% !TEX encoding = IsoLatin9
%% @texfile{ filename="MetricCH.tex", 
%% version="1.0",
%% date="September.2016", 
%% cdate="20160901",
%% filetype="LaTeX2e", 
%% journal="Preprint"
%% copyright="Copyright (C) J.A. Carrillo, K. Grunert, and  H. Holden".  }

\documentclass{amsart}
\usepackage{hyperref}
\usepackage{enumerate} 
\usepackage{upref}
\usepackage{verbatim} 
\usepackage{color}
\usepackage{graphicx}
\usepackage{bbm}
\usepackage{xcolor}
\usepackage{psfrag}
\usepackage[textsize=tiny]{todonotes}

%%%%%%%%%   
\newcommand{\sign}{\mathop{\rm sign}}
\newcommand*{\mailto}[1]{\href{mailto:#1}{\nolinkurl{#1}}}

\usepackage[latin1]{inputenc}
\usepackage{amssymb, amsmath, amsfonts, amsthm}
\usepackage[all]{xy}

\usepackage{marginnote}

\newcommand{\dott}{\, \cdot\,}

\renewcommand{\P}{\ensuremath{\mathcal{P}}}
\newcommand{\Q}{\ensuremath{\mathcal{Q}}}

\newcommand{\U}{\ensuremath{\mathcal{U}}}

\newcommand{\V}{\ensuremath{\mathcal{U}}} %newcommand{\V}{\ensuremath{\mathcal{V}}}

\newcommand{\X}{\ensuremath{\mathcal{X}}}
\newcommand{\Y}{\ensuremath{\mathcal{Y}}}
\newcommand{\Henergy}{\ensuremath{\mathcal{H}}}
\newcommand{\E}{\ensuremath{\mathcal{E}}}

\newcommand{\D}{\ensuremath{\mathcal{D}}}

\newcommand{\R}{\ensuremath{\mathcal{R}}}
\newcommand{\So}{\ensuremath{\mathcal{S}}}

\newcommand{\abs}[1]{\left\vert#1\right\vert}

\newcommand{\Real}{\mathbb R}

\newcommand{\norm}[1]{\left\Vert#1\right\Vert}

\newcommand{\muac}{\mu_{\text{\rm ac}}}
\newcommand{\e}{\ensuremath{\mathrm{e}}}

\newcommand{\nn}{\nonumber}

\newcommand{\sqP}[1]{\tilde\P_{#1}^{1/2}}
\newcommand{\sqDP}[1]{(\tilde\P_{#1}^{1/2})_\eta}
\newcommand{\sqtC}[1]{A_{#1}}
\newcommand{\A}{A}
\newcommand{\ma}{a}
\newcommand{\bigO}{\mathcal O}
\newtheorem{theorem}{Theorem}[section]

\newtheorem{lemma}[theorem]{Lemma}
\newtheorem{definition}[theorem]{Definition}
\newtheorem{proposition}[theorem]{Proposition}

\newtheorem{remark}[theorem]{Remark}

\numberwithin{equation}{section}

\allowdisplaybreaks

\begin{document}

\title[Metric for CH equation]{A Lipschitz metric for the Camassa--Holm equation}

\author[J. A. Carrillo]{Jos\'e Antonio Carrillo}
\address[J. A. Carrillo]{Department of Mathematics\\ Imperial College London \\ South Kensington Campus\\ London SW7 2AZ\\ UK}
\email{\mailto{carrillo@imperial.ac.uk}}
\urladdr{\url{http://wwwf.imperial.ac.uk/~jcarrill/index.html}}

\author[K. Grunert]{Katrin Grunert}
\address[K. Grunert]{Department of Mathematical Sciences\\ Norwegian University of Science and Technology\\ NO-7491 Trondheim\\ Norway}
\email{\mailto{katrin.grunert@ntnu.no}}
\urladdr{\url{https://www.ntnu.edu/employees/katrin.grunert}}

\author[H. Holden]{Helge Holden}
\address[H. Holden]{Department of Mathematical Sciences\\
  Norwegian University of Science and Technology\\
  NO-7491 Trondheim\\ Norway}
\email{\mailto{helge.holden@ntnu.no}}
\urladdr{\url{https://www.ntnu.edu/employees/holden}}

\thanks{Research supported by the grants {\it Waves and Nonlinear Phenomena (WaNP)} and {\it Wave Phenomena and Stability - a Shocking Combination (WaPheS)} from the Research Council of Norway.}  
\subjclass[2010]{Primary: 35Q53, 35B35; Secondary: 35B60}
\keywords{Camassa--Holm equation, Lipschitz metric, conservative solution}
\date{\today}
\thanks{JAC, KG and HH are very grateful to the Institut Mittag-Leffler, Stockholm,  for generous support. HH would like to thank the Isaac Newton Institute for Mathematical Sciences for support and hospitality during the program Nonlinear Water Waves when work on this paper was undertaken. This work was supported by EPSRC Grant Number EP/P031587/1.}

\begin{abstract}
We analyze stability of conservative solutions of the Cauchy problem on the line for the Camassa--Holm (CH) equation. Generically, the solutions of the CH 
equation develop singularities with steep gradients while preserving continuity of the solution itself. In order to obtain uniqueness, one is required to augment the equation 
itself by a measure that represents the associated energy, and the breakdown of the solution is associated with a complicated interplay where the measure becomes singular. 
The main result in this paper is the construction of a Lipschitz metric that compares two solutions of the CH equation with the respective initial data. The Lipschitz metric is based 
on the use of the Wasserstein metric.
\end{abstract}
\maketitle

\section{Introduction}

We study the Cauchy problem for weak conservative solutions of the Camassa--Holm (CH) equation, which reads
\begin{subequations}
\label{eq:CHbasic}
\begin{align}\label{CH:1}
u_t+uu_x+p_x&=0,\\ \label{CH:2}
\mu_t+(u\mu)_x&= (u^3-2pu)_x,
\end{align}
\end{subequations}
where $p(t,x)$ is given by 
\begin{equation}\label{P:rep}
p(t,x)=\frac14 \int_\Real e^{-\vert x-y\vert} u^2(t,y)dy+\frac14 \int_\Real e^{-\vert x-y\vert } d\mu(t,y),
\end{equation}
with initial data $(u,\mu)|_{t=0}=(u_0,\mu_0)$. In this case, the natural solution space consists of all pairs $(u,\mu)$ such that 
\begin{equation*}
u(t,\dott)\in H^1(\Real), \quad \mu(t,\dott)\in \mathcal{M}_+(\Real), \quad \text{and}\quad d\muac=(u^2+u_x^2)dx,
\end{equation*}
where $\mathcal{M}_+(\Real)$ denotes the set of all positive and finite Radon measures on $\Real$. Our main goal is to prove the existence of a metric $d$ such that 
\begin{equation*}
d((u_1(t),\mu_1(t)),(u_2(t),\mu_2(t)))\leq \alpha(t)d((u_{1,0},\mu_{1,0}),(u_{2,0}, \mu_{2,0})),
\end{equation*}
for two weak conservative solutions $(u_j(t),\mu_j(t))$ of \eqref{eq:CHbasic} with initial data $(u_{j,0},\mu_{j,0})$ $j=1,2$. Here $\alpha(t)$ depends on the total energy of the solutions and $\alpha(0)=1$.

The Camassa--Holm equation has been introduced in the seminal paper \cite{CH}, see also \cite{CHH}. Originally derived in the context of models for shallow water, the CH equation has
been intensively studied due to the intricate behavior of solutions of the Cauchy problem, which will be the focus of this paper.   We will not discuss further properties of the CH equation, e.g., the fact that the equation is completely integrable and allows for a geometric interpretation. Several extensions and generalizations exist, but we will focus on \eqref{eq:CHbasic}. 

The intriguing aspect of solutions to the Cauchy problem is the generic development of singularities in finite time, irrespective of the smoothness of the initial data. A solution may develop steep gradients, but in contrast to, say, hyperbolic conservation laws, the solution itself remains continuous. A finer analysis reveals that the energy density $u^2+u_x^2$ develops singularities, and at breakdown, which is often referred to as wave breaking, energy concentrates on sets of measure zero. Thus it becomes useful to introduce a measure, here denoted $\mu$, that encodes the energy, and away from breakdowns this measure should coincide with the energy density $u^2+u_x^2$. In technical terms we consider a nonnegative Radon measure $\mu$ with absolutely continuous part $d\muac=(u^2+u_x^2)dx$. This measure $\mu$ satisfies the equation \eqref{CH:2} (and as can easily be verified, $\mu=u^2+u_x^2$ will satisfy the same equation in the case of smooth solutions). An illustrating example of how intricate the structure of the points of wave breaking may be can be found in \cite{grunert}. The behavior in the proximity of the point of wave breaking, and, in particular, the prolongation of the solution past wave breaking, has been extensively studied.  See, e.g.,  
\cite{BC,BreCons:05a,ChenLiuZhang,ChenLiu,cons:98b,EscherLechtenfeldYin,FuQu,GHR2,GHR3,GHR5,GHR4,GHR,GuanKarlsenYin,GY, GuanYin2011, GuiLiu2010, GuiLiu2011, GuoZhou2010, HR,HRdiss,HRdissMP,GrasGrunHol} and references therein.  The key point here is that past wave breaking uniqueness fails, and there is a continuum of distinct solutions \cite{GHR}, with two extreme cases called dissipative and conservation solutions, respectively. To understand this conundrum it turns out to be advantageous to rewrite the equation in a different set of variables where the solution remains smooth. 

The explicit peakon-antipeakon solution \cite{GH}, which illustrates this problem, is given by  
\begin{equation} \label{eq:2peak}
u(t,x)
= \begin{cases} -\alpha(t)e^{x}, & \quad x\le-\gamma(t),\\
\beta(t)\sinh(x), & \quad -\gamma(t)<x<\gamma(t),\\
\alpha(t)e^{-x}, & \quad \gamma(t)\le x,
\end{cases}
\end{equation}
where 
\begin{equation*}
\alpha(t)= \frac{E}{2}\sinh(\frac{E}{2}t), \quad \beta(t)=E\frac{1}{\sinh(\frac{E}{2}t)}, \quad \gamma(t)=\ln(\cosh(\frac{E}{2}t)).
\end{equation*}
where $E=\norm{u(t)}_{H^1}$ for all $t\neq0$.
This function is a weak conservative solution which consists of a ``peak'' moving to the right and an ``anti-peak'' moving to the left, see Figure \ref{fig:1}. At $t=0$ the  ``peak''  and ``anti-peak'' collide, and the solution vanishes, yet the solution is highly non-trivial before and after the collision time. Clearly the trivial solution will coincide with this solution at  $t=0$, yet the trivial solution and  \eqref{eq:2peak} are very different at any other time. Thus it is not clear how to derive a metric comparing two solutions, that is stable under the time evolution. This is the task of the present work. 

To be more precise, we are here presenting a metric $d$ with the property that
\begin{equation*}
 d((u_1(t),\mu_1(t)), (u_2(t), \mu_2(t)))\leq  \alpha(t)d((u_{1,0},\mu_{1,0}), (u_{2,0}, \mu_{2,0})),
\end{equation*}
for two weak conservative solutions $(u_j(t),\mu_j(t))$, $j=1,2$, of \eqref{eq:CHbasic} with initial data $(u_{j,0},\mu_{j,0})$, $j=1,2$. Here  $\alpha(t)$ is a continuous function with $\alpha(0)=1$, which may depend on the total energies involved, but not on the particular solutions. We stress that no standard Sobolev norm nor Lebesgue space norm will work.  There exist alternative metrics for solutions of the CH equation, see \cite{BF}, \cite{GHR1}, and \cite{GHR2}. In \cite{BF} the periodic case is treated by approximating the solution by multi-peakons. The metric is defined by optimizing over a class of functions. The approach in   \cite{GHR1} and \cite{GHR2} depends on a reformulation of the CH equation in terms of Lagrangian variables. An intrinsic problem in this formulation  is that of {\it relabeling}, where there will be many different parametrizations in Lagrangian coordinates, corresponding to one and the same solution  $(u(t),\mu(t))$ in Eulerian variables. Thus one has to compute the distance between equivalence classes, which is not transparent. In the present approach, the key idea is to introduce a new set of variables, where one variable plays a role similar to a characteristic, while the remaining variables are linked to $(u,\mu)$ with the help of the ``characteristic". As we will outline next, there is no need to resort to equivalence classes or to optimize over classes of functions, and in spite of this proof being longer, we consider this approach to be more natural.  

Our approach is based on the fact that a natural metric for measuring distances between Radon measures (with the same total mass) is given through the Wasserstein (or Monge--Kantorovich) distance $d_W$, which in one dimension is defined with the help of pseudo inverses, see \cite{Vil03}. Given a nonnegative measure $\mu$ of finite mass $M>0$, we define the cumulative distribution function associated to $\mu$ as
$$
F(x)=\mu((-\infty,x))\,,
$$
which is a nondecreasing function from $\mathbb{R}$ onto $[0,M]$, left continuous and with limit from the right at any point $x\in\mathbb{R}$. The pseudo inverse associated to $\mu$ denoted by $\X$ is the function from  $[0,M]$ onto $\mathbb{R}$ given by 
$$
\mathcal{X}(\eta)=\sup\{x\mid F(x)<\eta\}.
$$
The pseudo-inverse of $F$ is a nondecreasing function from $[0,M]$ onto $\mathbb{R}$, left continuous and with limit from the right (caglad) at any point $x\in\mathbb{R}$. Notice the different convention adopted here with respect to the usual one in probability theory defining cumulative distribution functions continuous from the right and with limits from the left (cadlag) at every point. We prefer to have caglad instead of cadlag functions due to the use of the methods from \cite{HR} developed under the present convention. Wasserstein distances between nonnegative measures with the same mass can be defined via $L^p$-norms of the difference between their associated pseudo-inverses, see \cite{LT,CT,Vil03,CT2} and the references therein.

The approach of using Wasserstein distances to control the expansion of solutions of evolutionary PDEs leading to curves of probability measures goes back to the proofs of the mean-field limit of McKean--Vlasov and Vlasov equations in the late seventies and eighties of the last century. We refer to the classical references \cite{dobru,BH,Neun,Szni,Spohn} proving these large particle limits by means of the bounded Lipschitz distance and the coupling method.  See the recent results and surveys in \cite{Golse1,CCR,BCC,Golse2,CCHS}. The optimal transport viewpoint for one dimensional models was developed using pseudo inverse distributions for nonlinear aggregation and  diffusion equations in \cite{LT,CT,T2000,CT2,CDT} and the references therein, showing the contractivity of the Wasserstein distance in one dimension without the heavy machinery of optimal transport developed for general gradient flows in \cite{AGS}. More recently, these metrics have been used with success to show uniqueness past the blow-up time for multidimensional aggregation equations \cite{CDFLS2011} using gradient flow solutions. It is also interesting to point out that gradient flow solutions of the aggregation equation in one dimension with particular potentials are equivalent to entropy solutions of the Burgers equation as proven in \cite{BCDP}.
Another strategy using unbalanced optimal transport tools has been recently analyzed in \cite{GaVi} with the objective of understanding the relation between the incompressible Euler equations and the CH equations.

Finally, it is worth mentioning that there have been several works  \cite{gosse2006lagrangian,gosse2006identification,carrillo2009numerical, blanchet2008convergence,WW2010,during2010gradient,MO2014, junge2015fully,CRW2016} making use of this change of variables to produce numerical schemes capable of going over blow-up of solutions to nonlinear aggregations and being able to capture the blow-up of solutions of aggregation-diffusion models in one dimension such as toy versions of the Keller--Segel model for 
chemotaxis. It is a nice avenue of research to use this approach to produce numerical schemes for conservative solutions of the CH equation, see \cite{ChLiPe} for related particle methods.

In the present work, we will adapt this strategy of defining suitable distances between measures to the present problem of finding good metrics for solutions of the CH equation.
Let $(u(t,\dott),\mu(t,\dott))$ be  a weak conservative solution to the CH equation with total energy $\mu(t,\Real)=C>0$ (which for simplicity here is assumed to be smooth). 
Let 
\begin{equation*}
F(t,x)=\mu(t,(-\infty,x))=\int_{-\infty}^x d\mu(t)
\end{equation*}
due to the smoothness, and introduce the basic quantity
\begin{equation*}
G(t,x)=\int_{-\infty}^x (2p-u^2)(t,y)dy +F(t,x)= 2p_{x}(t,x)+2F(t,x).
\end{equation*}
The key function here is the (spatial) inverse of  the strictly increasing function $G$ for fixed time $t$. To that end we define
\begin{equation*} 
\Y(t,\eta)=\sup\{x\mid G(t,x)<\eta\}.
\end{equation*}
Formally we have that $G(t,\Y(t,\eta))=\eta$ for all $\eta\in(0,2C)$ and $\Y(t,G(t,x))=x$ for all $x\in \Real$. Here it is important to note that the domain of $\Y$ depends on the total energy $C$. Next, we want to determine the time evolution of $\Y$. Direct formal calculations yield that 
\begin{subequations}\label{CH:4_intro}
\begin{align}
\Y_{t}(t,G(t,x))+\Y_{\eta}(t,G(t,x))G_{t}(t,x)&=0,\\
\Y_{\eta}(t,G(t,x))G_{x}(t,x)& =1.
\end{align}
\end{subequations}
Thus we need to compute the time evolution of $G(t,x)$ before being able to compute the time evolution of $\Y(t,\eta)$. To that end we find, after some computations, that 
\begin{equation}\label{CH:5_intro}
G_{t}(t,x)+uG_{x}(t,x)=\frac23 u^3(t,x)+S(t,x),
\end{equation}
where 
\begin{equation}
S(t,x)= \int_\Real e^{-\vert x-y\vert }\left(\frac23 u^3-u_{x}p_{x}-2pu\right) (t,y) dy.
\end{equation}
Introducing $\eta=G(t,x)$,  $\So(t,\eta)=S(t,\Y(t,\eta))$, and
\begin{equation}\label{eq:Udef_intro}
\U(t,\eta)=u(t,\Y(t,\eta)),
\end{equation}
we find by combining \eqref{CH:4_intro} and \eqref{CH:5_intro}, that
\begin{equation*}
\Y_{t}(t,\eta)+(\frac23 \U^3+\So)(t,\eta)\Y_{\eta}(t,\eta)= \U(t,\eta), 
\end{equation*}
where we used that $\Y(t,G(t,x))=x$ for all $x\in\Real$. As far as the time evolution of 
$\U(t,\eta)$ is concerned, we find 
\begin{align*}
\U_{t}(t,\eta)& = -\Q(t,\eta)-(\frac23 \U^3+\So)\U_{\eta}(t,\eta),
\end{align*}
where we introduced $\Q(t,\eta)=p_{x}(t,\Y(t,\eta))$.
Thus, formally we end up with the system
\begin{align*}
\Y_{t}(t,\eta)+(\frac23 \U^3+\So)\Y_{\eta}(t,\eta)& =\U(t,\eta),\\
\U_{t}(t,\eta)+(\frac23 \U^3+\So)\U_{\eta}(t,\eta)& = -\Q(t,\eta).
\end{align*}
However, this system is not closed, and we need to introduce the function 
\begin{equation}\label{eq:Pdef_intro}
\P(t,\eta)=p(t,\Y(t,\eta)),
\end{equation}
and determine its time evolution.   We find, after some computations, that
\begin{equation*}
\P_{t}(t,\eta)+(\frac23 \U^3+\So)\P_{\eta}(t,\eta) = \Q\U(t,\eta)+\R(t,\eta),
\end{equation*}
where
\begin{align*}
\R(t,\eta)& =  \frac14 \int_0^{2C}\sign(\eta-\theta)e^{-\vert \Y(t,\eta)-\Y(t,\theta)\vert} (\frac23 \U^3\Y_{\eta}+\U)(t,\theta)d\theta \\
& \quad -\frac12 \int_0^{2C} e^{-\vert \Y(t,\eta)-\Y(t,\theta)\vert}\U\Q\Y_{\eta}(t,\theta) d\theta.
\end{align*}
To summarize, we have established the following system of differential equations
\begin{subequations}  \label{eq:evolution_intro}
\begin{align}
\Y_{t}(t,\eta)+(\frac23 \U^3+\So)\Y_{\eta}(t,\eta)& =\U(t,\eta),\\
\U_{t}(t,\eta)+(\frac23 \U^3+\So)\U_{\eta}(t,\eta)& = -\Q(t,\eta),\\
\P_{t}(t,\eta)+(\frac23 \U^3+\So)\P_{\eta}(t,\eta)& = \Q\U(t,\eta)+\R(t,\eta),
\end{align}
\end{subequations}
where 
\begin{subequations}
\begin{align}
\Q(t,\eta)& = -\frac14 \int_0^{2C} \sign(\eta-\theta)e^{-\vert \Y(t,\eta)-\Y(t,\theta)\vert } (2(\U^2-\P)\Y_{\eta}(t,\theta)+1) d\theta,\\
\So(t,\eta)&= \int_{0}^{2C} e^{-\vert \Y(t,\eta)-\Y(t,\theta)\vert} (\frac23 \U^3\Y_{\eta}-\U_{\eta}\Q-2\P\U\Y_{\eta})(t,\theta)d\theta,\\
\R(t,\eta)& =  \frac14 \int_0^{2C}\sign(\eta-\theta)e^{-\vert \Y(t,\eta)-\Y(t,\theta)\vert} (\frac23 \U^3\Y_{\eta}+\U)(t,\theta)d\theta \notag\\
& \quad -\frac12 \int_0^{2C} e^{-\vert \Y(t,\eta)-\Y(t,\theta)\vert}\U\Q\Y_{\eta}(t,\theta) d\theta.  \label{eq:Reta_intro}
\end{align}
\end{subequations}
Derived under assumptions of smoothness of the functions involved,  the same system is valid also
in the general case of weak conservation solutions. However, that requires considerable analysis, and Section \ref{sec:general} is devoted to that. The next step is to estimate the time evolution of these quantities $(\Y,\U,\P)$. It turns out that the natural functional space is the space of square integrable functions for the unknowns $(\Y,\U,\P^{1/2})$. For this reason we prefer to work with $\P^{1/2}$ rather than $\P$. Section \ref{sec:general} focuses on the first qualitative properties of the time evolution of the solutions of \eqref{eq:evolution_intro} for weak conservative solutions of the CH equation \eqref{eq:CHbasic} as well as the propagation in time of the $L^2$ norm of the unknowns.

The main aim of our work is to identify the right distance between two general conservative solutions of the CH equation \eqref{eq:CHbasic}, or equivalently, between two general $L^2$ solutions $(\Y,\U,\P^{1/2})$ of the system \eqref{eq:evolution_intro} with possibly different energies. In order to compare solutions with different energies we need to rescale the solutions of \eqref{eq:evolution_intro} in such a way that they are defined on the same interval. Since the natural functional spaces for our unknowns $(\Y,\U,\P^{1/2})$ was identified as the $L^2$-functional space, it seems natural to do a scaling conserving the $L^2$-norms of the unknowns $(\Y,\U,\P^{1/2})$, but leading to the domain being independent of the total energy $C$.

Let us define the scaled unknowns $(\tilde \Y,\tilde \U,\tilde \P^{1/2})$ associated to a conservative solution $(u(t),\mu(t))$ with energy $C=\mu(t,\Real)$ of the CH equation \eqref{eq:CHbasic} as
$\tilde \Y(t,\eta)= \sqrt{2C}\,\Y(t, 2C\eta)$, $\tilde \U(t,\eta)=\sqrt{2C}\, \U(t,2C\eta)$, and 
$\tilde\P^{1/2}(t,\eta)=\sqrt{2C}\,\P^{1/2}(t, 2C\eta)$,
where $(\Y,\U,\P^{1/2})$ is the solution of \eqref{eq:evolution_intro}. This scaling allows also for the zero solution to  \eqref{eq:CHbasic} to be included in our considerations, as outlined in Section~\ref{sec:difen}. A similar system to \eqref{eq:evolution_intro} can be written for $(\tilde \Y,\tilde \U,\tilde \P^{1/2})$, but this is postponed to Section \ref{sec:difen}. With this new set of unknowns in place, we can now define a metric to compare two general conservative solutions $(u_i,\mu_i)$, $i=1,2$, of \eqref{eq:CHbasic} with total energy $C_i=\mu_i(\Real)$.  We define it as
\begin{align*}
d((u_1,\mu_1), (u_2, \mu_2)) = &\,\norm{\tilde \Y_1-\tilde \Y_2}_{L^2([0,1])}+\norm{\tilde \U_1-\tilde \U_2}_{L^2([0,1])}\\
& + \norm{\tilde \P^{1/2}_1-\tilde \P^{1/2}_2}_{L^2([0,1])}+\vert \sqrt{2C_1}-\sqrt{2C_2}\vert.
\end{align*}
Our main result reads as follows: 

\begin{theorem}
Consider initial data $u_{i,0}\in H^1(\Real)$, $\mu_{i,0}\in \mathcal{M}_+(\Real)$ such that $d(\muac)_{i,0}=(u_i^2+u_{i,x}^2)dx$ and $C_i=\mu_i(\Real)$, and let 
$(u_i,\mu_i)$ for $i=1,2$ denote the corresponding weak conservative solutions of the Camassa--Holm equation \eqref{eq:CHbasic}.  Then we have that
\begin{equation*}
 d((u_1(t),\mu_1(t)), (u_2(t), \mu_2(t)))\leq e^{\bigO(1)t}d((u_{1,0},\mu_{1,0}), (u_{2,0}, \mu_{2,0})),
\end{equation*}
where $\bigO(1)$ denotes a constant depending only on $\max_j(C_j)$ remaining bounded as $\max_j(C_j)\to 0$.
\end{theorem}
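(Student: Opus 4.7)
The plan is to derive a Gronwall inequality for each of the four pieces of $d$ along the flow. Conservativity gives $\mu_i(t,\Real)=C_i$ for all $t$, so the last summand $|\sqrt{2C_1}-\sqrt{2C_2}|$ is time-independent and enters only as an additive inhomogeneity. The real task is therefore to bound
\[
E(t):=\bnorm{\tilde\Y_1-\tilde\Y_2}_{L^2([0,1])}+\bnorm{\tilde\U_1-\tilde\U_2}_{L^2([0,1])}+\bnorm{\tilde\P_1^{1/2}-\tilde\P_2^{1/2}}_{L^2([0,1])}
\]
by $e^{\bigO(1)t}(E(0)+|\sqrt{2C_1}-\sqrt{2C_2}|)$, where the $\bigO(1)$ constant is controlled by $\max_j C_j$ via the a priori $L^2$-bounds proved in Section~\ref{sec:general}.

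First, I would write the rescaled version of \eqref{eq:evolution_intro} for $(\tilde\Y,\tilde\U,\tilde\P^{1/2})$ on $[0,1]$ and subtract the two solutions. Each of the resulting equations for $\Delta F\in\{\Delta\tilde\Y,\Delta\tilde\U,\Delta\tilde\P^{1/2}\}$ takes the form
\[
(\Delta F)_t+\tilde v_1(\Delta F)_\eta=-(\tilde v_1-\tilde v_2)F_{2,\eta}+\Delta(\text{source}),
\]
with $\tilde v_i=\tfrac{2}{3}\tilde\U_i^3+\tilde\So_i$. Multiplying by $\Delta F$, integrating over $[0,1]$, and integrating by parts on the transport term produces the benign commutator $\tfrac12\int_0^1\partial_\eta\tilde v_1(\Delta F)^2\,d\eta$; it is benign provided $\partial_\eta\tilde v_1\in L^\infty$, which follows by differentiating the exponential kernel in $\tilde\So$ and using the uniform bounds on $(\tilde\Y,\tilde\U,\tilde\P^{1/2})$ from Section~\ref{sec:general} together with the Sobolev embedding $\tilde\U\in L^\infty$.

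Next, I would estimate the right-hand side of each difference equation in terms of $E(t)$ and $|\sqrt{2C_1}-\sqrt{2C_2}|$. Since $\tilde\Q,\tilde\So,\tilde\R$ are integrals of products of $\tilde\U,\tilde\P,\tilde\Y_\eta$ against the kernel $e^{-|\tilde\Y(\eta)-\tilde\Y(\theta)|}$, writing
\[
e^{-|\tilde\Y_1(\eta)-\tilde\Y_1(\theta)|}-e^{-|\tilde\Y_2(\eta)-\tilde\Y_2(\theta)|}=\int_0^1\!\!\sign(\cdot)\,e^{-|\cdot|}\,d\tau\,\bigl[\Delta\tilde\Y(\eta)-\Delta\tilde\Y(\theta)\bigr]
\]
yields pointwise Lipschitz control of the kernel by $\|\Delta\tilde\Y\|_{L^2}$ after Cauchy--Schwarz in $\theta$. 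The remaining factors (such as $\tilde\U_1^3-\tilde\U_2^3$, $\tilde\U_1\tilde\P_1-\tilde\U_2\tilde\P_2$, $\tilde\U_1\tilde\Q_1-\tilde\U_2\tilde\Q_2$) are telescoped using $L^\infty$-bounds on $\tilde\U$ and $\tilde\Y_\eta$; the problematic integrand $\tilde\U_\eta\tilde\Q$ inside $\tilde\So$ is handled by integrating by parts in $\theta$ to transfer the $\partial_\eta$ onto the kernel, turning an $L^2$-factor into an $L^\infty$-factor at the price of an extra $\tilde\Q$. The rescaling terms that depend on the total energy feed into the estimates only through factors of $\sqrt{2C_1}-\sqrt{2C_2}$, producing the additive inhomogeneity in Gronwall.

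The principal obstacle is the term $(\tilde v_1-\tilde v_2)F_{2,\eta}$ when $F=\tilde\U$ or $\tilde\P^{1/2}$: here $F_{2,\eta}$ lies only in $L^2$, so in order to pair it against $\Delta F\in L^2$ one must establish that $\tilde v_1-\tilde v_2$ is bounded in $L^\infty(\eta)$ by $E$ (not merely in $L^2$). This is where the kernel-Lipschitz argument above is essential, and this is also the structural reason for preferring $\tilde\P^{1/2}$ to $\tilde\P$: the resulting quadratic nonlinearities close in $L^2$ exactly when the unknowns are taken in the order used here. Combining these estimates yields $\tfrac{d}{dt}E(t)^2\le\bigO(1)\bigl(E(t)^2+|\sqrt{2C_1}-\sqrt{2C_2}|^2\bigr)$, and Gronwall's lemma delivers the theorem.
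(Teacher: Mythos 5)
Your high-level strategy (estimate $\tfrac{d}{dt}$ of each $L^2$-difference, exploit that $\sqrt{2C_i}$ is conserved so $|\sqrt{2C_1}-\sqrt{2C_2}|$ is a constant inhomogeneity, and close with Gronwall) is exactly the strategy the paper announces in the subsection on the choice of the distance, and the theorem is indeed assembled from three lemmas of the form $\tfrac{d}{dt}\norm{\Delta F}^2\le \bigO(1)\big(\norm{\Delta\tilde\Y}^2+\norm{\Delta\tilde\U}^2+\norm{\Delta\tilde\P^{1/2}}^2+|\A_1-\A_2|^2\big)$. The gap is in how you propose to prove those lemmas. Your pivotal claim -- that one can (and must) bound $\tilde v_1-\tilde v_2$ in $L^\infty_\eta$ by $E(t)$ so as to pair it against $F_{2,\eta}$ -- fails. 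The difference of the velocities contains $\tilde\U_1^3-\tilde\U_2^3$, which is only controlled in $L^2$ by $\norm{\Delta\tilde\U}$, and $\tilde\So_1-\tilde\So_2$, which contains differences of $\tilde\Y_{j,\eta}$ and $\tilde\U_{j,\eta}$ under the integral; none of these admits a pointwise bound by $E(t)$. Worse, the factor you want to pair against is $\tilde\Y_{2,\eta}$ (in the $\tilde\Y$-equation), which is not even in $L^2([0,1])$ in general (only combinations such as $\tilde\U^2\tilde\Y_\eta$, $\tilde\P\tilde\Y_\eta$, $\tilde\U\tilde\U_\eta$, $\tilde\Henergy_\eta$ are uniformly bounded, via $2\tilde\P\tilde\Y_\eta-\tilde\U^2\tilde\Y_\eta+\tilde\Henergy_\eta=\A^5$). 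So the pairing you describe cannot be carried out as stated, and the "telescoping with $L^\infty$ bounds on $\tilde\U$ and $\tilde\Y_\eta$" step is unavailable because $\tilde\Y_\eta$ is unbounded.

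What the paper actually does, and what your sketch omits, is to keep the unbounded derivatives locked inside those bounded products, split every difference with the $\min/\max$ identity (the sets $B(\eta)$, $E$, $D$ and the positive/negative parts $\tilde\V_j^{\pm}$), and then remove the remaining terms of the form $(\text{bounded quantity})\cdot(\tilde\Y_{1,\eta}-\tilde\Y_{2,\eta})$ or $(\tilde\U_{1,\eta}-\tilde\U_{2,\eta})$ by integrating by parts in $\eta$, which requires two further ingredients you do not supply: the boundary terms vanish only because of the propagated second-moment bounds ($xu,\;x\sqrt{p}\to0$), and the resulting coefficients ($\min_j(\tilde\P_j)\min_j(\tilde\V_j^+)$, $\min_j(\tilde\D_j)\tilde\U_k$, the nonlocal minima over $k$ of kernel integrals, etc.) must be shown to be Lipschitz in $\eta$ with constants depending only on $\A$ -- this is the content of the appendix lemmas. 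In addition, the double-integral terms are closed not by a crude Cauchy--Schwarz but by the weighted estimates $\int_0^\eta e^{-(\tilde\Y(\eta)-\tilde\Y(\theta))/\A}\tilde\P^2\tilde\Y_\eta\,d\theta\le\bigO(1)\A\tilde\P(\eta)$ and their relatives, followed by one more integration by parts in $\eta$; and the case $\A_1\ne\A_2$ forces comparison of kernels with different rates, producing the $|\A_1-\A_2|$ contributions through an elementary but necessary kernel lemma. Your kernel-difference idea and the integration by parts in $\theta$ for the $\tilde\Q\tilde\U_\eta$ term are genuinely among the paper's devices, but as presented the argument does not close: the core Lipschitz estimates, which constitute essentially all of the proof, are not established by the mechanism you describe.
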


The main core of this work lies in estimating the Lipschitz property of the right-hand side of the equivalent system to \eqref{eq:evolution_intro} in the $L^2$-sense for the unknowns $(\tilde \Y,\tilde \U,\tilde \P^{1/2})$. This is much easier in case we compare to the zero solution as it coincides with the propagation of the $L^2$ norms of the unknowns. Due to the intricate nonlinearities of the right-hand sides of  \eqref{eq:evolution_intro}, this leads in the general case to long detailed technical estimates that are displayed in full in Subsections \ref{subsec:lipy}, \ref{subsec:lipu}, and \ref{subsec:lipp} in Section \ref{sec:Lip}.  In the case of peakon-antipeakon solutions, as the solution \eqref{eq:2peak} is denoted, all quantities described in this paper can be computed explicitly. The details are to be found in Appendix \ref{app:peak}.

A notational comment is in order, we decided to denote by $\bigO(1)$ constants depending on $\max_j(C_j)$ that may change from line to line along the proofs, but remain bounded as $\max_j(C_j)\to 0$. Explicit tracking of the constants could be possible but it is highly cumbersome and avoided for the sake of the reader.

%%%%%%%%%%%%%%%%%%%%%%%
%%%%%%%%%%%%%%%%%%%%%%%
%%%%%%%%%%%%%%%%%%%%%%%

\section{Formal Ideas: Transformations with smoothness}

Let us start by explaining all the mathematical details for the transformation in the case of smooth solutions as outlined in the introduction. Let $(u(t,\dott),\mu(t,\dott))$ be a weak conservative solution to the Camassa--Holm equation with total energy $\mu(t,\Real)=C>0$. We assume that $F(t,x)$, given by
\begin{equation}\label{deff}
F(t,x)=\int_{-\infty}^x d\mu(t), 
\end{equation}
is increasing and smooth, and, in particular, that $\mu=\mu_{\rm ac}=(u^2+u_x^2)dx$ for all $t$. Introduce the function 
\begin{align*}
G(t,x)&=\int_{-\infty}^x (2p-u^2)(t,y)dy +F(t,x)\\ \nn
& = 2p_x(t,x)+2F(t,x),
\end{align*}
where we used integration by parts and \eqref{P:rep}.
First of all note that the function $G(t,x)$ satisfies
\begin{equation*}
\lim_{x\to -\infty} G(x)=0 \quad \text{ and } \quad \lim_{x\to \infty} G(x)=2C,
\end{equation*} 
since $|p_x(t,x)|\leq p(t,x)$ and $p$ is an $H^1$ function on the line due to \eqref{P:rep}.
Moreover, the function $(2p-u^2)(t,x)\geq 0$ for all $(t,x)\in \Real^2$ as the following computation shows,
\begin{align} \nn
(2p-u^2)(t,x)& = \frac12 \int_\Real e^{-\vert x-y\vert } (u^2+F_x)(t,y)dy -u^2(t,x)\\ \nn
& = \frac12 \int_{-\infty}^x e^{y-x} u^2(t,y)dy + \frac12 \int_x^\infty e^{x-y} u^2(t,y)dy\\ \nn
& \qquad  +\frac12 \int_\Real e^{-\vert x-y\vert } F_x(t,y)dy -u^2(t,x)\\ \nn
& = \frac12 u^2(t,x)-\frac12 \int_{-\infty}^x e^{y-x} 2uu_x(t,y)dy\\ \nn
& \qquad+\frac12 u^2(t,x)+\frac12 \int_x^\infty e^{x-y}2uu_x(t,y)dy\\ \nn
& \qquad + \frac12 \int_\Real e^{-\vert x-y\vert } F_x(t,y)dy -u^2(t,x)\\ \nn
& \geq \frac12 \int_\Real e^{-\vert x-y\vert } \big(F_x(t,y)-2\vert uu_x\vert (t,y)\big) dy \\
& = \frac12 \int_\Real e^{-\vert x-y\vert } (\vert u(t,y)\vert -\vert u_x(t,y)\vert )^2 dy\geq 0.  \label{eq:2Ppluss}
\end{align}
Thus the function $G(t,x)$ is non-decreasing and, in our case, since the function $F(t,x)$ is smooth, also $G(t,x)$ is smooth.

\begin{remark}
The estimate \eqref{eq:2Ppluss}, that is, $2p-u^2\ge0$, remains valid also in the case where the functions are non-smooth.
\end{remark}

Last, but not least, we want to make sure that $G(t,x)$ is strictly increasing, so that its pseudo-inverse will have no jumps. $F(t,x)$ is constant if and only if both $d\mu$, $u$, and $u_x$ are equal to zero. Therefore assume that there exists (for fixed $t$) some interval $[b,c]$ such that $d\mu(t,x)=u(t,x)=u_x(t,x)=0$ for all $x\in [b,c]$. Then the only term that can save us is $p(t,x)$, which in general satisfies $p(t,x)\geq 0$ for all $(t,x)\in \Real^2$. However, whenever $\mu(t,\Real)\not =0$, one has by its definition in \eqref{P:rep} that $p(t,x)>0$ and the claim follows.

Thus the function $G(t,x)$ is strictly increasing and continuous, and we can consider its pseudo-inverse $\Y\colon[0,2C]\to\Real$, which in this case coincides with its inverse and which is given by 
\begin{equation} \label{eq:Ydef}
\Y(t,\eta)=\sup\{x\mid G(t,x)<\eta\}.
\end{equation}
Since $G(t,x)$ is strictly increasing and continuous, we have that $G(t,\Y(t,\eta))=\eta$ for all $\eta\in(0,2C)$ and $\Y(t,G(t,x))=x$ for all $x\in \Real$. By the smoothness assumption on $F$, direct calculations yield that 
\begin{subequations}\label{CH:4}
\begin{align}
\Y_t(t,G(t,x))+\Y_\eta(t,G(t,x))G_t(t,x)&=0,\\
\Y_\eta(t,G(t,x))G_x(t,x)& =1.
\end{align}
\end{subequations}
Thus we need to compute the time evolution of $G(t,x)$ before being able to compute the time evolution of $\Y(t,\eta)$. The following calculations are only valid in the case of smooth solutions, but we will show in the next section how to overcome this issue for weak conservative solutions. Since $e^{-\vert x-y\vert}/2$ is the integral kernel of $(-\partial_x^2+1)^{-1}$, we observe from \eqref{P:rep} that $p$ is the solution to 
\begin{equation*}
p-p_{xx}=\frac12 u^2+\frac12 \mu
\end{equation*}
and hence 
\begin{align*}
p_t-p_{txx}& = uu_t+\frac12 F_{xt}\\ \nn
& = -u^2u_x-up_x-\frac12 (uF_x)_x+\frac12 (u^3)_x-(pu)_x\\ \nn
& = \frac16 (u^3)_x-(pu)_x-\frac12(uF_x)_x-up_x,
\end{align*}
where we used the abbreviation $\mu=F_x$.
Thus we end up with
\begin{align*}
p_t(t,x) &= -\frac12 \int_{\Real} \sign(x-y)e^{-\vert x-y\vert } \left(\frac16 u^3-pu-\frac12 uF_x\right)(t,y)dy\\ \nn
& \quad -\frac12 \int_\Real e^{-\vert x-y\vert} up_x(t,y)dy.
\end{align*}
Similar calculations yield that 
\begin{equation*}
p_{xt}(t,x) = -\frac16 u^3+pu+\frac12 uF_x+\frac12 \int_\Real e^{-\vert x-y\vert } \left(\frac23 u^3-u_xp_x-2pu\right)(t,y)dy.
\end{equation*}
Thus we get for the time evolution of $G(t,x)$ that 
\begin{equation}\label{CH:5}
G_t(t,x)+uG_x(t,x)=\frac23 u^3(t,x)+S(t,x),
\end{equation}
where 
\begin{equation*}
S(t,x)= \int_\Real e^{-\vert x-y\vert }\left(\frac23 u^3-u_xp_x-2pu\right) (t,y) dy.
\end{equation*}
Combining \eqref{CH:4} and \eqref{CH:5}, we end up with
\begin{equation*}
\Y_t(t,G(t,x))+(\frac23 u^3(t,x)+S(t,x))\Y_\eta(t,G(t,x))= u(t,x).
\end{equation*}
Introducing $\eta=G(t,x)$, we deduce
\begin{equation*}
\Y_t(t,\eta)+(\frac23 u^3+S)(t,\Y(t,\eta))\Y_\eta(t,\eta)= u(t,\Y(t,\eta)), 
\end{equation*}
where we used that $\Y(t,G(t,x))=x$ for all $x\in\Real$. As far as the time evolution of 
\begin{equation}\label{eq:Udef}
\U(t,\eta)=u(t,\Y(t,\eta))
\end{equation}
is concerned, we have 
\begin{align*}
\U_t(t,\eta)&= u_t(t,\Y(t,\eta))+u_x(t,\Y(t,\eta))\Y_t(t,\eta)\\
& = u_t(t,\Y(t,\eta))+ uu_x(t,\Y(t,\eta))-(\frac23 u^3+S)u_x(t,\Y(t,\eta))\Y_n(t,\eta)\\
& =-p_x(t,\Y(t,\eta))-(\frac23 u^3+S)u_x(t,\Y(t,\eta))\Y_n(t,\eta)\\
& = -\Q(t,\eta)-(\frac23 \U^3+\So)\U_\eta(t,\eta),
\end{align*}
where we introduced $\Q(t,\eta)=p_x(t,\Y(t,\eta))$ and $\So(t,\eta)=S(t,\Y(t,\eta))$.
Thus, formally we end up with the system
\begin{subequations}\label{hejkatrin}
\begin{align}
\Y_t(t,\eta)+(\frac23 \U^3+\So)\Y_\eta(t,\eta)& =\U(t,\eta),\\
\U_t(t,\eta)+(\frac23 \U^3+\So)\U_\eta(t,\eta)& = -\Q(t,\eta),
\end{align}
\end{subequations}
where $\Q(t,\eta)$ and $\So(t,\eta)$ can be written as
\begin{align*}
\Q(t,\eta)&=-\frac14 \int_{\Real} \sign(\Y(t,\eta)-y)e^{-\vert \Y(t,\eta)-y\vert} (u^2(t,y)+F_x(t,y))dy\\
& = -\frac14\int_\Real \sign(\Y(t,\eta)-y)e^{-\vert \Y(t,\eta)-y\vert}  (2(u^2(t,y)-p(t,y))+G_x(t,y))dy\\
& = -\frac14 \int_0^{2C} \sign(\eta-\theta)e^{-\vert \Y(t,\eta)-\Y(t,\theta)\vert } (2(\U^2-\P)\Y_\eta(t,\theta)+1) d\theta, 
\end{align*}
and
\begin{align*}
\So(t,\eta)&= \int_\Real e^{-\vert \Y(t,\eta)-y\vert } (\frac23 u^3-u_xp_x-2pu)(t,y)dy\\
& = \int_{0}^{2C} e^{-\vert \Y(t,\eta)-\Y(t,\theta)\vert} (\frac23 \U^3\Y_\eta-\U_\eta\Q-2\P\U\Y_\eta)(t,\theta)d\theta,
\end{align*}
with
\begin{equation}\label{eq:Pdef}
\P(t,\eta)=p(t,\Y(t,\eta)).
\end{equation}
It is then natural, in order to close the system \eqref{hejkatrin}, that besides the quantities $u$ and $\mu$, also $p$ in the new variables must be considered. One main reason being that these three quantities turn up in the definition of $G$. We already computed before that 
\begin{align*}
p_t(t,x)&=-\frac12 \int_\Real \sign(x-y)e^{-\vert x-y\vert} (\frac16 u^3-pu-\frac12 uF_x)(t,y)dy\\ \nn
& \quad -\frac12 \int_\Real e^{-\vert x-y\vert} up_x(t,y)dy\\ \nn
&= \frac12 \int_\Real \sign(x-y) e^{-\vert x-y\vert}(\frac13 u^3+\frac 12uG_x)(t,y)dy\\ \nn
& \quad -\frac12 \int_\Real e^{-\vert x-y\vert} up_x(t,y)dy,
\end{align*}
where we used that 
\begin{align*}
G_x(t,x)=2p(t,x)-u^2(t,x)+F_x(t,x).
\end{align*}
Thus direct computations yield the following additional equation
\begin{align*}
\P_t(t,\eta)+(\frac23 \U^3+\So)\P_\eta(t,\eta)& = \Q\U(t,\eta)+\R(t,\eta),
\end{align*}
where 
\begin{align*}
\R(t,\eta)& = \frac14 \int_\Real \sign(\Y(t,\eta)-y)e^{-\vert \Y(t,\eta)-y\vert}(\frac23 u^3+uG_x)(t,y)dy \\
& \quad -\frac12 \int_\Real e^{-\vert \Y(t,\eta)-y\vert} up_x(t,y)dy\\
& = \frac14 \int_0^{2C}\sign(\eta-\theta)e^{-\vert \Y(t,\eta)-\Y(t,\theta)\vert} (\frac23 \U^3\Y_\eta+\U)(t,\theta)d\theta\\
& \quad -\frac12 \int_0^{2C} e^{-\vert \Y(t,\eta)-\Y(t,\theta)\vert}\U\Q\Y_\eta(t,\theta) d\theta.
\end{align*}
We summarize the  result in the following proposition.
%-------------------
\begin{proposition}\label{prop:smooth}
Let $(u,\mu)$ denote a smooth solution of \eqref{eq:CHbasic}. Define $\Y$ by \eqref{eq:Ydef}, $\U$ by \eqref{eq:Udef}, and $\P$ by \eqref{eq:Pdef}. Then the following system of differential equations holds
\begin{subequations}  \label{eq:evolutionPROP}
\begin{align}
\Y_t(t,\eta)+(\frac23 \U^3+\So)\Y_\eta(t,\eta)& =\U(t,\eta),\\
\U_t(t,\eta)+(\frac23 \U^3+\So)\U_\eta(t,\eta)& = -\Q(t,\eta),\\
\P_t(t,\eta)+(\frac23 \U^3+\So)\P_\eta(t,\eta)& = \Q\U(t,\eta)+\R(t,\eta),
\end{align}
\end{subequations}
where 
\begin{subequations}
\begin{align}
\Q(t,\eta)& = -\frac14 \int_0^{2C} \sign(\eta-\theta)e^{-\vert \Y(t,\eta)-\Y(t,\theta)\vert } (2(\U^2-\P)\Y_\eta(t,\theta)+1) d\theta,\\
\So(t,\eta)&= \int_{0}^{2C} e^{-\vert \Y(t,\eta)-\Y(t,\theta)\vert} (\frac23 \U^3\Y_\eta-\U_\eta\Q-2\P\U\Y_\eta)(t,\theta)d\theta,\\
\R(t,\eta)& =  \frac14 \int_0^{2C}\sign(\eta-\theta)e^{-\vert \Y(t,\eta)-\Y(t,\theta)\vert} (\frac23 \U^3\Y_\eta+\U)(t,\theta)d\theta \notag\\
& \quad -\frac12 \int_0^{2C} e^{-\vert \Y(t,\eta)-\Y(t,\theta)\vert}\U\Q\Y_\eta(t,\theta) d\theta.  
\end{align}
\end{subequations}
\end{proposition}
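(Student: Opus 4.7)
The proposition is essentially a summary of the derivations preceding it, and the proof is a direct calculation organized in three stages. The first stage is to derive a transport equation for $G$ in Eulerian coordinates; the second is to convert it into the three equations of \eqref{eq:evolutionPROP} via the chain rule applied to the compositions $u\circ\Y$ and $p\circ\Y$; the third is to rewrite the resulting nonlocal integrals in the new variables through the substitution $y=\Y(t,\theta)$.

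For the first stage, one uses $p=(-\partial_x^2+1)^{-1}(\frac12 u^2+\frac12 \mu)$ to obtain the elliptic equation $p_t-p_{xxt}=uu_t+\frac12 F_{xt}$, substitutes $u_t$ from \eqref{CH:1} and $F_{xt}=\mu_t$ from \eqref{CH:2}, and then inverts $1-\partial_x^2$ against the kernel $\frac12 e^{-|x-y|}$ (integrating by parts to move the $x$-derivatives off the source). This yields explicit integral representations for $p_t$ and $p_{xt}$. Combining $G_t=2p_{xt}+2F_t$ with the formula $F_t=u^3-2pu-u\mu$, which follows from integrating \eqref{CH:2} in $x$, produces after cancellations the transport equation $G_t+uG_x=\frac23 u^3+S$.

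For the second stage, the inverse function identities \eqref{CH:4} give $\Y_t(t,\eta)=-\Y_\eta(t,\eta)\,G_t(t,\Y(t,\eta))$, which together with $\Y_\eta G_x=1$, $u(t,\Y)=\U$ and the transport equation for $G$ yields the first line of \eqref{eq:evolutionPROP}. Differentiating $\U(t,\eta)=u(t,\Y(t,\eta))$ gives $\U_t=(u_t+u_x\Y_t)\circ\Y$; substituting \eqref{CH:1} for $u_t$ together with the just-derived equation for $\Y_t$ produces the $\U$ equation, the $uu_x$ contributions cancelling against each other. The same procedure applied to $\P(t,\eta)=p(t,\Y(t,\eta))$, now using the integral formula for $p_t$ derived in the first stage, yields the $\P$ equation with right-hand side $\Q\U+\R$.

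The third stage consists in rewriting the integrals over $y\in\Real$ as integrals over $\theta\in[0,2C]$ via $y=\Y(t,\theta)$, $dy=\Y_\eta(t,\theta)\,d\theta$. The key pointwise identities are $u^2+F_x=G_x-2(p-u^2)$, $G_x(t,\Y(t,\theta))\,\Y_\eta(t,\theta)=1$, and $u_x(t,\Y(t,\theta))\,\Y_\eta(t,\theta)=\U_\eta(t,\theta)$; these absorb the Jacobian and transform the expressions into the precise integrals displayed for $\Q$, $\So$, and $\R$. In the smooth setting considered here, every chain-rule step and every change of variables is elementary because the smoothness of $F$ and the strict positivity of $p$ guarantee that $G$ is strictly increasing with smooth inverse $\Y$. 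The real analytic difficulty, deferred to the next section, is that for general weak conservative solutions $G$ may fail to be strictly increasing, so $\Y$ can have flat portions and the above manipulations must be justified by approximation.
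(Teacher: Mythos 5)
Your proof follows the paper's own derivation step for step: the same elliptic inversion producing the integral formulas for $p_t$ and $p_{xt}$, the same transport equation $G_t+uG_x=\tfrac23 u^3+S$ obtained from $G_t=2p_{xt}+2F_t$, the same chain-rule computations for $\Y$, $\U$, $\P$ using $\Y_\eta G_x=1$, and the same change of variables $y=\Y(t,\theta)$ with the identities $u^2+F_x=G_x-2(p-u^2)$ and $u_x(t,\Y)\Y_\eta=\U_\eta$; it is correct. Only your closing aside is slightly off: for weak conservative solutions $G$ remains strictly increasing but may have jumps (so $\Y$ has flat parts), and the paper justifies the system in that case via the Lagrangian reformulation of Section \ref{sec:general} rather than by approximation — this does not affect the proof of the smooth case.
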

%--------------------------

\vskip12pt

In the next section we will derive this system of equations also in the general case without assuming smoothness of the quantities involved, see  \eqref{eq:evolutionTHEOREM}.

Let us finish this section by checking some properties of the system \eqref{eq:evolutionPROP}, which will also hold in the case of weak conservative solutions as we will see in the next section. 

The quantity 
$\frac23 \U^3+\So$ is the velocity field of the three equations in \eqref{eq:evolutionPROP}. Instead of applying a characteristic method to estimate the solutions to this system, we will perform integration by parts by which the $\eta$-derivative of this quantity will naturally appear. 

\begin{lemma}
Given $(u,\mu)$ a smooth solution of \eqref{eq:CHbasic}, then the solution to \eqref{eq:evolutionPROP} satisfies 
$$
\left|(\frac23 \U^3+\So)_\eta\right| \leq \bigO(1).
$$
\end{lemma}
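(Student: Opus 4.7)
The plan is to reduce the estimate to a pointwise bound in the Eulerian variable $x$ by exploiting the fact that our velocity $v:=\tfrac{2}{3}\U^3+\So$ is nothing but the Eulerian velocity $V(x):=\tfrac{2}{3}u^3(x)+S(x)$ pulled back along $\Y$: since $\So(\eta)=S(\Y(\eta))$ and $\U(\eta)=u(\Y(\eta))$, we have $v(\eta)=V(\Y(\eta))$. Differentiating and using $\Y_\eta(\eta)G_x(\Y(\eta))=1$ (which holds since $\Y$ and $G$ are mutual inverses for fixed $t$) together with the identity $G_x=2p+u_x^2$ (which follows from $p-p_{xx}=\tfrac12 u^2+\tfrac12 F_x$ and $F_x=u^2+u_x^2$ in the smooth case), one gets
\[
v_\eta(\eta)=\frac{V_x(x)}{G_x(x)}\bigg|_{x=\Y(\eta)}.
\]
Therefore the lemma reduces to the pointwise estimate $|V_x(x)|\leq \bigO(1)\,G_x(x)$ on $\mathbb{R}$.

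For the local contribution $2u^2u_x$ of $V_x$, I would use AM--GM, $2u^2|u_x|\leq u_x^2+u^4$, combined with the pointwise inequality $u^2\leq 2p$ proved in \eqref{eq:2Ppluss} and the $H^1$-bound $\|u\|_\infty^2\leq\tfrac12\|u\|_{H^1}^2\leq \tfrac{C}{2}$: then $u^4\leq\bigO(1)\,u^2\leq \bigO(1)\cdot 2p$, so $|2u^2u_x|\leq u_x^2+\bigO(1)\cdot 2p\leq \bigO(1)\,G_x$.

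For the non-local piece $S_x$, I would start from
\[
|S_x(x)|\leq \int_{\mathbb{R}} e^{-|x-y|}\Bigl(\tfrac{2}{3}|u|^3+|u_xp_x|+2|pu|\Bigr)(y)\,dy
\]
and aim to bound the integrand pointwise by a multiple of the energy density $(u^2+u_x^2)(y)$, using $u^2\leq 2p$ to convert $|u|^3$ and $pu$ into $u^2$-terms, using $|p_x|\leq p$ and Young's inequality to handle $|u_xp_x|$, and spending the $L^\infty$ bounds for $u$ and $p$ (both $\bigO(1)$ in terms of $C$) against bare factors as needed. The closing identity is the weighted comparison
\[
\int_{\mathbb{R}} e^{-|x-y|}(u^2+u_x^2)(y)\,dy\leq \int_{\mathbb{R}} e^{-|x-y|}\,d\mu(y)=4p(x),
\]
which collapses every surviving convolution onto $p(x)$, giving $|S_x(x)|\leq \bigO(1)\,p(x)\leq \bigO(1)\,G_x(x)$.

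The main obstacle is to consistently match every constant arising from an $L^\infty$ bound with a surviving factor of $u^2$ or $u_x^2$ inside the integrand: any bound by a pure constant would produce a convolution $\int e^{-|x-y|}\,dy$ that is finite but uncontrolled by the possibly small quantity $G_x(x)$. Once this matching is arranged so that the displayed energy--potential identity applies, the local and non-local pieces combine to give $|V_x(x)|\leq \bigO(1)\,G_x(x)$, from which $|v_\eta(\eta)|\leq \bigO(1)$ follows directly, with the constant depending only on the total energy $C$.
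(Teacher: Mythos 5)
Your reduction to the Eulerian estimate $|V_x|\le\bigO(1)\,G_x$ with $G_x=2p+u_x^2$, and your treatment of the local term $2u^2u_x$, are fine and essentially match the paper (which bounds $\U^2\U_\eta$ via $2\vert\U\U_\eta\vert\le1$ and $\norm{u}_{L^\infty}\le\sqrt{C}$). The gap is in the nonlocal term $S_x$. You propose to dominate the integrand $\tfrac23|u|^3+|u_xp_x|+2|pu|$ pointwise by $\bigO(1)(u^2+u_x^2)(y)$ and then collapse the convolution onto $4p(x)$. That works for $|u|^3$, but it cannot work for the two terms containing $p$: $p$ is nonlocal, so $p(y)$ is in general not controlled pointwise by the local energy density $(u^2+u_x^2)(y)$ -- if $u$ decays faster than $e^{-|y|}$ (Gaussian or compactly supported profiles), then $p(y)\sim e^{-|y|}$ while $(u^2+u_x^2)(y)$ is exponentially smaller, so the ratios $p|u|/(u^2+u_x^2)$ and $|u_x|\,p/(u^2+u_x^2)$ are unbounded. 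The fallback of ``spending $L^\infty$ bounds against bare factors'' does not close either: after $|u_xp_x|\le\frac12u_x^2+\frac12p^2$ and $p^2\le\norm{p}_{L^\infty}p$ you would need $\int e^{-|x-y|}p(y)\,dy\le\bigO(1)\,p(x)$, which is false -- already for the generic tail behavior $p(y)=e^{-|y|}$ one has $\int e^{-|x-y|}e^{-|y|}\,dy=(1+|x|)e^{-|x|}$, so the ratio to $p(x)$ grows linearly; and a Cauchy--Schwarz treatment of $\int e^{-|x-y|}|u_x|\,dy$ only yields $\bigO(1)\sqrt{p(x)}$, which is not $\le\bigO(1)\,p(x)$ since $p\to0$ at infinity.

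What is actually needed, and what the paper supplies, is the weighted quadratic estimate $\int e^{-|x-y|}p^2(y)\,dy\le2\norm{p}_{L^\infty}p(x)\le Cp(x)$, see \eqref{rel:p2p}. It is proved by keeping the quadratic term: write $p^2=p\big(\tfrac12u^2+\tfrac12F_x\big)+pp_{xx}$ using $p-p_{xx}=\tfrac12u^2+\tfrac12F_x$, and integrate the $pp_{xx}$ contribution by parts against the kernel on each half-line; the resulting $-p_x^2$ term has the favorable sign, and rearranging gives the bound. The structural point your proposal misses is that $p^2$ (with its squared tails) is controlled after convolution by $p(x)$, whereas $p$ itself is not; so one must estimate $p|u|\le\frac12(p^2+u^2)$ and $|u_xp_x|\le\frac12(u_x^2+p^2)$ and invoke \eqref{rel:p2p}, rather than reduce everything to the local energy density. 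With that ingredient inserted, your scheme closes exactly as in the paper, giving $|S_x|\le\bigO(1)\,p\le\bigO(1)\,G_x$ and hence the lemma.
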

\begin{proof}
Since $G(t,\Y(t,\eta))=2p_x(t,\Y(t,\eta))+2F(t, \Y(t,\eta))=\eta$, we have due to the smoothness that
\begin{equation*}
G_x(t,\Y(t,\eta))\Y_\eta(t,\eta)=(2p-u^2+F_x)(t,\Y(t,\eta))\Y_\eta(t,\eta)=1.
\end{equation*}
Since $(2p-u^2)(t,x)\geq 0$ due to \eqref{eq:2Ppluss} and $(F_x-u^2)(t,x)\geq 0$ due to \eqref{deff}, we have that 
\begin{align*}
2\P\Y_\eta(t,\eta)&\leq (2p-u^2+F_x)(t,\Y)\Y_\eta(t,\eta) = 1, \\
\U^2\Y_\eta(t,\eta)&\leq 2\P\Y_\eta(t,\eta)\le 1, \\
2\vert \U\U_\eta(t,\eta)\vert&=2 \vert u u_x(t,\Y) \Y_\eta(t,\eta)\vert\le (u^2+u_x^2)(t,\Y) \Y_\eta(t,\eta)\\
&=  F_x(t,\Y)\Y_\eta(t,\eta)
\leq (2p-u^2+F_x)(t,\Y)\Y_\eta(t,\eta)= 1.
\end{align*}
We conclude that
\begin{equation}  \label{eq:key_est}
\P\Y_\eta(t,\eta)\leq \frac12 \, , \qquad \vert \U\U_\eta(t,\eta)\vert \leq \frac12,\quad \text{ and }\quad \U^2\Y_\eta(t,\eta)\leq 1.
\end{equation}
From the fact that the energy is conserved, it follows that $u(t)\in H^1(\Real)$ for all $t\geq0$ and 
$$
\| u(t,\cdot) \|_{L^\infty(\Real)}\leq \sqrt{C}.
$$
Therefore, the term $\U^2 \U_\eta$ is bounded by $\bigO(1)$.

Furthermore, $\So_\eta(t,\eta)$ is bounded. In particular, one can establish that 
\begin{equation*}
\So_\eta(t,\eta)\leq \bigO(1) \P\Y_\eta(t,\eta),
\end{equation*}
which is going to play a key role. Indeed, by definition one has 
\begin{equation*}
S(t,x)= \int_\Real e^{-\vert x-y\vert } (\frac23 u^3-u_xp_x-2pu)(t,y)dy,
\end{equation*}
and hence 
\begin{equation*}
S_x(t,x)=-\int_\Real \sign(x-y)e^{-\vert x-y\vert} (\frac23 u^3-u_xp_x-2pu)(t,y)dy.
\end{equation*}
Our aim is to show that 
\begin{equation*}
S_x(t,x)\leq \bigO(1) p(t,x).
\end{equation*}
First of all note that we have 
\begin{align*}
\left| \int_\Real \sign(x-y)e^{-\vert x-y\vert }\frac23 u^3(t,y)dy\right| 
& \leq \frac83 \norm{u(t,\dott)}_{L^\infty(\Real)}\frac14 \int_\Real e^{-\vert x-y\vert } u^2(t,y)dy\\
& \leq \bigO(1) p(t,x).
\end{align*}
Moreover, 
\begin{align*}
\left| \int_\Real \sign(x-y) e^{-\vert x-y\vert} (u_xp_x+2pu)(t,y)dy\right|
& \leq  \frac12 \int_\Real e^{-\vert x-y\vert } (2u^2+u_x^2+3p^2)(t,y)dy \\
& \leq 2 p(t,x)+ \frac32 \int_\Real e^{-\vert x-y\vert }  p^2(t,y)dy,
\end{align*}
since $\vert p_x(t,x)\vert \leq p(t,x)$.
Thus it remains to show that the last term can be bounded by a multiple of $p(t,x)$. Our reasoning will be based on integration by parts and the fact that 
\begin{equation*}
p(t,x)-p_{xx}(t,x)=\frac12 u^2(t,x)+\frac12 F_x(t,x).
\end{equation*}
Indeed, first direct computations yield
\begin{align*}
\int_\Real e^{-\vert x-y\vert } p^2(t,y)dy &= \int_\Real e^{-\vert x-y\vert } p(\frac12 u^2+\frac12 F_x)(t,y)dy + \int_\Real e^{-\vert x-y\vert } p p_{xx}(t,y)dy\\
& = I_1(t,x)+I_2(t,x).
\end{align*}
Since $p(t,x)\leq \frac12 \int_\Real F_x(t,y)dy\leq \frac12 C$, we have that 
\begin{equation*}
I_1(t,x)\leq \norm{p(t,\dott)}_{L^\infty(\Real)}\frac12\int_\Real e^{-\vert x-y\vert } (u^2+F_x)(t,y)dy\leq 2\norm{p(t,x)}_{L^\infty(\Real)}p(t,x).
\end{equation*}
As far as $I_2$ is concerned, we have 
\begin{align*}
\int_{-\infty}^x e^{y-x}pp_{xx}(t,y)dy& =pp_x(t,x)-\int_{-\infty}^x e^{y-x}(pp_x+p_x^2)(t,y)dy\\
& = pp_x(t,x)-\frac12 p^2(t,x)+\int_{-\infty}^x e^{y-x}(\frac12 p^2-p_x^2)(t,y)dy,
\end{align*}
and 
\begin{align*}
\int_{x}^\infty e^{x-y} pp_{xx}(t,y)dy & =-pp_x(t,x)-\int_x^\infty e^{x-y} (p_x^2-pp_x)(t,y)dy\\
& =-pp_x(t,x)-\frac12 p^2(t,x)+\int_x^\infty e^{x-y} (\frac12 p^2-p_x^2)(t,y)dy.
\end{align*}
Thus
\begin{align*}
I_2(t,x)&=-p^2(t,x)+\int_\Real e^{-\vert x-y\vert} (\frac12 p^2-p_x^2)(t,y) dy
\end{align*}
and subsequently
\begin{align*}
\int_\Real e^{-\vert x-y\vert} p^2(t,y)dy&\leq 2\norm{p(t,\dott)}_{L^\infty(\Real)}p(t,x)-p^2(t,x)+\int_\Real e^{-\vert x-y\vert} (\frac12 p^2-p_x^2)(t,y)dy\\
& \leq 2\norm{p(t,\dott)}_{L^\infty(\Real)}p(t,x) +\int_\Real e^{-\vert x-y\vert} (\frac12 p^2-p_x^2)(t,y)dy.
\end{align*}
Reshuffling the terms, we end up with 
\begin{align}\nn
\frac12 \int_\Real e^{-\vert x-y\vert } p^2(t,y)dy
& \leq \frac12 \int_\Real e^{-\vert x-y\vert } (p^2+p_x^2)(t,y)dy\\ \label{rel:p2p}
 &\leq 2\norm{p(t,\dott)}_{L^\infty(\Real)}p(t,x)\leq Cp(t,x),
\end{align}
showing the desired estimate.
\end{proof}

Next, we show that all properties seen in this section for smooth solutions remain true for weak conservative solutions to \eqref{eq:CHbasic}.

%%%%%%%%%%%%%%%%%%%%%%%
%%%%%%%%%%%%%%%%%%%%%%%
%%%%%%%%%%%%%%%%%%%%%%%

\section{Rigorous Transformation: weak conservative solutions} \label{sec:general}
To accommodate for the wave breaking of the solutions, it has turned out to be advantageous to rewrite the Camassa--Holm equation from the original Eulerian variables into Lagrangian variables, see \cite{BC, HR}. We will show that the system of equations obtained in Proposition \ref{prop:smooth} holds for the weak conservative solutions introduced in \cite{HR}. With this aim in mind let us start by summarizing their approach, which uses the different adopted convention followed in this work for cumulative distribution functions to be continuous from the left and with limit from the right (caglad) at all points $x\in\mathbb{R}$. Therefore, both $F(t,\cdot)$ and $G(t,\cdot)$ are nondecreasing and caglad functions.

Given some initial data $(u_0,\mu_0)$, the corresponding initial data in Lagrangian coordinates is then given by 
\begin{subequations}
\begin{align}
y(0,\xi)&=\sup\{ x\mid x+F_0(x)<\xi\},\\
H(0,\xi)&= \xi-y(0,\xi),\\
U(0,\xi)&= u(0,y(0,\xi)),
\end{align}
\end{subequations}
and $(y(t,\dott), U(t,\dott), H(t,\dott))$ are the solutions of 
\begin{subequations}\label{sys:Lagrange}
\begin{align}
y_t(t,\xi)&=U(t,\xi),\\
U_t(t,\xi)&= -Q(t,\xi),\\
H_t(t,\xi)&= (U^3-2PU)(t,\xi),
\end{align}
\end{subequations}
where 
\begin{subequations}
\begin{align}
P(t,\xi)&=\frac14 \int_\Real e^{-\vert y(t,\xi)-y(t,\sigma)\vert} (U^2y_\xi+H_\xi)(t,\sigma)d\sigma, \label{eq:Pest}\\
Q(t,\xi)&=-\frac14 \int_\Real \sign(\xi-\sigma) e^{-\vert y(t,\xi)-y(t,\sigma)\vert}(U^2y_\xi+H_\xi)(t,\sigma)d\sigma.
\end{align}
\end{subequations}
Moreover, the relation between $P$ and $Q$ is given by
\begin{subequations}  \label{eq:PQ_deriv}
\begin{align}
P_\xi(t,\xi)&= Q(t,\xi)y_\xi(t,\xi),\label{eq:PQ_deriv1} \\
Q_\xi(t,\xi)&= (P-\frac12 U^2)y_\xi(t,\xi)-\frac12 H_\xi(t,\xi).\label{auxq}
\end{align}
\end{subequations}
Introduce the function 
\begin{align*}
I(t,\xi)& =\int_{-\infty}^\xi (2P-U^2)y_\xi(t,\sigma) d\sigma =\int_{-\infty}^\xi (2Q_\xi+H_\xi)(t,\sigma)d\sigma\\ \nn
& = 2Q(t,\xi)+H(t,\xi),
\end{align*}
where we used that 
\begin{equation*}
\lim_{\xi\to -\infty} Q(t,\xi)=0=\lim_{\xi\to -\infty} H(t,\xi),
\end{equation*}
which follows from the definition of $H(t,\xi)$.
The relation between $H$  and $F$ is given by  
\begin{equation*}
 F(t, y(t,\xi))\leq H(t,\xi)\leq F(t,y(t,\xi)+).
\end{equation*}
Here we have introduced the common notation
\begin{equation} \label{eq:notation}
 \Phi(x\pm)=\lim_{\epsilon\downarrow 0} \Phi(x\pm\epsilon).
\end{equation}
Notice that $I(t,\xi)+ H(t,\xi)$ is the Lagrangian counterpart to the function $G(t,x)$. To convince oneself that this is really the case, one should take a quick look back first. The function $G(t,x)$ was defined as 
\begin{equation*}
G(t,x)=\int_{-\infty}^x (2p-u^2)(t,y) dy+ F(t,x).
\end{equation*}
Thus, whenever $F(t,x)$ has a jump of height $\alpha$ at a point $\bar x$, i.e., $\mu(t,\{\bar x\})=\alpha$, then also $G(t,x)$ has a jump of height $\alpha$ at $\bar x$, since the function $2p-u^2$ is continuous. Furthermore, the point $\bar x$ in Eulerian coordinates is mapped to some maximal interval $[\xi_l, \xi_r]$ in Lagrangian coordinates, on which $y_\xi(t,\xi)=0$ and $H_\xi(t,\xi)=1$ for a specific choice of a relabeling function. 
In fact, $H_\xi(t,\xi)>0$ for all $\xi \in [\xi_l, \xi_r]$ as proven in \cite[Theorem 4.2]{HR} and \cite[Definition 2.6]{HR}. 
Thus a close look at $Q(t,\xi)$ reveals that $Q(t,\xi)=Q(t,\xi_l)-\frac12 \int_{\xi_l}^\xi H_\xi(t,\sigma) d\sigma$ for all $\xi\in [\xi_l, \xi_r]$ and hence 
\begin{align*}
I(t,\xi)& = 2Q(t,\xi)+H(t,\xi)\\
& =2Q(t,\xi_l)-\int_{\xi_l}^\xi H_\xi(t,\sigma)d\sigma+H(t,\xi_l)+\int_{\xi_l}^\xi H_\xi(t,\sigma)d\sigma\\
& = 2Q(t,\xi_l)+H(t,\xi_l)=I(t,\xi_l).
\end{align*}
To put it short we have that $I(t,\xi)=I(t, \xi_l)$ for all $\xi\in [\xi_l, \xi_r]$. This allows us now to follow a similar approach as for the Hunter--Saxton (HS) equation in \cite{BHR,CGH}. Therefore introduce 
\begin{equation} \label{eq:J}
J(t,\xi)= I(t,\xi)+H(t,\xi)= \int_{-\infty}^\xi (2P-U^2)y_\xi(t,\sigma)d\sigma+H(t,\xi),
\end{equation}
and observe that for all solutions except the zero solution $J(t,\xi)$ is strictly increasing and continuous. In more detail, one has for all solutions except the zero solution that $P(t,\xi)\not=0$ for all $\xi \in \Real$, since $y_\xi+H_\xi>0$ almost everywhere due to \cite[Definition 2.6]{HR}. Moreover, if $H_\xi(t,\bar \xi)=0$ for some $\bar \xi$ one has that $U(t,\bar \xi)=0$ and $y_\xi(t,\bar \xi)\not =0$, since $y_\xi H_\xi (t,\bar\xi)=U^2y_\xi(t,\bar\xi)+U_\xi^2(t,\bar\xi)$ almost everywhere due to \cite[Definition 2.6]{HR}, and hence the $\xi$-derivative of $J(t,\xi)$ is strictly positive at the point $\bar\xi$.

\begin{lemma}\label{newcoor}
Given $\Y(t,\eta)= \sup \{ x\mid G(t,x)<\eta\}$, then
\begin{equation}\label{eq:Yy}
\Y(t,\eta)= y(t, l(t,\eta)),
\end{equation}
where we have introduced $l(t,\dott)\colon [0,2C]\to \Real$ by 
\begin{equation} \label{def:l}
l(t,\eta)=\sup\{\xi\mid J(t,\xi)<\eta\}.
\end{equation}
\end{lemma}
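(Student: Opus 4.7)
The plan is to establish the sandwich identity
$$
G(t, y(t,\xi)) \leq J(t,\xi) \leq G(t, y(t,\xi)+),
$$
which identifies $J$ with the composition $G\circ y$ up to the ambiguity at jump points of $G$. Once this is proven, the claim $\Y(t,\eta) = y(t, l(t,\eta))$ follows from the comparison of the two pseudo-inverses $\Y$ (of $G$) and $l$ (of $J$) together with the chain of compositions.

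First, I would perform the change of variables $z = y(t,\sigma)$ in the integral defining $J$ in \eqref{eq:J}. Using that $y(t,\cdot)$ is a continuous, non-decreasing Lipschitz function of $\xi$ coming from the Lagrangian framework of \cite{HR}, together with the identifications $P(t,\sigma)=p(t,y(t,\sigma))$ and $U(t,\sigma)=u(t,y(t,\sigma))$, one obtains
$$
\int_{-\infty}^\xi (2P-U^2)y_\xi(t,\sigma)\,d\sigma = \int_{-\infty}^{y(t,\xi)}(2p-u^2)(t,z)\,dz,
$$
where intervals on which $y_\xi=0$ contribute nothing since $(2p-u^2)(t,\cdot)$ is continuous. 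Combining this with the sandwich inequality $F(t,y(t,\xi))\leq H(t,\xi)\leq F(t,y(t,\xi)+)$ and the definition of $G$ yields the identity above.

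Second, since $J(t,\cdot)$ is continuous and non-decreasing (as $y$, $U$, $H$ are Lipschitz in $\xi$), the definition \eqref{def:l} gives $J(t,l(t,\eta))=\eta$ whenever $\eta$ lies in the range of $J$, with $l(t,\eta)$ chosen as the smallest such $\xi$. The identity above then gives
$$
G(t, y(t,l(t,\eta))) \leq \eta \leq G(t, y(t,l(t,\eta))+),
$$
which, together with the caglad character of $G$, is precisely the characterization satisfied by $\Y(t,\eta)$. If $y$ is strictly increasing at $l(t,\eta)$ then $y(t, l(t,\eta))$ is already the unique point with this property, and equality $\Y(t,\eta) = y(t,l(t,\eta))$ follows at once. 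Otherwise, if $y$ is constant equal to some $\bar x$ on a maximal plateau $[\xi_l,\xi_r]$ (which by the discussion preceding \eqref{eq:J} corresponds precisely to a jump of $G$ at $\bar x$ of the same height), then $J$ interpolates continuously from $G(t,\bar x)$ to $G(t,\bar x+)$ across $[\xi_l,\xi_r]$, so $l(t,\eta)\in[\xi_l,\xi_r]$ and hence $y(t,l(t,\eta))=\bar x$. On the Eulerian side, the caglad $G$ jumps from $G(t,\bar x)$ to $G(t,\bar x+)$ at $\bar x$, so $\Y(t,\eta)=\bar x$ as well.

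The main obstacle is the careful matching of the two pseudo-inverses across the plateaus of $y(t,\cdot)$: one must verify that the way a jump of $G$ in Eulerian variables is ``spread out'' into a continuous ramp of $J$ across the corresponding Lagrangian interval is compatible with the caglad sup-conventions used to define both $\Y$ and $l$. This is ultimately a bookkeeping argument reducing to the minimality encoded in the sup over $\{G<\eta\}$ and $\{J<\eta\}$, but the convention tracking needs to be done explicitly to ensure that no ambiguity remains at the endpoints of plateaus.
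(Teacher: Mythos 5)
Your proposal is correct and follows essentially the same route as the paper: the change of variables giving $I(t,\xi)=\int_{-\infty}^{y(t,\xi)}(2p-u^2)(t,z)\,dz$, the sandwich $F(t,y(t,\xi))\le H(t,\xi)\le F(t,y(t,\xi)+)$, hence $G(t,y(t,\xi))\le J(t,\xi)\le G(t,y(t,\xi)+)$, evaluated at $\xi=l(t,\eta)$ with $J(t,l(t,\eta))=\eta$, and the conclusion via monotonicity and surjectivity of $y(t,\cdot)$. The only difference is that you spell out the plateau/jump case analysis which the paper compresses into one sentence; this is harmless.
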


\begin{proof}
For each time $t$ we have 
\begin{equation*}
y(t,\xi)=\sup \{ x\mid x+F(t,x)< y(t,\xi)+H(t,\xi)\},
\end{equation*}
which implies that 
\begin{equation*}
y(t,\xi)+F(t,y(t,\xi))\leq y(t,\xi)+H(t,\xi)\leq y(t,\xi)+F(t,y(t,\xi)+).
\end{equation*}
Moreover, one has that $G(t,x)-F(t,x)$ is continuous, and, in particular,
\begin{align*}
G(t,y(t,\xi))-F(t,y(t,\xi))& =\int_{-\infty}^{y(t,\xi)} (2p-u^2)(t,y)dy\\ \nn
& = \int_{-\infty}^\xi (2P-U^2)y_\xi(t,\sigma)d\sigma=I(t,\xi).
\end{align*}
Thus one has 
\begin{equation*}
y(t,\xi)+G(t,y(t,\xi))\leq y(t,\xi)+J(t,\xi)\leq y(t,\xi)+ G(t,y(t,\xi)+).
\end{equation*}
Subtracting $y(t,\xi)$ in the above inequality, we end up with 
\begin{equation*}
G(t, y(t,\xi))\leq J(t,\xi)\leq G(t, y(t,\xi)+) \quad \text{ for all } \xi \in \Real.
\end{equation*}
 Comparing the last equation and \eqref{def:l}, we have 
 \begin{equation*}
 G(t, y(t, l(t,\eta)))\leq J(t, l(t,\eta))=\eta\leq G(t, y(t, l(t,\eta))+).
 \end{equation*}
 Since $y(t,\dott)$ is surjective and non-decreasing, we end up with 
 \begin{equation}\label{eq:consistency}
 \Y(t,\eta)=\sup\{x\mid G(t,x)<\eta\}=y(t,l(t,\eta)),
 \end{equation}
thereby proving \eqref{eq:Yy}.
\end{proof}

In the next step we want to establish rigorously the corresponding system of differential equations. Hence we first have to establish that the function $l(t,\eta)$ is differentiable, both with respect to time and space.

\subsection{The differentiability of $Q(t,\xi)$}\label{diffq}
The differentiability of $Q(t,\xi)$ with respect to $\xi$ has been proven in \cite{HR} and  $Q_\xi(t,\xi)$ is given by \eqref{auxq}.
Thus it is left to establish the differentiability with respect to time of $Q(t,\xi)$. To be more precise, we are going to establish the Lipschitz continuity of $Q(t,\xi)$ with respect to time. 

Let us recall that since $U(t,\dott)\in H^1(\Real)$,  one has in particular that $U(t,\dott)\in L^\infty(\Real)$. Moreover, direct calculations yield 
\begin{align}\label{LI1}
U^2(t,\xi)&= U^2(t,\xi)-U^2(t,-\infty)=\int_{-\infty}^\xi 2UU_\xi(t,\sigma)d\sigma\\ \nn
& \leq \int_{-\infty}^\xi H_\xi(t,\sigma)d\sigma\leq H(t,\infty)=C,
\end{align}
and we end up with 
\begin{equation*}
\norm{U(t,\dott)}_{L^\infty(\Real)}\leq \sqrt{C}\quad \text{ for all } t\in\Real.
\end{equation*}

Here we used \textit{Xavier's relation} from \cite{HR} which asserts that 
\begin{equation}\label{eq:xavier}
U^2y_\xi^2(t,\xi)+U_\xi^2(t,\xi)=y_\xi H_\xi(t,\xi),
\end{equation} 
and hence 
\begin{subequations}\label{eq:stand:est}
\begin{align}
\vert U_\xi(t,\xi)\vert &\leq \sqrt{y_\xi H_\xi}(t,\xi)\leq \frac12 (y_\xi+H_\xi)(t,\xi),\label{stand:est2} \\
\vert Uy_\xi(t,\xi)\vert &\leq \sqrt{y_\xi H_\xi}(t,\xi)\leq \frac12 (y_\xi+H_\xi)(t,\xi), \label{stand:est3} \\
\vert UU_\xi(t,\xi) \vert &\le \frac12 H_\xi(t,\xi).\label{stand:est4}
\end{align}
\end{subequations}

Similar considerations apply for $P(t,\xi)$. Indeed one has
\begin{align}\label{LI2}
0\leq P(t,\xi)&=\frac14 \int_\Real e^{-\vert y(t,\sigma)-y(t,\xi)\vert} (U^2y_\xi+H_\xi)(t,\sigma)d\sigma\\ \nn
& \leq \frac14 \int_\Real 2H_\xi(t,\sigma)d\sigma=\frac12 C.
\end{align}
Since $\vert Q(t,\xi)\vert \leq P(t,\xi)$, we end up with 
\begin{equation}\label{LI23}
\norm{Q(t,\dott)}_{L^\infty(\Real)}\leq \frac12 C, \quad \norm{P(t,\dott)}_{L^\infty(\Real)} \leq \frac12 C \quad \text{ for all } t\in \Real.
\end{equation}

Direct calculations, using \eqref{eq:xavier} and \eqref{eq:stand:est}, yield
\begin{align*}
-\bigO(1)(y_\xi(t,\xi)+H_\xi(t,\xi))&\leq (y_\xi(t,\xi)+H_\xi(t,\xi))_t\\
& = U_\xi(t,\xi)+3U^2U_\xi(t,\xi)-2QUy_\xi(t,\xi)-2PU_\xi(t,\xi)\\ 
&\leq \bigO(1)(y_\xi(t,\xi)+H_\xi(t,\xi)),
\end{align*}
since $U(t,\xi)$, $P(t,\xi)$, and $Q(t,\xi)$ are uniformly bounded with respect to both space and time due to \eqref{LI1}, \eqref{LI2}, and \eqref{LI23}.
Thus, we have for $s<t$ that
\begin{equation*}
(y_\xi(s,\xi)+H_\xi(s,\xi))e^{-\bigO(1)(t-s)}\leq y_\xi(t,\xi)+H_\xi(t,\xi)\leq (y_\xi(s,\xi)+H_\xi(s,\xi))e^{\bigO(1)(t-s)}
\end{equation*}
or equivalently 
\begin{align}\notag
(y_\xi(t,\xi)+H_\xi(t,\xi))e^{-\bigO(1)(t-s)}&\leq (y_\xi(s,\xi)+H_\xi(s,\xi))\\
&\leq (y_\xi(t,\xi)+H_\xi(t,\xi))e^{\bigO(1)(t-s)}.\label{stand:est}
\end{align} 

Recall that 
\begin{align*}
Q(t,\xi)&=-\frac14 \int_{-\infty}^\xi e^{y(t,\sigma)-y(t,\xi)} (U^2y_\xi+H_\xi)(t,\sigma) d\sigma\\
& \quad + \frac14 \int_\xi^\infty e^{y(t,\xi)-y(t,\sigma)} (U^2y_\xi+H_\xi)(t,\sigma)d\sigma\\
& =: \frac14Q_1(t,\xi)+\frac14Q_2(t,\xi).
\end{align*}
We are only establishing the Lipschitz continuity with respect to time for $Q_1(t,\xi)$, since the argument for $Q_2(t,\xi)$ follows the same lines. Let $t<\hat t$.  Then one has 
\begin{align*}\nn
\vert Q_1(t,\xi)-Q_1(\hat t,\xi)\vert &\leq \int_{-\infty}^\xi \vert e^{y(t,\sigma)-y(t,\xi)}-e^{y(\hat t, \sigma)-y(\hat t, \xi)}\vert   (U^2y_\xi+H_\xi)(t,\sigma)d\sigma\\ \nn
& \quad + \int_{-\infty}^\xi e^{y(\hat t,\sigma)-y(\hat t, \xi)} \vert (U^2y_\xi+H_\xi)(t,\sigma)-(U^2y_\xi+H_\xi)(\hat t, \sigma)\vert d\sigma\\ 
& =: I_1+I_2.
\end{align*}

As far as $I_1$ is concerned, we observe, using Lemma\ref{lemma:enkel} (i), that
\begin{align*}
\vert e^{y(t,\sigma)-y(t,\xi)}-e^{y(\hat t, \sigma)-y(\hat t, \xi)}\vert & \leq \vert y(t,\sigma)-y(\hat t, \sigma)\vert +\vert y(t, \xi)-y(\hat t, \xi)\vert \\
& \leq \int_t^{\hat t} \big(\vert U(s,\sigma)\vert +\vert U(s,\xi)\vert\big) ds \leq \bigO(1) \vert t-\hat t\vert,
\end{align*}
where we used that both $e^{y(\hat t,\sigma)-y(\hat t,\xi)}$ and $e^{y(t,\sigma)-y(t,\xi)}$ are bounded from above by one, and that $U$ can be uniformly bounded both with respect to space and time due to \eqref{LI1}.
Thus we end up with 
\begin{align*}
I_1& = \int_{-\infty}^\xi \vert e^{y(t,\sigma)-y(t,\xi)}-e^{y(\hat t, \sigma)-y(\hat t, \xi)}\vert   (U^2y_\xi+H_\xi)(t,\sigma)d\sigma\\ \nn
& \leq \bigO(1)\vert t-\hat t\vert \int_{-\infty}^\xi 2H_\xi(t,\sigma)d\sigma \leq \bigO(1) \vert t-\hat t\vert,
\end{align*}
where we used that $U^2y_\xi\leq H_\xi$. 

Thus it remains to establish a similar estimate for $I_2$. The idea is to use a similar strategy combined with \eqref{stand:est2} and \eqref{stand:est}. We have 
\begin{align*}
I_2& =  \int_{-\infty}^\xi e^{y(\hat t,\sigma)-y(\hat t, \xi)} \vert (U^2y_\xi+H_\xi)(t,\sigma)-(U^2y_\xi+H_\xi)(\hat t, \sigma)\vert d\sigma\\
& \leq \int_{-\infty}^\xi e^{y(\hat t, \sigma)-y(\hat t, \xi)}\int_t^{\hat t } \vert 4U^2U_\xi-4QUy_\xi-2PU_\xi\vert (s, \sigma) ds d\sigma\\
& \leq \int_{-\infty}^\xi e^{y(\hat t, \sigma)-y(\hat t, \xi)} \bigO(1)\int_{t}^{\hat t} (y_\xi+H_\xi) (s, \sigma)ds d\sigma\\
& \leq \bigO(1) \int_{-\infty} ^\xi e^{y(\hat t, \sigma)-y(\hat t, \xi)} \int_t^{\hat t } e^{\bigO(1)(\hat t-s)} (y_\xi+H_\xi) (\hat t, \sigma) ds d\sigma\\
& \leq \bigO(1)\int_{-\infty}^\xi e^{y(\hat t, \sigma)-y(\hat t, \xi)}(y_\xi+H_\xi)(\hat t, \sigma) \int_t^{\hat t} e^{\bigO(1)(\hat t-s)} ds d\sigma\\
&\leq \bigO(1) e^{\bigO(1) (\hat t-t)}\vert\hat t-t\vert \leq \bigO(1)\vert\hat t-t\vert,
\end{align*}
under the assumption that $\vert \hat t-t\vert\leq 1$ (or in general bounded). Thus we established that 
\begin{equation*}
\vert Q_1(t,\xi)-Q_1(\hat t, \xi)\vert \leq \bigO(1)\vert \hat t-t\vert
\end{equation*}
for all pairs $t$, $\hat t$, such that $\vert \hat t-t\vert \leq 1$. Furthermore, one has 
\begin{equation*}
\vert Q(t,\xi)-Q(\hat t,\xi)\vert \leq \bigO(1)\vert \hat t-t\vert 
\end{equation*}
for all pairs $t$, $\hat t$, such that $\vert \hat t-t\vert \leq 1$. Thus for fixed $\xi$, the function $Q$ is locally Lipschitz with respect to time, and hence differentiable almost everywhere by Rademacher's theorem.

A close look at the above argument reveals that we cannot only differentiate $Q(t,\xi)$ with respect to time, but also that we can apply the dominated convergence theorem, which yields
\begin{align}\label{tder:Q}
Q_t(t,\xi)&=-\frac14 \int_\Real \frac{d}{dt} \left(\sign(\xi-\sigma)e^{-\vert y(t,\xi)-y(t,\sigma)\vert} (U^2y_\xi+H_\xi)(t,\sigma) \right)d\sigma\\ \nn
&=-\frac23 U^3(t,\xi)+2PU(t,\xi)\\ \nn
& \quad +\frac12 \int_\Real e^{-\vert y(t,\xi)-y(t,\sigma)\vert} (\frac23 U^3y_\xi-2PUy_\xi-QU_\xi)(t,\sigma) d\sigma
\end{align}
after some integrations by parts. 

Next we want to show that $Q_t(t,\xi)$ can be uniformly bounded by a constant $\bigO(1)$. 
Observe that  \eqref{LI1} and \eqref{LI2} imply
\begin{align}\label{LI3}
 \left| \int_\Real \right. & \left. e^{-\vert y(t,\sigma)-y(t,\xi)\vert} (\frac23 U^3y_\xi-2PUy_\xi-QUy_\xi)(t,\sigma) d\sigma\right| \\ \nn
 & \leq \bigO(1) \left(\int_\Real e^{-\vert y(t,\sigma)-y(t,\xi)\vert } y_\xi(t,\sigma)d\sigma\right) \leq \bigO(1).  
 \end{align}
Thus recalling \eqref{tder:Q} and combining \eqref{LI1}, \eqref{LI2}, and \eqref{LI3}, we finally end up with
\begin{equation}\label{tder:est:Q}
\norm{Q_t(t,\dott)}_{L^\infty(\Real)}\leq \bigO(1) \quad \text{ for all } t\in \Real.
\end{equation}
This completes the argument regarding the differentiability of $Q$. Notice that a direct application of the dominated convergence theorem using formula \eqref{tder:Q} shows that $Q_t$ is a continuous function in time, under the assumption that $P_t$ exists and satisfies an estimate similar to \eqref{tder:est:Q}.

Next we focus on the differentiability with respect to time of $P$. 
An analogous argument to the one for $Q$ leads to the differentiability in time of $P$. Let us show that the derivative of $P$ with respect to time exists by applying the dominated convergence theorem to 
\begin{equation}\label{quot}
\lim_{s\to t} \frac{P(t,\xi)-P(s,\xi)}{t-s},
\end{equation}
where $\xi$ is chosen such that $y$ is differentiable with respect to time.
Since 
\begin{equation*}
\frac{d}{dt} \big(e^{-\vert y(t,\xi)-y(t,\sigma)\vert } (U^2y_\xi+H_\xi)(t,\sigma)\big)
\end{equation*}
exists almost everywhere, it is left to show that we can find a function in $L^1(\Real)$, which 
bounds the integrand of \eqref{quot}  uniformly in $s$.
Therefore observe that we can write, using \eqref{sys:Lagrange},  for $s<t$, that
\begin{align*}
P(t,\xi)-P(s,\xi)& = \frac14 \int_{-\infty}^\xi \int_s^t e^{y(l,\sigma)-y(l,\xi)} (U(l,\sigma)-U(l,\xi)) (U^2y_\xi+H_\xi)(l,\sigma)dl d\sigma\\
& \quad + \frac14 \int_{\xi}^{\infty} \int_s^t e^{y(l,\xi)-y(l,\sigma)} (U(l,\xi)-U(l,\sigma)) (U^2y_\xi+H_\xi)(l,\sigma)dl d\sigma\\
& \quad +\frac14 \int_\Real \int_s^t e^{-\vert y(l,\sigma)-y(l,\xi)\vert} (4U^2U_\xi-4UQy_\xi-2PU_\xi)(l,\sigma)dl d\sigma\\
& = I_1+I_2+I_3.
\end{align*}
As far as the first term is concerned, observe that  \eqref{stand:est} implies that 
\begin{align*}
& \left| \int_s^t e^{y(l,\sigma)-y(l,\xi)} (U(l,\sigma)-U(l,\xi))(U^2y_\xi+H_\xi)(l,\sigma)dl \right| 
\\
& \qquad\qquad \qquad\qquad   \leq \bigO(1) \int_s^t e^{y(l,\sigma)-y(l,\xi)}|(U^2y_\xi+H_\xi)(l,\sigma)|dl  \\
& \qquad\qquad \qquad \qquad  \leq \bigO(1)e^{\bigO(1)\vert t-s\vert } e^{y(t,\sigma)-y(t,\xi)} (y_\xi+H_\xi)(t,\sigma)\vert t-s\vert \\
& \qquad\qquad \qquad \qquad  \leq \bigO(1)  e^{y(t,\sigma)-y(t,\xi)} (y_\xi+H_\xi)(t,\sigma)\vert t-s\vert,  
\end{align*} 
if we assume that $\vert t-s\vert \leq 1$ or any other fixed constant. Moreover, the function in the last line belongs to $L^1(\Real)$.
The remaining two terms can be bounded by a function, which is of the same form and belongs to $L^1(\Real)$ uniformly in $s$. 
Thus all assumptions of the dominated convergence theorem are fulfilled, and we end up with 
\begin{align}\label{tder:P}
P_t(t,\xi)& = UQ(t,\xi)+\frac14 \int_\Real \sign(\xi-\sigma) e^{-\vert y(t,\xi)-y(t,\sigma)\vert } U(U^2y_\xi+H_\xi)(t,\sigma)d\sigma\\ \nn
& \quad + \frac14 \int_\Real e^{-\vert y(t,\xi)-y(t,\sigma)\vert} (4U^2U_\xi-4UQy_\xi-2PU_\xi)(t,\sigma) d\sigma\\ \nn
& = UQ(t,\xi)+\frac12\int_\Real \sign(\xi-\sigma) e^{-\vert y(t,\xi)-y(t,\sigma)\vert}U(\frac13 U^2y_\xi+Q_\xi+H_\xi)(t,\sigma)d\sigma\\ \nn
& \quad -\frac12 \int_\Real e^{-\vert y(t,\xi)-y(t,\sigma)\vert}UQy_\xi(t,\sigma) d\sigma,
\end{align} 
where we used integration by parts together with \eqref{auxq} in the last step.
Moreover, we have that 
\begin{align*}\nn
\vert P_t(t,\xi)\vert& \le \vert UQ(t,\xi)\vert+\frac14 \int_\Real e^{-\vert y(t,\xi)-y(t,\sigma)\vert } \vert U\vert (U^2y_\xi+H_\xi)(t,\sigma)d\sigma\\  \nn
& \quad + \frac14 \int_\Real e^{-\vert y(t,\xi)-y(t,\sigma)\vert} (4U^2\vert U_\xi\vert+4\vert UQ\vert y_\xi+2P\vert U_\xi\vert)(t,\sigma) d\sigma\\
&\leq  \norm{U(t,\dott)}_{L^\infty} P(t,\xi)+\norm{U(t,\dott)}_{L^\infty} \frac14\int_\Real e^{-\vert y(t,\xi)-y(t,\sigma)\vert} (U^2y_\xi+H_\xi)(t,\sigma)d\sigma\\\ \nn
& \quad + \frac14 \int_\Real e^{-\vert y(t,\xi)-y(t,\sigma)\vert } (2U^4y_\xi+3H_\xi+ 2U^2y_\xi+3P^2y_\xi)(t,\sigma) d\sigma\\ \nn
& \leq  \bigO(1)P(t,\xi)+\frac34 \int_\Real e^{-\vert y(t,\xi)-y(t,\sigma)\vert } P^2y_\xi(t,\sigma)d\sigma\\ \nn
& \quad + \frac14 \int_\Real e^{-\vert y(t,\xi)-y(t,\sigma)\vert } (2\norm{U(t,\dott)}_{L^\infty}^2U^2y_\xi+3H_\xi+ 2U^2y_\xi)(t,\sigma) d\sigma\\ \nn
& \leq \bigO(1)P(t,\xi) +\frac34 \int_\Real e^{-\vert y(t,\xi)-y(t,\sigma)\vert } P^2y_\xi(t,\sigma)d\sigma\\ \nn
& \leq \bigO(1) \Big(P(t,\xi) + \int_\Real e^{-\vert y(t,\xi)-y(t,\sigma)\vert } P^2y_\xi(t,\sigma)d\sigma\Big)
\end{align*}
due to \eqref{LI1}. It remains to show that
\begin{equation*}
\int_\Real e^{-\vert y(t,\xi)-y(t,\sigma)\vert } P^2y_\xi(t,\sigma)d\eta\leq \bigO(1) P(t,\xi).
\end{equation*}
The proof follows the same lines as the one of \eqref{rel:p2p} in Eulerian coordinates. 

%%%%%%%%%%%%%%%%%%%%%%%%%%%%%%%%%%%%%%

\subsection{The differentiability of $l(t,\eta)$}\label{diffl}
Finally, we can start thinking about the differentiability with respect to space and time of $l(t,\eta)$. Therefore, recall that the function $l(t,\eta)$ is defined as 
\begin{equation*}
l(t,\eta)=\sup\{\xi\in\Real \mid J(t,\xi)<\eta\}.
\end{equation*}

{\bf Differentiability with respect to $\eta$:}  For every $t\in\Real$, the function $J(t,\dott)=2Q(t,\dott)+2H(t,\dott)$ is strictly increasing and continuous, and hence differentiable almost everywhere with respect to $\xi$. Its inverse $J^{-1}(t,\dott) \colon [0,2C]\to \Real$ is therefore also strictly increasing and continuous, and hence differentiable almost everywhere. 
Since we have by definition that $J(t,l(t,\eta))=\eta$ for all $\eta$, it follows immediately that $J(t,l(t,\dott))$ is Lipschitz continuous with Lipschitz constant one and hence differentiable almost everywhere with respect to $\eta$. Since 
\begin{equation*}
J^{-1}(t, J(t,l(t,\eta)))=l(t,\eta) \quad \text{ for all } \eta,
\end{equation*}
we can finally conclude that $l(t,\eta)$ is differentiable almost everywhere with respect to $\eta$. In particular, one has,   
cf.~\eqref{eq:J}, that 
\begin{equation}\label{vik:estprep}
J_\xi(t,l(t,\eta))l_\eta(t,\eta)=2\P\Y_\eta(t,\eta)-\U^2\Y_\eta(t,\eta)+\Henergy_\eta(t,\eta)=1,
\end{equation}
with 
$$\Henergy(t,\eta)=H(t,l(t,\eta)),\quad  \P(t,\eta)=P(t,l(t,\eta)),\quad \text{ and }\quad \U(t,\eta)=U(t,l(t,\eta)).$$ These identities follow from equality \eqref{eq:consistency} since $\P(t,\eta)=p(t,\Y(t,\eta))$ 
and $P(t,\xi)=p(t,y(t,\xi))$, and analogously $\U(t,\eta)=u(t,\Y(t,\eta))$ 
and $U(t,\xi)=u(t,y(t,\xi))$.

Following the same argument as in the smooth case, see \eqref{eq:key_est}, we end up with 
\begin{equation}\label{vik:est}
0\leq 2\P\Y_\eta(t,\eta)\leq 1,\quad 2\vert \U\U_\eta(t,\eta)\vert \leq 1, \quad \text{ and }\quad 0\leq \U^2\Y_\eta (t,\eta)\leq 1.
\end{equation}

{\bf Differentiability with respect to $t$:}  As far as the differentiability of $l(t,\eta)$ with respect to time is concerned, the argument is a bit more involved. First of all note that for any time $t$ we have $J(t,l(t,\eta))=\eta$ and hence 
\begin{equation*}
J(t,l(t,\eta))=J(\tilde t, l(\tilde t,\eta))\quad \text{ for all } \eta \text{ and } \tilde t.
\end{equation*}
In particular, we can  conclude that 
\begin{align*}
J(t,l(t,\eta))-J(t, l(\tilde t, \eta))&=J(\tilde t, l(\tilde t, \eta))-J(t, l(\tilde t, \eta))\\ \nn
& =\int_{t}^{\tilde t} J_t(s, l(\tilde t, \eta))ds\\ \nn
& = 2\int_t^{\tilde t} \big(Q_t(s, l(\tilde t, \eta))+H_t(s,l(\tilde t, \eta))\big) ds.
\end{align*}
We already showed that $Q_t(t,\xi)$ exists and is given by \eqref{tder:Q}. However, a closer look reveals that $Q_t(t,\xi)$ is uniformly bounded both with respect to space and time and $Q_t(t,\xi)$ is continuous with respect to space for fixed $t$. The same holds for 
\begin{equation*}
H_t(t,\xi)=U^3(t,\xi)-2PU(t,\xi).
\end{equation*}
Thus we have that 
\begin{equation}\label{want:lim}
\lim_{\tilde t\to t} \frac{J(t,l(t,\eta))-J(t,l(\tilde t, \eta))}{t-\tilde t}=\lim_{\tilde t \to t} 2\int_{t}^{\tilde t} \frac{Q_t(s, l(\tilde t, \eta))+H_t(s, l(\tilde t, \eta))}{t-\tilde t}ds
\end{equation}
from a formal point of view. Thus it is left to establish that the limit exists and is finite. 

Since $J_t(t,\xi)=2Q_t(t,\xi) + 2H_t(t,\xi)$ and both $Q_t(t,\xi)$ and $H_t(t,\xi)$ can be bounded uniformly both in space and time by a constant only dependent on the total energy $C$, it follows that there exists a constant $\bigO(1)$ such that 
\begin{equation*}
\vert J_t(t,\xi)\vert \leq \bigO(1) \quad \text{ for all } t \text{ and } \xi.
\end{equation*}
Thus it follows that for all $s\in [t-\vert t-\tilde t\vert, t+ \vert t-\tilde t \vert]$, we have 
\begin{equation*}
J(s, \xi)\in [J(t,\xi)-\bigO(1)\vert \tilde t-t\vert , J(t,\xi)+\bigO(1)\vert \tilde t-t\vert ].
\end{equation*}
Accordingly we can conclude, since $J(t, l(t,\eta))=\eta$ for all $\eta$, that 
\begin{equation*}
J(s, l(t,\eta))\in [\eta-\bigO(1)\vert \tilde t-t\vert , \eta+ \bigO(1) \vert \tilde t-t\vert],
\end{equation*}
and hence 
\begin{equation}\label{L.time}
l(t,\eta)=l(s, \eta(s)) \quad \text{ for some } \eta(s)\in[\eta-\bigO(1)\vert \tilde t-t\vert , \eta+\bigO(1)\vert \tilde t-t\vert ],
\end{equation}
and, in particular, 
\begin{equation*}
\vert \eta-\eta(s)\vert \leq \bigO(1)\vert \tilde t-t\vert \text{ for all } s\in [t-\vert t-\tilde t\vert , t+\vert t-\tilde t\vert ].
\end{equation*}
Similar considerations yield that for all $s\in [\tilde t -\vert t-\tilde t\vert , \tilde t+\vert t-\tilde t \vert]$, we can find $\tilde \eta(s)$ such that 
\begin{equation*}
l(\tilde t, \eta)=l(s, \tilde \eta(s)) \quad \text{ and }\quad \vert \eta-\tilde \eta(s)\vert \leq \bigO(1)\vert t-\tilde t\vert.
\end{equation*}

Let us return to the integral we are actually interested in. Namely
\begin{align*}
\int_{t}^{\tilde t} J_t(s, l(\tilde t, \eta))ds
&=2\int_t^{\tilde t}  \big(Q_t(s, l(\tilde t, \eta))+H_t(s, l(\tilde t, \eta))\big)ds  \\
 &\qquad = 2\int_t^{\tilde t} \big(Q_t(s , l(t,\eta))+H_t(s, l(t,\eta))\big)ds\\
& \qquad\quad +2 \int_t^{\tilde t}\Big( (Q_t(s, l(\tilde t, \eta))+H_t(s, l(\tilde t, \eta)))\\
& \qquad\qquad\qquad -(Q_t(s, l(t, \eta))+H_t(s, l(t,\eta))) \Big)ds\\
&\qquad  = \tilde I_1+\tilde I_2.
\end{align*}
What we are hoping for, is that the second term $\tilde I_2$ is of order $o(\vert t-\tilde t\vert)$, and hence plays no role when computing \eqref{want:lim}. Therefore observe that 
\begin{align*}
H_{t\xi}(t,\xi)&= 3U^2U_\xi(t,\xi)-2QUy_\xi(t,\xi)-2PU_\xi(t,\xi),\\[2mm]
Q_{t\xi}(t,\xi)&= -2U^2U_\xi(t,\xi)+2QUy_\xi(t,\xi)+2PU_\xi(t,\xi)\\
& \quad -\frac12 \int_\Real \sign(\xi-\sigma) e^{-\vert y(t,\xi)-y(t,\sigma)\vert} 
(\frac23 U^3y_\xi-2PUy_\xi-QU_\xi)(t,\sigma)d\sigma y_\xi(t,\xi).
\end{align*}
Notice that the second derivative $Q_{t\xi}$ is well-defined by a dominated convergence argument as before for $Q_t$. Recalling \eqref{L.time}, we have 
\begin{align*}
\tilde I_2=&\int_{t}^{\tilde t} J_t(s, l(\tilde t, \eta))-J_t(s, l(t, \eta))ds\\
 &=2\int_t^{\tilde t} \Big((Q_t(s, l(\tilde t, \eta))+H_t(s, l(\tilde t, \eta)))-(Q_t(s, l(t,\eta))+H_t(s, l(t,\eta)))\Big)ds\\
& = 2\int_t^{\tilde t} \int^{l(\tilde t, \eta)}_{l(t, \eta)} (Q_{t,\xi}+H_{t,\xi})(s, z) dz ds\\
& = 2\int_t^{\tilde t} \int_{l(t, \eta)}^{l(\tilde t, \eta)} \Big( U^2U_\xi(s,z)\\
& \quad -\frac12 \int_\Real \sign(z-\sigma) e^{-\vert y(s, z)-y(s,\sigma)\vert} (\frac23 U^3y_\xi-2PUy_\xi-QU_\xi)(s,\sigma) d\sigma y_\xi(s,z)\Big) dz ds \\
& = 2\int_t^{\tilde t} \int_{l(s, \eta(s))}^{l(s, \tilde \eta(s))} \Big( U^2U_\xi(s,z)\\
& \quad -\frac12 \int_\Real \sign(z-\sigma) e^{-\vert y(s, z)-y(s,\sigma)\vert} (\frac23 U^3y_\xi-2PUy_\xi-QU_\xi)(s,\sigma) d\sigma y_\xi(s,z)\Big) dz ds \\
& = 2\int_t^{\tilde t} \int_{\eta(s)}^{\tilde \eta(s)} \U^2\U_\eta(s,m)dm ds -\int_t^{\tilde t} \int_{\eta(s)}^{\tilde\eta(s)} \int_0^{2C} \sign(m-n)e^{-\vert \Y(s,m)-\Y(s,n)\vert}\\
&\qquad\qquad\qquad\qquad\qquad\qquad\qquad\times(\frac23 \U^3\Y_\eta-2\P\U\Y_\eta-\Q\U_\eta)(s,n)dn\Y_\eta(s,m)dmds
\end{align*}
where $\eta$, $\eta(s)$, and $\tilde \eta(s)$ satisfy
\begin{equation*}
l(t,\eta)=l(s,\eta(s)) \quad \text{ and } \quad l(\tilde t, \eta)=l(s, \tilde \eta(s)),
\end{equation*}
and 
\begin{equation}\label{vik:est2}
\vert \eta-\eta(s)\vert\leq \bigO(1)\vert t-\tilde t\vert \quad \text{ and }\quad \vert \eta-\tilde \eta(s)\vert \leq \bigO(1) \vert t-\tilde t\vert.
\end{equation}
As far as the first term is concerned, we can apply \eqref{LI1}, \eqref{vik:est} and \eqref{vik:est2} as follows
\begin{multline}\label{vik:est4}
\left| 2\int_t^{\tilde t} \int_{\eta(s)}^{\tilde \eta(s)} \U^2\U_\eta(s,m)dmds\right| \\
 \leq \int_t^{\tilde t} \norm{\U(s,\dott)}_{L^\infty([0,2C])} \vert \eta(s)-\tilde\eta(s)\vert ds \leq \bigO(1) \vert t-\tilde t\vert ^2.
\end{multline}
As far as the second and last term is concerned, we want to apply the Cauchy--Schwarz inequality to the integral term at first. Indeed,
\begin{align*}
&\left| \int_{\eta(s)}^{\tilde\eta(s)} \int_0^{2C} \sign(m-n)e^{-\vert \Y(s,m)-\Y(s,n)\vert}\right.\\
&\qquad \qquad\qquad\qquad \left.\times(\frac23 \U^3\Y_\eta-2\P\U\Y_\eta-\Q\U_\eta)(s,n)dn\Y_\eta(s,m) dm\right| \\
& \leq \left|  \int_{\eta(s)}^{\tilde\eta(s)} \int_0^{2C} e^{-\vert \Y(s,m)-\Y(s,n)\vert} 
\Big(\frac23 \U^2\sqrt{\Y_\eta} (\U^2\Y_\eta)^{1/2}\right.\\
&\qquad \qquad\qquad  \qquad\qquad\left.+2\P\sqrt{\Y_\eta}(\U^2\Y_\eta)^{1/2}+\P\sqrt{\Y_\eta}\sqrt{\Henergy_\eta}\Big)(s,n)dn\, \Y_\eta(s,m)dm\right| \\
& \leq \left| \int_{\eta(s)}^{\tilde\eta(s)} \int_0^{2C}e^{-\vert \Y(s,m)-\Y(s,n)\vert} \big(\frac23 \U^2\sqrt\Y_\eta+3\P\sqrt\Y_\eta\big)
\sqrt{\Henergy_\eta}(s,n)dn\, \Y_\eta(s,m)dm\right| \\
& \leq \int_{\eta(s)}^{\tilde\eta(s)} \left(\int_0^{2C} e^{-\vert \Y(s,m)-\Y(s,n)\vert} \big(\frac23 \U^2+3\P\big)^2\Y_\eta(s,n)dn\right)^{1/2}\\
& \qquad \qquad \times\left( \int_0^{2C} e^{-\vert \Y(s,m)-\Y(s,n)\vert } \Henergy_\eta(s, n) dn \right)^{1/2} \Y_\eta(s,m)dm \\
& \leq \int_{\eta(s)}^{\tilde \eta(s)} \left(\int_0^{2C} e^{-\vert \Y(s,m)-\Y(s,n)\vert } \big(\frac89 \U^4+ 18\P^2\big)\Y_\eta(s,n)dn\right)^{1/2}\\
& \qquad \qquad \times\left( \int_0^{2C} e^{-\vert \Y(s,m)-\Y(s,n)\vert } \Henergy_\eta(s, n) dn \right)^{1/2} \Y_\eta(s,m)dm \\
& \leq \bigO(1) \left(\int_{\eta(s)}^{\tilde\eta(s)} \int_0^{2C} e^{-\vert \Y(s,m)-\Y(s,n)\vert } (\U^2 +\P) \Y_\eta(s,n) dn \Y_\eta(s,m)dm\right)^{1/2} \\
&\qquad \qquad \times \left(\int_{\eta(s)}^{\tilde \eta(s)} \int_0^{2C} e^{-\vert \Y(s,m)-\Y(s,n)\vert } \Henergy_\eta(s,n)dn\Y_\eta(s,m) dm \right)^{1/2}\\
& \leq \bigO(1) \left( \int_{\eta(s)}^{\tilde \eta(s)} \int_0^{2C} e^{-\vert \Y(s,m)-\Y(s,n)\vert } (\U^2+\P)\Y_\eta(s,n)dn\Y_\eta(s,m)dm \right)^{1/2} \\
& \qquad \qquad \times \left(\int_{\eta(s)}^{\tilde \eta(s)} 4\P \Y_\eta(s,m) dm \right)^{1/2}\\
&  \leq \bigO(1) \left( \int_{\eta(s)}^{\tilde\eta(s)} \int_0^{2C}e^{-\vert \Y(s,m)-\Y(s,n)\vert} (\U^2+\P)\Y_\eta(s,n)dn\Y_\eta(s,m)dm\right)^{1/2}\\
& \qquad \times \sqrt{\vert \eta(s)-\tilde\eta(s)\vert}\\
& \leq  \bigO(1) \left( \vert t-\tilde t \vert\int_{\eta(s)}^{\tilde\eta(s)} \int_0^{2C}e^{-\vert \Y(s,m)-\Y(s,n)\vert} (\U^2+\P)\Y_\eta(s,n)dn\Y_\eta(s,m)dm \right)^{1/2}.
\end{align*}
Thus it is left to show that 
\begin{equation*}
 \left| \int_{\eta(s)}^{\tilde\eta(s)} \int_0^{2C}e^{-\vert \Y(s,m)-\Y(s,n)\vert} (\U^2+\P)\Y_\eta(s,n)dn\,\Y_\eta(s,m)dm\right|
 \end{equation*} 
 is bounded uniformly with respect to both space and time.
 Therefore observe that, since all the involved terms are positive, we have 
 \begin{align*}
  \left| \int_{\eta(s)}^{\tilde\eta(s)} \int_0^{2C} \right. & \left. e^{-\vert \Y(s,m)-\Y(s,n)\vert} (\U^2+\P)\Y_\eta(s,n)dn\,\Y_\eta(s,m)dm \right| \\
  & \leq  \int_0^{2C} \int_0^{2C}e^{-\vert \Y(s,m)-\Y(s,n)\vert} (\U^2+\P)\Y_\eta(s,n)dn\,\Y_\eta(s,m)dm\\
  & = \int_0^{2C} \int_0^{2C} e^{-\vert \Y(s,m)-\Y(s,n)\vert} \Y_\eta(s,m)dm\, (\U^2+\P)\Y_\eta(s,n)dn\\
  & \leq 2 \int_0^{2C}  (\U^2+\P)\Y_\eta(s,n)dn \leq 6C,
  \end{align*}
  where  we used \eqref{vik:est} in the last step. Thus we obtain 
  \begin{align*}
& \Big\vert \int_{\eta(s)}^{\tilde\eta(s)} \int_0^{2C} \sign(m-n)e^{-\vert \Y(s,m)-\Y(s,n)\vert}\\
&\qquad \qquad\qquad\qquad\times (\frac23 \U^3\Y_\eta-2\P\U\Y_\eta-\Q\U_\eta)(s,n)dn\,\Y_\eta(s,m) dm\Big\vert \\
 & \leq  \bigO(1) \left( \vert t-\tilde t \vert \int_{\eta(s)}^{\tilde \eta(s)} \int_0^{2C}e^{-\vert \Y(s,m)-\Y(s,n)\vert} (\U^2+\P)\Y_\eta(s,n)dn\,\Y_\eta(s,m)dm \right)^{1/2} \\
& \leq \bigO(1) \vert t-\tilde t \vert^{1/2},
\end{align*}
and 
\begin{align}\nn
  \Big\vert \int_t^{\tilde t}  \int_{\eta(s)}^{\tilde\eta(s)}& \int_0^{2C} \sign(m-n)e^{-\vert \Y(s,m)-\Y(s,n)\vert}\\ \nn
  &\qquad\qquad\qquad \times\big(\frac23 \U^3\Y_\eta-2\P\U\Y_\eta-\Q\U_\eta\big)(s,n)dn\,\Y_\eta(s,m)dmds\Big\vert \\ 
&  \leq \bigO(1) \vert t-\tilde t\vert^{3/2}.\label{vik:est3}
  \end{align}
Combining \eqref{vik:est4} and \eqref{vik:est3}, we end up with 
\begin{align*}\nn
\vert \tilde I_2\vert &=\vert 2\int_t^{\tilde t} (Q_t(s, l(\tilde t, \eta))+H_t(s, l(\tilde t, \eta)))-(Q_t(s, l(t,\eta))+H_t(s, l(t,\eta)))ds\vert \\ 
& \leq \bigO(1)(\vert t-\tilde t\vert ^{3/2} +\vert t-\tilde t\vert^2).
\end{align*}

As far as $\tilde I_1$ is concerned, we would like to apply the mean-value theorem. 
Note therefore that we showed before that the function $(Q_t+H_t)(t,\xi)$ is continuous with respect to time, and hence the 
function $(Q_t+H_t)(s,l(t,\eta))$, considered  as a function of $s$, is continuous with respect to $s$. Thus we end up with 
\begin{equation*}
\tilde I_1= 2\int_t^{\tilde t} (Q_t+H_t)(s, l(t,\eta))ds= 2(\tilde t-t) (Q_t+H_t)(\tilde s, l(t,\eta))
\end{equation*}
for some $\tilde s$ between $t$ and $\tilde t$.

Last, but not least, we therefore have
\begin{align*}
2\int_t^{\tilde t} (Q_t+H_t)(s, l(\tilde t,\eta))ds=&\, \tilde I_1+\tilde I_2\\
= &\, 2(\tilde t-t) (Q_t+H_t)(\tilde s, l(t,\eta))+ \bigO(1)\vert \tilde t-t\vert ^{3/2} \\
&+ \bigO(1)\vert \tilde t-t\vert^2,
\end{align*}
for some $\tilde s$ between $t$ and $\tilde t$, which implies, cf.~\eqref{want:lim}, that
\begin{align*}
\lim_{\tilde t\to t} \frac{J(t,l(t,\eta))-J(t,l(\tilde t, \eta))}{t-\tilde t} & =\lim_{\tilde t\to t} 2\int_t^{\tilde t} \frac{(Q_t+H_t)(s, l(\tilde t, \eta))}{t-\tilde t} ds \\ \nn
&=2(Q_t+H_t)(t,l(t,\eta)).
\end{align*}

Recalling that $J(t,l(t,\eta))=\eta$ for all $t$ and $\eta$, we have that 
\begin{equation*}
J^{-1}(t, J(t,l(t,\eta)))=l(t,\eta),
\end{equation*}
if we, as before, denote the inverse to $J(t,\dott)$ by $J^{-1}(t,\dott)$. 
Thus we have  
\begin{align*}
l_t(t,\eta)& =\lim_{\tilde t\to t} \frac{l(t,\eta)-l(\tilde t, \eta)}{t-\tilde t}\\ \nn
&= \lim_{\tilde t\to t} \frac{J^{-1}(t, J(t,l(t,\eta)))-J^{-1}(t, J(t, l(\tilde t, \eta)))}{t-\tilde t}\\ \nn
& = \lim_{\tilde t\to t} \frac{J^{-1}(t, J(t, l(t,\eta)))-J^{-1}(t, J(t, l(\tilde t, \eta))}{J(t,l(t,\eta))-J(t, l(\tilde t, \eta))}\lim_{\tilde t \to t}\frac{J(t,l(t,\eta))-J(t,l(\tilde t, \eta))}{t-\tilde t}.
\end{align*}
We have established that the last limit on the right-hand side exists and the existence of the first one follows from the fact that $J^{-1}(t, \dott)$ is differentiable almost everywhere.

\begin{remark}
The above argument relies heavily on the fact that the function $l(t,\cdot)\colon[0,2C]\to \Real$ is continuous and strictly increasing, the reason being that $J(t,\dott)$ is continuous and strictly increasing. This is in contrast to the HS equation in \cite{CGH}, where the function $H(t,\dott)$ is increasing, but not necessarily strictly increasing and its inverse may have jumps. In the HS equations, if $H(t,\dott)$ is constant on some interval $I$, then $H(s, \dott)$ will be constant on $I$ for any $s$, since $H$ is independent of time. Furthermore, this means that the jumps in the new coordinates might change in height with respect to time, but never change their position. This is the essential difference to the CH equation where the jumps would turn up and disappear again immediately, which motivates the choice of $G(t,x)$ and $J(t,\xi)$. 
\end{remark}

%%%%%%%%%%%%%%%%%%%%%%%%

\subsection{New system: the right quantities.}
Since we have shown that $l(t,\eta)$ is differentiable almost everywhere  with respect to both space and time, we can now establish rigorously the system of differential equations in our new coordinates. Recall that $J(t,l(t,\eta))=\eta$ and hence direct calculations yield
\begin{align*}
J_t(t,l(t,\eta))+J_\xi(t,l(t,\eta))l_t(t,\eta)&=0,\\
J_\xi(t, l(t,\eta))l_\eta(t,\eta)&=1.
\end{align*}
According to the time evolution in Lagrangian coordinates we thus end up with 
\begin{align*}
l_\eta(t,\eta)& = \frac{1}{J_\xi(t,l(t,\eta))},\\
l_t(t,\eta)&=-\frac{J_t(t,l(t,\eta))}{J_\xi(t, l(t, \eta))}= -J_t(t,l(t,\eta))l_\eta(t,\eta).
\end{align*}
As far as the new coordinates are concerned, we recall that
\begin{align*}
\Y(t,\eta)=y(t,l(t,\eta)), \qquad \U(t, \eta)=U(t,l(t,\eta)), \qquad \mbox{and} \qquad \P(t,\eta)=P(t,l(t,\eta)),
\end{align*}
and direct calculations using the differentiabilities proved for $Q$, $P$, and $l$ in Subsections \ref{diffq} and \ref{diffl} yield the following differential equations for the unknowns $(\Y,\U,\P)$.
 
%-------------------
\begin{theorem}
Let $(u,\mu)$ denote a weak conservative solution of \eqref{eq:CHbasic}. Define $\Y$ by \eqref{eq:Ydef}, $\U$ by \eqref{eq:Udef}, and $\P$ by \eqref{eq:Pdef}. Then the following system of differential equations holds
\begin{subequations}  \label{eq:evolutionTHEOREM}
\begin{align}
\Y_t(t,\eta)+(\frac23 \U^3+\So)\Y_\eta(t,\eta)& =\U(t,\eta),\\
\U_t(t,\eta)+(\frac23 \U^3+\So)\U_\eta(t,\eta)& = -\Q(t,\eta),\\
\P_t(t,\eta)+(\frac23 \U^3+\So)\P_\eta(t,\eta)& = \Q\U(t,\eta)+\R(t,\eta),
\end{align}
\end{subequations}
where 
\begin{subequations}\label{eq:funcTHEOREM}
\begin{align}
\Q(t,\eta)& = -\frac14 \int_0^{2C} \sign(\eta-\theta)e^{-\vert \Y(t,\eta)-\Y(t,\theta)\vert } (2(\U^2-\P)\Y_\eta(t,\theta)+1) d\theta,\\
\So(t,\eta)&= \int_{0}^{2C} e^{-\vert \Y(t,\eta)-\Y(t,\theta)\vert} (\frac23 \U^3\Y_\eta-\U_\eta\Q-2\P\U\Y_\eta)(t,\theta)d\theta,\\
\R(t,\eta)& =  \frac14 \int_0^{2C}\sign(\eta-\theta)e^{-\vert \Y(t,\eta)-\Y(t,\theta)\vert} (\frac23 \U^3\Y_\eta+\U)(t,\theta)d\theta \notag\\
& \quad -\frac12 \int_0^{2C} e^{-\vert \Y(t,\eta)-\Y(t,\theta)\vert}\U\Q\Y_\eta(t,\theta) d\theta.  
\end{align}
\end{subequations}
\end{theorem}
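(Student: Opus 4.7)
The strategy is a direct chain-rule computation: since Lemma \ref{newcoor} and the definitions give the representations $\Y(t,\eta)=y(t,l(t,\eta))$, $\U(t,\eta)=U(t,l(t,\eta))$, and $\P(t,\eta)=P(t,l(t,\eta))$, and since Subsections \ref{diffq} and \ref{diffl} have established that $Q$, $P$, and $l$ are differentiable (almost everywhere) in both variables with all appropriate $L^\infty$ bounds, one can differentiate these compositions rigorously. Using $y_t=U$, $U_t=-Q$, the formula \eqref{tder:P} for $P_t$, and the relations $l_\eta = 1/(J_\xi\circ l)$ and $l_t=-(J_t\circ l)\,l_\eta$ (which come from differentiating $J(t,l(t,\eta))=\eta$), the chain rule gives
\begin{align*}
\Y_t &= \U - (y_\xi\circ l)\,l_\eta\,(J_t\circ l) = \U - \Y_\eta\,(J_t\circ l),\\
\U_t &= -\Q - \U_\eta\,(J_t\circ l),\\
\P_t &= (P_t\circ l) - \P_\eta\,(J_t\circ l).
\end{align*}
Thus the three equations of \eqref{eq:evolutionTHEOREM} follow as soon as one identifies $J_t\circ l = \tfrac23 \U^3 + \So$ and the right-hand side of the $\P$-equation as $\Q\U+\R$.

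For the transport velocity, write $J=2Q+2H$, so that $J_t=2Q_t+2H_t$. Using \eqref{tder:Q} and $H_t=U^3-2PU$, the local terms $-\tfrac43 U^3+4PU$ and $2U^3-4PU$ cancel down to $\tfrac23 U^3$, leaving
\[
J_t(t,\xi) = \tfrac23 U^3(t,\xi) + \int_\Real e^{-\vert y(t,\xi)-y(t,\sigma)\vert}\bigl(\tfrac23 U^3 y_\xi - QU_\xi - 2PUy_\xi\bigr)(t,\sigma)\,d\sigma.
\]
Changing variables by $\sigma=l(t,\theta)$ with $\theta\in[0,2C]$, and using $y_\xi(t,l(t,\theta))\,l_\eta(t,\theta)=\Y_\eta(t,\theta)$ and $U_\xi(t,l(t,\theta))\,l_\eta(t,\theta)=\U_\eta(t,\theta)$, the remaining integral equals $\So(t,\eta)$ as defined in \eqref{eq:funcTHEOREM}. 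The closed form for $\Q$ is derived analogously: start from the Lagrangian formula for $Q$, decompose $U^2 y_\xi+H_\xi = J_\xi + 2(U^2-P)y_\xi$ (using $J_\xi=(2P-U^2)y_\xi+H_\xi$), and substitute $\theta=J(t,\sigma)$; the $J_\xi$ piece produces the constant term $1$ inside the integrand of \eqref{eq:funcTHEOREM}, while the remainder produces the $2(\U^2-\P)\Y_\eta$ term.

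For the $\P$-equation, insert \eqref{tder:P} into $P_t\circ l - \P_\eta(J_t\circ l)$. The local piece $UQ$ pulls back to $\U\Q$; the two nonlocal integrals in \eqref{tder:P}, transformed by the same change of variables, produce the two integrals in the formula for $\R$. The main technical obstacle is bookkeeping: one must systematically separate the pieces of $P_t$ and $J_t$ that combine to form the transport operator $(\tfrac23\U^3+\So)\P_\eta$ from those that genuinely belong to $\R$, and ensure that every $y_\xi\,d\sigma$ is converted to $\Y_\eta\,d\theta$ and every $U_\xi\,d\sigma$ to $\U_\eta\,d\theta$. No new analytic estimates are required beyond those already proven in Subsections \ref{diffq} and \ref{diffl}; the argument is essentially an algebraic pullback of the Lagrangian system \eqref{sys:Lagrange} and the derived time derivatives \eqref{tder:Q} and \eqref{tder:P} along the composition with the well-defined bijection $l(t,\cdot)\colon[0,2C]\to\Real$.
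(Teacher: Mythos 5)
Your proposal is correct and follows essentially the same route as the paper: differentiate the compositions $\Y=y\circ l$, $\U=U\circ l$, $\P=P\circ l$ using the differentiability of $Q$, $P$, and $l$ established in Subsections \ref{diffq} and \ref{diffl}, identify $\So=J_t\circ l-\tfrac23\U^3$, $\R=P_t\circ l-\Q\U$, $\Q=Q\circ l$, and convert the Lagrangian integrals via the change of variables $\theta=J(t,\sigma)$ (the paper's remark after the theorem records exactly these identifications). Your explicit cancellation of the local terms in $J_t$ and the decomposition $U^2y_\xi+H_\xi=J_\xi+2(U^2-P)y_\xi$ are precisely the bookkeeping the paper leaves as ``direct calculations.''
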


\begin{remark}
This system is identical to the system derived in the smooth case, cf.~\eqref{eq:evolutionPROP} as expected.
We also remind the reader that $\Q(t,\eta)=Q(t,l(t,\eta))$, $\So(t,\eta)=J_t(t,l(t,\eta))-\frac23 \U^3(t,\eta)$, and $\R(t,\eta)=P_t(t,l(t,\eta))-\Q\U(t,\eta)$.
\end{remark}

%%%%%%%%%%%%%%%%%%%%%

\subsection{Functional setting: Consistency of the new coordinates}

The next main question is which functional space we should work in such that the right-hand side of the system \eqref{eq:evolutionTHEOREM} can be regarded as a Lipschitz function of the chosen unknowns. For instance, it is difficult or rather impossible to establish the Lipschitz continuity of $\Q$ with respect to $\P$, $\U$, and $\Y$. However, we will see that one can establish that $\Q$ is Lipschitz continuous with respect to $\P^{1/2}$, $\U$, and $\Y$. At first sight this seems to be surprising, but on the other hand we established in Eulerian coordinates that $2p(t,x)\geq u^2(t,x)$, which rewrites in our new coordinates as 
\begin{equation}\label{con:PU}
2\P(t,\eta)\geq \U^2(t,\eta).
\end{equation}
Thus it seems somehow natural that $\P(t,\eta)^{1/2}$ and $\vert \U(t,\eta)\vert$ will behave similarly. 

The aim of this subsection is to derive the system of differential equations for $\Y$, $\U$, and $\P^{1/2}$ and subsequently to establish that all terms turning up in this system are well-defined by assuming that each of the new variables $(\Y,\U,\P^{1/2})$ is in $L^2([0,2C])$.

Replacing the equation for $\P$ with the corresponding one for $\P^{1/2}$, we find that 
the system \eqref{eq:evolutionTHEOREM} reads
\begin{subequations} \label{sys:perfect}
\begin{align}
\Y_t+(\frac23 \U^3+\So)\Y_\eta&=\U,\\
\U_t+(\frac23 \U^3+\So)\U_\eta& = -\Q,\\ \label{evol:sqrtP}
(\P^{1/2})_t+(\frac23 \U^3+\So)(\P^{1/2})_\eta& = \frac{\Q\U}{2\P^{1/2}}+\frac{\R}{2\P^{1/2}},
\end{align}
\end{subequations}
where $\Q$, $\So$, and $\R$ are given by \eqref{eq:funcTHEOREM}.

Assume for the moment  that $\P^{1/2}$, $\U$, and $\Y$ belong to $L^2([0,2C])$. Then we want to show that all terms appearing in the above system also belong to $L^2([0,2C])$. Therefore it is important to keep in mind, in addition to \eqref{con:PU}, that 
\begin{equation}\label{eq:PUYH}
2\P\Y_\eta(t,\eta)-\U^2\Y_\eta(t,\eta)+ \Henergy_\eta(t,\eta)=1,  \end{equation}
and, in particular, cf.~\eqref{eq:stand:est}, \eqref{vik:estprep}, and \eqref{vik:est}, 
\begin{subequations} \label{eq:basic}
\begin{align}\label{eq:basic0}
2\vert \U\U_\eta(t,\eta)\vert& \leq \Henergy_\eta(t,\eta)\leq 1,\\ \label{eq:basic1} 
 \U^2\Y_\eta(t,\eta)& \leq \Henergy_\eta(t,\eta)\leq 1,\\ \label{eq:basic2}  2\vert \Q\vert \Y_\eta(t,\eta)&\leq 2\P\Y_\eta(t,\eta)\leq 1.
\end{align}
\end{subequations}
As an immediate consequence we obtain that 
\begin{equation*}
\U^3\Y_\eta(t,\eta) \quad \text{ and }\quad \U^3\U_\eta(t,\eta)
\end{equation*}
are uniformly bounded. The last term of this form can be estimated as follows 
\begin{equation*}
\vert \U^3(\P^{1/2})_\eta(t,\eta)\vert =\left\vert \frac{\U^3\Q\Y_\eta}{2\P^{1/2}}(t,\eta)\right\vert \leq \left\vert \frac{\U^3}{2\P^{1/2}}(t,\eta)\right\vert\leq \U^2(t,\eta),
\end{equation*}
and is therefore uniformly bounded. Here we used \eqref{con:PU}.

As far as $\So$ is concerned, we want to establish that 
\begin{equation*}
\vert \So(t,\eta)\vert\leq \bigO(1)\P(t,\eta) ,
\end{equation*}
implying that
\begin{align*}
\vert \So\Y_\eta(t,\eta)\vert &\leq \bigO(1)\P\Y_\eta(t,\eta)\leq \bigO(1),\\
\vert \So\U_\eta(t,\eta)\vert &\leq \So^2\Y_\eta(t,\eta)+\Henergy_\eta(t,\eta) \leq \bigO(1),\\ 
\vert \So(\P^{1/2})_\eta(t,\eta)\vert & \leq \bigO(1) \P^{3/2}\Y_\eta(t,\eta)\leq \bigO(1).
\end{align*}
Indeed,  by the definition of $\So$ and since $|\Q|\leq \P$ and (cf.~\eqref{stand:est2})
\begin{equation*}
\U_\eta^2(t,\eta)\le \Henergy_\eta\Y_\eta(t,\eta),
\end{equation*}
we have 
\begin{align*}
\vert \So(t,\eta)\vert &= \left| \int_0^{2C} e^{-\vert \Y(t,\eta)-\Y(t,\theta)\vert} (\frac23 \U^3\Y_\eta-\U_\eta\Q-2\P\U\Y_\eta)(t,\theta)d\theta\right| \\ 
& \leq \norm{\U(t,\dott)}_{L^\infty([0,2C])}\int_0^{2C} e^{-\vert \Y(t,\eta)-\Y(t,\theta)\vert} \U^2\Y_\eta(t,\theta)d\theta\\
& \quad +2 \left( \int_0^{2C} e^{-\vert \Y(t,\eta)-\Y(t,\theta)\vert} \P^2\Y_\eta(t,\theta)d\theta\right)^{1/2}\\
&\qquad\qquad\qquad  \times\left(\int_0^{2C} e^{-\vert \Y(t,\eta)-\Y(t,\theta)\vert } \U^2\Y_\eta(t,\theta)d\theta\right)^{1/2}\\
& \quad +\left(\int_0^{2C} e^{-\vert \Y(t,\eta)-\Y(t,\theta)\vert} \P^2\Y_\eta(t,\theta)d\theta \right)^{1/2} \\
&\qquad\qquad\qquad  \times\left( \int_0^{2C} e^{-\vert \Y(t,\eta)-\Y(t,\theta)\vert} \Henergy_\eta(t,\theta) d\theta\right)^{1/2}\\  
& \leq  4\sqrt{C}\,\P(t,\eta)+6\left(\int_0^{2C} e^{-\vert \Y(t,\eta)-\Y(t,\theta)\vert} \P^2\Y_\eta (t,\theta) d\theta\right)^{1/2} \P^{1/2}(t,\eta).
\end{align*} 
Thus it is left to show that 
\begin{equation}\label{eq:109}
\int_0^{2C} e^{-\vert \Y(t,\eta)-\Y(t,\theta)\vert} \P^2\Y_\eta(t,\theta)d\theta \leq \bigO(1)\P(t,\eta).
\end{equation}
As several times before, the main tool will be integration by parts together with 
\begin{equation*}
\P(t,\eta)=\frac14 \int_0^{2C} e^{-\vert \Y(t,\eta)-\Y(t,\theta)\vert} (2(\U^2-\P)\Y_\eta(t,\theta)+1)d\theta.
\end{equation*}
Direct computations yield
\begin{align}
\int_0^\eta e^{\Y(t,\theta)-\Y(t,\eta)} \P^2\Y_\eta(t,\theta)d\theta
& = \P^2(t,\eta)-\int_0^{\eta} e^{\Y(t,\theta)-\Y(t,\eta)} 2\P\Q\Y_\eta(t,\theta)d\theta \notag\\
& = \P^2(t,\eta)-2\P\Q(t,\eta) \notag\\ 
& \quad +\int_0^\eta e^{\Y(t,\theta)-\Y(t,\eta)} (2\Q^2\Y_\eta+2\P\Q_\eta)(t,\theta)d\theta\notag\\
& = \P^2(t,\eta)-2\P\Q(t,\eta)\notag\\ 
& \quad +\int_0^\eta e^{\Y(t,\theta)-\Y(t,\eta)} 2(\P^2+\Q^2)\Y_\eta(t,\theta)d\theta\notag\\
& \quad +\int_0^\eta e^{\Y(t,\theta)-\Y(t,\eta)} (2(\P-\U^2)\Y_\eta(t,\theta)-1)\P(t,\theta)d\theta.\label{eq:Q_d}
\end{align}
Here we have used \eqref{auxq} and \eqref{eq:PUYH}.
Thus, rearranging the terms, we end up with 
\begin{align}\nn
\int_0^\eta e^{\Y(t,\theta)-\Y(t,\eta)} \P^2\Y_\eta(t,\theta)d\theta& \leq \int_0^\eta e^{ \Y(t,\theta)-\Y(t,\eta)} (\P^2+2\Q^2)\Y_\eta(t,\theta)d\theta\\ \nn
& = 2\P\Q(t,\eta)-\P^2(t,\eta)\\ \nn
&\quad + \int_0^\eta e^{\Y(t,\theta)-\Y(t,\eta)} (2(\U^2-\P)\Y_\eta(t,\theta)+1) \P(t,\theta) d\theta\\ \label{rel:P2PP}
& \leq 2\P^2(t,\eta)+4\norm{\P(t,\dott)}_{L^\infty([0,2C])} \P(t,\eta)\\ \nn
&\leq 6 \norm{\P(t,\dott)}_{L^\infty([0,2C])} \P(t,\eta).
\end{align}
Similar computations yield that 
\begin{align*} \nn
\int_\eta^{2C} e^{\Y(t,\eta)-\Y(t,\theta)} \P^2\Y_\eta(t,\theta) d\theta
& \leq \int_\eta^{2C} e^{\Y(t,\eta)-\Y(t,\theta)} (\P^2+2\Q^2)\Y_\eta(t,\theta) d\theta\\ \nn
& = -\P^2(t,\eta)-2\P\Q(t,\eta)\\ \nn
& \quad +\int_\eta^{2C} e^{\Y(t,\eta)-\Y(t,\theta)} \\ \nn
&\qquad\qquad\times(2(\U^2-\P)\Y_\eta(t,\theta)+1)\P(t,\theta)d\theta\\ & \leq 2\P^2(t,\eta)+4\norm{\P(t,\dott)}_{L^\infty([0,2C])} \P(t,\eta) \\ \nn
&\leq 6\norm{\P(t,\dott)}_{L^\infty([0,2C])} \P(t,\eta).
\end{align*}
This proves \eqref{eq:109} due to \eqref{LI2}. 

There are two terms left to investigate. Namely, the term $\Q\U/\P^{1/2}$, and we easily find that
\begin{equation*}
\left\vert \frac{\Q\U}{\sqrt{2}\P^{1/2}}(t,\eta)\right\vert \leq \P(t,\eta),
\end{equation*}
and hence it is uniformly bounded. The last term $\R/\P^{1/2}$ is a bit more involved. We have 
\begin{align}
\vert \R(t,\eta)\vert& \leq \left| \frac14 \int_0^{2C} \sign(\eta-\theta)e^{-\vert \Y(t,\eta)-\Y(t,\theta)\vert} (\frac23 \U^3\Y_\eta+\U)(t,\theta)d\theta\right| \notag\\
& \qquad + \left| \frac12 \int_0^{2C} e^{-\vert \Y(t,\eta)-\Y(t,\theta)\vert } \U\Q\Y_\eta(t,\theta)d\theta\right| \notag\\ 
& \leq \frac14 \int_0^{2C} e^{-\vert \Y(t,\eta)-\Y(t,\theta)\vert} (2(\U^2-\P)\Y_\eta(t,\theta)+1)\vert \U\vert (t,\theta)d\theta\notag\\
& \quad + \frac14 \int_0^{2C} e^{-\vert \Y(t,\eta)-\Y(t,\theta)\vert }2\P\vert \U\vert\Y_\eta (t,\theta)d\theta\notag\\
& \quad + \frac12 \int_0^{2C} e^{-\vert \Y(t,\eta)-\Y(t,\theta)\vert } \vert \U\Q\vert\Y_\eta(t,\theta) d\theta\notag\\
& \leq \norm{\U(t,\dott)}_{L^\infty([0,2C])} \P(t,\eta) +\frac12 \int_0^{2C} e^{-\vert \Y(t,\eta)-\Y(t,\theta)\vert } (\P^2+\U^2)\Y_\eta(t,\theta)d\theta\notag\\
& \leq \bigO(1)\P(t,\eta).  \label{eq:R}
\end{align}
Thus
\begin{equation*}
\left| \frac{\R}{\P^{1/2}(t,\eta)}\right| \leq \bigO(1)\P^{1/2}(t,\eta),
\end{equation*} 
and hence belongs to $L^2([0,2C])$.

Later on, we will need in some of our estimates, cf.~\eqref{eq:Pest}, that
\begin{equation*}%\label{eq:Pestcal}
\P(t,\eta)=\frac14 \int_0^{2C} e^{-\vert \Y(t,\eta)-\Y(t,\theta)\vert} (\U^2\Y_\eta+\Henergy_\eta)(t,\theta)d\theta,
\end{equation*}
which implies the estimate
\begin{equation}\label{eq:PestcalA}
 \int_0^\eta e^{-(\Y(t,\eta)-\Y(t,\theta))} (\U^2\Y_\eta+\Henergy_\eta)(t,\theta)d\theta\le 4 \P(t,\eta).
\end{equation}

%%%%%%%%%%%%%%%%%%%%%%%

\subsection{The choice of the distance}\label{dist1}
In order to motivate the choice of the distance between two solutions with the same energy and to outline the strategy for proving the Lipschitz continuity, let us come back to the system \eqref{sys:perfect}. We observe that the unknowns $(\Y,\U,\P^{1/2})$ are advected by the same velocity field in the $\eta$ variable. Moreover, since all terms make sense under the assumption that the unknowns belong to $L^2([0,2C])$, as checked in the previous subsection, we consider a toy problem of the form 
$$
f_t+{\mathcal A}(f)f_\eta={\mathcal B}(f)
$$
where $f$ might be a vector valued function in $L^2([0,2C])$. This toy problem resembles our situation with the complicated operator dependencies on the unknowns. Within this functional framework, it is natural to look at the evolution of the $L^2$ norm of the unknown $f$. 
Given two solutions $f_1$ and $f_2$ to $f_t+{\mathcal A}(f)f_\eta={\mathcal B}(f)$, direct computations yield
\begin{align*}
\frac{d}{dt} \norm{f_1-f_2}_{L^2([0,2C])}^2& =2\int_0^{2C} (f_1-f_2)(f_{1,t}-f_{2,t})(t, \theta) d\theta\\
& = -2\int_0^{2C} ({\mathcal A}(f_1)f_{1,\eta}-{\mathcal A}(f_2)f_{2,\eta})(f_1-f_2)(t,\theta)d\theta \\ 
& \quad + 2 \int_0^{2C} ({\mathcal B}(f_1)-{\mathcal B}(f_2))(f_1-f_2)(t,\theta)d\theta\\ 
& = -2\int_0^{2C} {\mathcal A}(f_1)(f_{1,\eta}-f_{2,\eta})(f_1-f_2)(t,\theta)d\theta\\
& \quad -2\int_0^{2C} f_{2, \eta} ({\mathcal A}(f_1)-{\mathcal A}(f_2))(f_1-f_2)(t,\theta)d\theta\\
& \quad + 2 \int_0^{2C} ({\mathcal B}(f_1)-{\mathcal B}(f_2))(f_1-f_2)(t,\theta)d\theta\\ 
& = {\mathcal A}(f_1)(f_1-f_2)^2(t,0)-{\mathcal A}(f_1)(f_1-f_2)^2(t,2C)\\
& \quad + \int_0^{2C} {\mathcal A}'(f_1)f_{1,\eta} (f_1-f_2)^2(t,\theta)d\theta\\
& \quad -2\int_0^{2C} f_{2, \eta} ({\mathcal A}(f_1)-{\mathcal A}(f_2))(f_1-f_2)(t,\theta)d\theta\\
& \quad + 2 \int_0^{2C} ({\mathcal B}(f_1)-{\mathcal B}(f_2))(f_1-f_2)(t,\theta)d\theta,
\end{align*} 
where the first two terms in the last line are going to vanish if we impose the correct boundary conditions or rather if we have a good behavior of the solutions at both boundaries. Under this assumption, we can then use norm estimates if we know that ${\mathcal A}'(f_1) f_{1,\eta}$ and $f_{2, \eta}$ are uniformly bounded by a constant $\bigO(1)$ and that ${\mathcal A}$ and ${\mathcal B}$ are Lipschitz continuous with Lipschitz constant $\bigO(1)$ with respect to $f$ to conclude that 
\begin{equation*}
\frac{d}{dt}\norm{f_1-f_2}_{L^2([0,2C])}^2 \leq \bigO(1) \norm{f_1-f_2}_{L^2([0,2C])}^2,
\end{equation*}
and Gronwall's lemma then implies 
\begin{equation*}
\norm{(f_1-f_2)(t)}_{L^2([0,2C])} \leq \norm{(f_1-f_2)(0)}_{L^2([0,2C])} e^{\bigO(1)t}.
\end{equation*}

Hence the strategy consists in proving first a propagation in time for the $L^2$-norm of $f$ in order to get bounds on the unknowns and check the validity of the approach. Then the second, and the most important and technical point, is to establish the Lipschitz estimates. We will come back to this point in Section \ref{sec:difen}, where we will also fix the following problem in the definition of our metric: the domain of definition of our unknowns depends on the total energy. This is unsatisfactory, if we want to compare solutions with different total energies. 

%%%%%%%%%%%%%%%%%%%%%%%%

\subsection{Propagation of $L^2$ bounds: moment conditions}\label{moments}

Given $(u,\mu)$ a weak conservative solution of \eqref{eq:CHbasic} and $f=(\Y,\U,\P^{1/2})$, defined by \eqref{eq:Ydef}, \eqref{eq:Udef}, and \eqref{eq:Pdef}, the solution to the system \eqref{sys:perfect}. We want to show that the $L^2$-norm of $f$ is propagated in time. Since both $\U$ and $\P$ are bounded functions due to \eqref{LI1} and \eqref{LI2}, we are reduced to show the propagation of the $L^2$-norm of $\Y$. First of all note that we have 
\begin{equation*}
\Y(t, G(t,x))= x \quad \text{ for all } x\in \Real,
\end{equation*}
where $G(t,x)$ is given by 
\begin{equation*}
G(t,x)=\int_{-\infty}^x (2p-u^2)(t,y)dy +F(t,x).
\end{equation*}
Notice that the $x$-distributional derivative of $G$ is a measure that we denoted by $\nu(t,\dott)\in {\mathcal M}_+(\Real)$ given by
$$
d\nu(t,x)=(2p-u^2)(t,x) + d\mu(t,x) .
$$
Since $\Y(t,\eta)$ is the pseudo-inverse to $G(t,x)$, then $\Y$ pushes forward the uniform distribution on $[0,2C]$ to $\nu$, see \cite{Vil03}, and then
\begin{equation*}
\int_\Real x^2 d\nu = \int_0^{2C} \Y^2(t,\eta) d\eta.
\end{equation*}
Thus $\Y\in L^2([0,2C])$ is equivalent to $\nu(t,\dott)$ having finite second moment.

\begin{proposition}
Let $(u,\mu)$ denote a weak conservative solution of \eqref{eq:CHbasic}, and by $f=(\Y,\U,\P^{1/2})$ the solution to the system \eqref{sys:perfect}, then 
\begin{align*}
 \int_0^{2C} \Y^2(t,\eta) d\eta \leq e^{\bigO(1)t}\left(1+\int_0^{2C} \Y^2(0,\eta) d\eta\right).
\end{align*}
\end{proposition}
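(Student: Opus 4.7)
The strategy is a Gronwall-type $L^2$ energy estimate driven by the transport equation for $\Y$ in \eqref{sys:perfect}, exploiting that the characteristic speed has an $\eta$-derivative bounded by $\bigO(1)$.

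First I would differentiate in time, use $\Y_t = \U - (\tfrac{2}{3}\U^3 + \So)\Y_\eta$, and split the result as
$$\frac{1}{2}\frac{d}{dt}\int_0^{2C}\Y^2(t,\eta)\,d\eta = \int_0^{2C}\Y\,\U\,d\eta - \int_0^{2C}\Y\,\A\,\Y_\eta\,d\eta,$$
with $\A := \tfrac{2}{3}\U^3 + \So$. The source integral is harmless: Young's inequality together with the uniform bound $\|\U(t,\dott)\|_{L^\infty([0,2C])}\leq \sqrt{C}$ (inherited from \eqref{LI1}) yields
$$\left|\int_0^{2C}\Y\,\U\,d\eta\right|\leq \tfrac{1}{2}\int_0^{2C}\Y^2\,d\eta + \tfrac{1}{2}\int_0^{2C}\U^2\,d\eta \leq \tfrac{1}{2}\int_0^{2C}\Y^2\,d\eta + C^2.$$

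For the transport term, I would rewrite $\Y\Y_\eta = \tfrac{1}{2}(\Y^2)_\eta$ and integrate by parts, obtaining
$$-\int_0^{2C}\A\,\Y\,\Y_\eta\,d\eta = -\tfrac{1}{2}\bigl[\A\,\Y^2\bigr]_0^{2C} + \tfrac{1}{2}\int_0^{2C}\A_\eta\,\Y^2\,d\eta.$$
The interior integral is controlled by the earlier Lemma on $|(\tfrac23\U^3+\So)_\eta|\leq\bigO(1)$ (whose proof transfers to the weak conservative setting using the estimate $\So_\eta\leq\bigO(1)\P$, together with \eqref{eq:basic0}--\eqref{eq:basic2}), giving a bound of $\bigO(1)\int\Y^2\,d\eta$. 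Putting the pieces together produces a differential inequality
$$\frac{d}{dt}\int_0^{2C}\Y^2\,d\eta \leq \bigO(1)\int_0^{2C}\Y^2\,d\eta + \bigO(1),$$
which Gronwall turns into exactly the stated estimate.

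The main obstacle is the vanishing of the boundary term $\bigl[\A\Y^2\bigr]_0^{2C}$: since $\Y(t,\eta)\to\pm\infty$ at the endpoints, one needs $\A$ to decay sufficiently. Pointwise we do have $\U(t,\eta) = u(t,\Y(t,\eta))\to 0$ (because $u(t,\dott)\in H^1(\Real)$ vanishes at spatial infinity) and $\So(t,\eta)\to 0$ by dominated convergence in its defining integral (the kernel $e^{-|\Y(t,\eta)-\Y(t,\theta)|}\to 0$ pointwise as $\eta$ approaches the endpoints), but no quantitative rate is available in general $H^1$. I would bypass this either by a truncation argument, replacing $\Y$ by $\Y_M = (-M)\vee(\Y\wedge M)$, doing the integration by parts on the corresponding interior set, and passing to the limit $M\to\infty$ via monotone convergence and the $L^2$ assumption; or, more transparently, by running the characteristic flow $\dot\chi = \A(t,\chi)$, $\chi(0,\eta_0)=\eta_0$. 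Along characteristics one has $\tfrac{d}{dt}\Y(t,\chi) = \U(t,\chi)$, hence $|\Y(t,\chi(t,\eta_0))|\leq |\Y(0,\eta_0)|+\sqrt{C}\,t$; the Jacobian satisfies $\tfrac{d}{dt}\chi_{\eta_0} = \A_\eta(t,\chi)\chi_{\eta_0}$, so $\chi_{\eta_0}\in[e^{-\bigO(1)t},e^{\bigO(1)t}]$; the change of variables $\eta=\chi(t,\eta_0)$ then reproduces the stated inequality with no boundary contribution at all, absorbing the additive $t^2$ coming from $\sqrt{C}\,t$ into the exponential prefactor.
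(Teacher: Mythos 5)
Your argument is essentially sound, but it takes a genuinely different route from the paper. The paper never touches the transport equation for $\Y$ directly: it passes to Lagrangian coordinates, writes $\int_0^{2C}\Y^2\,d\eta=\int_\Real x^2\,d\nu$ with $d\nu=(2p-u^2)\,dx+d\mu$, expresses the Eulerian moments as $\int y^2Py_\xi\,d\xi$ and $\int y^2H_\xi\,d\xi$, differentiates these weighted $L^1$ quantities in time using the globally well-posed system \eqref{sys:Lagrange}, and closes with Gronwall. That route needs no information whatsoever about $(\tfrac23\U^3+\So)_\eta$, and it delivers as a by-product the finiteness of the individual moments \eqref{cond:mom}, which the paper uses immediately afterwards for the decay properties \eqref{decay:impl}. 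Your characteristic-flow version is in fact a repackaging of the same idea: the curves $\dot\chi=(\tfrac23\U^3+\So)(t,\chi)$ are exactly the images under $G$ of the Eulerian characteristics $\dot x=u$, i.e.\ fixed Lagrangian labels, and your identity $\tfrac{d}{dt}\Y(t,\chi)=\U$ with $\abs{\U}\le\sqrt C$ is just $y_t=U$ from \eqref{sys:Lagrange}. What your version buys is that it stays entirely in the new variables; what it costs is the extra hypothesis that the velocity field is uniformly Lipschitz in $\eta$, which you need both for the Jacobian bound $\chi_{\eta_0}\in[e^{-\bigO(1)t},e^{\bigO(1)t}]$ and for the integration-by-parts variant.

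That hypothesis is the one point you should spell out rather than assert in a parenthesis. The paper proves the bound $\vert(\tfrac23\U^3+\So)_\eta\vert\le\bigO(1)$ only in the smooth setting, and the auxiliary estimate you quote is misstated: what is true (and what the lemma actually uses) is $\vert\So_\eta\vert\le\bigO(1)\,\P\Y_\eta$, not $\bigO(1)\P$, since differentiating the kernel produces a factor $\Y_\eta$ which is unbounded near $\eta=0,2C$; boundedness then follows only through the compensation $2\P\Y_\eta\le1$ of \eqref{vik:est}. The ingredients for carrying this over to weak conservative solutions are indeed available in Section~\ref{sec:general} (\eqref{eq:basic}, $\vert\So\vert\le\bigO(1)\P$, \eqref{eq:109}, and the local absolute continuity of $\U$ in $\eta$), but you must actually run that argument, since it is the load-bearing step of your proof. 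Note also that your first route (differentiating $\int\Y^2\,d\eta$ in time and integrating by parts) is circular as stated — you differentiate a quantity you have not yet shown to be finite, and the truncation fix needs renormalization-type care — so the characteristics version is the one that genuinely closes the argument; with it, your proof is correct up to the same looseness in multiplicative constants that the paper itself allows in absorbing factors into $e^{\bigO(1)t}$.
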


\begin{proof}
We show the propagation of the second moment in time using the Lagrangian coordinates. Note first that
\begin{align*}
\int_\Real x^2d\mu(t,x) &= \int_\Real y^2H_\xi(t,\xi) d\xi,\\
\int_\Real x^2 u^2(t,x)dx& =\int_\Real y^2U^2y_\xi(t,\xi)d\xi,\\
\int_\Real x^2p(t,x)dx &= \int_\Real y^2Py_\xi(t,\xi)d\xi.
\end{align*}
We start by computing the derivatives of $y^2H_\xi$, $y^2U^2y_\xi$, and $y^2Py_\xi$ with respect to time. Direct computations using the formulas in subsection \ref{diffq} then yield
\begin{align*}
\vert (y^2H_\xi)_t\vert & = \vert 2yUH_\xi+y^2(3U^2U_\xi-2PU_\xi-2QUy_\xi)\vert \\
& \leq y^2H_\xi +U^2H_\xi+3y^2\vert U\vert H_\xi + 2y^2P^2y_\xi+2y^2H_\xi\\
& \leq \bigO(1) (y^2H_\xi  +y^2Py_\xi +H_\xi),
\end{align*}
and 
\begin{align*}
\vert (y^2Py_\xi)_t\vert &= \vert 2yUPy_\xi+ y^2P_ty_\xi+ y^2PU_\xi\vert \\
& \leq y^2P^2y_\xi +U^2y_\xi+ y^2\vert P_t\vert y_\xi + y^2P(y_\xi+H_\xi)\\
& \leq \bigO(1) (y^2 H_\xi + y^2Py_\xi+ H_\xi),
\end{align*}
where we used \eqref{tder:P}. 
Thus 
\begin{equation*}
\frac{d}{dt}\left(\norm{y^2Py_\xi}_{L^1} + \norm{y^2H_\xi}_{L^1}\right)\leq \bigO(1) \left(\norm{y^2Py_\xi}_{L^1}+\norm{y^2H_\xi}_{L^1}+ C\right)
\end{equation*}
and 
\begin{equation*}
\int_\Real (y^2Py_\xi+y^2H_\xi)(t,\xi)d\xi \leq e^{\bigO(1)t} \left(\int_\Real (y^2Py_\xi+y^2H_\xi)(0,\xi)d\xi+ C\right).
\end{equation*}
Since $y^2U^2y_\xi(t,\xi)\leq y^2H_\xi(t,\xi)$ it follows that also 
\begin{equation*}
\int_\Real y^2U^2y_\xi(t,\xi) d\xi
\end{equation*}
remains finite for all times, which proves the desired estimate since 
$$
\int_\Real x^2d\nu(t,x)=\int_\Real x^2(2p-u^2)(t,x)dx + \int_\Real x^2 d\mu(t,x).
$$
\end{proof}

The propagation of moments not only implies the feasibility of the strategy illustrated in subsection \ref{dist1} but also gives us a control on the ``boundary terms''. In fact, since $(u^2+u_x^2)(t,x)\leq d\mu(t,x)$ and $(2p-u^2)(t,x)\geq 0$, the previous result implies that
\begin{equation}\label{cond:mom}
\int_\Real x^2p(t,x) dx, \quad \int_\Real x^2u^2(t,x) dx, \int_\Real x^2u_x^2(t,x) dx, \text{ and }\int_\Real x^2d\mu(t,x)
\end{equation}
are all finite. We show next that \eqref{cond:mom} implies that 
\begin{equation}\label{decay:impl}
xu(t,x)\to 0 \quad \text{ and } x^2p(t,x)\to 0\quad \text{ as } x\to \pm\infty.
\end{equation}
Indeed, \eqref{cond:mom} and $u\in H^1(\Real)$ imply that 
\begin{equation*}
xu(t,x) \in L^2(\Real) \quad \text{ and } \quad (xu(t,x))_x=u(t,x)+xu_x(t,x)\in L^2(\Real).
\end{equation*}
Hence $xu(t,x)$ belongs to $H^1(\Real)$ for any fixed $t$ and in particular $xu(t,x)\to 0$ as $x\to \pm \infty$ for any fixed $t$.

The argument for $xp(t,x)$ is a bit more involved, but follows the same lines. To be more precise, 
$p(t,x)\in L^2(\Real)$, $\vert p_x(t,x)\vert \leq p(t,x)$, and 
\begin{equation*}
\int_\Real x^2p^2(t,x)dx \leq \bigO(1) \int_\Real x^2p(t,x)dx <\infty,
\end{equation*}
imply that 
\begin{equation*}
xp(t,x)\in L^2(\Real) \quad \text{ and } \quad (xp(t,x))_x=p(t,x)+ xp_x(t,x)\in L^2(\Real).
\end{equation*}
Hence $xp(t,x)$ belongs to $H^1(\Real)$ for any fixed $t$ and in particular $xp(t,x)\to 0$ as $x\to \pm\infty$. 

Finally, it is left to show that $x\sqrt{p(t,x)}\to 0$ as $x\to \pm\infty$. 
Therefore note that $p(t,x)\in L^1(\Real)\cap L^2(\Real)$ and
\begin{equation*}
\vert (x\sqrt{p(t,x)})_x\vert =\vert  \sqrt{p(t,x)}+ x\frac{p_x(t,x)}{2\sqrt{p(t,x)}}\vert \leq \sqrt{p(t,x)}+ \vert x\vert \sqrt{p(t,x)},
\end{equation*}
which together with \eqref{cond:mom} implies that 
\begin{equation*}
x\sqrt{p(t,x)} \in L^2(\Real) \quad \text{ and }\quad (x\sqrt{p(t,x)})_x\in L^2(\Real).
\end{equation*}
Hence $x\sqrt{p(t,x)}$ belongs to $H^1(\Real)$ for any fixed $t$ and in particular $x^2p(t,x)\to 0$ as $x\to \pm\infty$. These properties will allow us carry out several integrations by parts when deriving the anticipated Lipschitz estimate.

%%%%%%%%%%%%%%
%%%%%%%%%%%%%%
%%%%%%%%%%%%%%

\section{The right metric: solutions with different energies}\label{sec:difen}

For the remaining part of this paper, we will consider two distinct solutions $(u_j,\mu_j)$ for $j=1,2$ of the Camassa--Holm equation \eqref{eq:CHbasic} and estimate their difference using a carefully selected norm based on our new variables $(\Y_j, \U_j, \P_j^{1/2})$ for $j=1,2$ that yields a Lipschitz metric. The main idea remains to a large extent the one from the Hunter--Saxton equation in \cite{CGH}, where a Lipschitz metric for measuring the distance between solutions with non-zero energy has been constructed.

We want to define a Lipschitz metric, which can measure the distance between solutions with different total energy based on our new coordinates. At first sight this seems to be impossible for two reasons: \\
\noindent 1.: The support of the new variables depends on the total energy, since $$\Y, \U,\P^{1/2}:[0,2C]\to\Real.$$
\noindent 2.: In case of the zero-solution, i.e., $(u, \mu)=(0,0)$, the function $\Y$ is not defined and the same applies to the new variables $(\Y, \U, \P^{1/2})$.

The key idea is to use a rescaling. More precisely, given $(\Y, \U, \P^{1/2})$, where $\Y:[0,2C]\to \Real$ with $C\not=0$. Then we can introduce
\begin{align}
\tilde \Y(t,\eta)&= A\Y(t, A^2\eta), \quad \tilde \U(t,\eta)=A \U(t,A^2\eta), \notag\\
 \tilde\P^{1/2}(t,\eta)&=A\P^{1/2}(t, A^2\eta), \quad \tilde \Henergy(t,\eta)= A^3\Henergy(t, A^2\eta),\label{newchange}
\end{align}
with 
\begin{equation}\label{eq:defA}
A=\sqrt{2C}
\end{equation}
for notational purposes.  

This rescaling has three main properties that motivate our choice:\\
\noindent 1.: The support of the variables $(\tilde \Y, \tilde \U, \tilde \P^{1/2})$ is independent of the energy, which allows to compare arbitrary solutions with non-zero energy. \\
\noindent 2.: In the last section, we discussed that the natural solution space for the unknowns $(\Y, \U, \P^{1/2})$ is $L^2([0,2C])$. Since our rescaling preserves the $L^2$-norm, the natural solution space for the unknowns $(\tilde \Y, \tilde \U, \tilde \P^{1/2})$ is $L^2([0,1])$. \\
\noindent 3.: The most important property is that this rescaling allows us to set $$(\tilde \Y, \tilde \U, \tilde \P^{1/2})=(0,0,0) \quad \text{ for } \quad  (u,\mu)=(0,0).$$ 
A close look at the definition of $\tilde \Y(t,\eta)$ reveals that $\tilde \Y(t,\eta)$ is the pseudo inverse to $\tilde G(t,x)$, which is given by 
\begin{equation*}
\tilde G(t,x)=\frac{1}{\A^2}G(t, \frac{x}{\A}) \quad \text{ for } \A\not=0.
\end{equation*}
If one compares $\tilde G(t,x)$ with $G(t,x)$ for small $A$, then the graph of $G(t,x)$ is squeezed in the $x$ direction, but stretched in the $y$ direction, cf. Figure~\ref{resc}. 
\begin{figure}\centering
\includegraphics[width=4cm]{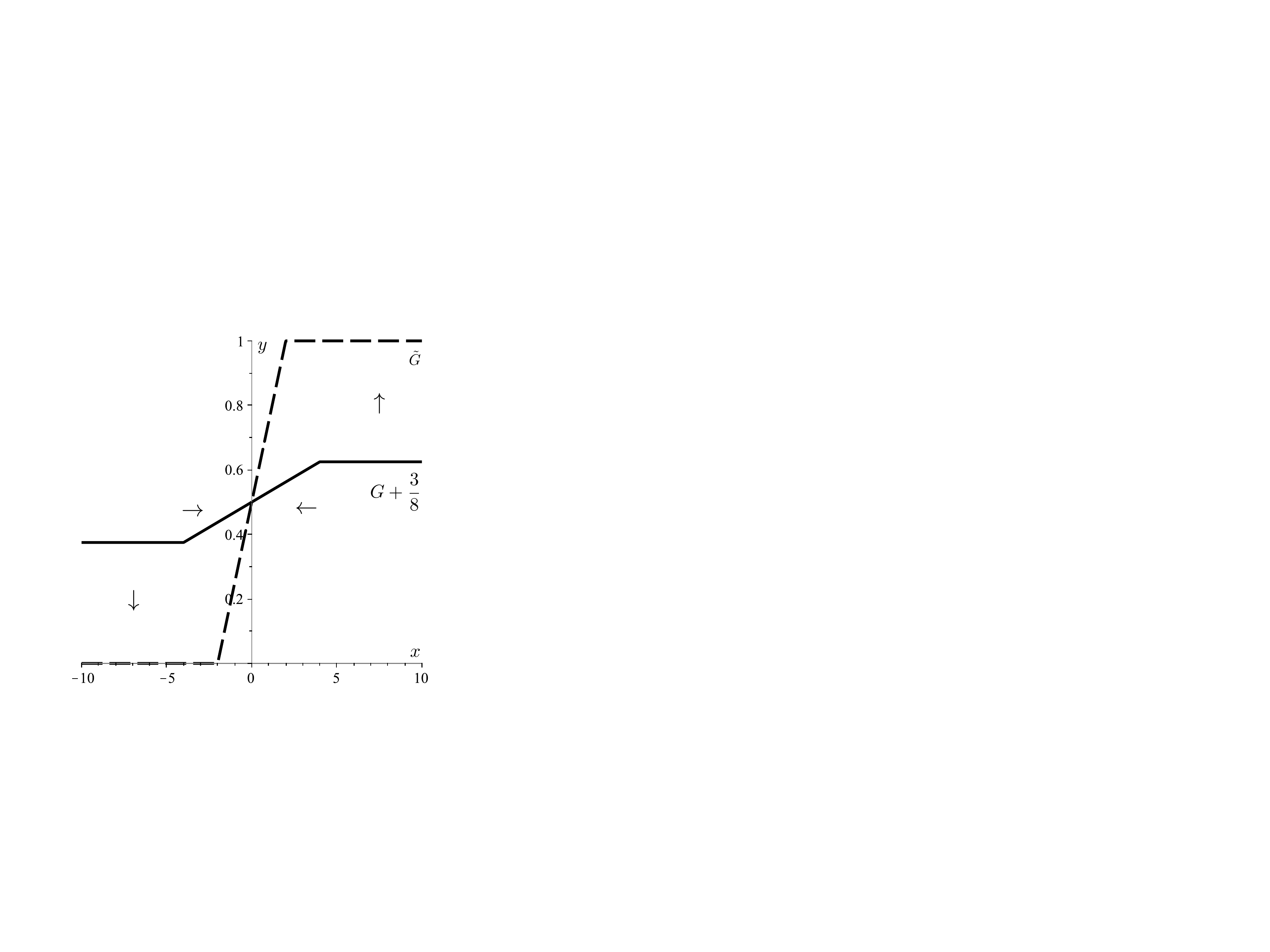}
\caption{An example of how the rescaling changes the graph of a function $G:\Real\to \frac14$. The picture shows the function $G+\frac{3}{8}$ (solid) and its rescaled version $\tilde G$ (dash).}\label{resc}
\end{figure}
Taking a sequence of functions $G_n(t,x)$ with $A_n$ tending to zero, then the corresponding sequence of functions $\tilde G_n(t,x)$ tends to the Heaviside function, which has as pseudo inverse $\tilde \Y(t,\eta)=0$, cf. Figure~\ref{resc2}.
\begin{figure}\centering
\includegraphics[width=13cm]{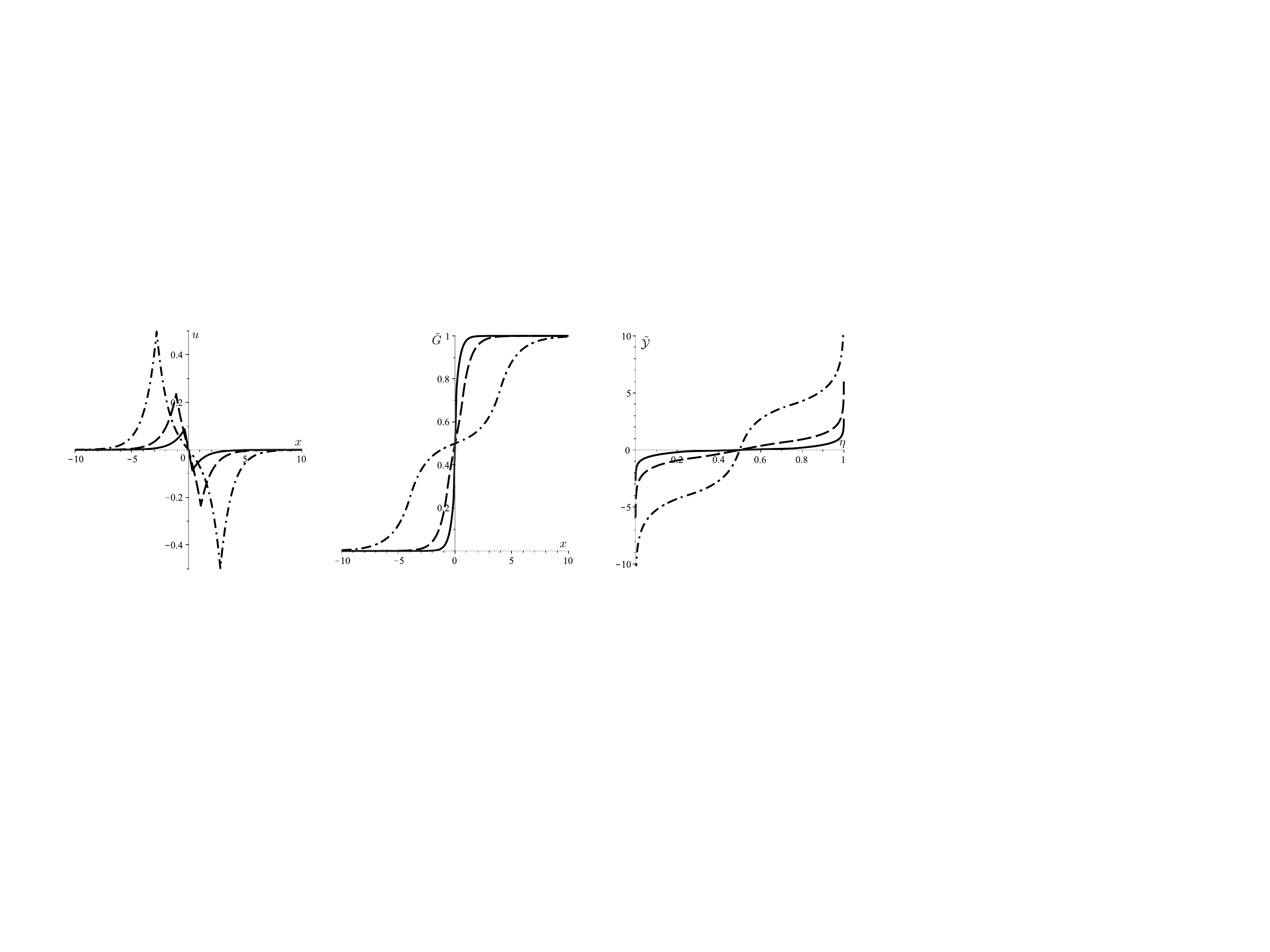}
\caption{Three different peakon-antipeakon solutions $u$ (left) with total energy $1$ (dashdot), $0.5$ (dash) and $0.25$ (solid), respectively and the corresponding functions $\tilde G$ (middle)  and $\tilde \Y$ (right).  }\label{resc2}
\end{figure}

Direct calculation yield the following theorem.
\begin{theorem}
Let $(u,\mu)$ denote a weak conservative solution of \eqref{eq:CHbasic}. Define $\tilde \Y$, $\tilde \U$ and $\tilde \P^{1/2}$ by \eqref{newchange}. Then the following system of differential equations holds
\begin{subequations}\label{finalsetequations}
\begin{align}
\tilde \Y_t+(\frac2{3}\frac{1}{A^5} \tilde \U^3+\frac{1}{A^6}\tilde \So)\tilde \Y_\eta& =\tilde \U, \label{eq:ynew}\\
\tilde \U_t+(\frac23 \frac{1}{A^5}\tilde \U^3+\frac{1}{A^6}\tilde \So)\tilde \U_\eta&= -\frac{1}{A^2}\tilde \Q,\\
(\tilde\P^{1/2})_t+ (\frac23\frac{1}{A^5} \tilde \U^3+\frac{1}{A^6}\tilde \So)(\tilde\P^{1/2})_\eta& =\frac{1}{2A^2}\frac{\tilde \Q\tilde\U}{\tilde\P^{1/2}}+\frac{1}{2A^3}\frac{\tilde \R}{\tilde\P^{1/2}},
\end{align}
\end{subequations}
where
\begin{subequations}
\begin{align}
\tilde \Q(t,\eta)&= A^3\Q(t,A^2\eta) \nn\\
& = -\frac14 \int_0^1 \sign(\eta-\theta)e^{-\frac{1}{A }\vert \tilde \Y(t,\eta)-\tilde \Y(t,\theta)\vert } (2(\tilde \U^2-\tilde \P)\tilde \Y_\eta(t,\theta)+A^5)d\theta,\\
\tilde \So(t,\eta)&=A^4\So(t,A^2\eta) \nn\\
&  = \int_0^1 e^{-\frac{1}{A}\vert \tilde \Y(t,\eta)-\tilde \Y(t,\theta)\vert} (\frac23 \tilde\U^3\tilde\Y_\eta-\tilde\U_\eta\tilde \Q-2\tilde\P\tilde \U\tilde\Y_\eta)(t,\theta)d\theta,\\
\tilde \R(t,\eta)&=A^5\R(t,A^2\eta) \nn\\
= & \,\frac14 \int_0^1 \sign(\eta-\theta)e^{-\frac{1}{A}\vert \tilde\Y(t,\eta)-\tilde\Y(t,\theta)\vert} (\frac23 A \tilde \U^3\tilde \Y_\eta+A^6\tilde\U)(t,\theta) \nn \\
& -\frac12 \int_0^1 e^{-\frac{1}{A}\vert \tilde \Y(t,\eta)-\tilde \Y(t,\theta)\vert} \tilde \U\tilde\Q\tilde \Y_\eta(t,\theta) d\theta.
\end{align}
\end{subequations}
\end{theorem}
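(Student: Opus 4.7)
The statement is essentially a bookkeeping exercise: the system \eqref{finalsetequations} is what the evolution \eqref{sys:perfect} becomes under the dilation \eqref{newchange}, and the integral formulas for $\tilde\Q, \tilde\So, \tilde\R$ are what \eqref{eq:funcTHEOREM} becomes after the change of variable $\theta\mapsto A^2\theta$. The plan is to apply the chain rule to \eqref{sys:perfect} evaluated at $(t, A^2\eta)$, then perform the change of variable in the integrals, tracking the powers of $A$ carefully.

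First I would record the chain-rule identities implied by \eqref{newchange}: writing $\bar\eta=A^2\eta$,
\begin{align*}
\tilde\Y_t(t,\eta)&=A\,\Y_t(t,\bar\eta), \quad \tilde\Y_\eta(t,\eta)=A^3\Y_\eta(t,\bar\eta),\\
\tilde\U_t(t,\eta)&=A\,\U_t(t,\bar\eta), \quad \tilde\U_\eta(t,\eta)=A^3\U_\eta(t,\bar\eta),
\end{align*}
and the analogous pair for $\tilde\P^{1/2}$. Substituting the inverse relations $\U(t,\bar\eta)=\tilde\U/A$, $\P(t,\bar\eta)=\tilde\P/A^2$, $\Y_\eta(t,\bar\eta)=\tilde\Y_\eta/A^3$ into \eqref{sys:perfect} at the point $(t,\bar\eta)$ and multiplying by $A$ produces \eqref{finalsetequations}, provided one sets $\tilde\Q=A^3\Q(t,\bar\eta)$, $\tilde\So=A^4\So(t,\bar\eta)$, $\tilde\R=A^5\R(t,\bar\eta)$. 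The exponents in the dilation \eqref{newchange} have been tuned precisely so that these three scaling relations close the system with coefficients depending only on $A$, which is the content we must verify.

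For the integral representations, I would start from \eqref{eq:funcTHEOREM} written at $\bar\eta$, change variables $\theta=A^2\phi$ with $\phi\in[0,1]$ (picking up the Jacobian $A^2$), and use $|\Y(t,\bar\eta)-\Y(t,A^2\phi)|=A^{-1}|\tilde\Y(t,\eta)-\tilde\Y(t,\phi)|$ inside every exponential kernel. The integrands transform according to $2(\U^2-\P)\Y_\eta(t,A^2\phi)=2(\tilde\U^2-\tilde\P)\tilde\Y_\eta(t,\phi)/A^5$, $\tfrac23\U^3\Y_\eta(t,A^2\phi)=\tfrac23\tilde\U^3\tilde\Y_\eta/A^6$, $\U_\eta\Q(t,A^2\phi)=\tilde\U_\eta\tilde\Q/A^6$, $\P\U\Y_\eta(t,A^2\phi)=\tilde\P\tilde\U\tilde\Y_\eta/A^6$, $\U\Q\Y_\eta(t,A^2\phi)=\tilde\U\tilde\Q\tilde\Y_\eta/A^7$. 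After collecting these factors with the Jacobian and with the overall multiplier ($A^3$, $A^4$, or $A^5$) dictated by the scaling of $\Q,\So,\R$, one reads off the three formulas in the theorem; in particular the constant $1$ inside the defining integral for $\Q$ becomes $A^5$ inside the integral for $\tilde\Q$, and the single factor $A$ in the $\tfrac23 A\tilde\U^3\tilde\Y_\eta$ term of $\tilde\R$ arises from the mismatch in the scaling powers of $\tfrac23\U^3\Y_\eta$ versus $\U$.

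There is no conceptual obstacle in this proof; it is a direct transcription. The only place where one must be attentive is bookkeeping the five distinct powers of $A$ that appear ($A$ from time derivatives, $A^3$ from $\eta$-derivatives, $A^2$ from the Jacobian, and $A^{-1}$ and $A^{-2}$ from the inverse substitutions inside kernels and nonlinearities). Once the scaling relations $\tilde\Q=A^3\Q$, $\tilde\So=A^4\So$, $\tilde\R=A^5\R$ are recognized as forced by the requirement that \eqref{finalsetequations} close on $(\tilde\Y,\tilde\U,\tilde\P^{1/2})$, everything follows from the theorem in Section \ref{sec:general} by substitution.
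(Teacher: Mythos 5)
Your proposal is correct and is exactly the computation the paper has in mind: the paper dispenses with the proof by the single remark ``Direct calculation yield the following theorem,'' and your chain-rule bookkeeping together with the substitution $\theta=A^2\phi$ (Jacobian $A^2$, kernel exponent rescaled by $1/A$) is precisely that calculation, with all the powers of $A$ tracked correctly, including the constant $1\mapsto A^5$ in $\tilde\Q$ and the asymmetric factors $A$ and $A^6$ in $\tilde\R$.
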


Now, we can define the metric between two general conservative solutions to \eqref{eq:CHbasic}. To simplify the notation we will from now on assume that all norms are on $L^2([0,1])$, unless otherwise indicated, and write
\begin{equation*}
\norm{\Psi}=\norm{\Psi}_{L^2([0,1])},
\end{equation*}
for any function $\Psi$. We will not always explicitly indicate the time-dependence in all quantities and write $\norm{\Psi}$ rather than $\norm{\Psi(t)}$. 

%---------------------------
\begin{definition}\label{def:metric}
Let $(u_i,\mu_i)$ for $i=1,2$ denote two conservative solutions of \eqref{eq:CHbasic}.  We define the distance between them as
\begin{align*}
d((u_1,\mu_1), (u_2, \mu_2)) = &\, \norm{\tilde \Y_1 -\tilde \Y_2}+\norm{\tilde \U_1-\tilde\U_2}\\
& + \norm{\tilde \P^{1/2}_1 - \tilde \P^{1/2}_2} +\vert A_1-A_2 \vert,
\end{align*}
where $A_i=\sqrt{2C_i}$ and $C_i=\mu_i(\Real)$, $i=1,2$, and $(\tilde \Y_i,\tilde \U_i,\tilde \P_i^{1/2})$ are given by \eqref{newchange}.
\end{definition}
%---------------------------

\begin{remark}
Notice that in the case of one of the two solutions being the trivial solution, the distance reduces to the $L^2$ norm of the solution in the right functional space, i.e., 
\begin{equation*}
d((u,\mu), (0,0))=\norm{\tilde\Y}+\norm{\tilde\U}+\norm{\tilde\P^{1/2}}+\sqrt{2C},
\end{equation*}
where $C=\mu(\Real)$, that we already estimated in Subsection \ref{moments}. It remains to show that the right-hand side of the system \eqref{finalsetequations} is Lipschitz continuous with respect to the new unknowns $(\tilde \Y,\tilde \U,\tilde \P^{1/2})$. 
\end{remark}

The main result of this work can  finally be stated. The proof consists in showing the corresponding Lipschitz estimates in each of the components of the distance in Definition \ref{def:metric}. This is done in the next section in Lemmas \ref{lemlipy}, \ref{lemlipu} and \ref{lemma:sqP}. Collecting these results leads to our main theorem due to Gronwall's lemma.

\begin{theorem}
Consider initial data $u_{i,0}\in H^1(\Real)$, $\mu_{i,0}\in \mathcal{M}_+(\Real)$ such that $d(\muac)_{i,0}=(u_i^2+u_{i,x}^2)dx$ and $C_i=\mu_i(\Real)$, and let 
$(u_i,\mu_i)$ for $i=1,2$ denote the corresponding conservative solutions of the Camassa--Holm equation \eqref{eq:CHbasic}.  Then we have that
\begin{equation*}
 d((u_1(t),\mu_1(t)), (u_2(t), \mu_2(t)))\leq e^{\bigO(1)t}d((u_{1,0},\mu_{1,0}), (u_{2,0}, \mu_{2,0})),
\end{equation*}
where $\bigO(1)$ denotes a constant depending only on $\max_j(C_j)$ remaining bounded as $\max_j(C_j)\to 0$.
\end{theorem}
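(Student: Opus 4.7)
The plan is to bound the time derivative of each of the four components of $d$ by $\bigO(1)\,d$, so that Gronwall's lemma yields the conclusion. The fourth term $|A_1-A_2|$ requires no work: for conservative solutions the total energy $C_j=\mu_j(t,\Real)$ is independent of time, hence so is $A_j=\sqrt{2C_j}$, and $|A_1-A_2|$ is constant along the evolution.

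For each of the three $L^2$ norms I would follow the schematic procedure outlined in Subsection~\ref{dist1}. Writing the system \eqref{finalsetequations} in the form $\tilde f_t + \mathcal{A}(\tilde f)\,\tilde f_\eta = \mathcal{B}(\tilde f)$ with transport velocity $\mathcal{A}=\frac{2}{3A^5}\tilde\U^3+\frac{1}{A^6}\tilde\So$, differentiating $\norm{\tilde f_1-\tilde f_2}^2$ and integrating by parts produces four contributions: the boundary term $[\mathcal{A}(\tilde f_1)(\tilde f_1-\tilde f_2)^2]_0^1$; an interior term containing $\mathcal{A}(\tilde f_1)_\eta (\tilde f_1-\tilde f_2)^2$; a commutator term $\int_0^1 \tilde f_{2,\eta}(\mathcal{A}(\tilde f_1)-\mathcal{A}(\tilde f_2))(\tilde f_1-\tilde f_2)\,d\eta$; and the source term $\int_0^1 (\mathcal{B}(\tilde f_1)-\mathcal{B}(\tilde f_2))(\tilde f_1-\tilde f_2)\,d\eta$. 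The boundary term vanishes thanks to the moment bounds of Subsection~\ref{moments}, which after rescaling force $\tilde\Y,\tilde\U,\tilde\P^{1/2}$ to have the required decay at the endpoints $\eta=0^+,1^-$. The second term is absorbed by $\norm{\tilde f_1-\tilde f_2}^2$ using the pointwise bound $|\mathcal{A}_\eta|\leq\bigO(1)$ proved in Section~2 and preserved by the scaling through the rescaled form of \eqref{eq:basic}.

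The heart of the argument, and the main obstacle, lies in the remaining two contributions: establishing Lipschitz estimates for $\tilde\Q$, $\tilde\So$, $\tilde\R/\tilde\P^{1/2}$, and $\tilde\Q\tilde\U/\tilde\P^{1/2}$ in the $L^2$ topology of $(\tilde\Y,\tilde\U,\tilde\P^{1/2})$, with constants of order $\bigO(1)$ depending only on $\max_j(C_j)$. Each operator is a nonlocal integral whose kernel $e^{-|\tilde\Y(\eta)-\tilde\Y(\theta)|/A}$ and integrand are polynomials in $(\tilde\U,\tilde\P,\tilde\Y_\eta,\tilde\U_\eta,\tilde\Henergy_\eta)$. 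Taking differences splits into three pieces: kernel differences, which I would control through the elementary inequality $|e^{-a/A}-e^{-b/A}|\leq |a-b|/A$ and the uniform bound on $\tilde\P\,\tilde\Y_\eta$; differences of the polynomial integrand, handled by Cauchy--Schwarz against the weight $e^{-|\cdot|/A}\tilde\Y_\eta$ and the pointwise controls \eqref{eq:basic}; and differences induced by the explicit dependence of the rescaled coefficients on $A$, which are treated by the mean value theorem in $A$ and account for the $|A_1-A_2|$ contribution to $d$. The delicate issue is the singular factor $1/\tilde\P^{1/2}$ in \eqref{evol:sqrtP}: it is tamed by the pointwise bounds $\tilde\U^2\leq 2\tilde\P$ and $|\tilde\R|\leq \bigO(1)\tilde\P$ (the rescaled \eqref{con:PU} and \eqref{eq:R}), so that the right-hand side is already pointwise bounded by a multiple of $\tilde\P^{1/2}$, and its difference can then be estimated by Lipschitz bounds on $\tilde\Q$ and $\tilde\R$ themselves. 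The integration-by-parts identity behind \eqref{eq:Q_d} and the moment estimate \eqref{eq:109} will be used repeatedly to convert double integrals against $\tilde\P^2\tilde\Y_\eta$ into pointwise multiples of $\tilde\P$.

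Once these Lipschitz estimates are in place for each of the three components, combining them with the constancy of $A_j$ yields
\begin{equation*}
\frac{d}{dt}\, d\bigl((u_1,\mu_1),(u_2,\mu_2)\bigr)^2 \leq \bigO(1)\, d\bigl((u_1,\mu_1),(u_2,\mu_2)\bigr)^2,
\end{equation*}
and Gronwall's lemma produces the claimed exponential stability bound.
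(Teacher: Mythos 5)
Your overall architecture is exactly the paper's: the fourth component $|A_1-A_2|$ is constant in time because conservative solutions preserve $C_j=\mu_j(t,\Real)$, and the three $L^2$ components are handled by differentiating in time, integrating by parts along the common transport velocity $\frac{2}{3A^5}\tilde\U^3+\frac{1}{A^6}\tilde\So$, killing the boundary terms with the decay furnished by the moment bounds (cf.~\eqref{decay:impl}), and closing with Gronwall; the paper's proof of the theorem is precisely the combination of Lemmas \ref{lemlipy}, \ref{lemlipu} and \ref{lemma:sqP} with Gronwall's lemma. So the route is the same.

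The gap is that the decisive content --- the component-wise Lipschitz lemmas themselves --- is only sketched, and two of the tools you name are too coarse to carry it out as stated. First, when $A_1\neq A_2$ the two kernels $e^{-\cdot/A_1}$ and $e^{-\cdot/A_2}$ decay at different rates, and the bound $|e^{-a/A}-e^{-b/A}|\le |a-b|/A$ (or a bare mean-value argument in $A$) loses the exponential decay you need to pair the difference with the unbounded factors $\tilde\Y_{j,\eta}$, $\tilde\P_j\tilde\Y_{j,\eta}$, etc.; the proof requires the sharper estimate of Lemma \ref{lemma:enkel}~(ii), which keeps a kernel of rate $\tfrac{3}{4A}$ in front of $|A_1-A_2|$, and correspondingly all the quantitative $\tilde\P$-weighted integral bounds in \eqref{eq:all_Pestimates}. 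Second, because $\tilde\Y_{j,\eta}$ and $\tilde\U_{j,\eta}$ are not uniformly bounded, differences of products such as $\tilde\U_1^3\tilde\Y_{1,\eta}-\tilde\U_2^3\tilde\Y_{2,\eta}$ cannot be estimated termwise; the paper must reorganize them through the indicator/minimum decompositions (Lemma \ref{lemma:LUR} and the sets $B(\eta)$, $E$, $D$ of \eqref{Def:Bn}, \eqref{Def:EE}, \eqref{eq:set_e}) so that the term carrying $(\tilde\Y_{1,\eta}-\tilde\Y_{2,\eta})$ can itself be integrated by parts, and this in turn requires the nontrivial Lipschitz/differentiability lemmas for minima of nonlocal quantities (Lemmas \ref{lemma:1}--\ref{lemma:7} and the pointwise bound on $\tilde\D_1-\tilde\D_2$ in Lemma \ref{lemma:D}). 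Without these ingredients the "commutator" and source differences in your scheme do not close, so the plan, while aimed correctly, does not yet constitute a proof of the Lipschitz estimates on which the theorem rests.
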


Now, let us start to do some preparatory work for the next section by collecting the main estimates we need in the new variables. Introduce
\begin{subequations} \label{eq:DogEdef}
\begin{align}
\tilde\D(t,\eta)&= \int_0^\eta e^{\frac{1}{A}(\tilde \Y(t,\theta)-\tilde\Y(t,\eta))} ((\tilde\U^2-\tilde\P)\tilde\Y_{\eta}(t,\theta)+ \frac12A^5)d\theta\\ \nn
& =\frac12 \int_0^\eta e^{\frac{1}{A}(\tilde\Y(t,\theta)-\tilde\Y(t,\eta))} (\tilde\U^2\tilde\Y_{\eta}+\tilde\Henergy_{\eta})(t,\theta) d\theta,   \label{eq:Ddef}\\[2mm]
\tilde \E(t,\eta)&= \int_{\eta}^1 e^{\frac{1}{A}(\tilde \Y(t,\eta)-\tilde\Y(t,\theta))} ((\tilde \U^2-\tilde\P)\tilde \Y_{\eta} (t,\theta)+\frac{1}{2}A^5)d\theta\\ \nn
& =\frac12 \int_\eta^1 e^{\frac{1}{A}(\tilde\Y(t,\eta)-\tilde\Y(t,\theta))}(\tilde\U^2\tilde\Y_{\eta}+\tilde\Henergy_{\eta})(t,\theta) d\theta, \label{eq:Edef}
\end{align}
\end{subequations}
so that we can write 
\begin{equation} \label{eq:DogE}
\tilde \P(t,\eta)= \frac1{2A}(\tilde \D(t,\eta)+\tilde \E(t,\eta)) \qquad \text{ and } \qquad \tilde \Q(t,\eta)=\frac12(-\tilde \D(t,\eta)+\tilde \E(t,\eta))
\end{equation}
and 
\begin{equation} \label{eq:DogP}
\tilde\Q(t,\eta)=-\tilde \D(t,\eta)+A\tilde\P(t,\eta).
\end{equation}
Note that both $\tilde \D(t,\eta) $ and $\tilde \E(t,\eta)$ have some very nice properties. Namely, 
\begin{equation}\label{eq:DPEP}
0\leq \tilde \D(t,\eta) \leq 2A\tilde \P(t,\eta) \qquad \text{and} \qquad 0\leq \tilde \E(t,\eta) \leq 2A\tilde\P(t,\eta),
\end{equation}
and 
\begin{align}
\left| \frac{d}{d\eta} \tilde \D(t,\eta)\right| &=\left| (\tilde\U^2-\tilde\P)\tilde \Y_{\eta} (t,\eta)+\frac12 A^5-\frac{1}{A}\tilde \D\tilde \Y_{\eta}(t,\eta)\right|  \leq \bigO(1)A^5, \label{eq:D_deriv}\\
\left| \frac{d}{d\eta} \tilde \E(t,\eta)\right| &= \left| -(\tilde \U^2-\tilde \P)\tilde \Y_{\eta}(t,\eta)-\frac12 A^5+ \frac1A\tilde \E\tilde\Y_{\eta}(t,\eta)\right| \leq \bigO(1)A^5.\label{eq:E_deriv}
\end{align}
The scaling leads to some changes in the standard estimates. Some of the problematic terms depending on 
$\frac{1}{A}$ in the system \eqref{finalsetequations} can be controlled since 
\begin{equation}\label{evik:est}
2\tilde\P\tilde \Y_\eta(t,\eta)\leq A^5,\quad 2\vert \tilde \U\tilde \U_\eta(t,\eta)\vert \leq A^4, \quad \tilde\U^2\tilde\Y_\eta(t,\eta)\leq A^5, \, \text{and} \, \tilde\Henergy_\eta(t,\eta)\le A^5,
\end{equation}
from \eqref{eq:basic} and \eqref{newchange}, as well as
\begin{equation}\label{evik:estA}
A^2\tilde\U_\eta^2(t,\eta)\le \tilde\Henergy_\eta\tilde \Y_\eta(t,\eta)\leq A^5\tilde \Y_\eta(t,\eta).
\end{equation}
In addition, the equation \eqref{eq:PUYH} becomes
\begin{equation}\label{eq:PUYH_scale}
2\tilde\P\tilde\Y_\eta(t,\eta)-\tilde\U^2\tilde\Y_\eta(t,\eta)+ \tilde\Henergy_\eta(t,\eta)=A^5.
\end{equation}
Moreover, we can show that
\begin{align}\label{evik:estB}
4\norm{\tilde {\P}(t,\dott)}_{L^\infty}&=4\norm{A^2\P(t,\dott)}_{L^\infty}\leq A^4 \\ 
2\norm{\tilde\U^2(t,\dott)}_{L^\infty}&=2\norm{A^2\U^2(t,\dott)}_{L^\infty}\leq A^4. \nn
\end{align}

We now collect all the estimates we will need on our solutions in the new variables with the explicit dependence on $A$. This will be repeatedly used in the next section.
 \begin{subequations} \label{eq:all_estimates}
 \begin{alignat}{2}
0&\le4\tilde \P\le A^4, \quad & & \text{[from \eqref{evik:estB} and \eqref{LI2}]}\label{eq:all_estimatesA} \\
\sqrt{2}\abs{\tilde \U} &\le A^2,  \quad & & \text{[from  \eqref{evik:estB} and \eqref{LI1}]} \label{eq:all_estimatesB}\\
\abs{\tilde\Q}&\le A\tilde\P,   \quad & & \text{[from common sense]} \label{eq:all_estimatesC} \\
 \tilde \U^2 &\le 2\tilde \P,   \quad & & \text{[from  \eqref{con:PU}]} \label{eq:all_estimatesD} \\
2\tilde \P \tilde \Y_\eta&\le A^5,   \quad & & \text{[from   \eqref{evik:est}]} \label{eq:all_estimatesE} \\
2 \abs{\tilde \U \tilde \U_\eta}&\le A^4,  \quad & & \text{[from  \eqref{evik:est}]} \label{eq:all_estimatesF}\\
0\leq\tilde \U^2 \tilde\Y_\eta&\le A^5,  \quad & & \text{[from   \eqref{evik:est}]} \label{eq:all_estimatesG} \\
 \tilde \U^2_\eta &\le A^3 \tilde \Y_\eta,  \quad & & \text{[from \eqref{evik:estA} and \eqref{stand:est2}]} \label{eq:all_estimatesH}\\
\sqrt{2}\abs{\tilde\U}\tilde\P^{1/2} \tilde \Y_\eta &\le A^5,  \quad & & \text{[from \eqref{eq:all_estimatesE} and \eqref{eq:all_estimatesG}]} \label{eq:all_estimatesI}\\
\abs{\tilde \R}&\le \bigO(1)A^3 \tilde \P,  \quad & & \text{[from  \eqref{eq:R}]}\label{eq:all_estimatesJ} \\
0&\le\tilde \Henergy_\eta\le A^5,  \quad & & \text{[from  \eqref{evik:est}]} \label{eq:all_estimatesK} \\
0&\le  \tilde \Y_\eta,   \quad & & \text{[from  the definition]}  \label{eq:all_estimatesL} \\
A^2\tilde \U_\eta^2&\le \tilde \Henergy_\eta \tilde\Y_\eta,  \quad & & \text{[from  \eqref{evik:estA} and \eqref{stand:est2}]} \label{eq:all_estimatesM}\\
0&\leq \tilde \D \leq 2A\tilde \P,  \quad & & \text{[from \eqref{eq:DPEP}]} \label{eq:all_estimatesN}\\
0&\leq \tilde \E \leq 2A\tilde\P,   \quad & & \text{[from  \eqref{eq:DPEP}]} \label{eq:all_estimatesO}\\
2\sqrt{2}\tilde \P \tilde \U_\eta&\le A^6,  \quad & & 
\text{[from \eqref{eq:all_estimatesA}, \eqref{eq:all_estimatesE}, and \eqref{eq:all_estimatesH}]} \label{eq:all_estimatesP} \\
2\tilde \P \tilde \U_\eta^2&\le A^8,  \quad & & \text{[from \eqref{eq:all_estimatesE} and \eqref{eq:all_estimatesH}]} \label{eq:all_estimatesQ} \\
4\tilde \P \tilde \U_\eta^2&\le A^7\tilde\Y_\eta,  \quad & & \text{[from \eqref{eq:all_estimatesA} and \eqref{eq:all_estimatesH}]}. \label{eq:all_estimatesR}
\end{alignat}
\end{subequations}

In addition, we have have several integrals that appear frequently:
\begin{subequations} \label{eq:all_Pestimates}
\begin{align}
\int_0^\eta e^{-\frac{1}{A}(\tilde\Y(t,\eta)-\tilde\Y(t,\theta))}\tilde \U^2(t,\theta) d\theta & \le 6\tilde \P(t,\eta), \quad & & \text{[from  \eqref{est:L3b}]}, \label{eq:all_PestimatesA}\\
\int_0^\eta e^{-\frac{1}{A}(\tilde\Y(t,\eta)-\tilde\Y(t,\theta))}\tilde \P^2\tilde \Y_\eta(t,\theta) d\theta & \le \frac32 A^5\tilde \P(t,\eta), \quad & & \text{[from  \eqref{rel:P2PP} and \eqref{eq:all_estimatesA}]}, \label{eq:all_PestimatesB}\\
\int_0^\eta e^{-\frac{1}{A}(\tilde\Y(t,\eta)-\tilde\Y(t,\theta))}\tilde \U^2\tilde \Y_\eta(t,\theta) d\theta & \le 4A\tilde \P(t,\eta), \quad & & \text{[from  \eqref{eq:PestcalA}]},\label{eq:all_PestimatesC} \\
\int_0^\eta e^{-\frac{1}{A}(\tilde\Y(t,\eta)-\tilde\Y(t,\theta))}\tilde \Henergy_\eta(t,\theta) d\theta & \le 4A\tilde \P(t,\eta), \quad & & \text{[from  \eqref{eq:PestcalA}]},  \label{eq:all_PestimatesD}  \\
\int_0^\eta e^{-\frac3{2A} (\tilde\Y(t,\eta)-\tilde\Y(t,\theta))}\tilde\P\tilde\Y_{\eta} (t,\theta) d\theta&\leq 2A\tilde\P(t,\eta), \quad & & \text{[from  Lemma \ref{lemma:E}]}, 
\label{eq:343}  \\
\int_0^\eta e^{-\frac{3}{2A}(\tilde \Y(t,\eta)-\tilde \Y(t,\theta))} \tilde \Henergy_{\eta}(t,\theta) d\theta 
& \le 4A \tilde\P(t,\eta), \quad & & \text{[from  Lemma \ref{lemma:E}]},  \label{eq:Henergy32} \\
\int_0^\eta e^{-\frac{1}{A}(\tilde\Y(t,\eta)-\tilde\Y(t,\theta))} \tilde\U^2(t,\theta)d \theta & \leq 6 \tilde\P(t,\eta), \quad & & \text{[from  Lemma \ref{lemma:E}]},  \label{est:L3b} \\
\int_0^\eta e^{-\frac{5}{4A}(\tilde \Y(t,\eta)-\tilde \Y(t,\theta))} \tilde \P\tilde \Y_{\eta}(t,\theta) d\theta&\leq 4 A\tilde\P(t,\eta), \quad & & \text{[from  Lemma \ref{lemma:E}]}
   \label{eq:all_PestimatesE} \\
\int_0^\eta e^{-\frac3{2A} (\tilde\Y(t,\eta)-\tilde\Y(t,\theta))}\tilde \P(t,\theta) d\theta
& \leq 7\tilde\P(t,\eta), \quad & & \text{[from  Lemma \ref{lemma:E}]},  \label{eq:32P} \\
\int_0^\eta e^{-\frac1{2A}(\tilde\Y(t,\eta)-\tilde\Y(t,\theta))} \tilde\P^2\tilde\Y_{\eta} (t,\theta) d\theta&\leq A\bigO(1)\P^{1/2}(t,\eta), 
\quad & & \text{[from  Lemma \ref{lemma:E}]} \label{eq:p2y}  \\
\int_0^\eta e^{-\frac{1}{A}(\tilde \Y(t,\eta)-\tilde \Y(t,\theta))} \tilde \P^{1+\beta}\tilde \Y_{\eta}(t,\theta) d\theta
&\leq 3\frac{1+\beta}{\beta}\frac{A^{1+4\beta}}{4^{\beta}}\tilde \P(t,\eta), \, \beta>0,\, & & \text{[from  Lemma \ref{lemma:E}]}.  \label{eq:general}
\end{align}
\end{subequations}
As for derivatives of the quantity $\P^{1/2}$ we have
\begin{subequations}\label{eq:all_Pderiv_estimates}
\begin{align}
\abs{(\tilde \P^{1/2})_\eta}&\le \frac1{2A} \tilde \P^{1/2}\tilde  \Y_\eta, \label{eq:all_Pderiv_estimatesA}\\
\abs{\tilde \U(\tilde \P^{1/2})_\eta}&\le \frac38 A^4, \label{eq:all_Pderiv_estimatesB}\\
\big((\tilde \P^{1/2})_\eta\big)^2&\le \frac18 A^3 \tilde \Y_\eta, \label{eq:all_Pderiv_estimatesC}\\
\big((\tilde \P^{1/2})_\eta\big)^2&\le \frac1{16} A^2\tilde  \Y_\eta^2.\label{eq:all_Pderiv_estimatesD}
\end{align}
\end{subequations}

%%%%%%%%%%%%%%
%%%%%%%%%%%%%%
%%%%%%%%%%%%%%

\section{Lipschitz estimates in the new metric} \label{sec:Lip}

We will repeatedly use the following elementary identity and estimate.
\begin{lemma}  \label{lemma:LUR}
Let $a_j,b_j\in\Real$ for $j=1,2$. Then we have
\begin{equation*}
a_2b_2-a_1b_1=(b_2-b_1)\big(a_2\mathbbm{1}_{b_1<b_2}+a_1\mathbbm{1}_{b_1\ge b_2} \big)+\min(b_1,b_2)(a_2-a_1).
\end{equation*}
Here 
\[
\mathbbm{1}_\mathcal{K}=\begin{cases}
1,& \text{$\mathcal{K}$ is true}, \\
0, & \text{$\mathcal{K}$ is false.}
\end{cases}
\]
We also use $\mathbbm{1}_\mathcal{K}$ to denote the characteristic (indicator) function of a set $\mathcal{K}$.
Furthermore, we have the following estimate:
\begin{equation*}
\abs{\min(a_1,b_1)-\min(a_2,b_2)}\le \max\big(\abs{a_1-a_2},\abs{b_1-b_2}\big).
\end{equation*}
\end{lemma}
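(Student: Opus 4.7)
The plan is to verify both statements by elementary case analysis; both are purely algebraic facts with no analytic content, so the proof amounts to bookkeeping.

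For the identity, I would split into the two cases $b_1<b_2$ and $b_1\ge b_2$, which exhaust all possibilities and match exactly the partition used by the indicators in the statement. In the case $b_1<b_2$ one has $\mathbbm{1}_{b_1<b_2}=1$, $\mathbbm{1}_{b_1\ge b_2}=0$, and $\min(b_1,b_2)=b_1$, so the right-hand side collapses to $(b_2-b_1)a_2+b_1(a_2-a_1)$, which telescopes to $a_2b_2-a_1b_1$. The case $b_1\ge b_2$ is entirely symmetric with the roles of the indices swapped in the first summand and $\min(b_1,b_2)=b_2$, and it also telescopes to $a_2b_2-a_1b_1$. Conceptually, the formula is a discrete version of the product rule $d(ab)=b\,da+a\,db$, where the coefficient on the increment $a_2-a_1$ is chosen to be the smaller of $b_1,b_2$ and the coefficient on $b_2-b_1$ is the value of $a$ at whichever index carries the larger $b$.

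For the estimate on $\min$, the shortest route is to observe that $(x,y)\mapsto\min(x,y)$ is $1$-Lipschitz with respect to the $\ell^\infty$ norm on $\Real^2$. By the symmetry $(a_j,b_j)\leftrightarrow(b_j,a_j)$ and $j=1\leftrightarrow j=2$, one may assume $\min(a_1,b_1)\ge\min(a_2,b_2)$ and that $\min(a_2,b_2)=a_2$. Then $\min(a_1,b_1)\le a_1$ gives
\[
0\le \min(a_1,b_1)-\min(a_2,b_2)\le a_1-a_2\le \max\bigl(|a_1-a_2|,|b_1-b_2|\bigr),
\]
which is the claim. If one prefers a symbolic argument, the identity $\min(x,y)=\tfrac12(x+y)-\tfrac12|x-y|$ combined with the triangle inequality $\bigl||x-y|-|x'-y'|\bigr|\le|x-x'|+|y-y'|$ gives the same bound after a short computation.

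No step is an obstacle; the only things to be careful about are (i) making the case split in the identity exhaustive and disjoint by matching the strict/non-strict inequalities in the indicators exactly, and (ii) handling the symmetry reduction in the $\min$-estimate cleanly so that the four sub-cases (which of $a_j,b_j$ realises each minimum) are all covered by one representative computation.
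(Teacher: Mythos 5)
Your proposal is correct. The paper itself states this lemma without proof (it is treated as an elementary algebraic fact and is only invoked to prove the subsequent lemma on the minimum of two Lipschitz functions), so there is no authorial argument to compare against; your two-case verification of the identity (the cases $b_1<b_2$ and $b_1\ge b_2$, each telescoping to $a_2b_2-a_1b_1$) and your symmetry-reduced argument for the $\min$-estimate are exactly the intended elementary checks, and both are complete.

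One small caution on your parenthetical alternative for the second part: if you bound $\bigl\vert\,\abs{a_1-b_1}-\abs{a_2-b_2}\,\bigr\vert$ by $\abs{a_1-a_2}+\abs{b_1-b_2}$, as your stated triangle inequality suggests, the computation via $\min(x,y)=\tfrac12(x+y)-\tfrac12\abs{x-y}$ only yields the weaker bound $\abs{a_1-a_2}+\abs{b_1-b_2}$. To recover the claimed maximum you need the sharper reverse triangle inequality $\bigl\vert\,\abs{a_1-b_1}-\abs{a_2-b_2}\,\bigr\vert\le\abs{(a_1-a_2)-(b_1-b_2)}$ together with the identity $\max(\abs{u},\abs{v})=\tfrac12\big(\abs{u+v}+\abs{u-v}\big)$ applied with $u=a_1-a_2$, $v=b_1-b_2$. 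Since your primary case argument already settles the estimate, this is only a matter of tightening the aside, not a gap in the proof.
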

\begin{lemma}
Let $f,g\colon\Real\to\Real$ be two Lipschitz continuous functions with Lipschitz constant $c_f$ and $c_g$, respectively. Then the function
$h\colon\Real\to\Real$ defined by $h=\min(f,g)$ is  Lipschitz continuous function with Lipschitz constant bounded by $\max(c_f,c_g)$.
\end{lemma}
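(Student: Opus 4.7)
The plan is to deduce this from the elementary pointwise estimate already established in Lemma~\ref{lemma:LUR}, namely
\[
\abs{\min(a_1,b_1) - \min(a_2,b_2)} \le \max\bigl(\abs{a_1-a_2},\abs{b_1-b_2}\bigr)
\]
valid for any $a_1,a_2,b_1,b_2\in\Real$. I would fix two arbitrary points $x_1,x_2\in\Real$ and apply this estimate with $a_i = f(x_i)$ and $b_i = g(x_i)$ for $i=1,2$, which yields
\[
\abs{h(x_1) - h(x_2)} \le \max\bigl(\abs{f(x_1)-f(x_2)},\abs{g(x_1)-g(x_2)}\bigr).
\]

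Next, substituting the Lipschitz bounds $\abs{f(x_1)-f(x_2)}\le c_f\abs{x_1-x_2}$ and $\abs{g(x_1)-g(x_2)}\le c_g\abs{x_1-x_2}$ into the right-hand side gives $\max(c_f,c_g)\abs{x_1-x_2}$. Since $x_1,x_2$ were arbitrary, the desired Lipschitz bound on $h$ with constant at most $\max(c_f,c_g)$ follows immediately.

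There is no real obstacle in this argument: the entire proof reduces to two lines once the preceding lemma is in hand. An alternative route would be to use the representation $\min(a,b)=\tfrac12(a+b-\abs{a-b})$ together with the triangle inequality, but this only produces a Lipschitz constant of $\tfrac12(c_f+c_g+\max(c_f,c_g))$ or $c_f+c_g$, which is weaker than what Lemma~\ref{lemma:LUR} allows; so leveraging that lemma directly is the cleanest path.
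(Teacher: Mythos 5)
Your proof is correct and follows exactly the route the paper intends: its proof of this lemma is literally the one-liner ``Use the previous lemma,'' i.e., apply the pointwise estimate $\abs{\min(a_1,b_1)-\min(a_2,b_2)}\le\max\bigl(\abs{a_1-a_2},\abs{b_1-b_2}\bigr)$ from Lemma~\ref{lemma:LUR} with $a_i=f(x_i)$, $b_i=g(x_i)$ and then insert the Lipschitz bounds, which is precisely what you did.
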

\begin{proof}
Use the previous lemma.
\end{proof}
Introduce for any function $\Phi$ the functions  
\begin{equation}\label{eq:pm0}
\Phi^-=\min(0, \Phi), \quad \Phi^+=\max(0, \Phi).
\end{equation}
We then have
\begin{equation}\label{eq:pm1}
\Phi^-\le0\le  \Phi^+, \quad \Phi^-\Phi^+=0, \quad \Phi=\Phi^+ +\Phi^-, \quad \abs{\Phi}=\Phi^+ -\Phi^-. 
\end{equation}
For two functions $\Phi,\Psi$ we have
\begin{equation}\label{eq:pm2}
\vert \Phi^\pm - \Psi^\pm\vert  \le\vert \Phi- \Psi\vert.
\end{equation}

Frequently we will have to estimate quantities like 
\[
 \int_0^{1} e^{-\frac{1}{A}\vert \tilde\Y(t,\eta)-\tilde\Y(t,\theta)\vert } (\cdots)(t,\theta) d\theta.
\]
We will rewrite it as follows
\begin{align} \nn
 \int_0^{1}& e^{-\frac{1}{A}\vert \tilde\Y(t,\eta)-\tilde\Y(t,\theta)\vert } (\cdots)(t,\theta) d\theta\\  \nn
 &=
\big( \int_0^{\eta}+\int^1_{\eta}\big) e^{-\frac{1}{A}\vert \tilde\Y(t,\eta)-\tilde\Y(t,\theta)\vert } (\cdots)(t,\theta) d\theta\\
&=  \int_0^{\eta} e^{-\frac{1}{A}(\tilde\Y(t,\eta)-\tilde\Y(t,\theta))} (\cdots)(t,\theta) d\theta
+ \int^1_{\eta} e^{\frac{1}{A}(\tilde\Y(t,\eta)-\tilde\Y(t,\theta)) } (\cdots)(t,\theta) d\theta. \label{eq:triks}
\end{align}
Both integrals can be estimated in the same manner, using that the argument in the exponential is negative in both cases.  Eliminating the absolute value will allow us to perform integration by parts.

\subsection{Lipschitz estimates for $\tilde\Y$}\label{subsec:lipy}
From the system of differential equations, we have
\begin{equation}\label{eq:Ylip}
\tilde \Y_{i,t}+(\frac23 \frac{1}{\sqtC{i}^5} \tilde \U_i^3+\frac{1}{\sqtC{i}^6}\tilde \So_i)\tilde \Y_{i,\eta}=\tilde \U_i,
\end{equation}
where 
\begin{align*}
\tilde \P_i(t,\eta)&=\frac{1}{4\sqtC{i}} \int_0^{1} e^{-\frac{1}{\sqtC{i}}\vert \tilde \Y_i(t,\eta)-\tilde\Y_i(t,\theta)\vert} (2(\tilde \U_i^2-\tilde \P_i)\tilde \Y_{i,\eta}(t,\theta)+\sqtC{i}^5)d\theta,\\
\tilde \Q_i(t,\eta)&= -\frac14 \int_0^{1} \sign(\eta-\theta) e^{-\frac{1}{\sqtC{i}}\vert \tilde\Y_i(t,\eta)-\tilde\Y_i(t,\theta)\vert} (2(\tilde\U_i^2-\tilde\P_i)\tilde\Y_{i,\eta}(t,\theta)+\sqtC{i}^5) d\theta,\\
\tilde\So_i(t,\eta)& = \int_0^{1} e^{-\frac{1}{\sqtC{i}}\vert \tilde\Y_i(t,\eta)-\tilde\Y_i(t,\theta)\vert } (\frac23 \tilde\U_i^3\tilde\Y_{i,\eta}-\tilde\Q_i\tilde\U_{i,\eta}-2\tilde\P_i\tilde\U_i\tilde\Y_{i,\eta})(t,\theta)d\theta.
\end{align*}
Thus we have 
\begin{align} \nn
\frac{d}{dt} \int_0^{1} (\tilde\Y_1-\tilde\Y_2)^2(t,\eta)d\eta &= 2\int_0^{1} (\tilde\Y_1-\tilde \Y_2)(\tilde\Y_{1,t}-\tilde\Y_{2,t})(t,\eta) d\eta\\ \nn
& = 2\int_0^{1} (\tilde\Y_1-\tilde \Y_2)(\tilde\U_1-\tilde\U_2)(t,\eta)d\eta\\ \nn
& \quad + \frac4{3} \int_0^{1} (\tilde\Y_1-\tilde\Y_2)(\frac{1}{\sqtC{2}^5}\tilde\U_2^3\tilde\Y_{2,\eta}-\frac{1}{\sqtC{1}^5}\tilde\U_1^3\tilde\Y_{1,\eta})(t,\eta)d\eta\\ \nn
& \quad + 2 \int_0^{1} (\tilde\Y_1-\tilde\Y_2)(\frac{1}{\sqtC{2}^6}\tilde\So_2\tilde\Y_{2,\eta}-\frac{1}{\sqtC{1}^6}\tilde\So_1\tilde\Y_{1,\eta})(t,\eta)d\eta\\
& = I_1+I_2+I_3. \label{eq:I1-3}
\end{align}
The strategy is to use integration by parts for the last two integrals $I_2$ and $I_3$, while we want to use straight forward estimates for $I_1$, which will finally yield that 
\begin{align*}
\frac{d}{dt}&\norm{\tilde\Y_1-\tilde\Y_2}^2\\ &\leq \bigO(1)\Big(\norm{\tilde\Y_1-\tilde\Y_2}^2+\norm{\tilde\U_1-\tilde\U_2}^2+ 
\norm{\sqP{1}-\sqP{2}}^2
+\vert \sqtC{1}-\sqtC{2}\vert ^2\Big),
\end{align*}
where $\bigO(1)$ denotes some constant which depends on $A=\max_j(A_j)$ and which remains bounded as $A\to0$ .

{\it The first integral $I_1$:} Note that 
\begin{align*}
\vert I_1\vert=\vert 2\int_0^{1} (\tilde\Y_1-\tilde\Y_2)(\tilde\U_1-\tilde\U_2)(t,\eta) d\eta\vert & \leq \int_0^{1} ((\tilde\Y_1-\tilde\Y_2)^2+(\tilde\U_1-\tilde\U_2)^2)(t,\eta) d\eta\\
& = \norm{\tilde\Y_1-\tilde\Y_2}^2+\norm{\tilde\U_1-\tilde\U_2}^2.
\end{align*}

{\it The second integral $I_2$:}
Note that 
\begin{align*} 
\frac34\vert I_2\vert&=\vert \int_0^{1} (\tilde\Y_1-\tilde\Y_2) (\frac{1}{\sqtC{2}^5}\tilde\U_2^3 \tilde\Y_{2,\eta}-\frac{1}{\sqtC{1}^5}\tilde\U_1^3\tilde\Y_{1,\eta})(t,\eta)d\eta\vert \\ 
& \leq \frac{1}{(\max_j(\sqtC{j}))^5}\vert\int_0^1 (\tilde\Y_1-\tilde\Y_2) (\tilde\U_2^3\tilde\Y_{2,\eta}-\tilde\U_1^3\tilde\Y_{1,\eta})(t,\eta) d\eta\vert \\
& \quad +  \frac{\vert \sqtC{1}^5-\sqtC{2}^5\vert}{\sqtC{1}^5\sqtC{2}^5} \vert \int_0^1 (\tilde\Y_1-\tilde\Y_2)(\tilde\U_1^3\tilde\Y_{1,\eta}\mathbbm{1}_{\sqtC{1}\leq \sqtC{2}}+ \tilde\U_2^3\tilde\Y_{2,\eta} \mathbbm{1}_{\sqtC{2}< \sqtC{1}})(t,\eta)d\eta\vert \\
& \leq \frac{1}{\A^5}\vert \int_0^{1} (\tilde\Y_1- \tilde\Y_2)(\tilde\U_2-\tilde\U_1) \tilde\U_2^2\tilde\Y_{2,\eta}(t,\eta) d\eta\vert\\ 
& \quad +\frac{1}{\A^5}\vert \int_0^{1} (\tilde\Y_1- \tilde\Y_2)[\tilde\U_1\tilde\Y_{1,\eta} \mathbbm{1}_{\tilde\U_2^2\leq \tilde\U_1^2}+\tilde\U_1 \tilde\Y_{2,\eta}\mathbbm{1}_{\tilde \U_1^2<\tilde\U_2^2}]\\
&\qquad\qquad\qquad\qquad\qquad\qquad\qquad\qquad \times(\tilde\U_2+\tilde\U_1)(\tilde\U_2-\tilde\U_1)(t,\eta) d\eta\vert\\ 
& \quad + \frac1{\A^5}\vert \int_0^{1} (\tilde\Y_1-\tilde\Y_2)\tilde\U_1\min_j(\tilde\U_j^2)( \tilde\Y_{2,\eta}-\tilde\Y_{1,\eta})(t,\eta)d \eta\vert\\
&\quad +  \frac{\vert \sqtC{1}^5-\sqtC{2}^5\vert}{\sqtC{1}^5\sqtC{2}^5} \vert \int_0^1 (\tilde\Y_1-\tilde\Y_2)(\tilde\U_1^3\tilde\Y_{1,\eta}\mathbbm{1}_{\sqtC{1}\leq \sqtC{2}}+ \tilde\U_2^3\tilde\Y_{2,\eta} \mathbbm{1}_{\sqtC{2}< \sqtC{1}})(t,\eta)d\eta\vert \\
& = J_1+J_2+J_3+J_4.
\end{align*}
We will write
\begin{equation}\label{eq:Amaxmin}
A=\max_j(A_j), \quad a=\min_j(A_j).
\end{equation}
We commence with $J_1$: Since $\tilde \U_i^2\tilde \Y_{i,\eta}(t,\eta)\leq \sqtC{i}^5\leq \A^5$ for all $t$ and $\eta$, we have 
\begin{align*}
J_1&=\frac{1}{A^5}\vert \int_0^{1} (\tilde\Y_1-\tilde\Y_2)(\tilde\U_2-\tilde\U_1)\tilde\U_2^2\tilde\Y_{2,\eta}(t,\eta) d\eta\vert \\
&\leq  \norm{\tilde\Y_1-\tilde \Y_2}^2+\norm{\tilde\U_1-\tilde\U_2}^2.
\end{align*}

Next term is $J_2$: Using once more that $\tilde \U_i^2\tilde\Y_{i,\eta}(t,\eta)\leq A^5$ for all $t$ and $\eta$, we get 
\begin{align*} \nn
J_2&=\frac{1}{\A^5}\vert \int_0^{1} (\tilde\Y_1-\tilde\Y_2) [\tilde\U_1\tilde\Y_{1,\eta} \mathbbm{1}_{\tilde\U_2^2\leq \tilde\U_1^2}+\tilde\U_1\tilde\Y_{2,\eta}\mathbbm{1}_{ \tilde\U_1^2<\tilde\U_2^2}]\\ \nn
&\qquad\qquad\qquad\qquad\qquad\qquad\qquad\qquad \times(\tilde\U_2+\tilde\U_1)(\tilde\U_2-\tilde\U_1)(t,\eta) d\eta\vert\\ \nn
& \leq \frac{1}{\A^5}\int_0^{1} \vert \tilde\Y_1-\tilde\Y_2\vert \vert \tilde\U_1- \tilde\U_2\vert (2\tilde\U_1^2\tilde\Y_{1,\eta}\mathbbm{1}_{\tilde\U_2^2\leq \tilde\U_1^2}+2\tilde\U_2^2 \tilde\Y_{2,\eta}\mathbbm{1}_{\tilde\U_1^2<\tilde\U_2^2})(t,\eta)d\eta\\
& \leq 2(\norm{\tilde\Y_1-\tilde\Y_2}^2+\norm{\tilde\U_1-\tilde\U_2}^2).
\end{align*}

Next, it is $J_3$: Here integration by parts plays an essential role. Thus we have to determine first whether or not the function $\eta\mapsto\tilde\U_1\min_j(\tilde \U_j^2)(t,\eta)$ is differentiable, and, if so, if its derivative is bounded. Recall from Lemma \ref{lemma:1} (ii) that the function $\eta\mapsto \min_j(\tilde \U_j^2)(t,\eta)$ is Lipschitz continuous with Lipschitz constant at most $A^4$. Thus  
\begin{equation}  \label{eq:U2lip}
\vert \frac{d}{d\eta}\left(\tilde\U_1(t,\eta)\min_j(\tilde\U_j^2(t,\eta))\right)\vert \leq \frac32 A^4 \norm{\tilde\U_1}_{L^\infty}\leq \bigO(1)A^6,
\end{equation}
and integration by parts together with \eqref{decay:impl} yields
\begin{align*}
J_3&=\frac{1}{\A^5}\vert \int_0^{1} (\tilde\Y_1-\tilde\Y_2) \tilde\U_1\min_j(\tilde\U_j^2)(\tilde\Y_{2,\eta}-\tilde\Y_{1,\eta})(t,\eta)d \eta\vert \\
& = \big\vert-\frac1{2\A^5} (\tilde\Y_1-\tilde\Y_2)^2\tilde\U_1\min_j(\tilde\U_j^2)(t,\eta)\big|_{\eta=0}^{1}\\
& \qquad + \frac1{2A^5} \int_0^{1} (\tilde\Y_1-\tilde\Y_2)^2 \frac{d}{d\eta} (\tilde\U_1\min_j(\tilde\U_j^2)) (t,\eta) d\eta\big\vert \\
& = \frac1{2A^5} \vert \int_0^{1} (\tilde\Y_1-\tilde\Y_2)^2 \frac{d}{d\eta} (\tilde\U_1\min_j(\tilde\U_j^2)) (t,\eta) d\eta\vert\\
& \leq \frac{3}{4\A}\norm{\tilde\U_1(t,\dott)}_{L^\infty} \int_0^{1} (\tilde\Y_1-\tilde\Y_2)^2(t,\eta)d\eta\\
& \leq \bigO(1) \norm{\tilde\Y_1-\tilde\Y_2}^2.
\end{align*}

Finally, we consider $J_4$: Direct calculations yield 
\begin{align*}
J_4&\leq \frac{\vert \sqtC{1}^5-\sqtC{2}^5\vert}{\sqtC{1}^5\sqtC{2}^5} \Big( \mathbbm{1}_{\sqtC{1}\leq \sqtC{2}}\vert \int_0^1 (\tilde\Y_1-\tilde\Y_2)\tilde\U_1^3\tilde\Y_{1,\eta}(t,\eta) d\eta\vert \\
&\qquad\qquad\qquad\qquad+\mathbbm{1}_{\sqtC{2}< \sqtC{1}}\vert \int_0^1(\tilde\Y_1-\tilde\Y_2)\tilde \U_2^3\tilde\Y_{2,\eta}(t,\eta) d\eta\vert \Big)\\
& \leq \frac{\vert \sqtC{1}^5-\sqtC{2}^5\vert}{\sqtC{1}^5\sqtC{2}^5} \left(\mathbbm{1}_{\sqtC{1}\leq \sqtC{2}}\sqtC{1}^7+\mathbbm{1}_{\sqtC{2}< \sqtC{1}} \sqtC{2}^7\right) \int_0^1\vert \tilde\Y_1-\tilde\Y_2\vert (t,\eta)d\eta\\
& \leq \frac{\vert \sqtC{1}^5-\sqtC{2}^5\vert}{\A^3}\norm{\tilde\Y_1-\tilde\Y_2}\\
& \leq 5\A  \vert \sqtC{1}-\sqtC{2}\vert \norm{\tilde\Y_1-\tilde\Y_2}\\
& \leq \bigO(1)\Big(\norm{\tilde\Y_1-\tilde\Y_2}^2+\vert \sqtC{1}-\sqtC{2}\vert ^2\Big),
\end{align*}
where we again used that $\tilde\U_i^2\tilde\Y_{i,\eta}\leq \sqtC{i}^5$ and $\norm{\tilde\U_i}_{L^\infty}\leq \sqtC{i}^2$.

{\it The third integral $I_3$:} We will consider several smaller parts of $I_3$ by inserting the definition of $\tilde\So_i$,  and combine them in the end.  We write
\begin{equation}  \label{eq:I3_delt}
I_3=\frac43I_{31}-4I_{32}-2I_{33},
\end{equation}
where
\begin{align*}
I_{31}&= \int_0^{1} (\tilde\Y_1-\tilde\Y_2)(t,\eta)\\ 
& \qquad \times\Big(\frac{1}{\sqtC{2}^6}\tilde\Y_{2,\eta}(t,\eta)\int_0^{1} e^{-\frac{1}{\sqtC{2}}\vert \tilde\Y_2(t,\eta)-\tilde\Y_2(t,\theta)\vert}\tilde\U_2^3\tilde\Y_{2,\eta}(t,\theta)d\theta\\
&\qquad\qquad\qquad\qquad\qquad-\frac{1}{\sqtC{1}^6}\tilde\Y_{1,\eta}(t,\eta)\int_0^{1} e^{-\frac{1}{\sqtC{1}}\vert \tilde\Y_1(t,\eta)-\tilde\Y_1(t,\theta)\vert} \tilde\U_1^3\tilde\Y_{1,\eta}(t,\theta) d\theta\Big)d\eta,\\[2mm]
I_{32}&=\int_0^{1} (\tilde\Y_1-\tilde\Y_2)(t,\eta)\\ 
& \qquad\quad \times\Big(\frac{1}{\sqtC{2}^6}\tilde\Y_{2,\eta}(t,\eta)\int_0^{1} e^{-\frac{1}{\sqtC{2}}\vert \tilde\Y_2(t,\eta)-\tilde\Y_2(t,\theta)\vert}\tilde\P_2\tilde\U_2\tilde\Y_{2,\eta}(t,\theta)d\theta\\
&\qquad\qquad\qquad\qquad-\frac{1}{\sqtC{1}^6}\tilde\Y_{1,\eta}(t,\eta)\int_0^{1} e^{-\frac{1}{\sqtC{1}}\vert \tilde\Y_1(t,\eta)-\tilde\Y_1(t,\theta)\vert} \tilde\P_1\tilde\U_1\tilde\Y_{1,\eta}(t,\theta) d\theta\Big)d\eta,   \\[2mm]
I_{33}&=\int_0^{1} (\tilde\Y_1-\tilde\Y_2)(t,\eta)\\ 
& \qquad \times\Big(\frac{1}{\sqtC{2}^6}\tilde\Y_{2,\eta}(t,\eta)\int_0^{1} e^{-\frac{1}{\sqtC{2}}\vert \tilde\Y_2(t,\eta)-\tilde\Y_2(t,\theta)\vert}\tilde\Q_2\tilde\U_{2,\eta}(t,\theta)d\theta\\
&\qquad\qquad\qquad\qquad\qquad-\frac{1}{\sqtC{1}^6}\tilde\Y_{1,\eta}(t,\eta)\int_0^{1} e^{-\frac{1}{\sqtC{1}}\vert \tilde\Y_1(t,\eta)-\tilde\Y_1(t,\theta)\vert} \tilde\Q_1\tilde\U_{1,\eta}(t,\theta) d\theta\Big)d\eta.
\end{align*}

Recall the definitions \eqref{eq:pm0}. Then we have 
\begin{align*}
I_{31}&= \int_0^{1} (\tilde\Y_1-\tilde\Y_2)(t,\eta)\\ 
& \qquad \times\Big(\frac{1}{\sqtC{2}^6}\tilde\Y_{2,\eta}(t,\eta)\int_0^{1} e^{-\frac{1}{\sqtC{2}}\vert \tilde\Y_2(t,\eta)-\tilde\Y_2(t,\theta)\vert}\tilde\U_2^3\tilde\Y_{2,\eta}(t,\theta)d\theta\\
&\qquad\qquad\qquad-\frac{1}{\sqtC{1}^6}\tilde\Y_{1,\eta}(t,\eta)\int_0^{1} e^{-\frac{1}{\sqtC{1}}\vert \tilde\Y_1(t,\eta)-\tilde\Y_1(t,\theta)\vert} \tilde\U_1^3\tilde\Y_{1,\eta}(t,\theta) d\theta\Big)d\eta\\
& \quad =  \int_0^{1} (\tilde\Y_1-\tilde\Y_2)(t,\eta)\\
& \qquad \times\Big(\frac{1}{\sqtC{2}^6}\tilde\Y_{2,\eta}(t,\eta)\int_0^{1} e^{-\frac{1}{\sqtC{2}}\vert \tilde\Y_2(t,\eta)-\tilde\Y_2(t,\theta)\vert}(\tilde\V_2^-)^3\tilde\Y_{2,\eta}(t,\theta)d\theta\\
&\qquad\qquad\qquad\qquad-\frac{1}{\sqtC{1}^6}\tilde\Y_{1,\eta}(t,\eta)\int_0^{1} e^{-\frac{1}{\sqtC{1}}\vert \tilde\Y_1(t,\eta)-\tilde\Y_1(t,\theta)\vert} (\tilde\V_1^-)^3\tilde\Y_{1,\eta}(t,\theta) d\theta\Big)d\eta\\
& \qquad +  \int_0^{1} (\tilde\Y_1-\tilde\Y_2)(t,\eta)\\ 
& \qquad \times\Big(\frac{1}{\sqtC{2}^6}\tilde\Y_{2,\eta}(t,\eta)\int_0^{1} e^{-\frac{1}{\sqtC{2}}\vert \tilde\Y_2(t,\eta)-\tilde\Y_2(t,\theta)\vert}(\tilde\V_2^+)^3\tilde\Y_{2,\eta}(t,\theta)d\theta\\
&\qquad\qquad\qquad\qquad-\frac{1}{\sqtC{1}^6}\tilde\Y_{1,\eta}(t,\eta)\int_0^{1} e^{-\frac{1}{\sqtC{1}}\vert \tilde\Y_1(t,\eta)-\tilde\Y_1(t,\theta)\vert} (\tilde\V_1^+)^3\tilde\Y_{1,\eta}(t,\theta) d\theta\Big)d\eta \\
&= \int_0^{1} (\tilde\Y_1-\tilde\Y_2)(\tilde K^-+\tilde K^+) (t,\eta)d\eta.
\end{align*}
Since both integrals have the same structure, it suffices to consider the second integral. Furthermore, applying the device described in \eqref{eq:triks}, it suffices to study the terms, denoted  $K^\pm$, with the upper limit of the inner integral replaced by $\eta$. 

Therefore observe that we can write
\begin{align} \nn
& K^+(t,\eta) \\ \nn
&\quad= \frac{1}{\sqtC{2}^6}\tilde\Y_{2,\eta}(t,\eta)  \int_0^{\eta} e^{-\frac{1}{\sqtC{2}}(\tilde\Y_2(t,\eta)-\tilde\Y_2(t,\theta))} (\tilde\V_2^+)^3\tilde\Y_{2,\eta}(t,\theta) d\theta\\ \nn
&\quad\qquad\qquad-\frac{1}{\sqtC{1}^6}\tilde\Y_{1,\eta} (t,\eta)\int_0^{\eta} e^{-\frac{1}{\sqtC{1}}(\tilde\Y_1(t,\eta)-\tilde\Y_1(t,\theta))} (\tilde \V^+_1)^3\tilde\Y_{1,\eta}(t,\theta) d\theta\\ \nn
&\quad = \frac{1}{(\max_j(\sqtC{j}))^6}\Big(\tilde \Y_{2,\eta}(t,\eta)\int_0^\eta e^{-\frac{1}{\sqtC{2}}(\tilde\Y_2(t,\eta)-\tilde\Y_2(t,\theta))}(\tilde\V_2^+)^3\tilde\Y_{2,\eta}(t,\theta) d\theta\\ \nn
&\quad \qquad \qquad \qquad \qquad\qquad -\tilde\Y_{1,\eta}(t,\eta)\int_0^\eta e^{-\frac{1}{\sqtC{1}}(\tilde\Y_1(t,\eta)-\tilde\Y_1(t,\theta))}(\tilde\V_1^+)^3\tilde\Y_{1,\eta}(t,\theta)d\theta\Big)\\ \nn
& \quad\quad + \frac{\sqtC{1}^6-\sqtC{2}^6}{\sqtC{1}^6\sqtC{2}^6}\mathbbm{1}_{\sqtC{1}\leq \sqtC{2}} \tilde \Y_{1,\eta}(t,\eta) \int_0^\eta e^{-\frac{1}{\sqtC{1}}(\tilde\Y_1(t,\eta)-\tilde\Y_1(t,\theta))} (\tilde\V_1^+)^3\tilde \Y_{1,\eta}(t,\theta) d\theta\\ \nn
&\quad \quad + \frac{\sqtC{1}^6-\sqtC{2}^6}{\sqtC{1}^6\sqtC{2}^6} \mathbbm{1}_{\sqtC{2}< \sqtC{1}} \tilde \Y_{2,\eta}(t,\eta)\int_0^\eta e^{-\frac{1}{\sqtC{2}}(\tilde\Y_2(t,\eta)-\tilde\Y_2(t,\theta))}(\tilde\V_2^+)^3\tilde\Y_{2,\eta}(t,\theta)d\theta\\ \nn
&\quad = \frac{1}{\A^6}\tilde\Y_{2,\eta}(t,\eta)\\ \nn
&\qquad\times \left(\int_0^{\eta} e^{-\frac{1}{\sqtC{2}}(\tilde\Y_2(t,\eta)-\tilde\Y_2(t,\theta))} ((\tilde\V_2^+)^3-(\tilde\V_1^+)^3) \tilde\Y_{2,\eta}(t,\theta) \mathbbm{1}_{\tilde\V_1^+\leq \tilde\V_2^+}(t,\theta)d\theta\right)\\ \nn
&\quad \quad +\frac{1}{\A^6}\tilde\Y_{1,\eta}(t,\eta)\\ \nn
&\qquad\times\left(\int_0^\eta e^{-\frac{1}{\sqtC{1}}(\tilde\Y_1(t,\eta)-\tilde\Y_1(t,\theta))}((\tilde\V_2^+)^3- (\tilde\V_1^+)^3)\tilde\Y_{1,\eta}(t,\theta) \mathbbm{1}_{\tilde\V_2^+<\tilde\V_1^+}(t,\theta)d\theta\right)\\ \nn
&\quad \quad + \frac{1}{\A^6}\tilde\Y_{2,\eta}(t,\eta)\left(\int_0^{\eta} e^{-\frac{1}{\sqtC{2}}(\tilde\Y_2(t,\eta)-\tilde\Y_2(t,\theta))}\min_j(\tilde\V_j^+)^3\tilde\Y_{2,\eta}(t,\theta)d\theta\right) \\ \nn
& \quad\quad-\frac{1}{\A^6}\tilde\Y_{1,\eta}(t,\eta) \left(\int_0^{\eta} e^{-\frac{1}{\sqtC{1}}(\tilde\Y_1(t,\eta)-\tilde\Y_1(t,\theta))} \min_j(\tilde\V_j^+)^3\tilde\Y_{1,\eta} (t,\theta) d\theta\right)\\ \nn
&\quad \quad + \frac{\sqtC{1}^6-\sqtC{2}^6}{\sqtC{1}^6\sqtC{2}^6}\mathbbm{1}_{\sqtC{1}\leq \sqtC{2}} \tilde \Y_{1,\eta}(t,\eta) \int_0^\eta e^{-\frac{1}{\sqtC{1}}(\tilde\Y_1(t,\eta)-\tilde\Y_1(t,\theta))} (\tilde\V_1^+)^3\tilde \Y_{1,\eta}(t,\theta) d\theta\\ \nn
&\quad \quad + \frac{\sqtC{1}^6-\sqtC{2}^6}{\sqtC{1}^6\sqtC{2}^6} \mathbbm{1}_{\sqtC{2}< \sqtC{1}} \tilde \Y_{2,\eta}(t,\eta)\int_0^\eta e^{-\frac{1}{\sqtC{2}}(\tilde\Y_2(t,\eta)-\tilde\Y_2(t,\theta))}(\tilde\V_2^+)^3\tilde\Y_{2,\eta}(t,\theta)d\theta\\ \nn
& \quad= \frac{1}{\A^6}\tilde\Y_{2,\eta}(t,\eta) \Big(\int_0^{\eta} e^{-\frac{1}{\sqtC{2}}(\tilde\Y_2(t,\eta)-\tilde\Y_2(t,\theta))}\\ \nn
&\qquad\qquad\qquad\qquad\qquad\times ((\tilde\V_2^+)^3-(\tilde\V_1^+)^3) \tilde\Y_{2,\eta}(t,\theta) \mathbbm{1}_{\tilde\V_1^+\leq \tilde\V_2^+}(t,\theta)d\theta\Big)\\ \nn
&\quad \quad +\frac{1}{\A^6}\tilde\Y_{1,\eta}(t,\eta)\Big(\int_0^\eta e^{-\frac{1}{\sqtC{1}}(\tilde\Y_1(t,\eta)-\tilde\Y_1(t,\theta))}\\ \nn
&\qquad\qquad\qquad\qquad\qquad\times ((\tilde\V_2^+)^3-(\tilde\V_1^+)^3)\tilde\Y_{1,\eta}(t,\theta) \mathbbm{1}_{\tilde\V_2^+<\tilde\V_1^+}(t,\theta)d\theta\Big)\\ \nn
& \quad\quad + \frac{1}{\A^6} \mathbbm{1}_{\sqtC{1}\leq \sqtC{2}}\tilde \Y_{2,\eta}(t,\eta)\\ \nn
&\qquad\quad\times \left(\int_0^\eta (e^{-\frac{1}{\sqtC{2}}(\tilde \Y_2(t,\eta)-\tilde\Y_2(t,\theta))}-e^{-\frac{1}{\sqtC{1}}(\tilde\Y_2(t,\eta)-\tilde\Y_2(t,\theta))} ) \min_j(\tilde\V_j^+)^3\tilde \Y_{2,\eta}(t,\theta) d\theta\right)\\ \nn
&\quad \quad +\frac{1}{\A^6} \mathbbm{1}_{\sqtC{2}<\sqtC{1}}\tilde \Y_{1,\eta}(t,\eta)\\ \nn
&\qquad\quad\times\left(\int_0^\eta (e^{-\frac{1}{\sqtC{2}}(\tilde\Y_1(t,\eta)-\tilde \Y_1(t,\theta))}-e^{-\frac{1}{\sqtC{1}}(\tilde \Y_1(t,\eta)-\tilde \Y_1(t,\theta))})\min_j(\tilde \V_j^+)^3\tilde \Y_{1,\eta}(t,\theta) d\theta\right)\\ \nn
& \quad\quad +\frac{1}{\A^6} \tilde\Y_{2,\eta}(t,\eta) \Big(\int_0^\eta (e^{-\frac{1}{\ma}(\tilde\Y_2(t,\eta)-\tilde\Y_2(t,\theta))}-e^{ -\frac{1}{\ma}(\tilde\Y_1(t,\eta)-\tilde\Y_1(t,\theta))}) \\ \nn
&\quad\qquad\qquad\qquad\qquad\qquad\qquad\times\min_j(\tilde\V_j^+)^3\tilde\Y_{2,\eta}(t,\theta)\mathbbm{1}_{B(\eta)}(t,\theta) d\theta\Big)\\  \nn
&\quad \quad -\frac{1}{\A^6}\tilde\Y_{1,\eta}(t,\eta) \Big(\int_0^\eta (e^{-\frac{1}{\ma}(\tilde\Y_1(t,\eta)-\tilde\Y_1(t,\theta))}-e^{-\frac{1}{\ma}(\tilde\Y_2(t,\eta)-\tilde\Y_2(t,\theta))})\\ \nn
&\quad\qquad\qquad\qquad\qquad\qquad\qquad\times\min_j(\tilde\V_j^+)^3 \tilde\Y_{1,\eta}(t,\theta) \mathbbm{1}_{B^c(\eta)} (t,\theta) d\theta\Big)\\ \nn
&\quad \quad + \frac{1}{\A^6}\tilde\Y_{2,\eta}(t,\eta) \Big(\int_0^\eta \min_j(e^{-\frac{1}{\ma}(\tilde\Y_j(t,\eta)-\tilde\Y_j(t,\theta))})\\ \nn
&\quad\qquad\qquad\qquad\qquad\qquad\qquad\times\min_j(\tilde\V_j^+)^3\tilde\Y_{2,\eta}(t,\theta) d\theta\Big)\\ \nn
&\quad \quad - \frac{1}{\A^6} \tilde\Y_{1,\eta}(t,\eta) \Big(\int_0^\eta \min_j(e^{-\frac{1}{\ma}(\tilde\Y_j(t,\eta)-\tilde\Y_j(t,\theta))})\\ \nn
&\quad\qquad\qquad\qquad\qquad\qquad\qquad\times\min_j(\tilde\V_j^+)^3\tilde\Y_{1,\eta}(t,\theta) d\theta\Big)\\ \nn
&\quad \quad + \frac{\sqtC{1}^6-\sqtC{2}^6}{\sqtC{1}^6\sqtC{2}^6}\mathbbm{1}_{\sqtC{1}\leq \sqtC{2}} \tilde \Y_{1,\eta}(t,\eta) \int_0^\eta e^{-\frac{1}{\sqtC{1}}(\tilde\Y_1(t,\eta)-\tilde\Y_1(t,\theta))} (\tilde\V_1^+)^3\tilde \Y_{1,\eta}(t,\theta) d\theta\\ \nn
&\quad \quad + \frac{\sqtC{1}^6-\sqtC{2}^6}{\sqtC{1}^6\sqtC{2}^6} \mathbbm{1}_{\sqtC{2}< \sqtC{1}} \tilde \Y_{2,\eta}(t,\eta)\int_0^\eta e^{-\frac{1}{\sqtC{2}}(\tilde\Y_2(t,\eta)-\tilde\Y_2(t,\theta))}(\tilde\V_2^+)^3\tilde\Y_{2,\eta}(t,\theta)d\theta\\ 
&\quad = (J_1+J_2+J_3+J_4+J_5+J_6+J_7+J_8+J_9+J_{10})(t,\eta).  \label{eq:alleJ}
\end{align}
Here 
\begin{equation}\label{Def:Bn}
B(\eta)=\{(t,\theta)\mid e^{\tilde\Y_1(t,\theta)-\tilde \Y_1(t,\eta)}\leq e^{\tilde\Y_2(t,\theta)-\tilde\Y_2(t,\eta)} \}
\end{equation}
which means especially that $B(\eta)$ depends heavily on $\eta$!  Observe that the set $B(\eta)$ is scale invariant in the sense that $(\tilde\Y_1,\tilde\Y_2)$ and $(c\tilde\Y_1,c\tilde\Y_2)$ define the same set $B(\eta)$ for any constant $c$.

As far as the first two terms $J_1$ and $J_2$ are concerned, they have the same structure, and hence we only consider the term  $J_1$, i.e., 
\begin{align*}
\int_0^{1} J_1(&\tilde\Y_1-\tilde\Y_2)(t,\eta)d\eta \\
&=\frac{1}{\A^6}\int_0^{1} (\tilde\Y_1-\tilde\Y_2) \tilde\Y_{2,\eta}(t,\eta) \\
&\quad\times\left(\int_0^{\eta} e^{-\frac{1}{\sqtC{2}}(\tilde\Y_2(t,\eta)-\tilde\Y_2(t,\theta))} ((\tilde\V_2^+)^3- (\tilde\V_1^+)^3) \tilde\Y_{2,\eta}(t,\theta) \mathbbm{1}_{\tilde\V_1^+\leq \tilde\V_2^+}(t,\theta)d\theta\right) d\eta.
\end{align*}
The main ingredients are the following observations
\begin{equation*}
\vert (\tilde\V_1^+)^3(t,\eta)-(\tilde\V_2^+)^3(t,\eta)\vert\mathbbm{1}_{\tilde \V_1^+\leq \tilde\V_2^+} \leq 3 \tilde\U_2^2(t,\eta) \vert \tilde\U_1(t,\eta)-\tilde \U_2(t,\eta)\vert,
\end{equation*} 
and \eqref{eq:all_PestimatesC}.

 Thus we have 
 \begin{align*}
\vert &\int_0^{1} J_1(\tilde\Y_1-\tilde\Y_2)(t,\eta)d\eta\vert\\
&=\frac{1}{\A^6}\Big\vert \int_0^{1} (\tilde\Y_1-\tilde \Y_2) \tilde\Y_{2,\eta}(t,\eta)\\
&\qquad \times \left(\int_0^{\eta} e^{-\frac{1}{\sqtC{2}}(\tilde\Y_2(t,\eta)-\tilde\Y_2(t,\theta))} ((\tilde\U_2^+)^3-(\tilde\U_1^+)^3) \tilde\Y_{2,\eta}(t,\theta) \mathbbm{1}_{\tilde\V_1^+\leq \tilde\V_2^+}(t,\theta)d\theta\right)d\eta\Big\vert \\
& \leq \norm{\tilde\Y_1-\tilde\Y_2}^2 +\frac{1}{\A^{12}} \int_0^{1} \tilde\Y_{2,\eta}^2(t,\eta)\\
&\qquad \times \left(\int_0^\eta e^{-\frac{1}{\sqtC{2}}(\tilde\Y_2(t,\eta)-\tilde\Y_2(t,\theta))} ((\tilde\V_2^+)^3-(\tilde \V_1^+)^3) \tilde\Y_{2,\eta}(t,\theta) \mathbbm{1}_{\tilde\V_1^+\leq \tilde\V_2^+}(t,\theta)d\theta\right)^2d\eta\\
& \leq \norm{\tilde\Y_1-\tilde\Y_2}^2\\
& \quad +\frac{9}{\A^{12}} \int_0^{1} \tilde\Y_{2,\eta}^2(t,\eta) \left(\int_0^\eta e^{-\frac{1}{\sqtC{2}}(\tilde\Y_2(t,\eta)-\tilde\Y_2(t,\theta))} \tilde\U_2^2 \tilde\Y_{2,\eta}(t,\theta) d\theta\right)\\
& \qquad \qquad \times \left( \int_0^\eta e^{-\frac{1}{\sqtC{2}}(\tilde\Y_2(t,\eta)-\tilde\Y_2(t,\theta))} \tilde\U_2^2\tilde\Y_{2,\eta} (\tilde\U_1-\tilde\U_2)^2(t,\theta) d\theta\right) d\eta\\ 
& \leq \norm{\tilde\Y_1-\tilde\Y_2}^2\\
& \quad +\frac{36}{\A^{11}}\int_0^{1} \tilde\P_2\tilde\Y_{2,\eta}^2(t,\eta) \left( \int_0^\eta e^{-\frac{1}{\sqtC{2}}(\tilde\Y_2(t,\eta)-\tilde\Y_2(t,\theta))} \tilde\U_2^2\tilde\Y_{2,\eta} (\tilde\U_1-\tilde\U_2)^2(t,\theta) d\theta\right) d\eta\\
& \leq \norm{\tilde\Y_1-\tilde\Y_2}^2 \\
& \quad + \frac{18}{\A^6}\int_0^{1} \tilde\Y_{2,\eta}(t,\eta) e^{-\frac{1}{\sqtC{2}}\tilde\Y_2(t,\eta)} \int_0^\eta e^{\frac{1}{\sqtC{2}}\tilde\Y_2(t,\theta)}\tilde\U_2^2\tilde\Y_{2,\eta}(\tilde\U_1-\tilde\U_2)^2(t,\theta) d\theta d\eta\\
& = \norm{\tilde\Y_1-\tilde\Y_2}^2\\
& \quad -\frac{18 \sqtC{2}}{\A^6}\int_0^\eta e^{-\frac{1}{\sqtC{2}}(\tilde\Y_2(t,\eta)-\tilde\Y_2(t,\theta))}\tilde\U_2^2\tilde\Y_{2,\eta}(\tilde\U_1-\tilde\U_2)^2(t,\theta) d\theta\big|_{\eta=0}^{1}\\
& \quad + \frac{18\sqtC{2}}{\A^6}\int_0^{1} \tilde\V_2^2\tilde\Y_{2,\eta} (\tilde\U_1-\tilde \U_2)^2(t,\eta) d\eta\\
& \leq \norm{\tilde\Y_1-\tilde\Y_2}^2+18\norm{\tilde\U_1-\tilde\U_2}^2,
\end{align*}
where we in the last step used \eqref{eq:all_estimatesB} and \eqref{eq:all_PestimatesC}, which imply that 
\begin{equation}\label{eq:lhs}
\int_0^\eta e^{-\frac{1}{\sqtC{2}}(\tilde\Y_2(t,\eta)-\tilde\Y_2(t,\theta))}\tilde\U_2^2\tilde\Y_{2,\eta}(\tilde\U_1-\tilde\U_2)^2(t,\theta) d\theta\leq 8\A^5 \tilde\P_2(t,\eta),
\end{equation} 
and hence the left-hand side of \eqref{eq:lhs} tends to $0$ as $\eta$ to $0$ or $1$ according to \eqref{decay:impl}. Thus the second term above vanishes.

As far as the third and the fourth term $J_3$ and $J_4$ are concerned, they again have the same structure, and hence we only consider the integral corresponding to $J_3$, i.e.,
\begin{align*}
\int_0^1& J_3(\tilde \Y_1-\tilde \Y_2)(t,\eta) d\eta \\
&= \frac{1}{\A^6} \mathbbm{1}_{\sqtC{1}\leq \sqtC{2}}\int_0^1 (\tilde \Y_1-\tilde \Y_2)\tilde \Y_{2,\eta}(t,\eta) \\
& \quad \times \Big(\int_0^\eta (e^{-\frac{1}{\sqtC{2}}(\tilde \Y_2(t,\eta)-\tilde \Y_2(t,\theta))}-e^{-\frac{1}{\sqtC{1}}(\tilde \Y_2(t,\eta)-\tilde \Y_2(t,\theta))})\min_j(\tilde \V_j^+)^3 \tilde \Y_{2,\eta}(t,\theta)d\theta\Big) d\eta.
\end{align*}

Recall Lemma \ref{lemma:enkel} (ii). Direct computations yield
\begin{align*}
& \vert \int_0^1 J_3(\tilde \Y_1-\tilde \Y_2)(t,\eta) d\eta\vert \\
& \quad =\frac{1}{\A^6} \mathbbm{1}_{\sqtC{1}\leq \sqtC{2}}\vert \int_0^1 (\tilde \Y_1-\tilde \Y_2) 
\tilde \Y_{2,\eta}(t,\eta)\\
& \qquad \times \Big(\int_0^\eta (e^{-\frac{1}{\sqtC{2}}(\tilde \Y_2(t,\eta)-\tilde \Y_2(t,\theta))}-e^{-\frac{1}{\sqtC{1}}(\tilde \Y_2(t,\eta)-\tilde \Y_2(t,\theta))}\min_j(\tilde \V_j^+)^3\tilde \Y_{2,\eta}(t,\theta) d\theta\Big)d\eta\vert\\
& \quad \leq \frac{4\vert \sqtC{1}-\sqtC{2}\vert }{\ma \A^6 e} \mathbbm{1}_{\sqtC{1}\leq \sqtC{2}} \int_0^1 \vert \tilde\Y_1-\tilde \Y_2\vert \tilde \Y_{2,\eta}(t,\eta)\\
& \qquad \times\Big(\int_0^\eta e^{-\frac{3}{4\sqtC{2}} (\tilde \Y_2(t,\eta)-\tilde \Y_2(t,\theta))}\min_j(\tilde \V_j^+)^3 \tilde \Y_{2,\eta}(t,\theta) d\theta\Big) d\eta \\
&\quad  \leq \norm{\tilde \Y_1-\tilde \Y_2}^2+\frac{16\vert \sqtC{1}-\sqtC{2}\vert ^2}{\ma^2\A^{12}e^2}\mathbbm{1}_{\sqtC{1}\leq\sqtC{2}}\\ 
& \qquad\qquad\qquad \times\int_0^1 \tilde \Y_{2,\eta}^2(t,\eta)\Big(\int_0^\eta e^{-\frac{3}{4\sqtC{2}}(\tilde \Y_2(t,\eta)-\tilde \Y_2(t,\theta))}\vert \tilde \U_1\vert \tilde \U_2^2\tilde \Y_{2,\eta}(t,\theta)d\theta\Big)^2d\eta \\
& \quad \leq \norm{\tilde\Y_1-\tilde\Y_2}^2\\
& \qquad +\frac{16\ma^4\vert \sqtC{1}-\sqtC{2}\vert ^2}{\ma^2\A^{12}e^2}\int_0^1 \tilde \Y_{2,\eta}^2 (t,\eta)\Big(\int_0^\eta e^{-\frac{1}{\sqtC{2}} (\tilde \Y_2(t,\eta)-\tilde \Y_2(t,\theta))}\tilde \U_2^2\tilde \Y_{2,\eta}(t,\theta) d\theta\Big)\\
& \qquad \qquad \qquad \qquad \qquad \times  \Big(\int_0^\eta e^{-\frac{1}{2\sqtC{2}}(\tilde \Y_2(t,\eta)-\tilde \Y_2(t,\theta))}\tilde \U_2^2\tilde \Y_{2,\eta}(t,\theta) d\theta\Big) d\eta\\
& \quad \leq \norm{\tilde\Y_1-\tilde\Y_2}^2\\
& \qquad  +\frac{64\vert \sqtC{1}-\sqtC{2}\vert ^2}{\A^9e^2}\int_0^1 \tilde \P_2\tilde \Y_{2,\eta}^2(t,\eta) \Big(\int_0^\eta e^{-\frac{1}{2\sqtC{2}}(\tilde \Y_2(t,\eta)-\tilde\Y_2(t,\theta))} \tilde \U_2^2\tilde \Y_{2,\eta} (t,\theta) d\theta\Big) d\eta \\
& \quad \leq \norm{\tilde\Y_1-\tilde\Y_2}^2\\
& \qquad + \frac{32\vert \sqtC{1}-\sqtC{2}\vert ^2}{\A^4 e^2} \int_0^1 \tilde \Y_{2,\eta}(t,\eta) e^{-\frac{1}{2\sqtC{2}}\tilde \Y_2(t,\eta)}\int_0^\eta e^{\frac{1}{2\sqtC{2}}\tilde \Y_2(t,\theta)} \tilde \U_2^2\tilde \Y_{2,\eta} (t,\theta) d\theta d\eta \\
& \quad \leq \norm{\tilde \Y_1-\tilde \Y_2}^2 + \frac{64\sqtC{2}\vert \sqtC{1}-\sqtC{2}\vert ^2}{\A^4 e^2}\\
& \qquad\qquad \times \Big( -\int_0^ \eta e^{-\frac{1}{2\sqtC{2}}(\tilde \Y_2(t,\eta)-\tilde \Y_2(t,\theta))} \tilde \U_2^2\tilde \Y_{2,\eta} (t,\theta)d\theta |_{\eta=0}^1+ \int_0^1 \tilde \U_2^2 \tilde \Y_{2,\eta} (t,\eta) d\eta\Big) \\
& \leq \norm{\tilde \Y_1-\tilde \Y_2}^2+ \frac{128\A^2}{e^2} \vert \sqtC{1}-\sqtC{2}\vert ^2\\
& \leq \bigO(1)(\norm{\tilde \Y_1-\tilde \Y_2}^2+\vert \sqtC{1}-\sqtC{2}\vert ^2).
\end{align*}

As far as the  terms $J_5$ and $J_6$ are concerned, they again have the same structure, and hence we only consider the integral corresponding to $J_5$, i.e.,
\begin{align*}
\int_0^{1} J_5(\tilde\Y_1-\tilde\Y_2)(t,\eta)d\eta&
=\frac{1}{\A^6}\int_0^{1} (\tilde\Y_1-\tilde\Y_2) \tilde\Y_{2,\eta}(t,\eta)\\
&\qquad\times \Big(\int_0^\eta (e^{-\frac{1}{\ma}(\tilde\Y_2(t,\eta)-\tilde\Y_2(t,\theta))}-e^{-\frac{1}{\ma}(\tilde\Y_1(t,\eta)-\tilde\Y_1(t,\theta))}) \\
&\qquad\qquad\qquad\times\min_j(\tilde\V_j^+)^3\tilde\Y_{2,\eta}(t,\theta)\mathbbm{1}_{B(\eta)}(t,\theta) d\theta\Big)d\eta.
\end{align*}
The main ingredient is the following estimate
\begin{multline}\label{Diff:Exp}
\vert e^{-\frac{1}{\ma}(\tilde\Y_2(t,\eta)-\tilde\Y_2(t,\theta))}-e^{-\frac{1}{\ma}(\tilde\Y_1(t,\eta)-\tilde\Y_1(t,\theta))}\vert\\
 \leq \frac{1}{\ma}e^{-\frac{1}{\ma}(\tilde\Y_2(t,\eta)-\tilde\Y_2(t,\theta))} (\vert \tilde\Y_2(t,\theta)-\tilde\Y_1(t,\theta)\vert +\vert \tilde\Y_2(t,\eta)-\tilde\Y_1(t,\eta)\vert ) \\ \text{ for all } (t,\theta)\in B(\eta),
\end{multline}
which follows from Lemma \ref{lemma:enkel} (i).

Direct computations yield
\begin{align*}
&\vert\int_0^{1} J_5(\tilde\Y_1-\tilde\Y_2)(t,\eta)d\eta  \vert\\
&\quad=\frac{1}{A^6}\vert \int_0^{1}  (\tilde\Y_1-\tilde\Y_2) \tilde\Y_{2,\eta}(t,\eta) \Big(\int_0^\eta (e^{-\frac{1}{\ma}(\tilde\Y_2(t,\eta)-\tilde\Y_2(t,\theta))}-e^{-\frac{1}{\ma}(\tilde\Y_1(t,\eta)-\tilde\Y_1(t,\theta))}) \\
&\qquad\qquad\qquad\qquad\qquad\qquad\qquad\qquad\times\min_j(\tilde\V_j^+)^3\tilde\Y_{2,\eta}(t,\theta)\mathbbm{1}_{B(\eta)}(t,\theta) d\theta\Big)d\eta\vert \\
&\quad \leq \frac{1}{\ma\A^6}\int_0^{1} \vert \tilde\Y_1-\tilde \Y_2 \vert \tilde\Y_{2,\eta}(t,\eta) \\
& \qquad \qquad \times \Big(\int_0^\eta (\vert \tilde\Y_2(t,\eta)-\tilde \Y_1(t,\eta)\vert +\vert \tilde\Y_2(t,\theta)-\tilde\Y_1(t,\theta) \vert) \\
&\qquad\qquad\qquad\qquad\qquad\qquad\qquad\times e^{-\frac{1}{a}(\tilde\Y_2(t,\eta)-\tilde\Y_2(t,\theta))} \min_j(\tilde\V_j^+)^3 \tilde\Y_{2,\eta}(t,\theta) d\theta\Big) d\eta\\
&\quad \leq \frac{1}{\ma\A^6}\int_0^{1} (\tilde\Y_1-\tilde\Y_2)^2 \tilde\Y_{2,\eta}(t,\eta)\\
&\qquad\qquad\qquad\qquad\times\left(\int_0^{\eta} e^{-\frac{1}{\ma}(\tilde\Y_2(t,\eta)-\tilde\Y_2(t,\theta))} \min_j(\tilde\V_j^+)^3\tilde\Y_{2,\eta}(t,\theta) d\theta\right) d\eta\\
& \qquad +\norm{\tilde\Y_1-\tilde\Y_2}^2 + \frac{1}{\ma^2\A^{12}}\int_0^{1} \tilde\Y_{2,\eta}^2(t,\eta)\\
&\qquad\quad\times\left( \int_0^\eta \vert \tilde\Y_1(t,\theta)-\tilde\Y_2(t,\theta)\vert e^{-\frac{1}{\ma}(\tilde\Y_2(t,\eta)-\tilde\Y_2(t,\theta))} \min_j(\tilde\V_j^+)^3 \tilde\Y_{2,\eta}(t,\theta) d\theta \right)^2 d\eta\\
& \quad\leq \frac{4}{\A^4}\int_0^{1} (\tilde\Y_1-\tilde\Y_2)^2\tilde\P_2\tilde\Y_{2,\eta}(t,\eta) d\eta + \norm{\tilde\Y_1-\tilde\Y_2}^2\\
& \qquad + \frac{1}{\ma^2A^{12}}\int_0^{1} \tilde\Y_{2,\eta}^2(t,\eta) \left(\int_0^\eta e^{-\frac{1}{\ma}(\tilde\Y_2(t,\eta)-\tilde\Y_2(t,\theta))} \tilde\U_2^2\tilde\Y_{2,\eta}(t,\theta) d\theta\right)\\ 
& \qquad \qquad\qquad  \times \left(\int_0^{\eta} (\tilde\Y_1-\tilde\Y_2)^2(t,\theta) e^{-\frac{1}{\ma}(\tilde\Y_2(t,\eta)-\tilde\Y_2(t,\theta))}\min_j(\tilde\U_j^4) \tilde\Y_{2,\eta}(t,\theta) d\theta \right)d\eta\\
&\quad \leq (2\A+1)\norm{\tilde\Y_1-\tilde\Y_2}^2 +\frac{4}{\ma^2\A^{11}} \int_0^{1} \tilde\P_2\tilde\Y_{2,\eta}^2(t,\eta) \\
&\qquad\qquad\quad\times\left(\int_0^{\eta} (\tilde\Y_1-\tilde \Y_2)^2(t,\theta) e^{-\frac{1}{\ma}(\tilde\Y_2(t,\eta)-\tilde\Y_2(t,\theta))}\min_j(\tilde\U_j^4) \tilde\Y_{2,\eta}(t,\theta) d\theta \right)d\eta\\
&\quad \leq \bigO(1)\norm{\tilde\Y_1-\tilde \Y_2}^2 +\frac{2}{\ma^2A^6} \int_0^{1} \tilde\Y_{2,\eta}(t,\eta)\\
&\qquad\qquad\quad\times \left(\int_0^{\eta} (\tilde\Y_1-\tilde \Y_2)^2(t,\theta) e^{-\frac{1}{\ma}(\tilde\Y_2(t,\eta)-\tilde\Y_2(t,\theta))}\min_j(\tilde\U_j^4) \tilde\Y_{2,\eta}(t,\theta) d\theta \right)d\eta\\
& \quad= \bigO(1) \norm{\tilde\Y_1-\tilde\Y_2}^2 +\frac{2}{\ma^2\A^6}\int_0^{1} \tilde\Y_{2,\eta}(t,\eta) e^{-\frac{1}{\ma}\tilde\Y_2(t,\eta)} \\
&\qquad\qquad\quad\times\int_0^\eta (\tilde\Y_1-\tilde \Y_2)^2(t,\theta) e^{\frac{1}{\ma}\tilde\Y_2(t,\theta)} \min_j(\tilde\U_j^4)\tilde\Y_{2,\eta}(t, \theta) d\theta d\eta\\
& \quad= \bigO(1) \norm{\tilde\Y_1-\tilde\Y_2}^2 \\
& \qquad - \frac{2}{\ma\A^6}\int_0^\eta e^{-\frac{1}{\ma}(\tilde\Y_2(t,\eta)-\tilde\Y_2(t,\theta))} (\tilde\Y_1-\tilde \Y_2)^2  \min_j(\tilde\U_j^4)\tilde\Y_{2,\eta}(t,\theta) d\theta \Big| _{\eta=0}^{1}\\
& \qquad +\frac{2}{\ma\A^6}\int_0^{1} (\tilde\Y_1-\tilde \Y_2)^2\min_j(\tilde\U_j^4)\tilde\Y_{2,\eta}(t,\eta) d\eta\\\
&\quad \leq \bigO(1) \norm{\hat\Y_1-\hat\Y_2}^2
\end{align*}
where $\bigO(1)$ denotes some constant which only depends on $A$ and which remains bounded as $A\to 0$. We used, cf.~\eqref{eq:all_PestimatesC}, that 
\begin{align*}
\int_0^\eta e^{-\frac{1}{\ma}(\tilde\Y_2(t,\eta)-\tilde\Y_2(t,\theta))} \tilde\U_2^2\tilde\Y_{2,\eta}(t,\theta) d\theta
&\le \int_0^\eta e^{-\frac{1}{A_2}(\tilde\Y_2(t,\eta)-\tilde\Y_2(t,\theta))} \tilde\U_2^2\tilde\Y_{2,\eta}(t,\theta) d\theta\\
&\le 4A_2\tilde\P_2(t,\eta). 
\end{align*}
In the last step we used \eqref{eq:all_estimatesB}, \eqref{eq:all_estimatesG}, and finally that 
\begin{multline*}
\vert \int_0^\eta e^{-\frac{1}{\ma}(\tilde\Y_2(t,\eta)-\tilde\Y_2(t,\theta))}  (\tilde\Y_1- \tilde\Y_2)^2 \min_j(\tilde\U_j^4) \tilde\Y_{2,\eta}(t,\theta) d\theta\vert 
\\ \leq (\norm{ \tilde\Y_1\tilde\U_1}_{L^\infty}^2+ \norm{\tilde\Y_2\tilde\U_2}_{L^\infty}^2) 8\A \tilde\P_2(t,\eta).  
\end{multline*}
Since $\tilde\P_i(t,\eta)$ tends to $0$ as $\eta$ tends to $0$ and $1$, the term on left-hand side tends to zero  as $\eta$ tends to $0$ and $1$, respectively, cf.~\eqref{decay:impl}. 

Consider next the  terms $J_7$ and $J_8$, i.e., 
\begin{align*}
(J_7+J_8)(t,\eta)
&= \frac{1}{\A^6}\tilde\Y_{2,\eta}(t,\eta) \\
&\qquad\times \left(\int_0^\eta\min_j(e^{-\frac{1}{\ma}(\tilde \Y_j(t,\eta)-\tilde\Y_j(t,\theta))})\min_j(\tilde\V_j^+)^3\tilde\Y_{2,\eta}(t,\theta) d\theta\right)\\
& \quad -\frac1{\A^6} \tilde\Y_{1,\eta}(t,\eta)\\
&\qquad\times \left(\int_0^\eta \min_j(e^{-\frac{1}{\ma}(\tilde \Y_j(t,\eta)-\tilde\Y_j(t,\theta))})\min_j(\tilde\V_j^+)^3\tilde \Y_{1,\eta}(t,\theta) d\theta\right).
\end{align*}
Here we have to be a bit more careful. Introducing 
\begin{align}
E&=\Big\{ (t,\eta)\mid   \int_0^\eta \Omega(t,\eta,\theta)\tilde\Y_{2,\eta}(t,\theta) d\theta 
 \leq \int_0^\eta \Omega(t,\eta,\theta)\tilde\Y_{1,\eta}(t,\theta) d\theta\Big\}, \label{Def:EE} \\
 \intertext{with}
\Omega(t,\eta,\theta)&= \min_j(e^{-\frac{1}{\ma}(\tilde \Y_j(t,\eta)-\tilde\Y_j(t,\theta))})\min_j(\tilde\V_j^+)^3(t,\theta), \nn
\end{align}
 we can write
\begin{align*}
(J_7&+J_8)(t,\eta)\\
&= \frac{1}{\A^6}\tilde\Y_{2,\eta}(t,\eta) \\
&\qquad\times \left(\int_0^\eta \min_j(e^{-\frac{1}{\ma}(\tilde \Y_j(t,\eta)-\tilde\Y_j(t,\theta))})\min_j(\tilde\V_j^+)^3\tilde\Y_{2,\eta}(t,\theta) d\theta\right)\\
& \quad - \frac{1}{A^6}\tilde\Y_{1,\eta}(t,\eta)\\
&\qquad\times \left(\int_0^\eta \min_j(e^{-\frac{1}{\ma}(\tilde \Y_j(t,\eta)-\tilde\Y_j(t,\theta))})\min_j(\tilde\V_j^+)^3\tilde \Y_{1,\eta}(t,\theta) d\theta\right)\\
& =  \frac{1}{A^6}(\tilde\Y_{2,\eta}-\tilde\Y_{1,\eta})(t,\eta)\\
&\qquad\times \min_k\Big[ \int_0^\eta \min_j(e^{-\frac{1}{\ma}(\tilde \Y_j(t,\eta)-\tilde\Y_j(t,\theta))})\min_j(\tilde\V_j^+)^3\tilde\Y_{k,\eta}(t,\theta) d\theta\Big]\\
& \quad + \frac{1}{\A^6}\hat\Y_{1,\eta}(t,\eta) \int_0^\eta \min_j(e^{-\frac{1}{\ma}(\tilde \Y_j(t,\eta)-\tilde\Y_j(t,\theta))})\\
&\qquad\qquad\qquad\qquad\qquad\qquad\times \min_j(\tilde\V_j^+)^3(\tilde\Y_{2,\eta}-\tilde \Y_{1,\eta})(t, \theta) d\theta\mathbbm{1}_E(t,\eta)\\
& \quad +\frac{1}{\A^6}\hat\Y_{2,\eta}(t,\eta) \int_0^\eta \min_j(e^{-\frac{1}{\ma}(\tilde \Y_j(t,\eta)-\tilde\Y_j(t,\theta))})\\
&\qquad\qquad\qquad\qquad\qquad\qquad\times \min_j(\tilde\V_j^+)^3(\tilde\Y_{2,\eta}-\tilde \Y_{1,\eta})(t,\theta) d\theta \mathbbm{1}_{E^c}(t,\eta)\\
& =( L_1+L_2+L_3)(t,\eta).
\end{align*}

As far as the first term $L_1$ is concerned, the corresponding integral can be estimated as follows  (we use \eqref{decay:impl})
\begin{align*}
&\vert \int_0^{1} L_1(\tilde\Y_1-\tilde \Y_2)(t,\eta)d\eta\vert \notag \\
&\quad=\frac{1}{\A^6}\Big\vert \int_0^{1}   (\tilde\Y_1-\tilde \Y_2)(\tilde\Y_{2,\eta}-\tilde\Y_{1,\eta})(t,\eta) \notag \\
&\qquad\qquad\times\min_k\Big[ \int_0^\eta \min_j(e^{-\frac{1}{\ma}(\tilde \Y_j(t,\eta)-\tilde\Y_j(t,\theta))})\min_j(\tilde\V_j^+)^3\tilde\Y_{k,\eta}(t,\theta) d\theta\Big]d\eta\Big\vert \notag\\
&\quad = \Big\vert -\frac1{2\A^6} (\tilde\Y_1-\tilde\Y_2)^2(t,\eta)\notag\\
&\qquad\qquad\times \min_k\Big[ \int_0^\eta \min_j(e^{-\frac{1}{\ma}(\tilde \Y_j(t,\eta)-\tilde\Y_j(t,\theta))})\min_j(\tilde\V_j^+)^3\tilde\Y_{k,\eta}(t,\theta) d\theta\Big] 
\Big|_{\eta=0}^{1}\notag\\ 
&\qquad +\frac1{2\A^6} \int_0^{1} (\tilde\Y_1-\tilde\Y_2)^2(t,\eta)\notag\\
&\qquad\quad\times\frac{d}{d\eta} \min_k\Big[ \int_0^\eta \min_j(e^{-\frac{1}{\ma}(\tilde \Y_j(t,\eta)-\tilde\Y_j(t,\theta))})\min_j(\tilde\V_j^+)^3\tilde\Y_{k,\eta}(t,\theta) d\theta\Big]d\eta  \Big\vert\notag \\
&\quad \leq \bigO(1) \norm{\tilde\Y_1-\tilde\Y_2}^2,
\end{align*}
where $\bigO(1)$ denotes some constant only depending on $\A$, which remains bounded as $\A\to 0$, since the derivative 
\begin{equation*} 
\frac{d}{d\eta} \min_k \Big[ \int_0^\eta \min_j(e^{-\frac{1}{\ma}(\tilde \Y_j(t,\eta)-\tilde\Y_j(t,\theta))})\min_j(\tilde\V_j^+)^3\tilde\Y_{k,\eta}(t,\theta) d\theta\Big] 
\end{equation*}
exists and is uniformly bounded, see Lemma \ref{lemma:7}.

\medskip
As far as the last term $L_3$ (a similar argument works for $L_2$) is concerned, the corresponding integral can be estimated as follows,
\begin{align*}
&\int_0^{1} L_3(\tilde\Y_1-\tilde\Y_2)(t,\eta)d\eta\\
&= \frac1{\A^6}\int_0^{1} (\tilde\Y_1-\tilde \Y_2) \tilde\Y_{2,\eta}(t,\eta)\int_0^\eta\min_j(e^{-\frac{1}{\ma}(\tilde \Y_j(t,\eta)-\tilde\Y_j(t,\theta))})\\
&\qquad\qquad\qquad\qquad\qquad\qquad\qquad\times\min_j(\tilde\V_j^+)^3(\tilde\Y_{2,\eta}-\tilde\Y_{1,\eta})(t,\theta) d\theta \mathbbm{1}_{E^c}(t,\eta) d\eta\\
& = \frac1{\A^6}\int_0^{1} (\tilde\Y_1-\tilde\Y_2)\tilde\Y_{2,\eta}(t,\eta) \mathbbm{1}_{E^c}(t,\eta)\\ 
& \qquad  \times \Big[ (\tilde\Y_2-\tilde\Y_1)(t,\theta)\min_j(e^{-\frac{1}{\ma}(\tilde \Y_j(t,\eta)-\tilde\Y_j(t,\theta))}) \min_j(\tilde\V_j^+)^3(t,\theta)\Big|_{\theta=0}^{\eta}\\
& \qquad   - \int_0^\eta (\tilde\Y_2-\tilde\Y_1)(t,\theta) \\
&\qquad\qquad\times[(\frac{d}{d\theta} \min_j(e^{-\frac{1}{\ma}(\tilde \Y_j(t,\eta)-\tilde\Y_j(t,\theta))}))\min_j(\tilde\V_j^+)^3(t,\theta)\\
&\qquad  \qquad \qquad  + \min_j(e^{-\frac{1}{\ma}(\tilde \Y_j(t,\eta)-\tilde\Y_j(t,\theta))})(\frac{d}{d\theta} \min_j(\tilde\V_j^+)^3)(t,\theta) ] d\theta\Big]d\eta\\
&=-\frac{1}{\A^6}\int_0^{1} (\tilde\Y_1-\tilde\Y_2)^2 \min_j(\tilde\V_j^+)^3\tilde\Y_{2,\eta}(t,\eta) \mathbbm{1}_{E^c}(t,\eta) d\eta\\
& \quad +\frac1{\A^6}\int_0^{1} (\tilde\Y_1-\tilde\Y_2) \tilde\Y_{2,\eta}(t,\eta) \mathbbm{1}_{E^c}(t,\eta)\\
& \qquad \quad \times\int_0^\eta (\tilde\Y_1-\tilde\Y_2)(t,\theta)\\
&\qquad\qquad\quad\times\Big[(\frac{d}{d\theta} \min_j(e^{-\frac{1}{\ma}(\tilde \Y_j(t,\eta)-\tilde\Y_j(t,\theta))}))\min_j(\tilde\V_j^+)^3(t,\theta)\\
&\qquad \qquad\qquad  + \min_j(e^{-\frac{1}{\ma}(\tilde \Y_j(t,\eta)-\tilde\Y_j(t,\theta))})(\frac{d}{d\theta} \min_j(\tilde\V_j^+)^3)(t,\theta) ] d\theta\Big]d\eta\\
&=L_{31}+L_{32}.
\end{align*}

As far as the first term $L_{31}$ is concerned, we have, since 
\begin{equation*}
\min_j(\tilde\V_j^+)^3\tilde\Y_{i,\eta}(t,\eta) \leq \A^5\min_j(\tilde\V_j^+)\leq \frac{1}{\sqrt{2}}\A^7,
\end{equation*} 
that 
\begin{align*}
\vert L_{31}\vert&\leq\big\vert \frac{1}{\A^6}\int_0^{1} (\tilde\Y_1-\tilde\Y_2)^2 \min_j(\tilde\V_j^+)^3\tilde\Y_{2,\eta}(t,\eta) \mathbbm{1}_{E^c}(t,\eta) d\eta\big\vert \\
&\leq \frac{\A}{\sqrt{2}} \norm{\tilde\Y_1-\tilde\Y_2}^2=\bigO(1) \norm{\tilde\Y_1-\tilde\Y_2}^2,
\end{align*}
where $\bigO(1)$ denotes a constant, which only depends on $\A$ and which remains bounded as $\A\to 0$.

The second term $L_{32}$, on the other hand, is a bit more demanding. We start by considering the first part of $L_{32}$.  From Lemma \ref{lemma:1}
we have that
\begin{equation*}
\vert \frac{d}{d\theta}  \min_j(e^{-\frac{1}{\ma}(\tilde \Y_j(t,\eta)-\tilde\Y_j(t,\theta))})\vert
 \leq
 \frac{1}{\ma}\min_j(e^{-\frac{1}{\ma}(\tilde \Y_j(t,\eta)-\tilde\Y_j(t,\theta))}) \max_j(\tilde\Y_{j,\eta}(t,\theta)).
\end{equation*}
This implies that 
\begin{align*}
\frac{1}{\A^6}& \vert \int_0^{1} (\tilde\Y_1-\tilde\Y_2)\tilde\Y_{2,\eta}(t,\eta)\mathbbm{1}_{E^c}(t,\eta) \int_0^\eta (\tilde\Y_1-\tilde\Y_2)(t,\theta) \\
&\qquad\qquad\qquad\qquad\times(\frac{d}{d\theta}\min_j(e^{-\frac{1}{\ma}(\tilde \Y_j(t,\eta)-\tilde\Y_j(t,\theta))})\min_j (\tilde\V_j^+)^3(t,\theta) d\theta d\eta\vert \\
& \leq \norm{\tilde\Y_1-\tilde\Y_2} ^2 + \frac{1}{A^{12}}\int_0^1 \tilde\Y_{2,\eta}^2 (t,\eta)   \Big( \int_0^\eta (\tilde\Y_1-\tilde\Y_2) (t,\theta) \\
&\qquad\qquad\qquad\qquad\times(\frac{d}{d\theta} \min_j(e^{-\frac{1}{\ma}(\tilde \Y_j(t,\eta)-\tilde\Y_j(t,\theta))})\min_j (\tilde\V_j^+)^3(t,\theta) d\theta\Big) ^2 d\eta\\
& \leq \norm{\tilde\Y_1-\tilde \Y_2}^2 + \frac{1}{\ma^2\A^{12}} \int_0^1 \tilde \Y_{2,\eta}^2(t,\eta)\\
& \qquad\times
 \Big(\int_0^\eta (\tilde \Y_1-\tilde \Y_2)(t,\theta) \min_j(e^{-\frac{1}{\ma}(\tilde \Y_j(t,\eta)-\tilde \Y_j(t,\theta))})\\
 &\qquad\qquad\qquad\qquad\qquad\qquad\qquad\qquad\times\min_j(\tilde \V_j^+)^3 \max_j (\tilde \Y_{j,\eta})(t,\theta)d\theta\Big)^2 d\eta\\
& \leq \norm{\tilde \Y_1-\tilde \Y_2}^2+ \frac1{\ma^2\A^2}\int_0^1 \tilde \Y_{2,\eta}^2 (t,\eta)\\
& \qquad  \times \Big( \int_0^\eta (\tilde \Y_1-\tilde \Y_2)(t,\theta) \min_j(e^{-\frac{1}{\ma} (\tilde \Y_j(t,\eta)-\tilde \Y_j(t,\theta))})\min_j(\tilde \V_j^+)(t,\theta)d\theta\Big)^2d\eta\\
& \leq \norm{\tilde \Y_1-\tilde \Y_2}^2  +\frac{\ma^2}{2\A^2} \int_0^1\tilde \Y_{2,\eta}^2(t,\eta) \Big( \int_0^\eta (\tilde \Y_1-\tilde \Y_2)(t,\theta) \\
& \qquad \qquad\qquad\qquad\qquad\qquad\qquad\times\min_j(e^{-\frac{1}{\ma} (\tilde \Y_j(t,\eta)-\tilde \Y_j(t,\theta))})d\theta\Big)^2d\eta\\
& \leq \norm{\tilde \Y_1-\tilde \Y_2}^2\\
& \quad + \frac12\int_0^1 \tilde \Y_{2,\eta}^2(t,\eta) \Big( \int_0^\eta e^{-\frac{1}{2\sqtC{2}} (\tilde \Y_2(t,\eta)-\tilde \Y_2(t,\theta))}(\tilde \Y_1-\tilde \Y_2)^2(t,\theta) d\theta\Big)\\
& \qquad \qquad\qquad \qquad\qquad \times \Big(\int_0^\eta e^{-\frac{3}{2\sqtC{2}}(\tilde \Y_2(t,\eta)-\tilde \Y_2(t,\theta))} d\theta\Big) d\eta\\
& \leq \norm{\tilde \Y_1-\tilde \Y_2}^2\\
& \qquad + \frac{1}{2\A^5} \int_0^1 \tilde \Y_{2,\eta}^2(t,\eta)\Big( \int_0^\eta e^{-\frac{1}{2\sqtC{2}}(\tilde \Y_2(t,\eta)-\tilde \Y_2(t,\theta))} (\tilde\Y_1-\tilde\Y_2)^2(t,\theta)d\theta\Big)\\
& \qquad \qquad \qquad \times \Big(\int_0^\eta e^{-\frac{3}{2\sqtC{2}}(\tilde \Y_2(t,\eta)-\tilde \Y_2(t,\theta))} (2\tilde \P_2\tilde \Y_{2,\eta}+\tilde \Henergy_{2,\eta})(t,\theta) d\theta\Big) d\eta\\
& \leq \norm{\tilde \Y_1-\tilde \Y_2}^2\\
& \quad + \frac{1}{2\A^5} \int_0^1 8\A \tilde \P_2\tilde \Y_{2,\eta}^2(t,\eta)\Big(\int_0^\eta e^{-\frac{1}{2\sqtC{2}}(\tilde \Y_2(t,\eta)-\tilde\Y_2(t,\theta))}(\tilde \Y_1-\tilde\Y_2)^2(t,\theta)d\theta\Big) d\eta\\
& \leq \norm{\tilde \Y_1-\tilde \Y_2}^2\\
& \quad + 2\A\int_0^1 \tilde \Y_{2,\eta}(t,\eta) e^{-\frac{1}{2\sqtC{2}} \tilde \Y_2(t,\eta)}\int_0^\eta e^{\frac{1}{2\sqtC{2}}\tilde \Y_2(t,\theta) } (\tilde\Y_1-\tilde\Y_2)^2(t,\theta)d\theta\Big) d\eta\\
&  \leq \norm{\tilde\Y_1-\tilde\Y_2}^2 +4\A\sqtC{2}\Big(-\int_0^\eta e^{-\frac{1}{2\sqtC{2}}(\tilde \Y_2(t,\eta)-\tilde \Y_2(t,\theta))}(\tilde \Y_1-\tilde\Y_2)^2(t,\theta)d\theta|_{\eta=0}^1 \\
&\qquad\qquad\qquad\qquad\qquad\qquad\qquad\qquad+\int_0^1 (\tilde \Y_1-\tilde \Y_2)^2(t,\eta) d\eta\Big)\\
& \leq \bigO(1) \norm{\tilde \Y_1-\tilde \Y_2}^2.
\end{align*}
Here we used \eqref{eq:PUYH_scale} and \eqref{eq:343}.
Furthermore, we use \eqref{eq:Henergy32}.

Next, we turn to the second half of $L_{32}$. Recall first \eqref{est:L3b}.

From Lemma \ref{lemma:1} we have that 
\begin{equation*}
\big\vert \frac{d}{d\theta} \min_j(\tilde\V_j^+)^3(t,\theta)\big\vert \leq 2\A^4\min_j(\tilde\V_j^+)(t,\theta).
\end{equation*}
Thus we can conclude as before
\begin{align*}
& \frac{1}{\A^6}\Big\vert \int_0^{1} (\tilde\Y_1-\tilde\Y_2)\tilde\Y_{2,\eta}(t,\eta)\mathbbm{1}_{E^c}(t,\eta)
\int_0^\eta (\tilde\Y_1-\tilde\Y_2)(t,\theta) \min_j(e^{-\frac{1}{\ma}(\tilde \Y_j(t,\eta)-\tilde\Y_j(t,\theta))})\\
&\qquad\qquad\qquad\qquad\qquad\qquad\qquad\qquad\qquad\qquad
\times\left(\frac{d}{d\theta}\min_j(\tilde\V_j^+)^3\right)(t,\theta) d\theta\, d\eta\Big\vert\\
& \leq \norm{\tilde\Y_1-\tilde\Y_2}^2 + \frac{1}{\A^{12}}\int_0^{1} \tilde\Y_{2,\eta}^2 (t,\eta) 
\Big(\int_0^\eta (\tilde\Y_1-\tilde\Y_2) (t,\theta) \min_j(e^{-\frac{1}{\ma}(\tilde \Y_j(t,\eta)-\tilde\Y_j(t,\theta))})\\
&\qquad\qquad\qquad\qquad\qquad\qquad\qquad\qquad\qquad\qquad\times
\left(\frac{d}{d\theta}\min_j(\tilde\V_j^+)^3\right)(t,\theta) d\theta\Big)^2 d\eta\\
&  \leq \norm{\tilde\Y_1-\tilde\Y_2}^2
+  \frac{4}{\A^4}\int_0^{1} \tilde\Y_{2,\eta}^2 (t,\eta)  \Big( \int_0^\eta \vert \tilde\Y_1-\tilde \Y_2\vert  (t,\theta)  e^{-\frac{1}{\ma}(\tilde\Y_2(t,\eta)-\tilde\Y_2(t,\theta))}\min_j(\tilde\V_j^+)(t,\theta) d\theta\Big)^2 d\eta\\
& \leq \norm{\tilde\Y_1-\tilde\Y_2}^2 \\ 
& \quad + \frac{4}{\A^4}\int_0^{1} \tilde\Y_{2,\eta}^2 (t,\eta) \Big(\int_0^\eta (\tilde\Y_1-\tilde \Y_2)^2 (t,\theta) e^{-\frac{1}{\ma}(\tilde\Y_2(t,\eta)-\tilde\Y_2(t,\theta))}
d\theta\Big) \\
&\qquad\qquad\qquad\qquad\qquad\qquad\qquad\qquad\times
\Big(\int_0^{\eta} e^{-\frac{1}{\sqtC{2}}(\tilde\Y_2(t,\eta)-\tilde\Y_2(t,\theta)) } \tilde\U_2^2(t,\theta) d\theta\Big) d\eta\\
&\leq \norm{\tilde\Y_1-\tilde \Y_2}^2 \\
& \quad+\frac{24}{\A^4}\int_0^{1}  \tilde\P_2 \tilde\Y_{2,\eta}^2(t,\eta)\left(\int_0^\eta (\tilde\Y_1-\tilde\Y_2)^2(t,\theta) e^{-\frac{1}{\ma}(\tilde\Y_2(t,\eta)-\tilde\Y_2(t,\theta))} d\theta\right) d\eta \\
& \leq \norm{\tilde\Y_1-\tilde \Y_2}^2\\ 
& \quad + 12\A \int_0^{1} \tilde\Y_{2,\eta}(t,\eta)e^{-\frac{1}{\ma}\tilde\Y_2(t,\eta)}\left( \int_0^\eta (\tilde\Y_1-\tilde\Y_2)^2(t,\theta)e^{\frac{1}{\ma}\tilde\Y_2(t,\theta)}d\theta\right) d\eta\\
& =  \norm{\tilde\Y_1-\tilde\Y_2}^2 + 12\ma\A \Big(-\int_0^\eta e^{-\frac{1}{\ma}(\tilde\Y_2(t,\eta)-\tilde\Y_2(t,\theta))}(\tilde\Y_1-\tilde\Y_2)^2(t,\theta)d\theta\big|_{\eta=0}^{1}\\
& \qquad\qquad\qquad\qquad\qquad\qquad\qquad\qquad+ \int_0^{1} (\tilde\Y_1-\tilde \Y_2)^2(t,\eta)d\eta\Big)\\
& \leq \bigO(1) \norm{\tilde\Y_1-\tilde\Y_2}^2.
\end{align*}
We conclude that
\begin{equation*}
\abs{L_{32}}\leq \bigO(1) \norm{\tilde\Y_1-\tilde\Y_2}^2.
\end{equation*}

Finally, we have a look at $J_9$ (the argument for $J_{10}$ follows the same lines). We have 
\begin{align*}
& \big\vert \int_0^1 J_9(\tilde\Y_1-\tilde\Y_2)(t,\eta) d\eta\big\vert \\
&\quad  = \big\vert \frac{\sqtC{1}^6-\sqtC{2}^6}{\sqtC{1}^6\sqtC{2}^6}\mathbbm{1}_{\sqtC{1}\leq \sqtC{2}}\int_0^1(\tilde \Y_1-\tilde \Y_2)\tilde \Y_{1,\eta} (t,\eta) \\
&\qquad\qquad\qquad\quad\times\left(\int_0^\eta e^{-\frac{1}{\sqtC{1}}(\tilde \Y_1(t,\eta)-\tilde \Y_1(t,\theta))} (\tilde \V_1^+)^3\tilde \Y_{1,\eta} (t,\theta) d\theta\right) d\eta\big\vert \\
& \quad\leq\frac{\sqtC{2}^6-\sqtC{1}^6}{\sqrt{2}\sqtC{1}^6\sqtC{2}^6}\sqtC{1}^2\mathbbm{1}_{\sqtC{1}\leq \sqtC{2}} \\
&\qquad\qquad\quad\times\int_0^1 \vert \tilde\Y_1-\tilde \Y_2\vert \tilde \Y_{1,\eta} (t, \eta) \left(\int_0^\eta e^{-\frac{1}{\sqtC{1}}(\tilde \Y_1(t,\eta)-\tilde \Y_1(t,\theta))} \tilde \U_1^2\tilde\Y_{1,\eta} (t,\theta) d\theta\right) d\eta\\
& \quad\leq 4\frac{\sqtC{2}^6-\sqtC{1}^6}{\sqrt{2}\sqtC{1}^6\sqtC{2}^6}\sqtC{1}^3\mathbbm{1}_{\sqtC{1}\leq \sqtC{2}}\int_0^1\vert \tilde \Y_1-\tilde \Y_2\vert \tilde \P_1\tilde \Y_{1,\eta} (t,\eta) d\eta\\
& \quad\leq 2\frac{\sqtC{2}^6-\sqtC{1}^6}{\sqrt{2}\sqtC{1}^6\sqtC{2}^6}\sqtC{1}^8\mathbbm{1}_{\sqtC{1}\leq \sqtC{2}}\int_0^1\vert \tilde \Y_1-\tilde \Y_2\vert  (t,\eta) d\eta\\
& \quad\leq 6\sqrt{2}\A(\sqtC{2}-\sqtC{1}) \norm{\tilde \Y_1-\tilde\Y_2}\\
& \quad \leq 6\A \Big(\norm{\tilde \Y_1-\tilde\Y_2}^2+\vert \sqtC{1}-\sqtC{2}\vert ^2\Big).
\end{align*}

We now turn our attention to $I_{32}$, i.e.,
\begin{align*}
I_{32}& =\int_0^1 (\tilde\Y_1-\tilde \Y_2)(t,\eta)\Big(\frac{1}{\A_2^6}\tilde \Y_{2,\eta}(t,\eta) \int_0^1 e^{-\frac{1}{\sqtC{2}}\vert \tilde \Y_2(t,\eta)-\tilde \Y_2(t,\theta)\vert } 
\tilde \P_2\tilde \U_2\tilde \Y_{2,\eta} (t,\theta)d\theta\\ 
& \qquad \qquad\qquad \qquad -\frac{1}{\sqtC{1}^6} \tilde \Y_{1,\eta} (t,\eta) \int_0^1 e^{-\frac{1}{\sqtC{1}}\vert \tilde \Y_1(t,\eta)-\tilde \Y_1(t,\theta)\vert } \tilde \P_1\tilde\U_1\tilde \Y_{1,\eta} (t,\theta) d\theta\Big)d\eta.
\end{align*}
As before, we are only going to establish the estimates for one part of it, since the other parts can be treated similarly.
Let
\begin{align}\nn
\tilde I_{32}&= \int_0^1 (\tilde \Y_1-\tilde \Y_2)(t,\eta)\Big(\frac{1}{\sqtC{2}^6}\tilde \Y_{2,\eta} (t,\eta) \int_0^\eta e^{-\frac{1}{\sqtC{2}}(\tilde \Y_2(t,\eta)-\tilde \Y_2(t,\theta))}\tilde \P_2\tilde \V_2^+\tilde \Y_{2,\eta} (t,\theta) d\theta\\ \nn
& \qquad \qquad \qquad \qquad -\frac{1}{\sqtC{1}^6} \tilde \Y_{1,\eta} (t,\eta) \int_0^\eta e^{-\frac{1}{\sqtC{1}}(\tilde\Y_1(t,\eta)-\tilde \Y_1(t,\theta))} \tilde \P_1\tilde \V_1^+\tilde \Y_{1,\eta} (t,\theta) d\theta\Big) d\eta\\ \nn
& = \frac{1}{\A^6} \int_0^1 (\tilde \Y_1-\tilde \Y_2)(t,\eta) \Big(\tilde \Y_{2,\eta} (t,\eta)\int_0^\eta e^{-\frac{1}{\sqtC{2}}(\tilde \Y_2(t,\eta)-\tilde \Y_2(t,\theta))}\tilde\P_2\tilde \V_2^+\tilde \Y_{2,\eta} (t,\theta) d\theta\\ \nn
& \qquad \qquad \qquad \qquad - \tilde \Y_{1,\eta} (t,\eta) \int_0^\eta e^{-\frac{1}{\sqtC{1}}(\tilde \Y_1(t,\eta)-\tilde \Y_1(t,\theta))} \tilde \P_1\tilde \V_1^+\tilde \Y_{1,\eta} (t,\theta) d\theta\Big) d\eta\\ \nn
& \quad +\Big(\frac{1}{\sqtC{2}^6}-\frac{1}{\sqtC{1}^6} \Big) \mathbbm{1}_{\sqtC{1}\leq \sqtC{2}} \int_0^1 (\tilde\Y_1-\tilde \Y_2)\tilde \Y_{1,\eta} (t,\eta)\\ \nn
&\qquad\qquad\qquad\qquad \times\left( \int_0^\eta e^{-\frac{1}{\sqtC{1}}(\tilde \Y_1(t,\eta)-\tilde \Y_1(t,\theta))}\tilde \P_1\tilde \V_1^+\tilde \Y_{1,\eta} (t,\theta)d\theta \right)d\eta\\ \nn
& \quad +\Big(\frac{1}{\sqtC{2}^6}-\frac{1}{\sqtC{1}^6}\Big) \mathbbm{1}_{\sqtC{2}< \sqtC{1}} \int_0^1 (\tilde\Y_1-\tilde \Y_2) \tilde \Y_{2,\eta} (t,\eta)\\ \nn
&\qquad\qquad\qquad\qquad\qquad\qquad \times \left(\int_0^\eta e^{-\frac{1}{\sqtC{2}}(\tilde\Y_2(t,\eta)-\tilde \Y_2(t,\theta))}\tilde \P_2\tilde \V_2^+\tilde \Y_{2,\eta} (t,\theta) d\theta\right) d\eta\\ 
& = \tilde K_1+\tilde K_2+\tilde K_3.  \label{eq:alleK}
\end{align}

Direct calculations yield for $\tilde K_2$ (and similar for $\tilde K_3$) that 
\begin{align*}
\vert \tilde K_2\vert & \leq \frac{\sqtC{2}^6-\sqtC{1}^6}{\sqtC{1}^6\sqtC{2}^6}\mathbbm{1}_{\sqtC{1}\le \sqtC{2}} \\
&\qquad\times \int_0^1 \vert \tilde\Y_1-\tilde \Y_2\vert \tilde \Y_{1,\eta} (t,\eta) \left(\int_0^\eta e^{-\frac{1}{\sqtC{1}}(\tilde \Y_1(t,\eta)-\tilde \Y_1(t,\theta))} \tilde \P_1\tilde \V_1^+\tilde \Y_{1,\eta}(t,\theta)d\theta \right)d\eta\\
& \leq \frac{\sqtC{2}^6-\sqtC{1}^6}{\sqtC{1}^6\sqtC{2}^6} \norm{\tilde \Y_1-\tilde \Y_2}\\
&\qquad\qquad\times\left( \int_0^1 \tilde \Y_{1,\eta}^2(t,\eta) \Big(\int_0^\eta e^{-\frac{1}{\sqtC{1}}(\tilde \Y_1(t,\eta)-\tilde \Y_1(t,\theta))} \tilde \P_1\tilde \V_1^+\tilde \Y_{1,\eta} (t,\theta) d\theta\Big)^2d\eta\right)^{1/2}\\
& \leq \frac{\sqtC{2}^6-\sqtC{1}^6}{\sqtC{1}^6\sqtC{2}^6} \norm{\tilde \Y_1-\tilde\Y_2} \Big(\int_0^1 \tilde \Y_{1,\eta}^2(t,\eta) \int_0^\eta e^{-\frac{1}{\sqtC{1}}(\tilde\Y_1(t,\eta)-\tilde \Y_1(t,\theta))}\tilde \P_1^2\tilde \Y_{1,\eta} (t,\theta) d\theta\\
& \qquad \qquad\qquad\qquad \qquad\qquad  \times \int_0^\eta e^{-\frac{1}{\sqtC{1}}(\tilde \Y_1(t,\eta)-\tilde \Y_1(t,\theta))} \tilde \U_1^2\tilde \Y_{1,\eta} (t,\theta) d\theta d\eta\Big)^{1/2}\\
& \leq \sqrt{6} \frac{\sqtC{2}^6-\sqtC{1}^6}{\sqtC{1}^3\sqtC{2}^6} \norm{\tilde\Y_1-\tilde \Y_2}\Big( \int_0^1 \tilde \P_1^2\tilde \Y_{1,\eta}^2 (t,\eta) d\eta\Big)^{1/2}\\
& \leq \frac{\sqrt{6}}{2} \frac{\sqtC{2}^6-\sqtC{1}^6} {\A^4} \norm{\tilde \Y_1-\tilde \Y_2}\\
& \leq \bigO(1) (\norm{\tilde\Y_1-\tilde\Y_2}^2+\vert \sqtC{2}-\sqtC{1}\vert ^2),
\end{align*}
where we used \eqref{eq:all_PestimatesB}, \eqref{eq:all_PestimatesC}, and $\sqtC{1}\leq \sqtC{2}$.

On the other hand, the term $\tilde K_1$ needs to be rewritten a bit more. Namely, 
\begin{align*}
\tilde K_1& = \frac{1}{\A^6} \int_0^1 (\tilde \Y_1-\tilde \Y_2)(t,\eta) \Big(\tilde \Y_{2,\eta} (t,\eta)\int_0^\eta e^{-\frac{1}{\sqtC{2}}(\tilde \Y_2(t,\eta)-\tilde \Y_2(t,\theta))}\tilde\P_2\tilde \V_2^+\tilde \Y_{2,\eta} (t,\theta) d\theta\\ 
& \qquad \qquad \qquad \qquad\qquad - \tilde \Y_{1,\eta} (t,\eta) \int_0^\eta e^{-\frac{1}{\sqtC{1}}(\tilde \Y_1(t,\eta)-\tilde \Y_1(t,\theta))} \tilde \P_1\tilde \V_1^+\tilde \Y_{1,\eta} (t,\theta) d\theta\Big) d\eta\\ \nn
& = \frac{1}{\A^6} \int_0^1 (\tilde \Y_1-\tilde\Y_2)\tilde\Y_{2,\eta} (t,\eta)\\
&\qquad\qquad\times\Big(\int_0^\eta e^{-\frac{1}{\sqtC{2}}(\tilde \Y_2(t,\eta)-\tilde\Y_2(t,\theta))}(\tilde\P_2-\tilde\P_1)\tilde\V_2^+\tilde\Y_{2,\eta}(t,\theta)\mathbbm{1}_{\tilde \P_1\leq \tilde\P_2}(t,\theta) d\theta\Big)d\eta\\
& \quad +\frac1{\A^6}\int_0^1 (\tilde \Y_1-\tilde\Y_2)\tilde\Y_{1,\eta} (t,\eta)\\
&\qquad\qquad\times\Big(\int_0^\eta e^{-\frac{1}{\sqtC{1}}(\tilde\Y_1(t,\eta)-\tilde\Y_1(t,\theta))}(\tilde\P_2-\tilde\P_1)\tilde \V_1^+\tilde\Y_{1,\eta}(t,\theta) \mathbbm{1}_{\tilde \P_2< \tilde\P_1}(t,\theta)d\theta\Big) d\eta\\
& \quad +\frac{1}{\A^6} \int_0^1 (\tilde\Y_1-\tilde\Y_2)\tilde\Y_{2,\eta} (t,\eta)\\
&\quad\times\Big( \int_0^\eta e^{-\frac{1}{\sqtC{2}}(\tilde\Y_2(t,\eta)-\tilde\Y_2(t,\theta))}\min_j(\tilde\P_j)(\tilde\V_2^+-\tilde\V_1^+)\tilde \Y_{2,\eta} (t,\theta)\mathbbm{1}_{\tilde\V_1^+\leq \tilde\V_2^+}(t,\theta)d\theta\Big)d\eta\\
& \quad +\frac{1}{\A^6} \int_0^1 (\tilde\Y_1-\tilde\Y_2)\tilde\Y_{1,\eta} (t,\eta)\\
&\quad\times\Big(\int_0^\eta e^{-\frac{1}{\sqtC{1}}(\tilde \Y_1(t,\eta)-\tilde \Y_1(t,\theta))} \min_j(\tilde\P_j)(\tilde \V_2^+-\tilde \V_1^+)\tilde \Y_{1,\eta} (t,\theta)\mathbbm{1}_{\tilde\V_2^+<\tilde\V_1^+}(t,\theta)d\theta\Big) d\eta\\
& \quad + \frac{1}{\A^6} \int_0^1 (\tilde \Y_1-\tilde\Y_2)(t,\eta)\\
&\qquad\qquad\times \Big(\tilde \Y_{2,\eta} (t,\eta) \int_0^\eta e^{-\frac{1}{\sqtC{2}}(\tilde \Y_2(t,\eta)-\tilde \Y_2(t,\theta))} \min_j(\tilde\P_j)\min_j(\tilde\V_j^+) \tilde \Y_{2,\eta} (t,\theta) d\theta\\
& \qquad \qquad  -\tilde \Y_{1,\eta} (t,\eta) \int_0^\eta e^{-\frac{1}{\sqtC{1}}(\tilde \Y_1(t,\eta)-\tilde \Y_1(t,\theta))} \min_j(\tilde\P_j)\min_j(\tilde\V_j^+)\tilde\Y_{1,\eta} (t,\theta)d\theta\Big) d\eta\\
& = \tilde J_1+\tilde J_2+\tilde J_3+\tilde J_4+\tilde J_5.
\end{align*}

We start by having a close look at $\tilde J_1$ ($\tilde J_2$ can be handled similarly). One has
\begin{align*}
\vert \tilde J_1\vert&  \leq \frac1{\A^6}\vert \int_0^1 (\tilde \Y_1-\tilde \Y_2)\tilde\Y_{2,\eta} (t,\eta)\\
&\qquad\times \Big(\int_0^\eta e^{-\frac{1}{\sqtC{2}}(\tilde\Y_2(t,\eta)-\tilde\Y_2(t,\theta))}(\tilde\P_2-\tilde\P_1)\tilde\V_2^+\tilde\Y_{2,\eta} (t,\theta) \mathbbm{1}_{\tilde \P_1\leq \tilde\P_2} (t,\theta) d\theta\big)d\eta\vert \\
& \leq \norm{\tilde \Y_1-\tilde\Y_2}^2
 + \frac1{\A^{12}} \int_0^1 \tilde \Y_{2,\eta}^2(t,\eta)\Big( \int_0^\eta e^{-\frac{1}{\sqtC{2}}(\tilde \Y_2(t,\eta)-\tilde\Y_2(t,\theta))}\\
&\qquad\qquad\times(\sqP{2}-\sqP{1})(\sqP{2}+\sqP{1})\tilde\V_2^+\tilde\Y_{2,\eta}(t,\theta) \mathbbm{1}_{\tilde \P_1\leq \tilde\P_2}(t,\theta) d\theta\Big)^2d\eta\\
& \leq \norm{\tilde\Y_1-\tilde\Y_2}^2 + \frac{1}{\A^{12}} \int_0^1 \tilde \Y_{2,\eta}^2(t,\eta)\\
&\qquad\qquad \times\Big( 2\int_0^\eta e^{-\frac{1}{\sqtC{2}}(\tilde \Y_2(t,\eta)-\tilde\Y_2(t,\theta))}(\sqP{2}-\sqP{1})\sqP{2}\tilde\V_2^+\tilde\Y_{2,\eta}(t,\theta)  d\theta\Big)^2d\eta\\
& \leq \norm{\tilde \Y_1-\tilde\Y_2}^2
+\frac{4}{\A^{12}} \int_0^1 \tilde\Y_{2,\eta}^2(t,\eta) \Big( \int_0^\eta e^{-\frac{1}{\sqtC{2}}(\tilde\Y_2(t,\eta)-\tilde\Y_2(t,\theta))}\tilde \U_2^2\tilde\Y_{2,\eta}(t,\theta)d\theta\Big)\\
& \qquad \qquad \qquad    \times\Big(\int_0^\eta e^{-\frac{1}{\sqtC{2}}(\tilde \Y_2(t,\eta)-\tilde\Y_2(t,\theta))} (\sqP{2}-\sqP{1})^2\tilde\P_2\tilde\Y_{2,\eta} (t,\theta)d\theta\Big)d\eta\\
& \leq \norm{\tilde\Y_1-\tilde\Y_2}^2 +\frac{16}{\A^{11}} \int_0^1 \tilde\P_2\tilde\Y_{2,\eta}^2 (t,\eta)\\
& \qquad\qquad\qquad\times\Big(\int_0^\eta e^{-\frac{1}{\sqtC{2}}(\tilde\Y_2(t,\eta)-\tilde\Y_2(t,\theta))}(\sqP{2}-\sqP{1})^2\tilde \P_2\tilde\Y_{2,\eta} (t,\theta) d\theta\Big)d\eta\\
& \leq \norm{\tilde\Y_1-\tilde\Y_2}^2 +\frac{8}{\A^6} \int_0^1 \tilde\Y_{2,\eta} (t,\eta)\\
& \qquad\times\Big(\int_0^\eta e^{-\frac{1}{\sqtC{2}}(\tilde\Y_2(t,\eta)-\tilde\Y_2(t,\theta))}(\sqP{2}-\sqP{1})^2\tilde \P_2\tilde\Y_{2,\eta} (t,\theta) d\theta\Big)d\eta\\
& \leq \norm{\tilde\Y_1-\tilde\Y_2}^2 + \frac{8}{\A^6} \int_0^1 \tilde\Y_{2,\eta}(t,\eta) e^{-\frac{1}{\sqtC{2}}\tilde\Y_2(t,\eta)}\\
& \qquad\times\Big( \int_0^\eta e^{\frac{1}{\sqtC{2}}\tilde\Y_2(t,\theta)} (\sqP{2}-\sqP{1})^2 \tilde \P_2\tilde\Y_{2,\eta} (t,\theta) d\theta\Big) d\eta\\
& \leq \norm{\tilde\Y_1-\tilde\Y_2}^2\\
& \quad + \frac{8\sqtC{2}}{\A^6} \Big(-\int_0^\eta e^{-\frac{1}{\sqtC{2}}(\tilde\Y_2(t,\eta)-\tilde \Y_2(t,\theta))}(\sqP{2}-\sqP{1})^2\tilde\P_2\tilde\Y_{2,\eta} (t,\theta)d\theta |_{\eta=0}^1\\
& \qquad \qquad \qquad \qquad \qquad \qquad +\int_0^1 (\sqP{2}-\sqP{1})^2\tilde\P_2\tilde\Y_{2,\eta}(t,\eta) d\eta\Big)\\
&\leq \bigO(1) (\norm{\tilde\Y_1-\tilde\Y_2}^2+\norm{\sqP{1}-\sqP{2}}^2),
\end{align*}
where we used that 
\begin{align*}
0&\leq \int_0^\eta e^{-\frac{1}{\sqtC{2}}(\tilde\Y_2(t,\eta)-\tilde\Y_2(t,\theta))} (\sqP{2}-\sqP{1})^2\tilde\P_2\tilde\Y_{2,\eta} (t,\eta)d\eta\\
& \leq \A^4\int_0^\eta e^{-\frac{1}{\sqtC{2}}(\tilde\Y_2(t,\eta)-\tilde\Y_2(t,\theta))}\tilde \P_2\tilde\Y_{2,\eta} (t,\theta) d\theta\\
& \leq \A^4\Big(\int_0^\eta e^{-\frac{1}{\sqtC{2}}(\tilde \Y_2(t,\eta)-\tilde\Y_2(t,\theta))} \tilde\Y_{2,\eta}(t,\theta) d\theta\Big)^{1/2} \\
&\qquad\qquad\times \Big(\int_0^\eta e^{-\frac{1}{\sqtC{2}}(\tilde\Y_2(t,\eta)-\tilde\Y_2(t,\theta))} \tilde \P_2^2\tilde\Y_{2,\eta} (t,\theta) d\theta\Big)^{1/2}\\
& \leq \A^5\bigO(1) \sqP{2}(t,\eta),
\end{align*}
where the very last term tends to $0$ as $\eta\to 0,1$. In the last step we used \eqref{eq:all_PestimatesB}.

Next, we investigate $\tilde J_3$ ($\tilde J_4$ can be handled in a similar way). The argument is a bit more involved and hence we start with some preliminary estimates. One has
\begin{align*}
\int_0^\eta e^{-\frac1{2\sqtC{2}} (\tilde\Y_2(t,\eta)-\tilde\Y_2(t,\theta))}& (\tilde\V_2^+-\tilde\V_1^+)^2\tilde\P_2\tilde\Y_{2,\eta}(t,\theta) d\theta\\
& \leq 2\A^4\int_0^\eta e^{-\frac1{2\sqtC{2}} (\tilde \Y_2(t,\eta)-\tilde\Y_2(t,\theta))} \tilde\P_2\tilde\Y_{2,\eta}(t,\theta) d\theta\\
& \leq 2\A^4\Big(\int_0^\eta e^{-\frac1{2\sqtC{2}} (\tilde\Y_2(t,\eta)-\tilde\Y_2(t,\theta))}\tilde\Y_{2,\eta}(t,\theta) d\theta\Big)^{1/2}\\
&\qquad\qquad\times\Big(\int_0^\eta e^{-\frac1{2\sqtC{2}} (\tilde \Y_2(t,\eta)-\tilde\Y_2(t,\theta))}\tilde\P_2^2\tilde\Y_{2,\eta}(t,\theta) d\theta\Big)^{1/2}\\
& \leq A^5\bigO(1) \tilde\P_2^{1/4}(t,\eta),
\end{align*}
where the term on the right-hand side tends to $0$ for $\eta\to 0,1$.
In the last step we used \eqref{eq:p2y}, cf.~\eqref{eq:Q_d} (and the derivate $\tilde\Q_{2,\eta}$ can be found from \eqref{eq:DogP} and \eqref{eq:D_deriv}).
With these estimates in mind, we end up with, recalling \eqref{eq:343},  
\begin{align*}
\vert \tilde J_3\vert & = \frac{1}{\A^6}\vert \int_0^1 (\tilde\Y_1-\tilde\Y_2)\tilde\Y_{2,\eta}(t,\eta)\\
&\quad\times\Big(\int_0^\eta e^{-\frac{1}{\sqtC{2}}(\tilde\Y_2(t,\eta)-\tilde\Y_2(t,\theta))}\min_j(\tilde\P_j)(\tilde\V_2^+-\tilde\V_1^+)\tilde\Y_{2,\eta} (t,\theta)\mathbbm{1}_{\tilde\V_1^+\leq \tilde\V_2^+} (t,\theta) d\theta\Big)d\eta\vert \\
& \leq \norm{\tilde\Y_1-\tilde\Y_2}^2\\
& \quad +\frac{1}{\A^{12}} \int_0^1\tilde \Y_{2,\eta}^2(t,\eta)\\
&\quad\times \Big(\int_0^\eta e^{-\frac{1}{\sqtC{2}}(\tilde\Y_2(t,\eta)-\tilde\Y_2(t,\theta))}\min_j(\tilde\P_j)(\tilde\V_2^+-\tilde\V_1^+)\tilde\Y_{2,\eta}(t,\theta) \mathbbm{1}_{\tilde\V_1^+\leq \tilde\V_2^+}(t,\theta) d\theta\Big)^2 d\eta\\
& \leq \norm{\tilde\Y_1-\tilde\Y_2}^2\\
& \quad +\frac{1}{\A^{12}} \int_0^1 \tilde\Y_{2,\eta}^2(t,\eta)\Big(\int_0^\eta e^{-\frac3{2\sqtC{2}} (\tilde\Y_2(t,\eta)-\tilde\Y_2(t,\theta))}\tilde\P_2\tilde\Y_{2,\eta}(t,\theta) d\theta\Big)\\
& \qquad \qquad \qquad  \times\Big(\int_0^\eta e^{-\frac1{2\sqtC{2}} (\tilde\Y_2(t,\eta)-\tilde\Y_2(t,\theta))}(\tilde\V_2^+-\tilde\V_1^+)^2\tilde\P_2\tilde\Y_{2,\eta}(t,\theta)d\theta \Big)d\eta\\
&\leq\norm{\tilde \Y_1-\tilde\Y_2}^2 + \frac{2}{\A^{11}} \int_0^1 \tilde\P_2\tilde\Y_{2,\eta}^2 (t,\eta)\\
& \qquad\times \Big(\int_0^\eta e^{-\frac1{2\sqtC{2}} (\tilde\Y_2(t,\eta)-\tilde\Y_2(t,\theta))}(\tilde\V_2^+-\tilde \V_1^+)^2\tilde\P_2\tilde\Y_{2,\eta} (t,\theta) d\theta \Big)d\eta\\
&\leq \norm{\tilde\Y_1-\tilde\Y_2}^2\\
& \quad + \frac{1}{\A^6} \int_0^1 \tilde\Y_{2,\eta} (t,\eta) e^{-\frac1{2\sqtC{2}} \tilde\Y_2(t,\eta)}\Big(\int_0^\eta e^{\frac1{2\sqtC{2}} \tilde\Y_2(t,\theta)} (\tilde\V_2^+-\tilde\V_1^+)^2\tilde \P_2\tilde\Y_{2,\eta} (t,\theta)d\theta\Big) d\eta\\ 
& = \norm{\tilde\Y_1-\tilde\Y_2}^2\\
& \quad+ \frac{1}{\A^6} \Big(-2\sqtC{2}\int_0^\eta e^{-\frac1{2\sqtC{2}} (\tilde\Y_2(t,\eta)-\tilde\Y_2(t,\theta))}(\tilde\V_2^+-\tilde\V_1^+)^2\tilde\P_2\tilde\Y_{2,\eta} (t,\theta) d\theta\vert_{\eta=0}^1\\
& \qquad \qquad \qquad \qquad +2\sqtC{2}\int_0^1 (\tilde\V_2^+-\tilde\V_1^+)^2 \tilde \P_2\tilde\Y_{2,\eta} (t,\eta) d\eta\Big)\\
& \leq \bigO(1)\big(\norm{\tilde \Y_1-\tilde\Y_2}^2 +\norm{\tilde \U_2-\tilde\U_1}^2\big).
\end{align*}

As far as $\tilde J_5$ is concerned, we have to rewrite it a bit more. Namely,
\begin{align*}
\tilde J_5&= \frac1{\A^6} \int_0^1 (\tilde \Y_1-\tilde\Y_2)(t,\eta)\\
&\qquad\times\Big(\tilde\Y_{2,\eta} (t,\eta) \int_0^\eta e^{-\frac{1}{\sqtC{2}}(\tilde\Y_2(t,\eta)-\tilde\Y_2(t,\theta))}\min_j(\tilde\P_j)\min_j(\tilde\V_j^+)\tilde \Y_{2,\eta} (t,\theta) d\theta\\ 
& \qquad \quad  -\tilde\Y_{1,\eta} (t,\eta) \int_0^\eta e^{-\frac{1}{\sqtC{1}}(\tilde\Y_1(t,\eta)-\tilde\Y_1(t,\theta))}\min_j(\tilde\P_j)\min_j(\tilde\V_j^+)\tilde\Y_{1,\eta} (t,\theta) d\theta\Big) d\eta\\
& = \frac{1}{\A^6} \mathbbm{1}_{\sqtC{1}\leq \sqtC{2}}\int_0^1 (\tilde \Y_1-\tilde \Y_2)\tilde \Y_{2,\eta}(t,\eta)\\
&\qquad\qquad\qquad\times\Big(\int_0^\eta (e^{-\frac{1}{\sqtC{2}}(\tilde \Y_2(t,\eta)-\tilde \Y_2(t,\theta))}-e^{-\frac{1}{\sqtC{1}}(\tilde \Y_2(t,\eta)-\tilde \Y_2(t,\theta))})\\
& \qquad\qquad\qquad\qquad\qquad \times \min_j(\tilde \P_j)\min_j(\tilde \V_j^+)\tilde \Y_{2,\eta}(t,\theta) d\theta\Big) d\eta\\
& \quad +\frac{1}{\A^6} \mathbbm{1}_{\sqtC{2}<\sqtC{1}} \int_0^1 (\tilde \Y_1-\tilde \Y_2)\tilde \Y_{1,\eta}(t,\eta)\\
&\qquad\qquad\qquad\times \Big( \int_0^\eta (e^{-\frac{1}{\sqtC{2}}(\tilde \Y_1(t,\eta)-\tilde \Y_1(t,\theta))}-e^{-\frac{1}{\sqtC{1}}(\tilde \Y_1(t,\eta)-\tilde \Y_1(t,\theta))})\\
& \qquad\qquad\qquad \qquad\qquad\times\min_j(\tilde \P_j)\min_j(\tilde \V_j^+)\tilde \Y_{1,\eta}(t,\theta) d\theta\Big) d\eta\\
& \quad +\frac1{\A^6} \int_0^1  (\tilde\Y_1-\tilde\Y_2)\tilde \Y_{2,\eta}(t,\eta) \\
& \qquad \times\Big(\int_0^\eta ( e^{-\frac{1}{\ma}(\tilde \Y_2(t,\eta)-\tilde\Y_2(t,\theta))}-e^{-\frac{1}{\ma}(\tilde \Y_1(t,\eta)-\tilde \Y_1(t,\theta))})\min_j(\tilde\P_j)\\
&\qquad\qquad\qquad\qquad\qquad\qquad\qquad\qquad\times\min_j(\tilde\V_j^+)\tilde \Y_{2,\eta} (t,\theta)\mathbbm{1}_{B(\eta)}(t,\theta) d\theta\Big) d\eta\\
& \quad +\frac1{\A^6} \int_0^1 (\tilde \Y_1-\tilde \Y_2)\tilde \Y_{1,\eta} (t,\eta)\\
& \qquad \times \Big(\int_0^\eta (e^{-\frac{1}{\ma}(\tilde \Y_2(t,\eta) -\tilde \Y_2(t,\theta))}-e^{-\frac{1}{\ma}(\tilde \Y_1(t,\eta)-\tilde \Y_1(t,\theta))})\min_j(\tilde\P_j)\\
&\qquad\qquad\qquad\qquad\qquad\qquad\qquad\qquad\times
\min_j(\tilde \V_j^+)\tilde \Y_{1,\eta} (t,\theta) \mathbbm{1}_{B^c(\eta)}(t,\theta) d\theta\Big) d\eta\\
& \quad +\frac1{\A^6} \int_0^1 (\tilde \Y_1-\tilde \Y_2)(t,\eta) \\
& \qquad \times \Big(\tilde \Y_{2,\eta} (t,\eta) \int_0^\eta \min_j(e^{-\frac{1}{\ma}(\tilde \Y_j(t,\eta)-\tilde\Y_j(t,\theta))})\min_j(\tilde\P_j)\\
&\qquad\qquad\qquad\qquad\qquad\qquad\qquad\qquad\qquad\qquad\times\min_j(\tilde \V_j^+)\tilde \Y_{2,\eta} (t,\theta)d\theta\\
& \qquad \qquad -\tilde \Y_{1,\eta} (t,\eta)\int_0^\eta \min_j(e^{-\frac{1}{\ma}(\tilde \Y_j(t,\eta)-\tilde\Y_j(t,\theta))})\min_j(\tilde\P_j)\\
&\qquad\qquad\qquad\qquad\qquad\qquad\qquad\qquad\qquad\qquad\times \min_j(\tilde \V_j^+)\tilde \Y_{1,\eta} (t,\theta) d\theta\Big) d\eta\\
& = \tilde L_1+\tilde L_2+\tilde L_3+\tilde L_4+ \tilde L_5,
\end{align*}
where $B(\eta)$ is given by \eqref{Def:Bn}.

Both $\tilde L_1$ and $\tilde L_2$ can be handled in much the same way, and  therefore we only consider $\tilde L_1$. 
One has 
\begin{align*}
\vert \tilde L_1\vert & =  \frac{1}{\A^6} \mathbbm{1}_{\sqtC{1}\leq \sqtC{2}}\vert \int_0^1 (\tilde \Y_1-\tilde \Y_2)\tilde \Y_{2,\eta}(t,\eta)\\
&\qquad\qquad\qquad\times\Big(\int_0^\eta (e^{-\frac{1}{\sqtC{2}}(\tilde \Y_2(t,\eta)-\tilde \Y_2(t,\theta))}-e^{-\frac{1}{\sqtC{1}}(\tilde \Y_2(t,\eta)-\tilde \Y_2(t,\theta))})\\
& \qquad\qquad\qquad\qquad\qquad\qquad\qquad\qquad \times \min_j(\tilde \P_j)\min_j(\tilde \V_j^+)\tilde \Y_{2,\eta}(t,\theta) d\theta\Big) d\eta\vert\\ 
& \leq \frac{4}{\ma\A^6 e}\mathbbm{1}_{\sqtC{1}\leq \sqtC{2}}\int_0^1 \vert \tilde \Y_1-\tilde \Y_2\vert \tilde \Y_{2,\eta}(t,\eta)\\
& \qquad \times \Big( \int_0^\eta e^{-\frac{3}{4\sqtC{2}}(\tilde \Y_2(t,\eta)-\tilde \Y_2(t,\theta))}\min_j(\tilde \P_j)\min(\tilde \V_j^+)\tilde \Y_{2,\eta}(t,\theta) d\theta\Big) d\eta\vert \sqtC{1}-\sqtC{2}\vert\\
& \leq \norm{\tilde \Y_1-\tilde \Y_2}^2  + \frac{16}{\ma^2\A^{12}e^2}\mathbbm{1}_{\sqtC{1}\leq \sqtC{2}}\int_0^1 \tilde \Y_{2,\eta}^2(t,\eta) \\
&\qquad\times\Big(\int_0^\eta e^{-\frac{3}{4\sqtC{2}}(\tilde \Y_2(t,\eta)-\tilde \Y_2(t,\theta))}\vert \tilde \U_1\vert \tilde \P_2\tilde \Y_{2,\eta}(t,\theta) d\theta\Big)^2 d\eta \vert \sqtC{1}-\sqtC{2}\vert^2\\
& \leq \norm{\tilde \Y_1-\tilde \Y_2}^2 + \frac{8}{\A^{10}e^2}\int_0^1 \tilde \Y_{2,\eta}^2 (t,\eta)\\
& \qquad \quad\times\Big(\int_0^\eta e^{-\frac{3}{4\sqtC{2}}(\tilde \Y_2(t,\eta)-\tilde \Y_2(t,\theta))} \tilde \P_2\tilde \Y_{2,\eta}(t,\theta)d\theta\Big)^2 d\eta \vert \sqtC{1}-\sqtC{2}\vert ^2\\
&  \leq \norm{\tilde \Y_1-\tilde \Y_2}^2 + \frac{8}{\A^{10}e^2}\int_0^1 \tilde \Y_{2,\eta}^2(t,\eta)\Big( \int_0^\eta e^{-\frac{5}{4\sqtC{2}}(\tilde \Y_2(t,\eta)-\tilde \Y_2(t,\theta))}\tilde \P_2\tilde \Y_{2,\eta}(t,\theta) d\theta\Big)\\
& \qquad \qquad\qquad \qquad \times \Big(\int_0^\eta e^{-\frac{1}{4\sqtC{2}}(\tilde \Y_2(t,\eta)-\tilde \Y_2(t,\theta))}\tilde \P_2\tilde \Y_{2,\eta}(t,\theta) d\theta\Big) d\eta\vert \sqtC{1}-\sqtC{2}\vert ^2\\
& \leq \norm{\tilde \Y_1-\tilde \Y_2}^2  + \frac{32}{\A^9 e^2} \int_0^1 \tilde \P_2\tilde \Y_{2,\eta}^2(t,\eta) \\
& \qquad \qquad\qquad\qquad\times\Big(\int_0^\eta e^{-\frac{1}{4\sqtC{2}} (\tilde \Y_2(t,\eta)-\tilde \Y_2(t,\theta))} \tilde \P_2\tilde \Y_{2,\eta} (t,\theta) d\theta\Big) d\eta \vert \sqtC{1}-\sqtC{2}\vert ^2\\
&  \leq \norm{\tilde \Y_1-\tilde \Y_2}^2  + \frac{16}{\A^4e^2} \int_0^{1} \tilde \Y_{2,\eta}(t,\eta) e^{-\frac1{4\sqtC{2}}\tilde \Y_2(t,\eta)}\\
& \qquad \qquad\qquad\qquad\times\int_0^\eta e^{\frac{1}{4\sqtC{2}}\tilde \Y_2(t,\theta)}\tilde \P_2\tilde \Y_{2,\eta} (t,\theta) d\theta d\eta \vert \sqtC{1}-\sqtC{2}\vert ^2\\
&  \leq \norm{\tilde \Y_1-\tilde \Y_2}^2 +\frac{64}{\A^3 e^2}\Big(-\int_0^\eta e^{-\frac{1}{4\sqtC{2}}(\tilde \Y_2(t,\eta)-\tilde \Y_2(t,\theta))}\tilde \P_2\tilde \Y_{2,\eta} (t,\theta) d\theta\big\vert _{\eta=0}^1 \\
& \qquad \qquad \qquad \qquad \qquad \qquad \quad+\int_0^1 \tilde \P_2\tilde \Y_{2,\eta} (t,\eta) d\eta\Big) \vert \sqtC{1}-\sqtC{2}\vert ^2\\
& \leq \bigO(1)(\norm{\tilde \Y_1-\tilde \Y_2}^2 +\vert \sqtC{1}-\sqtC{2}\vert^2),
\end{align*}
where we used  \eqref{eq:all_PestimatesE}.

Next we turn our attention to $\tilde L_3$ and $\tilde L_4$, which can be handled in much the same way, and therefore we only consider $\tilde L_3$.
One has 
\begin{align*}
\vert \tilde L_3\vert & = \frac1{\A^6} \vert \int_0^1 (\tilde \Y_1-\tilde\Y_2)\tilde \Y_{2,\eta} (t,\eta)\\ 
& \qquad  \times \Big(\int_0^\eta (e^{-\frac{1}{\ma}(\tilde \Y_2(t,\eta)-\tilde \Y_2(t,\theta))}-e^{-\frac{1}{\ma}(\tilde \Y_1(t,\eta)-\tilde \Y_1(t,\theta))}) \min_j(\tilde \P_j)\\
&\qquad\qquad\qquad\qquad\qquad\qquad\qquad\qquad\times \min_j(\tilde \V_j^+)\tilde \Y_{2,\eta} (t,\theta) \mathbbm{1}_{B(\eta)}(t,\theta) d\theta\Big) d\eta\vert \\
& \leq \frac{1}{\ma\A^6} \int_0^1 \vert \tilde \Y_1-\tilde \Y_2\vert \tilde \Y_{2,\eta} (t,\eta) \\
& \quad \times \Big(\int_0^\eta (\vert \tilde \Y_1(t,\eta)-\tilde \Y_2(t,\eta) \vert + \vert \tilde \Y_1(t,\theta)-\tilde \Y_2(t,\theta)\vert) \\
&\qquad\qquad\qquad\qquad\qquad\times e^{-\frac{1}{\ma}(\tilde \Y_2(t,\eta)-\tilde \Y_2(t,\theta))}\min_j(\tilde\P_j)\min_j(\tilde \V_j^+)\tilde \Y_{2,\eta} (t,\theta) d\theta\Big)d\eta\\
& = \frac1{\ma\A^6} \int_0^1 (\tilde \Y_1-\tilde \Y_2)^2\tilde \Y_{2,\eta} (t,\eta)\\
&\qquad\times\Big(\int_0^\eta e^{-\frac{1}{\ma}(\tilde \Y_2(t,\eta)-\tilde \Y_2(t,\theta))} \min_j(\tilde \P_j)\min_j(\tilde \V_j^+)\tilde \Y_{2,\eta} (t,\theta) d\theta\Big) d\eta\\
& \quad + \frac{1}{\ma\A^6} \int_0^1 \vert \tilde \Y_1-\tilde \Y_2\vert \tilde \Y_{2,\eta} (t,\eta) \Big( \int_0^\eta \vert \tilde \Y_1(t,\theta)-\tilde \Y_2(t,\theta)\vert \\
&\qquad\qquad\qquad\qquad\times e^{-\frac{1}{\ma}(\tilde \Y_2(t,\eta)-\tilde \Y_2(t,\theta))} \min_j(\tilde \P_j)\min_j(\tilde \V_j^+)\tilde \Y_{2,\eta} (t,\theta) d\theta\Big)d\eta\\
& \leq \frac{1}{\sqrt{2}\A^6}\int_0^1 (\tilde \Y_1-\tilde \Y_2)^2\tilde \Y_{2,\eta}(t,\eta)\\
&\qquad\qquad\times\Big(\int_0^\eta e^{-\frac{1}{\sqtC{2}}(\tilde \Y_2(t,\eta)-\tilde \Y_2(t,\theta))} \tilde \P_2^{3/4}\tilde \U_2^+\tilde \Y_{2,\eta}(t,\theta) d\theta\Big) d\eta\\
& \quad + \frac{1}{\sqrt{2}\A^6}\int_0^1 \vert \tilde \Y_1-\tilde \Y_2\vert \tilde \Y_{2,\eta}(t,\eta)\\
& \qquad \quad \times \Big( \int_0^\eta \vert \tilde \Y_1(t,\theta)-\tilde \Y_2(t,\theta)\vert e^{-\frac{1}{\sqtC{2}}(\tilde \Y_2(t,\eta)-\tilde \Y_2(t,\theta))} \tilde \P_2^{3/4}\tilde \U_2^+\tilde \Y_{2,\eta}(t,\theta) d\theta\Big) d\eta\\
& \leq \frac{1}{\sqrt{2}\A^6}\int_0^1 (\tilde \Y_1-\tilde \Y_2)^2 \tilde \Y_{2,\eta} (t,\eta)\Big(\int_0^\eta e^{-\frac{1}{\sqtC{2}}(\tilde \Y_2(t,\eta)-\tilde \Y_2(t,\theta))} \tilde \P_2^{3/2}\tilde \Y_{2,\eta} (t,\theta) d\theta\Big)^{1/2} \\
&\qquad\qquad\qquad\qquad\qquad\times\Big(\int_0^\eta e^{-\frac{1}{\sqtC{2}}(\tilde \Y_2(t,\eta)-\tilde \Y_2(t,\theta))} \tilde \U_2^2\tilde \Y_{2,\eta} (t,\theta) d\theta\Big)^{1/2} d\eta\\
&\quad + \norm{\tilde \Y_1-\tilde \Y_2}^2 +\frac1{2\A^{12}} \int_0^1 \tilde \Y_{2,\eta}^2 (t,\eta) \\
&\qquad\qquad\qquad\times\Big(\int_0^\eta e^{-\frac{1}{\sqtC{2}}(\tilde\Y_2(t,\eta)-\tilde \Y_2(t,\theta))} \vert \tilde\Y_1-\tilde\Y_2\vert \tilde \P_2^{3/4}\tilde \V_2^+\tilde \Y_{2,\eta} (t,\theta) d\theta\Big)^2d\eta\\
& \leq \frac{3}{\A^4} \int_0^1 (\tilde\Y_1-\tilde \Y_2)^2\tilde \P_2\tilde \Y_{2,\eta} (t,\eta) d\eta + \norm{\tilde \Y_1-\tilde \Y_2}^2\\
& \quad + \frac{1}{2\A^{12}} \int_0^1 \tilde \Y_{2,\eta}^2(t,\eta)\Big(\int_0^\eta e^{-\frac{1}{\sqtC{2}}(\tilde \Y_2(t,\eta)-\tilde \Y_2(t,\theta))} \tilde \U_2^2\tilde \Y_{2,\eta} (t,\theta) d\theta\Big)\\ 
& \qquad \qquad \qquad \qquad \times\Big( \int_0^\eta  e^{-\frac{1}{\sqtC{2}}(\tilde \Y_2(t,\eta)-\tilde \Y_2(t,\theta))}
(\tilde \Y_1-\tilde \Y_2)^2 \tilde \P_2^{3/2}\tilde \Y_{2,\eta} (t,\theta) d\theta\Big)d\eta\\
& \leq \bigO(1) \norm{\tilde \Y_1-\tilde \Y_2}^2 \\
& \quad +\frac{1}{\A^6} \int_0^1 \tilde \Y_{2,\eta} (t,\eta) e^{-\frac{1}{\sqtC{2}}\tilde \Y_2(t,\eta)} \Big( \int_0^\eta e^{\frac{1}{\sqtC{2}}\tilde\Y_2(t,\theta)}(\tilde \Y_1-\tilde \Y_2)^2  \tilde \P_2^{3/2}\tilde \Y_{2,\eta} (t,\theta) d\theta\Big)\\
& \leq \bigO(1)\norm{\tilde \Y_1-\tilde\Y_2}^2\\
& \quad + \frac{1}{\A^5} \Big(-\int_0^\eta e^{-\frac{1}{\sqtC{2}}(\tilde \Y_2(t,\eta)-\tilde \Y_2(t,\theta))} (\tilde \Y_1-\tilde\Y_2)^2\tilde \P_2^{3/2}\tilde \Y_{2,\eta} (t,\theta) d\theta\Big\vert_{\theta=0}^1\\
& \qquad \qquad \qquad \qquad +\int_0^1 (\tilde \Y_1-\tilde \Y_2) ^2 \tilde \P_2^{3/2}\tilde \Y_{2,\eta} (t,\eta) d\eta\Big)\\
& \leq \bigO(1) \norm{\tilde \Y_1-\tilde \Y_2}^2,
\end{align*}
where we used  \eqref{eq:general} for $\beta=\frac12$.

Finally, we can turn our attention to $\tilde L_5$, which we need to split into several parts. We write
\begin{align*}
\tilde L_5& = \frac{1}{\A^6} \int_0^1 (\tilde \Y_1-\tilde \Y_2)(t,\eta)\\
& \qquad \qquad  \times\Big( \tilde \Y_{2,\eta} (t,\eta)\int_0^\eta \min_j(e^{-\frac{1}{\ma}(\tilde \Y_j(t,\eta)-\tilde\Y_j(t,\theta))})\min_j(\tilde\P_j)\\ 
& \qquad \qquad \qquad \qquad \qquad \qquad\qquad \qquad\times\min_j(\tilde \V_j^+) \tilde \Y_{2,\eta} (t,\theta) d\theta\\
& \qquad \qquad \qquad -\tilde \Y_{1,\eta} (t,\eta)\int_0^\eta \min_j(e^{-\frac{1}{\ma}(\tilde \Y_j(t,\eta)-\tilde\Y_j(t,\theta))})\min_j(\tilde\P_j)\\ 
& \qquad \qquad \qquad \qquad\qquad \qquad\qquad \qquad \times\min_j(\tilde\V_j^+)\tilde \Y_{1,\eta} (t,\theta) d\theta\Big)d\eta\\
& = \frac{1}{\A^6} \int_0^1 (\tilde \Y_1-\tilde \Y_2)(\tilde \Y_{2,\eta}-\tilde \Y_{1,\eta} )(t,\eta)\\
& \qquad \qquad \times \min_k\Big( \int_0^\eta \min_j(e^{-\frac{1}{\ma}(\tilde \Y_j(t,\eta)-\tilde\Y_j(t,\theta))}) \min_j(\tilde\P_j)\\ 
& \qquad \qquad \qquad \qquad \qquad \qquad\qquad \qquad\qquad \qquad\times\min_j(\tilde\V_j^+)\tilde \Y_{k,\eta} (t,\theta) d\theta\Big)d\eta \\
& \quad + \frac{1}{\A^6} \int_0^1 (\tilde \Y_1-\tilde \Y_2) \tilde \Y_{2,\eta} (t,\eta) \mathbbm{1}_{\tilde E^c}(t,\eta) \\
& \qquad \qquad \times\Big(\int_0^\eta \min_j(e^{-\frac{1}{\ma}(\tilde \Y_j(t,\eta)-\tilde\Y_j(t,\theta))})\min_j(\tilde\P_j)\\ 
& \qquad \qquad \qquad \qquad \qquad \qquad\qquad \qquad\times\ \min_j(\tilde\V_j^+)(\tilde \Y_{2,\eta}-\tilde \Y_{1,\eta})(t,\theta) d\theta\Big) d\eta\\
& \quad +\frac{1}{\A^6} \int_0^1 (\tilde\Y_1-\tilde \Y_2)\tilde \Y_{1,\eta} (t,\eta) \mathbbm{1}_{\tilde E} (t,\eta) \\
& \qquad \qquad \times\Big(\int_0^\eta \min_j(e^{-\frac{1}{\ma}(\tilde \Y_j(t,\eta)-\tilde\Y_j(t,\theta))})\min_j(\tilde\P_j) \\ 
& \qquad \qquad \qquad \qquad \qquad \qquad\qquad \qquad\times\min_j(\tilde\V_j^+)(\tilde \Y_{2,\eta}-\tilde \Y_{1,\eta})(t,\theta) d\theta\Big) d\eta\\
& = \tilde L_{51}+\tilde L_{52}+\tilde L_{53},
\end{align*}
where $\tilde E$ is given by 
\begin{align}
\tilde E=\Big\{ &(t, \eta) \mid  \int_0^\eta \min_j(e^{-\frac{1}{\ma}(\tilde \Y_j(t,\eta)-\tilde\Y_j(t,\theta))})\min_j(\tilde\P_j)\min_j(\tilde\V_j^+)\tilde\Y_{2,\eta}(t,\theta) d\theta \notag\\
& \qquad \quad\leq \int_0^\eta \min_j(e^{-\frac{1}{\ma}(\tilde \Y_j(t,\eta)-\tilde\Y_j(t,\theta))})\min_j(\tilde\P_j)\min_j(\tilde\V_j^+)\tilde\Y_{1,\eta}(t,\theta) d\theta \Big\}. \label{eq:tildeE}
\end{align}

As far as the first term $\tilde L_{51}$ is concerned, it can be estimated as follows (we use \eqref{decay:impl})
\begin{align}
 \vert \tilde L_{51}\vert 
& = \frac{1}{\A^6} \vert \int_0^1 (\tilde \Y_1-\tilde \Y_2)(\tilde \Y_{2,\eta}-\tilde \Y_{1,\eta})(t,\eta)\notag\\ \nn
& \qquad \qquad \qquad \times \min_k\Big( \int_0^\eta \min_j(e^{-\frac{1}{\ma}(\tilde \Y_j(t,\eta)-\tilde\Y_j(t,\theta))})
\min_j(\tilde\P_j)\\ 
& \qquad \qquad \qquad \qquad \qquad \qquad\qquad\qquad \qquad\times\min_j(\tilde\V_j^+)\tilde \Y_{k,\eta}(t,\theta) d\theta\Big) d\eta\vert \notag \\ \nn
& =\frac{1}{2\A^6} \Big\vert -(\tilde \Y_1-\tilde \Y_2)^2(t,\eta)\\
& \qquad \qquad \qquad \times \min_k\Big( \int_0^\eta \min_j(e^{-\frac{1}{\ma}(\tilde \Y_j(t,\eta)-\tilde\Y_j(t,\theta))})\min_j(\tilde\P_j)\notag\\ 
& \qquad \qquad \qquad \qquad \qquad \qquad\qquad \qquad\qquad\times\min_j(\tilde\V_j^+)\tilde \Y_{k,\eta}(t,\theta) d\theta\Big) \Big\vert _{\eta=0}^1 \notag\\
& \quad +\int_0^1 (\tilde \Y_1-\tilde \Y_2)^2(t,\eta)\notag \\ \nn
& \qquad \qquad  \times\frac{d}{d\eta} \min_k\Big( \int_0^\eta \min_j(e^{-\frac{1}{\ma}(\tilde \Y_j(t,\eta)-\tilde\Y_j(t,\theta))})\min_j(\tilde\P_j)\\ 
& \qquad \qquad \qquad \qquad \qquad \qquad\qquad\qquad \qquad\times\min_j(\tilde\V_j^+)\tilde \Y_{k,\eta}(t,\theta) d\theta\Big) d\eta\Big\vert\notag \\
& \quad\leq \bigO(1) \norm{\tilde \Y_1-\tilde \Y_2}^2, \label{eq:tildeL31}
\end{align}
where $\bigO(1)$ denotes some constant only depending on $\A$, which remains boundes as $\A\to 0$, provided we can show that the derivative  in the latter integral exists and is uniformly bounded.  In fact,  from Lemma \ref{lemma:2} we have that
\begin{align*}
&\frac{d}{d\eta} \min_k\Big( \int_0^\eta \min_j(e^{-\frac{1}{\ma}(\tilde \Y_j(t,\eta)-\tilde\Y_j(t,\theta))})\min_j(\tilde\P_j)
\min_j(\tilde\V_j^+)\tilde \Y_{k,\eta}(t,\theta) d\theta\Big)
\end{align*} 
exists and is uniformly bounded. 

As far as the term $\tilde L_{52}$ (a similar argument works for $\tilde L_{53}$) is concerned, the integral can be estimated as follows,
\begin{align*}
\tilde L_{52}&= \frac{1}{\A^6} \int_0^1 (\tilde \Y_1-\tilde \Y_2)\tilde \Y_{2,\eta}(t,\eta) \mathbbm{1}_{\tilde E^c} (t,\eta)\\
& \qquad \times \Big(\int_0^\eta \min_j(e^{-\frac{1}{\ma}(\tilde \Y_j(t,\eta)-\tilde\Y_j(t,\theta))})\min_j(\tilde\P_j)\\
&\qquad\qquad\qquad\qquad\qquad\qquad\qquad\qquad\times\min_j(\tilde\V_j^+) (\tilde \Y_{2,\eta}-\tilde \Y_{1,\eta}) (t,\theta) d\theta\Big)d\eta\\
& =\frac{1}{\A^6} \int_0^1 (\tilde \Y_1-\tilde \Y_2) \tilde \Y_{2,\eta}(t,\eta) \mathbbm{1}_{\tilde E^c}(t,\eta)\\
& \qquad \times\Big[(\tilde \Y_2-\tilde \Y_1)(t,\theta) \min_j(e^{-\frac{1}{\ma}(\tilde \Y_j(t,\eta)-\tilde\Y_j(t,\theta))})\min_j(\tilde\P_j)\\
&\qquad\qquad\qquad\qquad\qquad\qquad\qquad\qquad\qquad\qquad\times \min_j(\tilde\V_j^+) (t,\theta) \Big\vert_{\theta=0}^\eta\\
& \qquad \quad-\int_0^\eta (\tilde \Y_2-\tilde \Y_1)(t,\theta)\\
& \qquad\qquad \times \big[(\frac{d}{d\theta}\min_j(e^{-\frac{1}{\ma}(\tilde \Y_j(t,\eta)-\tilde\Y_j(t,\theta))})) \min_j(\tilde\P_j)
\min_j(\tilde\V_j^+)(t,\theta)\\
& \qquad \qquad\quad + \min_j(e^{-\frac{1}{\ma}(\tilde \Y_j(t,\eta)-\tilde\Y_j(t,\theta))})(\frac{d}{d\theta} \min_j(\tilde\P_j) 
\min_j(\tilde\V_j^+))(t,\theta)d\theta\big]d\eta\Big]\\
& = -\frac{1}{\A^6}\int_0^1 (\tilde\Y_1-\tilde \Y_2)^2 \min_j(\tilde\P_j)  \min_j(\tilde\V_j^+)\tilde \Y_{2,\eta} (t,\eta) \mathbbm{1}_{\tilde E^c}(t,\eta) d\eta\\
& \quad +\frac{1}{\A^6}\int_0^1 (\tilde \Y_1-\tilde \Y_2)\tilde \Y_{2,\eta} (t,\eta) \mathbbm{1}_{\tilde E^c}(t,\eta)\\
& \qquad \times\int_0^\eta (\tilde \Y_1-\tilde \Y_2)(t,\theta)\\
& \qquad\qquad \times \Big[(\frac{d}{d\theta}\min_j(e^{-\frac{1}{\ma}(\tilde \Y_j(t,\eta)-\tilde\Y_j(t,\theta))})) 
\min_j(\tilde\P_j)\min_j(\tilde\V_j^+)(t,\theta)\\
& \qquad  + \min_j(e^{-\frac{1}{\ma}(\tilde \Y_j(t,\eta)-\tilde\Y_j(t,\theta))})
\big(\frac{d}{d\theta} \min_j(\tilde\P_j)\min_j(\tilde\V_j^+)\big)(t,\theta)d\theta\Big]d\eta\\
& = M_1+M_2.
\end{align*}

As far as the first term $M_1$ is concerned, we have since 
\begin{equation*}
2\min_j(\tilde\P_j) \min_j(\tilde\V_j^+)\tilde \Y_{i,\eta}(t,\eta)\leq \A^5\min_j(\tilde\V_j^+)(t,\eta)\leq \frac{\A^7}{\sqrt{2}},
\end{equation*}
that
\begin{align*} 
\vert M_1\vert & \leq \frac{1}{\A^6} \int_0^1 (\tilde \Y_1-\tilde \Y_2)^2\min_j(\tilde\P_j) \min_j(\tilde\V_j^+)\tilde \Y_{2,\eta} (t,\eta) d\eta\\
& \leq \frac{\A}{2\sqrt{2}}\norm{\tilde \Y_1-\tilde \Y_2}^2= \bigO(1) \norm{\tilde \Y_1-\tilde \Y_2}^2,
\end{align*}
where $\bigO(1)$ denotes some constant, which only depends on $\A$ and which remains bounded as $\A\to 0$. 

The second term $M_2$, on the other hand, is a bit more demanding.
(i): First of all recall \eqref{est:L3a_lemma}, i.e., 
\begin{align*}
\vert \frac{d}{d\theta} \min_j(e^{-\frac{1}{\ma}(\tilde \Y_j(t,\eta)-\tilde\Y_j(t,\theta))})\vert
& \leq \frac{1}{\ma}\min_j(e^{-\frac{1}{\ma}(\tilde \Y_j(t,\eta)-\tilde\Y_j(t,\theta))})\max_j(\tilde \Y_{j,\eta})(t,\theta),
\end{align*}
which implies that
\begin{align*}
\frac{1}{\A^6}& \vert \int_0^1 (\tilde \Y_1-\tilde \Y_2)\tilde \Y_{2,\eta} (t,\eta) \mathbbm{1}_{\tilde E^c}(t,\eta)
 \int_0^\eta (\tilde \Y_1-\tilde \Y_2)(t,\theta)\\
& \qquad \qquad \times \big(\frac{d}{d\theta} \min_j(e^{-\frac{1}{\ma}(\tilde \Y_j(t,\eta)-\tilde\Y_j(t,\theta))})\big)\min_j(\tilde\P_j) 
 \min_j(\tilde\V_j^+)(t,\theta) d\theta d\eta\\
& \leq \norm{\tilde \Y_1-\tilde \Y_2}^2 +\frac{1}{A^{12}} \int_0^1 \tilde \Y_{2,\eta}^2(t,\eta) \mathbbm{1}_{\tilde E^c}(t,\eta)
 \Big(\int_0^\eta (\tilde \Y_1-\tilde \Y_2)(t,\theta)\\
& \quad   \times \big(\frac{d}{d\theta} \min_j(e^{-\frac{1}{\ma}(\tilde \Y_j(t,\eta)-\tilde\Y_j(t,\theta))})\big)
 \min_j(\tilde \P_j) \min_j(\tilde \V_j^+) (t,\theta) d\theta\Big)^2 d\eta\\
& \leq \norm{\tilde \Y_1-\tilde \Y_2}^2
 +\frac{1}{4\ma^2\A^2}\int_0^1 \tilde \Y_{2,\eta}^2 (t,\eta) \\
&  \qquad \times \Big( \int_0^\eta \vert \tilde \Y_1-\tilde \Y_2\vert(t,\theta) e^{-\frac{1}{\ma}(\tilde \Y_2(t,\eta)-\tilde \Y_2(t,\theta))}\min_j(\tilde\V_j^+)(t,\theta) d\theta\Big)^2 d\eta\\
& \leq \norm{\tilde \Y_1-\tilde \Y_2}^2
 + \frac{\ma^2}{8\A^2\sqtC{2}^5}\int_0^1 \tilde \Y_{2,\eta}^2 (t,\eta)\\
& \quad \times \Big( \int_0^\eta \vert \tilde \Y_1-\tilde \Y_2\vert (t,\theta) e^{-\frac{1}{\sqtC{2}}(\tilde \Y_2(t,\eta)-\tilde \Y_2(t,\theta))} \big(2\tilde \P_2\tilde \Y_{2,\eta} +\tilde \Henergy_{2,\eta}\big)^{1/2}(t,\theta) d\theta\Big)^2 d\eta\\
& \leq \norm{\tilde\Y_1-\tilde \Y_2}^2\\ 
& \quad + \frac{1}{8\sqtC{2}^5}\int_0^1 \tilde \Y_{2,\eta}^2 (t, \eta)\Big( \int_0^\eta (\tilde \Y_1-\tilde \Y_2)^2(t,\theta) e^{-\frac{1}{2\sqtC{2}}(\tilde \Y_2(t,\eta)-\tilde \Y_2(t,\theta))}d\theta\Big)\\
& \quad \qquad \qquad \qquad \times \Big( \int_0^\eta e^{-\frac{3}{2\sqtC{2}}(\tilde \Y_2(t,\eta)-\tilde \Y_2(t,\theta))}(2\tilde \P_2\tilde \Y_{2,\eta}+\tilde \Henergy_{2,\eta})(t,\theta) d\theta\Big) d\eta\\
& \leq \norm{\tilde \Y_1-\tilde \Y_2}^2\\
&\quad+ \frac{1}{\sqtC{2}^4}\int_0^1 \tilde \P_2\tilde \Y_{2,\eta}^2(t,\eta)\Big( \int_0^\eta (\tilde \Y_1-\tilde \Y_2)^2(t,\theta)e^{-\frac{1}{2\sqtC{2}}(\tilde \Y_2(t,\eta)-\tilde\Y_2(t,\theta))}d\theta \Big)d\eta\\
& \leq \norm{\tilde \Y_1-\tilde \Y_2}^2 \\
&\quad+\frac{\A}{2}\int_0^1 \tilde \Y_{2,\eta} (t,\eta) e^{-\frac{1}{2\sqtC{2}}\tilde \Y_2(t,\eta)}\Big( \int_0^\eta (\tilde \Y_1-\tilde \Y_2)^2(t,\theta) e^{\frac{1}{2\sqtC{2}}\tilde \Y_2(t,\theta)}d\theta\Big) d\eta\\
& = \norm{\tilde \Y_1-\tilde \Y_2}^2-\A\sqtC{2}\int_0^\eta e^{-\frac{1}{2\sqtC{2}}(\tilde \Y_2(t,\eta)-\tilde \Y_2(t,\theta))} (\tilde \Y_1-\tilde \Y_2)^2(t,\theta) d\theta\Big\vert_{\eta=0}^1\\
& \quad \qquad \qquad \qquad +\A\sqtC{2} \int_0^1 (\tilde \Y_1-\tilde \Y_2)^2(t,\eta) d\eta\\
& \leq \bigO(1) \norm{\tilde \Y_1-\tilde \Y_2}^2,
\end{align*}
where we used \eqref{eq:343} and \eqref{eq:Henergy32}.

(ii): First of all we have to establish that $\min_j(\tilde\P_j)\min_j(\tilde \V_j^+)(t,\theta)$ is Lipschitz continuous with a uniformly bounded Lipschitz constant.  More precisely, in Lemma \ref{lemma:3} we show that
\begin{equation*}
\vert \frac{d}{d\theta} (\min_j(\tilde \P_j)\min_j(\tilde\V_j^+))(t,\theta)\vert\leq 2\A^4 (\min_j(\tilde \P_j)^{1/2}+\vert \tilde\U_2\vert)(t,\theta).
\end{equation*}
We are now ready to establish a Lipschitz estimate for the second part of $M_2$. Indeed, 
\begin{align*}
\frac{1}{\A^6} & \vert \int_0^1 (\tilde \Y_1-\tilde\Y_2)\tilde\Y_{2,\eta}(t,\eta)\mathbbm{1}_{\tilde E^c} (t,\eta)\\
& \quad \times \int_0^\eta (\tilde \Y_1-\tilde\Y_2)(t,\theta)\min_j(e^{-\frac{1}{\ma}(\tilde \Y_j(t,\eta)-\tilde\Y_j(t,\theta))})\\
& \qquad \qquad \qquad \qquad \qquad \qquad \qquad \times 
\left(\frac{d}{d\theta} \min_j(\tilde \P_j)\min_j(\tilde\V_j^+)\right)(t,\theta) d\theta d\eta\vert\\
& \quad \leq \norm{\tilde \Y_1-\tilde\Y_2}^2
 +\frac{1}{A^{12}}\int_0^1 \tilde \Y_{2,\eta}^2(t,\eta)\mathbbm{1}_{\tilde E^c} (t,\eta)\\
& \qquad \qquad \times\Big( \int_0^\eta (\tilde\Y_1-\tilde\Y_2) (t,\theta)  \min_j(e^{-\frac{1}{\ma}(\tilde \Y_j(t,\eta)-\tilde\Y_j(t,\theta))})\\
& \qquad \qquad \qquad \qquad \qquad \qquad \qquad \times\left(\frac{d}{d\theta} \min_j(\tilde \P_j) \min_j(\tilde\V_j^+)\right) (t,\theta) d\theta\Big)^2d\eta\\
&\quad \leq \norm{\tilde \Y_1-\tilde\Y_2}^2 + \frac{4}{\A^4} \int_0^1 \tilde\Y_{2,\eta}^2(t,\eta)\\
& \qquad \qquad \times\Big(\int_0^\eta \min_j(e^{-\frac{1}{\ma}(\tilde \Y_j(t,\eta)-\tilde\Y_j(t,\theta))})\\
& \qquad \qquad \qquad \qquad \qquad \qquad \qquad\times\vert \tilde \Y_1-\tilde\Y_2\vert (\min_j(\tilde\P_j)^{1/2} +\vert \tilde \U_2\vert )(t,\theta) d\theta\Big)^2d\eta\\
& \quad\leq \norm{\tilde\Y_1-\tilde\Y_2}^2 +\frac{4(1+\sqrt{2})^2}{\A^4} \int_0^1\tilde \Y_{2,\eta}^2 (t,\eta)\\
&\qquad\qquad\qquad\qquad\times\Big( \int_0^\eta  e^{-\frac{1}{\sqtC{2}}(\tilde \Y_2(t,\eta)-\tilde \Y_2(t,\theta))}\vert \tilde \Y_1-\tilde \Y_2\vert \tilde \P_2^{1/2}(t,\theta) d\theta\Big)^2 d\eta\\
& \quad \leq\norm{\tilde\Y_1-\tilde\Y_2}^2\\
& \qquad +\frac{36}{\A^4}\int_0^1 \tilde \Y_{2,\eta}^2(t,\eta) \Big(\int_0^\eta e^{-\frac3{2\sqtC{2}}(\tilde \Y_2(t,\eta)-\tilde \Y_2(t,\theta))} \tilde \P_2(t,\theta) d\theta\Big)\\
&\qquad\qquad\qquad\qquad\qquad\times\Big( \int_0^\eta e^{-\frac1{2\sqtC{2}}(\tilde \Y_2(t,\eta)-\tilde\Y_2(t,\theta))}(\tilde \Y_1-\tilde\Y_2)^2(t,\theta) d\theta\Big)d\eta\\
&\quad \leq \norm{\tilde \Y_1-\tilde\Y_2}^2\\
& \qquad +\frac{252}{\A^4} \int_0^1 \tilde \P_2\tilde\Y_{2,\eta}^2(t,\eta)\Big(\int_0^\eta e^{-\frac1{2\sqtC{2}}(\tilde\Y_2(t,\eta)-\tilde\Y_2(t,\theta))}(\tilde\Y_1-\tilde\Y_2)^2(t,\theta) d\theta\Big) d\eta\\
& \quad\leq \norm{\tilde \Y_1-\tilde\Y_2}^2\\
& \qquad +126\A \int_0^1 \tilde\Y_{2,\eta}(t,\eta) e^{-\frac1{2\sqtC{2}}\tilde\Y_2(t,\eta)}\Big(\int_0^\eta e^{\frac1{2\sqtC{2}} \tilde \Y_2(t,\theta)}(\tilde \Y_1-\tilde \Y_2)^2(t,\theta) d\theta\Big) d\eta\\
& \quad\leq \norm{\tilde \Y_1-\tilde\Y_2}^2 \\
& \qquad + 126\A\Big( -2\sqtC{2}\int_0^\eta e^{-\frac1{2\sqtC{2}}(\tilde \Y_2(t,\eta)-\tilde\Y_2(t,\theta))}(\tilde \Y_1-\tilde\Y_2)^2(t,\theta)d\theta\Big\vert_{\eta=0}^1\\
&\qquad\qquad\qquad\qquad\qquad\qquad\qquad\qquad\qquad\qquad+\int_0^1 2\sqtC{2}(\tilde \Y_1-\tilde \Y_2)^2(t,\eta) d\eta\Big)\\
& \quad\leq \bigO(1) \norm{\tilde \Y_1-\tilde \Y_2}^2,
\end{align*}
where we used \eqref{eq:32P}.
Thus we find that
\begin{equation*}
\abs{\tilde L_{52}}+\abs{\tilde L_{53}} \le \bigO(1) \norm{\tilde \Y_1-\tilde \Y_2}^2.
\end{equation*}

We now turn our attention to $I_{33}$, i.e., 
\begin{align*}
I_{33}& = \int_0^1 (\tilde \Y_1-\tilde \Y_2)(t,\eta)\Big( \frac{1}{\sqtC{2}^6}\tilde\Y_{2,\eta}(t,\eta)\int_0^1 e^{-\frac{1}{\sqtC{2}}\vert \tilde \Y_2(t,\eta)-\tilde \Y_2(t,\theta)\vert}\tilde \Q_2\tilde \U_{2,\eta}(t,\theta) d\theta\\
& \qquad \qquad \qquad \qquad\qquad - \frac{1}{\sqtC{1}^6}\tilde\Y_{1,\eta} (t,\eta)\int_0^1 e^{-\frac{1}{\sqtC{1}}\vert \tilde \Y_1(t,\eta)-\tilde\Y_1(t,\theta)\vert} \tilde \Q_1\tilde \U_{1,\eta} (t,\theta) d\theta\Big) d\eta.
\end{align*}

As before, we apply \eqref{eq:triks}. Thus it suffices to study the following term.  
\begin{align*}
\bar I_{33}& = \int_0^1 (\tilde \Y_1-\tilde\Y_2)(t,\eta)\Big(\frac{1}{\sqtC{2}^6} \tilde \Y_{2,\eta}(t,\eta)\int_0^\eta e^{-\frac{1}{\sqtC{2}}(\tilde \Y_2(t,\eta)-\tilde\Y_2(t,\theta))} \tilde \Q_2\tilde\U_{2,\eta} (t,\theta) d\theta\\
& \qquad \qquad \qquad \qquad  -\frac{1}{\sqtC{1}^6} \tilde \Y_{1,\eta}(t,\eta)\int_0^\eta e^{-\frac{1}{\sqtC{1}}(\tilde\Y_1(t,\eta)-\tilde\Y_1(t,\theta))}\tilde \Q_1\tilde \U_{1,\eta} (t,\theta) d\theta\Big) d\eta\\
& = \frac{1}{\A^6} \int_0^1 (\tilde\Y_1-\tilde\Y_2)(t,\eta)\Big(\tilde \Y_{2,\eta}(t,\eta)\int_0^\eta e^{-\frac{1}{\sqtC{2}}(\tilde\Y_2(t,\eta)-\tilde\Y_2(t,\theta))}\tilde \Q_2\tilde\U_{2,\eta} (t,\theta) d\theta\\
&\qquad \qquad \qquad \qquad \qquad -\tilde \Y_{1,\eta} (t,\eta)\int_0^\eta e^{-\frac{1}{\sqtC{1}}(\tilde\Y_1(t,\eta)-\tilde\Y_1(t,\theta))} \tilde \Q_1\tilde \U_{1,\eta} (t,\theta) d\theta\Big) d\eta\\
& \quad +\left(\frac{1}{\sqtC{2}^6}-\frac{1}{\sqtC{1}^6}\right) \mathbbm{1}_{\sqtC{1}\leq \sqtC{2}} \int_0^1 (\tilde \Y_1-\tilde\Y_2)\tilde \Y_{1,\eta}(t,\eta)\\
&\qquad\qquad\qquad\qquad\qquad\qquad\times \Big(\int_0^\eta e^{-\frac{1}{\sqtC{1}}(\tilde\Y_1(t,\eta)-\tilde\Y_1(t,\theta))}\tilde \Q_1\tilde\U_{1,\eta} (t,\theta) d\theta\Big) d\eta\\
& \quad +\left(\frac{1}{\sqtC{2}^6}-\frac{1}{\sqtC{1}^6}\right) \mathbbm{1}_{\sqtC{2}< \sqtC{1}} \int_0^1 (\tilde\Y_1-\tilde\Y_2)\tilde \Y_{2,\eta} (t,\eta)\\
&\qquad\qquad\qquad\qquad\qquad\quad\times \Big(\int_0^\eta e^{-\frac{1}{\sqtC{2}}(\tilde\Y_2(t,\eta)-\tilde\Y_2(t,\theta))} \tilde \Q_2\tilde\U_{2,\eta} (t,\theta) d\theta\Big) d\eta\\
&= \bar K_1+\bar K_2+\bar K_3.
\end{align*}

Direct calculations yield for $\bar K_2$ (and similar for $\bar K_3$) that 
\begin{align*}
\vert \bar K_2\vert &\leq \frac{\sqtC{2}^6-\sqtC{1}^6}{\sqtC{1}^5\sqtC{2}^6}\int_0^1 \vert \tilde\Y_1-\tilde\Y_2\vert \tilde \Y_{1,\eta}(t,\eta)\Big( \int_0^\eta e^{-\frac{1}{\sqtC{1}}(\tilde \Y_1(t,\eta)-\tilde\Y_1(t,\theta))} \tilde \P_1\vert \tilde\U_{1,\eta} \vert (t,\theta) d\theta\Big) d\eta\\
& \leq \frac{\sqtC{2}^6-\sqtC{1}^6}{\sqtC{1}^5\sqtC{2}^6}\norm{\tilde\Y_1-\tilde\Y_2}\\
&\qquad\qquad\times\Big(\int_0^1 \tilde\Y_{1,\eta}^2(t,\eta)\Big(\int_0^\eta e^{-\frac{1}{\sqtC{1}}(\tilde\Y_1(t,\eta)-\tilde\Y_1(t,\theta))}\tilde \P_1\vert \tilde \U_{1,\eta}\vert (t,\theta) d\theta\Big)^2 d\eta\Big)^{1/2}\\
& \leq \frac{\sqtC{2}^6-\sqtC{1}^6}{\sqtC{1}^6\sqtC{2}^6} \norm{\tilde\Y_1-\tilde\Y_2}\Big(\int_0^1 \tilde \Y_{1,\eta}^2(t,\eta) \Big(\int_0^\eta e^{-\frac{1}{\sqtC{1}}(\tilde \Y_1(t,\eta)-\tilde \Y_1(t,\theta))} \tilde \P_1^2\tilde\Y_{1,\eta} (t,\theta) d\theta\Big)\\
&\quad \qquad \qquad\qquad\qquad\qquad \qquad \times\Big( \int_0^\eta e^{-\frac{1}{\sqtC{1}}(\tilde \Y_1(t,\eta)-\tilde\Y_1(t,\theta))}\tilde \Henergy_{1,\eta} (t,\theta)d\theta\Big) d\eta\Big)^{1/2}\\
& \leq \sqrt{6}\frac{\sqtC{2}^6-\sqtC{1}^6}{\sqtC{1}^3\sqtC{2}^6} \norm{\tilde\Y_1-\tilde\Y_2} \Big(\int_0^1 \tilde \P_1^2\tilde\Y_{1,\eta}^2(t,\eta) d\eta\Big)^{1/2}\\
& \leq \frac{\sqrt{6}}{2} \frac{\sqtC{2}^6-\sqtC{1}^6}{\A^4} \norm{\tilde \Y_1-\tilde \Y_2}\\
& \leq \bigO(1)\Big(\norm{\tilde\Y_1-\tilde\Y_2}^2+ \vert \sqtC{1}-\sqtC{2}\vert^2\Big).
\end{align*}
Here we used \eqref{eq:all_estimatesM}, \eqref{eq:all_PestimatesB}, and \eqref{eq:all_PestimatesD}.

$\bar K_1$, on the other hand, needs to be rewritten a bit more.   
Recall \eqref{eq:DogP}. Then we can write
\begin{align} \label{eq:barK1_1}
\bar K_1&=\frac{1}{\A^6}\int_0^1 (\tilde\Y_1-\tilde\Y_2)(t,\eta)\Big(\tilde\Y_{2,\eta}(t,\eta)\int_0^\eta e^{-\frac{1}{\sqtC{2}}(\tilde\Y_2(t,\eta)-\tilde\Y_2(t,\theta))}\tilde\Q_2\tilde\U_{2,\eta}(t,\theta) d\theta\\
& \qquad \qquad \qquad \qquad \qquad -\tilde\Y_{1,\eta}(t,\eta)\int_0^\eta e^{-\frac{1}{\sqtC{1}}(\tilde\Y_1(t,\eta)-\tilde\Y_1(t,\theta))} \tilde\Q_1\tilde\U_{1,\eta}(t,\theta)d\theta\Big)d\eta\notag\\
&= \frac{1}{\A^6}\int_0^1 (\tilde\Y_1-\tilde\Y_2)(t,\eta)\Big(\sqtC{2}\tilde\Y_{2,\eta}(t,\eta)\int_0^\eta e^{-\frac{1}{\sqtC{2}}(\tilde\Y_2(t,\eta)-\tilde\Y_2(t,\theta))}\tilde \P_2\tilde\U_{2,\eta}(t,\theta)d\theta\notag\\
& \qquad \qquad \qquad \qquad\qquad -\sqtC{1}\tilde \Y_{1,\eta}(t,\eta) \int_0^\eta e^{-\frac{1}{\sqtC{1}}(\tilde \Y_1(t,\eta)-\tilde\Y_1(t,\theta))}\tilde \P_1\tilde\U_{1,\eta}(t,\theta)d\theta\Big)d\eta\notag\\
& \quad +\frac{1}{\A^6} \int_0^1(\tilde\Y_1-\tilde\Y_2)(t,\eta)\Big(\tilde\Y_{1,\eta}(t,\eta)\int_0^\eta e^{-\frac{1}{\sqtC{1}}(\tilde\Y_1(t,\eta)-\tilde\Y_1(t,\theta))}\tilde \D_1\tilde\U_{1,\eta}(t,\theta) d\theta\notag\\
&\qquad \qquad \qquad \qquad\qquad -\tilde \Y_{2,\eta}(t,\eta)\int_0^\eta e^{-\frac{1}{\sqtC{2}}(\tilde\Y_2(t,\eta)-\tilde\Y_2(t,\theta))}\tilde\D_2\tilde\U_{2,\eta}(t,\theta)d\theta\Big) d\eta.\notag
\end{align}
In the next step we are applying integration by parts, so that we get rid of $\tilde\U_{j,\eta}(t,\theta)$ in the integrands and hence can therefore have a splitting into positive and negative parts.
Therefore note that for $i=1$, $2$, we have
\begin{align*}
\int_0^\eta e^{-\frac{1}{\sqtC{i}}(\tilde\Y_i(t,\eta)-\tilde\Y_i(t,\theta))}&\tilde \P_i\tilde\U_{i,\eta}(t,\theta)d\theta\\
& = e^{-\frac{1}{\sqtC{i}}(\tilde \Y_i(t,\eta)-\tilde\Y_i(t,\theta))} \tilde \P_i\tilde\U_i(t,\theta)\Big\vert_{\theta=0}^{\eta} \\
& \quad-\int_0^\eta e^{-\frac{1}{\sqtC{i}}(\tilde\Y_i(t,\eta)-\tilde\Y_i(t,\theta))}(\frac{1}{\sqtC{i}}\tilde\P_i+\frac{1}{\sqtC{i}^2}\tilde\Q_i)\tilde\U_i\tilde\Y_{i,\eta}(t,\theta)d\theta\\
& = \tilde \P_i\tilde\U_i(t,\eta)\\
& \quad -\int_0^\eta e^{-\frac{1}{\sqtC{i}}(\tilde\Y_i(t,\eta)-\tilde\Y_i(t,\theta))}(\frac{2}{\sqtC{i}}\tilde\P_i-\frac{1}{\sqtC{i}^2}\tilde\D_i)\tilde\U_i\tilde\Y_{i,\eta}(t,\theta)d\theta\\
& =\tilde\P_i\tilde\U_i(t,\eta)\\
& \quad -\frac{2}{\sqtC{i}}\int_0^\eta e^{-\frac{1}{\sqtC{i}}(\tilde\Y_i(t,\eta)-\tilde\Y_i(t,\theta))}\tilde\P_i\tilde\U_i\tilde\Y_{i,\eta}(t,\theta)d\theta\\
& \quad + \frac{1}{\sqtC{i}^2}\int_0^\eta e^{-\frac{1}{\sqtC{i}}(\tilde \Y_i(t,\eta) -\tilde\Y_i(t,\theta))} \tilde \D_i\tilde\U_i\tilde\Y_{i,\eta} (t,\theta)d\theta,
\end{align*}
and 
\begin{align*}
\int_0^\eta &e^{-\frac{1}{\sqtC{i}}(\tilde\Y_i(t,\eta)-\tilde\Y_i(t,\theta))} \tilde \D_i\tilde \U_{i,\eta}(t,\theta)d\theta\\ 
& = \int_0^\eta e^{-\frac{1}{\sqtC{i}}(\tilde \Y_i(t,\eta)-\tilde\Y_i(t,\theta))} \\
&\quad\times\Big(\int_0^\theta e^{-\frac{1}{\sqtC{i}}(\tilde\Y_i(t,\theta)-\tilde\Y_i(t,l))}((\tilde\U_i^2-\tilde\P_i)\tilde\Y_{i,\eta}(t,l)+\frac12 \sqtC{i}^5) dl\Big)\tilde\U_{i,\eta}(t,\theta) d\theta\\
& = \int_0^\eta \Big(\int_0^\theta e^{-\frac{1}{\sqtC{i}}(\tilde\Y_i(t,\eta)-\tilde\Y_i(t,l))} ((\tilde\U_i^2-\tilde\P_i)\tilde\Y_{i,\eta}(t,l)+\frac12 \sqtC{i}^5)dl\Big) \tilde \U_{i,\eta}(t,\theta) d\theta\\
& = \Big(\int_0^\theta e^{-\frac{1}{\sqtC{i}}(\tilde\Y_i(t,\eta)-\tilde\Y_i(t,l))} ((\tilde\U_i^2-\tilde\P_i)\tilde\Y_{i,\eta}(t,l)+\frac12 \sqtC{i}^5)dl\Big) \tilde \U_i(t,\theta) \Big\vert_{\theta=0}^{\eta}\\
&\quad -\int_0^\eta e^{-\frac{1}{\sqtC{i}}(\tilde \Y_i(t,\eta)-\tilde\Y_i(t,\theta))}((\tilde\U_i^2-\tilde\P_i)\tilde\Y_{i,\eta}(t,\theta)+\frac12 \sqtC{i}^5)\tilde \U_i(t,\theta)d\theta\\
& = \tilde \D_i\tilde\U_i(t,\eta)\\
&\quad -\int_0^\eta e^{-\frac{1}{\sqtC{i}}(\tilde \Y_i(t,\eta)-\tilde\Y_i(t,\theta))}((\tilde\U_i^2-\tilde\P_i)\tilde\Y_{i,\eta}(t,\theta)+\frac12\sqtC{i}^5)\tilde \U_i(t,\theta)d\theta.
\end{align*}
Here we used that for $\theta\leq \eta$,
\begin{align*}
0&\leq \int_0^\theta  e^{-\frac{1}{\sqtC{i}}(\tilde\Y_i(t,\eta)-\tilde\Y_i(t,l))}((\tilde\U_i^2-\tilde\P_i)\tilde\Y_{i,\eta}(t,l)+\frac12 \sqtC{i}^5)dl\\
& \leq \int_0^\eta e^{-\frac{1}{\sqtC{i}}(\tilde\Y_i(t,\eta)-\tilde\Y_i(t,l))}((\tilde\U_i^2-\tilde\P_i)\tilde\Y_{i,\eta}(t,l)+\frac12 \sqtC{i}^5)dl\leq \tilde\D_i(t,\eta)\leq 2\sqtC{i}\tilde\P_i(t,\eta).
\end{align*}
We finally end up with 
\begin{align}\label{eq:barK1_2}
\bar K_1&= \frac{1}{\A^6} \int_0^1(\tilde\Y_1-\tilde\Y_2)(\sqtC{2}\tilde\P_2\tilde\U_2\tilde\Y_{2,\eta}-\sqtC{1}\tilde\P_1\tilde\U_1\tilde\Y_{1,\eta})(t,\eta)d\eta\\
& \quad +\frac{1}{\A^6} \int_0^1 (\tilde\Y_1-\tilde\Y_2)(\tilde \D_1\tilde\U_1\tilde\Y_{1,\eta}-\tilde\D_2\tilde\U_2\tilde\Y_{2,\eta})(t,\eta)d\eta\notag\\  \nn
& \quad +\frac{1}{\A^6} \int_0^1 (\tilde \Y_1-\tilde\Y_2)(t,\eta)\\ \nn
&\qquad\qquad \qquad\times\Big(\tilde\Y_{2,\eta} (t,\eta) \int_0^\eta e^{-\frac{1}{\sqtC{2}}(\tilde \Y_2(t,\eta)-\tilde\Y_2(t,\theta))}\frac{1}{\sqtC{2}}\tilde\D_2\tilde\U_2\tilde\Y_{2,\eta} (t,\theta) d\theta\notag\\
& \qquad \qquad \qquad - \tilde\Y_{1,\eta} (t,\eta) \int_0^\eta e^{-\frac{1}{\sqtC{1}}(\tilde\Y_1(t,\eta)-\tilde\Y_1(t,\theta))}\frac{1}{\sqtC{1}}\tilde\D_1\tilde\U_1\tilde\Y_{1,\eta}(t,\theta)d\theta\Big)d\eta\notag\\
& \quad +\frac{3}{\A^6} \int_0^1(\tilde\Y_1-\tilde\Y_2)(t,\eta) \Big(\tilde\Y_{1,\eta} (t,\eta)\int_0^\eta e^{-\frac{1}{\sqtC{1}}(\tilde\Y_1(t,\eta)-\tilde\Y_1(t,\theta))}\tilde \P_1\tilde\U_1\tilde\Y_{1,\eta}(t,\theta) d\theta\notag\\
& \qquad \qquad \qquad \qquad \qquad -\tilde\Y_{2,\eta}(t,\eta)\int_0^\eta e^{-\frac{1}{\sqtC{2}}(\tilde \Y_2(t,\eta)-\tilde\Y_2(t,\theta))}\tilde\P_2\tilde\U_2\tilde\Y_{2,\eta}(t,\theta) d\theta\Big) d\eta\notag\\
& \quad +\frac{1}{\A^6} \int_0^1(\tilde\Y_1-\tilde\Y_2)(t,\eta) \Big(\tilde\Y_{2,\eta} (t,\eta) \int_0^\eta e^{-\frac{1}{\sqtC{2}}(\tilde\Y_2(t,\eta)-\tilde\Y_2(t,\theta))}\tilde\U_2^3\tilde\Y_{2,\eta}(t,\theta)d\theta\notag\\
& \qquad \qquad \qquad \qquad \qquad -\tilde\Y_{1,\eta}(t,\eta)\int_0^\eta e^{-\frac{1}{\sqtC{1}}(\tilde \Y_1(t,\eta)-\tilde\Y_1(t,\theta))} \tilde \U_1^3\tilde\Y_{1,\eta}(t,\theta)d\theta\Big) d\eta\notag\\
& \quad +\frac{1}{2\A^6} \int_0^1(\tilde\Y_1-\tilde\Y_2)(t,\eta) \Big(\tilde\Y_{2,\eta}(t,\eta)\int_0^\eta e^{-\frac{1}{\sqtC{2}}(\tilde\Y_2(t,\eta)-\tilde\Y_2(t,\theta))}\sqtC{2}^5\tilde\U_{2}(t,\theta)d\theta\notag\\
& \qquad \qquad \qquad \qquad \qquad -\tilde\Y_{1,\eta}(t,\eta) \int_0^\eta e^{-\frac{1}{\sqtC{1}}(\tilde \Y_1(t,\eta)-\tilde\Y_1(t,\theta))}\sqtC{1}^5\tilde\U_1(t,\theta)d\theta\Big) d\eta\notag\\
& = \bar K_{11}+\bar K_{12}+\bar K_{13}+\bar K_{14}+\bar K_{15}+\bar K_{16}.\notag
\end{align}

We start by considering $\bar K_{11}$, which can be further split into 
\begin{align*}
\bar K_{11}& = \frac{1}{\A^6} \int_0^1(\tilde\Y_1-\tilde\Y_2)(\sqtC{2}\tilde\P_2\tilde\U_2\tilde\Y_{2,\eta}-\sqtC{1}\tilde\P_1\tilde\U_1\tilde\Y_{1,\eta})(t,\eta)d\eta\\
& = \frac{1}{\A^6} \mathbbm{1}_{\sqtC{1}\leq \sqtC{2}}(\sqtC{2}-\sqtC{1})\int_0^1 (\tilde \Y_1-\tilde \Y_2)\tilde \P_2\tilde\U_2\tilde \Y_{2,\eta}(t,\eta) d\eta\\
& \quad + \frac{1}{\A^6} \mathbbm{1}_{\sqtC{2}<\sqtC{1}}(\sqtC{2}-\sqtC{1})\int_0^1 (\tilde \Y_1-\tilde \Y_2) \tilde \P_1\tilde \U_1\tilde \Y_{1,\eta}(t,\eta) d\eta\\
& \quad +\frac{\ma}{\A^6} \int_0^1 (\tilde \Y_1-\tilde\Y_2)(\tilde\P_2-\tilde\P_1)\tilde\U_2\tilde\Y_{2,\eta}\mathbbm{1}_{\tilde\P_1\leq\tilde\P_2}(t,\eta)d\eta\\
& \quad +\frac{\ma}{\A^6}\int_0^1 (\tilde\Y_1-\tilde\Y_2)(\tilde\P_2-\tilde\P_1)\tilde\U_1\tilde\Y_{1,\eta}\mathbbm{1}_{\tilde\P_2<\tilde\P_1}(t,\eta)d\eta\\
& \quad +\frac{\ma}{\A^6}\int_0^1 (\tilde\Y_1-\tilde\Y_2)(\tilde\Y_{2,\eta}-\tilde\Y_{1,\eta})\min_j(\tilde\P_j)\tilde\U_{2}(t,\eta)d\eta\\
& \quad +\frac{\ma}{\A^6}\int_0^1(\tilde\Y_1-\tilde\Y_2)(\tilde\U_2-\tilde\U_1)\min_j(\tilde\P_j)\tilde\Y_{1,\eta}(t,\eta)d\eta\\
&= \bar B_{11}+\bar B_{12}+\bar B_{13}+\bar B_{14}+\bar B_{15}+\bar B_{16}.
\end{align*}

For $\bar B_{11}$ we have (and similarly for $B_{12}$) that
\begin{align*}
\vert \bar B_{11}\vert & = \frac{1}{\A^6} \mathbbm{1}_{\sqtC{1}\leq \sqtC{2}} (\sqtC{2}-\sqtC{1}) \vert \int_0^1 (\tilde \Y_1-\tilde \Y_2)\tilde \P_2\tilde \U_2\tilde \Y_{2,\eta}(t,\eta) d\eta\vert \\
& \leq \frac{\A}{2\sqrt{2}}\vert \sqtC{1}-\sqtC{2}\vert \int_0^1 \vert \tilde \Y_1-\tilde \Y_2\vert (t,\eta) d\eta\\
& \leq \bigO(1)(\norm{\tilde \Y_1-\tilde \Y_2}^2 +\vert \sqtC{1}-\sqtC{2}\vert).
\end{align*}

Recalling \eqref{eq:all_estimatesI}, we have for $\bar B_{13}$ (and similarly for $\bar B_{14}$) that 
\begin{align*}
\vert \bar B_{13}\vert &= \frac{\ma}{\A^6} \vert \int_0^1 (\tilde \Y_1-\tilde\Y_2)(\tilde\P_2-\tilde\P_1)\tilde\U_2\tilde\Y_{2,\eta}\mathbbm{1}_{\tilde\P_1\leq \tilde\P_2}(t,\eta) d\eta\vert\\
& \leq \frac{2}{\A^5} \int_0^1\vert \tilde\Y_1-\tilde\Y_2\vert\, \vert \sqP{2}-\sqP{1}\vert \sqP{2}\vert\tilde\U_2\vert\tilde\Y_{2,\eta}(t,\eta) d\eta\\
& \leq \sqrt{2} \int_0^1 \vert \tilde\Y_1-\tilde\Y_2\vert\,\vert \sqP{2}-\sqP{1}\vert (t,\eta)d\eta\\
& \leq \bigO(1)\Big(\norm{\tilde\Y_1-\tilde\Y_2}^2+\norm{\sqP{2}-\sqP{1}}^2\Big).
\end{align*}

As far as $\bar B_{15}$ is concerned, we want to use integration by parts. Therefore it is important to recall \eqref{decay:impl}, and 
Lemma \ref{lemma:3}(ii), which imply that 
\begin{align}\label{eq:barB13}
\vert \bar B_{15}\vert & =\frac{\ma}{\A^6}\vert \int_0^1 (\tilde\Y_1-\tilde\Y_2)(\tilde\Y_{1,\eta}-\tilde\Y_{2,\eta})\min_j(\tilde\P_j)\tilde\U_{2}(t,\eta)d\eta\vert\\
& = \frac{\ma}{2\A^6}\vert (\tilde\Y_1-\tilde\Y_2)^2 \min_j(\tilde\P_j)\tilde\U_2(t,\eta) \Big\vert_{\eta=0}^1\notag\\
& \qquad -\int_0^1 (\tilde\Y_1-\tilde\Y_2)^2 \frac{d}{d\eta}(\min_j(\tilde\P_j)\tilde\U_2)(t,\eta) d\eta\vert\notag\\
& = \frac{\ma}{2\A^6}\vert \int_0^1 (\tilde\Y_1-\tilde\Y_2)^2 \frac{d}{d\eta}(\min_j(\tilde\P_j)\tilde\U_2)(t,\eta) d\eta\vert\notag\\
& \leq \frac{\A}{2\sqrt{2}} \norm{\tilde\Y_1-\tilde\Y_2}^2.\notag
\end{align} 

$\bar B_{16}$ is straightforward. Indeed, one has using \eqref{eq:all_estimatesE} that 
\begin{align*}
\vert\bar B_{16}\vert&=\frac{\ma}{\A^6}\vert \int_0^1 (\tilde\Y_1-\tilde\Y_2)(\tilde\U_1-\tilde\U_2)\min_j(\tilde\P_j)\tilde\Y_{1,\eta}(t,\eta)d\eta\vert\\
& \leq \frac{1}{2}\int_0^1\vert \tilde\Y_1-\tilde\Y_2\vert \vert \tilde\U_1-\tilde\U_2\vert (t,\eta)d\eta\\
& \leq \norm{\tilde\Y_1-\tilde\Y_2}^2+\norm{\tilde\U_1-\tilde\U_2}.
\end{align*}

We continue with $\bar K_{12}$. We can split $\bar K_{12}$ as follows
\begin{align*}
\bar K_{12}&=  \frac1{\A^6} \int_0^1 (\tilde \Y_1-\tilde\Y_2)(\tilde \D_1\tilde\U_1\tilde\Y_{1,\eta}-\tilde\D_2\tilde\U_2\tilde\Y_{2,\eta})(t,\eta) d\eta\\
&=\frac{1}{\A^6}\int_0^1 (\tilde \Y_1-\tilde\Y_2) (\tilde \D_1-\tilde\D_2) \tilde \U_1\tilde \Y_{1,\eta} \mathbbm{1}_{\tilde \D_2\leq \tilde\D_1}(t,\eta)d\eta\\
& \quad +\frac{1}{\A^6} \int_0^1 (\tilde\Y_1-\tilde\Y_2) (\tilde \D_1-\tilde\D_2) \tilde \U_2\tilde\Y_{2,\eta} \mathbbm{1}_{\tilde\D_1<\tilde\D_2}(t,\eta) d\eta\\
& \quad +\frac{1}{\A^6} \int_0^1 (\tilde \Y_1-\tilde\Y_2)(\tilde\U_1-\tilde\U_2) \min_j(\tilde\D_j) \tilde\Y_{1,\eta}(t,\eta) d\eta\\
& \quad + \frac{1}{\A^6} \int_0^1 (\tilde\Y_1-\tilde\Y_2) (\tilde \Y_{1,\eta}-\tilde\Y_{2,\eta}) \min_j(\tilde\D_j) \tilde \U_2(t,\eta) d\eta\\
& = \bar B_{21}+\bar B_{22}+\bar B_{23}+\bar B_{24}.
\end{align*}

As a first step we need to establish an estimate for $(\tilde\D_1-\tilde\D_2)(t,\eta)$. Note that $\vert \tilde\U_i\tilde\Y_{i,\eta}(t,\eta)\vert \leq \A^{5/2}\sqrt{\tilde\Y_{i,\eta}}(t,\eta)$, and hence it is in general unbounded. Thus our estimate for $(\tilde\D_1-\tilde\D_2)(t,\eta)$ must take care of this problem.

Applying Lemma \ref{lemma:D} finally yields
\begin{align*}
\vert \bar B_{21}\vert  & \leq \frac{1}{\A^6} \int_0^1\vert\tilde\Y_1-\tilde\Y_2\vert\, \vert\tilde\D_1-\tilde\D_2\vert\, \vert \tilde \U_1\vert \tilde\Y_{1,\eta}(t,\eta) \mathbbm{1}_{\tilde\D_2\leq \tilde\D_1}(t,\eta)d\eta\\
& \leq \frac2{\A^{9/2}} \int_0^1 (\tilde\Y_1-\tilde\Y_2)^2 \tilde\D_1^{1/2}\vert \tilde\U_1\vert \tilde\Y_{1,\eta} (t,\eta)d\eta\\
&\quad +\frac1{\A^6}\int_0^1 (\tilde\Y_1-\tilde\Y_2)^2(\tilde \U_1^2+\tilde\P_1)\vert \tilde\U_1\vert \tilde\Y_{1,\eta} (t,\eta) d\eta\\
& \quad+ \frac{2\sqrt{2}}{\A^{9/2}} \norm{\tilde \Y_1-\tilde\Y_2}\int_0^1 \vert \tilde\Y_1-\tilde\Y_2\vert \tilde\D_1^{1/2}\vert \tilde\U_1\vert \tilde\Y_{1,\eta} (t,\eta) d\eta\\
& \quad+ \frac4{\A^3} \int_0^1\vert \tilde \Y_1-\tilde\Y_2\vert(t,\eta) \\
&\qquad\qquad\times\Big(\int_0^\eta e^{-\frac{1}{\A}(\tilde \Y_1(t,\eta)-\tilde\Y_1(t,\theta))}(\tilde \U_1-\tilde\U_2)^2(t,\theta) d\theta\Big)^{1/2}\vert \tilde \U_1\vert\tilde\Y_{1,\eta}(t,\eta)d\eta\\
& \quad+\frac{2\sqrt{2}}{\A^3} \int_0^1\vert \tilde \Y_1-\tilde\Y_2\vert(t,\eta) \\
&\qquad\qquad\times\Big(\int_0^\eta e^{-\frac{1}{\A}(\tilde \Y_1(t,\eta)-\tilde\Y_1(t,\theta))}(\sqP{1}-\sqP{2})^2(t,\theta) d\theta\Big)^{1/2}\vert \tilde \U_1\vert\tilde\Y_{1,\eta}(t,\eta)d\eta\\
& \quad+\frac{3}{\sqrt{2}\A^2} \int_0^1\vert \tilde \Y_1-\tilde\Y_2\vert (t,\eta)\\
&\qquad\qquad\qquad\times\Big(\int_0^\eta e^{-\frac{1}{\ma}(\tilde \Y_1(t,\eta)-\tilde\Y_1(t,\theta))}(\tilde \Y_1-\tilde\Y_2)^2(t,\theta) d\theta\Big)^{1/2}\vert \tilde \U_1\vert\tilde\Y_{1,\eta}(t,\eta)d\eta\\
& \quad +\frac{12\sqrt{2}}{\sqrt{3}e\A^2}\vert \sqtC{1}-\sqtC{2}\vert \int_0^1 \vert \tilde \Y_1-\tilde \Y_2\vert (t,\eta)\\&\qquad\qquad\qquad\times\Big(\int_0^\eta e^{-\frac{3}{4\A}(\tilde \Y_1(t,\eta)-\tilde \Y_1(t,\theta))} d\theta\Big)^{1/2} \vert \tilde \U_1\vert \tilde \Y_{1,\eta}(t,\eta) d\eta\\
& \quad + \frac{3}{2\A^2} \int_0^1 \vert \tilde\Y_1-\tilde \Y_2\vert (t,\eta)\\
&\qquad\qquad\times\Big(\int_0^\eta e^{-\frac{1}{\ma}(\tilde \Y_1(t,\eta)-\tilde \Y_1(t,\theta))}\vert \tilde \Y_1-\tilde \Y_2\vert (t,\theta) d\theta\Big) \vert \tilde \U_1\vert \tilde \Y_{1,\eta} (t,\eta) d\eta\\
&\quad+\frac{6}{\A^2} \vert \sqtC{1}-\sqtC{2}\vert \int_0^1\vert \tilde \Y_1-\tilde\Y_2\vert(t,\eta)\\
&\qquad\qquad\times \Big(\int_0^\eta e^{-\frac{3}{4\A}(\tilde \Y_1(t,\eta)-\tilde\Y_1(t,\theta))} d\theta\Big)\vert \tilde \U_1\vert\tilde\Y_{1,\eta}(t,\eta)d\eta\\
& \leq \bigO(1) \norm{\tilde\Y_1-\tilde\Y_2}^2
 + \frac{16}{\A}\int_0^1 \tilde \Y_{1,\eta}(t,\eta) e^{-\frac{1}{\A}\tilde\Y_1(t,\eta)} \\
&\quad \times\Big(\int_0^\eta e^{\frac{1}{\A}\tilde \Y_1(t,\theta)} ((\tilde\U_1-\tilde\U_2)^2+ (\sqP{1}-\sqP{2})^2+\A(\tilde \Y_1-\tilde\Y_2)^2)(t,\theta) d\theta\Big) d\eta\\
& \quad + 12\A^{1/2}\vert \sqtC{1}-\sqtC{2}\vert \norm{\tilde \Y_1-\tilde \Y_2}\\
&\qquad\qquad\times\Big(\int_0^1 \tilde \Y_{1,\eta}(t,\eta) e^{-\frac{3}{4\A}\tilde \Y_1(t,\eta)}\Big(\int_0^\eta e^{\frac{3}{4\A}\tilde \Y_1(t,\theta)}d\theta\Big) d\eta\Big)^{1/2}\\
&\quad+ \frac{3\A^{1/2}}{2}\Big(\int_0^1 \tilde\Y_{1,\eta}(t,\eta) e^{-\frac{2}{\ma}\tilde\Y_1(t,\eta)}\Big( \int_0^\eta e^{\frac{2}{\ma}\tilde\Y_1(t,\theta)}d\theta\Big) d\eta\Big)^{1/2} \norm{\tilde \Y_1-\tilde\Y_2}^2\\
&\quad+ 6\A^{1/2} \vert \sqtC{1}-\sqtC{2}\vert \norm{\tilde \Y_1-\tilde\Y_2} \\
&\qquad\qquad\qquad\qquad\times \Big(\int_0^1 \tilde \Y_{1,\eta}(t,\eta)e^{-\frac{3}{4\A}\tilde\Y_1(t,\eta)}\Big(\int_0^\eta e^{\frac{3}{4A}\tilde\Y_1(t,\theta)}d\theta\Big)d\eta\Big)^{1/2}\\
& \leq \bigO(1)(\norm{\tilde \Y_1-\tilde\Y_2}^2+\norm{\tilde\U_1-\tilde\U_2}^2+\norm{\sqP{1}-\sqP{2}}^2+\vert \sqtC{1}-\sqtC{2}\vert^2).
\end{align*}
Following the same lines, one obtains
\begin{equation*}
\vert \bar B_{22}\vert\leq \bigO(1)\Big(\norm{\tilde \Y_1-\tilde\Y_2}^2+\norm{\tilde\U_1-\tilde\U_2}^2+\norm{\sqP{1}-\sqP{2}}^2+\vert \sqtC{1}-\sqtC{2}\vert^2\Big).
\end{equation*}

The estimate for $\bar B_{23}$ is straightforward. Namely,
\begin{align*}
\vert \bar B_{23}\vert &\leq\frac{2}{\A^5} \int_0^1 \vert \tilde\Y_1-\tilde\Y_2\vert \vert \tilde\U_1-\tilde\U_2\vert \tilde \P_1\tilde\Y_{1,\eta} (t,\eta) d\eta\\
& \leq \norm{\tilde \Y_1-\tilde\Y_2}^2+\norm{\tilde \U_1-\tilde\U_2}^2.
\end{align*}

As far as $\bar B_{24}$ is concerned, recall from Lemma \ref{lemma:5} (ii) that
\begin{align*}
\vert \frac{d}{d\eta} \big(\min_j(\tilde \D_j)\tilde \U_2(t,\eta)\big) \vert & \leq\bigO(1)\A^7,
\end{align*}
which yields, together with \eqref{decay:impl}, that
\begin{align*}
\vert \bar B_{24}\vert &=  \frac{1}{\A^6}\vert \int_0^1 (\tilde \Y_1-\tilde\Y_2)(\tilde\Y_{1,\eta}-\tilde\Y_{2,\eta})\min_j(\tilde\D_j)\tilde\U_2(t,\eta) d\eta\vert\\
& =\frac1{2\A^6} \vert (\tilde \Y_1-\tilde\Y_2)^2 \min_j(\tilde\D_j)\tilde \U_2(t,\eta)\Big\vert_{\eta=0}^1 \\
&\qquad\qquad\qquad\qquad-\int_0^1 (\tilde\Y_1-\tilde\Y_2)^2(t,\eta) \frac{d}{d\eta} (\min_j(\tilde\D_j)\tilde\U_2)(t,\eta) d\eta\vert \\
& = \frac{1}{2\A^6}\vert \int_0^1 (\tilde \Y_1-\tilde\Y_2)^2(t,\eta) \frac{d}{d\eta} (\min_j(\tilde \D_j)\tilde\U_2)(t,\eta) d\eta\vert \\
& \leq \bigO(1) \norm{\tilde \Y_1-\tilde\Y_2}^2.
\end{align*}

Next, we have a look at $\bar K_{13}$, which can be rewritten as follows
\begin{align*}
\bar K_{13}&= \frac{1}{\A^6}\int_0^1 (\tilde\Y_1-\tilde\Y_2)(t,\eta)\\
&\qquad\times\Big(\tilde\Y_{2,\eta}(t,\eta) \int_0^\eta e^{-\frac{1}{\sqtC{2}}(\tilde\Y_2(t,\eta)-\tilde\Y_2(t,\theta))}\frac{1}{\sqtC{2}}\tilde\D_2\tilde\U_2\tilde\Y_{2,\eta}(t,\theta) d\theta\\
& \qquad \qquad\qquad  -\tilde\Y_{1,\eta}(t,\eta) \int_0^\eta e^{-\frac{1}{\sqtC{1}}(\tilde\Y_1(t,\eta)-\tilde\Y_1(t,\theta))}\frac{1}{\sqtC{1}}\tilde\D_1\tilde\U_1\tilde\Y_{1,\eta}(t,\theta) d\theta\Big)d\eta\\
& = \frac{1}{\A^6} \int_0^1 (\tilde\Y_1-\tilde\Y_2)(t,\eta)\\
&\qquad\times\Big(\tilde\Y_{2,\eta}(t,\eta)\int_0^\eta e^{-\frac{1}{\sqtC{2}}(\tilde\Y_2(t,\eta)-\tilde\Y_2(t,\theta))}\frac{1}{\sqtC{2}}\tilde\D_2\tilde\V_2^+\tilde\Y_{2,\eta}(t,\theta) d\theta\\
& \qquad\qquad \qquad  -\tilde\Y_{1,\eta}(t,\eta)\int_0^\eta e^{-\frac{1}{\sqtC{1}}(\tilde\Y_1(t,\eta)-\tilde\Y_1(t,\theta))}\frac{1}{\sqtC{1}}\tilde\D_1\tilde\V_1^+\tilde\Y_{1,\eta}(t,\theta) d\theta\Big) d\eta\\
& +\frac{1}{\A^6} \int_0^1 (\tilde\Y_1-\tilde\Y_2)(t,\eta) \\
&\qquad\times\Big(\tilde\Y_{2,\eta}(t,\eta)\int_0^\eta e^{-\frac{1}{\sqtC{2}}(\tilde\Y_2(t,\eta)-\tilde\Y_2(t,\theta))}\frac{1}{\sqtC{2}}\tilde\D_2\tilde\V_2^-\tilde\Y_{2,\eta}(t,\theta)d\theta\\
& \qquad \qquad\qquad  -\tilde\Y_{1,\eta}(t,\eta)\int_0^\eta e^{-\frac{1}{\sqtC{1}}(\tilde\Y_1(t,\eta)-\tilde\Y_1(t,\theta))}\frac{1}{\sqtC{1}}\tilde\D_1\tilde\V_1^-\tilde\Y_{1,\eta}(t,\theta) d\theta\Big) d\eta\\
& = \bar K_{13}^++\bar K_{13}^-.
\end{align*}
Note that both $\bar K_{13}^+$ and $\bar K_{13}^-$ have the same structure and hence we are only going to present the details for $\bar K_{13}^+$, which needs to be rewritten a bit more. Namely, 
\begin{align*}
\bar K_{13}^+& = \frac{1}{\A^6}\int_0^1 (\tilde\Y_1-\tilde\Y_2)(t,\eta)\Big( \tilde\Y_{2,\eta}(t,\eta)\int_0^\eta e^{-\frac{1}{\sqtC{2}}(\tilde\Y_2(t,\eta)-\tilde\Y_2(t,\theta))}\frac{1}{\sqtC{2}}\tilde \D_2\tilde\V_2^+\tilde\Y_{2,\eta}(t,\theta) d\theta\\
& \qquad \qquad \qquad \qquad -\tilde\Y_{1,\eta}(t,\eta)\int_0^\eta e^{-\frac{1}{\sqtC{1}}(\tilde\Y_1(t,\eta)-\tilde\Y_1(t,\theta))}\frac{1}{\sqtC{1}}\tilde\D_1\tilde\V_1^+\tilde\Y_{1,\eta}(t,\theta) d\theta\Big) d\eta\\
& = \mathbbm{1}_{\sqtC{1}\leq \sqtC{2}}\frac{1}{\A^6}\Big(\frac{1}{\sqtC{2}}-\frac{1}{\sqtC{1}}\Big) \int_0^1 (\tilde \Y_1-\tilde \Y_2)\tilde \Y_{1,\eta}(t,\eta)\\
&\qquad\qquad\qquad\qquad\qquad\times \Big(\int_0^\eta e^{-\frac{1}{\sqtC{1}}(\tilde \Y_1(t,\eta)-\tilde \Y_1(t,\theta))}\tilde \D_1\tilde \V_1^+\tilde \Y_{1,\eta} (t,\theta) d\theta\Big) d\eta\\
& \quad + \mathbbm{1}_{\sqtC{2}<\sqtC{1}}\frac{1}{\A^6}\Big(\frac{1}{\sqtC{2}}-\frac{1}{\sqtC{1}}\Big)\int_0^1 (\tilde \Y_1-\tilde \Y_2)\tilde \Y_{2,\eta} (t,\eta)\\
&\qquad\qquad\qquad\qquad\qquad\times \Big(\int_0^\eta e^{-\frac{1}{\sqtC{2}}(\tilde \Y_2(t,\eta)-\tilde \Y_2(t,\theta))}\tilde \D_2\tilde \V_2^+\tilde \Y_{2,\eta}(t,\theta)d\theta\Big) d\eta\\
& \quad+  \frac{1}{\A^7} \int_0^1 (\tilde\Y_1-\tilde\Y_2)\tilde\Y_{2,\eta}(t,\eta)\\
&\qquad\qquad\qquad\times\Big(\int_0^\eta e^{-\frac{1}{\sqtC{2}}(\tilde\Y_2(t,\eta)-\tilde\Y_2(t,\theta))}(\tilde\D_2-\tilde\D_1)\tilde\V_2^+\tilde\Y_{2,\eta}\mathbbm{1}_{\tilde \D_1\leq \tilde\D_2}(t,\theta) d\theta\Big) d\eta\\
&\quad +\frac{1}{\A^7} \int_0^1 (\tilde\Y_1-\tilde\Y_2)\tilde\Y_{1,\eta}(t,\eta)\\
&\qquad\qquad\qquad\times\Big(\int_0^\eta e^{-\frac{1}{\sqtC{1}}(\tilde\Y_1(t,\eta)-\tilde\Y_1(t,\theta))}(\tilde\D_2-\tilde\D_1)\tilde\V_1^+\tilde\Y_{1,\eta}\mathbbm{1}_{\tilde \D_2<\tilde\D_1}(t,\theta)d\theta\Big) d\eta\\
&\quad +\frac{1}{\A^7}\int_0^1 (\tilde\Y_1-\tilde\Y_2)\tilde\Y_{2,\eta}(t,\eta)\\
&\qquad\quad\times\Big(\int_0^\eta e^{-\frac{1}{\sqtC{2}}(\tilde\Y_2(t,\eta)-\tilde\Y_2(t,\theta))}\min_j(\tilde\D_j)(\tilde\V_2^+-\tilde\V_1^+)\tilde\Y_{2,\eta}\mathbbm{1}_{\tilde\V_1^+\leq \tilde\V_2^+}(t,\theta) d\theta\Big) d\eta\\
&\quad+ \frac{1}{\A^7} \int_0^1 (\tilde\Y_1-\tilde\Y_2)\tilde\Y_{1,\eta}(t,\eta)\\
&\qquad\quad\times\Big(\int_0^\eta e^{-\frac{1}{\sqtC{1}}(\tilde\Y_1(t,\eta)-\tilde\Y_1(t,\theta))}\min_j(\tilde\D_j)(\tilde\V_2^+-\tilde\V_1^+)\tilde\Y_{1,\eta}\mathbbm{1}_{\tilde\V_2^+<\tilde\V_1^+}(t,\theta) d\theta\Big) d\eta\\
&\quad +\frac{1}{\A^7} \int_0^1 (\tilde\Y_1-\tilde\Y_2)\tilde\Y_{2,\eta}(t,\eta) \\
& \quad\qquad\qquad\qquad\quad \times\Big(\int_0^\eta (e^{-\frac{1}{\sqtC{2}}(\tilde\Y_2(t,\eta)-\tilde\Y_2(t,\theta))}-e^{-\frac{1}{\sqtC{2}}(\tilde\Y_1(t,\eta)-\tilde\Y_1(t,\theta))})\\
&\qquad\qquad\qquad\qquad\qquad\qquad\qquad\times\min_j(\tilde\D_j)\min_j(\tilde\V_j^+)\tilde\Y_{2,\eta}\mathbbm{1}_{B(\eta)}(t,\theta)d\theta\Big) d\eta\\
&\quad +\frac{1}{\A^7} \int_0^1 (\tilde\Y_1-\tilde\Y_2)\tilde\Y_{1,\eta}(t,\eta)\\
&\qquad\qquad\qquad \qquad\quad \times\Big(\int_0^\eta (e^{-\frac{1}{\sqtC{1}}(\tilde\Y_2(t,\eta)-\tilde\Y_2(t,\theta))}-e^{-\frac{1}{\sqtC{1}}(\tilde\Y_1(t,\eta)-\tilde\Y_1(t,\theta))})\\
&\qquad\qquad\qquad\qquad\qquad\qquad\qquad\times\min_j(\tilde\D_j)\min_j(\tilde\V_j^+)\tilde\Y_{1,\eta}\mathbbm{1}_{B(\eta)^c}(t,\theta)d\theta\Big) d\eta\\
& \quad+\mathbbm{1}_{\sqtC{1}\leq \sqtC{2}}\frac{1}{\A^7} \int_0^1 (\tilde \Y_1-\tilde \Y_2)\tilde \Y_{2,\eta} (t,\eta)\\
& \qquad \qquad \qquad \quad  \times \Big(\int_0^\eta (\min_j(e^{-\frac{1}{\sqtC{2}}(\tilde \Y_j(t,\eta)-\tilde \Y_j(t,\theta))})-\min_j(e^{-\frac{1}{\sqtC{1}}(\tilde \Y_j(t,\eta)-\tilde \Y_j(t,\theta))}))\\
& \qquad \qquad \qquad \qquad \qquad \qquad \quad \times \min_j(\tilde \D_j)\min_j(\tilde \V_j^+)\tilde \Y_{2,\eta} (t,\theta)d\theta\Big) d\eta\\
& \quad+\mathbbm{1}_{\sqtC{2}<\sqtC{1}}\frac{1}{\A^7} \int_0^1 (\tilde \Y_1-\tilde \Y_2) \tilde \Y_{1,\eta} (t,\eta)\\
& \qquad \qquad \qquad \quad  \times \Big(\int_0^\eta (\min_j(e^{-\frac{1}{\sqtC{2}}(\tilde \Y_j(t,\eta)-\tilde \Y_j(t,\theta))})-\min_j(e^{-\frac{1}{\sqtC{1}}(\tilde \Y_j(t,\eta)-\tilde \Y_j(t,\theta))}))\\
& \qquad \qquad \qquad \qquad \qquad \qquad  \times \min_j(\tilde \D_j)\min_j(\tilde \V_j^+)\tilde \Y_{1,\eta}(t,\theta) d\theta\Big) d\eta\\
&\quad+\frac{1}{A^7}\int_0^1 (\tilde\Y_1-\tilde\Y_2)(\tilde\Y_{2,\eta}-\tilde\Y_{1,\eta})(t,\eta)\\
& \qquad \times \min_k\Big(\int_0^\eta \min_j(e^{-\frac{1}{\ma}(\tilde \Y_j(t,\eta)-\tilde\Y_j(t,\theta))}) \min_j(\tilde\D_j)\min_j(\tilde\V_j^+)\tilde\Y_{k,\eta}(t,\theta) d\theta\Big) d\eta\\
& \quad+ \frac{1}{\A^7}\int_0^1 (\tilde\Y_1-\tilde\Y_2)\tilde\Y_{2,\eta}\mathbbm{1}_{D^c}(t,\eta)\\
& \quad  \times \Big(\int_0^\eta \min_j(e^{-\frac{1}{\ma}(\tilde \Y_j(t,\eta)-\tilde\Y_j(t,\theta))})\\
&\qquad\qquad\qquad\qquad\qquad\qquad\times\min_j(\tilde\D_j)\min_j(\tilde\V_j^+)(\tilde\Y_{2,\eta}-\tilde\Y_{1,\eta})(t,\theta) d\theta\Big) d\eta\\
& \quad+ \frac{1}{\A^7} \int_0^1 (\tilde\Y_1-\tilde\Y_2)\tilde\Y_{1,\eta} \mathbbm{1}_D(t,\eta)\\
& \quad \times \Big(\int_0^\eta \min_j(e^{-\frac{1}{\ma}(\tilde \Y_j(t,\eta)-\tilde\Y_j(t,\theta))})\\
&\qquad\qquad\qquad\qquad\qquad\qquad\times\min_j(\tilde\D_j)\min_j(\tilde\V_j^+)(\tilde\Y_{2,\eta}-\tilde\Y_{1,\eta})(t,\theta) d\theta\Big) d\eta\\
& = \hat B_{31}+\hat B_{32} +\bar B_{31}+\bar B_{32}+\bar B_{33} +\bar B_{34}\\
&\quad+\bar B_{35}+\bar B_{36}+\hat B_{35}+\hat B_{36}+\bar B_{37}+\bar B_{38}+\bar B_{39},
\end{align*}
where  
$B(\eta)$ by \eqref{Def:Bn}  
and 
\begin{equation}\label{eq:set_e}
D=\Big\{ (t,\eta)\mid  \int_0^\eta \varUpsilon(t,\eta,\theta)\tilde\Y_{2,\eta}(t,\theta) d\theta
 \leq \int_0^\eta \varUpsilon(t,\eta,\theta)\tilde\Y_{1,\eta}(t,\theta) d\theta\Big\},
\end{equation} 
where
\begin{equation*}
\varUpsilon(t,\eta,\theta)= \min_j(e^{-\frac{1}{\ma}(\tilde \Y_j(t,\eta)-\tilde\Y_j(t,\theta))}) 
\min_j(\tilde\D_j)\min_j(\tilde\V_j^+).
\end{equation*}
We start by estimating $\hat B_{31}$ (a similar argument works for $\hat B_{32}$.
Direct calculations yield
\begin{align*}
\vert \hat B_{31}\vert &= \mathbbm{1}_{\sqtC{1}\leq \sqtC{2}}\frac{1}{\ma\A^7}\vert \sqtC{2}-\sqtC{1}\vert \int_0^1 \vert \tilde \Y_1-\tilde \Y_2\vert \tilde \Y_{1,\eta}(t,\eta)\\
&\qquad\qquad\qquad\times\Big(\int_0^\eta e^{-\frac{1}{\sqtC{1}}(\tilde \Y_1(t,\eta)-\tilde \Y_1(t,\theta))} \tilde \D_1\tilde \V_1^+\tilde \Y_{1,\eta}(t,\theta) d\theta\Big) d\eta\\
& \leq \frac{2}{\A^7} \vert \sqtC{1}-\sqtC{2}\vert \int_0^1 \vert \tilde \Y_1-\tilde \Y_2\vert \tilde \Y_{1,\eta}(t,\eta) \\
&\qquad\qquad\qquad\times\Big(\int_0^\eta e^{-\frac{1}{\sqtC{1}}(\tilde \Y_1(t,\eta)-\tilde \Y_1(t,\theta))} \tilde \P_1 \tilde \V_1^+\tilde \Y_{1,\eta}(t,\theta) d\theta\Big) d\eta\\
& \leq \frac{2}{\A^7} \vert \sqtC{1}-\sqtC{2}\vert \int_0^1 \vert \tilde \Y_1-\tilde \Y_2\vert \tilde \Y_{1,\eta}(t,\eta) \\
&\qquad\qquad\qquad\times\Big(\int_0^\eta e^{-\frac{1}{\sqtC{1}}(\tilde \Y_1(t,\eta)-\tilde \Y_1(t,\theta))}\tilde \P_1^2\tilde \Y_{1,\eta}(t,\theta) d\theta\Big)^{1/2}\\
& \qquad \qquad \qquad \qquad \qquad  \times \Big(\int_0^\eta e^{-\frac{1}{\sqtC{1}}(\tilde \Y_1(t,\eta)-\tilde \Y_1(t,\theta))}\tilde \U_1^2\tilde \Y_{1,\eta}(t,\theta) d\theta\Big)^{1/2} d\eta\\
& \leq \frac{2\sqrt{6}}{\A^4} \vert \sqtC{1}-\sqtC{2}\vert \int_0^1 \vert \tilde \Y_1-\tilde \Y_2\vert \tilde \P_1\tilde \Y_{1,\eta} (t,\eta) d\eta\\
& \leq \sqrt{6}\A \vert \sqtC{1}-\sqtC{2}\vert \int_0^1 \vert \tilde \Y_1-\tilde \Y_2\vert (t,\eta) d\eta\\
& \leq \bigO(1) \big( \norm{\tilde \Y_1-\tilde \Y_2}^2+\vert \sqtC{1}-\sqtC{2}\vert^2\big).
\end{align*}

For $\bar B_{31}$ (a similar argument works for $\bar B_{32}$) we would like to apply some of the estimates established when investigating $\bar K_1$. Thus we split $\bar B_{31}$ into even smaller parts, i.e., 
\begin{align*} \notag
\vert \bar B_{31}\vert &\leq \frac{1}{\A^7}\int_0^1 \vert \tilde\Y_1-\tilde\Y_2\vert \tilde\Y_{2,\eta}(t,\eta)\\ \nn
&\qquad\times\Big(\int_0^\eta e^{-\frac{1}{\sqtC{2}}(\tilde\Y_2(t,\eta)-\tilde\Y_2(t,\theta))}(\bar d_{11}+\bar d_{12})\tilde\V_2^+\tilde\Y_{2,\eta}\mathbbm{1}_{\tilde \D_1\leq \tilde\D_2}(t,\theta) d\theta\Big) d\eta\\\nn
& \quad +\frac{1}{\A^7} \int_0^1 \vert \tilde\Y_1-\tilde\Y_2\vert \tilde\Y_{2,\eta}(t,\eta)\\ \nn
&\qquad\times\Big(\int_0^\eta e^{-\frac{1}{\sqtC{2}}(\tilde\Y_2(t,\eta)-\tilde\Y_2(t,\theta))}\bar T_1\tilde\V_2^+\tilde\Y_{2,\eta}\mathbbm{1}_{\tilde \D_1\leq \tilde\D_2}(t,\theta) d\theta\Big)d\eta\notag\\\nn
& \quad +\frac{1}{\A^7} \int_0^1 \vert \tilde\Y_1-\tilde\Y_2\vert \tilde\Y_{2,\eta}(t,\eta)\\ \nn
&\qquad\times \Big(\int_0^\eta e^{-\frac{1}{\sqtC{2}}(\tilde\Y_2(t,\eta)-\tilde\Y_2(t,\theta))}\bar T_2\tilde\V_2^+\tilde\Y_{2,\eta}\mathbbm{1}_{\tilde \D_1\leq \tilde\D_2}(t,\theta)d\theta\Big) d\eta\notag\\\nn
& \quad +\frac{1}{\A^7} \int_0^1 \vert \tilde\Y_1-\tilde\Y_2\vert \tilde\Y_{2,\eta}(t,\eta)\\ \nn
&\qquad\times\Big(\int_0^\eta e^{-\frac{1}{\sqtC{2}}(\tilde\Y_2(t,\eta)-\tilde\Y_2(t,\theta))}\bar T_3\tilde\V_2^+\tilde\Y_{2,\eta}\mathbbm{1}_{\tilde \D_1\leq \tilde\D_2}(t,\theta) d\theta \Big) d\eta\notag\\
& = B_{311}+B_{312}+B_{313}+B_{314}.%\label{eq:barB31}
\end{align*}

By \eqref{est:d11} and \eqref{est:d12}, we have that
\begin{equation*}
(\bar d_{11}+\bar d_{12})(t,\theta) \leq 2\A^{3/2}\tilde\D_2^{1/2}\vert \tilde\Y_1-\tilde\Y_2\vert (t,\theta)+ 2\sqrt{2}\A^{3/2} \tilde\D_2^{1/2}(t,\theta) \norm{\tilde\Y_1-\tilde\Y_2},
\end{equation*}
and hence 
\begin{align*}
B_{311}& \leq \frac{2}{A^{11/2}} \int_0^1 \vert \tilde\Y_1-\tilde\Y_2\vert \tilde\Y_{2,\eta}(t,\eta) \\
&\qquad\qquad\qquad\times\Big(\int_0^\eta e^{-\frac{1}{\sqtC{2}}(\tilde\Y_2(t,\eta)-\tilde\Y_2(t,\theta))}\vert \tilde\Y_1-\tilde\Y_2\vert \tilde\D_2^{1/2}\tilde\V_2^+\tilde\Y_{2,\eta}(t,\theta) d\theta\Big) d\eta\\
& \quad + \frac{2\sqrt{2}}{\A^{11/2}}\int_0^1 \vert \tilde\Y_1-\tilde\Y_2\vert \tilde\Y_{2,\eta}(t,\eta)\\
&\qquad\qquad\qquad\times \Big(\int_0^\eta e^{-\frac{1}{\sqtC{2}}(\tilde\Y_2(t,\eta)-\tilde\Y_2(t,\theta))}\tilde\D_2^{1/2} \tilde\V_2^+\tilde\Y_{2,\eta}(t,\theta) d\theta\Big) d\eta \norm{\tilde\Y_1-\tilde\Y_2}\\
& \leq \norm{\tilde\Y_1-\tilde\Y_2}^2 \\
& \quad + \frac{4}{\A^{11}} \int_0^1 \tilde\Y_{2,\eta}^2(t,\eta)\\
&\qquad\qquad\qquad\times\Big(\int_0^\eta e^{-\frac{1}{\sqtC{2}}(\tilde \Y_2(t,\eta)-\tilde\Y_2(t,\theta))}\vert\tilde\Y_1-\tilde\Y_2\vert \tilde\D_2^{1/2}\tilde\V_2^+\tilde\Y_{2,\eta}(t,\theta) d\theta\Big)^2d\eta\\
& \quad +\frac{2\sqrt{2}}{A^{11/2}} \norm{\tilde\Y_1-\tilde\Y_2}^2\\
&\qquad\quad\times\Big(\int_0^1 \tilde\Y_{2,\eta}^2(t,\eta)\Big(\int_0^\eta e^{-\frac{1}{\sqtC{2}}(\tilde\Y_2(t,\eta)-\tilde\Y_2(t,\theta))}\tilde\D_2^{1/2} \tilde\V_2^+\tilde\Y_{2,\eta}(t,\theta)d\theta\Big)^2d\eta\Big)^{1/2}\\
& \leq \norm{\tilde\Y_1-\tilde\Y_2}^2\\
& \quad +\frac{4}{\A^{11}} \int_0^1 \tilde\Y_{2,\eta}^2(t,\eta)\Big(\int_0^\eta e^{-\frac{1}{\sqtC{2}}(\tilde\Y_2(t,\eta)-\tilde\Y_2(t,\theta))}\tilde\U_2^2\tilde\Y_{2,\eta}(t,\theta) d\theta\Big)\\
& \qquad \qquad \qquad \qquad \times \Big(\int_0^\eta e^{-\frac{1}{\sqtC{2}}(\tilde\Y_2(t,\eta)-\tilde\Y_2(t,\theta))} (\tilde\Y_1-\tilde\Y_2)^2 \tilde\D_2 \tilde\Y_{2,\eta}(t,\theta) d\theta\Big) d\eta\\
& \quad +\frac{2\sqrt{2}}{\A^{11/2}} \norm{\tilde\Y_1-\tilde\Y_2}^2 \Big(\int_0^1 \tilde\Y_{2,\eta}^2(t,\eta) \Big( \int_0^\eta e^{-\frac{1}{\sqtC{2}}(\tilde\Y_2(t,\eta)-\tilde\Y_2(t,\theta))}\tilde\U_2^2\tilde\Y_{2,\eta}(t,\theta) d\theta\Big)\\
& \qquad \qquad \qquad \qquad \qquad\qquad\times \Big(\int_0^\eta e^{-\frac{1}{\sqtC{2}}(\tilde\Y_2(t,\eta)-\tilde\Y_2(t,\theta))}\tilde\D_2\tilde\Y_{2,\eta}(t,\theta)d\theta\Big) d\eta\Big)^{1/2}\\
& \leq \norm{\tilde\Y_1-\tilde\Y_2}^2\\
& \quad + \frac{8}{\A^{5}} \int_0^1 \tilde\Y_{2,\eta}(t,\eta) e^{-\frac{1}{\sqtC{2}}\tilde\Y_2(t,\eta)}\Big(\int_0^\eta e^{\frac{1}{\sqtC{2}}\tilde\Y_2(t,\theta)}(\tilde\Y_1-\tilde\Y_2)^2\tilde\D_2\tilde\Y_{2,\eta}(t,\theta) d\theta\Big) d\eta\\
& \quad +\frac{4}{\A^{5/2}} \norm{\tilde\Y_1-\tilde\Y_2}^2 \\
&\qquad\qquad\times\Big(\int_0^1 \tilde\Y_{2,\eta}(t,\eta)\Big(\int_0^\eta e^{-\frac{1}{\sqtC{2}}(\tilde\Y_2(t,\eta)-\tilde\Y_2(t,\theta))}\tilde\D_2\tilde\Y_{2,\eta}(t,\theta) d\theta\Big) d\eta\Big)^{1/2}\\
& \leq \norm{\tilde\Y_1-\tilde\Y_2}^2 \\
& \quad +\frac{8\sqtC{2}}{\A^5} \Big[-\int_0^\eta e^{-\frac{1}{\sqtC{2}}(\tilde\Y_2(t,\eta)-\tilde\Y_2(t,\theta))} (\tilde\Y_1-\tilde\Y_2)^2 \tilde\D_2\tilde\Y_{2,\eta}(t,\theta) d\theta\Big\vert_{\eta=0}^1 \\
&\qquad\qquad\qquad\qquad\qquad\qquad\qquad\qquad+\int_0^1 (\tilde\Y_1-\tilde\Y_2)^2 \tilde\D_2\tilde\Y_{2,\eta}(t,\eta) d\eta\Big]\\
& \quad +4\A^{1/2} \norm{\tilde\Y_1-\tilde\Y_2}^2 \Big(\int_0^1 \tilde\Y_{2,\eta}(t,\eta)e^{-\frac{1}{\sqtC{2}}\tilde\Y_2(t,\eta)}\Big(\int_0^\eta e^{\frac{1}{\sqtC{2}}\tilde\Y_2(t,\theta)}d\theta\Big) d\eta\Big)^{1/2}\\
& \leq \bigO(1) \norm{\tilde\Y_1-\tilde\Y_2}^2.
\end{align*}

Recalling the estimate for $\bar T_1$, we have that
\begin{align*}
B_{312}&\leq \frac{1}{\A^7} \int_0^1 \vert \tilde\Y_1-\tilde\Y_2\vert\tilde\Y_{2,\eta}(t,\eta)\\
&\qquad\qquad\qquad\times\Big(\int_0^\eta e^{-\frac{1}{\sqtC{2}}(\tilde\Y_2(t,\eta)-\tilde\Y_2(t,\theta))}\vert \tilde\Y_1-\tilde\Y_2\vert \vert \tilde\U_2\vert ^3\tilde\Y_{2,\eta}(t,\theta) d\theta\Big) d\eta\\
& \quad + \frac{4}{\A^4}\int_0^1 \vert\tilde\Y_1-\tilde\Y_2\vert \tilde\Y_{2,\eta}(t,\eta)
\Big(\int_0^\eta e^{-\frac{1}{\sqtC{2}}(\tilde\Y_2(t,\eta)-\tilde\Y_2(t,\theta))}\\
& \qquad \qquad   \times\Big(\int_0^\theta e^{-\frac{1}{\A}(\tilde\Y_2(t,\theta)-\tilde\Y_2(t,l))} (\tilde\U_1-\tilde\U_2)^2(t,l) dl\Big)^{1/2} \tilde\V_2^+\tilde\Y_{2,\eta}(t,\theta) d\theta\Big) d\eta\\
& \quad +\frac{\sqrt{2}}{\A^3} \int_0^1\vert \tilde \Y_1-\tilde \Y_2\vert \tilde \Y_{2,\eta}(t,\eta)
 \Big(\int_0^\eta e^{-\frac{1}{\sqtC{2}}(\tilde \Y_2(t,\eta)-\tilde \Y_2(t,\theta))}\\
& \qquad \qquad \times \Big(\int_0^\theta e^{-\frac{1}{\ma}(\tilde \Y_2(t,\theta)-\tilde \Y_2(t,l))}(\tilde \Y_1-\tilde \Y_2)^2(t,l) dl\Big)^{1/2}\tilde \V_2^+\tilde \Y_{2,\eta}(t,\theta) d\theta\Big) d\eta\\
 & \quad + \frac{1}{\A^3}\int_0^1 \vert \tilde\Y_1-\tilde\Y_2\vert \tilde\Y_{2,\eta}(t,\eta)
 \Big(\int_0^\eta e^{-\frac{1}{\sqtC{2}}(\tilde\Y_2(t,\eta)-\tilde\Y_2(t,\theta))}\\
& \qquad \qquad  \qquad  \times\Big(\int_0^\theta e^{-\frac{1}{\ma}(\tilde\Y_2(t,\theta)-\tilde\Y_2(t,l))} \vert \tilde\Y_1-\tilde\Y_2\vert (t,l)dl\Big) \tilde\V_2^+\tilde\Y_{2,\eta}(t,\theta)d\theta\Big)d\eta\\
& \quad + \frac{8\sqrt{2}}{\sqrt{3}e\A^3} \int_0^1 \vert \tilde \Y_1-\tilde \Y_2\vert \tilde \Y_{2,\eta} (t,\eta)
\Big(\int_0^\eta e^{-\frac{1}{\sqtC{2}}(\tilde \Y_2(t,\eta)-\tilde \Y_2(t,\theta))}\\
& \qquad \qquad \times \Big(\int_0^\theta  e^{-\frac{3}{4\A}(\tilde \Y_2(t,\theta) -\tilde \Y_2(t,l))}dl\Big)^{1/2} \tilde \V_2^+\tilde \Y_{2,\eta}(t,\theta)d\theta\Big) d\eta\vert \sqtC{1}-\sqtC{2}\vert \\
& \leq 5\norm{\tilde\Y_1-\tilde\Y_2}^2
 + \frac{1}{\A^{14}}\int_0^1 \tilde\Y_{2,\eta}^2(t,\eta)\\
&\qquad\times\Big(\int_0^\eta e^{-\frac{1}{\sqtC{2}}(\tilde\Y_2(t,\eta)-\tilde\Y_2(t,\theta))}\vert \tilde\Y_1-\tilde\Y_2\vert\, \vert \tilde\U_2\vert^3\tilde\Y_{2,\eta}(t,\theta) d\theta\Big)^2d\eta\\
& \quad +\frac{16}{\A^8} \int_0^1 \tilde\Y_{2,\eta}^2(t,\eta)\Big(\int_0^\eta e^{-\frac{1}{\sqtC{2}}(\tilde\Y_2(t,\eta)-\tilde\Y_2(t,\theta))}\\
&\qquad\qquad\times \Big(\int_0^\theta e^{-\frac{1}{\A}(\tilde\Y_2(t,\theta)-\tilde\Y_2(t,l))}(\tilde\U_1-\tilde\U_2)^2 (t,l)dl \Big)^{1/2} \tilde\V_2^+\tilde\Y_{2,\eta}(t,\theta)d\theta\Big)^2 d\eta\\
& \quad + \frac{2}{\A^6}\int_0^1 \tilde \Y_{2,\eta}^2(t,\eta)\Big(\int_0^\eta e^{-\frac{1}{\sqtC{2}}(\tilde \Y_2(t,\eta)-\tilde \Y_2(t,\theta))}\\
& \qquad \qquad \times \Big(\int_0^\theta e^{-\frac{1}{\ma}(\tilde \Y_2(t,\theta)-\tilde \Y_2(t,l))}(\tilde \Y_1-\tilde\Y_2)^2(t,l) dl\Big)^{1/2}\tilde \V_2^+\tilde \Y_{2,\eta}(t,\theta) d\theta\Big)^2 d\eta\\
& \quad + \frac{1}{\A^6} \int_0^1 \tilde\Y_{2,\eta}^2(t,\eta) \Big(\int_0^\eta e^{-\frac{1}{\sqtC{2}}(\tilde\Y_2(t,\eta)-\tilde\Y_2(t,\theta))}\\
&\qquad\qquad\times\Big(\int_0^\theta e^{-\frac{1}{\ma}(\tilde\Y_2(t,\theta)-\tilde\Y_2(t,l))}\vert \tilde \Y_1-\tilde\Y_2\vert (t,l)dl\Big)\tilde\U_2^+\tilde\Y_{2,\eta}(t,\theta)d\theta\Big)^2d\eta\\
& \quad + \frac{128}{3e^2\A^6} \int_0^1 \tilde \Y_{2,\eta}^2(t,\eta) \Big(\int_0^\eta e^{-\frac{1}{\sqtC{2}}(\tilde \Y_2(t,\eta)-\tilde \Y_2(t,\theta))}\\
& \qquad \qquad  \times \Big(\int_0^\theta e^{-\frac{3}{4\A}(\tilde \Y_2(t,\theta)-\tilde \Y_2(t,l))}dl\Big)^{1/2}\tilde \V_2^+\tilde \Y_{2,\eta}(t,\theta)d\theta\Big)^2 d\eta\vert \sqtC{1}-\sqtC{2}\vert^2\\
& \leq 5\norm{\tilde\Y_1-\tilde\Y_2}^2
 +\frac{1}{\A^{14}}\int_0^1\tilde\Y_{2,\eta}^2(t,\eta)\Big(\int_0^\eta  e^{-\frac{1}{\sqtC{2}}(\tilde\Y_2(t,\eta)-\tilde\Y_2(t,\theta))}\tilde\U_2^4\tilde\Y_{2,\eta}(t,\theta)d\theta\Big)\\
& \qquad \qquad \qquad  \times\Big(\int_0^\eta e^{-\frac{1}{\sqtC{2}}(\tilde\Y_2(t,\eta)-\tilde\Y_2(t,\theta))}(\tilde\Y_1-\tilde\Y_2)^2\tilde\U_2^2\tilde\Y_{2,\eta}(t,\theta)d\theta\Big) d\eta\\
& \quad +\frac{16}{\A^8} \int_0^1 \tilde\Y_{2,\eta}^2(t,\eta)\Big(\int_0^\eta e^{-\frac{1}{\sqtC{2}}(\tilde\Y_2(t,\eta)-\tilde\Y_2(t,\theta))}\tilde\U_2^2\tilde\Y_{2,\eta}(t,\theta)d\theta\Big)\\
& \qquad \qquad  \quad \times \Big(\int_0^\eta e^{-\frac{1}{\sqtC{2}}(\tilde\Y_2(t,\eta)-\tilde\Y_2(t,\theta))}\\
&\qquad\qquad\quad\times \Big(\int_0^\theta e^{-\frac{1}{\A}(\tilde\Y_2(t,\theta)-\tilde\Y_2(t,l))} (\tilde\U_1-\tilde\U_2)^2(t,l)dl\Big) \tilde\Y_{2,\eta}(t,\theta) d\theta\Big) d\eta\\
& \quad +\frac{2}{\A^6} \int_0^1 \tilde\Y_{2,\eta}^2(t,\eta)\Big(\int_0^\eta e^{-\frac{1}{\sqtC{2}}(\tilde\Y_2(t,\eta)-\tilde\Y_2(t,\theta))}\tilde\U_2^2\tilde\Y_{2,\eta}(t,\theta)d\theta\Big)\\
& \qquad \qquad  \quad \times \Big(\int_0^\eta e^{-\frac{1}{\sqtC{2}}(\tilde\Y_2(t,\eta)-\tilde\Y_2(t,\theta))}\\
&\qquad\qquad\quad\times \Big(\int_0^\theta e^{-\frac{1}{\ma}(\tilde\Y_2(t,\theta)-\tilde\Y_2(t,l))} (\tilde\Y_1-\tilde\Y_2)^2(t,l)dl\Big) \tilde\Y_{2,\eta}(t,\theta) d\theta\Big) d\eta\\
& \quad +\frac{1}{\A^6} \int_0^1 \tilde\Y_{2,\eta}^2(t,\eta) \Big(\int_0^\eta e^{-\frac{1}{\sqtC{2}}(\tilde\Y_2(t,\eta)-\tilde\Y_2(t,\theta))}\tilde\U_2^2\tilde\Y_{2,\eta}(t,\theta) d\theta\Big)\\
& \qquad \qquad  \quad\times \Big(\int_0^\eta e^{-\frac{1}{\sqtC{2}}(\tilde\Y_2(t,\eta)-\tilde\Y_2(t,\theta))}\\
&\qquad\qquad\quad\times\Big(\int_0^\theta e^{-\frac{1}{\ma}(\tilde\Y_2(t,\theta)-\tilde\Y_2(t,l))} \vert \tilde\Y_1-\tilde\Y_2\vert (t,l) dl\Big)^2 \tilde\Y_{2,\eta}(t,\theta)d\theta\Big) d\eta\\
& \quad + \frac{128}{3e^2\A^6}\int_0^1 \tilde \Y_{2,\eta}^2(t,\eta)\Big(\int_0^\eta e^{-\frac{1}{\sqtC{2}}(\tilde \Y_2(t,\eta)-\tilde \Y_2(t,\theta))}\tilde \U_2^2\tilde \Y_{2,\eta}(t,\theta)d\theta\Big)\\
& \qquad \qquad \quad  \times \Big(\int_0^\eta e^{-\frac{1}{\sqtC{2}}(\tilde \Y_2(t,\eta)-\tilde \Y_2(t,\theta))}\\
& \qquad \qquad \quad  \times \Big(\int_0^\theta e^{-\frac{3}{4\A}(\tilde \Y_2(t,\theta)-\tilde \Y_2(t,l)}dl\Big)\tilde \Y_{2,\eta} (t,\theta) d\theta\Big) d\eta\vert \sqtC{1}-\sqtC{2}\vert^2\\
& \leq 5\norm{\tilde\Y_1-\tilde\Y_2}^2\\
& \quad + \frac{2}{\A^4}\int_0^1 \tilde\P_2\tilde\Y_{2,\eta}^2(t,\eta)\Big(\int_0^\eta e^{-\frac{1}{\sqtC{2}}(\tilde\Y_2(t,\eta)-\tilde\Y_2(t,\theta))}(\tilde\Y_1-\tilde\Y_2)^2(t,\theta) d\theta\Big) d\eta\\
& \quad + \frac{64}{\A^7} \int_0^1 \tilde\P_2\tilde\Y_{2,\eta}^2(t,\eta)\Big( \int_0^\eta e^{-\frac1{2\A}(\tilde\Y_2(t,\eta)-\tilde\Y_2(t,\theta))}\\
&\qquad\qquad\quad\times\Big(\int_0^\theta e^{-\frac{1}{\A}(\tilde\Y_2(t,\theta)-\tilde\Y_2(t,l))} (\tilde\U_1-\tilde\U_2)^2(t,l)dl\Big) \tilde\Y_{2,\eta}(t,\theta) d\theta\Big) d\eta\\
& \quad + \frac{8}{\A^5} \int_0^1 \tilde\P_2\tilde\Y_{2,\eta}^2(t,\eta)\Big( \int_0^\eta e^{-\frac1{2\sqtC{2}}(\tilde\Y_2(t,\eta)-\tilde\Y_2(t,\theta))}\\
&\qquad\qquad\quad\times\Big(\int_0^\theta e^{-\frac{1}{\sqtC{2}}(\tilde\Y_2(t,\theta)-\tilde\Y_2(t,l))} (\tilde\Y_1-\tilde\Y_2)^2(t,l)dl\Big) \tilde\Y_{2,\eta}(t,\theta) d\theta\Big) d\eta\\
& \quad + \frac{4}{\A^5}\int_0^1 \tilde\P_2\tilde\Y_{2,\eta}^2(t,\eta)\Big(\int_0^\eta e^{-\frac1{2\sqtC{2}}(\tilde\Y_2(t,\eta)-\tilde\Y_2(t,\theta))}\Big(\int_0^\theta e^{-\frac{1}{\sqtC{2}}(\tilde\Y_2(t,\theta)-\tilde\Y_2(t,l))}dl\Big)\\
& \qquad \qquad \quad  \times\Big(\int_0^\theta e^{-\frac{1}{\sqtC{2}}(\tilde\Y_2(t,\theta)-\tilde\Y_2(t,l))} (\tilde\Y_1-\tilde\Y_2)^2(t,l)dl\Big) \tilde\Y_{2,\eta}(t,\theta) d\theta\Big)d\eta\\
& \quad + \frac{512}{3e^2\A^5}\int_0^1 \tilde \P_2\tilde \Y_{2,\eta}^2(t,\eta)  \Big(\int_0^\eta e^{-\frac{1}{2\A}(\tilde \Y_2(t,\eta)-\tilde \Y_2(t,\theta))}\\
&\qquad\qquad\quad\times\Big(\int_0^\theta e^{-\frac{3}{4\A}(\tilde \Y_2(t,\theta)-\tilde \Y_2(t,l))}dl\Big) \tilde \Y_{2,\eta}(t,\theta) d\theta\Big) d\eta\vert \sqtC{1}-\sqtC{2}\vert^2\\
& \leq 5\norm{\tilde\Y_1-\tilde\Y_2}^2\\
& \quad +\A \int_0^1 \tilde\Y_{2,\eta}(t,\eta)e^{-\frac{1}{\sqtC{2}}\tilde\Y_2(t,\eta)}\Big(\int_0^\eta e^{\frac{1}{\sqtC{2}}\tilde\Y_2(t,\theta)}(\tilde\Y_1-\tilde\Y_2)^2(t,\theta) d\theta\Big) d\eta\\
& \quad + \frac{32}{\A^2} \int_0^1 \tilde\Y_{2,\eta}(t,\eta)\Big(\int_0^\eta \tilde\Y_{2,\eta}(t,\theta)e^{-\frac1{2\A} \tilde\Y_2(t,\theta) }\\
&\qquad\qquad\quad\times\Big( \int_0^\theta e^{-(\frac1{2\A}\tilde\Y_2(t,\eta)- \frac{1}{\A}\tilde\Y_2(t,l))}(\tilde\U_1-\tilde\U_2)^2(t,l) dl\Big) d\theta\Big) d\eta\\
& \quad + 4 \int_0^1 \tilde\Y_{2,\eta}(t,\eta)\Big(\int_0^\eta \tilde\Y_{2,\eta}(t,\theta)e^{-\frac1{2\sqtC{2}} \tilde\Y_2(t,\theta) }\\
&\qquad\qquad\quad\times\Big( \int_0^\theta e^{-(\frac1{2\sqtC{2}}\tilde\Y_2(t,\eta)- \frac{1}{\sqtC{2}}\tilde\Y_2(t,l))}(\tilde\Y_1-\tilde\Y_2)^2(t,l) dl\Big) d\theta\Big) d\eta\\
& \quad +2 \int_0^1\tilde\Y_{2,\eta}(t,\eta) \Big(\int_0^\eta \tilde\Y_{2,\eta}(t,\theta)e^{-\frac1{2\sqtC{2}} \tilde\Y_2(t,\theta)}\\
&\qquad\quad\times\Big( \int_0^\theta e^{-(\frac1{2\sqtC{2}}\tilde\Y_2(t,\eta)-\frac{1}{\sqtC{2}}\tilde\Y_2(t,l))}dl\Big) d\theta\Big) d\eta\norm{\tilde\Y_1-\tilde\Y_2}^2\\
& \quad + \frac{256}{3e^2}\int_0^1 \tilde \Y_{2,\eta}(t,\eta)\Big(\int_0^\eta \tilde \Y_{2,\eta}(t,\theta) e^{-\frac{1}{4\A}\tilde \Y_2(t,\theta)}\\
&\qquad\quad\times\Big( \int_0^\theta e^{-(\frac{1}{2\A}\tilde \Y_2(t,\eta)-\frac{3}{4\A}\tilde \Y_2(t,l))}dl \Big) d\theta\Big) d\eta \vert \sqtC{1}-\sqtC{2}\vert^2\\
& \leq 5\norm{\tilde\Y_1-\tilde\Y_2}^2 +\A \Big[-\sqtC{2}\int_0^\eta e^{-\frac{1}{\sqtC{2}}(\tilde\Y_2(t,\eta)-\tilde\Y_2(t,\theta))}(\tilde\Y_1-\tilde\Y_2)^2(t,\theta) d\theta\Big\vert_{\eta=0}^1\\
& \qquad\qquad\qquad\qquad\qquad +\int_0^1 (\tilde\Y_1-\tilde\Y_2)^2(t,\eta) d\eta\Big]\\
& \quad +\frac{32}{\A} \int_0^1 \tilde\Y_{2,\eta}(t,\eta)\Big[-2 \int_0^\theta e^{\frac{1}{\A}\tilde\Y_2(t,l)-\frac1{2\A}(\tilde\Y_2(t,\eta)+\tilde\Y_2(t,\theta))} (\tilde\U_1-\tilde\U_2)^2(t,l) dl\Big\vert_{\theta=0}^\eta\\
& \qquad \qquad \qquad \qquad \qquad +2 \int_0^\eta e^{\frac1{2\A}(\tilde\Y_2(t,\theta)-\tilde\Y_2(t,\eta))}(\tilde\U_1-\tilde\U_2)^2(t,\theta) d\theta\Big]d\eta\\
& \quad +4\sqtC{2} \int_0^1 \tilde\Y_{2,\eta}(t,\eta)\\
&\qquad\qquad\times\Big[-2 \int_0^\theta e^{\frac{1}{\sqtC{2}}\tilde\Y_2(t,l)-\frac1{2\sqtC{2}}(\tilde\Y_2(t,\eta)+\tilde\Y_2(t,\theta))} (\tilde\Y_1-\tilde\Y_2)^2(t,l) dl\Big\vert_{\theta=0}^\eta\\
& \qquad \qquad \qquad \qquad \qquad +2 \int_0^\eta e^{\frac1{2\sqtC{2}}(\tilde\Y_2(t,\theta)-\tilde\Y_2(t,\eta))}(\tilde\Y_1-\tilde\Y_2)^2(t,\theta) d\theta\Big]d\eta\\
& \quad + 2\sqtC{2}\int_0^1 \tilde\Y_{2,\eta}(t,\eta) \Big[ -2\int_0^\theta e^{\frac{1}{\sqtC{2}}\tilde\Y_2(t,l)-\frac1{2\sqtC{2}}(\tilde\Y_2(t,\theta)+\tilde\Y_2(t,\eta))}dl\Big\vert_{\theta=0}^\eta \\
& \qquad \qquad \qquad \qquad \qquad\qquad +2 \int_0^\eta e^{\frac1{2\sqtC{2}}(\tilde\Y_2(t,\theta)-\tilde\Y_2(t,\eta))}d\theta\Big]d\eta\norm{\tilde\Y_1-\tilde\Y_2}^2\\
& \quad +\frac{256\A}{3e^2} \int_0^1 \tilde \Y_{2,\eta}(t,\eta)\Big[-4\int_0^\theta e^{\frac{3}{4A} \tilde \Y_2(t,l)-(\frac1{4\A}\tilde \Y_2(t,\theta)+\frac{1}{2\A}\tilde \Y_2(t,\eta))}\Big\vert_{\theta=0}^\eta\\
& \qquad \qquad \qquad \qquad \qquad \qquad + 4\int_0^\eta e^{\frac{1}{2\A} (\tilde \Y_2(t,\theta)-\tilde \Y_2(t,\eta))}d\theta\Big]d\eta \vert \sqtC{1}-\sqtC{2}\vert^2\\
& \leq \bigO(1)\norm{\tilde\Y_1-\tilde\Y_2}^2\\
& \quad +\frac{64}{\A}\int_0^1 \tilde\Y_{2,\eta}(t,\eta) e^{-\frac1{2\A} \tilde\Y_2(t,\eta)}\Big(\int_0^\eta e^{\frac1{2\A}\tilde\Y_2(t,\theta)}(\tilde\U_1-\tilde\U_2)^2(t,\theta) d\theta\Big) d\eta\\
& \quad +8\A\int_0^1 \tilde\Y_{2,\eta}(t,\eta) e^{-\frac1{2\sqtC{2}} \tilde\Y_2(t,\eta)}\Big(\int_0^\eta e^{\frac1{2\sqtC{2}}\tilde\Y_2(t,\theta)}(\tilde\Y_1-\tilde\Y_2)^2(t,\theta) d\theta\Big) d\eta\\
&\quad +4\A \int_0^1\tilde\Y_{2,\eta}(t,\eta)e^{-\frac1{2\sqtC{2}} \tilde\Y_2(t,\eta)}\Big(\int_0^\eta e^{\frac1{2\sqtC{2}} \tilde\Y_2(t,\theta)}d\theta\Big) d\eta\norm{\tilde\Y_1-\tilde\Y_2}^2\\
& \quad + \frac{1024\A}{3e^2}\int_0^1 \tilde \Y_{2,\eta}(t,\eta)e^{-\frac{1}{2\A}\tilde \Y_2(t,\eta)}\Big(\int_0^\eta e^{\frac{1}{2\A}\tilde \Y_2(t,\theta)}d\theta\Big) d\eta \vert \sqtC{1}-\sqtC{2}\vert^2\\
& \leq \bigO(1)\norm{\tilde\Y_1-\tilde\Y_2}^2\\
& \quad +64\Big[ -2 \int_0^\eta e^{\frac1{2\A}(\tilde\Y_2(t,\theta)-\tilde\Y_2(t,\eta))} (\tilde\U_1-\tilde\U_2)^2(t,\theta)d\theta\Big\vert_{\eta=0}^1\\
&\qquad\qquad\qquad\qquad\qquad\qquad\qquad\qquad+2\int_0^1 (\tilde\U_1-\tilde\U_2)^2(t,\eta)d\eta\Big]\\
& \quad +8\A\sqtC{2}\Big[ -2 \int_0^\eta e^{\frac1{2\sqtC{2}}(\tilde\Y_2(t,\theta)-\tilde\Y_2(t,\eta))} (\tilde\Y_1-\tilde\Y_2)^2(t,\theta)d\theta\Big\vert_{\eta=0}^1\\
&\qquad\qquad\qquad\qquad\qquad\qquad\qquad\qquad+2\int_0^1 (\tilde\Y_1-\tilde\Y_2)^2(t,\eta)d\eta\Big]\\
& \quad + 4\A\sqtC{2}\Big[ -2 \int_0^\eta e^{\frac1{2\sqtC{2}}(\tilde\Y_2(t,\theta)-\tilde\Y_2(t,\eta))}d\theta\Big\vert_{\eta=0}^1 + 2\int_0^1 d\eta\Big] \norm{\tilde\Y_1-\tilde\Y_2}^2\\
& \quad + \frac{1024\A^2}{3e^2}\Big[ -2 \int_0^\eta e^{\frac1{2\A}(\tilde\Y_2(t,\theta)-\tilde\Y_2(t,\eta))}d\theta\Big\vert_{\eta=0}^1 + 2\int_0^1 d\eta\Big] \vert\sqtC{1}-\sqtC{2}\vert^2\\
& \leq \bigO(1) (\norm{\tilde\Y_1-\tilde\Y_2}^2+\norm{\tilde\U_1-\tilde\U_2}^2+\vert \sqtC{1}-\sqtC{2}\vert^2).
\end{align*}

Similar calculations yield for $B_{313}$, 
\begin{align*} \nn
B_{313}&\leq \frac{1}{\A^7} \int_0^1 \vert \tilde\Y_1-\tilde\Y_2\vert\tilde\Y_{2,\eta}(t,\eta)\\
&\qquad \quad\times\Big(\int_0^\eta e^{-\frac{1}{\sqtC{2}}(\tilde\Y_2(t,\eta)-\tilde\Y_2(t,\theta))}
\vert \tilde\Y_1-\tilde\Y_2\vert \tilde \P_2\vert \tilde\U_2\vert \tilde\Y_{2,\eta}(t,\theta) d\theta\Big) d\eta \nn \\
& \quad + \frac{2\sqrt{2}}{\A^4}\int_0^1 \vert\tilde\Y_1-\tilde\Y_2\vert \tilde\Y_{2,\eta}(t,\eta)
\Big(\int_0^\eta e^{-\frac{1}{\sqtC{2}}(\tilde\Y_2(t,\eta)-\tilde\Y_2(t,\theta))} \nn\\
& \qquad \quad   \times\Big(\int_0^\theta e^{-\frac{1}{\A}(\tilde\Y_2(t,\theta)-\tilde\Y_2(t,l))} (\sqP{1}-\sqP{2})^2(t,l) dl\Big)^{1/2} \tilde\V_2^+\tilde\Y_{2,\eta}(t,\theta) d\theta\Big) d\eta \nn\\
& \quad +\frac{1}{\sqrt{2}\A^3} \int_0^1\vert \tilde \Y_1-\tilde \Y_2\vert \tilde \Y_{2,\eta}(t,\eta)
 \Big(\int_0^\eta e^{-\frac{1}{\sqtC{2}}(\tilde \Y_2(t,\eta)-\tilde \Y_2(t,\theta))}\nn \\
& \qquad \quad \times \Big(\int_0^\theta e^{-\frac{1}{\ma}(\tilde \Y_2(t,\theta)-\tilde \Y_2(t,l))}(\tilde \Y_1-\tilde \Y_2)^2(t,l) dl\Big)^{1/2}\tilde \V_2^+\tilde \Y_{2,\eta}(t,\theta) d\theta\Big) d\eta\nn \\
 & \quad + \frac{1}{2\A^3}\int_0^1 \vert \tilde\Y_1-\tilde\Y_2\vert \tilde\Y_{2,\eta}(t,\eta) 
 \Big(\int_0^\eta e^{-\frac{1}{\sqtC{2}}(\tilde\Y_2(t,\eta)-\tilde\Y_2(t,\theta))}\nn \\
& \qquad \quad  \times\Big(\int_0^\theta e^{-\frac{1}{\ma}(\tilde\Y_2(t,\theta)-\tilde\Y_2(t,l))} \vert \tilde\Y_1-\tilde\Y_2\vert (t,l)dl\Big) \tilde\V_2^+\tilde\Y_{2,\eta}(t,\theta)d\theta\Big)d\eta\nn \\
& \quad + \frac{4\sqrt{2}}{\sqrt{3}e\A^3} \int_0^1 \vert \tilde \Y_1-\tilde \Y_2\vert \tilde \Y_{2,\eta} (t,\eta)
\Big(\int_0^\eta e^{-\frac{1}{\sqtC{2}}(\tilde \Y_2(t,\eta)-\tilde \Y_2(t,\theta))}\nn \\
& \qquad \quad \times \Big(\int_0^\theta  e^{-\frac{3}{4\A}(\tilde \Y_2(t,\theta) -\tilde \Y_2(t,l))}dl\Big)^{1/2} \tilde \V_2^+\tilde \Y_{2,\eta}(t,\theta)d\theta\Big) d\eta\vert \sqtC{1}-\sqtC{2}\vert\nn \\ \nn
& \leq 5\norm{\tilde\Y_1-\tilde\Y_2}^2 + \frac{1}{\A^{14}}\int_0^1 \tilde\Y_{2,\eta}^2(t,\eta)\\
&\qquad\quad\times\Big(\int_0^\eta e^{-\frac{1}{\sqtC{2}}(\tilde\Y_2(t,\eta)-\tilde\Y_2(t,\theta))}\vert \tilde\Y_1-\tilde\Y_2\vert\, \tilde\P_2\vert \tilde\U_2\vert\tilde\Y_{2,\eta}(t,\theta) d\theta\Big)^2d\eta \nn\\
& \quad +\frac{8}{\A^8} \int_0^1 \tilde\Y_{2,\eta}^2(t,\eta)\Big(\int_0^\eta e^{-\frac{1}{\sqtC{2}}(\tilde\Y_2(t,\eta)-\tilde\Y_2(t,\theta))}\nn \\
&\qquad\times \Big(\int_0^\theta e^{-\frac{1}{\A}(\tilde\Y_2(t,\theta)-\tilde\Y_2(t,l))}(\sqP{1}-\sqP{2})^2 (t,l)dl \Big)^{1/2} \tilde\V_2^+\tilde\Y_{2,\eta}(t,\theta)d\theta\Big)^2 d\eta\nn \\
& \quad + \frac{1}{2\A^6}\int_0^1 \tilde \Y_{2,\eta}^2(t,\eta)\Big(\int_0^\eta e^{-\frac{1}{\sqtC{2}}(\tilde \Y_2(t,\eta)-\tilde \Y_2(t,\theta))}\nn \\
& \qquad \quad\times \Big(\int_0^\theta e^{-\frac{1}{\ma}(\tilde \Y_2(t,\theta)-\tilde \Y_2(t,l))}(\tilde \Y_1-\tilde\Y_2)^2(t,l) dl\Big)^{1/2}\tilde \V_2^+\tilde \Y_{2,\eta}(t,\theta) d\theta\Big)^2 d\eta\nn \\
& \quad + \frac{1}{4\A^6} \int_0^1 \tilde\Y_{2,\eta}^2(t,\eta) \Big(\int_0^\eta e^{-\frac{1}{\sqtC{2}}(\tilde\Y_2(t,\eta)-\tilde\Y_2(t,\theta))}\nn \\
&\qquad\quad\times\Big(\int_0^\theta e^{-\frac{1}{\ma}(\tilde\Y_2(t,\theta)-\tilde\Y_2(t,l))}\vert \tilde \Y_1-\tilde\Y_2\vert (t,l)dl\Big)\tilde\U_2^+\tilde\Y_{2,\eta}(t,\theta)d\theta\Big)^2d\eta\nn \\
& \quad + \frac{32}{3e^2\A^6} \int_0^1 \tilde \Y_{2,\eta}^2(t,\eta) \Big(\int_0^\eta e^{-\frac{1}{\sqtC{2}}(\tilde \Y_2(t,\eta)-\tilde \Y_2(t,\theta))}\nn \\
& \qquad \quad  \times \Big(\int_0^\theta e^{-\frac{3}{4\A}(\tilde \Y_2(t,\theta)-\tilde \Y_2(t,l))}dl\Big)^{1/2}\tilde \V_2^+\tilde \Y_{2,\eta}(t,\theta)d\theta\Big)^2 d\eta\vert \sqtC{1}-\sqtC{2}\vert^2\nn \\
& \leq 5\norm{\tilde\Y_1-\tilde\Y_2}^2\nn \\
& \quad +\frac{1}{\A^{14}}\int_0^1\tilde\Y_{2,\eta}^2(t,\eta)\Big(\int_0^\eta  e^{-\frac{1}{\sqtC{2}}(\tilde\Y_2(t,\eta)-\tilde\Y_2(t,\theta))}\tilde\U_2^2\tilde\Y_{2,\eta}(t,\theta)d\theta\Big)\nn \\
& \qquad \quad  \times\Big(\int_0^\eta e^{-\frac{1}{\sqtC{2}}(\tilde\Y_2(t,\eta)-\tilde\Y_2(t,\theta))}(\tilde\Y_1-\tilde\Y_2)^2\tilde\P_2^2\tilde\Y_{2,\eta}(t,\theta)d\theta\Big) d\eta\nn \\
& \quad +\frac{8}{\A^8} \int_0^1 \tilde\Y_{2,\eta}^2(t,\eta)\Big(\int_0^\eta e^{-\frac{1}{\sqtC{2}}(\tilde\Y_2(t,\eta)-\tilde\Y_2(t,\theta))}\tilde\U_2^2\tilde\Y_{2,\eta}(t,\theta)d\theta\Big)\nn \\
& \qquad  \quad \times \Big(\int_0^\eta e^{-\frac{1}{\sqtC{2}}(\tilde\Y_2(t,\eta)-\tilde\Y_2(t,\theta))}\nn \\
&\qquad\quad\times \Big(\int_0^\theta e^{-\frac{1}{\A}(\tilde\Y_2(t,\theta)-\tilde\Y_2(t,l))} (\sqP{1}-\sqP{2})^2(t,l)dl\Big) \tilde\Y_{2,\eta}(t,\theta) d\theta\Big) d\eta\nn \\
& \quad +\frac{1}{2\A^6} \int_0^1 \tilde\Y_{2,\eta}^2(t,\eta)\Big(\int_0^\eta e^{-\frac{1}{\sqtC{2}}(\tilde\Y_2(t,\eta)-\tilde\Y_2(t,\theta))}\tilde\U_2^2\tilde\Y_{2,\eta}(t,\theta)d\theta\Big)\nn \\
& \qquad  \quad \times \Big(\int_0^\eta e^{-\frac{1}{\sqtC{2}}(\tilde\Y_2(t,\eta)-\tilde\Y_2(t,\theta))}\nn \\
&\qquad\quad\times \Big(\int_0^\theta e^{-\frac{1}{\ma}(\tilde\Y_2(t,\theta)-\tilde\Y_2(t,l))} (\tilde\Y_1-\tilde\Y_2)^2(t,l)dl\Big) \tilde\Y_{2,\eta}(t,\theta) d\theta\Big) d\eta\nn \\
& \quad +\frac{1}{4\A^6} \int_0^1 \tilde\Y_{2,\eta}^2(t,\eta) \Big(\int_0^\eta e^{-\frac{1}{\sqtC{2}}(\tilde\Y_2(t,\eta)-\tilde\Y_2(t,\theta))}\tilde\U_2^2\tilde\Y_{2,\eta}(t,\theta) d\theta\Big)\nn \\
& \qquad \quad  \times \Big(\int_0^\eta e^{-\frac{1}{\sqtC{2}}(\tilde\Y_2(t,\eta)-\tilde\Y_2(t,\theta))}\nn \\
&\qquad\quad\times\Big(\int_0^\theta e^{-\frac{1}{\ma}(\tilde\Y_2(t,\theta)-\tilde\Y_2(t,l))} \vert \tilde\Y_1-\tilde\Y_2\vert (t,l) dl\Big)^2 \tilde\Y_{2,\eta}(t,\theta)d\theta\Big) d\eta\nn \\
& \quad + \frac{32}{3e^2\A^6}\int_0^1 \tilde \Y_{2,\eta}^2(t,\eta)\Big(\int_0^\eta e^{-\frac{1}{\sqtC{2}}(\tilde \Y_2(t,\eta)-\tilde \Y_2(t,\theta))}\tilde \U_2^2\tilde \Y_{2,\eta}(t,\theta)d\theta\Big)\nn \\
& \qquad \quad  \times \Big(\int_0^\eta e^{-\frac{1}{\sqtC{2}}(\tilde \Y_2(t,\eta)-\tilde \Y_2(t,\theta))}\nn \\
& \qquad \quad  \times \Big(\int_0^\theta e^{-\frac{3}{4\A}(\tilde \Y_2(t,\theta)-\tilde \Y_2(t,l)}dl\Big)\tilde \Y_{2,\eta} (t,\theta) d\theta\Big) d\eta\vert \sqtC{1}-\sqtC{2}\vert^2\nn \\
& \leq 5\norm{\tilde\Y_1-\tilde\Y_2}^2\nn \\
& \quad + \frac{1}{2\A^4}\int_0^1 \tilde\P_2\tilde\Y_{2,\eta}^2(t,\eta)\Big(\int_0^\eta e^{-\frac{1}{\sqtC{2}}(\tilde\Y_2(t,\eta)-\tilde\Y_2(t,\theta))}(\tilde\Y_1-\tilde\Y_2)^2(t,\theta) d\theta\Big) d\eta\nn \\
& \quad + \frac{32}{\A^7} \int_0^1 \tilde\P_2\tilde\Y_{2,\eta}^2(t,\eta)\Big( \int_0^\eta e^{-\frac1{2\A}(\tilde\Y_2(t,\eta)-\tilde\Y_2(t,\theta))}\nn \\
&\qquad\quad\times\Big(\int_0^\theta e^{-\frac{1}{\A}(\tilde\Y_2(t,\theta)-\tilde\Y_2(t,l))} (\sqP{1}-\sqP{2})^2(t,l)dl\Big) \tilde\Y_{2,\eta}(t,\theta) d\theta\Big) d\eta\nn \\
& \quad + \frac{2}{\A^5} \int_0^1 \tilde\P_2\tilde\Y_{2,\eta}^2(t,\eta)\Big( \int_0^\eta e^{-\frac1{2\sqtC{2}}(\tilde\Y_2(t,\eta)-\tilde\Y_2(t,\theta))}\nn \\
&\qquad\quad\times\Big(\int_0^\theta e^{-\frac{1}{\sqtC{2}}(\tilde\Y_2(t,\theta)-\tilde\Y_2(t,l))} (\tilde\Y_1-\tilde\Y_2)^2(t,l)dl\Big) \tilde\Y_{2,\eta}(t,\theta) d\theta\Big) d\eta\nn \\
& \quad + \frac{1}{\A^5}\int_0^1 \tilde\P_2\tilde\Y_{2,\eta}^2(t,\eta)\Big(\int_0^\eta e^{-\frac1{2\sqtC{2}}(\tilde\Y_2(t,\eta)-\tilde\Y_2(t,\theta))}\Big(\int_0^\theta e^{-\frac{1}{\sqtC{2}}(\tilde\Y_2(t,\theta)-\tilde\Y_2(t,l))}dl\Big)\nn \\
& \qquad \quad  \times\Big(\int_0^\theta e^{-\frac{1}{\sqtC{2}}(\tilde\Y_2(t,\theta)-\tilde\Y_2(t,l))} (\tilde\Y_1-\tilde\Y_2)^2(t,l)dl\Big) \tilde\Y_{2,\eta}(t,\theta) d\theta\Big)d\eta\nn \\
& \quad + \frac{128}{3e^2\A^5}\int_0^1 \tilde \P_2\tilde \Y_{2,\eta}^2(t,\eta)\Big(\int_0^\eta e^{-\frac{1}{2\A}(\tilde \Y_2(t,\eta)-\tilde \Y_2(t,\theta))} \nn \\
&\qquad\quad \times\Big(\int_0^\theta e^{-\frac{3}{4\A}(\tilde \Y_2(t,\theta)-\tilde \Y_2(t,l))}dl\Big) \tilde \Y_{2,\eta}(t,\theta) d\theta\Big) d\eta\vert \sqtC{1}-\sqtC{2}\vert^2\nn \\
& \leq 5\norm{\tilde\Y_1-\tilde\Y_2}^2\nn \\
& \quad +\frac{\A}{4} \int_0^1 \tilde\Y_{2,\eta}(t,\eta)e^{-\frac{1}{\sqtC{2}}\tilde\Y_2(t,\eta)}\Big(\int_0^\eta e^{\frac{1}{\sqtC{2}}\tilde\Y_2(t,\theta)}(\tilde\Y_1-\tilde\Y_2)^2(t,\theta) d\theta\Big) d\eta\nn \\
& \quad + \frac{16}{\A^2} \int_0^1 \tilde\Y_{2,\eta}(t,\eta)\Big(\int_0^\eta \tilde\Y_{2,\eta}(t,\theta)e^{-\frac1{2\A} \tilde\Y_2(t,\theta) }\nn \\
&\qquad\quad\times\Big( \int_0^\theta e^{-(\frac1{2\A}\tilde\Y_2(t,\eta)- \frac{1}{\A}\tilde\Y_2(t,l))}(\sqP{1}-\sqP{2})^2(t,l) dl\Big) d\theta\Big) d\eta\nn \\
& \quad +  \int_0^1 \tilde\Y_{2,\eta}(t,\eta)\Big(\int_0^\eta \tilde\Y_{2,\eta}(t,\theta)e^{-\frac1{2\sqtC{2}} \tilde\Y_2(t,\theta) }\nn \\
&\qquad\quad\times\Big( \int_0^\theta e^{-(\frac1{2\sqtC{2}}\tilde\Y_2(t,\eta)- \frac{1}{\sqtC{2}}\tilde\Y_2(t,l))}(\tilde\Y_1-\tilde\Y_2)^2(t,l) dl\Big) d\theta\Big) d\eta\nn \\
& \quad +\frac{1}{2} \int_0^1\tilde\Y_{2,\eta}(t,\eta)\Big(\int_0^\eta \tilde\Y_{2,\eta}(t,\theta)e^{-\frac1{2\sqtC{2}} \tilde\Y_2(t,\theta)}\nn \\
&\qquad\quad\times \Big( \int_0^\theta e^{-(\frac1{2\sqtC{2}}\tilde\Y_2(t,\eta)-\frac{1}{\sqtC{2}}\tilde\Y_2(t,l))}dl\Big) d\theta\Big) d\eta\norm{\tilde\Y_1-\tilde\Y_2}^2\nn \\
& \quad + \frac{64}{3e^2}\int_0^1 \tilde \Y_{2,\eta}(t,\eta)\Big(\int_0^\eta \tilde \Y_{2,\eta}(t,\theta) e^{-\frac{1}{4\A}\tilde \Y_2(t,\theta)} \nn \\
&\qquad\quad\times\Big( \int_0^\theta e^{-(\frac{1}{2\A}\tilde \Y_2(t,\eta)-\frac{3}{4\A}\tilde \Y_2(t,l))}dl \Big) d\theta\Big) d\eta \vert \sqtC{1}-\sqtC{2}\vert^2\nn \\
& \leq 5\norm{\tilde\Y_1-\tilde\Y_2}^2 +\frac{\A}{4} \Big[-\sqtC{2}\int_0^\eta e^{-\frac{1}{\sqtC{2}}(\tilde\Y_2(t,\eta)-\tilde\Y_2(t,\theta))}(\tilde\Y_1-\tilde\Y_2)^2(t,\theta) d\theta\Big\vert_{\eta=0}^1\nn \\
&\qquad \qquad \qquad\qquad\qquad\qquad \quad +\int_0^1 (\tilde\Y_1-\tilde\Y_2)^2(t,\eta) d\eta\Big]\nn \\
& \quad +\frac{16}{\A} \int_0^1 \tilde\Y_{2,\eta}(t,\eta)\nn \\
&\qquad\times\Big[-2 \int_0^\theta e^{\frac{1}{\A}\tilde\Y_2(t,l)-\frac1{2\A}(\tilde\Y_2(t,\eta)+\tilde\Y_2(t,\theta))} (\sqP{1}-\sqP{2})^2(t,l) dl\Big\vert_{\theta=0}^\eta\nn \\
& \qquad \quad  +2 \int_0^\eta e^{\frac1{2\A}(\tilde\Y_2(t,\theta)-\tilde\Y_2(t,\eta))}(\sqP{1}-\sqP{2})^2(t,\theta) d\theta\Big]d\eta\nn \\
& \quad +\sqtC{2} \int_0^1 \tilde\Y_{2,\eta}(t,\eta)\nn \\
&\qquad\times\Big[-2 \int_0^\theta e^{\frac{1}{\sqtC{2}}\tilde\Y_2(t,l)-\frac1{2\sqtC{2}}(\tilde\Y_2(t,\eta)+\tilde\Y_2(t,\theta))} (\tilde\Y_1-\tilde\Y_2)^2(t,l) dl\Big\vert_{\theta=0}^\eta\nn \\
& \qquad \quad +2 \int_0^\eta e^{\frac1{2\sqtC{2}}(\tilde\Y_2(t,\theta)-\tilde\Y_2(t,\eta))}(\tilde\Y_1-\tilde\Y_2)^2(t,\theta) d\theta\Big]d\eta\nn \\
& \quad + \frac{\sqtC{2}}{2}\int_0^1 \tilde\Y_{2,\eta}(t,\eta) \Big[ -2\int_0^\theta e^{\frac{1}{\sqtC{2}}\tilde\Y_2(t,l)-\frac1{2\sqtC{2}}(\tilde\Y_2(t,\theta)+\tilde\Y_2(t,\eta))}dl\Big\vert_{\theta=0}^\eta\nn  \\
& \qquad \quad  +2 \int_0^\eta e^{\frac1{2\sqtC{2}}(\tilde\Y_2(t,\theta)-\tilde\Y_2(t,\eta))}d\theta\Big]d\eta\norm{\tilde\Y_1-\tilde\Y_2}^2\nn \\
& \quad +\frac{64\A}{3e^2} \int_0^1 \tilde \Y_{2,\eta}(t,\eta)\Big[-4\int_0^\theta e^{\frac{3}{4A} \tilde \Y_2(t,l)-(\frac1{4\A}\tilde \Y_2(t,\theta)+\frac{1}{2\A}\tilde \Y_2(t,\eta))}\Big\vert_{\theta=0}^\eta\nn \\
& \qquad \quad  + 4\int_0^\eta e^{\frac{1}{2\A} (\tilde \Y_2(t,\theta)-\tilde \Y_2(t,\eta))}d\theta\Big]d\eta \vert \sqtC{1}-\sqtC{2}\vert^2\nn \\
& \leq \bigO(1)\norm{\tilde\Y_1-\tilde\Y_2}^2\nn \\
& \quad +\frac{32}{\A}\int_0^1 \tilde\Y_{2,\eta}(t,\eta) e^{-\frac1{2\A} \tilde\Y_2(t,\eta)}\Big(\int_0^\eta e^{\frac1{2\A}\tilde\Y_2(t,\theta)}(\sqP{1}-\sqP{2})^2(t,\theta) d\theta\Big) d\eta\nn \\
& \quad +2\A\int_0^1 \tilde\Y_{2,\eta}(t,\eta) e^{-\frac1{2\sqtC{2}} \tilde\Y_2(t,\eta)}\Big(\int_0^\eta e^{\frac1{2\sqtC{2}}\tilde\Y_2(t,\theta)}(\tilde\Y_1-\tilde\Y_2)^2(t,\theta) d\theta\Big) d\eta\nn \\
&\quad +\A \int_0^1\tilde\Y_{2,\eta}(t,\eta)e^{-\frac1{2\sqtC{2}} \tilde\Y_2(t,\eta)}\Big(\int_0^\eta e^{\frac1{2\sqtC{2}} \tilde\Y_2(t,\theta)}d\theta\Big) d\eta\norm{\tilde\Y_1-\tilde\Y_2}^2\nn \\
& \quad + \frac{256\A}{3e^2}\int_0^1 \tilde \Y_{2,\eta}(t,\eta)e^{-\frac{1}{2\A}\tilde \Y_2(t,\eta)}\Big(\int_0^\eta e^{\frac{1}{2\A}\tilde \Y_2(t,\theta)}d\theta\Big) d\eta \vert \sqtC{1}-\sqtC{2}\vert^2\nn \\
& \leq \bigO(1)\norm{\tilde\Y_1-\tilde\Y_2}^2\nn \\
& \quad +32\Big[ -2 \int_0^\eta e^{\frac1{2\A}(\tilde\Y_2(t,\theta)-\tilde\Y_2(t,\eta))} (\sqP{1}-\sqP{2})^2(t,\theta)d\theta\Big\vert_{\eta=0}^1\nn \\
&\qquad\quad+2\int_0^1 (\sqP{1}-\sqP{2})^2(t,\eta)d\eta\Big]\nn \\
& \quad +2\A\sqtC{2}\Big[ -2 \int_0^\eta e^{\frac1{2\sqtC{2}}(\tilde\Y_2(t,\theta)-\tilde\Y_2(t,\eta))} (\tilde\Y_1-\tilde\Y_2)^2(t,\theta)d\theta\Big\vert_{\eta=0}^1\nn \\
&\qquad\quad+2\int_0^1 (\tilde\Y_1-\tilde\Y_2)^2(t,\eta)d\eta\Big]\nn \\
& \quad + \A\sqtC{2}\Big[ -2 \int_0^\eta e^{\frac1{2\sqtC{2}}(\tilde\Y_2(t,\theta)-\tilde\Y_2(t,\eta))}d\theta\Big\vert_{\eta=0}^1 + 2\int_0^1 d\eta\Big] \norm{\tilde\Y_1-\tilde\Y_2}^2\nn \\
& \quad + \frac{256\A^2}{3e^2}\Big[ -2 \int_0^\eta e^{\frac1{2\A}(\tilde\Y_2(t,\theta)-\tilde\Y_2(t,\eta))}d\theta\Big\vert_{\eta=0}^1 + 2\int_0^1 d\eta\Big] \vert\sqtC{1}-\sqtC{2}\vert^2\nn \\ %\label{eq:B313}
& \leq \bigO(1) (\norm{\tilde\Y_1-\tilde\Y_2}^2+\norm{\sqP{1}-\sqP{2}}^2+\vert \sqtC{1}-\sqtC{2}\vert^2).
\end{align*}

Last, but not least, direct calculations for $B_{314}$ yield
\begin{align*}
B_{314}&\leq \frac6{\A^3}\vert \sqtC{1}-\sqtC{2}\vert \int_0^1 \vert \tilde\Y_1-\tilde\Y_2\vert \tilde\Y_{2,\eta}(t,\eta) \\
& \quad  \times\Big(\int_0^\eta e^{-\frac{1}{\sqtC{2}}(\tilde\Y_2(t,\eta)-\tilde\Y_2(t,\theta))}\Big(\int_0^\theta e^{-\frac{3}{4\A}(\tilde\Y_2(t,\theta)-\tilde\Y_2(t,l))}dl\Big) \tilde\V_2^+\tilde\Y_{2,\eta}(t,\theta)d\theta\Big)d\eta\\
& \leq \frac6{\A^3}\vert \sqtC{1}-\sqtC{2}\vert \norm{\tilde\Y_1-\tilde\Y_2}\\
& \qquad  \times\Big(\int_0^1 \tilde\Y_{2,\eta}^2(t,\eta)\Big(\int_0^\eta e^{-\frac{1}{\sqtC{2}}(\tilde\Y_2(t,\eta)-\tilde\Y_2(t,\theta))}\\
&\qquad\qquad\qquad\times\Big(\int_0^\theta e^{-\frac{3}{4\A}(\tilde\Y_2(t,\theta)-\tilde\Y_2(t,l))}dl\Big) \tilde\V_2^+\tilde\Y_{2,\eta}(t,\theta)d\theta\Big)^2 d\eta\Big)^{1/2}\\
& \leq \frac{6}{\A^3}\vert \sqtC{1}-\sqtC{2}\vert \norm{\tilde\Y_1-\tilde\Y_2}\\
& \quad  \times \Big(\int_0^1 \tilde\Y_{2,\eta}^2(t,\eta) \Big(\int_0^\eta e^{-\frac{1}{\sqtC{2}}(\tilde\Y_2(t,\eta)-\tilde\Y_2(t,\theta))} \tilde\U_2^2\tilde\Y_{2,\eta}(t,\theta) d\theta\Big)\\
& \quad   \times\Big(\int_0^\eta e^{-\frac{1}{\sqtC{2}}(\tilde\Y_2(t,\eta)-\tilde\Y_2(t,\theta))} \Big(\int_0^\theta e^{-\frac{3}{4\A}(\tilde\Y_2(t,\theta)-\tilde\Y_2(t,l))} dl\Big)^2 \tilde\Y_{2,\eta}(t,\theta)d\theta\Big) d\eta\Big)^{1/2}\\
& \leq \frac{12}{\A^{5/2}} \vert \sqtC{1}-\sqtC{2}\vert \norm{\tilde\Y_1-\tilde\Y_2}\\
& \qquad  \times \Big(\int_0^1 \tilde\P_2\tilde\Y_{2,\eta}^2 (t,\eta)\Big(\int_0^\eta e^{-\frac1{2\A}(\tilde\Y_2(t,\eta)-\tilde\Y_2(t,\theta))}\\
&\qquad\qquad\qquad\qquad\times\Big(\int_0^\theta e^{-\frac{3}{4\A}(\tilde\Y_2(t,\theta)-\tilde\Y_2(t,l))}dl\Big) \tilde\Y_{2,\eta}(t,\theta) d\theta\Big) d\eta\Big)^{1/2}\\
& \leq 6\sqrt{2}\vert \sqtC{1}-\sqtC{2}\vert \norm{\tilde\Y_1-\tilde\Y_2}\\
& \qquad \times \Big(\int_0^1 \tilde\Y_{2,\eta}(t,\eta) \Big(\int_0^\eta \tilde\Y_{2,\eta}(t,\theta) e^{-\frac1{4\A} \tilde\Y_2(t,\theta)}\\
&\qquad\qquad\qquad\qquad\times\Big(\int_0^\theta e^{\frac{3}{4\A}\tilde\Y_2(t,l)-\frac1{2\A} \tilde\Y_2(t,\eta)}dl\Big) d\theta\Big) d\eta\Big)^{1/2}\\
& \leq \bigO(1)\vert \sqtC{1}-\sqtC{2}\vert \norm{\tilde\Y_1-\tilde\Y_2}.
\end{align*}
Collecting the estimates we have obtained, we conclude
\begin{equation*}%\label{eq:barB31_final}
\vert \bar B_{31}\vert \leq \bigO(1)\big(\norm{\tilde\U_1-\tilde\U_2}^2+ \norm{\tilde\Y_1-\tilde\Y_2}^2+\norm{\sqP{1}-\sqP{2}}^2+\abs{\sqtC{1}-\sqtC{2}}^2\big).
\end{equation*}

Recalling \eqref{eq:343} direct computation yield for $\bar B_{33}$ (and in much the same way for $\bar B_{34}$) that 
\begin{align*}
\vert \bar B_{33}\vert &\leq \frac{1}{\A^7}\int_0^1 \vert \tilde\Y_1-\tilde\Y_2\vert \tilde\Y_{2,\eta}(t,\eta)\notag\\
&\qquad\qquad\times\Big(\int_0^\eta e^{-\frac{1}{\sqtC{2}}(\tilde\Y_2(t,\eta)-\tilde\Y_2(t,\theta))}\min_j(\tilde\D_j)\vert \tilde\U_1-\tilde\U_2\vert \tilde\Y_{2,\eta}(t,\theta) d\theta\Big)d\eta\notag\\
& \leq \norm{\tilde\Y_1-\tilde\Y_2}^2 + \frac{1}{\A^{14}} \int_0^1\tilde\Y_{2,\eta}^2(t,\eta)\nn \\ 
&\qquad\qquad\times\Big(\int_0^\eta e^{-\frac{1}{\sqtC{2}}(\tilde\Y_2(t,\eta)-\tilde\Y_2(t,\theta))} \min_j(\tilde\D_j)\tilde\Y_{2,\eta} \vert \tilde\U_1-\tilde\U_2\vert (t,\theta) d\theta\Big)^2d\eta\notag\\
& \leq \norm{\tilde\Y_1-\tilde\Y_2}^2\notag\\
& \quad +\frac{4}{\A^{12}}\int_0^1 \tilde \Y_{2,\eta}^2(t,\eta)\Big(\int_0^\eta e^{-\frac3{2\sqtC{2}}(\tilde\Y_2(t,\eta)-\tilde\Y_2(t,\theta))}\tilde\P_2\tilde\Y_{2,\eta}(t,\theta) d\theta\Big)\notag\\
& \quad \qquad \qquad \qquad \qquad \times\Big(\int_0^\eta e^{-\frac1{2\sqtC{2}}(\tilde\Y_2(t,\eta)-\tilde\Y_2(t,\theta))}\tilde\P_2\tilde\Y_{2,\eta}(\tilde\U_1-\tilde\U_2)^2(t,\theta)d\theta\Big) d\eta\notag\\
& \leq \norm{\tilde\Y_1-\tilde\Y_2}^2\notag\\
& \quad + \frac{4}{\A^{6}} \int_0^1 \tilde\P_2\tilde\Y_{2,\eta}^2(t,\eta)\Big(\int_0^\eta e^{-\frac1{2\sqtC{2}} (\tilde\Y_2(t,\eta)-\tilde\Y_2(t,\theta))}(\tilde\U_1-\tilde\U_2)^2(t,\theta) d\theta\Big) d\eta\notag\\
& \leq \norm{\tilde\Y_1-\tilde\Y_2}^2\notag\\
& \quad + \frac{2}{\A}\int_0^1 \tilde\Y_{2,\eta}(t,\eta)e^{-\frac1{2\sqtC{2}} \tilde\Y_2(t,\eta)}\Big(\int_0^\eta e^{\frac1{2\sqtC{2}}\tilde\Y_2(t,\theta)}(\tilde\U_1-\tilde\U_2)^2(t,\theta) d\theta\Big) d\eta\notag\\
& \leq \norm{\tilde\Y_1-\tilde\Y_2}^2\notag\\
& \quad + \frac{2\sqtC{2}}{\A} \Big[ -2 \int_0^\eta e^{-\frac1{2\sqtC{2}}(\tilde\Y_2(t,\eta)-\tilde\Y_2(t,\theta))} (\tilde\U_1-\tilde\U_2)^2(t,\theta)d\theta\Big\vert_{\eta=0}^1\notag\\
&\qquad\qquad\qquad\qquad\qquad\qquad\qquad+2 \int_0^1 (\tilde \U_1-\tilde\U_2)^2(t,\eta) d\eta\Big]\notag\\
& \leq \bigO(1)\Big(\norm{\tilde\Y_1-\tilde\Y_2}^2+\norm{\tilde\U_1-\tilde\U_2}^2\Big).%\label{eq:barB33}
\end{align*}

Direct calculations yield for $\bar B_{35}$ (and in much the same way for $\bar B_{36}$) that 
\begin{align}
\vert \bar B_{35}\vert & \leq \frac{1}{\A^7} \int_0^1 \vert \tilde\Y_1-\tilde\Y_2\vert \tilde\Y_{2,\eta}(t,\eta) \notag\\
& \qquad  \times\Big(\int_0^\eta (e^{-\frac{1}{\sqtC{2}}(\tilde\Y_2(t,\eta)-\tilde\Y_2(t,\theta))}-e^{-\frac{1}{\sqtC{2}}(\tilde\Y_1(t,\eta)-\tilde\Y_1(t,\theta))})\notag\\
&\qquad\qquad\qquad\qquad\qquad\times\min_j(\tilde\D_j)\min_j(\tilde\V_j^+)\tilde\Y_{2,\eta} \mathbbm{1}_{B(\eta)}(t,\theta) d\theta\Big)d\eta\notag\\
& \leq \frac{1}{\A^7\sqtC{2}} \int_0^1 \vert \tilde\Y_1-\tilde\Y_2\vert \tilde\Y_{2,\eta}(t,\eta) \Big(\int_0^\eta \big(\vert \tilde\Y_1-\tilde\Y_2\vert (t,\eta)+\vert \tilde\Y_1-\tilde\Y_2\vert (t,\theta)\big)\notag\\
&\qquad\qquad\qquad\qquad\qquad\times e^{-\frac{1}{\sqtC{2}}(\tilde\Y_2(t,\eta)-\tilde\Y_2(t,\theta))}\tilde\D_2\tilde\V_2^+\tilde\Y_{2,\eta}(t,\theta) d\theta\Big) d\eta\notag\\
& \leq \frac1{\A^7\sqtC{2}} \int_0^1 (\tilde\Y_1-\tilde\Y_2)^2 \tilde\Y_{2,\eta} (t,\eta)\nn \\
&\qquad\qquad\qquad\qquad\qquad\times\Big(\int_0^\eta e^{-\frac{1}{\sqtC{2}}(\tilde\Y_2(t,\eta)-\tilde\Y_2(t,\theta))}\tilde\D_2\tilde\U_2^+\tilde\Y_{2,\eta}(t,\theta) d\theta\Big) d\eta\notag\\
& \quad +\frac1{\A^7\sqtC{2}}\int_0^1 \vert \tilde\Y_1-\tilde\Y_2\vert \tilde\Y_{2,\eta}(t,\eta)\notag\\
&\qquad\qquad\qquad\times\Big(\int_0^\eta e^{-\frac{1}{\sqtC{2}}(\tilde\Y_2(t,\eta)-\tilde\Y_2(t,\theta))} \tilde\D_2\tilde\V_2^+\tilde\Y_{2,\eta}\vert \tilde\Y_1-\tilde\Y_2\vert (t,\theta)d\theta\Big) d\eta\notag\\
& \leq \frac{2}{\A^7} \int_0^1 (\tilde\Y_1-\tilde\Y_2)^2 \tilde\Y_{2,\eta}(t,\eta)\Big(\int_0^\eta e^{-\frac{1}{\sqtC{2}}(\tilde\Y_2(t,\eta)-\tilde\Y_2(t,\theta))} \tilde\P_2^2\tilde\Y_{2,\eta}(t,\theta) d\theta\Big)^{1/2} \notag\\
& \quad \qquad \qquad \qquad  \times \Big(\int_0^\eta e^{-\frac{1}{\sqtC{2}}(\tilde\Y_2(t,\eta)-\tilde\Y_2(t,\theta))}\tilde\U_2^2\tilde\Y_{2,\eta}(t,\theta)d\theta\Big)^{1/2} d\eta\notag\\
& \quad +\norm{\tilde\Y_1-\tilde\Y_2}^2\notag\\
& \quad + \frac{4}{\A^{14}} \int_0^1 \tilde\Y_{2,\eta}^2(t,\eta) \Big(\int_0^\eta e^{-\frac{1}{\sqtC{2}}(\tilde\Y_2(t,\eta)-\tilde\Y_2(t,\theta))}\tilde\U_2^2\tilde\Y_{2,\eta}(t,\theta) d\theta\Big)\notag\\
& \qquad \qquad \qquad   \times \Big(\int_0^\eta e^{-\frac{1}{\sqtC{2}}(\tilde\Y_2(t,\eta)-\tilde\Y_2(t,\theta))} \tilde\P_2^2\tilde\Y_{2,\eta} (\tilde\Y_1-\tilde\Y_2)^2(t,\theta) d\theta\Big) d\eta\notag\\
& \leq \norm{\tilde\Y_1-\tilde\Y_2}^2 +\frac{2\sqrt{6}}{\A^{4}} \int_0^1(\tilde\Y_1-\tilde\Y_2)^2 \tilde\P_2\tilde\Y_{2,\eta}(t,\eta)d\eta\notag\\
& \quad + \frac {16}{\A^{13}}\int_0^1 \tilde\P_2\tilde\Y_{2,\eta}^2(t,\eta)\nn \\
&\qquad\qquad\qquad\times\Big(\int_0^\eta e^{-\frac{1}{\sqtC{2}}(\tilde\Y_2(t,\eta)-\tilde\Y_2(t,\theta))}\tilde\P_2^2\tilde\Y_{2,\eta}(\tilde\Y_1-\tilde\Y_2)^2(t,\theta) d\theta\Big) d\eta\notag\\
& \leq (1+\sqrt{6}\A) \norm{\tilde\Y_1-\tilde\Y_2}^2\notag\\
& \quad + \A\int_0^1 \tilde\Y_{2,\eta}(t,\eta) e^{-\frac{1}{\sqtC{2}}\tilde\Y_2(t,\eta)}\Big(\int_0^\eta e^{\frac{1}{\sqtC{2}}\tilde\Y_2(t,\theta)} (\tilde\Y_1-\tilde\Y_2)^2(t,\theta) d\theta\Big)d \eta\notag\\
& = (1+\sqrt{6}\A)\norm{\tilde\Y_1-\tilde\Y_2}^2\notag\\
& \quad +\A\sqtC{2} \Big[-\int_0^\eta e^{-\frac{1}{\sqtC{2}}(\tilde\Y_2(t,\eta)-\tilde\Y_2(t,\theta))}(\tilde\Y_1-\tilde\Y_2)^2(t,\theta) d\theta\Big\vert_{\eta=0}^1\notag\\
&\qquad\qquad\qquad\qquad\qquad\qquad\qquad\qquad\qquad+\int_0^1 (\tilde\Y_1-\tilde\Y_2)^2(t,\eta) d\eta\Big]\notag\\
& \leq \bigO(1) \norm{\tilde\Y_1-\tilde\Y_2}^2.  \label{eq:barB35}
\end{align}

Direct calculations yield for $\hat B_{35}$ (and much in the same way for $\hat B_{36}$) that 
\begin{align*} \nn
\vert \hat B_{35}\vert & \leq \mathbbm{1}_{\sqtC{1}\leq\sqtC{2}}\frac{4\vert \sqtC{1}-\sqtC{2}\vert}{\ma\A^7 e} \int_0^1 \vert \tilde \Y_1-\tilde \Y_2\vert \tilde \Y_{2,\eta}(t,\eta)\\ \nn
& \qquad \qquad \times \Big(\int_0^\eta \min_j(e^{-\frac{3}{4\A}(\tilde \Y_j(t,\eta)-\tilde \Y_j(t,\theta))})\min_j(\tilde \D_j)\min_j(\tilde \V_j^+)\tilde \Y_{2,\eta}(t,\theta)d\theta\Big) d\eta\\ \nn
& \leq \frac{8}{\ma\A^6e} \int_0^1 \vert \tilde \Y_1-\tilde \Y_2\vert \tilde \Y_{2,\eta}(t,\eta)\\ \nn
& \qquad \qquad \times \Big(\int_0^\eta e^{-\frac{1}{\sqtC{2}}(\tilde \Y_2(t,\eta)-\tilde \Y_2(t,\theta))} \tilde \U_2^2\tilde \Y_{2,\eta}(t,\theta) d\theta\Big)^{1/2}\\ \nn
& \qquad \qquad \times \Big(\int_0^\eta e^{-\frac{1}{2\sqtC{2}}(\tilde \Y_2(t,\eta)-\tilde \Y_2(t,\theta))}\tilde \P_1\tilde \P_2\tilde \Y_{2,\eta} (t,\theta) d\theta \Big)^{1/2}d\eta \vert \sqtC{1}-\sqtC{2}\vert\\ \nn
& \leq \frac{16}{a\A^{11/2} e} \int_0^1 \vert \tilde \Y_1-\tilde \Y_2\vert \sqP{2}\tilde \Y_{2,\eta}(t,\eta)\\ \nn
& \qquad \qquad \times \Big( \int_0^\eta e^{-\frac{1}{2\sqtC{2}}(\tilde\Y_2(t,\eta)-\tilde \Y_2(t,\theta))}\tilde\P_1\tilde \P_2\tilde \Y_{2,\eta}(t,\theta)d\theta\Big)^{1/2}d\eta \vert \sqtC{1}-\sqtC{2}\vert\\ \nn
& \leq \norm{\tilde \Y_1-\tilde \Y_2}^2 \\ \nn
& \qquad + \frac{256}{\ma^2 \A^{11}e^2}\int_0^1 \tilde \P_2\tilde \Y_{2,\eta}^2(t,\eta)\\ \nn
& \qquad \qquad \times \Big(\int_0^\eta e^{-\frac{1}{2\sqtC{2}}(\tilde \Y_2(t,\eta)-\tilde \Y_2(t,\theta))}\tilde \P_1\tilde\P_2\tilde \Y_{2,\eta}(t,\theta) d\theta\Big) d\eta \vert \sqtC{1}-\sqtC{2}\vert^2\\ \nn
& \leq \norm{\tilde \Y_1-\tilde \Y_2}^2\\ \nn
& \qquad + \frac{16\A}{e^2}\int_0^1 \tilde \Y_{2,\eta}(t,\eta) e^{-\frac{1}{2\sqtC{2}}\tilde \Y_2(t,\eta)}\Big(\int_0^\eta e^{\frac{1}{2\sqtC{2}}\tilde \Y_2(t,\theta)}d\theta\Big) d\eta \vert \sqtC{1}-\sqtC{2}\vert^2\\ \
& \leq \bigO(1)(\norm{\tilde \Y_1-\tilde \Y_2}^2+\vert \sqtC{1}-\sqtC{2}\vert^2).   
\end{align*}

As far as the term $\bar B_{37}$ is concerned it can be estimated as follows (we use \eqref{decay:impl})
\begin{align}
\vert \bar B_{37}\vert &= \frac{1}{\A^7} \vert \int_0^1 (\tilde\Y_1-\tilde\Y_2)(\tilde\Y_{1,\eta}-\tilde\Y_{2,\eta})(t,\eta)\notag\\
& \qquad  \times\min_k\Big(\int_0^\eta \min_j(e^{-\frac{1}{\ma}(\tilde \Y_j(t,\eta)-\tilde\Y_j(t,\theta))})\notag\\
&\qquad\qquad\qquad\qquad\qquad\qquad\qquad\times\min_j(\tilde\D_j)\min_j(\tilde\V_j^+)\tilde\Y_{k,\eta}(t,\theta)d\theta\Big) d\eta\vert \notag\\
& = \frac{1}{2\A^7} \vert (\tilde\Y_1-\tilde\Y_2)^2(t,\eta)\min_k\Big(\int_0^\eta \min_j(e^{-\frac{1}{\ma}(\tilde \Y_j(t,\eta)-\tilde\Y_j(t,\theta))})\notag\\
&\qquad\qquad\qquad\qquad\qquad\qquad\qquad\times\min_j(\tilde\D_j)\min_j(\tilde\V_j^+)\tilde\Y_{k,\eta}(t,\theta)d\theta\Big) \Big\vert_{\eta=0}^1\notag\\
& \quad -\int_0^1 (\tilde\Y_1-\tilde\Y_2)^2(t,\eta)\frac{d}{d\eta}\min_k\Big(\int_0^\eta \min_j(e^{-\frac{1}{\ma}(\tilde \Y_j(t,\eta)-\tilde\Y_j(t,\theta))})\notag\\
&\qquad\qquad\qquad\qquad\qquad\qquad\qquad\times\min_j(\tilde\D_j)\min_j(\tilde\V_j^+)\tilde\Y_{k,\eta}(t,\theta)d\theta\Big)d\eta \vert\notag \\
& =\frac1{2\A^7} \vert \int_0^1 (\tilde\Y_1-\tilde\Y_2)^2(t,\eta)
\frac{d}{d\eta}\min_k\Big(\int_0^\eta \min_j(e^{-\frac{1}{\ma}(\tilde \Y_j(t,\eta)-\tilde\Y_j(t,\theta))})\notag\\
&\qquad\qquad\qquad\qquad\qquad\qquad\qquad\times\min_j(\tilde\D_j)\min_j(\tilde\V_j^+)\tilde\Y_{k,\eta}(t,\theta)d\theta\Big)d\eta \vert \notag\\
&\leq \bigO(1)\norm{\tilde\Y_1-\tilde\Y_2}^2,  \label{eq:barB37}
\end{align}
where $\bigO(1)$ denotes some constant only depending on $\A$, which remains bounded as $\A\to 0$, provided we can show that the derivative  in the latter integral exists and is uniformly bounded, see Lemma \ref{lemma:4}. 

As far as the term $\bar B_{38}$ (a similar argument works for $\bar B_{39}$) is concerned, the integral can be estimated as follows,
\begin{align*}
\bar B_{38}&= \frac{1}{\A^7} \int_0^1 (\tilde \Y_1-\tilde \Y_2)\tilde \Y_{2,\eta} \mathbbm{1}_{D^c} (t,\eta)
\Big(\int_0^\eta \min_j(e^{-\frac{1}{\ma}(\tilde \Y_j(t,\eta)-\tilde\Y_j(t,\theta))})\\
&\qquad\qquad\qquad\qquad\times\min_j(\tilde\D_j)\min_j(\tilde\V_j^+) (\tilde \Y_{2,\eta}-\tilde \Y_{1,\eta}) (t,\theta) d\theta\Big)d\eta\\
& =\frac{1}{\A^7} \int_0^1 (\tilde \Y_1-\tilde \Y_2) \tilde \Y_{2,\eta} \mathbbm{1}_{D^c}(t,\eta)
\Big[(\tilde \Y_2-\tilde \Y_1)(t,\theta) \min_j(e^{-\frac{1}{\ma}(\tilde \Y_j(t,\eta)-\tilde\Y_j(t,\theta))})\\
&\qquad\qquad\qquad\qquad\qquad\qquad\qquad\times\min_j(\tilde\D_j) \min_j(\tilde\V_j^+) (t,\theta) \Big\vert_{\theta=0}^\eta\\
& \qquad -\int_0^\eta (\tilde \Y_2-\tilde \Y_1)(t,\theta)
\big[(\frac{d}{d\theta}\min_j(e^{-\frac{1}{\ma}(\tilde \Y_j(t,\eta)-\tilde\Y_j(t,\theta))})) \\
&\qquad\qquad\qquad\qquad\qquad\qquad\qquad\times\min_j(\tilde \D_j)\min_j(\tilde\V_j^+)(t,\theta)\\
& \qquad  + \min_j(e^{-\frac{1}{\ma}(\tilde \Y_j(t,\eta)-\tilde\Y_j(t,\theta))})\big(\frac{d}{d\theta} \min_j(\tilde\D_j)\min_j(\tilde\V_j^+)\big)(t,\theta)d\theta\big]d\eta\Big]\\
& = -\frac{1}{\A^7}\int_0^1 (\tilde\Y_1-\tilde \Y_2)^2 \min_j(\tilde\D_j) \min_j(\tilde\V_j^+)\tilde \Y_{2,\eta} \mathbbm{1}_{D^c}(t,\eta) d\eta\\
& \quad +\frac{1}{\A^7}\int_0^1 (\tilde \Y_1-\tilde \Y_2)\tilde \Y_{2,\eta} (t,\eta) \mathbbm{1}_{D^c}(t,\eta)\\
& \qquad \times\int_0^\eta (\tilde \Y_1-\tilde \Y_2)(t,\theta)
\Big[\big(\frac{d}{d\theta}\min_j(e^{-\frac{1}{\ma}(\tilde \Y_j(t,\eta)-\tilde\Y_j(t,\theta))})\big) \\
&\qquad\qquad\qquad\qquad\qquad\qquad\qquad\times\min_j(\tilde \D_j)\min_j(\tilde\V_j^+)(t,\theta)\\
& \qquad  + \min_j(e^{-\frac{1}{\ma}(\tilde \Y_j(t,\eta)-\tilde\Y_j(t,\theta))})\big(\frac{d}{d\theta} \min_j(\tilde\D_j)\min_j(\tilde\V_j^+)\big)(t,\theta)d\theta\Big]d\eta\\
& = \bar M_1+\bar M_2.
\end{align*}

As far as the first term $\bar M_1$ is concerned, we have since 
\begin{align*}
\min_j(\tilde \D_j) \min_j(\tilde\V_j^+)\tilde \Y_{i,\eta}(t,\eta)
& \leq2 \min_j(\sqtC{j}\tilde\P_j) \min_j(\tilde\V_j^+)\tilde \Y_{i,\eta}(t,\eta)\\
&\leq \A^6\min_j(\tilde\V_j^+)(t,\eta)\leq \frac{\A^8}{\sqrt{2}},
\end{align*}
that
\begin{align*} 
\vert \bar M_1\vert & \leq \frac{1}{\A^7} \int_0^1 (\tilde \Y_1-\tilde \Y_2)^2\min_j(\tilde\D_j) \min_j(\tilde\V_j^+)\tilde \Y_{2,\eta} (t,\eta) d\eta\\
& \leq \frac{\A}{\sqrt{2}}\norm{\tilde \Y_1-\tilde \Y_2}^2= \bigO(1) \norm{\tilde \Y_1-\tilde \Y_2}^2,
\end{align*}
where $\bigO(1)$ denotes some constant, which only depends on $\A$ and which remains bounded as $\A\to 0$. 

The second term $\bar M_2$, on the other hand, is a bit more demanding.

(i): First of all recall \eqref{est:L3a_lemma}, i.e., 
\begin{equation*}
\vert \frac{d}{d\theta} \min_j (e^{-\frac{1}{\ma}(\tilde \Y_j(t,\eta)-\tilde \Y_j(t,\theta))})\vert \leq \frac{1}{\ma}\min_j(e^{-\frac{1}{\ma}(\tilde \Y_j(t,\eta)-\tilde\Y_j(t,\theta))})\max_j(\tilde \Y_{j,\eta})(t,\theta),
\end{equation*}
which implies that
\begin{align*}
\frac{1}{\A^7}& \vert \int_0^1 (\tilde \Y_1-\tilde \Y_2)\tilde \Y_{2,\eta} (t,\eta) \mathbbm{1}_{D^c}(t,\eta) \int_0^\eta (\tilde \Y_1-\tilde \Y_2)(t,\theta)\\
&  \qquad \times (\frac{d}{d\theta} \min_j(e^{-\frac{1}{\ma}(\tilde \Y_j(t,\eta)-\tilde\Y_j(t,\theta))}))
\min_j(\tilde\D_j) \min_j(\tilde\V_j^+)(t,\theta) d\theta d\eta\vert\\
& \leq \norm{\tilde \Y_1-\tilde \Y_2}^2 +\frac{1}{\A^{14}} \int_0^1 \tilde \Y_{2,\eta}^2(t,\eta) \mathbbm{1}_{D^c}(t,\eta)\\
& \quad \times \Big(\int_0^\eta (\tilde \Y_1-\tilde \Y_2)(t,\theta)
 (\frac{d}{d\theta} \min_j(e^{-\frac{1}{\ma}(\tilde \Y_j(t,\eta)-\tilde\Y_j(t,\theta))}))\\
&\qquad\qquad\qquad\qquad\qquad\qquad\times\min_j(\tilde\D_j) \min_j(\tilde \V_j^+) (t,\theta) d\theta\Big)^2 d\eta\\
& \leq \norm{\tilde \Y_1-\tilde \Y_2}^2 +\frac{1}{\ma^2\A^2}\int_0^1 \tilde\Y_{2,\eta}^2 (t,\eta) \\
&  \quad \qquad \times \Big( \int_0^\eta \vert \tilde \Y_1-\tilde \Y_2\vert(t,\theta) e^{-\frac{1}{\sqtC{2}}(\tilde \Y_2(t,\eta)-\tilde \Y_2(t,\theta))}\min_j(\tilde\V_j^+)(t,\theta) d\theta\Big)^2 d\eta\\
& \leq \norm{\tilde\Y_1-\tilde \Y_2}^2\\
& \quad +\frac{\ma^2}{2\A^2}\int_0^1 \tilde \Y_{2,\eta}^2(t,\eta)\Big(\int_0^\eta \vert \tilde \Y_1-\tilde \Y_2\vert (t,\theta)e^{-\frac{1}{\sqtC{2}}(\tilde \Y_2(t,\eta)-\tilde \Y_2(t,\theta))}d\theta\Big)^2 d\eta\\
& \leq \norm{\tilde\Y_1-\tilde \Y_2}^2\\ 
& \quad + \frac12\int_0^1 \tilde \Y_{2,\eta}^2 (t, \eta)\Big( \int_0^\eta (\tilde \Y_1-\tilde \Y_2)^2(t,\theta) e^{-\frac{1}{2\sqtC{2}}(\tilde \Y_2(t,\eta)-\tilde \Y_2(t,\theta))}d\theta\Big)\\
& \quad \qquad \qquad \qquad \times \Big( \int_0^\eta e^{-\frac{3}{2\sqtC{2}}(\tilde \Y_2(t,\eta)-\tilde \Y_2(t,\theta))} d\theta\Big) d\eta\\
& \leq \norm{\tilde \Y_1-\tilde \Y_2}^2\\
& \qquad + \frac{1}{\sqtC{2}^5}\int_0^1\tilde \Y_{2,\eta}^2(t,\eta)\Big(\int_0^\eta (\tilde \Y_1-\tilde \Y_2)^2(t,\theta) e^{-\frac{1}{2\sqtC{2}}(\tilde \Y_2(t,\eta)-\tilde \Y_2(t,\theta))}d\theta\Big)\\
& \qquad \qquad \qquad \qquad \times \Big(\int_0^\eta e^{-\frac{3}{2\sqtC{2}}(\tilde \Y_2(t,\eta)-\tilde \Y_2(t,\theta))}(\tilde \P_2\tilde \Y_{2,\eta}+\tilde \Henergy_{2,\eta})(t,\theta)d\theta\Big) d\eta\\
& \leq \norm{\tilde \Y_1-\tilde \Y_2}^2\\
&\quad+ \frac{6}{\sqtC{2}^4}\int_0^1 \tilde \P_2\tilde \Y_{2,\eta}^2(t,\eta)\Big( \int_0^\eta (\tilde \Y_1-\tilde \Y_2)^2(t,\theta)e^{-\frac{1}{2\sqtC{2}}(\tilde \Y_2(t,\eta)-\tilde\Y_2(t,\theta))}d\theta \Big)d\eta\\
& \leq \norm{\tilde \Y_1-\tilde \Y_2}^2 \\
&\quad+3\A\int_0^1 \tilde \Y_{2,\eta} (t,\eta) e^{-\frac{1}{2\sqtC{2}}\tilde \Y_2(t,\eta)}\Big( \int_0^\eta (\tilde \Y_1-\tilde \Y_2)^2(t,\theta) e^{\frac{1}{2\sqtC{2}}\tilde \Y_2(t,\theta)}d\theta\Big) d\eta\\
& = \norm{\tilde \Y_1-\tilde \Y_2}^2-6\A\sqtC{2}\int_0^\eta e^{-\frac{1}{2\sqtC{2}}(\tilde \Y_2(t,\eta)-\tilde \Y_2(t,\theta))} (\tilde \Y_1-\tilde \Y_2)^2(t,\theta) d\theta\Big\vert_{\eta=0}^1\\
& \quad \qquad \qquad \qquad +6\A\sqtC{2} \int_0^1 (\tilde \Y_1-\tilde \Y_2)^2(t,\eta) d\eta\\
& \leq \bigO(1)\norm{\tilde \Y_1-\tilde \Y_2}^2,
\end{align*}
where we used \eqref{eq:PUYH_scale}, \eqref{eq:343}, and \eqref{eq:Henergy32}. Again, $\bigO(1)$ denotes some constant only depending on $\A$, which remains bounded as $\A\to 0$.

(ii): First of all we have to establish that $\min_j(\tilde\D_j)\min_j(\tilde \V_j^+)(t,\theta)$ is Lipschitz continuous with a uniformly bounded Lipschitz constant.   More precisely, in Lemma \ref{lemma:5} we show that
\begin{equation*}
\vert \frac{d}{d\theta} (\min_j(\tilde\D_j)\min_j(\tilde\V_j^+))(t,\theta)\vert\leq \bigO(1)\sqrt{\A}\A^4 (\min_j(\tilde\D_j)^{1/2}+\vert \tilde\U_2\vert)(t,\theta).
\end{equation*}

We are now ready to establish a Lipschitz estimate for the second part of $\bar M_2$. Indeed, 
\begin{align*}
\frac{1}{\A^7} & \vert \int_0^1 (\tilde \Y_1-\tilde\Y_2)\tilde\Y_{2,\eta}\mathbbm{1}_{D^c} (t,\eta)
 \int_0^\eta (\tilde \Y_1-\tilde\Y_2)(t,\theta) \min_j(e^{-\frac{1}{\ma}(\tilde \Y_j(t,\eta)-\tilde\Y_j(t,\theta))})\\
&\qquad\qquad\qquad\qquad\qquad\qquad\times\left(\frac{d}{d\theta} \min_j(\tilde\D_j)\min_j(\tilde\V_j^+)\right)(t,\theta) d\theta d\eta\vert \\
&  \leq \norm{\tilde \Y_1-\tilde\Y_2}^2 +\frac{1}{\A^{14}}\int_0^1 \tilde \Y_{2,\eta}^2\mathbbm{1}_{D^c} (t,\eta)\\
& \quad \qquad \times\Big( \int_0^\eta (\tilde\Y_1-\tilde\Y_2) (t,\theta)  \min_j(e^{-\frac{1}{\ma}(\tilde \Y_j(t,\eta)-\tilde\Y_j(t,\theta))})\\
&\qquad\qquad\qquad\qquad\qquad\qquad\times\left(\frac{d}{d\theta} \min_j(\tilde\D_j) \min_j(\tilde\V_j^+)\right) (t,\theta) d\theta\Big)^2d\eta\\
& \leq \norm{\tilde \Y_1-\tilde\Y_2}^2\\
& \quad + \bigO(1)\frac{1}{\A^5} \int_0^1 \tilde\Y_{2,\eta}^2(t,\eta)\Big(\int_0^\eta \min_j(e^{-\frac{1}{\ma}(\tilde \Y_j(t,\eta)-\tilde\Y_j(t,\theta))})\vert \tilde \Y_1-\tilde\Y_2\vert \\
& \qquad \qquad\qquad \qquad\qquad \qquad\qquad \qquad \times\big(\min_j(\tilde\D_j)^{1/2} +\vert \tilde \U_2\vert \big)(t,\theta) d\theta\Big)^2d\eta\\
& \leq \norm{\tilde\Y_1-\tilde\Y_2}^2\\
& \quad +\bigO(1)\frac{1}{\A^5}\int_0^1\tilde \Y_{2,\eta}^2 (t,\eta)\Big( \int_0^\eta  e^{-\frac{1}{\sqtC{2}}(\tilde \Y_2(t,\eta)-\tilde \Y_2(t,\theta))}\vert \tilde \Y_1-\tilde \Y_2\vert \tilde \P_2^{1/2}(t,\theta) d\theta\Big)^2 d\eta\\
&  \leq\norm{\tilde\Y_1-\tilde\Y_2}^2\\
& \quad +\bigO(1)\frac{1}{\A^5}\int_0^1 \tilde \Y_{2,\eta}^2(t,\eta) \Big(\int_0^\eta e^{-\frac3{2\sqtC{2}}(\tilde \Y_2(t,\eta)-\tilde \Y_2(t,\theta))} \tilde \P_2(t,\theta) d\theta\Big)\\
&\qquad\qquad\qquad\qquad\qquad\qquad\times\Big( \int_0^\eta e^{-\frac1{2\sqtC{2}}(\tilde \Y_2(t,\eta)-\tilde\Y_2(t,\theta))}(\tilde \Y_1-\tilde\Y_2)^2(t,\theta) d\theta\Big)d\eta\\
& \leq \norm{\tilde \Y_1-\tilde\Y_2}^2\\
& \quad +\bigO(1)\frac{1}{\A^5} \int_0^1 \tilde \P_2\tilde\Y_{2,\eta}^2(t,\eta)\Big(\int_0^\eta e^{-\frac1{2\sqtC{2}}(\tilde\Y_2(t,\eta)-\tilde\Y_2(t,\theta))}(\tilde\Y_1-\tilde\Y_2)^2(t,\theta) d\theta\Big) d\eta\\
& \leq \norm{\tilde \Y_1-\tilde\Y_2}^2\\
& \quad +\bigO(1)\int_0^1 \tilde\Y_{2,\eta}(t,\eta) e^{-\frac1{2\sqtC{2}}\tilde\Y_2(t,\eta)}\Big(\int_0^\eta e^{\frac1{2\sqtC{2}} \tilde \Y_2(t,\theta)}(\tilde \Y_1-\tilde \Y_2)^2(t,\theta) d\theta\Big) d\eta\\
& \leq \norm{\tilde \Y_1-\tilde\Y_2}^2 \\
& \quad + \bigO(1)\sqtC{2}\Big( -2\int_0^\eta e^{-\frac1{2\sqtC{2}}(\tilde \Y_2(t,\eta)-\tilde\Y_2(t,\theta))}(\tilde \Y_1-\tilde\Y_2)^2(t,\theta)d\theta\Big\vert_{\eta=0}^1\\
&\qquad\qquad\qquad\qquad\qquad\qquad\qquad\qquad\qquad+\int_0^1 2(\tilde \Y_1-\tilde \Y_2)^2(t,\eta) d\eta\Big)\\
& \leq\bigO(1) \norm{\tilde \Y_1-\tilde \Y_2}^2,
\end{align*}
where we used \eqref{eq:32P}.
Thus we conclude that
\begin{equation*}
\abs{\bar B_{38}}+ \abs{\bar B_{39}} \le \bigO(1) \norm{\tilde \Y_1-\tilde \Y_2}^2.
\end{equation*}

Next we have a look at $\bar K_{14}$, which can be rewritten as follows
\begin{align*}
\bar K_{14}&= \frac{3}{\A^6} \int_0^1 (\tilde\Y_1-\tilde\Y_2)(t,\eta) \Big(\tilde\Y_{1,\eta}(t,\eta)\int_0^\eta e^{-\frac{1}{\sqtC{1}}(\tilde\Y_1(t,\eta)-\tilde\Y_1(t,\theta))}\tilde\P_1\tilde\U_1\tilde\Y_{1,\eta}(t,\theta) d\theta\\
& \qquad \qquad \qquad \qquad  -\tilde\Y_{2,\eta}(t,\eta) \int_0^\eta e^{-\frac{1}{\sqtC{2}}(\tilde\Y_2(t,\eta)-\tilde\Y_2(t,\theta))}\tilde\P_2\tilde\U_2\tilde\Y_{2,\eta}(t,\theta) d\theta\Big) d\eta\\
& = \frac{3}{\A^6} \int_0^1 (\tilde\Y_1-\tilde\Y_2)(t,\eta)\Big(\tilde\Y_{1,\eta}(t,\eta) \int_0^\eta e^{-\frac{1}{\sqtC{1}}(\tilde\Y_1(t,\eta)-\tilde\Y_1(t,\theta))}\tilde\P_1\tilde\V_1^+\tilde\Y_{1,\eta}(t,\theta) d\theta\\
& \qquad \qquad \qquad \qquad  -\tilde\Y_{2,\eta}(t,\eta) \int_0^\eta e^{-\frac{1}{\sqtC{2}}(\tilde\Y_2(t,\eta)-\tilde\Y_2(t,\theta))}\tilde\P_2\tilde\V_2^+\tilde\Y_{2,\eta}(t,\theta) d\theta\Big) d\eta\\
& +\frac{3}{\A^6} \int_0^1(\tilde\Y_1-\tilde\Y_2)(t,\eta)\Big(\tilde\Y_{1,\eta}(t,\eta)\int_0^\eta e^{-\frac{1}{\sqtC{1}}(\tilde\Y_1(t,\eta)-\tilde\Y_1(t,\theta))}\tilde\P_1\tilde\V_1^-\tilde\Y_{1,\eta}(t,\theta) d\theta\\
& \qquad \qquad \qquad \qquad  -\tilde\Y_{2,\eta}(t,\eta)\int_0^\eta e^{-\frac{1}{\sqtC{2}}(\tilde\Y_2(t,\eta)-\tilde\Y_2(t,\theta))}\tilde\P_2\tilde\V_2^-\tilde\Y_{2,\eta}(t,\theta)d\theta\Big) d\eta\\
& = \bar K_{14}^++\bar K_{14}^-.
\end{align*}
Note that both $\bar K_{14}^+$ and $\bar K_{14}^-$ have the same structure. Moreover, having a closer look at $\bar K_{14}^+$ one has 
\begin{equation*}
\bar K_{14}^+=- 3\tilde K_1,
\end{equation*}
where $\tilde K_1$ is defined in \eqref{eq:alleK}. Thus we can immediately conclude that 
\begin{equation*}
\vert \bar K_{14}^+\vert \leq \bigO(1) \left(\norm{\tilde\Y_1-\tilde\Y_2}^2+\norm{\tilde\U_1-\tilde\U_2}^2+\norm{\sqP{1}-\sqP{2}}^2+\vert \sqtC{1}-\sqtC{2}\vert\right).
\end{equation*}

Next, we have a look at $\bar K_{15}$, which can be rewritten as follows
\begin{align*}
\bar K_{15}&= \frac{1}{\A^6} \int_0^1 (\tilde\Y_1-\tilde\Y_2)(t,\eta)\Big(\tilde\Y_{2,\eta}(t,\eta)\int_0^\eta e^{-\frac{1}{\sqtC{2}}(\tilde\Y_2(t,\eta)-\tilde\Y_2(t,\theta))}\tilde\U_2^3\tilde\Y_{2,\eta}(t,\theta) d\theta\\
& \qquad \qquad \qquad \qquad -\tilde\Y_{1,\eta}(t,\eta)\int_0^\eta e^{-\frac{1}{\sqtC{1}}(\tilde\Y_1(t,\eta)-\tilde\Y_1(t,\theta))}\tilde\U_1^3\tilde\Y_{1,\eta}(t,\theta)d\theta\Big) d\eta\\
& = \frac{1}{\A^6} \int_0^1 (\tilde\Y_1-\tilde\Y_2)(t,\eta)\Big(\tilde\Y_{2,\eta}(t,\eta) \int_0^\eta e^{-\frac{1}{\sqtC{2}}(\tilde\Y_2(t,\eta)-\tilde\Y_2(t,\theta))}(\tilde\V_2^+)^3\tilde\Y_{2,\eta}(t,\theta) d\theta\\
& \qquad \qquad \qquad \qquad  -\tilde\Y_{1,\eta}(t,\eta)\int_0^\eta e^{-\frac{1}{\sqtC{1}}(\tilde\Y_1(t,\eta)-\tilde\Y_1(t,\theta))}(\tilde\V_1^+)^3\tilde\Y_{1,\eta}(t,\theta) d\theta\Big) d\eta\\
& + \frac{1}{\A^6} \int_0^1 (\tilde\Y_1-\tilde\Y_2)(t,\eta)\Big(\tilde\Y_{2,\eta}(t,\eta) \int_0^\eta e^{-\frac{1}{\sqtC{2}}(\tilde\Y_2(t,\eta)-\tilde\Y_2(t,\theta))}(\tilde\V_2^-)^3\tilde\Y_{2,\eta}(t,\theta) d\theta\\
& \qquad \qquad \qquad \qquad  -\tilde\Y_{1,\eta}(t,\eta)\int_0^\eta e^{-\frac{1}{\sqtC{1}}(\tilde\Y_1(t,\eta)-\tilde\Y_1(t,\theta))}(\tilde\V_1^-)^3\tilde\Y_{1,\eta}(t,\theta) d\theta\Big) d\eta\\
&= \bar K_{15}^++\bar K_{15}^-.
\end{align*}
Note that both $\bar K_{15}^+$ and $\bar K_{15}^-$ have the same structure. Having a close look at $\bar K_{15}^+$ one has 
\begin{equation*}
\bar K_{15}^+= \int_0^1 (\tilde\Y_1-\tilde\Y_2)(J_1+J_2+J_3+J_4+J_5+J_6+J_7+J_8)(t,\eta) d\eta,
\end{equation*}
where $J_1,\dots,J_8$ are defined in \eqref{eq:alleJ}.  Thus we can conclude immediately that  
\begin{align*}
\vert \bar K_{15}^+\vert &\leq \bigO(1) \Big(\norm{\tilde\Y_1-\tilde\Y_2}^2+\norm{\tilde\U_1-\tilde\U_2}^2\\
&\qquad\qquad\qquad+\norm{\sqP{1}-\sqP{2}}^2+\vert \sqtC{1}-\sqtC{2}\vert^2\Big).
\end{align*}

Finally, we have a look at $\bar K_{16}$, which can be rewritten as follows
\begin{align*}
\bar K_{16}&= \frac{1}{2\A^6}\int_0^1 (\tilde \Y_1-\tilde\Y_2)(t,\eta)\Big(\tilde \Y_{2,\eta}(t,\eta)\int_0^\eta e^{-\frac{1}{\sqtC{2}}(\tilde \Y_2(t,\eta)-\tilde\Y_2(t,\theta))}\sqtC{2}^5\tilde\U_2(t,\theta)d\theta\\
& \qquad \qquad \qquad \qquad \qquad -\tilde\Y_{1,\eta}(t,\eta)\int_0^\eta e^{-\frac{1}{\sqtC{1}}(\tilde \Y_1(t,\eta)-\tilde\Y_1(t,\theta))} \sqtC{1}^5\tilde\U_1(t,\theta)d\theta\Big)d\eta\\
& = \frac{\sqtC{2}^5-\sqtC{1}^5}{2\A^6} \mathbbm{1}_{\sqtC{1}\leq \sqtC{2}} \int_0^1(\tilde\Y_1-\tilde\Y_2)\tilde\Y_{2,\eta}(t,\eta)\\
&\qquad\qquad\qquad\qquad\qquad\qquad\qquad\times\Big(\int_0^\eta e^{-\frac{1}{\sqtC{2}}(\tilde \Y_2(t,\eta)-\tilde\Y_2(t,\theta))} \tilde\U_2(t,\theta)d\theta\Big) d\eta\\
& \quad +\frac{\sqtC{2}^5-\sqtC{1}^5}{2\A^6} \mathbbm{1}_{\sqtC{2}< \sqtC{1}} \int_0^1 (\tilde\Y_1-\tilde\Y_2)\tilde\Y_{1,\eta}(t,\eta)\\
&\qquad\qquad\qquad\qquad\qquad\qquad\qquad\times\Big(\int_0^\eta e^{-\frac{1}{\sqtC{1}}(\tilde\Y_1(t,\eta)-\tilde\Y_1(t,\theta))}\tilde\U_1(t,\theta)d\theta\Big)d\eta\\
& \quad +\frac{\ma^5}{2\A^6}\int_0^1(\tilde\Y_1-\tilde\Y_2)\tilde\Y_{2,\eta}(t,\eta)\\
&\qquad\times\Big(\int_0^\eta (e^{-\frac{1}{\sqtC{2}}(\tilde \Y_2(t,\eta)-\tilde\Y_2(t,\theta))}-e^{-\frac{1}{\sqtC{2}}(\tilde\Y_1(t,\eta)-\tilde\Y_1(t,\theta))})\tilde\U_2\mathbbm{1}_{B(\eta)}(t,\theta)d\theta\Big)d\eta\\
& \quad +\frac{\ma^5}{2\A^6}\int_0^1 (\tilde\Y_1-\tilde\Y_2)\tilde\Y_{1,\eta}(t,\eta)\\
&\qquad\times\Big(\int_0^\eta (e^{-\frac{1}{\sqtC{1}}(\tilde \Y_2(t,\eta)-\tilde\Y_2(t,\theta))}-e^{-\frac{1}{\sqtC{1}}(\tilde\Y_1(t,\eta)-\tilde\Y_1(t,\theta))})\tilde\U_1\mathbbm{1}_{B(\eta)^c}(t,\theta)d\theta\Big) d\eta\\
& \quad + \mathbbm{1}_{\sqtC{1}\leq \sqtC{2}}\frac{\ma^5}{2\A^6}\int_0^1 (\tilde \Y_1-\tilde \Y_2)\tilde \Y_{2,\eta}(t,\eta) \Big(\int_0^\eta \big(\min_j(e^{-\frac{1}{\sqtC{2}}(\tilde \Y_j(t,\eta)-\tilde \Y_j(t,\theta))})\\
& \qquad\qquad\qquad\qquad\qquad\qquad  -\min_j(e^{-\frac{1}{\sqtC{1}}(\tilde \Y_j(t,\eta)-\tilde \Y_j(t,\theta))})\big)\tilde \U_2(t,\theta) d\theta\Big) d\eta\\
& \quad +\mathbbm{1}_{\sqtC{2}<\sqtC{1}}\frac{\ma^5}{2\A^6} \int_0^1 (\tilde \Y_1-\tilde \Y_2)\tilde \Y_{1,\eta}(t,\eta)\Big(\int_0^{\eta } \big(\min_j(e^{-\frac{1}{\sqtC{2}}(\tilde \Y_j(t,\eta)-\tilde \Y_j(t,\theta))})\\
& \qquad\qquad\qquad\qquad\qquad\qquad-\min_j( e^{-\frac{1}{\sqtC{1}}(\tilde \Y_j(t,\eta)-\tilde \Y_j(t,\theta))})\big)\tilde \U_1(t,\theta) d\theta\Big) d\eta\\
& \quad + \frac{\ma^5}{2\A^6}\int_0^1(\tilde\Y_1-\tilde\Y_2)\tilde\Y_{2,\eta}(t,\eta) \\
& \qquad \times\Big(\int_0^\eta \min_j(e^{-\frac{1}{\ma}(\tilde \Y_j(t,\eta)-\tilde\Y_j(t,\theta))})(\tilde\V_2^+-\tilde\V_1^+)\mathbbm{1}_{\tilde\V_1^+\leq \tilde\V_2^+}(t,\theta) d\theta\Big) d\eta\\
& \quad + \frac{\ma^5}{2\A^6}\int_0^1(\tilde\Y_1-\tilde\Y_2)\tilde\Y_{2,\eta}(t,\eta) \\
& \qquad  \times\Big(\int_0^\eta \min_j(e^{-\frac{1}{\ma}(\tilde \Y_j(t,\eta)-\tilde\Y_j(t,\theta))})(\tilde\V_2^--\tilde\V_1^-)\mathbbm{1}_{\tilde\V_2^-\leq \tilde\V_1^-}(t,\theta) d\theta\Big) d\eta\\
& \quad + \frac{\ma^5}{2\A^6}\int_0^1(\tilde\Y_1-\tilde\Y_2)\tilde\Y_{1,\eta}(t,\eta) \\
& \qquad \times\Big(\int_0^\eta \min_j(e^{-\frac{1}{\ma}(\tilde \Y_j(t,\eta)-\tilde\Y_j(t,\theta))})
(\tilde\V_2^+-\tilde\V_1^+)\mathbbm{1}_{\tilde\V_2^+\leq \tilde\V_1^+}(t,\theta) d\theta\Big) d\eta\\
& \quad + \frac{\ma^5}{2\A^6}\int_0^1(\tilde\Y_1-\tilde\Y_2)\tilde\Y_{1,\eta}(t,\eta) \\
& \qquad  \times\Big(\int_0^\eta \min_j(e^{-\frac{1}{\ma}(\tilde \Y_j(t,\eta)-\tilde\Y_j(t,\theta))})
(\tilde\V_2^--\tilde\V_1^-)\mathbbm{1}_{\tilde\V_1^-\leq \tilde\V_2^-}(t,\theta) d\theta\Big) d\eta\\
& \quad+ \frac{\ma^5}{2\A^6}\int_0^1(\tilde \Y_1-\tilde\Y_2)(\tilde\Y_{2,\eta}-\tilde\Y_{1,\eta})(t,\eta)\\
&\qquad  \times\Big(\int_0^\eta \min_j(e^{-\frac{1}{\ma}(\tilde \Y_j(t,\eta)-\tilde\Y_j(t,\theta))})
\min_j(\tilde \V_j^+)(t,\theta)d\theta\Big)d\eta\\
& \quad+ \frac{\ma^5}{2\A^6}\int_0^1(\tilde \Y_1-\tilde\Y_2)(\tilde\Y_{2,\eta}-\tilde\Y_{1,\eta})(t,\eta)\\
& \qquad  \times \Big(\int_0^\eta \min_j(e^{-\frac{1}{\ma}(\tilde \Y_j(t,\eta)-\tilde\Y_j(t,\theta))})\max_j(\tilde \V_j^-)(t,\theta)d\theta\Big)d\eta \\
& =\bar B_{61}+\bar B_{62}+\bar B_{63}+\bar B_{64}+ \bar B_{65}+\bar B_{66}\\
&\quad+\bar B_{67}^++\bar B_{67}^-+\bar B_{68}^++\bar B_{68}^-+\bar B_{69}^++\bar B_{69}^-.
\end{align*}

The key observation, which rescues the whole paper, is again
\begin{equation*}
\sqtC{i}^5\leq 2(\tilde\P_i\tilde\Y_{i,\eta}(t,\eta)+\tilde\Henergy_{i,\eta})(t,\eta),  %\label{eq:key}
\end{equation*}
which yields for $\bar B_{61}$ (and similar for $\bar B_{62}$) that
\begin{align*}
\vert \bar B_{61}\vert &\leq \frac{\sqtC{2}^5-\sqtC{1}^5}{2\A^{11}} \int_0^1 \vert\tilde\Y_1-\tilde\Y_2\vert \tilde\Y_{2,\eta}(t,\eta)\\
&\qquad\qquad\qquad\times \Big(\int_0^\eta  e^{-\frac{1}{\sqtC{2}}(\tilde\Y_2(t,\eta)-\tilde\Y_2(t,\theta))}\sqtC{2}^{5}\vert\tilde\U_2\vert(t,\theta)d\theta \Big) d\eta\\
& \leq \frac{5}{2\A^{7}} \vert \sqtC{2}-\sqtC{1}\vert \norm{\tilde\Y_1-\tilde\Y_2}\\
&\qquad\qquad\times\Big(\int_0^1 \tilde \Y_{2,\eta}^2(t,\eta)\Big(\int_0^\eta e^{-\frac{1}{\sqtC{2}}(\tilde\Y_2(t,\eta)-\tilde\Y_2(t,\theta))}\sqtC{2}^{5}\vert \tilde\U_2\vert(t,\theta)\Big)^2d\eta\Big)^{1/2}\\
& \leq \frac{10}{\A^{7}} \vert \sqtC{2}-\sqtC{1}\vert \norm{\tilde\Y_1-\tilde\Y_2}\\
&\qquad\times\Big(\int_0^1\tilde\Y_{2,\eta}^2(t,\eta)\Big[\Big(\int_0^\eta e^{-\frac{1}{\sqtC{2}}(\tilde\Y_2(t,\eta)-\tilde\Y_2(t,\theta))} \tilde \P_2\vert\tilde\U_2\vert \tilde\Y_{2,\eta}(t,\theta)d\theta\Big)^2\\
&\qquad\qquad\qquad\qquad+\Big(\int_0^\eta e^{-\frac{1}{\sqtC{2}}(\tilde\Y_2(t,\eta)-\tilde\Y_2(t,\theta))} \vert \tilde\U_2\vert \tilde\Henergy_{2,\eta}(t,\theta) d\theta\Big)^2\Big]d\eta\Big)^{1/2}\\
&\leq \frac{10}{\A^{7}}\vert \sqtC{2}-\sqtC{1}\vert \norm{\tilde\Y_1-\tilde\Y_2}\Big(\int_0^1 \tilde\Y_{2,\eta}^2(t,\eta)\\
& \quad  \times \Big[\Big(\int_0^\eta e^{-\frac{1}{\sqtC{2}}(\tilde\Y_2(t,\eta)-\tilde\Y_2(t,\theta))}\tilde\U_2^2\tilde\Y_{2,\eta}(t,\theta)d\theta\Big)\\
&\qquad\qquad\times\Big(\int_0^\eta e^{-\frac{1}{\sqtC{2}}(\tilde\Y_2(t,\eta)-\tilde\Y_2(t,\theta))}\tilde\P_2^2\tilde\Y_{2,\eta}(t,\theta)d\theta\Big)\\
& \qquad \qquad+\norm{\tilde\U_2(t,\dott)}_{L^\infty}^2\Big(\int_0^\eta e^{-\frac{1}{\sqtC{2}}(\tilde\Y_2(t,\eta)-\tilde\Y_2(t,\theta))}\tilde\Henergy_{2,\eta}(t,\theta)d\theta\Big)^2\Big]d\eta\Big)^{1/2}\\
& \leq \frac{10}{\A^7} \vert \sqtC{2}-\sqtC{1}\vert\norm{\tilde\Y_1-\tilde\Y_2}\\
&\qquad\qquad\qquad\times\Big(\int_0^1 (6A^6+16\A^2\norm{\tilde\U_2(t,\dott)}_{L^\infty}^2)
\tilde\P_2^2\tilde\Y_{2,\eta}^2(t,\eta)d\eta\Big)^{1/2}\\
&\leq \bigO(1)(\norm{\tilde\Y_1-\tilde\Y_2}^2+\vert \sqtC{2}-\sqtC{1}\vert^2).
\end{align*}

Next we have a closer look at $\bar B_{63}$ (and similar for $\bar B_{64}$). Recalling the definition of $B(\eta)$ \eqref{Def:Bn}, we have
\begin{align*}
\vert \bar B_{63}\vert &\leq \frac{\ma^5}{2\A^6\sqtC{2}}\int_0^1 \vert \tilde\Y_1-\tilde\Y_2\vert \tilde\Y_{2,\eta}(t,\eta)\Big(\int_0^\eta \big(\vert \tilde\Y_1-\tilde\Y_2\vert(t,\eta)+\vert \tilde\Y_1-\tilde\Y_2\vert (t,\theta)\big)\nn\\
&\qquad\qquad\qquad\qquad\qquad\qquad \times e^{-\frac{1}{\sqtC{2}}(\tilde\Y_2(t,\eta)-\tilde\Y_2(t,\theta))}\vert \tilde\U_2\vert(t,\theta)d\theta\Big) d\eta\notag\\
& \leq \frac{\ma^5}{2\A^6\sqtC{2}}\int_0^1(\tilde\Y_1-\tilde\Y_2)^2\tilde\Y_{2,\eta}(t,\eta) \Big(\int_0^\eta e^{-\frac{1}{\sqtC{2}}(\tilde\Y_2(t,\eta)-\tilde\Y_2(t,\theta))}\vert \tilde\U_2\vert (t,\theta) d\theta\Big)d\eta\notag\\
&\quad +\frac{\ma^5}{2\A^6\sqtC{2}} \int_0^1\vert \tilde\Y_1-\tilde\Y_2\vert \tilde\Y_{2,\eta}(t,\eta)\notag\\
&\qquad\qquad\qquad\qquad\times\Big(\int_0^\eta \vert \tilde\Y_1-\tilde\Y_2\vert(t,\theta)e^{-\frac{1}{\sqtC{2}}(\tilde\Y_2(t,\eta)-\tilde\Y_2(t,\theta))}\vert \tilde\U_2\vert (t,\theta)d\theta\Big)d\eta\notag\\
& \leq \frac{\ma^5}{\A^6\sqtC{2}^6} \int_0^1 (\tilde\Y_1-\tilde\Y_2)^2\tilde\Y_{2,\eta}(t,\eta)\notag \\
&\qquad\qquad\qquad\times\Big(\int_0^\eta e^{-\frac{1}{\sqtC{2}}(\tilde\Y_2(t,\eta)-\tilde\Y_2(t,\theta))}(\tilde\P_2\tilde\Y_{2,\eta}+\tilde\Henergy_{2,\eta})\vert\tilde\U_2\vert(t,\theta)d\theta\Big) d\eta\notag\\
& \quad +\frac{\ma^5}{\A^6\sqtC{2}^6}\int_0^1 \vert \tilde\Y_1-\tilde\Y_2\vert \tilde\Y_{2,\eta}(t,\eta)\notag \\
&\qquad\times\Big(\int_0^\eta \vert \tilde\Y_1-\tilde\Y_2\vert (t,\theta) e^{-\frac{1}{\sqtC{2}}(\tilde\Y_2(t,\eta)-\tilde\Y_2(t,\theta))} (\tilde\P_2\tilde\Y_{2,\eta}+\tilde\Henergy_{2,\eta})\vert \tilde\U_2\vert (t,\theta)d\theta\Big)d\eta\notag\\
& \leq \frac{1}{\A^6\sqtC{2}} \int_0^1 (\tilde\Y_1-\tilde\Y_2)^2\tilde\Y_{2,\eta}(t,\eta)\Big(\int_0^\eta e^{-\frac{1}{\sqtC{2}}(\tilde\Y_2(t,\eta)-\tilde\Y_2(t,\theta))}\notag\\
&\qquad \qquad\qquad  \times  \big(\frac{1}{\sqtC{2}}\tilde\P_2^2\tilde\Y_{2,\eta}+\sqtC{2}\tilde\U_2^2\tilde\Y_{2,\eta}+\norm{\tilde\U_2(t,\dott)}_{L^\infty}\tilde\Henergy_{2,\eta}\big)(t,\theta) d\theta\Big) d\eta\notag\\
& \quad +\norm{\tilde\Y_1-\tilde\Y_2}^2+\frac{1}{\A^{12}\sqtC{2}^2}\int_0^1\tilde\Y_{2,\eta}^2(t,\eta)\Big(\int_0^\eta \vert \tilde\Y_1-\tilde\Y_2\vert (t,\theta) e^{-\frac{1}{\sqtC{2}}(\tilde\Y_2(t,\eta)-\tilde\Y_2(t,\theta))} \notag\\
&\qquad\qquad\qquad\qquad\qquad\qquad\times\big(\tilde\P_2\tilde\Y_{2,\eta}+\tilde\Henergy_{2,\eta}\big)\vert \tilde\U_2\vert (t,\theta)d\theta\Big)^2d\eta\notag\\
& \leq \norm{\tilde\Y_1-\tilde\Y_2}^2+ \bigO(1)\frac{1}{\A^5}\int_0^1(\tilde\Y_1-\tilde\Y_2)^2\tilde\P_2\tilde\Y_{2,\eta}(t,\eta) d\eta+\frac{2}{\A^{12}\sqtC{2}^2} \int_0^1\tilde\Y_{2,\eta}^2(t,\eta)\notag\\
& \qquad\qquad\quad \times\Big[\Big(\int_0^\eta \vert \tilde\Y_1-\tilde\Y_2\vert (t,\theta) e^{-\frac{1}{\sqtC{2}}(\tilde\Y_2(t,\eta)-\tilde\Y_2(t,\theta))}\tilde\P_2\vert \tilde\U_2\vert\tilde\Y_{2,\eta}(t,\theta)d\theta\Big)^2\notag\\
& \qquad \qquad \qquad+\Big(\int_0^\eta \vert \tilde\Y_1-\tilde\Y_2\vert (t,\theta) e^{-\frac{1}{\sqtC{2}}(\tilde\Y_2(t,\eta)-\tilde\Y_2(t,\theta))}\vert \tilde\U_2\vert \tilde\Henergy_{2,\eta}(t,\theta)d\theta\Big)^2\Big]d\eta\notag\\
& \leq \bigO(1)\norm{\tilde\Y_1-\tilde\Y_2}^2 +\frac{2}{\A^{12}\sqtC{2}^2}\int_0^1 \tilde\Y_{2,\eta}^2(t,\eta)\notag\\
& \qquad\qquad\qquad\times\Big[\Big(\int_0^\eta (\tilde\Y_1-\tilde\Y_2)^2(t,\theta) e^{-\frac{1}{\sqtC{2}}(\tilde\Y_2(t,\eta)-\tilde\Y_2(t,\theta))}\tilde\P_2^2\tilde\Y_{2,\eta}(t,\theta)d\theta\Big)\notag\\
& \qquad \qquad \qquad \qquad \qquad\qquad\quad \times\Big(\int_0^\eta e^{-\frac{1}{\sqtC{2}}(\tilde\Y_2(t,\eta)-\tilde\Y_2(t,\theta))}\tilde\U_2^2\tilde\Y_{2,\eta}(t,\theta) d\theta\Big)\notag\\
& \qquad \qquad \qquad \quad  + \Big(\int_0^\eta (\tilde\Y_1-\tilde\Y_2)^2(t,\theta) e^{-\frac{1}{\sqtC{2}}(\tilde\Y_2(t,\eta)-\tilde\Y_2(t,\theta))}\tilde \U_2^2\tilde\Henergy_{2,\eta}(t,\theta)d\theta\Big)\notag\\
& \qquad \qquad \qquad \qquad \qquad\quad \times\Big(\int_0^\eta e^{-\frac{1}{\sqtC{2}}(\tilde\Y_2(t,\eta)-\tilde\Y_2(t,\theta))}\tilde\Henergy_{2,\eta}(t,\theta)d\theta\Big)\Big]d\eta\notag\\
& \leq \bigO(1)\norm{\tilde\Y_1-\tilde\Y_2}^2\notag \\
& \quad +\frac{8}{\A^{12}\sqtC{2}} \int_0^1 \tilde\P_2\tilde\Y_{2,\eta}^2(t,\eta)\notag\\
&\qquad\times\Big(\int_0^\eta (\tilde\Y_1-\tilde\Y_2)^2(t,\theta) e^{-\frac{1}{\sqtC{2}}(\tilde\Y_2(t,\eta)-\tilde\Y_2(t,\theta))} (\tilde\P_2^2\tilde\Y_{2,\eta}+\tilde\U_2^2\tilde\Henergy_{2,\eta})(t,\theta) d\theta\Big) d\eta\notag\\
& \leq \bigO(1)\norm{\tilde\Y_1-\tilde\Y_2}^2
 +\frac{4}{\A^8} \int_0^1 \tilde\Y_{2,\eta}(t,\eta) e^{-\frac{1}{\sqtC{2}}\tilde\Y_2(t,\eta)}\notag\\
&\qquad\qquad\qquad\times\Big(\int_0^\eta (\tilde\Y_1-\tilde\Y_2)^2e^{\frac{1}{\sqtC{2}}\tilde\Y_2(t,\theta)}(\tilde\P_2^2\tilde\Y_{2,\eta}+\tilde\U_2^2\tilde\Henergy_{2,\eta})(t,\theta) d\theta\Big)d\eta\notag\\
& =\bigO(1) \norm{\tilde\Y_1-\tilde\Y_2}^2 + \frac{4\sqtC{2}}{\A^8}\Big( -\int_0^\eta (\tilde\Y_1-\tilde\Y_2)^2(t,\theta) e^{-\frac{1}{\sqtC{2}}(\tilde\Y_2(t,\eta)-\tilde\Y_2(t,\theta))}\nn \\
&\qquad\qquad\qquad\qquad\qquad\qquad\qquad\qquad\qquad\times\big(\tilde\P_2^2\tilde\Y_{2,\eta}+\tilde\U_2^2\tilde\Henergy_{2,\eta}\big)(t,\theta) d\theta\Big\vert_{\eta=0}^1\notag \\
& \quad \qquad \qquad\qquad\qquad+ \int_0^1 (\tilde \Y_1-\tilde\Y_2)^2 (\tilde\P_2^2\tilde\Y_{2,\eta}+\tilde\U_2^2\tilde\Henergy_{2,\eta})(t,\eta) d\eta\Big)\notag\\
& \leq \bigO(1) \norm{\tilde\Y_1-\tilde\Y_2}^2. %\label{eq:barB63}
\end{align*}

Next, we have a look at $\bar B_{65}$  (a similar argument works for $\bar B_{66}$). 
Direct calculations yield
\begin{align*}
\vert \bar B_{65}\vert &\leq \frac{\ma^5}{2\A^6}\int_0^1 \vert \tilde \Y_1-\tilde \Y_2\vert \tilde \Y_{2,\eta}(t,\eta)\\&\qquad\qquad\times\Big(\frac{4}{\ma e}\int_0^\eta  \min_j(e^{-\frac{3}{4\sqtC{2}}(\tilde \Y_j(t,\eta)-\tilde \Y_j(t,\theta))})\vert \tilde \U_2\vert (t,\theta) d\theta \Big)d\eta\vert \sqtC{1}-\sqtC{2}\vert\\
& \leq \frac{2}{\A^2 e}\int_0^1 \vert \tilde \Y_1-\tilde \Y_2\vert \tilde \Y_{2,\eta}(t,\eta)\\
&\qquad\qquad\times\Big(\int_0^\eta e^{-\frac{3}{4\sqtC{2}}(\tilde \Y_2(t,\eta)-\tilde \Y_2(t,\theta))}\vert \tilde \U_2\vert (t,\theta) d\theta\Big) d\eta\vert \sqtC{1}-\sqtC{2}\vert\\
& \leq \frac{4}{\A^2\sqtC{2}^5 e} \int_0^1 \vert \tilde \Y_1-\tilde \Y_2\vert \tilde \Y_{2,\eta} (t,\eta)\\
&\qquad\times \Big(\int_0^\eta e^{-\frac{3}{4\sqtC{2}}(\tilde \Y_2(t,\eta)-\tilde \Y_2(t,\theta))} (\tilde \P_2\tilde \Y_{2,\eta}+\tilde \Henergy_{2,\eta})\vert \tilde \U_2\vert (t,\theta) d\theta\Big) d\eta \vert \sqtC{1}-\sqtC{2}\vert\\
& \leq \vert \sqtC{1}-\sqtC{2}\vert ^2 + \frac{32}{\A^4\sqtC{2}^{10}e^2} \norm{\tilde \Y_1-\tilde \Y_2}^2\int_0^1 \tilde \Y_{2,\eta}^2(t,\eta)\\
&\qquad\qquad\qquad \qquad \quad\times\Big[\Big(\int_0^{\eta} e^{-\frac{1}{2\sqtC{2}}(\tilde \Y_2(t,\eta)-\tilde \Y_2(t,\theta))}\tilde \P_2^2\tilde \Y_{2,\eta}(t,\theta) d\theta\Big) \\
& \qquad \qquad \qquad \qquad \qquad \qquad  \times \Big(\int_0^\eta e^{-\frac{1}{\sqtC{2}}(\tilde \Y_2(t,\eta)-\tilde \Y_2(t,\theta))} \tilde \U_2^2\tilde \Y_{2,\eta}(t,\theta) d\theta\Big) \\
& \qquad \qquad \qquad \qquad  \qquad \quad +\Big(\int_0^\eta e^{-\frac{1}{2\sqtC{2}}(\tilde \Y_2(t,\eta)-\tilde \Y_2(t,\theta))}\tilde \U_2^2\tilde \Henergy_{2,\eta}(t,\theta) d\theta\Big) \\
& \qquad \qquad \qquad \qquad  \qquad \qquad  \times \Big(\int_0^\eta e^{-\frac{1}{\sqtC{2}}(\tilde \Y_2(t,\eta)-\tilde \Y_2(t,\theta))}\tilde \Henergy_{2,\eta}(t,\theta) d\theta \Big) \Big] d\eta\\
&\leq \vert \sqtC{1}-\sqtC{2}\vert^2 + \frac{128}{\A^4\sqtC{2}^9 e^2}\norm{\tilde \Y_1-\tilde \Y_2}^2 \int_0^1 \tilde \P_2\tilde \Y_{2,\eta}^2(t,\eta)\\
& \qquad \qquad \qquad  \times \Big(\int_0^\eta e^{-\frac{1}{2\sqtC{2}}(\tilde \Y_2(t,\eta)-\tilde \Y_2(t,\theta))}(\tilde \P_2^2\tilde \Y_{2,\eta} +\tilde \U_2^2\tilde \Henergy_{2,\eta})(t,\theta)d\theta\Big)d\eta\\
& \leq \vert \sqtC{1}-\sqtC{2}\vert ^2 +\frac{64}{\A^4\sqtC{2}^4 e^2} \norm{\tilde \Y_1-\tilde \Y_2}^2 \\
&\quad\times\int_0^1 \tilde \Y_{2,\eta} (t,\eta) e^{-\frac{1}{2\sqtC{2}}\tilde \Y_2(t,\eta)}\Big(\int_0^\eta e^{\frac{1}{2\sqtC{2}}\tilde \Y_2(t,\theta)}(\tilde \P_2^2\tilde \Y_{2,\eta}+\tilde \U_2^2\tilde \Henergy_{2,\eta})(t,\theta) d\theta\Big)d\eta\\
& \leq \vert \sqtC{1}-\sqtC{2}\vert^2 + \frac{128}{\A^4\sqtC{2}^3 e^2}\norm{\tilde \Y_1-\tilde \Y_2}^2\\
&\qquad\qquad\times \Big[-\int_0^\eta e^{-\frac{1}{2\sqtC{2}}(\tilde \Y_2(t,\eta)-\tilde \Y_2(t,\theta))}(\tilde \P_2^2\tilde \Y_{2,\eta}+\tilde \U_2^2\tilde \Henergy_{2,\eta})(t,\theta)d\theta\Big\vert_{\eta=0}^1\\
& \qquad \qquad \qquad \qquad \qquad \qquad \qquad  + \int_0^1 (\tilde \P_2^2\tilde \Y_{2,\eta}+\tilde \U_2^2\tilde \Henergy_{2,\eta})(t,\eta) d\eta\Big]\\
& \leq \bigO(1)(\norm{\tilde \Y_1-\tilde \Y_2}^2 +\vert  \sqtC{1}-\sqtC{2}\vert^2)
\end{align*}

Next, we have a look at $\bar B_{67}^+$ (a similar argument works for $\bar B_{67}^-$ and $\bar B_{68}^{\pm}$). Direct calculations yield
\begin{align}
\vert \bar B_{67}^+\vert &\leq \frac{\ma^5}{2\A^6} \int_0^1 \vert \tilde\Y_1-\tilde\Y_2\vert \tilde \Y_{2,\eta}(t,\eta)\Big(\int_0^\eta e^{-\frac{1}{\sqtC{2}}(\tilde\Y_2(t,\eta)-\tilde\Y_2(t,\theta))} \vert \tilde\V_2^+-\tilde\V_1^+\vert (t,\theta) d\theta\Big) d\eta\notag\\
& \leq \norm{\tilde\Y_1-\tilde\Y_2}^2\notag\\
& \quad + \frac{\sqtC{2}^5}{4\A^7}\int_0^1 \tilde\Y_{2,\eta}^2(t,\eta)\Big(\int_0^\eta e^{-\frac{1}{\sqtC{2}}(\tilde\Y_2(t,\eta)-\tilde\Y_2(t,\theta))}\vert \tilde\U_2-\tilde\U_1\vert (t,\theta) d\theta\Big)^2d\eta\notag\\
& \leq \norm{\tilde\Y_1-\tilde\Y_2}^2 +\frac{1}{2\A^7}\int_0^1 \tilde\Y_{2,\eta}^2(t,\eta)\notag \\
& \qquad\qquad\times\Big(\int_0^\eta e^{-\frac{1}{\sqtC{2}}(\tilde\Y_2(t,\eta)-\tilde\Y_2(t,\theta))}(\tilde\P_2\tilde\Y_{2,\eta}+\tilde\Henergy_{2,\eta})
\vert \tilde\U_2-\tilde\U_1\vert (t,\theta) d\theta\Big)^2d\eta\notag\\
& \leq \norm{\tilde\Y_1-\tilde\Y_2}^2\notag\\
& \quad +\frac{1}{\A^7} \int_0^1\tilde\Y_{2,\eta}^2(t,\eta)\Big[\Big(\int_0^\eta e^{-\frac{1}{\sqtC{2}}(\tilde\Y_2(t,\eta)-\tilde\Y_2(t,\theta))}\tilde\P_2\tilde\Y_{2,\eta}\vert \tilde\U_2-\tilde\U_1\vert (t,\theta) d\theta\Big)^2\notag\\
& \qquad \qquad \qquad\qquad \qquad +\Big(\int_0^\eta e^{-\frac{1}{\sqtC{2}}(\tilde\Y_2(t,\eta)-\tilde\Y_2(t,\theta))}\tilde\Henergy_{2,\eta}\vert \tilde\U_2-\tilde\U_1\vert(t,\theta)d\theta\Big)^2\Big]d\eta\notag\\
&\leq \norm{\tilde\Y_1-\tilde\Y_2}^2
 +\frac{1}{\A^7}\int_0^1 \tilde\Y_{2,\eta}^2(t,\eta) \Big(\int_0^\eta e^{-\frac3{2\sqtC{2}} (\tilde\Y_2(t,\eta)-\tilde\Y_2(t,\theta))}\tilde\P_2\tilde\Y_{2,\eta}(t,\theta) d\theta\Big)\notag\\
& \qquad \qquad \qquad \qquad \times \Big(\int_0^\eta e^{-\frac1{2\sqtC{2}}(\tilde\Y_2(t,\eta)-\tilde\Y_2(t,\theta))}(\tilde\U_2-\tilde\U_1)^2\tilde\P_2\tilde\Y_{2,\eta}(t,\theta) d\theta\Big)d\eta\notag\\
& \quad +\frac{1}{\A^7}\int_0^1 \tilde \Y_{2,\eta}^2(t,\eta) \Big(\int_0^\eta e^{-\frac{1}{\sqtC{2}}(\tilde\Y_2(t,\eta)-\tilde\Y_2(t,\theta))}\tilde\Henergy_{2,\eta}(t,\theta) d\theta\Big)\notag\\
& \qquad \qquad \qquad \qquad \times\Big(\int_0^\eta e^{-\frac{1}{\sqtC{2}}(\tilde\Y_2(t,\eta)-\tilde\Y_2(t,\theta))}(\tilde\U_1-\tilde\U_2)^2\tilde\Henergy_{2,\eta}(t,\theta)d\theta\Big)d\eta\notag\\
& \leq \norm{\tilde\Y_1-\tilde\Y_2}^2\notag\\
& \quad +\frac{2}{\A^6} \int_0^1 \tilde\Y_{2,\eta}(t,\eta) e^{-\frac1{2\sqtC{2}}\tilde\Y_2(t,\eta)}\Big(\int_0^\eta e^{\frac1{2\sqtC{2}}\tilde\Y_2(t,\theta)} (\tilde\U_1-\tilde\U_2)^2\tilde\P_2\tilde\Y_{2\eta}(t,\theta) d\theta\Big)d\eta\notag\\
& \quad +\frac{4}{\A^6} \int_0^1 \tilde\Y_{2,\eta}(t,\eta)e^{-\frac{1}{\sqtC{2}}\tilde\Y_2(t,\eta)}\Big(\int_0^\eta e^{\frac{1}{\sqtC{2}}\tilde\Y_2(t,\theta)}(\tilde\U_1-\tilde\U_2)^2\tilde \Henergy_{2,\eta}(t,\theta) d\theta\Big) d\eta\notag\\
& \leq \norm{\tilde\Y_1-\tilde\Y_2}^2\notag\\
& \quad + \frac{2\sqtC{2}}{\A^6} \Big(- 2\int_0^\eta e^{-\frac1{2\sqtC{2}} (\tilde\Y_2(t,\eta)-\tilde\Y_2(t,\theta))}(\tilde\U_1-\tilde\U_2)^2\tilde\P_2\tilde\Y_{2,\eta}(t,\theta)d\theta\Big\vert_{\eta=0}^1\notag\\
& \qquad \qquad \qquad \qquad +2 \int_0^1 (\tilde\U_1-\tilde\U_2)^2\tilde\P_2\tilde\Y_{2,\eta}(t,\eta) d\eta\Big)\notag\\
& \quad + \frac{4\sqtC{2}}{\A^6}\Big(-\int_0^\eta e^{-\frac{1}{\sqtC{2}}(\tilde\Y_2(t,\eta)-\tilde\Y_2(t,\theta))}(\tilde\U_1-\tilde\U_2)^2\tilde\Henergy_{2,\eta}(t,\theta) d\theta\Big\vert_{\eta=0}^1\notag\\
& \qquad \qquad \qquad \qquad + \int_0^1 (\tilde\U_1-\tilde\U_2)^2\tilde\Henergy_{2,\eta}(t,\eta) d\eta\Big)\notag\\
& \leq \bigO(1) \Big(\norm{\tilde\Y_1-\tilde\Y_2}^2+\norm{\tilde\U_1-\tilde\U_2}^2\Big). \label{eq:barB65}
\end{align}

Finally, we consider $\bar B_{69}^+$ (a similar argument works for $\bar B_{69}^-$). Here integration by parts will play the main role. Indeed, we have 
\begin{align}
\vert \bar B_{69}^+\vert &=\frac{\ma^5}{2\A^6}\vert \int_0^1 (\tilde \Y_1-\tilde\Y_2)(\tilde \Y_{1,\eta}-\tilde\Y_{2,\eta})(t,\eta)\notag\\
& \qquad \qquad \times\Big(\int_0^\eta \min_j(e^{-\frac{1}{\ma}(\tilde \Y_j(t,\eta)-\tilde\Y_j(t,\theta))})\min_j(\tilde\V_j^+)(t,\theta) d\theta\Big) d\eta\vert\notag \\
& = \frac{\ma^5}{4\A^6} \vert (\tilde\Y_1-\tilde\Y_2)^2(t,\eta)\notag\\
&\qquad\times\Big(\int_0^\eta \min_j(e^{-\frac{1}{\ma}(\tilde \Y_j(t,\eta)-\tilde\Y_j(t,\theta))})\min_j(\tilde\V_j^+)(t,\theta) d\theta\Big)\Big\vert_{\eta=0}^1\notag \\
& \qquad   -\int_0^1 (\tilde\Y_1-\tilde\Y_2)^2(t,\eta)\notag \\
& \qquad \quad  \times \frac{d}{d\eta}\Big(\int_0^\eta \min_j(e^{-\frac{1}{\ma}(\tilde \Y_j(t,\eta)-\tilde\Y_j(t,\theta))})\min_j(\tilde\V_j^+)(t,\theta) d\theta\Big) d\eta\vert \notag\\
& \leq \frac{\ma^5}{4\A^6}\vert \int_0^1 (\tilde\Y_1-\tilde\Y_2)^2(t,\eta) \notag\\
& \qquad  \times \frac{d}{d\eta}\Big(\int_0^\eta \min_j(e^{-\frac{1}{\ma}(\tilde \Y_j(t,\eta)-\tilde\Y_j(t,\theta))})\min_j(\tilde\V_j^+)(t,\theta) d\theta \Big)d\eta\vert \notag\\
& \leq \bigO(1) \norm{\tilde\Y_1-\tilde\Y_2} \label{eq:barB67}
\end{align}
where $\bigO(1)$ denotes some constant depending on $\A$, which remains bounded as $\A\to 0$, provided that we can show that both
\begin{equation*}
\ma^5\int_0^\eta \min_j(e^{-\frac{1}{\ma}(\tilde \Y_j(t,\eta)-\tilde\Y_j(t,\theta))})\min_j(\tilde\V_j^+)(t,\theta) d\theta \leq \bigO(1)\A^2 \min_j(\tilde\P_j)(t,\eta)
\end{equation*}
and  the derivative
\begin{equation}\label{eq:eU_pluss}
\ma\frac{d}{d\eta}\Big(\int_0^\eta \min_j(e^{-\frac{1}{\ma}(\tilde \Y_j(t,\eta)-\tilde\Y_j(t,\theta))})\min_j(\tilde\V_j^+)(t,\theta) d\theta \Big)
\end{equation}
exist and are uniformly bounded.

Direct computations yield
\begin{align*}
\ma^5& \int_0^\eta \min_j(e^{-\frac{1}{\ma}(\tilde \Y_j(t,\eta)-\tilde\Y_j(t,\theta))})\min_j(\tilde\V_j^+)(t,\theta) d\theta\\
& \leq \int_0^\eta e^{-\frac{1}{\sqtC{i}}(\tilde \Y_i(t,\eta)-\tilde\Y_i(t,\theta))}\sqtC{i}^5\vert \tilde\U_i\vert (t,\theta) d\theta\\
& \leq 2\int_0^\eta e^{-\frac{1}{\sqtC{i}}(\tilde \Y_i(t,\eta)-\tilde\Y_i(t,\theta))}(\tilde\P_i\tilde\Y_{i,\eta}+\tilde\Henergy_{i,\eta})\vert \tilde\U_i\vert (t,\theta) d\theta\\
& \leq2 \int_0^\eta e^{-\frac{1}{\sqtC{i}}(\tilde \Y_i(t,\eta)-\tilde\Y_i(t,\theta))} (\frac{1}{\sqtC{i}}\tilde \P_i^2\tilde\Y_{i,\eta}+\sqtC{i}\tilde\U_i^2\tilde\Y_{i,\eta}+\norm{\tilde\U_i(t,\dott)}_{L^\infty}\tilde\Henergy_{i,\eta})(t,\theta) d\theta\\
& \leq\bigO(1)\sqtC{i}^2  \tilde \P_i(t,\eta).
\end{align*}
The result for \eqref{eq:eU_pluss} is contained in Lemma \ref{lemma:6}.

%-------------------------
\begin{lemma}\label{lemlipy}
Let $\tilde\Y_i$ denote two solutions of \eqref{eq:Ylip}. Then we have
\begin{align} \nn
\frac{d}{dt}\norm{\tilde\Y_1-\tilde\Y_2}^2 &\leq \bigO(1)\Big(\norm{\tilde\Y_1-\tilde\Y_2}^2+\norm{\tilde\U_1-\tilde\U_2}^2\\
&\qquad\qquad\qquad+ \norm{\sqP{1}-\sqP{2}}^2+\vert \sqtC{1}-\sqtC{2}\vert ^2\Big),
\end{align}
where $\bigO(1)$ denotes some constant which depends on $\A=\max_j(\sqtC{j})$, which remains bounded as $\A\to 0$.
\end{lemma}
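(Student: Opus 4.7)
The plan is to differentiate $\norm{\tilde\Y_1-\tilde\Y_2}^2$ in time using the evolution equation \eqref{eq:Ylip} and organize the resulting terms exactly as already laid out in the body of Section~\ref{sec:Lip}. Specifically, writing out
\begin{equation*}
\tfrac{d}{dt}\norm{\tilde\Y_1-\tilde\Y_2}^2 = 2\int_0^1 (\tilde\Y_1-\tilde\Y_2)(\tilde\Y_{1,t}-\tilde\Y_{2,t})(t,\eta)\,d\eta,
\end{equation*}
and substituting \eqref{eq:Ylip} produces the three integrals $I_1,I_2,I_3$ of \eqref{eq:I1-3}. The integral $I_1$ comes from the $\tilde\U_i$ source term and is immediately handled by Young's inequality, giving $\abs{I_1}\le\norm{\tilde\Y_1-\tilde\Y_2}^2+\norm{\tilde\U_1-\tilde\U_2}^2$. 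This is the easy part.

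The integral $I_2$ contains the $\tfrac{2}{3A_i^5}\tilde\U_i^3\tilde\Y_{i,\eta}$ transport coefficient, and the integral $I_3$ contains $\tfrac{1}{A_i^6}\tilde\So_i\tilde\Y_{i,\eta}$, which I will split according to \eqref{eq:I3_delt} into the three sub-integrals $I_{31},I_{32},I_{33}$ corresponding to the three pieces of $\tilde\So_i$. For each of these, the strategy has two systematic layers. First, I peel off the energy mismatch by writing
\begin{equation*}
\tfrac{1}{A_2^k}X_2-\tfrac{1}{A_1^k}X_1 = \tfrac{1}{A^k}(X_2-X_1)+\bigl(\tfrac{1}{A_2^k}-\tfrac{1}{A_1^k}\bigr)\bigl(X_1\mathbbm{1}_{A_1\le A_2}+X_2\mathbbm{1}_{A_2<A_1}\bigr),
\end{equation*}
(with $A=\max_jA_j$), which isolates an $\abs{A_1-A_2}$-factor estimated against $\norm{\tilde\Y_1-\tilde\Y_2}$ plus the common-energy difference. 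Second, for the common-energy differences I apply repeatedly the identity of Lemma~\ref{lemma:LUR} together with the $\pm$-splitting \eqref{eq:pm0}--\eqref{eq:pm2} to break quantities like $\tilde\U_2^3\tilde\Y_{2,\eta}-\tilde\U_1^3\tilde\Y_{1,\eta}$ into a sum where each summand is either (i) a difference in one of the basic unknowns $\tilde\Y,\tilde\U,\tilde\P^{1/2}$ multiplied by a factor already known to be uniformly bounded by $\bigO(1)$ thanks to \eqref{eq:all_estimates}--\eqref{eq:all_Pestimates}, or (ii) a difference of derivatives $\tilde\Y_{j,\eta}$ (or $\tilde\U_{j,\eta}$) multiplied by a coefficient involving minima over $j$.

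The genuinely delicate terms are those of type~(ii), since $\tilde\Y_{j,\eta}$ and $\tilde\U_{j,\eta}$ are only $L^1$ in general. Here the key device is integration by parts in $\eta$: since $(\tilde\Y_1-\tilde\Y_2)(\tilde\Y_{1,\eta}-\tilde\Y_{2,\eta})=\tfrac12\partial_\eta(\tilde\Y_1-\tilde\Y_2)^2$, the factor that multiplies the derivative-difference must be Lipschitz in $\eta$ with uniformly bounded Lipschitz constant. This is provided by the cascade of auxiliary lemmas referenced throughout the computations (Lemmas~\ref{lemma:1}--\ref{lemma:7}), together with the fact that the boundary contributions vanish thanks to the decay \eqref{decay:impl} furnished by the moment bounds of Subsection~\ref{moments}. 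Whenever the inner integrals involve kernels such as $\min_j e^{-A^{-1}\abs{\tilde\Y_j(\eta)-\tilde\Y_j(\theta)}}$, I will split the domain of integration according to the sign of $\tilde\Y_1(\theta)-\tilde\Y_2(\theta)-(\tilde\Y_1(\eta)-\tilde\Y_2(\eta))$ (the sets $B(\eta)$, $E$, $\tilde E$, $D$ of \eqref{Def:Bn}, \eqref{Def:EE}, \eqref{eq:tildeE}, \eqref{eq:set_e}) so that the estimate of Lemma~\ref{lemma:enkel}(i) applies uniformly and so that each inner integral reduces, via the exponential-kernel bounds \eqref{eq:all_Pestimates} (in particular the crucial \eqref{eq:109}/\eqref{rel:p2p} estimating $\tilde\P^2$ by $\tilde\P$), to a factor of $\tilde\P_i$ or $\tilde\Y_{i,\eta}$ whose outer integral is finite.

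The main obstacle is purely organizational: once the decomposition \eqref{eq:alleJ}, \eqref{eq:alleK}, and the analogous decompositions of $\bar K_{11},\dots,\bar K_{16}$ are written down, each of the several dozen pieces is controlled by one of a small number of template estimates (Cauchy--Schwarz against a bounded exponential convolution, then integration by parts after multiplication by $\tilde\Y_{i,\eta}e^{-\frac{1}{A_i}\tilde\Y_i}$, followed by \eqref{decay:impl} to kill boundary terms). All constants that appear depend only on $A=\max_j(A_j)$ and remain bounded as $A\to 0$, so that the final Gronwall step in the main theorem produces the stated exponential rate. Collecting the bounds for $I_1$, $I_2$, $I_{31}$, $I_{32}$, $I_{33}$ and summing yields the desired inequality.
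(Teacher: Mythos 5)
Your proposal follows essentially the same route as the paper's own argument: differentiating the $L^2$-distance, splitting into the three integrals of \eqref{eq:I1-3}, peeling off the $\vert A_1-A_2\vert$ contributions, applying Lemma \ref{lemma:LUR} with the $\pm$-splittings, and then integrating by parts against the min-coefficients whose Lipschitz bounds and boundary decay are supplied by Lemmas \ref{lemma:enkel}--\ref{lemma:7}, \eqref{decay:impl}, and the kernel estimates \eqref{eq:all_estimates}--\eqref{eq:all_Pestimates}. The strategy and all key ingredients are the paper's; what you leave as ``template estimates'' is exactly the long case-by-case work carried out in Subsection \ref{subsec:lipy}, so the outline is correct rather than a new approach.
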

%---------------------

\subsection{Lipschitz estimates for $\tilde \U$}\label{subsec:lipu}
From the system of differential equations, we have 
\begin{equation}\label{eq:Ulip}
\tilde \U_{i,t}+(\frac23 \frac{1}{\sqtC{i}^5}\tilde \U_i^3+\frac{1}{\sqtC{i}^6}\tilde \So_i)\tilde \U_{i,\eta}=-\frac{1}{\sqtC{i}^2}\tilde \Q_i,
\end{equation}
where
\begin{align*}
\tilde \Q_i(t,\eta)&= -\frac14 \int_0^{1} \sign(\eta-\theta) e^{-\frac{1}{\sqtC{i}}\vert \tilde \Y_i(t,\eta)-\tilde \Y_i(t,\theta)\vert} \big(2(\tilde \U_i^2-\tilde\P_i)\tilde\Y_{i,\eta}(t,\theta)+\sqtC{i}^5\big) d\theta,\\
\tilde\So_i(t,\eta)& = \int_0^{1} e^{-\frac{1}{\sqtC{i}}\vert \tilde\Y_i(t,\eta)-\tilde\Y_i(t,\theta)\vert } (\frac23 \tilde \U_i^3\tilde \Y_{i,\eta}-\tilde \Q_i\tilde \U_{i,\eta}-2\tilde \P_i\tilde \U_i\tilde \Y_{i,\eta})(t,\theta)d\theta.
\end{align*}

Thus we have 
\begin{align*} \nn
\frac{d}{dt}\int_0^{1} (\tilde \U_1-\tilde \U_2)^2(t,\eta)d\eta& = 2\int_0^{1}(\tilde \U_1- \tilde \U_2)(\tilde \U_{1,t}-\tilde\U_{2,t})(t,\eta)d\eta\\ \nn
& = 2\int_0^{1} (\tilde\U_1-\tilde\U_2)(\frac{1}{\sqtC{2}^2}\tilde\Q_2-\frac{1}{\sqtC{1}^2}\tilde\Q_1)(t,\eta)d\eta\\ \nn
& \quad + \frac43\int_0^{1} (\tilde\U_1- \tilde\U_2)(\frac{1}{\sqtC{2}^5}\tilde\U_2^3 \tilde\U_{2,\eta}-\frac{1}{\sqtC{1}^5}\tilde \U_1^3\tilde \U_{1,\eta})(t,\eta)d\eta\\ \nn 
& \quad + 2\int_0^{1}(\tilde\U_1-\tilde\U_2)(\frac{1}{\sqtC{2}^6}\tilde\So_2 \tilde\U_{2,\eta}-\frac{1}{\sqtC{1}^6}\tilde\So_1\tilde\U_{1,\eta})(t,\eta)d\eta\\
& = I_1+I_2+I_3.
\end{align*}
The strategy is to use integration by parts for the last two integrals $I_2$ and $I_3$, while we want to use straight forward estimates for $I_1$, which will finally yield that 
\begin{align*}
\frac{d}{dt} &\norm{\tilde\U_1-\tilde\U_2}^2\\&\leq \bigO(1)\Big(\norm{\tilde\Y_1-\tilde\Y_2}^2+\norm{\tilde\U_1-\tilde\U_2}^2+\norm{\sqP{1}-\sqP{2}}^2+ \vert \sqtC{1}-\sqtC{2}\vert ^2\Big),
\end{align*}
where $\bigO(1)$ denotes some constant which only depends on $\A=\max_j(\sqtC{j})$ and which remains bounded as $\A\to 0$.

{\it The first integral $I_1$:}
Note that we can split $I_1$ as follows
\begin{align*}
I_1&=\mathbbm{1}_{\sqtC{1}\leq\sqtC{2}}2\frac{\sqtC{1}^2-\sqtC{2}^2}{\sqtC{1}^2\sqtC{2}^2}\int_0^1(\tilde \U_1-\tilde\U_2)\tilde \Q_1(t,\eta) d\eta\\
&\quad + \mathbbm{1}_{\sqtC{2}<\sqtC{1}}2\frac{\sqtC{1}^2-\sqtC{2}^2}{\sqtC{1}^2\sqtC{2}^2}\int_0^1 (\tilde \U_1-\tilde \U_2)\tilde \Q_2(t,\eta) d\eta\\
& \quad + 2\frac{1}{\A^2}\int_0^1 (\tilde \U_1-\tilde \U_2)(\tilde \Q_2-\tilde \Q_1)(t,\eta) d\eta\\
& = I_{11}+I_{12}+I_{13}.
\end{align*}
As far as $I_{11}$ is concerned (a similar argument works for $I_{12}$), we have 
\begin{align*}
\vert I_{11}\vert & \leq \mathbbm{1}_{\sqtC{1}\leq \sqtC{2}}4\frac{\vert \sqtC{1}-\sqtC{2}\vert}{\sqtC{1}\sqtC{2}}\int_0^1 \vert \tilde \U_1-\tilde \U_2\vert \tilde \P_1(t,\eta) d\eta\\
& \leq \A^2\vert \sqtC{1}-\sqtC{2}\vert \int_0^1\vert \tilde\U_1-\tilde \U_2\vert (t,\eta) d\eta\\
& \leq \bigO(1) (\norm{\tilde \U_1-\tilde \U_2}^2+\vert \sqtC{1}-\sqtC{2}\vert^2).
\end{align*}
Note that 
\begin{align*}
\vert I_{13}\vert&=\vert 2\frac{1}{\A^2}\int_{0}^{1} (\tilde\U_1-\tilde\U_2)(\tilde\Q_1-\tilde\Q_2)(t,\eta)d\eta\vert \\
&\leq \int_0^{1} ((\tilde\U_1-\tilde\U_2)^2+\frac{1}{\A^4}(\tilde\Q_1-\tilde\Q_2)^2)(t,\eta)d\eta,
\end{align*}
and hence it suffices to show that 
\begin{align} \notag
\norm{\tilde\Q_1-\tilde\Q_2}   & \leq \bigO(1)\A^2 
\Big(\norm{\tilde\Y_1-\tilde\Y_2}+\norm{\tilde\U_1-\tilde\U_2}\\
&\qquad \qquad\qquad+\norm{\sqP{1}-\sqP{2}}+\vert \sqtC{1}-\sqtC{2}\vert \Big),  \label{LipQ}
\end{align}
which is equivalent to 
\begin{align*}
\norm{\tilde\Q_1- \tilde\Q_2}^2  &  \leq \bigO(1)\A^4
\Big(\norm{\tilde\Y_1- \tilde\Y_2}^2+\norm{\tilde\U_1-\tilde\U_2}^2\\
&\qquad \qquad\qquad +\norm{\sqP{1}-\sqP{2}}^2+\vert \sqtC{1}-\sqtC{2}\vert ^2 \Big).
\end{align*}

To begin with, we observe that we can write
\begin{align*}\nn
(\tilde \Q_1-\tilde\Q_2)(t,\eta)&= (\sqtC{1}\tilde\P_1-\sqtC{2}\tilde\P_2)(t,\eta)\\ \nn
& \quad + (\tilde\D_2-\tilde\D_1)(t,\eta)\\ \nn
& = (\sqtC{1}-\sqtC{2})\tilde \P_1\\
& \quad + \sqtC{2}\big(\sqP{1}+\sqP{2}\big)\big(\sqP{1}-\sqP{2}\big)(t,\eta)\\ \nn
& \quad + (\tilde\D_2-\tilde\D_1)(t,\eta)\\ \nn
& = K_1(t,\eta)+K_2(t,\eta)+K_3(t,\eta).
\end{align*}

As far as $K_1(t,\eta)$ is concerned, we have 
\begin{equation*}
\vert \sqtC{1}-\sqtC{2}\vert \norm{\tilde \P_1}_{L^\infty}\leq \frac{\A^4}{4}\vert \sqtC{1}-\sqtC{2}\vert.
\end{equation*}

As far as $K_2(t,\eta)$ is concerned, we have 
\begin{equation*}
\norm{\tilde\P_1-\tilde\P_2}\leq  \A^2\norm{\sqP{1}-\sqP{2}},
\end{equation*}
since $\norm{\sqP{i}}_{L^\infty}$ can be bounded by a constant, which only depends on $\A$.

As far as $K_3(t,\eta)$ is concerned, Lemma~\ref{lemma:D} implies immediately that 
\begin{align*}
\norm{\tilde\D_1-\tilde\D_2}& \leq \bigO(1)\A^2 
\Big(\norm{\tilde\Y_1-\tilde\Y_2}+\norm{\tilde\U_1-\tilde\U_2}\\
&\qquad \qquad\qquad+\norm{\sqP{1}-\sqP{2}}+\vert \sqtC{1}-\sqtC{2}\vert \Big).
\end{align*}
This finishes the proof of \eqref{LipQ}.

\vspace{0.3cm}
{\it The second integral $I_2$:}
Note that we can write 
\begin{align*}
\frac34 I_2 &=\int_0^{1} (\tilde\U_1-\tilde\U_2) (\frac1{\sqtC{2}^5}\tilde\U_2^3\tilde\U_{2,\eta}-\frac{1}{\sqtC{1}^5}\tilde\U_1^3\tilde\U_{1,\eta})(t,\eta)d\eta\\
& = \frac1{\A^5}\int_0^1(\tilde\U_1-\tilde\U_2) (\tilde\U_2^3\tilde\U_{2,\eta}-\tilde\U_1^3\tilde\U_{1,\eta})(t,\eta) d\eta\\ 
& \quad +  \frac{\sqtC{1}^5-\sqtC{2}^5}{\sqtC{1}^5\sqtC{2}^5} \int_0^1 (\tilde\U_1-\tilde\U_2) (\tilde\U_1^3\tilde\U_{1,\eta}\mathbbm{1}_{\sqtC{1}\leq \sqtC{2}}+\tilde\U_2^3\tilde\U_{2,\eta}\mathbbm{1}_{\sqtC{2}< \sqtC{1}})(t,\eta) d\eta\\
& = \frac1{\A^5} \int_0^1 (\tilde\U_1-\tilde\U_2)(\tilde\U_2-\tilde\U_1)\tilde\U_2^2\tilde\U_{2,\eta}(t,\eta) d\eta\\
& \quad + \frac1{\A^5} \int_0^1 (\tilde\U_1-\tilde\U_2)(\tilde\U_2^2\tilde\U_{2,\eta}-\tilde\U_1^2\tilde\U_{1,\eta})\tilde\U_1(t,\eta)d\eta\\
& \quad + \frac{\sqtC{1}^5-\sqtC{2}^5}{\sqtC{1}^5\sqtC{2}^5} \int_0^1 (\tilde\U_1-\tilde\U_2) (\tilde\U_1^3\tilde\U_{1,\eta}\mathbbm{1}_{\sqtC{1}\leq \sqtC{2}}+\tilde\U_2^3\tilde\U_{2,\eta}\mathbbm{1}_{\sqtC{2}< \sqtC{1}})(t,\eta) d\eta\\
& =  \frac{1}{\A^5}\int_0^{1} (\tilde\U_1-\tilde\U_2)(\tilde\U_2-\tilde\U_1)\tilde\U_2^2 \tilde\U_{2,\eta}(t,\eta) d\eta\\
& \quad +\frac1{\A^5}\int_0^{1} (\tilde\U_1-\tilde\U_2)(\tilde\U_2^2-\tilde\U_1^2)\tilde\U_{2,\eta} \tilde\U_1(t,\eta) \mathbbm{1}_{\tilde\U_1^2\leq \tilde\U_2^2}(t,\eta)d\eta\\
& \quad +\frac{1}{\A^5} \int_0^{1} (\tilde\U_1-\tilde\U_2)(\tilde\U_2^2-\tilde\U_1^2)\tilde\U_1\tilde\U_{1,\eta}(t,\eta) \mathbbm{1}_{\tilde \U_2^2<\tilde\U_1^2}(t,\eta)d\eta\\
& \quad +\frac{1}{\A^5} \int_0^{1} (\tilde\U_1-\tilde\U_2)(\tilde\U_{2,\eta}-\tilde\U_{1,\eta})\tilde\U_1 \min_j(\tilde\U_j^2)(t,\eta) d\eta\\
& \quad + \frac{\sqtC{1}^5-\sqtC{2}^5}{\sqtC{1}^5\sqtC{2}^5} \int_0^1 (\tilde\U_1-\tilde\U_2) (\tilde\U_1^3\tilde\U_{1,\eta}\mathbbm{1}_{\sqtC{1}\leq \sqtC{2}}+\tilde\U_2^3\tilde\U_{2,\eta}\mathbbm{1}_{\sqtC{2}< \sqtC{1}})(t,\eta) d\eta\\
& =  \frac{1}{\A^5}\int_0^{1} (\tilde\U_1-\tilde\U_2)( \tilde\U_2-\tilde\U_1) \tilde\U_2^2\tilde\U_{2,\eta}(t,\eta) d\eta\\
& \quad +\frac{1}{\A^5}\int_0^{1} (\tilde\U_1-\tilde \U_2)(\tilde\U_2^2-\tilde\U_1^2)\tilde\U_{2,\eta} \tilde\U_1(t,\eta) \mathbbm{1}_{ \tilde\U_1^2\leq  \tilde\U_2^2}(t,\eta)d\eta\\
& \quad +\frac1{\A^5} \int_0^{1} (\tilde\U_1-\tilde\U_2)(\tilde\U_2^2-\tilde\U_1^2)\tilde\U_1\tilde\U_{1,\eta}(t,\eta) \mathbbm{1}_{\tilde\U_2^2<\tilde\U_1^2}(t,\eta)d\eta\\
& \quad + \frac1{2\A^5} \int_0^{1} (\tilde\U_1-\tilde \U_2)^2 \frac{d}{d\eta} (\tilde\U_1\min_j(\tilde\U_j^2))(t,\eta) d\eta\\
& \quad +  \frac{\sqtC{1}^5-\sqtC{2}^5}{\sqtC{1}^5\sqtC{2}^5} \int_0^1 (\tilde\U_1-\tilde\U_2) (\tilde\U_1^3\tilde\U_{1,\eta}\mathbbm{1}_{\sqtC{1}\leq \sqtC{2}}+\tilde\U_2^3\tilde\U_{2,\eta}\mathbbm{1}_{\sqtC{2}< \sqtC{1}})(t,\eta) d\eta,
\end{align*}
where we used integration by parts in the last step together with $\tilde\U_i(t,\eta)\to 0$ as $\eta\to 0$, $1$. 
As far as the derivative in the last integral is concerned, observe that 
\begin{equation} 
\vert \tilde\U_{1,\eta}\min_j(\tilde\U_j^2)(t,\eta)\vert \leq \vert\tilde \U_1^2\tilde\U_{1,\eta}(t,\eta)\vert \leq \frac{\A^4}{2}\norm{\tilde\U_1}_{L^\infty}\leq \frac{\A^6}{2\sqrt{2}},
\end{equation}
since $2\vert\tilde \U_i\tilde\U_{i,\eta}(t,\eta)\vert \leq \sqtC{i}^4\leq \A^4$ for all $t$ and $\eta$. Furthermore, we established before that the function $\theta\mapsto \min_j(\tilde\U_j^2(t,\eta))$ is Lipschitz continuous with Lipschitz constant at most $\A^4$, see \eqref{eq:U2lip}. Thus
\begin{equation} \label{eq:470}
\vert \frac{d}{d\eta} \tilde\U_1\min_j(\tilde\U_j^2)(t,\eta)\vert \leq \frac32 \A^4\norm{\tilde\U_1}_{L^\infty} \leq \frac{3}{2\sqrt{2}}\A^6,
\end{equation}
and 
\begin{equation*}
\vert \frac{1}{\sqtC{1}^5\sqtC{2}^5} \tilde\U_i^3\tilde\U_{i,n} \mathbbm{1}_{\sqtC{i}=\ma}\vert \leq \frac{1}{\sqtC{1}^5\sqtC{2}^5} \frac{1}{4}\sqtC{i}^8 \mathbbm{1}_{\sqtC{i}=\ma} \leq \frac{1}{\A^2}.
\end{equation*}
Finally, we get that 
\begin{align*}
\frac34\vert I_2\vert& =\vert \int_0^{1} (\tilde\U_1-\tilde\U_2) (\frac{1}{\sqtC{2}^5}\tilde\U_2^3\tilde\U_{2,\eta}-\frac{1}{\sqtC{1}^5}\tilde\U_1^3\tilde\U_{1,\eta})(t,\eta)d\eta\vert \\
 & \leq \bigO(1)\norm{\tilde\U_1-\tilde\U_2}^2+ \frac{2}{\A^2} \vert \sqtC{1}^5-\sqtC{2}^5\vert \int_0^1\vert \tilde\U_1-\tilde\U_2\vert (t,\eta) d\eta\\
 & \leq \bigO(1)\big( \norm{\tilde\U_1-\tilde\U_2}^2+\vert \sqtC{1}-\sqtC{2}\vert ^2\big).
\end{align*}

\vspace{0.3cm}
{\it The third integral $I_3$:} We will split $I_3$ in several terms that we treat separately, and combine them in the end.

Recall that we introduced the functions $\tilde\U_i^-=\min(0, \tilde\U_i)$ and $\tilde\U_i^+=\max(0,\tilde\U_i)$ with properties \eqref{eq:pm1} and \eqref{eq:pm2}.  
We write
\begin{equation*}
I_3=\frac43I_{31}-4I_{32}-2I_{33},
\end{equation*}
where 
\begin{subequations}  \label{eq:I_for_U}
\begin{align}
I_{31}&= \int_0^{1} (\tilde\U_1-\tilde\U_2)(t, \eta)\Big(\frac{1}{\sqtC{2}^6}\tilde\U_{2,\eta}(t, \eta)\int_0^{1}
 e^{-\frac{1}{\sqtC{2}}\vert \tilde\Y_2(t,\eta)-\tilde\Y_2(t,\theta)\vert }\tilde \U_2^3\tilde\Y_{2,\eta}(t,\theta)d\theta \notag\\
&\qquad\qquad\qquad -\frac{1}{\sqtC{1}^6}\tilde\U_{1,\eta}(t, \eta)\int_0^{1} e^{-\frac{1}{\sqtC{1}}\vert \tilde\Y_1(t,\eta)-\tilde\Y_1(t,\theta)\vert } \tilde\U_1^3\tilde\Y_{1,\eta}(t,\theta)d\theta\Big)d\eta, \label{eq:I31_for_U}
\\[2mm]
I_{32}&=\int_0^{1} (\tilde\U_1- \tilde\U_2)(t, \eta)\Big(\frac{1}{\sqtC{2}^6}\tilde\U_{2,\eta}(t, \eta)\int_0^{1}
 e^{-\frac{1}{\sqtC{2}}\vert \tilde\Y_2(t,\eta)-\tilde\Y_2(t,\theta)\vert }\tilde\P_2\tilde\U_2\tilde\Y_{2,\eta}(t,\theta)d\theta\notag\\
&\qquad\qquad\qquad- \frac{1}{\sqtC{1}^6}\tilde\U_{1,\eta}(t, \eta)\int_0^{1} e^{-\frac{1}{\sqtC{1}}\vert \tilde\Y_1(t,\eta)-\tilde\Y_1(t,\theta)\vert } 
\tilde\P_1\tilde\U_1\tilde\Y_{1,\eta}(t,\theta)d\theta\Big)d\eta,\label{eq:I32_for_U} \\[2mm]
I_{33}&=\int_0^{1} (\tilde\U_1-\tilde\U_2)(t, \eta)\Big(\frac{1}{\sqtC{2}^6}\tilde\U_{2,\eta}(t, \eta)\int_0^{1}
 e^{-\frac{1}{\sqtC{2}}\vert \tilde\Y_2(t,\eta)-\tilde\Y_2(t,\theta)\vert }\tilde\Q_2\tilde\U_{2,\eta}(t,\theta)d\theta\notag\\
&\qquad\qquad\qquad- \frac{1}{\sqtC{1}^6}\tilde\U_{1,\eta}(t, \eta)\int_0^{1} e^{-\frac{1}{\sqtC{1}}\vert \tilde\Y_1(t,\eta)-\tilde\Y_1(t,\theta)\vert } 
\tilde\Q_1\tilde\U_{1,\eta}(t,\theta)d\theta\Big)d\eta. \label{eq:I33_for_U}
\end{align}
\end{subequations}
Thus we have 
\begin{align*}
I_{31}&= \int_0^{1} (\tilde\U_1-\tilde\U_2)(t, \eta)\\
& \qquad  \times\Big(\frac{1}{\sqtC{2}^6}\tilde\U_{2,\eta}(t,\eta) \int_0^{1} e^{-\frac{1}{\sqtC{2}}\vert \tilde\Y_2(t,\eta)-\tilde\Y_2(t,\theta)\vert }\tilde \U_2^3\tilde\Y_{2,\eta}(t,\theta) d\theta\\
& \qquad \qquad\qquad \qquad -\frac{1}{\sqtC{1}^6}\tilde\U_{1,\eta}(t,\eta)\int_0^{1} e^{-\frac{1}{\sqtC{1}}\vert \tilde\Y_1(t,\eta)-\tilde\Y_1(t,\theta)\vert }\tilde\U_1^3\tilde\Y_{1,\eta}(t,\theta) d\theta\Big) d\eta \\
& = \int_0^{1}(\tilde\U_1-\tilde\U_2)(t,\eta)\\
& \qquad \times \Big(\frac{1}{\sqtC{2}^6}\tilde\U_{2,\eta}(t,\eta) \int_0^{1} e^{-\frac{1}{\sqtC{2}}\vert \tilde\Y_2(t,\eta)-\tilde\Y_2(t,\theta)\vert } (\tilde\V_2^-)^3\tilde\Y_{2,\eta}(t,\theta) d\theta\\
& \qquad \qquad\qquad \qquad -\frac{1}{\sqtC{1}^6}\tilde\U_{1,\eta}(t,\eta)\int_0^{1} e^{-\frac{1}{\sqtC{1}}\vert \tilde\Y_1(t,\eta)-\tilde\Y_1(t,\theta)\vert }(\tilde\V_1^-)^3\tilde\Y_{1,\eta}(t,\theta) d\theta\Big) d\eta\\
& \quad + \int_0^{1}(\tilde\U_1-\tilde\U_2)(t,\eta)\\
& \qquad  \times \Big(\frac{1}{\sqtC{2}^6}\tilde\U_{2,\eta}(t,\eta) \int_0^{1} e^{-\frac{1}{\sqtC{2}}\vert \tilde\Y_2(t,\eta)-\tilde\Y_2(t,\theta)\vert } (\tilde\V_2^+)^3\tilde\Y_{2,\eta}(t,\theta) d\theta\\
& \qquad \qquad\qquad \qquad-\frac{1}{\sqtC{1}^6} \tilde\U_{1,\eta}(t,\eta)\int_0^{1} e^{-\frac{1}{\sqtC{1}}\vert \tilde\Y_1(t,\eta)-\tilde\Y_1(t,\theta)\vert }(\tilde\V_1^+)^3\tilde\Y_{1,\eta}(t,\theta) d\theta\Big) d\eta.
\end{align*}
Since both inner integrals have the same structure, it suffices to consider the second integral.  Recall the rewrite \eqref{eq:triks}.
Thus we need to estimate the following term:
\begin{align}\nn
& \frac{1}{\sqtC{2}^6}  \tilde\U_{2,\eta}(t,\eta) \int_0^{\eta} e^{-\frac{1}{\sqtC{2}}( \tilde\Y_2(t,\eta)-\tilde\Y_2(t,\theta))} (\tilde\V_2^+)^3\tilde\Y_{2,\eta}(t,\theta) d\theta\\ \nn
&\qquad\qquad\qquad\qquad\qquad-\frac{1}{\sqtC{1}^6}\tilde\U_{1,\eta}(t,\eta)\int_0^{\eta} e^{-\frac{1}{\sqtC{1}}(\tilde\Y_1(t,\eta)-\tilde\Y_1(t,\theta))}(\tilde\V_1^+)^3\tilde \Y_{1,\eta}(t,\theta) d\theta\\ \nn
 & = \frac{1}{\A^6}\Big(\tilde\U_{2,\eta}(t,\eta) \int_0^{\eta} e^{-\frac{1}{\sqtC{2}}(\tilde \Y_2(t,\eta)-\tilde\Y_2(t,\theta)) } (\tilde\V_2^+)^3 \tilde\Y_{2,\eta}(t,\theta) d\theta\\ \nn
 &\qquad\qquad\qquad\qquad\qquad- \tilde\U_{1,\eta}(t,\eta)\int_0^{\eta} e^{-\frac{1}{\sqtC{1}}(\tilde \Y_1(t,\eta)-\tilde\Y_1(t,\theta))}(\tilde\V_1^+)^3\tilde\Y_{1,\eta}(t,\theta) d\theta\Big)\\  \nn
 & \quad + \mathbbm{1}_{\sqtC{1}\leq \sqtC{2}} (\frac{1}{\sqtC{2}^6}-\frac{1}{\sqtC{1}^6}) \left(\tilde\U_{1,\eta}(t,\eta)\int_0^\eta e^{-\frac{1}{\sqtC{1}}(\tilde\Y_1(t,\eta)-\tilde\Y_1(t,\theta))} (\tilde\V_1^+)^3\tilde\Y_{1,\eta}(t,\theta)d\theta\right)\\  \nn
 & \quad + \mathbbm{1}_{\sqtC{2}< \sqtC{1}} (\frac{1}{\sqtC{2}^6}-\frac1{\sqtC{1}^6})\left( \tilde\U_{2,\eta}(t,\eta) \int_0^\eta e^{-\frac{1}{\sqtC{2}}(\tilde\Y_2(t,\eta)-\tilde\Y_2(t,\theta))} (\tilde\V_2^+)^3 \tilde\Y_{2,\eta} (t,\theta) d\theta\right)\\  \nn
 &= \frac{1}{\A^6} \tilde\U_{2,\eta}(t,\eta)\int_0^\eta e^{-\frac{1}{\sqtC{2}}(\tilde \Y_2(t,\eta)-\tilde\Y_2(t,\theta))}((\tilde\V_2^+)^3-(\tilde\V_1^+)^3)\tilde\Y_{2,\eta}(t,\theta)\mathbbm{1}_{\tilde\V_1^+\leq \tilde\V_2^+}(t,\theta)d\theta\\ \nn
 & \quad +\frac1{\A^6}\tilde\U_{1,\eta}(t,\eta)\int_0^\eta e^{-\frac{1}{\sqtC{1}}(\tilde \Y_1(t,\eta)-\tilde \Y_1(t,\theta))}((\tilde \V_2^+)^3-(\tilde\V_1^+)^3)\tilde \Y_{1,\eta}(t,\theta) \mathbbm{1}_{\tilde\V_2^+<\tilde\V_1^+}(t,\theta) d\theta\\ \nn
 & \quad +\frac{1}{\A^6} \tilde \U_{2,\eta}(t,\eta)\int_0^\eta e^{-\frac{1}{\sqtC{2}}(\tilde\Y_2(t,\eta) -\tilde \Y_2(t,\theta))}\min_j(\tilde\V_j^+)^3 \tilde \Y_{2,\eta} (t,\theta) d\theta\\ \nn
 & \quad - \frac{1}{\A^6} \tilde \U_{1,\eta} (t,\eta) \int_0^\eta e^{-\frac{1}{\sqtC{1}}(\tilde \Y_1(t,\eta)-\tilde \Y_1(t,\theta))} \min_j(\tilde\V_j^+)^3\tilde\Y_{1,\eta}(t,\theta) d\theta\\ \nn
  & \quad + \mathbbm{1}_{\sqtC{1}\leq \sqtC{2}} (\frac{1}{\sqtC{2}^6}-\frac{1}{\sqtC{1}^6}) \left(\tilde\U_{1,\eta}(t,\eta)\int_0^\eta e^{-\frac{1}{\sqtC{1}}(\tilde\Y_1(t,\eta)-\tilde\Y_1(t,\theta))} (\tilde\V_1^+)^3\tilde\Y_{1,\eta}(t,\theta)d\theta\right)\\ \nn
 & \quad + \mathbbm{1}_{\sqtC{2}< \sqtC{1}} (\frac{1}{\sqtC{2}^6}-\frac1{\sqtC{1}^6})\left( \tilde\U_{2,\eta}(t,\eta) \int_0^\eta e^{-\frac{1}{\sqtC{2}}(\tilde\Y_2(t,\eta)-\tilde\Y_2(t,\theta))} (\tilde\V_2^+)^3 \tilde\Y_{2,\eta} (t,\theta) d\theta\right)\\ \nn
 & = \frac{1}{\A^6} \tilde\U_{2,\eta}(t,\eta)\int_0^\eta e^{-\frac{1}{\sqtC{2}}(\tilde \Y_2(t,\eta)-\tilde\Y_2(t,\theta))}((\tilde\V_2^+)^3-(\tilde\V_1^+)^3)\tilde\Y_{2,\eta}(t,\theta)\mathbbm{1}_{\tilde\V_1^+\leq \tilde\V_2^+}(t,\theta)d\theta\\ \nn
 & \quad +\frac1{\A^6}\tilde\U_{1,\eta}(t,\eta)\int_0^\eta e^{-\frac{1}{\sqtC{1}}(\tilde \Y_1(t,\eta)-\tilde \Y_1(t,\theta))}((\tilde \V_2^+)^3-(\tilde\V_1^+)^3)\tilde \Y_{1,\eta}(t,\theta) \mathbbm{1}_{\tilde\V_2^+<\tilde\V_1^+}(t,\theta) d\theta\\ \nn
 & \quad  +\frac{1}{\A^6}\mathbbm{1}_{\sqtC{1}\leq \sqtC{2}} \tilde \U_{2,\eta}(t,\eta)\int_0^\eta (e^{-\frac{1}{\sqtC{2}}(\tilde \Y_2(t,\eta)-\tilde \Y_2(t,\theta))}-e^{-\frac{1}{\sqtC{1}}(\tilde \Y_2(t,\eta)-\tilde \Y_2(t,\theta))})\nn \\
&\qquad\qquad\qquad\qquad\qquad\qquad\qquad\qquad\qquad\qquad\times\min_j(\tilde \V_j^+)^3\tilde \Y_{2,\eta}(t,\theta) d\theta \nn \\ \nn
 & \quad +\frac{1}{\A^6} \mathbbm{1}_{\sqtC{2}<\sqtC{1}}\tilde \U_{1,\eta}(t,\eta)\int_0^\eta (e^{-\frac{1}{\sqtC{2}}(\tilde \Y_1(t,\eta)-\tilde \Y_1(t,\theta))}-e^{-\frac{1}{\sqtC{1}}(\tilde \Y_1(t,\eta)-\tilde \Y_1(t,\theta))})\nn \\
&\qquad\qquad\qquad\qquad\qquad\qquad\qquad\qquad\qquad\qquad\times\min_j(\tilde \V_j^+)\tilde \Y_{1,\eta}(t,\theta)d\theta \nn \\ \nn
 & \quad +\frac{1}{\A^6} \tilde \U_{2,\eta}(t,\eta) \int_0^\eta (e^{-\frac{1}{\ma}(\tilde \Y_2(t,\eta)-\tilde \Y_2(t,\theta))}-e^{-\frac{1}{\ma}(\tilde \Y_1(t,\eta)-\tilde \Y_1(t,\theta))})\\ \nn
 &\qquad\qquad\qquad\qquad\qquad\qquad\times\min_j(\tilde\V_j^+)^3 \tilde\Y_{2,\eta} (t,\theta)\mathbbm{1}_{B(\eta)}(t,\theta) d\theta\\ \nn
 & \quad -\frac{1}{\A^6} \tilde \U_{1,\eta} (t,\eta)\int_0^\eta (e^{-\frac{1}{\ma}(\tilde \Y_1(t,\eta)-\tilde \Y_1(t,\theta))}-e^{-\frac{1}{\ma}(\tilde \Y_2(t,\eta)-\tilde \Y_2(t,\theta))}\\ \nn
 &\qquad\qquad\qquad\qquad\qquad\qquad\times\min_j(\tilde\V_j^+)^3\tilde \Y_{1,\eta} (t,\theta) \mathbbm{1}_{B^c(\eta)}(t,\theta) d\theta)\\ \nn
 &\quad + \frac{1}{\A^6} \tilde \U_{2,\eta} (t,\eta)\int_0^\eta \min_j(e^{-\frac{1}{\ma}(\tilde \Y_j(t,\eta)-\tilde\Y_j(t,\theta))})\min_j(\tilde\V_j^+)^3\tilde \Y_{2,\eta} (t,\theta) d\theta\\ \nn
 & \quad -\frac{1}{\A^6} \tilde\U_{1,\eta} (t,\eta)\int_0^\eta \min_j(e^{-\frac{1}{\ma}(\tilde \Y_j(t,\eta)-\tilde\Y_j(t,\theta))})\min_j(\tilde \V_j^+)^3\tilde \Y_{1,\eta} (t,\theta) d\theta\\ \nn
 & \quad + \mathbbm{1}_{\sqtC{1}\leq \sqtC{2}} (\frac{1}{\sqtC{2}^6}-\frac{1}{\sqtC{1}^6}) \left(\tilde\U_{1,\eta}(t,\eta)\int_0^\eta e^{-\frac{1}{\sqtC{1}}(\tilde\Y_1(t,\eta)-\tilde\Y_1(t,\theta))} (\tilde\V_1^+)^3\tilde\Y_{1,\eta}(t,\theta)d\theta\right)\\ \nn
 & \quad + \mathbbm{1}_{\sqtC{2}< \sqtC{1}} (\frac{1}{\sqtC{2}^6}-\frac1{\sqtC{1}^6})\left( \tilde\U_{2,\eta}(t,\eta) \int_0^\eta e^{-\frac{1}{\sqtC{2}}(\tilde\Y_2(t,\eta)-\tilde\Y_2(t,\theta))} (\tilde\V_2^+)^3 \hat\Y_{2,\eta} (t,\theta) d\theta\right)\\ \label{eq:splitU}
 & = (J_1+J_2+J_3+J_4+J_5+J_6+ J_7+J_8+J_9+J_{10})(t,\eta),
 \end{align}
 where $B(\eta)$ is given by \eqref{Def:Bn}.

As far as the integral which contains $J_1$ (a similar argument works for $J_2$) is concerned we have 
\begin{align} \nn
& \vert \int_0^1J_1(\tilde \U_1-\tilde\U_2)(t,\eta)d\eta\vert \\ \nn
& \leq \norm{\tilde \U_1-\tilde\U_2}^2 + \int_0^1 J_1^2(t,\eta) d\eta\\ \nn
& \leq \norm{\tilde \U_1-\tilde \U_2} _{L^2}^2  + \frac1{A^{12}} \int_0^1 \tilde \U_{2,\eta}^2(t,\eta) \nn \\
&  \qquad \quad\times \left(\int_0^\eta e^{-\frac{1}{\sqtC{2}}(\tilde \Y_2(t,\eta)-\tilde \Y_2(t,\theta))}((\tilde \V_2^+)^3-(\tilde \V_1^+)^3)\tilde \Y_{2,\eta} (t,\theta) \mathbbm{1}_{\tilde\V_1^+\leq \tilde\V_2^+} (t,\theta) d\theta\right)^2 d\eta \nn \\ \nn
& \leq \norm{\tilde \U_1-\tilde \U_2}^2 \\ \nn
& \quad +\frac1{\A^9} \int_0^1 \tilde \Y_{2,\eta} (t,\eta)\left(\int_0^\eta e^{-\frac{1}{\sqtC{2}}(\tilde \Y_2(t,\eta)-\tilde \Y_2(t,\theta))}3(\tilde \V_2^+)^2 \vert \tilde \V_2^+-\tilde \V_1^+\vert \tilde \Y_{2,\eta} (t,\theta) d\theta\right)^2 d\eta\\ \nn
& \leq \norm{\tilde \U_1-\tilde \U_2}^2 \\ \nn
& \quad + \frac{9}{\A^4}\int_0^1 \tilde \Y_{2,\eta} (t,\eta) \Big(\int_0^\eta e^{-\frac{1}{\sqtC{2}}(\tilde \Y_2(t,\eta)-\tilde \Y_2(t,\theta))} \tilde\U_2^2\tilde \Y_{2,\eta} (t,\theta)d\theta\Big) d\eta \norm{\tilde\U_1-\tilde \U_2}^2\\ \nn
& \leq \big(1+ \frac{36}{\A^3}\int_0^1 \tilde\P_2\tilde \Y_{2,\eta} (t,\eta) d\eta\big)\norm{\tilde \U_1-\tilde \U_2}^2\\ 
& \leq \bigO(1)\norm{\tilde \U_1-\tilde \U_2}^2, \label{eq:J1J2}
\end{align}
where we used \eqref{eq:all_estimatesE}, \eqref{eq:all_estimatesG}, and \eqref{eq:all_estimatesH}. 
 
 As far as the third and the fourth term $J_3$ and $J_4$ are concerned, they again have the same structure, and hence we only consider the integral corresponding to $J_3$. 
\begin{align*}
& \vert \int_0^1 J_3(\tilde \U_1-\tilde \U_2)(t,\eta) d\eta\vert \\
& \quad = \mathbbm{1}_{\sqtC{1}\leq \sqtC{2}}\frac{1}{\A^6} \vert \int_0^1 (\tilde \U_1-\tilde \U_2)\tilde \U_{2,\eta}(t,\eta)\\ &\qquad\times\int_0^\eta (e^{-\frac{1}{\sqtC{2}}(\tilde \Y_2(t,\eta)-\tilde \Y_2(t,\theta))}-e^{-\frac{1}{\sqtC{1}}(\tilde \Y_2(t,\eta)-\tilde \Y_2(t,\theta))})\min_j(\tilde \V_j^+)^3\tilde \Y_{2,\eta}(t,\theta) d\theta d\eta\vert\\
& \quad \leq \frac{2\sqrt{2}\ma}{\A^6 e}  \int_0^1 (\tilde \U_1-\tilde \U_2)\vert \tilde \U_{2,\eta}\vert(t,\eta)\\
&\qquad\qquad\qquad\times \int_0^\eta e^{-\frac{3}{4\sqtC{2}}(\tilde \Y_2(t,\eta)-\tilde \Y_2(t,\theta))}\vert  \tilde \U_2^2\tilde \Y_{2,\eta}(t,\theta) d\theta d\eta\vert \sqtC{1}-\sqtC{2}\vert\\
& \quad \leq \norm{\tilde \U_1-\tilde \U_2}^2  + \frac{8}{\A^{10}e^2} \int_0^1 \tilde \U_{2,\eta}^2(t,\eta)\Big(\int_0^\eta e^{-\frac{1}{\sqtC{2}}(\tilde \Y_2(t,\eta)-\tilde \Y_2(t,\theta))}\tilde \U_2^2\tilde \Y_{2,\eta}(t,\theta) d\theta\Big)\\
& \qquad \qquad \qquad \qquad \qquad\times \Big(\int_0^\eta e^{-\frac{1}{2\sqtC{2}}(\tilde \Y_2(t,\eta)-\tilde \Y_2(t,\theta))} \tilde \U_2^2\tilde \Y_{2,\eta}(t,\theta) d\theta\Big) d\eta\vert \sqtC{1}-\sqtC{2}\vert^2\\
& \quad \leq \norm{\tilde \U_1-\tilde \U_2}^2 
+ \frac{32}{\A e^2} \int_0^1 \tilde \P_2\tilde \Y_{2,\eta}(t,\eta) d\eta \vert \sqtC{1}-\sqtC{2}\vert^2\\
& \leq \bigO(1)(\norm{\tilde \U_1-\tilde \U_2}^2+\vert \sqtC{1}-\sqtC{2}\vert^2).
\end{align*} 
 
 As far as the third and the fourth term $J_5$ and $J_6$ are concerned, they again have the same structure, and hence we only consider the integral corresponding to $J_5$. Thus, using \eqref{Diff:Exp},
\begin{align} \nn
&\vert\int_0^{1} J_5(\tilde\U_1-\tilde\U_2)(t,\eta)d\eta  \vert\\ \nn
&\quad=\frac{1}{\A^6}\vert \int_0^{1}  (\tilde\U_1-\tilde\U_2) \tilde\U_{2,\eta}(t,\eta) \Big(\int_0^\eta (e^{-\frac{1}{\ma}(\tilde\Y_2(t,\eta)-\tilde\Y_2(t,\theta))}-e^{-\frac{1}{\ma}(\tilde\Y_1(t,\eta)-\tilde\Y_1(t,\theta))}) \\ \nn
&\qquad\qquad\qquad\qquad\qquad\qquad\qquad\qquad\times\min_j(\tilde\V_j^+)^3\tilde\Y_{2,\eta}(t,\theta)\mathbbm{1}_{B(\eta)}(t,\theta) d\theta\Big)d\eta\vert \\ \nn
&\quad \leq \frac{1}{\ma\A^6}\int_0^{1} \vert \tilde\U_1-\tilde \U_2 \vert \vert\tilde\U_{2,\eta}\vert(t,\eta) \\ \nn
& \qquad \qquad \times \Big(\int_0^\eta (\vert \tilde\Y_2(t,\eta)-\tilde \Y_1(t,\eta)\vert +\vert \tilde\Y_2(t,\theta)-\tilde\Y_1(t,\theta) \vert) \\ \nn
&\qquad\qquad\qquad\qquad\qquad\qquad\qquad\times e^{-\frac{1}{\ma}(\tilde\Y_2(t,\eta)-\tilde\Y_2(t,\theta))} \min_j(\tilde\V_j^+)^3 \tilde\Y_{2,\eta}(t,\theta) d\theta\Big) d\eta\\ \nn
&\quad \leq \frac{1}{\ma\A^6}\int_0^{1} \vert \tilde\U_1-\tilde\U_2\vert \vert \tilde \Y_1-\tilde\Y_2\vert  \vert \tilde\U_{2,\eta}\vert (t,\eta)\\ \nn
&\qquad\qquad\qquad\qquad\qquad\qquad\times\int_0^{\eta} e^{-\frac{1}{\ma}(\tilde\Y_2(t,\eta)-\tilde\Y_2(t,\theta))} \min_j(\tilde\V_j^+)^3\tilde\Y_{2,\eta}(t,\theta) d\theta d\eta\\ \nn
& \qquad +\norm{\tilde\U_1-\tilde\U_2}^2 + \frac{1}{\ma^2\A^{12}}\int_0^{1} \tilde\U_{2,\eta}^2(t,\eta)\\ \nn
&\qquad\qquad\times\left( \int_0^\eta \vert \tilde\Y_1(t,\theta)-\tilde\Y_2(t,\theta)\vert e^{-\frac{1}{\ma}(\tilde\Y_2(t,\eta)-\tilde\Y_2(t,\theta))} \min_j(\tilde\V_j^+)^3 \tilde\Y_{2,\eta}(t,\theta) d\theta \right)^2 d\eta\\ \nn
& \quad\leq \frac{4\ma}{\A^5}\int_0^{1} \vert \tilde\U_1-\tilde\U_2\vert \vert \tilde\Y_1-\tilde \Y_2\vert \tilde\P_2\vert \tilde\U_{2,\eta}\vert (t,\eta) d\eta + \norm{\tilde\U_1-\tilde\U_2}^2\\ \nn
& \qquad + \frac{1}{\ma^2\A^9}\int_0^{1} \tilde\Y_{2,\eta}(t,\eta) \left(\int_0^\eta e^{-\frac{1}{\sqtC{2}}(\tilde\Y_2(t,\eta)-\tilde\Y_2(t,\theta))} \tilde\U_2^2\tilde\Y_{2,\eta}(t,\theta) d\theta\right)\\  \nn
& \qquad \qquad\qquad  \times \left(\int_0^{\eta} (\tilde\Y_1-\tilde\Y_2)^2(t,\theta) e^{-\frac{1}{\ma}(\tilde\Y_2(t,\eta)-\tilde\Y_2(t,\theta))}\min_j(\tilde\U_j^4) \tilde\Y_{2,\eta}(t,\theta) d\theta \right)d\eta\\ \nn
&\quad \leq (\sqrt{2}\A^2+1)(\norm{\tilde\Y_1-\tilde\Y_2}^2+\norm{\tilde\U_1-\tilde\U_2}^2)
 +\frac{2}{\A^6} \int_0^{1} \tilde\P_2\tilde\Y_{2,\eta}(t,\eta) \\ \nn
&\qquad\qquad\quad\times\left(\int_0^{\eta} (\tilde\Y_1-\tilde \Y_2)^2(t,\theta) e^{-\frac{1}{\ma}(\tilde\Y_2(t,\eta)-\tilde\Y_2(t,\theta))}\tilde\U_2^2 \tilde\Y_{2,\eta}(t,\theta) d\theta \right)d\eta\\ 
& \quad\leq \bigO(1) (\norm{\tilde \Y_1-\tilde \Y_2}^2 +\norm{\tilde\U_1-\tilde\U_2}^2).\label{eq:J3}
\end{align}

Next are the terms $J_7$ and $J_8$. Therefore recall \eqref{Def:EE}, which implies
\begin{align*}
J_7+J_8&= \frac{1}{\A^6}\tilde\U_{2,\eta}(t,\eta) \\
&\quad\times \left(\int_0^\eta \min_j(e^{-\frac{1}{\ma}(\tilde \Y_j(t,\eta)-\tilde\Y_j(t,\theta))})\min_j(\tilde\V_j^+)^3\tilde\Y_{2,\eta}(t,\theta) d\theta\right)\\
& \quad - \frac{1}{\A^6}\tilde\U_{1,\eta}(t,\eta)\\
&\quad\times \left(\int_0^\eta \min_j(e^{-\frac{1}{\ma}(\tilde \Y_j(t,\eta)-\tilde\Y_j(t,\theta))})\min_j(\tilde\V_j^+)^3\tilde \Y_{1,\eta}(t,\theta) d\theta\right)\\
& =  \frac{1}{\A^6}(\tilde\U_{2,\eta}-\tilde\U_{1,\eta})(t,\eta)\\
&\quad\times \min_k\Big[ \int_0^\eta \min_j(e^{-\frac{1}{\ma}(\tilde \Y_j(t,\eta)-\tilde\Y_j(t,\theta))})\min_j(\tilde\V_j^+)^3\tilde\Y_{k,\eta}(t,\theta) d\theta\Big]\\
& \quad + \frac{1}{\A^6}\tilde\U_{1,\eta}(t,\eta) \int_0^\eta \min_j(e^{-\frac{1}{\ma}(\tilde \Y_j(t,\eta)-\tilde\Y_j(t,\theta))})\\
&\quad\qquad\qquad\qquad\qquad\qquad\times \min_j(\tilde\V_j^+)^3(\tilde\Y_{2,\eta}-\tilde \Y_{1,\eta})(t, \theta) d\theta\mathbbm{1}_E(t,\eta)\\
& \quad +\frac{1}{\A^6}\tilde\U_{2,\eta}(t,\eta) \int_0^\eta \min_j(e^{-\frac{1}{\ma}(\tilde \Y_j(t,\eta)-\tilde\Y_j(t,\theta))})\\
&\quad\qquad\qquad\qquad\qquad\qquad\times \min_j(\tilde\V_j^+)^3(\tilde\Y_{2,\eta}-\tilde \Y_{1,\eta})(t,\theta) d\theta \mathbbm{1}_{E^c}(t,\eta)\\
& = L_1+L_2+L_3.
\end{align*}

As far as the first term $L_1$ is concerned, the corresponding integral can be estimates as follows, using \eqref{eq:MinMax},
\begin{align} \nn
&\vert \int_0^{1} L_1(\tilde\U_1-\tilde \U_2)(t,\eta)d\eta\vert \\  \nn
&\quad=\frac{1}{\A^6}\Big\vert \int_0^{1}   (\tilde\U_1-\tilde \U_2)(\tilde\U_{2,\eta}-\tilde\U_{1,\eta})(t,\eta) \\  \nn
&\qquad\qquad\times\min_k\Big[ \int_0^\eta \min_j(e^{-\frac{1}{\ma}(\tilde \Y_j(t,\eta)-\tilde\Y_j(t,\theta))})\min_j(\tilde\V_j^+)^3\tilde\Y_{k,\eta}(t,\theta) d\theta\Big]d\eta\Big\vert \\  \nn
&\quad = \Big\vert -\frac1{2\A^6} (\tilde\U_1-\tilde\U_2)^2(t,\eta)\\ \nn
&\qquad\qquad\times \min_k\Big[ \int_0^\eta \min_j(e^{-\frac{1}{\ma}(\tilde \Y_j(t,\eta)-\tilde\Y_j(t,\theta))})\min_j(\tilde\V_j^+)^3\tilde\Y_{k,\eta}(t,\theta) d\theta\Big]
\Big|_{\eta=0}^{1}\\   \nn
&\qquad +\frac1{2\A^6} \int_0^{1} (\tilde\U_1-\tilde\U_2)^2(t,\eta)\\  \nn
&\qquad\quad\times\frac{d}{d\eta} \min_k\Big[ \int_0^\eta \min_j(e^{-\frac{1}{\ma}(\tilde \Y_j(t,\eta)-\tilde\Y_j(t,\theta))})\min_j(\tilde\V_j^+)^3\tilde\Y_{k,\eta}(t,\theta) d\theta\Big]d\eta  \Big\vert \\
&\quad \leq \bigO(1) \norm{\tilde\U_1-\tilde\U_2}^2, \label{eq:L1}
\end{align}
where $\bigO(1)$ denotes some constant only depending on $\A$, which remains bounded as $\A\to 0$.

As far as the last term $L_3$ (a similar argument works for $L_2$) is concerned, the corresponding integral can be estimated as follows
\begin{align} \nn
&\vert \int_0^{1} L_3(\tilde\U_1-\tilde\U_2)(t,\eta)d\eta\vert \\ \nn
&= \frac1{\A^6}\vert \int_0^{1} (\tilde\U_1-\tilde \U_2) \tilde\U_{2,\eta}(t,\eta)\int_0^\eta\min_j(e^{-\frac{1}{\ma}(\tilde \Y_j(t,\eta)-\tilde\Y_j(t,\theta))})\\ \nn
&\qquad\qquad\qquad\qquad\qquad\qquad\qquad\times\min_j(\tilde\V_j^+)^3(\tilde\Y_{2,\eta}-\tilde\Y_{1,\eta})(t,\theta) d\theta \mathbbm{1}_{E^c}(t,\eta) d\eta\vert \\ \nn
& = \frac1{\A^6}\vert \int_0^{1} (\tilde\U_1-\tilde\U_2)\tilde\U_{2,\eta}(t,\eta) \mathbbm{1}_{E^c}(t,\eta)\\  \nn
& \qquad  \times \Big[ (\tilde\Y_2-\tilde\Y_1)(t,\theta)\min_j(e^{-\frac{1}{\ma}(\tilde \Y_j(t,\eta)-\tilde\Y_j(t,\theta))}) \min_j(\tilde\V_j^+)^3(t,\theta)\Big|_{\theta=0}^{\eta}\\ \nn
& \qquad   - \int_0^\eta (\tilde\Y_2-\tilde\Y_1)(t,\theta) 
\big[(\frac{d}{d\theta} \min_j(e^{-\frac{1}{\ma}(\tilde \Y_j(t,\eta)-\tilde\Y_j(t,\theta))}))\min_j(\tilde\V_j^+)^3(t,\theta)\\ \nn
&\qquad  \qquad\qquad  + \min_j(e^{-\frac{1}{\ma}(\tilde \Y_j(t,\eta)-\tilde\Y_j(t,\theta))})
(\frac{d}{d\theta} \min_j(\tilde\V_j^+)^3)(t,\theta) \big] d\theta\Big]\mathbbm{1}_{E^c}(t,\eta)d\eta\vert \\ \nn
&=\frac{1}{\A^6}\vert -\int_0^{1} (\tilde\U_1-\tilde\U_2)(\tilde \Y_1-\tilde \Y_2)\min_j(\tilde\V_j^+)^3\tilde\U_{2,\eta}(t,\eta) \mathbbm{1}_{E^c}(t,\eta) d\eta\\ \nn
& \quad +\int_0^{1} (\tilde\U_1-\tilde\U_2) \tilde\U_{2,\eta}(t,\eta) \mathbbm{1}_{E^c}(t,\eta)\\ \nn
& \qquad \quad \times\int_0^\eta (\tilde\Y_1-\tilde\Y_2)(t,\theta)
\Big[(\frac{d}{d\theta} \min_j(e^{-\frac{1}{\ma}(\tilde \Y_j(t,\eta)-\tilde\Y_j(t,\theta))}))\min_j(\tilde\V_j^+)^3(t,\theta)\\ \nn
&\qquad \qquad\qquad  + \min_j(e^{-\frac{1}{\ma}(\tilde \Y_j(t,\eta)-\tilde\Y_j(t,\theta))})(\frac{d}{d\theta} \min_j(\tilde\V_j^+)^3)(t,\theta)  \Big]d\theta d\eta\vert \\ \nn
&\leq \bigO(1) (\norm{\tilde \Y_1-\tilde \Y_2}^2+\norm{\tilde\U_1-\tilde \U_2}^2)
 +\frac{1}{\A^{12}} \int_0^1 \tilde\U_{2,\eta}^2 (t,\eta) \Big(\int_0^\eta \vert\tilde \Y_1-\tilde\Y_2\vert (t,\theta)\\ \nn
& \qquad \qquad  \times \big[\frac{1}{\ma}\min_j(e^{-\frac{1}{\ma}(\tilde \Y_j(t,\eta)-\tilde\Y_j(t,\theta))})\min_j(\tilde \V_j^+)^3\max_j(\tilde\Y_{j,\eta}) (t,\theta)\\ \nn
& \qquad \qquad \qquad \qquad + 2\A^4\min_j(e^{-\frac{1}{\ma}(\tilde \Y_j(t,\eta)-\tilde\Y_j(t,\theta))})\min_j(\tilde \V_j^+)(t,\theta) \big]d\theta\Big)^2d\eta\\ \nn
& \leq  \bigO(1) (\norm{\tilde \Y_1-\tilde \Y_2}^2+\norm{\tilde\U_1-\tilde \U_2}^2)
 +\frac{54}{\A} \int_0^1 \tilde \P_2\tilde \Y_{2,\eta}(t,\eta) d\eta\norm{\tilde \Y_1-\tilde \Y_2}^2\\ 
& \leq \bigO(1) (\norm{\tilde \Y_1-\tilde \Y_2}^2+\norm{\tilde\U_1-\tilde \U_2}^2), \label{eq:L3}
\end{align}
where we used \eqref{est:L3a_lemma}, \eqref{eq:all_PestimatesA}, and \eqref{est:L3c_lemma}.

Finally, we have a look at the integral, which contains $J_9$ (a similar argument works for $J_{10}$), where we can 
assume $\sqtC{1}\leq \sqtC{2}$.  Thus  
\begin{align}  \nn
& \vert \int_0^1 J_9(\tilde\U_1-\tilde\U_2)(t,\eta) d\eta\vert \\ \nn
& \quad = \left(\frac{1}{\sqtC{1}^6}-\frac{1}{\sqtC{2}^6}\right) \vert \int_0^1 (\tilde\U_1-\tilde\U_2) \tilde\U_{1,\eta}(t,\eta) \nn\\ \nn
&\qquad\qquad\qquad\qquad\qquad\times\Big(\int_0^\eta e^{-\frac{1}{\sqtC{1}}(\tilde\Y_1(t,\eta)-\tilde\Y_1(t,\theta))} (\tilde\V_1^+)^3\tilde \Y_{1,\eta}(t,\theta) d\theta\Big) d\eta \vert  \\  \nn
& \quad= \frac{\sqtC{2}^6-\sqtC{1}^6} {\sqtC{1}^6\sqtC{2}^6} \vert \int_0^1 (\tilde\U_1-\tilde\U_2)\tilde\U_{1,\eta}(t,\eta) 
\nn\\ \nn
&\qquad\qquad\qquad\qquad\qquad\times\Big(\int_0^\eta e^{-\frac{1}{\sqtC{1}}(\tilde\Y_1(t,\eta)-\tilde\Y_1(t,\theta))} (\tilde\V_1^+)^3\tilde\Y_{1,\eta}(t,\theta) d\theta\Big) d\eta \vert\\ \nn
&\quad \leq 3\frac{\sqtC{2}-\sqtC{1}}{\sqtC{1}^2\sqtC{2}} \vert \int_0^1(\tilde\U_1-\tilde\U_2) \tilde\U_{1,\eta}(t,\eta)\Big( \int_0^\eta e^{-\frac{1}{\sqtC{1}}(\tilde\Y_1(t,\eta)-\tilde\Y_1(t,\theta))} \tilde\V_1^+\tilde\Y_{1,\eta}(t,\theta) d\theta \Big) d\eta\vert\\ \nn
&\quad \leq \vert \sqtC{1}-\sqtC{2}\vert ^2 + \frac{9}{\sqtC{1}^4\sqtC{2}^2} \int_0^1 (\tilde\U_2-\tilde\U_1)^2\tilde\U_{1,\eta}^2 (t,\eta)\nn\\ \nn
&\qquad\qquad\qquad\qquad\qquad\times \Big( \int_0^\eta e^{-\frac{1}{\sqtC{1}}(\tilde\Y_1(t,\eta) -\tilde\Y_1(t,\theta))} \tilde\V_1^+\tilde\Y_{1,\eta} (t,\theta) d\theta\Big)^2 d\eta\\ \nn
&\quad \leq \vert \sqtC{1}-\sqtC{2}\vert ^2 + \frac{9}{\sqtC{1}\sqtC{2}^2} \int_0^1 (\tilde\U_1-\tilde\U_2)^2 \tilde\Y_{1,\eta} (t,\eta)\nn\\ \nn
&\qquad\qquad\qquad\qquad\qquad\times\Big( \int_0^\eta e^{-\frac{1}{\sqtC{1}}(\tilde\Y_1(t,\eta)-\tilde\Y_1(t,\theta))} \tilde\Y_{1,\eta} (t,\theta) d\theta\Big)\\ \nn
& \qquad \qquad \qquad \qquad \qquad  \times\Big( \int_0^\eta e^{-\frac{1}{\sqtC{1}}(\tilde\Y_1(t,\eta) -\tilde\Y_1(t,\theta))} (\tilde\V_1^+)^2\tilde\Y_{1,\eta} (t,\theta) d\theta\Big) d\eta\\ \nn
& \quad\leq \vert \sqtC{1}-\sqtC{2}\vert ^2+\frac{9}{\sqtC{2}^2}\int_0^1 (\tilde\U_1-\tilde\U_2)^2 \tilde\Y_{1,\eta}(t,\eta) \nn\\ \nn
&\qquad\qquad\qquad\qquad\qquad\times\Big(\int_0^\eta e^{-\frac{1}{\sqtC{1}}(\tilde\Y_1(t,\eta)-\tilde\Y_1(t,\theta))} \tilde\U_1^2\tilde\Y_{1,\eta} (t,\theta) d\theta\Big) d\eta\\ \nn
&\quad \leq \vert \sqtC{1}-\sqtC{2}\vert ^2 + \frac{36}{\A} \int_0^1 (\tilde\U_1-\tilde\U_2)^2 \tilde\P_1\tilde\Y_{1,\eta} (t,\eta) d\eta\\
&\quad \leq \vert \sqtC{1}-\sqtC{2}\vert ^2 + 18\A^4 \norm{\tilde\U_1-\tilde\U_2}^2.  \label{eq:J7}
\end{align}

We now turn to the integral $I_{32}$. Recall equation \eqref{eq:I32_for_U}, namely   
\begin{align*}
I_{32}&=\int_0^{1} (\tilde\U_1- \tilde\U_2)(t, \eta)\Big(\frac{1}{\sqtC{2}^6}\tilde\U_{2,\eta}(t, \eta)\int_0^{1}
 e^{-\frac{1}{\sqtC{2}}\vert \tilde\Y_2(t,\eta)-\tilde\Y_2(t,\theta)\vert }\tilde\P_2\tilde\U_2\tilde\Y_{2,\eta}(t,\theta)d\theta\\
&\qquad\qquad\qquad- \frac{1}{\sqtC{1}^6}\tilde\U_{1,\eta}(t, \eta)\int_0^{1} e^{-\frac{1}{\sqtC{1}}\vert \tilde\Y_1(t,\eta)-\tilde\Y_1(t,\theta)\vert } 
\tilde\P_1\tilde\U_1\tilde\Y_{1,\eta}(t,\theta)d\theta\Big)d\eta.
\end{align*}
By first using the decomposition $\tilde\U_j=\tilde\U_j^+ +\tilde\U_j^-$, and then involving \eqref{eq:triks}, we need to estimate terms of the type
\begin{align*}
\tilde I_{32}&=\int_0^{1} (\tilde\U_1- \tilde\U_2)(t, \eta)\Big(\frac{1}{\sqtC{2}^6}\tilde\U_{2,\eta}(t, \eta)\int_0^{\eta}
 e^{-\frac{1}{\sqtC{2}}( \tilde\Y_2(t,\eta)-\tilde\Y_2(t,\theta)) }\tilde\P_2\tilde\U_2^+\tilde\Y_{2,\eta}(t,\theta)d\theta\notag\\
&\qquad\qquad\qquad- \frac{1}{\sqtC{1}^6}\tilde\U_{1,\eta}(t, \eta)\int_0^{\eta} e^{-\frac{1}{\sqtC{1}}( \tilde\Y_1(t,\eta)-\tilde\Y_1(t,\theta)) } 
\tilde\P_1\tilde\U_1^+\tilde\Y_{1,\eta}(t,\theta)d\theta\Big)d\eta.    %\label{eq:tilde_I32_for_U} 
\end{align*}
We invoke Lemma \ref{lemma:LUR} and find  
\begin{align*}
\tilde I_{32}&=\frac1{\A^6}\int_0^{1} (\tilde\U_1- \tilde\U_2)(t, \eta)\Big(\tilde\U_{2,\eta}(t, \eta)\int_0^{\eta}
 e^{-\frac{1}{\sqtC{2}}( \tilde\Y_2(t,\eta)-\tilde\Y_2(t,\theta)) }\tilde\P_2\tilde\U_2^+\tilde\Y_{2,\eta}(t,\theta)d\theta\\
&\qquad\qquad\qquad- \tilde\U_{1,\eta}(t, \eta)\int_0^{\eta} e^{-\frac{1}{\sqtC{1}}( \tilde\Y_1(t,\eta)-\tilde\Y_1(t,\theta)) } 
\tilde\P_1\tilde\U_1^+\tilde\Y_{1,\eta}(t,\theta)d\theta\Big)d\eta \\
&\quad+\frac{\sqtC{1}^6-\sqtC{2}^6}{\sqtC{1}^6\sqtC{2}^6}\mathbbm{1}_{\sqtC{2}\le \sqtC{1}} \int_0^{1} (\tilde\U_1- \tilde\U_2)\tilde\U_{2,\eta}(t, \eta)\\
&\qquad\qquad\qquad\qquad\qquad\times\Big(\int_0^{\eta}
 e^{-\frac{1}{\sqtC{2}}( \tilde\Y_2(t,\eta)-\tilde\Y_2(t,\theta)) }\tilde\P_2\tilde\U_2^+\tilde\Y_{2,\eta}(t,\theta)d\theta\Big)d\eta\\
&\quad+\frac{\sqtC{1}^6-\sqtC{2}^6}{\sqtC{1}^6\sqtC{2}^6}\mathbbm{1}_{\sqtC{1}< \sqtC{2}} \int_0^{1} (\tilde\U_1- \tilde\U_2)\tilde\U_{1,\eta}(t, \eta)\\
&\qquad\qquad\qquad\qquad\qquad\times\Big(\int_0^{\eta}
 e^{-\frac{1}{\sqtC{1}}( \tilde\Y_1(t,\eta)-\tilde\Y_1(t,\theta)) }\tilde\P_1\tilde\U_1^+\tilde\Y_{1,\eta}(t,\theta)d\theta\Big)d\eta\\
 &= N_1+N_2+N_3.
\end{align*}
We consider first $N_1$ where we get
\begin{align}
N_1&=\frac1{\A^6}\int_0^{1} (\tilde\U_1- \tilde\U_2)(t, \eta)\Big[\tilde\U_{2,\eta}(t, \eta)\int_0^{\eta}
 e^{-\frac{1}{\sqtC{2}}( \tilde\Y_2(t,\eta)-\tilde\Y_2(t,\theta)) }\tilde\P_2\tilde\U_2^+\tilde\Y_{2,\eta}(t,\theta)d\theta 
 \notag\\
&\qquad\qquad\qquad- \tilde\U_{1,\eta}(t, \eta)\int_0^{\eta} e^{-\frac{1}{\sqtC{1}}( \tilde\Y_1(t,\eta)-\tilde\Y_1(t,\theta)) } 
\tilde\P_1\tilde\U_1^+\tilde\Y_{1,\eta}(t,\theta)d\theta\Big]d\eta \notag\\
&=\frac1{\A^6}\int_0^{1} (\tilde\U_1- \tilde\U_2)(t, \eta) \nn \\
&\qquad\times\Big[\tilde\U_{2,\eta}(t, \eta)\int_0^{\eta}
 e^{-\frac{1}{\sqtC{2}}( \tilde\Y_2(t,\eta)-\tilde\Y_2(t,\theta)) }(\tilde\P_2-\tilde\P_1)\tilde\U_2^+\tilde\Y_{2,\eta}
 \mathbbm{1}_{\tilde\P_1\le \tilde\P_2}(t,\theta)d\theta\notag\\
&\qquad\qquad+\tilde\U_{1,\eta}(t, \eta)\int_0^{\eta} e^{-\frac{1}{\sqtC{1}}( \tilde\Y_1(t,\eta)-\tilde\Y_1(t,\theta)) } 
(\tilde\P_2-\tilde\P_1)\tilde\U_1^+\tilde\Y_{1,\eta}\mathbbm{1}_{\tilde\P_2<\tilde\P_1}(t,\theta)d\theta \notag\\
&\qquad +\tilde\U_{2,\eta}(t, \eta)\int_0^{\eta}
 e^{-\frac{1}{\sqtC{2}}( \tilde\Y_2(t,\eta)-\tilde\Y_2(t,\theta)) }\min_j(\tilde\P_j)(\tilde\U_2^+-\tilde\U_1^+)\tilde\Y_{2,\eta}
 \mathbbm{1}_{\tilde\U_1^+\le\tilde\U_2^+}(t,\theta)d\theta\notag\\
&\qquad +\tilde\U_{1,\eta}(t, \eta)\int_0^{\eta}
 e^{-\frac{1}{\sqtC{1}}( \tilde\Y_1(t,\eta)-\tilde\Y_1(t,\theta)) }\min_j(\tilde\P_j)(\tilde\U_2^+-\tilde\U_1^+)\tilde\Y_{1,\eta}
 \mathbbm{1}_{\tilde\U_2^+<\tilde\U_1^+}(t,\theta)d\theta\notag\\
&\qquad+\tilde\U_{2,\eta}(t, \eta)\int_0^{\eta}
 e^{-\frac{1}{\sqtC{2}}( \tilde\Y_2(t,\eta)-\tilde\Y_2(t,\theta)) }\min_j(\tilde\P_j)\min_j(\tilde\U_j^+)\tilde\Y_{2,\eta}(t,\theta)d\theta\notag\\
&\qquad-\tilde\U_{1,\eta}(t, \eta)\int_0^{\eta}
 e^{-\frac{1}{\sqtC{1}}( \tilde\Y_1(t,\eta)-\tilde\Y_1(t,\theta)) }\min_j(\tilde\P_j)\min_j(\tilde\U_j^+)\tilde\Y_{1,\eta}(t,\theta)d\theta
\Big]d\eta \notag\\
&= N_{11}+N_{12}+N_{13}+N_{14}+N_{15}+N_{16}. \label{eq:N1}
\end{align}
The terms $N_{11}$ and $N_{12}$ can be treated similarly. To that end we find
\begin{align*}
\abs{N_{11}}&= \frac1{\A^6}\vert\int_0^{1} (\tilde\U_1- \tilde\U_2)(t, \eta)\\
&\quad\times\Big[\tilde\U_{2,\eta}(t, \eta)\int_0^{\eta}
 e^{-\frac{1}{\sqtC{2}}( \tilde\Y_2(t,\eta)-\tilde\Y_2(t,\theta)) }(\tilde\P_2-\tilde\P_1)\tilde\U_2^+\tilde\Y_{2,\eta}
 \mathbbm{1}_{\tilde\P_1\le\tilde\P_2}(t,\theta)d\theta\Big]d\eta \vert \\
 &\le \norm{\tilde\U_1- \tilde\U_2}^2+\frac1{\A^{12}} \int_0^{1}\tilde\U_{2,\eta}^2(t, \eta)\\
 & \qquad\qquad\times\Big(\int_0^{\eta}
 e^{-\frac{1}{\sqtC{2}}( \tilde\Y_2(t,\eta)-\tilde\Y_2(t,\theta)) }(\tilde\P_2-\tilde\P_1)\tilde\U_2^+\tilde\Y_{2,\eta}
 \mathbbm{1}_{\tilde\P_1\le\tilde\P_2}(t,\theta)d\theta \Big)^2d\eta \\
 &\le \norm{\tilde\U_1- \tilde\U_2}^2 \\
 &\quad\qquad+\frac4{\A^9} \int_0^{1}\tilde\Y_{2,\eta}(t, \eta)\Big(\int_0^{\eta}e^{-\frac{1}{\sqtC{2}}( \tilde\Y_2(t,\eta)-\tilde\Y_2(t,\theta))}
 (\tilde\U_2^+)^2\tilde\Y_{2,\eta}(t,\theta)d\theta \Big)\\
 &\qquad\qquad\times \Big(\int_0^{\eta}e^{-\frac{1}{\sqtC{2}}( \tilde\Y_2(t,\eta)-\tilde\Y_2(t,\theta))}\vert\sqP{2}-\sqP{1}\vert^2 \tilde\P_2\tilde\Y_{2,\eta}(t,\theta)d\theta \Big)d\eta \\
&\le \norm{\tilde\U_1- \tilde\U_2}^2 +\frac{16}{\A^8} \int_0^{1}\tilde\P_2\tilde\Y_{2,\eta}(t, \eta) \\
 &\qquad\qquad\qquad\times
 \Big(\int_0^{\eta}e^{-\frac{1}{\sqtC{2}}( \tilde\Y_2(t,\eta)-\tilde\Y_2(t,\theta))}\vert\sqP{2}-\sqP{1}\vert^2 \tilde\P_2\tilde\Y_{2,\eta}(t,\theta)d\theta \Big)d\eta \\
 &\le \bigO(1)\big(\norm{\tilde\U_1- \tilde\U_2}^2 +\norm{\sqP{1}- \sqP{2}}^2\big),
\end{align*}
where we have used \eqref{eq:all_estimatesE}, \eqref{eq:all_estimatesH}, and \eqref{eq:all_PestimatesC}.

The terms $N_{13}$ and $N_{14}$ follow the same estimates. More precisely,
\begin{align*}
\abs{N_{13}}&=\frac1{\A^6} \vert\int_0^{1} (\tilde\U_1- \tilde\U_2)
\tilde\U_{2,\eta}(t, \eta) \\
&\qquad \times\int_0^{\eta}
 e^{-\frac{1}{\sqtC{2}}( \tilde\Y_2(t,\eta)-\tilde\Y_2(t,\theta)) }\min_j(\tilde\P_j)(\tilde\U_2^+-\tilde\U_1^+)\tilde\Y_{2,\eta}
 \mathbbm{1}_{\tilde\U_1^+\le\tilde\U_2^+}(t,\theta)d\theta d\eta\vert\\
 &\le\frac1{\A^6} \int_0^{1}\vert\tilde\U_1- \tilde\U_2\vert\, 
\vert\tilde\U_{2,\eta}\vert(t, \eta) \\
&\qquad \times\int_0^{\eta}
 e^{-\frac{1}{\sqtC{2}}( \tilde\Y_2(t,\eta)-\tilde\Y_2(t,\theta)) }\min_j(\tilde\P_j)\vert\tilde\U_2^+-\tilde\U_1^+\vert\tilde\Y_{2,\eta}
 \mathbbm{1}_{\tilde\U_1^+\le\tilde\U_2^+}(t,\theta)d\theta d\eta\\
 &\le\norm{\tilde\U_1- \tilde\U_2}^2 +\frac1{\A^{12}} \int_0^{1}
\tilde\U_{2,\eta}^2(t, \eta)\\
 &\qquad\times \Big(\int_0^{\eta}
 e^{-\frac{1}{\sqtC{2}}( \tilde\Y_2(t,\eta)-\tilde\Y_2(t,\theta)) }\min_j(\tilde\P_j)\vert\tilde\U_2^+-\tilde\U_1^+\vert\tilde\Y_{2,\eta}
 \mathbbm{1}_{\tilde\U_1^+\le\tilde\U_2^+}(t,\theta)d\theta\Big)^2 d\eta\\ 
&\le\norm{\tilde\U_1- \tilde\U_2}^2 \\
 &\quad+ \frac1{\A^{12}} \int_0^{1}
\tilde\U_{2,\eta}^2(t, \eta) \big(\int_0^{\eta} e^{-\frac3{2\sqtC{2}}( \tilde\Y_2(t,\eta)-\tilde\Y_2(t,\theta)) }\tilde\P_2\tilde\Y_{2,\eta}
(t,\theta)d\theta\big) \\
 &\qquad\times \big(\int_0^{\eta} e^{-\frac1{2\sqtC{2}}( \tilde\Y_2(t,\eta)-\tilde\Y_2(t,\theta)) }(\tilde\U_2^+-\tilde\U_1^+)^2\tilde\P_2\tilde\Y_{2,\eta}(t,\theta)d\theta\big) d\eta\\   
 &\le\norm{\tilde\U_1- \tilde\U_2}^2 
 + \frac2{\A^{11}} \int_0^{1}\tilde\P_2\tilde\U_{2,\eta}^2(t, \eta)  \\
 &\qquad \times \big(\int_0^{\eta} e^{-\frac1{2\sqtC{2}}( \tilde\Y_2(t,\eta)-\tilde\Y_2(t,\theta)) }(\tilde\U_2-\tilde\U_1)^2\tilde\P_2\tilde\Y_{2,\eta}(t,\theta)d\theta\big) d\eta\\   
&\le \bigO(1) \norm{\tilde\U_2-\tilde\U_1}^2,
\end{align*}
by applying \eqref{eq:343}, \eqref{eq:all_estimatesE}, and \eqref{eq:all_estimatesQ}.

The term  $N_{15}+N_{16}$ can be estimated as follows:
\begin{align*}
N_{15}+N_{16}&=\frac1{\A^6}\int_0^{1} (\tilde\U_1- \tilde\U_2)(t, \eta)\\
&\quad\times\Big[\tilde\U_{2,\eta}(t, \eta)\int_0^{\eta}
 e^{-\frac{1}{\sqtC{2}}( \tilde\Y_2(t,\eta)-\tilde\Y_2(t,\theta)) }\min_j(\tilde\P_j)\min_j(\tilde\U_j^+)\tilde\Y_{2,\eta}(t,\theta)d\theta\\
&\qquad-\tilde\U_{1,\eta}(t, \eta)\int_0^{\eta}
 e^{-\frac{1}{\sqtC{1}}( \tilde\Y_1(t,\eta)-\tilde\Y_1(t,\theta)) }\min_j(\tilde\P_j)\min_j(\tilde\U_j^+)\tilde\Y_{1,\eta}(t,\theta)d\theta
\Big]d\eta \\
& = \frac{1}{\A^6} \mathbbm{1}_{\sqtC{1}\leq\sqtC{2}}\int_0^1 (\tilde \U_1-\tilde \U_2)\tilde \U_{2,\eta}(t,\eta)\\
& \qquad \qquad\qquad  \times\Big(\int_0^\eta (e^{-\frac{1}{\sqtC{2}}(\tilde \Y_2(t,\eta)-\tilde \Y_2(t,\theta))}-e^{-\frac{1}{\sqtC{1}}(\tilde \Y_2(t,\eta)-\tilde \Y_2(t,\theta))})\\
&\qquad\qquad\qquad\qquad \qquad\qquad \qquad\times\min_j(\tilde \P_j)\min_j(\tilde \U_j^+)\tilde \Y_{2,\eta}(t,\theta) d\theta\Big) d\eta\\
& \quad + \frac{1}{\A^6} \mathbbm{1}_{\sqtC{2}<\sqtC{1}}\int_0^1 (\tilde \U_1-\tilde \U_2)\tilde \U_{1,\eta}(t,\eta)\\
& \qquad \qquad \times\Big(\int_0^\eta (e^{-\frac{1}{\sqtC{2}}(\tilde \Y_1(t,\eta)-\tilde \Y_1(t,\theta))}-e^{-\frac{1}{\sqtC{1}}(\tilde \Y_1(t,\eta)-\tilde \Y_1(t,\theta))})\\
&\qquad\qquad\qquad\qquad\qquad\qquad\qquad\times\min_j(\tilde \P_j)\min_j(\tilde \U_j^+)\tilde \Y_{1,\eta}(t,\theta) d\theta\Big) d\eta\\
&\quad +\frac1{\A^6}\int_0^{1} (\tilde\U_1- \tilde\U_2)\tilde\U_{2,\eta}(t, \eta) \\
&\qquad\times\int_0^{\eta}
\big( e^{-\frac{1}{\ma}( \tilde\Y_2(t,\eta)-\tilde\Y_2(t,\theta))}-e^{-\frac{1}{\ma}( \tilde\Y_1(t,\eta)-\tilde\Y_1(t,\theta))}\big) )\\
&\qquad\qquad\qquad\qquad\qquad\times\min_j(\tilde\P_j)\min_j(\tilde\U_j^+)\tilde\Y_{2,\eta}(t,\theta) \mathbbm{1}_{B(\eta)}(t,\theta) d\theta\big)d\eta\\
&\quad+\frac1{\A^6}\int_0^{1} (\tilde\U_1- \tilde\U_2)\tilde\U_{1,\eta}(t, \eta)\\
&\qquad\times\int_0^{\eta}
\big( e^{-\frac{1}{\ma}( \tilde\Y_2(t,\eta)-\tilde\Y_2(t,\theta))}-e^{-\frac{1}{\ma}( \tilde\Y_1(t,\eta)-\tilde\Y_1(t,\theta))}\big) \\
&\qquad\qquad\qquad\qquad\qquad\times\min_j(\tilde\P_j)\min_j(\tilde\U_j^+)\tilde\Y_{1,\eta}(t,\theta) \mathbbm{1}_{B^c(\eta)} (t,\theta)d\theta\big )d\eta\\
&\quad +\frac1{\A^6}\int_0^{1} (\tilde\U_1- \tilde\U_2)(t, \eta)\Big[\tilde\U_{2,\eta}(t, \eta)\int_0^{\eta}
 \min_j(e^{-\frac{1}{\ma}( \tilde\Y_j(t,\eta)-\tilde\Y_j(t,\theta))})\\
&\qquad\qquad\qquad\qquad\qquad\times\min_j(\tilde\P_j)\min_j(\tilde\U_j^+)\tilde\Y_{2,\eta}(t,\theta)d\theta\\
&\qquad\qquad-\tilde\U_{1,\eta}(t, \eta)\int_0^{\eta} \min_j(e^{-\frac{1}{\ma}( \tilde\Y_j(t,\eta)-\tilde\Y_j(t,\theta))})\\
&\qquad\qquad\qquad\qquad\qquad\times
 \min_j(\tilde\P_j)\min_j(\tilde\U_j^+)\tilde\Y_{1,\eta}(t,\theta)d\theta
\Big]d\eta \\
&=N_{151}+N_{152}+N_{153}+N_{154}+N_{155},
\end{align*} 
which, unfortunately, each need a special treatment.

The terms $N_{151}$ and $N_{152}$ can be handled as follows:
\begin{align*}
\vert N_{151}\vert &= \frac{1}{\A^6} \mathbbm{1}_{\sqtC{1}\leq \sqtC{2}} \vert \int_0^1 (\tilde \U_1-\tilde \U_2)\tilde \U_{2,\eta}(t,\eta)\\
& \qquad \quad \times \Big(\int_0^\eta (e^{-\frac{1}{\sqtC{2}}(\tilde \Y_2(t,\eta)-\tilde \Y_2(t,\theta))}-e^{-\frac{1}{\sqtC{1}}(\tilde \Y_2(t,\eta)-\tilde \Y_2(t,\theta))})\\
&\qquad\qquad\qquad\qquad\qquad\qquad\qquad\times\min_j(\tilde \P_j)\min_j(\tilde \V_j^+)\tilde \Y_{2,\eta}(t,\theta) d\theta\Big) d\eta \vert\\
& \leq \frac{4}{\A^6\sqtC{1}e}\int_0^1 \vert \tilde \U_1-\tilde \U_2\vert \vert \tilde \U_{2,\eta}(t,\eta)\\
& \quad  \times \Big(\int_0^\eta e^{-\frac{3}{4\sqtC{2}}(\tilde \Y_2(t,\eta)-\tilde \Y_2(t,\theta))}\min_j(\tilde \P_j)\min_j(\tilde \U_j^+)\tilde \Y_{2,\eta}(t,\theta) d\theta\Big) d\eta \vert \sqtC{1}-\sqtC{2}\vert\\
& \leq \norm{\tilde \U_1-\tilde \U_2}^2 + \frac{16}{\A^{12} \sqtC{1}^2 e^2}\int_0^1 \tilde \U_{2,\eta}^2(t,\eta)\\
& \qquad \qquad \times \Big(\int_0^\eta e^{-\frac{3}{4\sqtC{2}}(\tilde \Y_2(t,\eta)-\tilde \Y_2(t,\theta))}\vert \tilde \U_1\vert \tilde \P_2\tilde \Y_{2,\eta}(t,\theta)d\theta\Big)^2 d\eta \vert \sqtC{1}-\sqtC{2}\vert ^2\\
& \leq \norm{\tilde \U_1-\tilde \U_2}^2  + \frac{8}{\A^4e^2}\int_0^1 \tilde \P_2\tilde \U_{2,\eta}^2(t,\eta) d\eta \vert \sqtC{1}-\sqtC{2}\vert^2\\
& \leq \bigO(1)(\norm{\tilde \U_1-\tilde \U_2}^2+\vert \sqtC{1}-\sqtC{2}\vert^2),
\end{align*}
where we used \eqref{eq:343}.

The terms $N_{153}$ and $N_{154}$ can be handled as follow:
\begin{align*}
\abs{N_{153}}&=\frac1{\A^6}\vert \int_0^{1} (\tilde\U_1- \tilde\U_2)\tilde\U_{2,\eta}(t, \eta) \\
&\quad\times\int_0^{\eta}
\big( e^{-\frac{1}{\ma}( \tilde\Y_2(t,\eta)-\tilde\Y_2(t,\theta))}-e^{-\frac{1}{\ma}( \tilde\Y_1(t,\eta)-\tilde\Y_1(t,\theta))}\big)
 \\
&\qquad\qquad\qquad\qquad\qquad\qquad\times\min_j(\tilde\P_j)\min_j(\tilde\U_j^+)\tilde\Y_{2,\eta}(t,\theta) \mathbbm{1}_{B(\eta)}(t,\theta) d\theta d\eta\vert \\
&\leq\frac1{\A^6\ma}\int_0^{1} \vert\tilde\U_1- \tilde\U_2\vert\vert\tilde\U_{2,\eta}\vert(t, \eta) \\
&\quad\times\int_0^{\eta}
\big( \vert \tilde\Y_2(t,\eta)-\tilde\Y_1(t,\eta) \vert+ \vert \tilde\Y_2(t,\theta)-\tilde\Y_1(t,\theta) \vert\big) e^{-\frac{1}{\ma}( \tilde\Y_2(t,\eta)-\tilde\Y_2(t,\theta))}\\
&\qquad\qquad\times\min_j(\tilde\P_j)\min_j(\tilde\U_j^+)\tilde\Y_{2,\eta}(t,\theta)d\theta  d\eta \\
&=\frac1{\A^6\ma}\int_0^{1} \vert \tilde\U_1- \tilde\U_2\vert \,  \vert \tilde\Y_2-\tilde\Y_1 \vert\vert\tilde\U_{2,\eta}\vert(t, \eta)\\
&\qquad\qquad\qquad\times \int_0^{\eta}e^{-\frac{1}{\ma}( \tilde\Y_2(t,\eta)-\tilde\Y_2(t,\theta))}\min_j(\tilde\P_j)\min_j(\tilde\U_j^+)\tilde\Y_{2,\eta}(t,\theta) d\theta  d\eta \\
&\quad +\frac1{\A^6\ma}\int_0^{1} \vert \tilde\U_1- \tilde\U_2\vert \vert\tilde\U_{2,\eta}\vert(t, \eta)\\
&\qquad\qquad\times \int_0^{\eta}
 e^{-\frac{1}{\ma}( \tilde\Y_2(t,\eta)-\tilde\Y_2(t,\theta))} \vert \tilde\Y_2-\tilde\Y_1 \vert\min_j(\tilde\P_j)\min_j(\tilde\U_j^+)\tilde\Y_{2,\eta}(t,\theta)d\theta  d\eta \\
&\le \frac1{\A^6\ma}\int_0^{1} \vert \tilde\U_1- \tilde\U_2\vert \,  \vert \tilde\Y_2-\tilde\Y_1 \vert\vert\tilde\U_{2,\eta}\vert(t, \eta) \\
&\qquad\qquad\times\Big(\int_0^{\eta}e^{-\frac{1}{\sqtC{2}}( \tilde\Y_2(t,\eta)-\tilde\Y_2(t,\theta))}\tilde\P_2^2\tilde\Y_{2,\eta}(t,\theta)d\theta\Big)^{1/2} \\
&\qquad\qquad\times\Big(\int_0^{\eta}e^{-\frac{1}{\ma}( \tilde\Y_2(t,\eta)-\tilde\Y_2(t,\theta))}\min_j(\tilde\U_j^+)^2\tilde\Y_{2,\eta}(t,\theta) d\theta\Big)^{1/2}  d\eta \\
&\quad+\norm{\tilde\U_1- \tilde\U_2}^2+\frac1{\A^{12}\ma^2} \int_0^{1}\tilde\U_{2,\eta}^2(t, \eta)\\ 
& \qquad\qquad\times 
\Big(\int_0^{\eta}e^{-\frac{1}{\sqtC{2}}( \tilde\Y_2(t,\eta)-\tilde\Y_2(t,\theta))}\vert \tilde\Y_2-\tilde\Y_1 \vert\tilde\P_2\min_j(\tilde\U_j^+)\tilde\Y_{2,\eta}(t,\theta) d\theta\Big)^{2}  d\eta \\
&\le \frac{\sqrt{3}}{2\A^2}\int_0^{1} \vert \tilde\U_1- \tilde\U_2\vert \,  \vert \tilde\Y_2-\tilde\Y_1 \vert\sqP{2}\vert\tilde\U_{2,\eta}\vert(t, \eta)   d\eta +\norm{\tilde\U_1- \tilde\U_2}^2\\
&\quad+\frac{9}{4\A^4} \int_0^1 \tilde \P_2\tilde \U_{2,\eta}^2(t,\eta) d\eta \norm{\tilde \Y_1-\tilde \Y_2}^2 \\
&\le \bigO(1)\big(\norm{\tilde\U_1- \tilde\U_2}^2+\norm{\tilde\Y_1- \tilde\Y_2}^2 \big),
\end{align*} 
where we used \eqref{eq:general}.
We consider now $N_{155}$:
\begin{align*}
N_{155}&=\frac1{\A^6}\int_0^{1} (\tilde\U_1- \tilde\U_2)(t, \eta)\\
&\quad\times\Big[\tilde\U_{2,\eta}(t, \eta)\int_0^{\eta}
 \min_j(e^{-\frac{1}{\ma}( \tilde\Y_j(t,\eta)-\tilde\Y_j(t,\theta))})\min_j(\tilde\P_j)\min_j(\tilde\U_j^+)\tilde\Y_{2,\eta}(t,\theta)d\theta\\
&\qquad-\tilde\U_{1,\eta}(t, \eta)\int_0^{\eta}
 \min_j(e^{-\frac{1}{\ma}( \tilde\Y_j(t,\eta)-\tilde\Y_j(t,\theta))})\min_j(\tilde\P_j)\min_j(\tilde\U_j^+)\tilde\Y_{1,\eta}(t,\theta)d\theta
\Big]d\eta \\
&= \frac1{\A^6}\int_0^{1} (\tilde\U_1- \tilde\U_2) (\tilde\U_2- \tilde\U_1)_\eta (t, \eta)\\
&\qquad\qquad\times\min_k \Big[ \int_0^{\eta}
 \min_j(e^{-\frac{1}{\ma}( \tilde\Y_j(t,\eta)-\tilde\Y_j(t,\theta))})\min_j(\tilde\P_j)\min_j(\tilde\U_j^+)\tilde\Y_{k,\eta}(t,\theta)d\theta\Big]d\eta \\
&\quad  +\frac1{\A^6}\int_0^{1} (\tilde\U_1- \tilde\U_2)\tilde\U_{2,\eta}(t, \eta)\mathbbm{1}_{\tilde E^c}(t,\eta)\\
&\qquad\qquad\times
\big( \int_0^{\eta}\min_j(e^{-\frac{1}{\ma}( \tilde\Y_j(t,\eta)-\tilde\Y_j(t,\theta))})\min_j(\tilde\P_j)\min_j(\tilde\U_j^+)(\tilde\Y_{2}- \tilde\Y_{1})_\eta d\theta\big)d\eta \\
&\quad+  \frac1{\A^6}\int_0^{1} (\tilde\U_1- \tilde\U_2)\tilde\U_{1,\eta}(t, \eta)\mathbbm{1}_{\tilde E}(t,\eta)\\
&\qquad\qquad\times
\big( \int_0^{\eta}\min_j(e^{-\frac{1}{\ma}( \tilde\Y_j(t,\eta)-\tilde\Y_j(t,\theta))})\min_j(\tilde\P_j)\min_j(\tilde\U_j^+)(\tilde\Y_{2}- \tilde\Y_{1})_\eta d\theta\big)d\eta \\   
&= N_{1551}+N_{1552}+N_{1553},
\end{align*}
which, yet again, requires separate treatment.  Here $\tilde E$ is defined in \eqref{eq:tildeE}.

The term $N_{1551}$ can be handled as the term $\tilde L_{31}$, cf.~\eqref{eq:tildeL31},
\begin{align*}
\vert N_{1551}\vert&\le \bigO(1)\norm{\tilde\U_1-\tilde\U_2}^2.
\end{align*}

The terms $N_{1552}$ and $N_{1553}$ can be treated in the same manner:
\begin{align*}
\abs{N_{1552}}&\le\frac1{\A^6}\vert \int_0^{1} (\tilde\U_1- \tilde\U_2)\tilde\U_{2,\eta}(t, \eta)\mathbbm{1}_{\tilde E^c}(t,\eta)\\
&\qquad\times
\big( \int_0^{\eta}\min_j(e^{-\frac{1}{\ma}( \tilde\Y_j(t,\eta)-\tilde\Y_j(t,\theta))})\min_j(\tilde\P_j)\min_j(\tilde\U_j^+)(\tilde\Y_{2}- \tilde\Y_{1})_\eta d\theta\big)d\eta\vert \\
&\leq \frac1{\A^6}\int_0^{1} \vert\tilde\U_1- \tilde\U_2\vert\, \vert \tilde\Y_1- \tilde\Y_2 \vert\min_j(\tilde\P_j)\min_j(\tilde\U_j^+)\vert\tilde\U_{2,\eta}\vert\mathbbm{1}_{\tilde E^c}(t,\eta)d\eta\\
&\quad +  \frac1{\A^6}\vert\int_0^{1}(\tilde\U_1- \tilde\U_2)\tilde\U_{2,\eta}(t, \eta)\mathbbm{1}_{\tilde E^c}(t,\eta)\\
&\quad\times \int_0^{\eta}(\tilde\Y_1- \tilde\Y_2)\Big[\frac{d}{d\theta}\big(\min_j(e^{-\frac{1}{\ma}( \tilde\Y_j(t,\eta)-\tilde\Y_j(t,\theta))})\big)\min_j(\tilde\P_j)\min_j(\tilde\U_j^+) \\
&\qquad\qquad\qquad + \min_j(e^{-\frac{1}{\ma}( \tilde\Y_j(t,\eta)-\tilde\Y_j(t,\theta))})\frac{d}{d\theta}\big(\min_j(\tilde\P_j)\min_j(\tilde\U_j^+) \big)\Big](t,\eta)d\eta\vert \\
&\le \bigO(1) \big(\norm{\tilde\U_1- \tilde\U_2}^2+ \norm{\tilde\Y_1- \tilde\Y_2}^2 \big). 
\end{align*}

The terms $N_2$ (and also $N_3$) can be treated as follows, keeping in mind that $\sqtC{2}\leq \sqtC{1}$.
\begin{align*}
\abs{N_2}&\leq \frac{\sqtC{1}^6-\sqtC{2}^6}{\sqtC{1}^6\sqtC{2}^6}\int_0^1\vert \tilde\U_1-\tilde\U_2\vert \vert \tilde\U_{2,\eta}\vert(t,\eta)\big(\int_0^\eta e^{-\frac{1}{\sqtC{2}}(\tilde\Y_2(t,\eta)-\tilde\Y_2(t,\theta))}\tilde \P_2^2\tilde\Y_{2,\eta}(t,\theta)d\theta\big)^{1/2}\\
& \qquad \qquad \times\big(\int_0^\eta e^{-\frac{1}{\sqtC{2}}(\tilde \Y_2(t,\eta)-\tilde \Y_2(t,\theta))}\tilde \U_2^2\tilde\Y_{2,\eta}(t,\theta) d\theta\big)^{1/2}d\eta\\
& \leq \sqrt{6}\frac{\sqtC{1}^6-\sqtC{2}^6}{\A^6\sqtC{2}^3} \int_0^1 \vert \tilde\U_1-\tilde\U_2\vert \tilde\P_2\vert \tilde\U_{2,\eta}\vert (t,\eta) d\eta\\
& \leq 3\sqrt{3}\A^2 \vert \sqtC{1}-\sqtC{2}\vert \int_0^1 \vert \tilde\U_2-\tilde\U_1\vert (t,\eta)d\eta\\
& \leq \bigO(1) (\norm{\tilde\U_1-\tilde\U_2}^2+\vert \sqtC{1}-\sqtC{2}\vert ^2).
\end{align*}

%----------------------
%

%
%---------------------------
\bigskip 
Finally, we now turn to the integral $I_{33}$. Recall equation \eqref{eq:I33_for_U}, namely
\begin{align*}
I_{33}&=\int_0^{1} (\tilde\U_1-\tilde\U_2)(t, \eta)\Big(\frac{1}{\sqtC{2}^6}\tilde\U_{2,\eta}(t, \eta)\int_0^{1}
 e^{-\frac{1}{\sqtC{2}}\vert \tilde\Y_2(t,\eta)-\tilde\Y_2(t,\theta)\vert }\tilde\Q_2\tilde\U_{2,\eta}(t,\theta)d\theta\\
&\qquad\qquad\qquad- \frac{1}{\sqtC{1}^6}\tilde\U_{1,\eta}(t, \eta)\int_0^{1} e^{-\frac{1}{\sqtC{1}}\vert \tilde\Y_1(t,\eta)-\tilde\Y_1(t,\theta)\vert } 
\tilde\Q_1\tilde\U_{1,\eta}(t,\theta)d\theta\Big)d\eta. 
\end{align*}
By first involving \eqref{eq:triks} and then Lemma \ref{lemma:LUR}, we see that it suffices to estimate
\begin{align*}
\tilde I_{33}&=\int_0^{1} (\tilde\U_1-\tilde\U_2)(t, \eta)\Big(\frac{1}{\sqtC{2}^6}\tilde\U_{2,\eta}(t, \eta)\int_0^{\eta}
 e^{-\frac{1}{\sqtC{2}}( \tilde\Y_2(t,\eta)-\tilde\Y_2(t,\theta)) }\tilde\Q_2\tilde\U_{2,\eta}(t,\theta)d\theta\\
&\qquad\qquad\qquad- \frac{1}{\sqtC{1}^6}\tilde\U_{1,\eta}(t, \eta)\int_0^{\eta} e^{-\frac{1}{\sqtC{1}}( \tilde\Y_1(t,\eta)-\tilde\Y_1(t,\theta)) } 
\tilde\Q_1\tilde\U_{1,\eta}(t,\theta)d\theta\Big)d\eta\\
&= \frac{1}{\A^6}\int_0^{1} (\tilde\U_1-\tilde\U_2)(t, \eta)\Big(\tilde\U_{2,\eta}(t, \eta)\int_0^{\eta}
 e^{-\frac{1}{\sqtC{2}}( \tilde\Y_2(t,\eta)-\tilde\Y_2(t,\theta)) }\tilde\Q_2\tilde\U_{2,\eta}(t,\theta)d\theta \\
& \qquad\qquad\qquad -\tilde\U_{1,\eta}(t, \eta)\int_0^{\eta} e^{-\frac{1}{\sqtC{1}}( \tilde\Y_1(t,\eta)-\tilde\Y_1(t,\theta)) } 
\tilde\Q_1\tilde\U_{1,\eta}(t,\theta)d\theta \Big)d\eta \\
& \quad+\mathbbm{1}_{\sqtC{2}\le \sqtC{1}}\big(\frac{1}{\sqtC{2}^6}-\frac{1}{\sqtC{1}^6} \big)\int_0^{1}(\tilde\U_1-\tilde\U_2)\tilde\U_{2,\eta}(t, \eta) \\
&\qquad\qquad\qquad\qquad\qquad\qquad\times
\int_0^{\eta}e^{-\frac{1}{\sqtC{2}}( \tilde\Y_2(t,\eta)-\tilde\Y_2(t,\theta)) }\tilde\Q_2\tilde\U_{2,\eta}(t,\theta)d\theta d\eta \\
& \quad+\mathbbm{1}_{\sqtC{1}<\sqtC{2}}
\big(\frac{1}{\sqtC{2}^6}-\frac{1}{\sqtC{1}^6} \big)\int_0^{1}(\tilde\U_1-\tilde\U_2)\tilde\U_{1,\eta}(t, \eta)\\
&\qquad\qquad\qquad\qquad\qquad\qquad\times
\int_0^{\eta}e^{-\frac{1}{\sqtC{1}}( \tilde\Y_1(t,\eta)-\tilde\Y_1(t,\theta)) }\tilde\Q_1\tilde\U_{1,\eta}(t,\theta)d\theta d\eta \\
&= M_1+M_2+M_3.
\end{align*}

To our dismay, the estimate for $M_1$ is rather involved. We estimate, using first that $\tilde\Q_i=\sqtC{i}\tilde\P_i-\tilde\D_i$ (cf.~\eqref{eq:DogP}):
\begin{align*}
M_1&=\frac{1}{\A^6}\int_0^{1} (\tilde\U_1-\tilde\U_2)(t, \eta)\Big(\tilde\U_{2,\eta}(t, \eta)\int_0^{\eta}
 e^{-\frac{1}{\sqtC{2}}( \tilde\Y_2(t,\eta)-\tilde\Y_2(t,\theta)) }\tilde\Q_2\tilde\U_{2,\eta}(t,\theta)d\theta \\
& \qquad -\tilde\U_{1,\eta}(t, \eta)\int_0^{\eta} e^{-\frac{1}{\sqtC{1}}( \tilde\Y_1(t,\eta)-\tilde\Y_1(t,\theta)) } 
\tilde\Q_1\tilde\U_{1,\eta}(t,\theta)d\theta \Big)d\eta \\
&= \frac{1}{\A^6}\int_0^{1} (\tilde\U_1-\tilde\U_2)(t, \eta)\Big(\tilde\U_{2,\eta}(t, \eta)\int_0^{\eta}
 e^{-\frac{1}{\sqtC{2}}( \tilde\Y_2(t,\eta)-\tilde\Y_2(t,\theta)) }\sqtC{2}\tilde\P_2\tilde\U_{2,\eta}(t,\theta)d\theta \\
& \qquad -\tilde\U_{1,\eta}(t, \eta)\int_0^{\eta} e^{-\frac{1}{\sqtC{1}}( \tilde\Y_1(t,\eta)-\tilde\Y_1(t,\theta)) } 
\sqtC{1}\tilde\P_1\tilde\U_{1,\eta}(t,\theta)d\theta \Big)d\eta \\
&\quad-\frac{1}{\A^6}\int_0^{1} (\tilde\U_1-\tilde\U_2)(t, \eta)\Big(\tilde\U_{2,\eta}(t, \eta)\int_0^{\eta}
 e^{-\frac{1}{\sqtC{2}}( \tilde\Y_2(t,\eta)-\tilde\Y_2(t,\theta)) }\tilde\D_2\tilde\U_{2,\eta}(t,\theta)d\theta \\
& \qquad -\tilde\U_{1,\eta}(t, \eta)\int_0^{\eta} e^{-\frac{1}{\sqtC{1}}( \tilde\Y_1(t,\eta)-\tilde\Y_1(t,\theta)) } 
\tilde\D_1\tilde\U_{1,\eta}(t,\theta)d\theta \Big)d\eta\\
&=\frac{1}{\A^6}\int_0^{1} (\tilde\U_1-\tilde\U_2)(\sqtC{2}\tilde\P_2 \tilde\U_2\tilde\U_{2,\eta}- \sqtC{1}\tilde\P_1 \tilde\U_1\tilde\U_{1,\eta})(t, \eta)d\eta \\
&\quad-\frac{1}{\A^6}\int_0^{1} (\tilde\U_1-\tilde\U_2)(\tilde\D_2 \tilde\U_2\tilde\U_{2,\eta}- \tilde\D_1 \tilde\U_1\tilde\U_{1,\eta})(t, \eta)d\eta \\
&\quad+\frac{1}{\A^6}\int_0^{1} (\tilde\U_1-\tilde\U_2)(t, \eta)\\
&\qquad\times\Big(\tilde\U_{2,\eta}(t, \eta)\int_0^{\eta} e^{-\frac{1}{\sqtC{2}}( \tilde\Y_2(t,\eta)-\tilde\Y_2(t,\theta))}\frac{1}{\sqtC{2}}\tilde\D_2\tilde\U_{2}\tilde\Y_{2,\eta}(t,\theta)d\theta\\
&\qquad\qquad-\tilde\U_{1,\eta}(t, \eta)\int_0^{\eta} e^{-\frac{1}{\sqtC{1}}( \tilde\Y_1(t,\eta)-\tilde\Y_1(t,\theta))}\frac{1}{\sqtC{1}}\tilde\D_1\tilde\U_{1}\tilde\Y_{1,\eta}(t,\theta)d\theta\Big)d\eta\\
&\quad-\frac{3}{\A^6}\int_0^{1} (\tilde\U_1-\tilde\U_2)(t, \eta)\Big(\tilde\U_{2,\eta}(t, \eta)\int_0^{\eta} e^{-\frac{1}{\sqtC{2}}( \tilde\Y_2(t,\eta)-\tilde\Y_2(t,\theta))}\tilde\P_2\tilde\U_{2}\tilde\Y_{2,\eta}(t,\theta)d\theta\\
&\qquad-\tilde\U_{1,\eta}(t, \eta)\int_0^{\eta} e^{-\frac{1}{\sqtC{1}}( \tilde\Y_1(t,\eta)-\tilde\Y_1(t,\theta))}\tilde\P_1\tilde\U_{1}\tilde\Y_{1,\eta}(t,\theta)d\theta\Big)d\eta\\
&\quad+\frac{1}{\A^6}\int_0^{1} (\tilde\U_1-\tilde\U_2)(t, \eta)\Big(\tilde\U_{2,\eta}(t, \eta)\int_0^{\eta} e^{-\frac{1}{\sqtC{2}}( \tilde\Y_2(t,\eta)-\tilde\Y_2(t,\theta))}\tilde\U_2^3\tilde\Y_{2,\eta}(t,\theta)d\theta\\
&\qquad-\tilde\U_{1,\eta}(t, \eta)\int_0^{\eta} e^{-\frac{1}{\sqtC{1}}( \tilde\Y_1(t,\eta)-\tilde\Y_1(t,\theta))}\tilde\U_1^3\tilde\Y_{1,\eta}(t,\theta)d\theta\Big)d\eta\\
&\quad+\frac{1}{2\A^6}\int_0^{1} (\tilde\U_1-\tilde\U_2)(t, \eta)\Big(\tilde\U_{2,\eta}(t, \eta)\int_0^{\eta} e^{-\frac{1}{\sqtC{2}}( \tilde\Y_2(t,\eta)-\tilde\Y_2(t,\theta))}\sqtC{2}^5\tilde\U_2(t,\theta)d\theta\\
&\qquad-\tilde\U_{1,\eta}(t, \eta)\int_0^{\eta} e^{-\frac{1}{\sqtC{1}}( \tilde\Y_1(t,\eta)-\tilde\Y_1(t,\theta))}\sqtC{1}^5\tilde\U_1(t,\theta)d\theta\Big)d\eta\\
&= W_1+W_2+W_3+W_4+W_5+W_6.
\end{align*}
Here we have used the rewrite employed when manipulating the term $\bar K_1$ from the expression \eqref{eq:barK1_1} to \eqref{eq:barK1_2}.
We start by considering the term $W_1$:
\begin{align*}
W_1&=\frac{1}{\A^6}\int_0^{1} (\tilde\U_1-\tilde\U_2)(\sqtC{2}\tilde\P_2 \tilde\U_2\tilde\U_{2,\eta}- \sqtC{1}\tilde\P_1 \tilde\U_1\tilde\U_{1,\eta})(t, \eta)d\eta \\
&= \frac{1}{\A^6}\mathbbm{1}_{\sqtC{1}\leq\sqtC{2}}(\sqtC{2}-\sqtC{1})\int_0^1 (\tilde \U_1-\tilde \U_2)\tilde \P_2\tilde \U_2\tilde \U_{2,\eta}(t,\eta) d\eta\\
& \quad + \frac{1}{\A^6} \mathbbm{1}_{\sqtC{2}<\sqtC{1}}(\sqtC{2}-\sqtC{1})\int_0^1 (\tilde \U_1-\tilde \U_2)\tilde \P_1\tilde \U_1\tilde \U_{1,\eta}(t,\eta) d\eta\\
&\quad +\frac{\ma}{\A^6}\int_0^{1} (\tilde\U_1-\tilde\U_2)(\tilde\P_2-\tilde\P_1)   \tilde\U_2\tilde\U_{2,\eta}(t, \eta)\mathbbm{1}_{\tilde\P_1\le\tilde\P_2}(t,\eta)d\eta \\
&\quad+\frac{\ma}{\A^6}\int_0^{1} (\tilde\U_1-\tilde\U_2)(\tilde\P_2-\tilde\P_1)   \tilde\U_1\tilde\U_{1,\eta}(t, \eta)\mathbbm{1}_{\tilde\P_2<\tilde\P_1}(t,\eta)d\eta \\
&\quad+\frac{\ma}{\A^6}\int_0^{1} (\tilde\U_1-\tilde\U_2)(\tilde\U_{2,\eta}-\tilde\U_{1,\eta})  \min_j( \tilde\P_j) \tilde\U_2(t, \eta)d\eta \\
&\quad-\frac{\ma}{\A^6}\int_0^{1} (\tilde\U_1-\tilde\U_2)^2 \min_j( \tilde\P_j) \tilde\U_{1,\eta}(t, \eta)d\eta \\
&=W_{11}+W_{12}+W_{13}+W_{14}+W_{15}+W_{16}.
\end{align*}

For $W_{11}$ we have (and similar for $W_{12}$) that
\begin{align*}
\vert W_{11}\vert & \leq \frac{\A^2}{8}\vert \sqtC{1}-\sqtC{2}\vert \int_0^1\vert \tilde \U_1-\tilde \U_2\vert (t,\eta) d\eta\\
& \leq \bigO(1)(\norm{\tilde \U_1-\tilde \U_2}^2+\vert \sqtC{1}-\sqtC{2}\vert^2)
\end{align*}

The terms $W_{13}$ and $W_{14}$ are similar:
\begin{align*}
\abs{W_{11}}&=\frac{\ma}{\A^6}\vert \int_0^{1} (\tilde\U_1-\tilde\U_2)(\tilde\P_2-\tilde\P_1)   \tilde\U_2\tilde\U_{2,\eta}(t, \eta)\mathbbm{1}_{\tilde\P_1\le\tilde\P_2}(t,\eta)d\eta \vert\\
&\le \frac{2\ma}{\A^6}\int_0^{1} \vert\tilde\U_1-\tilde\U_2\vert\, \vert\tilde\P_2^{1/2}-\tilde\P_1^{1/2}\vert  \tilde\P_2^{1/2} \vert \tilde\U_2\tilde\U_{2,\eta}\vert(t, \eta)d\eta \\
&\le \frac{\A}{2}\int_0^{1} \vert\tilde\U_1-\tilde\U_2\vert\, \vert\tilde\P_2^{1/2}-\tilde\P_1^{1/2}\vert d\eta \\
&\le \bigO(1)\big(\norm{\tilde\U_1-\tilde\U_2}^2+ \norm{\tilde\P_2^{1/2}-\tilde\P_1^{1/2}}^2 \big),
\end{align*}
using \eqref{eq:all_estimatesA} and \eqref{eq:all_estimatesF}.

The term $W_{15}$ goes as follows: 
\begin{align*}
\abs{W_{15}}&=\frac{\ma}{\A^6}\vert \int_0^{1} (\tilde\U_1-\tilde\U_2)(\tilde\U_{2,\eta}-\tilde\U_{1,\eta})  \min_j( \tilde\P_j) \tilde\U_2(t, \eta)d\eta \vert\\
&= \vert \frac{\ma}{2\A^6}\big(\tilde\U_1-\tilde\U_2 \big)^2 \min_j( \tilde\P_j)\tilde\U_2(t, \eta)\big\vert_0^1\\
&\qquad- \frac{\ma}{2\A^6}\int_0^{1}(\tilde\U_1-\tilde\U_2 \big)^2\frac{d}{d\eta}\big(\min_j( \tilde\P_j)\tilde\U_2\big)(t, \eta)d\eta\vert \\
&\le \frac{\ma}{2\A^6}\int_0^{1}(\tilde\U_1-\tilde\U_2 \big)^2\vert \frac{d}{d\eta}\big(\min_j( \tilde\P_j)\tilde\U_2\big)\vert(t, \eta)d\eta \\
&\le \bigO(1)\norm{\tilde\U_1-\tilde\U_2}^2,
\end{align*}
see the estimates for $\bar B_{15}$, cf.~\eqref{eq:barB13}.   As for the term $W_{16}$ we get:
\begin{align*}
\abs{W_{16}}&=\frac{\ma}{\A^6}\int_0^{1} (\tilde\U_1-\tilde\U_2)^2 \min_j( \tilde\P_j) \vert\tilde\U_{1,\eta}\vert (t, \eta)d\eta \\
&\le \bigO(1)\norm{\tilde\U_1-\tilde\U_2}^2,
\end{align*}
using \eqref{eq:all_estimatesP}.

As for the term $W_2$ we find
\begin{align*}
-W_2&= \frac{1}{\A^6}\int_0^{1} (\tilde\U_1-\tilde\U_2)(\tilde\D_2 \tilde\U_2\tilde\U_{2,\eta}- \tilde\D_1 \tilde\U_1\tilde\U_{1,\eta})(t, \eta)d\eta \\
&=\frac{1}{\A^6}\int_0^{1} (\tilde\U_1-\tilde\U_2)(\tilde\D_2-\tilde\D_1)   \tilde\U_1\tilde\U_{1,\eta}(t, \eta)\mathbbm{1}_{\tilde\D_2\le\tilde\D_1}(t,\eta)d\eta \\
&\quad+\frac{1}{\A^6}\int_0^{1} (\tilde\U_1-\tilde\U_2)(\tilde\D_2-\tilde\D_1)   \tilde\U_2\tilde\U_{2,\eta}(t, \eta)\mathbbm{1}_{\tilde\D_1<\tilde\D_2}(t,\eta)d\eta \\
&\quad+\frac{1}{\A^6}\int_0^{1} (\tilde\U_1-\tilde\U_2)(\tilde\U_{2,\eta}-\tilde\U_{1,\eta})  \min_j( \tilde\D_j) \tilde\U_2(t, \eta)d\eta \\
&\quad-\frac{1}{\A^6}\int_0^{1} (\tilde\U_1-\tilde\U_2)^2 \min_j( \tilde\D_j) \tilde\U_{1,\eta}(t, \eta)d\eta \\
&=W_{21}+W_{22}+W_{23}+W_{24}.
\end{align*}
The terms $W_{21}$ and $W_{22}$ can be treated similarly.  We need to estimate $\tilde\D_2-\tilde\D_1$. Applying  Lemma \ref{lemma:D}, we have
\begin{align*}
\abs{W_{21}}&= \frac{1}{\A^6}\vert\int_0^{1} (\tilde\U_1-\tilde\U_2)(\tilde\D_2-\tilde\D_1)   \tilde\U_1\tilde\U_{1,\eta}(t, \eta)\mathbbm{1}_{\tilde\D_2\le\tilde\D_1}(t,\eta)d\eta\vert \\
&\leq \frac{2}{\A^{9/2}}\int_0^{1} \vert\tilde\U_1-\tilde\U_2\vert\, \vert \tilde\Y_1-\tilde\Y_2\vert  \tilde\D_1^{1/2}\vert\tilde\U_1\tilde\U_{1,\eta}\vert(t, \eta)d\eta \\
& \quad +\frac{1}{\A^6}\int_0^{1} \vert\tilde\U_1-\tilde\U_2\vert\, \vert \tilde \Y_1-\tilde\Y_2\vert (\tilde \U_1^2+\tilde \P_1)\vert\tilde\U_1\tilde\U_{1,\eta}\vert(t,\eta) d\eta\\
& \quad +  \frac{2\sqrt{2}}{\A^{9/2}}\norm{\tilde\Y_1-\tilde\Y_2}\int_0^{1}\vert\tilde\U_1-\tilde\U_2\vert\tilde \D_1^{1/2}\vert\tilde\U_1\tilde\U_{1,\eta}\vert(t,\eta)d\eta \\
& \quad  + \frac{4}{\A^3}\int_0^{1}\vert\tilde\U_1-\tilde\U_2\vert \\
&\qquad\times\Big(\int_0^\eta e^{-\frac{1}{\A}(\tilde\Y_1(t,\eta)-\tilde\Y_1(t,\theta))}(\tilde\U_1-\tilde\U_2)^2(t,\theta)d\theta\Big)^{1/2}\vert\tilde\U_1\tilde\U_{1,\eta}\vert(t,\eta)d\eta\\
& \quad + \frac{2\sqrt{2}}{\A^3} \int_0^{1}\vert\tilde\U_1-\tilde\U_2\vert\\
&\qquad\times \Big(\int_0^\eta e^{-\frac{1}{\A}(\tilde \Y_1(t,\eta)-\tilde\Y_1(t,\theta))} (\sqP{1}-\sqP{2})^2(t,\theta) d\theta\Big)^{1/2}\vert\tilde\U_1\tilde\U_{1,\eta}\vert(t,\eta)d\eta\\
& \quad + \frac{3}{\sqrt{2}\A^2}\int_0^1\vert \tilde \U_1-\tilde \U_2\vert\\
&\qquad\times\Big(\int_0^\eta e^{-\frac{1}{\ma}(\tilde \Y_1(t,\eta)-\tilde \Y_1(t,\theta))}(\tilde \Y_1-\tilde \Y_2)^2(t,\theta) d\theta\Big)^{1/2}\vert \tilde \U_1\tilde \U_{1,\eta}\vert (t,\eta) d\eta\\
& \quad +\frac{12\sqrt{2}}{\sqrt{3}e\A^2}\int_0^1 \vert \tilde \U_1-\tilde \U_2\vert\\
&\qquad\times \Big(\int_0^\eta e^{-\frac{3}{4\A}(\tilde \Y_1(t,\eta)-\tilde \Y_1(t,\theta))}d\theta\Big)^{1/2}\vert \tilde\U_1\tilde \U_{1,\eta}\vert (t,\eta) d\eta \vert \sqtC{1}-\sqtC{2}\vert\\
& \quad +\frac{3}{2\A^2}\int_0^{1}\vert\tilde\U_1-\tilde\U_2\vert \\
&\qquad\times\Big(\int_0^\eta e^{-\frac{1}{\ma}(\tilde\Y_1(t,\eta)-\tilde\Y_1(t,\theta))} \vert\tilde\Y_1-\tilde\Y_2\vert(t,\theta) d\theta\Big)\vert\tilde\U_1\tilde\U_{1,\eta}\vert(t,\eta)d\eta\\
& \quad + \frac{6}{\A^2}\vert\sqtC{1}-\sqtC{2}\vert\int_0^{1}\vert\tilde\U_1-\tilde\U_2\vert  \Big(\int_0^\eta e^{-\frac{3}{4\A}(\tilde\Y_1(t,\eta)-\tilde\Y_1(t,\theta))} d\theta\Big)\vert\tilde\U_1\tilde\U_{1,\eta}\vert(t,\eta)d\eta \\
&\le \frac{2}{\A^{9/2}} \int_0^{1} \vert\tilde\U_1-\tilde\U_2\vert\, \vert \tilde\Y_1-\tilde\Y_2\vert (t,\eta)\frac{\A^{13/2}}{2\sqrt{2}} d\eta \\
&\quad+ \frac{1}{\A^6}\int_0^{1} \vert\tilde\U_1-\tilde\U_2\vert\, \vert \tilde \Y_1-\tilde\Y_2\vert (t,\eta)\frac{3\A^8}{8}d\eta\\
& \quad +  \frac{2\sqrt{2}}{A^{9/2}}\norm{\tilde\Y_1-\tilde\Y_2}\int_0^{1}\vert\tilde\U_1-\tilde\U_2\vert(t,\eta)\frac{\A^{13/2}}{2\sqrt{2}}d\eta \\
&\quad+\frac{4}{\A^3}\int_0^{1}\vert\tilde\U_1-\tilde\U_2\vert(t,\eta) \Big(\int_0^\eta (\tilde\U_1-\tilde\U_2)^2(t,\theta)d\theta\Big)^{1/2} \frac{\A^4}{2} d\eta\\
&\quad + \frac{2\sqrt{2}}{\A^3} \int_0^{1}\vert\tilde\U_1-\tilde\U_2\vert(t,\eta)\Big(\int_0^\eta  (\sqP{1}-\sqP{2})^2(t,\theta) d\theta\Big)^{1/2}\frac{\A^4}{2} d\eta\\
&\quad + \frac{3}{\sqrt{2}\A^2} \int_0^{1}\vert\tilde\U_1-\tilde\U_2\vert(t,\eta)\Big(\int_0^\eta  (\tilde \Y_1-\tilde \Y_2)^2(t,\theta) d\theta\Big)^{1/2}\frac{\A^4}{2} d\eta\\
& \quad + \frac{12\sqrt{2}}{\sqrt{3}e\A^2}\int_0^1 \vert \tilde \U_1-\tilde \U_2\vert \frac{\A^4}{2}d\eta \vert \sqtC{1}-\sqtC{2}\vert\\
&\quad +\frac{3}{2\A^2}\int_0^{1}\vert\tilde\U_1-\tilde\U_2\vert(t,\eta)\Big(\int_0^\eta \vert\tilde\Y_1-\tilde\Y_2\vert(t,\theta) d\theta\Big)\frac{\A^4}{2} d\eta\\
&\quad +\frac{6}{\A^2}\vert\sqtC{1}-\sqtC{2}\vert\int_0^{1}\vert\tilde\U_1-\tilde\U_2\vert(t,\eta) \frac{\A^4}{2}d\eta \\
&\le \bigO(1)\big(\norm{\tilde\U_1-\tilde\U_2}^2+\norm{\sqP{1}-\sqP{2}}^2+\norm{\tilde\Y_1-\tilde\Y_2}^2+\abs{\sqtC{1}-\sqtC{2}}^2\big),
\end{align*}
where we have used estimates \eqref{eq:all_estimatesA}, \eqref{eq:all_estimatesB},  \eqref{eq:all_estimatesF}, and \eqref{eq:all_estimatesN}.
The term $W_{23}$  goes as follows:
\begin{align*}
\abs{W_{23}}&= \frac{1}{\A^6}\vert \int_0^{1} (\tilde\U_1-\tilde\U_2)(\tilde\U_{2,\eta}-\tilde\U_{1,\eta})  \min_j( \tilde\D_j) \tilde\U_2(t, \eta)d\eta\vert \\
&\le  \vert \frac{1}{2\A^6}(\tilde\U_1-\tilde\U_2)^2 \min_j( \tilde\D_j) \tilde\U_2(t, \eta)\Big\vert_{0}^1 \\
&\qquad-  \frac{1}{2\A^6}\int_0^{1} (\tilde\U_1-\tilde\U_2)^2 \frac{d}{d\theta}
\big(\min_j( \tilde\D_j) \tilde\U_2\big) (t, \eta)d\eta\vert \\
&\leq\frac{1}{2\A^6}\int_0^{1} (\tilde\U_1-\tilde\U_2)^2 \vert\frac{d}{d\theta}
\big(\min_j(\tilde\D_j) \tilde\U_2\big)\vert (t, \eta)d\eta  \\
&\le \bigO(1) \norm{\tilde\U_1-\tilde\U_2}^2,
\end{align*}
by applying  Lemma \ref{lemma:5} (ii).   

The term $W_{24}$  goes as follows:
\begin{align*}
\abs{W_{24}}&\leq \frac{1}{\A^6}\int_0^{1} (\tilde\U_1-\tilde\U_2)^2 \min_j( \tilde\D_j) \vert\tilde\U_{1,\eta}\vert(t, \eta)d\eta \\
&\le \bigO(1)\norm{\tilde\U_1-\tilde\U_2}^2,
\end{align*}
using 
\begin{equation*}
\min_j( \tilde\D_j) \vert\tilde\U_{1,\eta}\vert\le 2\sqtC{1}\tilde\P_1\vert\tilde\U_{1,\eta}\vert\leq\frac{\A^7}{\sqrt{2}},
\end{equation*}
from \eqref{eq:all_estimatesN} and  \eqref{eq:all_estimatesP}.

Next, we turn to the term $W_3$: 
\begin{align*}
W_3&=\frac{1}{\A^6}\int_0^{1} (\tilde\U_1-\tilde\U_2)(t, \eta)\Big(\tilde\U_{2,\eta}(t, \eta)\int_0^{\eta} e^{-\frac{1}{\sqtC{2}}( \tilde\Y_2(t,\eta)-\tilde\Y_2(t,\theta))}\frac{1}{\sqtC{2}}\tilde\D_2\tilde\U_{2}\tilde\Y_{2,\eta}(t,\theta)d\theta\\
&\qquad-\tilde\U_{1,\eta}(t, \eta)\int_0^{\eta} e^{-\frac{1}{\sqtC{1}}( \tilde\Y_1(t,\eta)-\tilde\Y_1(t,\theta))}\frac{1}{\sqtC{1}}\tilde\D_1\tilde\U_{1}\tilde\Y_{1,\eta}(t,\theta)d\theta\Big)d\eta\\
&=\frac{1}{\A^6}\int_0^{1} (\tilde\U_1-\tilde\U_2)(t, \eta)\Big(\tilde\U_{2,\eta}(t, \eta)\int_0^{\eta} e^{-\frac{1}{\sqtC{2}}( \tilde\Y_2(t,\eta)-\tilde\Y_2(t,\theta))}\frac{1}{\sqtC{2}}\tilde\D_2\tilde\U_{2}^+\tilde\Y_{2,\eta}(t,\theta)d\theta\\
&\qquad-\tilde\U_{1,\eta}(t, \eta)\int_0^{\eta} e^{-\frac{1}{\sqtC{1}}( \tilde\Y_1(t,\eta)-\tilde\Y_1(t,\theta))}\frac{1}{\sqtC{1}}\tilde\D_1\tilde\U_{1}^+\tilde\Y_{1,\eta}(t,\theta)d\theta\Big)d\eta\\
&\quad+\frac{1}{\A^6}\int_0^{1} (\tilde\U_1-\tilde\U_2)(t, \eta)\\
&\qquad\qquad\times\Big(\tilde\U_{2,\eta}(t, \eta)\int_0^{\eta} e^{-\frac{1}{\sqtC{2}}( \tilde\Y_2(t,\eta)-\tilde\Y_2(t,\theta))}\frac{1}{\sqtC{2}}\tilde\D_2\tilde\U_{2}^-\tilde\Y_{2,\eta}(t,\theta)d\theta\\
&\qquad\qquad\qquad-\tilde\U_{1,\eta}(t, \eta)\int_0^{\eta} e^{-\frac{1}{\sqtC{1}}( \tilde\Y_1(t,\eta)-\tilde\Y_1(t,\theta))}\frac{1}{\sqtC{1}}\tilde\D_1\tilde\U_{1}^-\tilde\Y_{1,\eta}(t,\theta)d\theta\Big)d\eta\\
&=W_{31}+W_{32};
\end{align*}
these terms can be treated similarly:
\begin{align*}
W_{31}&=\frac{1}{\A^6}\int_0^{1} (\tilde\U_1-\tilde\U_2)(t, \eta)\Big(\tilde\U_{2,\eta}(t, \eta)\int_0^{\eta} e^{-\frac{1}{\sqtC{2}}( \tilde\Y_2(t,\eta)-\tilde\Y_2(t,\theta))}\frac{1}{\sqtC{2}}\tilde\D_2\tilde\U_{2}^+\tilde\Y_{2,\eta}(t,\theta)d\theta\\
&\qquad\qquad\qquad-\tilde\U_{1,\eta}(t, \eta)\int_0^{\eta} e^{-\frac{1}{\sqtC{1}}( \tilde\Y_1(t,\eta)-\tilde\Y_1(t,\theta))}\frac{1}{\sqtC{1}}\tilde\D_1\tilde\U_{1}^+\tilde\Y_{1,\eta}(t,\theta)d\theta\Big)d\eta\\
& = \mathbbm{1}_{\sqtC{1}\leq\sqtC{2}}\frac{1}{\A^6}\Big(\frac{1}{\sqtC{2}}-\frac{1}{\sqtC{1}}\Big)\int_0^1 (\tilde \U_1-\tilde \U_2)\tilde \U_{1,\eta}(t,\eta) \\
& \qquad \qquad \qquad \times\Big(\int_0^\eta e^{-\frac{1}{\sqtC{1}}(\tilde \Y_1(t,\eta)-\tilde \Y_1(t,\theta))} \tilde \D_1\tilde \V_1^+\tilde \Y_{1,\eta}(t,\theta) d\theta\Big) d\eta\\
& \qquad + \mathbbm{1}_{\sqtC{2}<\sqtC{1}}\frac{1}{\A^6}\Big(\frac{1}{\sqtC{2}}-\frac{1}{\sqtC{1}}\Big) \int_0^1 (\tilde \U_1-\tilde \U_2)\tilde \U_{2,\eta}(t,\eta)\\
& \qquad \qquad \qquad \times\Big(\int_0^\eta e^{-\frac{1}{\sqtC{2}}(\tilde \Y_2(t,\eta)-\tilde \Y_2(t,\theta))}\tilde \D_2\tilde \V_2^+\tilde \Y_{2,\eta}(t,\theta) d\theta\Big) d\eta\\
&\quad +\frac{1}{\A^7}\int_0^{1}(\tilde\U_1-\tilde\U_2)\tilde\U_{2,\eta}(t, \eta)\\
&\qquad\times\Big(\int_0^{\eta} e^{-\frac{1}{\sqtC{2}}( \tilde\Y_2(t,\eta)-\tilde\Y_2(t,\theta))}(\tilde\D_2-\tilde\D_1)\tilde\U_{2}^+\tilde\Y_{2,\eta}\mathbbm{1}_{\tilde\D_1\le\tilde\D_2}
(t,\theta)d\theta\Big)d\eta\\
&\quad+\frac{1}{\A^7}\int_0^{1}(\tilde\U_1-\tilde\U_2)\tilde\U_{1,\eta}(t, \eta)\\
&\qquad\times\Big(\int_0^{\eta} e^{-\frac{1}{\sqtC{1}}( \tilde\Y_1(t,\eta)-\tilde\Y_1(t,\theta))}(\tilde\D_2-\tilde\D_1)\tilde\U_{1}^+\tilde\Y_{1,\eta}\mathbbm{1}_{\tilde\D_2<\tilde\D_1}
(t,\theta)d\theta\Big)d\eta\\
&\quad+\frac{1}{\A^7}\int_0^{1}(\tilde\U_1-\tilde\U_2)\tilde\U_{2,\eta}(t, \eta)\\
&\qquad\times\Big(\int_0^{\eta} e^{-\frac{1}{\sqtC{2}}( \tilde\Y_2(t,\eta)-\tilde\Y_2(t,\theta))}(\tilde\U_{2}^+ -\tilde\U_{1}^+)\min_j(\tilde\D_j)\tilde\Y_{2,\eta}\mathbbm{1}_{\tilde\U_1^+\le\tilde\U_2^+}(t,\theta)d\theta\Big)d\eta\\
&\quad+\frac{1}{\A^7}\int_0^{1}(\tilde\U_1-\tilde\U_2)\tilde\U_{1,\eta}(t, \eta)\\
&\qquad\times\Big(\int_0^{\eta} e^{-\frac{1}{\sqtC{1}}( \tilde\Y_1(t,\eta)-\tilde\Y_1(t,\theta))}(\tilde\U_{2}^+ -\tilde\U_{1}^+)\min_j(\tilde\D_j)\tilde\Y_{1,\eta}\mathbbm{1}_{\tilde\U_2^+<\tilde\U_1^+}(t,\theta)d\theta\Big)d\eta\\
& \quad +\frac{1}{\A^7}\mathbbm{1}_{\sqtC{1}\leq \sqtC{2}}\int_0^1 (\tilde \U_1-\tilde \U_2)\tilde \U_{2,\eta}(t,\eta) \\
& \qquad \times\Big(\int_0^\eta (e^{-\frac{1}{\sqtC{2}}(\tilde \Y_2(t,\eta)-\tilde \Y_2(t,\theta))}-e^{-\frac{1}{\sqtC{1}}(\tilde \Y_2(t,\eta)-\tilde \Y_2(t,\theta))})\\
&\qquad\qquad\qquad\qquad\qquad\qquad\times\min_j(\tilde \D_j)\min_j(\tilde \V_j^+)\tilde \Y_{2,\eta} (t,\theta) d\theta\Big) d\eta\\
& \quad +\frac{1}{\A^7} \mathbbm{1}_{\sqtC{2}<\sqtC{1}} \int_0^1 (\tilde \U_1-\tilde \U_2)\tilde \U_{1,\eta}(t,\eta)\\
& \qquad \times \Big(\int_0^\eta (e^{-\frac{1}{\sqtC{2}}(\tilde \Y_1(t,\eta)-\tilde \Y_1(t,\theta))}-e^{-\frac{1}{\sqtC{1}}(\tilde \Y_1(t,\eta)-\tilde \Y_1(t,\theta))})\\
&\qquad\qquad\qquad\qquad\qquad\qquad\times\min_j(\tilde \D_j)\min_j(\tilde \V_j^+)\tilde \Y_{1,\eta}(t,\theta) d\theta\Big)d\eta\\
&\quad+\frac{1}{\A^7}\int_0^{1}(\tilde\U_1-\tilde\U_2)\tilde\U_{2,\eta}(t, \eta)\\
&\qquad\times\Big(\int_0^{\eta} \big(e^{-\frac{1}{\ma}( \tilde\Y_2(t,\eta)-\tilde\Y_2(t,\theta))}-e^{-\frac{1}{\ma}( \tilde\Y_1(t,\eta)-\tilde\Y_1(t,\theta))}\big)\\
&\qquad\qquad\qquad\qquad\qquad\qquad\times\min_j(\tilde\D_j) \min_j(\tilde\U_j^+) \tilde\Y_{2,\eta}\mathbbm{1}_{B(\eta)}(t,\theta)d\theta\Big)d\eta \\
&\quad+\frac{1}{\A^7}\int_0^{1}(\tilde\U_1-\tilde\U_2)\tilde\U_{1,\eta}(t, \eta)\\
&\qquad\times\Big(\int_0^{\eta} \big(e^{-\frac{1}{\ma}( \tilde\Y_2(t,\eta)-\tilde\Y_2(t,\theta))}-e^{-\frac{1}{\ma}( \tilde\Y_1(t,\eta)-\tilde\Y_1(t,\theta))}\big)\\
&\qquad\qquad\qquad\qquad\qquad\qquad\times\min_j(\tilde\D_j) \min_j(\tilde\U_j^+) \tilde\Y_{1,\eta}\mathbbm{1}_{B^c(\eta)}(t,\theta)d\theta\Big)d\eta \\
&\quad-\frac{1}{\A^7}\int_0^{1}(\tilde\U_1-\tilde\U_2)(\tilde\U_1-\tilde\U_2)_\eta(t,\eta) \\
&\qquad\times\min_k\Big(\int_0^{\eta} \min_j\big(e^{-\frac{1}{\ma}( \tilde\Y_j(t,\eta)-\tilde\Y_j(t,\theta))} \big) \min_j(\tilde\D_j)\min_j(\tilde\U_j^+)\tilde\Y_{k,\eta}(t,\theta)  d\theta\Big)d\eta \\
&\quad+\frac{1}{\A^7}\int_0^{1}(\tilde\U_1-\tilde\U_2)\tilde\U_{2,\eta} \mathbbm{1}_{D^c}(t,\eta)\\
&\quad\times\Big(\int_0^{\eta} \min_j\big(e^{-\frac{1}{\ma}( \tilde\Y_j(t,\eta)-\tilde\Y_j(t,\theta))} \big) \min_j(\tilde\D_j)\min_j(\tilde\U_j^+)(\tilde\Y_{2,\eta}-\tilde\Y_{1,\eta})(t,\theta)  d\theta\Big)d\eta \\
&\quad+\frac{1}{\A^7}\int_0^{1}(\tilde\U_1-\tilde\U_2)\tilde\U_{1,\eta} \mathbbm{1}_{D}(t,\eta)\\
&\quad\times\Big(\int_0^{\eta} \min_j\big(e^{-\frac{1}{\ma}( \tilde\Y_j(t,\eta)-\tilde\Y_j(t,\theta))} \big) \min_j(\tilde\D_j)\min_j(\tilde\U_j^+)(\tilde\Y_{2,\eta}-\tilde\Y_{1,\eta}) (t,\theta) d\theta\Big)d\eta \\
&= Z_1+Z_2+Z_3+Z_4+Z_5+Z_6+Z_7+Z_8+Z_9+Z_{10}+Z_{11}+Z_{12}+Z_{13},
\end{align*}
where the set $D$ is defined by \eqref{eq:set_e}.  At this point it cannot come as a surprise that we have to treat these terms separately.  

Let us start with $Z_1$ ($Z_2$ is similar)
\begin{align*}
\vert Z_1\vert &\leq  \frac{2}{\A^7}\vert \sqtC{1}-\sqtC{2}\vert \int_0^1 \vert \tilde \U_1-\tilde \U_2\vert \vert \tilde \U_{1,\eta}\vert (t,\eta)\\
& \qquad \qquad \times \Big(\int_0^\eta e^{-\frac{1}{\sqtC{1}}(\tilde \Y_1(t,\eta)-\tilde \Y_1(t,\theta))}\tilde \U_1^2\tilde \Y_{1,\eta}(t,\theta) d\theta\Big)^{1/2}\\
& \qquad \qquad \qquad \times \Big(\int_0^\eta e^{-\frac{1}{\sqtC{1}}(\tilde \Y_1(t,\eta)-\tilde \Y_1(t,\theta))}\tilde \P_1^2\tilde \Y_{1,\eta}(t,\theta) d\theta\Big)^{1/2}d\eta\\
& \leq \frac{2\sqrt{6}}{\A^4}\vert \sqtC{1}-\sqtC{2}\vert \int_0^1 \vert \tilde \U_1-\tilde \U_2\vert \tilde \P_1\vert \tilde \U_{1,\eta}\vert (t,\eta) d\eta\\
& \leq \bigO(1)(\norm{\tilde \U_1-\tilde \U_2}^2+\vert \sqtC{1}-\sqtC{2}\vert ^2).
\end{align*}

The terms $Z_3$ and $Z_4$ go as follows:
\begin{align*}
\abs{Z_3}&=\frac{1}{\A^7}\vert\int_0^{1}(\tilde\U_1-\tilde\U_2)\tilde\U_{2,\eta}(t, \eta)\\
&\qquad\times\Big(\int_0^{\eta} e^{-\frac{1}{\sqtC{2}}( \tilde\Y_2(t,\eta)-\tilde\Y_2(t,\theta))}(\tilde\D_2-\tilde\D_1)\tilde\U_{2}^+\tilde\Y_{2,\eta}\mathbbm{1}_{\tilde\D_1\le\tilde\D_2}
(t,\theta)d\theta\Big)d\eta\vert\\
&\le \bigO(1)\big( \norm{\tilde\U_1-\tilde\U_2}^2+\norm{\tilde\Y_1-\tilde\Y_2}^2+\norm{\sqP{1}-\sqP{2}}^2+\vert \sqtC{1}-\sqtC{2}\vert^2\big),
\end{align*}
by Lemma~\ref{lemma:D}. The terms $Z_5$ and $Z_6$ can be treated similarly:
\begin{align*}
\abs{Z_5}&=\frac{1}{\A^7}\vert\int_0^{1}(\tilde\U_1-\tilde\U_2)\tilde\U_{2,\eta}(t, \eta)\\
&\qquad\times\Big(\int_0^{\eta} e^{-\frac{1}{\sqtC{2}}( \tilde\Y_2(t,\eta)-\tilde\Y_2(t,\theta))}(\tilde\U_{2}^+ -\tilde\U_{1}^+)\min_j(\tilde\D_j)\tilde\Y_{2,\eta}\mathbbm{1}_{\tilde\U_1^+\le\tilde\U_2^+}(t,\theta)d\theta\Big)d\eta\vert \\
&\le \norm{\tilde\U_1-\tilde\U_2}^2+\frac1{\A^{14}}\int_0^{1}\tilde\U_{2,\eta}^2(t, \eta)\\
&\qquad\times\Big(\int_0^{\eta}  e^{-\frac{1}{\sqtC{2}}( \tilde\Y_2(t,\eta)-\tilde\Y_2(t,\theta))} (\tilde\U_{2}^+ -\tilde\U_{1}^+)\min_j(\tilde\D_j)\tilde\Y_{2,\eta}\mathbbm{1}_{\tilde\U_1^+\le\tilde\U_2^+}(t,\theta)d\theta  \Big)^2d\eta \\
&\le \bigO(1) \norm{\tilde\U_1-\tilde\U_2}^2,
\end{align*}
by applying \eqref{eq:343}, estimating $\tilde \D_2\le 2\sqtC{2}\tilde \P_2$ (cf.~\eqref{eq:all_estimatesN}), and subsequently $ 2\sqrt{2}\tilde \P_2\tilde\U_{2,\eta}^2\le \sqtC{2}^6$ (cf.~\eqref{eq:all_estimatesQ}). 
The terms $Z_7$ and $Z_8$ follow this pattern
\begin{align*}
\abs{Z_8}&\leq \frac{4}{\A^7\ma e}\int_0^1 \vert \tilde \U_1-\tilde \U_2\vert \vert \tilde \U_{1,\eta}\vert (t,\eta)\\
& \qquad \qquad \times \Big(\int_0^\eta e^{-\frac{3}{4\sqtC{1}}(\tilde \Y_1(t,\eta)-\tilde \Y_1(t,\theta))}2\sqtC{1}\tilde \P_1\tilde \Y_{1,\eta}\vert \tilde \U_2\vert (t,\theta) d\theta\Big) d\eta\vert \sqtC{1}-\sqtC{2}\vert\\
&\leq \frac{4\sqrt{2}}{\A^5e}\int_0^1 \vert \tilde \U_1-\tilde \U_2\vert \vert \tilde \U_{1,\eta}\vert (t,\eta) \Big(\int_0^\eta e^{-\frac{1}{\sqtC{2}}(\tilde \Y_1(t,\eta)-\tilde \Y_2(t,\eta))} \tilde \P_1^2\tilde \Y_{1,\eta}(t,\theta) d\theta\Big)^{1/2}\\
& \qquad \qquad \times \Big(\int_0^\eta e^{-\frac{1}{2\sqtC{2}}(\tilde \Y_1(t,\eta)-\tilde \Y_1(t,\theta))}\tilde \Y_{1,\eta}(t,\theta) d\theta \Big)^{1/2} d\eta\vert \sqtC{1}-\sqtC{2}\vert\\
& \leq \frac{4\sqrt{6}}{\A^2e} \int_0^1 \vert \tilde \U_1-\tilde \U_2\vert \sqP{1}\vert \tilde \U_{1,\eta}\vert (t,\eta) d\eta\vert \sqtC{1}-\sqtC{2}\vert\\
& \leq \bigO(1)(\norm{\tilde \U_1-\tilde\U_2}^2+\vert \sqtC{1}-\sqtC{2}\vert^2).
\end{align*}

The terms $Z_9$ and $Z_{10}$ follow this pattern:
\begin{align*}
\abs{Z_9}&=\frac{1}{\A^7}\vert\int_0^{1}(\tilde\U_1-\tilde\U_2)\tilde\U_{2,\eta}(t, \eta)\Big(\int_0^{\eta} \big(e^{-\frac{1}{\ma}( \tilde\Y_2(t,\eta)-\tilde\Y_2(t,\theta))}-e^{-\frac{1}{\ma}( \tilde\Y_1(t,\eta)-\tilde\Y_1(t,\theta))}\big)\\
&\qquad\qquad\qquad\qquad\qquad\qquad\qquad\times\min_j(\tilde\D_j) \min_j(\tilde\U_j^+) \tilde\Y_{2,\eta}\mathbbm{1}_{B(\eta)}(t,\theta)
d\theta\Big)d\eta\vert \\
&\le \frac{\ma}{\sqrt{2}\A^7}\int_0^{1}\vert\tilde\U_1-\tilde\U_2\vert\vert\tilde\U_{2,\eta}\vert(t, \eta)\\
&\qquad\qquad\qquad\qquad\times\Big(\int_0^{\eta} \big(\vert\tilde\Y_2(t,\eta)-\tilde\Y_1(t,\eta)\vert +\vert\tilde\Y_2(t,\theta)-\tilde\Y_1(t,\theta)\vert\big) \\
&\qquad\qquad\qquad\qquad\qquad\qquad\qquad \times e^{-\frac{1}{\ma}( \tilde\Y_2(t,\eta)-\tilde\Y_2(t,\theta))}\tilde\D_2 \tilde\Y_{2,\eta}(t,\theta)d\theta\Big)d\eta  \\
&\le \frac{1}{\sqrt{2}\A^6}\int_0^{1}\vert\tilde\U_1-\tilde\U_2\vert\,\vert\tilde\Y_2-\tilde\Y_1\vert\vert\tilde\U_{2,\eta}\vert(t, \eta)\\
&\qquad\times\Big(\int_0^{\eta} e^{-\frac{1}{\sqtC{2}}( \tilde\Y_2(t,\eta)-\tilde\Y_2(t,\theta))}\tilde\D_2 \tilde\Y_{2,\eta}(t,\theta)d\theta\Big)d\eta \\
&\quad +\frac{1}{\sqrt{2}\A^6}\int_0^{1}\vert\tilde\U_1-\tilde\U_2\vert\vert\tilde\U_{2,\eta}\vert(t, \eta)\\
&\qquad\times\Big(\int_0^{\eta} e^{-\frac{1}{\sqtC{2}}( \tilde\Y_2(t,\eta)-\tilde\Y_2(t,\theta))} \tilde\D_2  \tilde\Y_{2,\eta}
\vert\tilde\Y_2-\tilde\Y_1\vert(t,\theta)d\theta\Big)d\eta \\
&\le \frac{\sqrt{2}}{\A^5}\int_0^{1}\vert\tilde\U_1-\tilde\U_2\vert\,\vert\tilde\Y_2-\tilde\Y_1\vert\vert\tilde\U_{2,\eta}\vert(t, \eta)\\
&\qquad\qquad\qquad\qquad\qquad\times\Big(\int_0^{\eta} e^{-\frac{1}{\sqtC{2}}( \tilde\Y_2(t,\eta)-\tilde\Y_2(t,\theta))}\tilde\P_2^2  \tilde\Y_{2,\eta}(t,\theta)d\theta\Big)^{1/2}   \\
&\qquad\qquad\qquad\qquad\qquad\quad\times\Big(\int_0^{\eta}   e^{-\frac{1}{\sqtC{2}}( \tilde\Y_2(t,\eta)-\tilde\Y_2(t,\theta))}    \tilde\Y_{2,\eta}(t,\theta) d\theta\Big)^{1/2}   d\eta \\
&\quad +\norm{\tilde\U_1-\tilde\U_2}^2+\frac{2}{\A^{10}}\int_0^{1}\tilde\U_{2,\eta}^2(t, \eta)\Big(\int_0^{\eta} e^{-\frac{3}{2\sqtC{2}}( \tilde\Y_2(t,\eta)-\tilde\Y_2(t,\theta))} \tilde \P_2\tilde\Y_{2,\eta}(t,\theta)d\theta\Big)\\
&\qquad\qquad\qquad\qquad \times\Big(\int_0^{\eta} e^{-\frac{1}{2\sqtC{2}}( \tilde\Y_2(t,\eta)-\tilde\Y_2(t,\theta))} \tilde\P_2 \tilde\Y_{2,\eta}\vert\tilde\Y_2-\tilde\Y_1\vert^2(t,\theta)d\theta\Big)d\eta \\
&\le \bigO(1)\big(\norm{\tilde\U_1-\tilde\U_2}^2+\norm{\tilde\Y_1-\tilde\Y_2}^2 \big).
\end{align*} 
The term $Z_{11}$:
\begin{align*}
\abs{Z_{11}}&=\frac{1}{\A^7}\vert\int_0^{1}(\tilde\U_1-\tilde\U_2)(\tilde\U_1-\tilde\U_2)_\eta(t, \eta) \\
&\qquad\times\min_k\Big(\int_0^{\eta} \min_j\big(e^{-\frac{1}{\ma}( \tilde\Y_j(t,\eta)-\tilde\Y_j(t,\theta))} \big) \min_j(\tilde\D_j)\min_j(\tilde\U_j^+)\tilde\Y_{k,\eta}(t, \theta)  d\theta\Big)d\eta\vert \\
&\le \bigO(1) \norm{\tilde\U_1-\tilde\U_2}^2_2,
\end{align*}
following the estimates employed for the term $\bar B_{37}$, see \eqref{eq:barB37}.  

The terms $Z_{12}$ and $Z_{13}$ may be treated as follows:
\begin{align*}
\abs{Z_{12}}&=\frac{1}{\A^7}\vert\int_0^{1}(\tilde\U_1-\tilde\U_2)\tilde\U_{2,\eta} \mathbbm{1}_{D^c}(t,\eta)\Big(\int_0^{\eta} \min_j\big(e^{-\frac{1}{\ma}( \tilde\Y_j(t,\eta)-\tilde\Y_j(t,\theta))} \big)\\
&\qquad\qquad\qquad\qquad\qquad\times \min_j(\tilde\D_j)\min_j(\tilde\U_j^+)(\tilde\Y_{2,\eta}-\tilde\Y_{1,\eta})(t, \theta)  d\theta\Big)d\eta \vert\\
&=\frac{1}{\A^7}\vert\int_0^{1}(\tilde\U_1-\tilde\U_2)\tilde\U_{2,\eta} \mathbbm{1}_{D^c}(t,\eta)\\
&\quad\times\Big[\Big(\min_j\big(e^{-\frac{1}{\ma}( \tilde\Y_j(t,\eta)-\tilde\Y_j(t,\theta))} \big) \min_j(\tilde\D_j)\min_j(\tilde\U_j^+)(\tilde\Y_{2}-\tilde\Y_{1})(t,\theta)\Big)\Big\vert_{\theta=0}^{\eta} \\
&\quad -\int_0^{\eta}(\tilde\Y_{2}-\tilde\Y_{1})\frac{d}{d\theta}\big( \min_j\big(e^{-\frac{1}{\ma}( \tilde\Y_j(t,\eta)-\tilde\Y_j(t,\theta))} \big) \min_j(\tilde\D_j)\min_j(\tilde\U_j^+) \big)(t, \theta)d\theta\Big]d\eta \vert \\
&\le \frac{1}{\A^7}\vert \int_0^{1}(\tilde\U_1-\tilde\U_2)\tilde\U_{2,\eta} \mathbbm{1}_{D^c}
 \min_j(\tilde\D_j)\min_j(\tilde\U_j^+)(\tilde\Y_{2}-\tilde\Y_{1})(t,\eta) d\eta\vert\\
&\quad+\frac{1}{\A^7}\vert\int_0^{1}(\tilde\U_1-\tilde\U_2)\tilde\U_{2,\eta} \mathbbm{1}_{D^c}(t,\eta)\Big(\int_0^{\eta}(\tilde\Y_{2}-\tilde\Y_{1})\frac{d}{d\theta}\big( \min_j\big(e^{-\frac{1}{\ma}( \tilde\Y_j(t,\eta)-\tilde\Y_j(t,\theta))} \big) \\
&\qquad\qquad\qquad\qquad\qquad\qquad\qquad\qquad\times
 \min_j(\tilde\D_j)\min_j(\tilde\U_j^+) \big)(t, \theta)d\theta\Big) d\eta\vert  \\
&\le \frac{\A^2}{4}\int_0^{1}\vert\tilde\U_1-\tilde\U_2\vert\, \vert\tilde\Y_{2}-\tilde\Y_{1} \vert(t,\eta) d\eta\\
&\quad +\frac{1}{\A^7}\int_0^{1}\vert\tilde\U_1-\tilde\U_2\vert\, \vert\tilde\U_{2,\eta} \vert (t,\eta)\Big(\int_0^{\eta}\vert\tilde\Y_1-\tilde\Y_2\vert \vert\frac{d}{d\theta}\big( \min_j\big(e^{-\frac{1}{\ma}( \tilde\Y_j(t,\eta)-\tilde\Y_j(t,\theta))} \big)\\
&\qquad\qquad\qquad\qquad\qquad\qquad\qquad\qquad\times \min_j(\tilde\D_j)\min_j(\tilde\U_j^+) \big)\vert(t, \theta)d\theta\Big)d\eta\\
&= \tilde M_{1}+\tilde M_{2}.
 \end{align*}
Here we find
\begin{equation*}
\tilde M_1\le \bigO(1)\big(\norm{\tilde\U_1-\tilde\U_2}^2+\norm{\tilde\Y_1-\tilde\Y_2}^2 \big),
\end{equation*}
while $\tilde M_2$ requires more care:
\begin{align*}
\tilde M_2&= \frac{1}{\A^7}\int_0^{1}\vert\tilde\U_1-\tilde\U_2\vert\, \vert\tilde\U_{2,\eta} \vert (t,\eta)\Big(\int_0^{\eta}\vert\tilde\Y_1-\tilde\Y_2\vert\vert \frac{d}{d\theta}\big( \min_j\big(e^{-\frac{1}{\ma}( \tilde\Y_j(t,\eta)-\tilde\Y_j(t,\theta))} \big)\\
&\qquad\qquad\qquad\qquad\qquad\qquad\qquad\times \min_j(\tilde\D_j)\min_j(\tilde\U_j^+) \big)(t, \theta)\vert d\theta\Big)d\eta\\
&\le  \frac{1}{\A^7}\int_0^{1}\vert\tilde\U_1-\tilde\U_2\vert\, \vert\tilde\U_{2,\eta} \vert (t,\eta)\Big(\int_0^{\eta}\vert\tilde\Y_1-\tilde\Y_2\vert \\
&\qquad\times\big\vert\frac{d}{d\theta}\big( \min_j\big(e^{-\frac{1}{\ma}( \tilde\Y_j(t,\eta)-\tilde\Y_j(t,\theta))} \big)\big) \min_j(\tilde\D_j)\min_j(\tilde\U_j^+) \\
&\qquad\qquad\qquad+\min_j\big(e^{-\frac{1}{\ma}( \tilde\Y_j(t,\eta)-\tilde\Y_j(t,\theta))} \big) \frac{d}{d\theta}\big(\min_j(\tilde\D_j)\min_j(\tilde\U_j^+) \big) \big)(t, \theta)\vert d\theta\Big)d\eta\\
&\le \frac{1}{\A^7}\int_0^{1}\vert\tilde\U_1-\tilde\U_2\vert\, \vert\tilde\U_{2,\eta} \vert (t,\eta)\Big(\int_0^{\eta}\vert\tilde\Y_1-\tilde\Y_2\vert\vert\frac{d}{d\theta}\big( \min_j\big(e^{-\frac{1}{\ma}( \tilde\Y_j(t,\eta)-\tilde\Y_j(t,\theta))} \big)\big) \\
&\qquad\qquad\qquad\qquad\qquad\qquad\qquad\times\min_j(\tilde\D_j)\min_j(\tilde\U_j^+)(t, \theta)\vert d\theta\Big)d\eta\\
&\quad+\frac{1}{\A^7}\int_0^{1}\vert\tilde\U_1-\tilde\U_2\vert\, \vert\tilde\U_{2,\eta} \vert (t,\eta)\Big(\int_0^{\eta}\vert\tilde\Y_1-\tilde\Y_2\vert\vert\min_j\big(e^{-\frac{1}{\ma}( \tilde\Y_j(t,\eta)-\tilde\Y_j(t,\theta))} \big) \\
&\qquad\qquad\qquad\qquad\qquad\qquad\qquad\qquad\times\frac{d}{d\theta}\big(\min_j(\tilde\D_j)\min_j(\tilde\U_j^+) \big)(t, \theta)\vert d\theta\Big)d\eta\\
&\le \frac{1}{\ma\A^7}\int_0^{1}\vert\tilde\U_1-\tilde\U_2\vert\, \vert\tilde\U_{2,\eta} \vert (t,\eta)
\Big(\int_0^{\eta}\vert\tilde\Y_1-\tilde\Y_2\vert \min_j(e^{-\frac{1}{\ma}(\tilde \Y_j(t,\eta)-\tilde\Y_j(t,\theta))}) \\
&\qquad\qquad\qquad\qquad\qquad\qquad\qquad\qquad\times\max_j(\tilde\Y_{j,\eta})\min_j(\tilde\D_j)\min_j(\tilde\U_j^+)(t, \theta)d\theta\Big)d\eta\\
&\quad+\bigO(1)\frac{1}{\A^{5/2}}\int_0^{1}\vert\tilde\U_1-\tilde\U_2\vert\, \vert\tilde\U_{2,\eta} \vert (t,\eta)\\
&\qquad\times\Big(\int_0^{\eta}\vert\tilde\Y_1-\tilde\Y_2\vert\min_j\big(e^{-\frac{1}{\ma}( \tilde\Y_j(t,\eta)-\tilde\Y_j(t,\theta))} \big) 
 (\min_j(\tilde\D_j)^{1/2}+\vert \tilde\U_2\vert)(t, \theta)d\theta\Big)d\eta\\
 &=\tilde M_{21}+\tilde M_{22},
\end{align*}
where estimates for the derivatives come from Lemma \ref{lemma:1} and Lemma \ref{lemma:5}. We find for the term $\tilde M_{21}$ that
\begin{align*}
\tilde M_{21}&=\frac{1}{\ma\A^7}\int_0^{1}\vert\tilde\U_1-\tilde\U_2\vert\, \vert\tilde\U_{2,\eta} \vert (t,\eta)\Big(\int_0^{\eta}\vert\tilde\Y_1-\tilde\Y_2\vert \min_j(e^{-\frac{1}{\ma}(\tilde \Y_j(t,\eta)-\tilde\Y_j(t,\theta))})\\
&\qquad\qquad\qquad\qquad\qquad\qquad\times \max_j(\tilde\Y_{j,\eta})\min_j(\tilde\D_j)\min_j(\tilde\U_j^+)(t, \theta)d\theta\Big)d\eta\\
&\le \norm{\tilde\U_1-\tilde\U_2}^2+\frac{1}{\ma^2\A^{14}}\int_0^{1}\tilde\U_{2,\eta}^2 (t,\eta) \Big(\int_0^{\eta}\vert\tilde\Y_1-\tilde\Y_2\vert \min_j(e^{-\frac{1}{\ma}(\tilde \Y_j(t,\eta)-\tilde\Y_j(t,\theta))})\\
&\qquad\qquad\qquad\qquad\qquad\qquad\times \max_j(\tilde\Y_{j,\eta})\min_j(\tilde\D_j)\min_j(\tilde\U_j^+)(t, \theta)d\theta\Big)^2 d\eta\\
&\le \norm{\tilde\U_1-\tilde\U_2}^2+\frac12\int_0^{1}\tilde\U_{2,\eta}^2(t,\eta) \\
&\qquad\times\Big(\int_0^{\eta}\vert\tilde\Y_1-\tilde\Y_2\vert \min_j(e^{-\frac{1}{\ma}(\tilde \Y_j(t,\eta)-\tilde\Y_j(t,\theta))}) d\theta\Big)^2 d\eta\\
&\le \norm{\tilde\U_1-\tilde\U_2}^2+\frac{\A^3}{2}\int_0^{1}\tilde\Y_{2,\eta} (t,\eta) \\
&\qquad\times\Big(\int_0^{\eta}(\tilde\Y_1-\tilde\Y_2)^2 e^{-\frac{1}{\ma}(\tilde \Y_2(t,\eta)-\tilde\Y_2(t,\theta))} d\theta\Big) \Big(\int_0^{\eta} e^{-\frac{1}{\ma}(\tilde \Y_2(t,\eta)-\tilde\Y_2(t,\theta))}d\theta\Big) d\eta\\
&\le \norm{\tilde\U_1-\tilde\U_2}^2+\frac{\A^3}{2}\int_0^{1}\tilde\Y_{2,\eta}(t,\eta)e^{-\frac{1}{\ma}\tilde \Y_2(t,\eta)} \Big(\int_0^{\eta} e^{\frac{1}{\ma}\tilde \Y_2(t,\theta)} d\theta\Big) d\eta\norm{\tilde \Y_1-\tilde \Y_2}^2\\
&\le \bigO(1)\big(\norm{\tilde\U_1-\tilde\U_2}^2+\norm{\tilde\Y_1-\tilde\Y_2}^2 \big),
\end{align*}
where we used the estimate 
\begin{equation*}
\max_j(\tilde\Y_{j,\eta})\min_j(\tilde\D_j)\le 2 \max_j(\tilde\Y_{j,\eta})\min_j(\sqtC{j}\tilde\P_j)\le 2A\max_j(\tilde\P_j\tilde\Y_{j,\eta})\leq \A^6,
\end{equation*}
using \eqref{eq:all_estimatesE} and \eqref{eq:all_estimatesN}, as well as \eqref{eq:all_estimatesQ} and \eqref{eq:all_PestimatesA}.  The term $\tilde M_{22}$ reads
\begin{align*}
\tilde M_{22}&=\bigO(1)\frac{1}{\A^{5/2}}\int_0^{1}\vert\tilde\U_1-\tilde\U_2\vert\, \vert\tilde\U_{2,\eta} \vert (t,\eta)\\
&\qquad\times\Big(\int_0^{\eta}\vert\tilde\Y_1-\tilde\Y_2\vert\min_j\big(e^{-\frac{1}{\ma}( \tilde\Y_j(t,\eta)-\tilde\Y_j(t,\theta))} \big) 
 (\min_j(\tilde\D_j)^{1/2}+\vert \tilde\U_2\vert)(t, \theta)d\theta\Big)d\eta\\
&\le \norm{\tilde\U_1-\tilde\U_2}^2 +\bigO(1)\frac{1}{\A^5}\int_0^{1}\tilde\U_{2,\eta}^2(t,\eta)\\
&\qquad\qquad\qquad\qquad\qquad\times
\Big(\int_0^{\eta}e^{-\frac{1}{\ma}( \tilde\Y_2(t,\eta)-\tilde\Y_2(t,\theta))}  \vert\tilde\Y_1-\tilde\Y_2\vert\tilde\P_2^{1/2} (t,\theta)d\theta\Big)^2d\eta \\
&\le \norm{\tilde\U_1-\tilde\U_2}^2 +\bigO(1)\frac{1}{\A^5}\int_0^{1}\tilde\U_{2,\eta}^2 (t,\eta)\Big(\int_0^{\eta}e^{-\frac3{2\sqtC{2}}( \tilde\Y_2(t,\eta)-\tilde\Y_2(t,\theta))} \tilde\P_2(t,\theta)d\theta\Big)\\
&\qquad\qquad\qquad\qquad\qquad \times\Big(\int_0^{\eta} e^{-\frac1{2\sqtC{2}}( \tilde\Y_2(t,\eta)-\tilde\Y_2(t,\theta))} (\tilde\Y_1-\tilde\Y_2)^2(t,\theta)d\theta\Big)d\eta \\
&\le \norm{\tilde\U_1-\tilde\U_2}^2 +\bigO(1)\frac{1}{\A^5}\int_0^{1}\tilde\P_2\tilde\U_{2,\eta}^2(t,\eta)\\
&\qquad\qquad\qquad\qquad\qquad \times \Big(\int_0^{\eta} e^{-\frac1{2\sqtC{2}}( \tilde\Y_2(t,\eta)-\tilde\Y_2(t,\theta))} (\tilde\Y_1-\tilde\Y_2)^2(t,\theta)d\theta\Big)d\eta\\
 &\le \bigO(1)\big(\norm{\tilde\U_1-\tilde\U_2}^2+ \norm{\tilde\Y_1-\tilde\Y_2}^2\big).
\end{align*}
Here we employed 
\begin{align*}
\min_j(\tilde\D_j)^{1/2}+\vert \tilde\U_2\vert&\le \sqrt{2\sqtC{2}}\sqP{2}+\sqrt{2}\sqP{2}\le \sqrt{2}(1+\sqrt{\sqtC{2}})\sqP{2},
\end{align*}
as well as \eqref{eq:32P} and \eqref{eq:all_estimatesQ}.

\bigskip
Next, we turn to the term $W_4$: 
\begin{align*}
-W_4&=\frac{3}{\A^6}\int_0^{1} (\tilde\U_1-\tilde\U_2)(t, \eta)\Big(\tilde\U_{2,\eta}(t, \eta)\int_0^{\eta} e^{-\frac{1}{\sqtC{2}}( \tilde\Y_2(t,\eta)-\tilde\Y_2(t,\theta))}\tilde\P_2\tilde\U_{2}\tilde\Y_{2,\eta}(t,\theta)d\theta\\
&\qquad-\tilde\U_{1,\eta}(t, \eta)\int_0^{\eta} e^{-\frac{1}{\sqtC{1}}( \tilde\Y_1(t,\eta)-\tilde\Y_1(t,\theta))}\tilde\P_1\tilde\U_{1}\tilde\Y_{1,\eta}(t,\theta)d\theta\Big)d\eta\\
&=W_4^+ + W_4^-,
\end{align*}
with
\begin{align*}
W_4^{\pm}&=\frac{3}{\A^6}\int_0^{1} (\tilde\U_1-\tilde\U_2)(t, \eta)\Big(\tilde\U_{2,\eta}(t, \eta)\int_0^{\eta} e^{-\frac{1}{\sqtC{2}}( \tilde\Y_2(t,\eta)-\tilde\Y_2(t,\theta))}\tilde\P_2\tilde\U_{2}^\pm\tilde\Y_{2,\eta}(t,\theta)d\theta\\
&\qquad-\tilde\U_{1,\eta}(t, \eta)\int_0^{\eta} e^{-\frac{1}{\sqtC{1}}( \tilde\Y_1(t,\eta)-\tilde\Y_1(t,\theta))}\tilde\P_1\tilde\U_{1}^\pm\tilde\Y_{1,\eta}(t,\theta)d\theta\Big)d\eta\\
&\le \bigO(1)\big(\norm{\tilde\U_1-\tilde\U_2}^2+\norm{\sqP{1}-\sqP{2}}^2+ \norm{\tilde\Y_1-\tilde\Y_2}^2+\vert \sqtC{1}-\sqtC{2}\vert^2\big),
\end{align*}
using that $W_4^+=3N_1$, see \eqref{eq:N1}, and similar for $W_4^-$. 

\bigskip
Next, we turn to the term $W_5$: 
\begin{align*}
W_5&=\frac{1}{\A^6}\int_0^{1} (\tilde\U_1-\tilde\U_2)(t, \eta)\Big(\tilde\U_{2,\eta}(t, \eta)\int_0^{\eta} e^{-\frac{1}{\sqtC{2}}( \tilde\Y_2(t,\eta)-\tilde\Y_2(t,\theta))}\tilde\U_2^3\tilde\Y_{2,\eta}(t,\theta)d\theta\\
&\qquad-\tilde\U_{1,\eta}(t, \eta)\int_0^{\eta} e^{-\frac{1}{\sqtC{1}}( \tilde\Y_1(t,\eta)-\tilde\Y_1(t,\theta))}\tilde\U_1^3\tilde\Y_{1,\eta}(t,\theta)d\theta\Big)d\eta
\end{align*}
We take positive and negative part of the term $\tilde\U_j^3$, and thus it suffices to study the term
\begin{align*}
W_5^+&=\frac{1}{\A^6}\int_0^{1} (\tilde\U_1-\tilde\U_2)\Big[\tilde\U_{2,\eta}(t, \eta)\int_0^{\eta} e^{-\frac{1}{\sqtC{2}}( \tilde\Y_2(t,\eta)-\tilde\Y_2(t,\theta))}(\tilde\U_2^+)^3\tilde\Y_{2,\eta}(t,\theta)d\theta\\
&\qquad-\tilde\U_{1,\eta}(t, \eta)\int_0^{\eta} e^{-\frac{1}{\sqtC{1}}( \tilde\Y_1(t,\eta)-\tilde\Y_1(t,\theta))}(\tilde\U_1^+)^3\tilde\Y_{1,\eta}(t,\theta)d\theta\Big]d\eta. 
\end{align*}
Having a close look at $W_5^+$ one has 
\begin{equation*}
W_5^+=\int_0^1 (\tilde \U_1-\tilde\U_2)(J_1+J_2+J_3+J_4+J_5+J_6+J_7+J_8)(t,\eta) d\eta,
\end{equation*}
where $J_1,\dots, J_6$ are defined in \eqref{eq:splitU}. Thus we can conclude immediately that 
\begin{align*}
\abs{W_5^+}&\leq \bigO(1) \Big(\norm{\tilde\Y_1-\tilde\Y_2}^2+\norm{\tilde\U_1-\tilde\U_2}^2\\
&\qquad\qquad\qquad+\norm{\sqP{1}-\sqP{2}}^2+\vert \sqtC{1}-\sqtC{2}\vert^2\Big).
\end{align*}

\bigskip
Next, we turn to the term $W_6$: 
\begin{align*}
2W_6&=\frac{1}{\A^6}\int_0^{1} (\tilde\U_1-\tilde\U_2)(t, \eta)\Big(\tilde\U_{2,\eta}(t, \eta)\int_0^{\eta} e^{-\frac{1}{\sqtC{2}}( \tilde\Y_2(t,\eta)-\tilde\Y_2(t,\theta))}\sqtC{2}^5\tilde\U_2(t,\theta)d\theta\\
&\qquad-\tilde\U_{1,\eta}(t, \eta)\int_0^{\eta} e^{-\frac{1}{\sqtC{1}}( \tilde\Y_1(t,\eta)-\tilde\Y_1(t,\theta))}\sqtC{1}^5\tilde\U_1(t,\theta)d\theta\Big)d\eta\\
&= \frac{\sqtC{2}^5-\sqtC{1}^5}{\A^6}\mathbbm{1}_{\sqtC{1}\le\sqtC{2}} \int_0^1(\tilde\U_1-\tilde\U_2)\tilde\U_{2,\eta}(t, \eta)\int_0^{\eta} e^{-\frac{1}{\sqtC{2}}( \tilde\Y_2(t,\eta)-\tilde\Y_2(t,\theta))}\tilde\U_2(t,\theta)d\theta d\eta\\
&\quad +\frac{\sqtC{2}^5-\sqtC{1}^5}{\A^6}\mathbbm{1}_{\sqtC{2}< \sqtC{1}} \int_0^1   (\tilde\U_1-\tilde\U_2) \tilde\U_{1,\eta}(t, \eta)\\
&\qquad\qquad\qquad\qquad\qquad\qquad\times\int_0^{\eta} e^{-\frac{1}{\sqtC{1}}( \tilde\Y_1(t,\eta)-\tilde\Y_1(t,\theta))}\tilde\U_1(t,\theta)d\theta d\eta\\
& \quad +\frac{\ma^5}{\A^6}\mathbbm{1}_{\sqtC{1}\leq \sqtC{2}}\int_0^1 (\tilde \U_1-\tilde \U_2)\tilde \U_{2,\eta}(t,\eta)\\
&\qquad\qquad\times\int_0^\eta (e^{-\frac{1}{\sqtC{2}}(\tilde \Y_2(t,\eta)-\tilde \Y_2(t,\theta))}-e^{-\frac{1}{\sqtC{1}}(\tilde \Y_2(t,\eta)-\tilde \Y_2(t,\theta))})\tilde \U_2(t,\theta) d\theta d\eta\\
& \quad +\frac{\ma^5}{\A^5} \mathbbm{1}_{\sqtC{2}<\sqtC{1}}\int_0^1 (\tilde \U_1-\tilde \U_2)\tilde \U_{1,\eta}(t,\eta)\\
&\qquad\qquad\times\int_0^\eta (e^{-\frac{1}{\sqtC{2}}(\tilde \Y_1(t,\eta)-\tilde \Y_1(t,\theta))}-e^{-\frac{1}{\sqtC{1}}(\tilde \Y_1(t,\eta)-\tilde \Y_1(t,\theta))})\tilde \U_1(t,\theta) d\theta d\eta\\
&\quad+ \frac{\ma^5}{\A^6}\int_0^{1}(\tilde\U_1-\tilde\U_2)\tilde\U_{2,\eta}(t, \eta)\\
&\qquad\qquad\times\int_0^{\eta}\big( e^{-\frac{1}{\ma}( \tilde\Y_2(t,\eta)-\tilde\Y_2(t,\theta))}-e^{-\frac{1}{\ma}( \tilde\Y_1(t,\eta)-\tilde\Y_1(t,\theta))}\big)\tilde\U_2\mathbbm{1}_{B(\eta)}(t,\theta)d\theta d\eta\\
&\quad+ \frac{\ma^5}{\A^6}\int_0^1(\tilde\U_1-\tilde\U_2)\tilde\U_{1,\eta}(t, \eta)\\
&\qquad\qquad\times\int_0^{\eta}\big( e^{-\frac{1}{\ma}( \tilde\Y_2(t,\eta)-\tilde\Y_2(t,\theta))}-e^{-\frac{1}{\ma}( \tilde\Y_1(t,\eta)-\tilde\Y_1(t,\theta))}\big)\tilde\U_1\mathbbm{1}_{B(\eta)^c}(t,\theta)d\theta d\eta\\
&\quad+ \frac{\ma^5}{\A^6}\int_0^1(\tilde\U_1-\tilde\U_2)\tilde\U_{2,\eta}(t, \eta)\\
&\qquad\qquad\times\int_0^{\eta}\min_j(e^{-\frac{1}{\ma}( \tilde\Y_j(t,\eta)-\tilde\Y_j(t,\theta))})(\tilde\U_2^+ -\tilde\U_1^+)
\mathbbm{1}_{\tilde\U_1^+ \le\tilde\U_2^+}(t,\theta)d\theta d\eta\\
&\quad+ \frac{\ma^5}{\A^6}\int_0^1(\tilde\U_1-\tilde\U_2)\tilde\U_{2,\eta}(t, \eta)\\
&\qquad\qquad\times\int_0^{\eta}\min_j(e^{-\frac{1}{\ma}( \tilde\Y_j(t,\eta)-\tilde\Y_j(t,\theta))})(\tilde\U_2^- -\tilde\U_1^-)
\mathbbm{1}_{\tilde\U_2^- \le\tilde\U_1^-}(t,\theta)d\theta d\eta\\
&\quad+ \frac{\ma^5}{\A^6}\int_0^1(\tilde\U_1-\tilde\U_2)\tilde\U_{1,\eta}(t, \eta)\\
&\qquad\qquad\times\int_0^{\eta}\min_j(e^{-\frac{1}{\ma}( \tilde\Y_j(t,\eta)-\tilde\Y_j(t,\theta))})(\tilde\U_2^+ -\tilde\U_1^+)
\mathbbm{1}_{\tilde\U_2^+ <\tilde\U_1^+}(t,\theta)d\theta d\eta\\
&\quad+ \frac{\ma^5}{\A^6}\int_0^1(\tilde\U_1-\tilde\U_2)\tilde\U_{1,\eta}(t, \eta)\\
&\qquad\qquad\times\int_0^{\eta}\min_j(e^{-\frac{1}{\ma}( \tilde\Y_j(t,\eta)-\tilde\Y_j(t,\theta))})(\tilde\U_2^- -\tilde\U_1^-)
\mathbbm{1}_{\tilde\U_1^- <\tilde\U_2^-}(t,\theta)d\theta d\eta\\
&\quad- \frac{\ma^5}{\A^6}\int_0^1(\tilde\U_1-\tilde\U_2)(\tilde\U_1-\tilde\U_2)_\eta(t, \eta)\\
&\qquad\qquad\times\int_0^{\eta}\min_j(e^{-\frac{1}{\ma}( \tilde\Y_j(t,\eta)-\tilde\Y_j(t,\theta))})\min_j(\tilde\U_j^+)(t,\theta)d\theta d\eta\\
&\quad- \frac{\ma^5}{\A^6}\int_0^1(\tilde\U_1-\tilde\U_2)(\tilde\U_1-\tilde\U_2)_\eta(t, \eta)\\
&\qquad\qquad\times\int_0^{\eta}\min_j(e^{-\frac{1}{\ma}( \tilde\Y_j(t,\eta)-\tilde\Y_j(t,\theta))})\max_j(\tilde\U_j^-)(t,\theta)d\theta d\eta\\
&=W_{61}+W_{62}+W_{63}+W_{64}+W_{65}+W_{66}+W_{67}^{\pm} +W_{68}^{\pm} +W_{69}^{\pm}.
\end{align*}
Here we go again!  The terms $W_{61}$ and $W_{62}$:
\begin{align*}
\abs{W_{61}}&=\frac{\sqtC{2}^5-\sqtC{1}^5}{\A^6}\mathbbm{1}_{\sqtC{1}\le \sqtC{2}} \vert\int_0^1(\tilde\U_1-\tilde\U_2)\tilde\U_{2,\eta}(t, \eta)\\
&\qquad\qquad\qquad\qquad\qquad\qquad\times\int_0^{\eta} e^{-\frac{1}{\sqtC{2}}( \tilde\Y_2(t,\eta)-\tilde\Y_2(t,\theta))}\tilde\U_2(t,\theta)d\theta d\eta\vert\\
&\le\frac{\sqtC{2}^5-\sqtC{1}^5}{\A^6}\mathbbm{1}_{\sqtC{1}\le \sqtC{2}} \int_0^1\vert\tilde\U_1-\tilde\U_2\vert\vert\tilde\U_{2,\eta}\vert(t, \eta)\\
&\qquad\qquad\qquad\qquad\qquad\times\Big(\int_0^{\eta} e^{-\frac{1}{\sqtC{2}}( \tilde\Y_2(t,\eta)-\tilde\Y_2(t,\theta))}\tilde\U_2^2(t,\theta)d\theta \Big)^{1/2} \\
& \qquad \qquad \qquad \qquad \qquad \qquad  \times\Big(\int_0^{\eta} e^{-\frac{1}{\sqtC{2}}( \tilde\Y_2(t,\eta)-\tilde\Y_2(t,\theta))}d\theta\Big)^{1/2}d\eta\vert\\
& \leq \frac{\sqrt{6}5}{\A^2}\abs{\sqtC{2}-\sqtC{1}}\int_0^1\vert\tilde\U_1-\tilde\U_2\vert\sqP{2}\vert\tilde\U_{2,\eta}\vert(t, \eta)d\eta\\
& \leq \sqrt{3}5\A^2\vert \sqtC{2}-\sqtC{1}\vert\norm{\tilde\U_1-\tilde\U_2}\\
&\le \bigO(1)\big(\abs{\sqtC{1}-\sqtC{2}}^2+\norm{\tilde\U_1-\tilde\U_2}^2 \big),
\end{align*}
using \eqref{eq:all_PestimatesA} and \eqref{eq:all_estimatesQ}.

The terms $W_{63}$ and $W_{64}$:
\begin{align*}
\abs{W_{63}}& \leq \frac{4\ma^4}{\A^6e}\int_0^1 \vert \tilde \U_1-\tilde \U_2\vert \vert \tilde \U_{2,\eta}\vert (t,\eta) \\&\qquad\qquad\qquad\qquad\times\int_0^\eta e^{-\frac{3}{4\sqtC{2}}(\tilde \Y_2(t,\eta)-\tilde \Y_2(t,\theta))} \vert \tilde \U_2\vert(t,\theta) d\theta d\eta \vert \sqtC{1}-\sqtC{2}\vert\\
& \leq \frac{4}{\A^2e} \int_0^1 \vert \tilde \U_1-\tilde \U_2\vert \vert \tilde \U_{2,\eta}\vert (t,\eta) \Big(\int_0^\eta e^{-\frac{1}{\sqtC{2}}(\tilde \Y_2(t,\eta)-\tilde \Y_2(t,\theta))}\tilde \U_2^2(t,\theta)d\theta\Big)^{1/2}\\
& \qquad \qquad \qquad \times \Big(\int_0^\eta e^{-\frac{1}{2\sqtC{2}}(\tilde \Y_2(t,\eta)-\tilde \Y_2(t,\theta))}d\theta\Big)^{1/2} d\eta \vert \sqtC{1}-\sqtC{2}\vert\\
& \leq \frac{4\sqrt{6}}{\A^2e}\int_0^1 \vert \tilde \U_1-\tilde \U_2\vert \sqP{2}\vert \tilde \U_{2,\eta}\vert(t,\eta) d\eta \vert \sqtC{1}-\sqtC{2}\vert\\
& \leq \bigO(1)(\norm{\tilde \U_1-\tilde \U_2}^2 +\vert \sqtC{1}-\sqtC{2}\vert^2).
\end{align*}

Next comes the terms $W_{65}$ and $W_{66}$:
\begin{align*}
\abs{W_{65}}&=\frac{\ma^5}{\A^6}\vert\int_0^1(\tilde\U_1-\tilde\U_2)\tilde\U_{2,\eta}(t, \eta)\\
&\qquad\qquad\times\int_0^{\eta}\big( e^{-\frac{1}{\ma}( \tilde\Y_2(t,\eta)-\tilde\Y_2(t,\theta))}-e^{-\frac{1}{\ma}( \tilde\Y_1(t,\eta)-\tilde\Y_1(t,\theta))}\big)\tilde\U_2\mathbbm{1}_{B(\eta)}(t,\theta)d\theta d\eta\vert\\
& \le \frac{1}{\A^2}\int_0^1\vert\tilde\U_1-\tilde\U_2\vert\vert\tilde\U_{2,\eta}\vert(t, \eta)\int_0^{\eta}\big(\vert\tilde\Y_2(t,\eta)-\tilde\Y_1(t,\eta)\vert+\vert \tilde\Y_2(t,\theta)-\tilde\Y_1(t,\theta)\vert\big) \\
&\qquad\qquad\times e^{-\frac{1}{\ma}( \tilde\Y_2(t,\eta)-\tilde\Y_2(t,\theta))}\vert\tilde\U_2\vert(t,\theta)d\theta d\eta\\
&\le \frac{1}{\A^2}\int_0^1\vert\tilde\U_1-\tilde\U_2\vert\vert\tilde\Y_2-\tilde\Y_1\vert \vert\tilde\U_{2,\eta}\vert(t, \eta)\int_0^{\eta} e^{-\frac{1}{\ma}( \tilde\Y_2(t,\eta)-\tilde\Y_2(t,\theta))}\vert\tilde\U_2\vert(t,\theta)d\theta d\eta\\
&\quad+\frac{1}{\A^2}\int_0^1\vert\tilde\U_1-\tilde\U_2\vert\vert\tilde\U_{2,\eta}\vert(t, \eta)\int_0^{\eta} e^{-\frac{1}{\ma}( \tilde\Y_2(t,\eta)-\tilde\Y_2(t,\theta))}\vert \tilde\Y_2-\tilde\Y_1\vert\,\vert\tilde\U_2\vert(t,\theta)d\theta d\eta\\
& \leq \frac{1}{\A^2}\int_0^1\vert\tilde\U_1-\tilde\U_2\vert\vert\tilde\Y_2-\tilde\Y_1\vert \vert\tilde\U_{2,\eta}\vert(t, \eta) \\
&\qquad\qquad\qquad\qquad\times\Big(\int_0^{\eta} e^{-\frac{1}{\sqtC{2}}( \tilde\Y_2(t,\eta)-\tilde\Y_2(t,\theta))}\tilde\U_2^2(t,\theta)d\theta \Big)^{1/2}d\eta\\
&\quad+\frac{1}{\A^2}\int_0^1\vert\tilde\U_1-\tilde\U_2\vert\vert\tilde\U_{2,\eta}\vert(t, \eta)\\
&\qquad\qquad\qquad\qquad\times\Big(\int_0^{\eta} e^{-\frac{1}{\sqtC{2}}( \tilde\Y_2(t,\eta)-\tilde\Y_2(t,\theta))}\tilde\U_2^2(t,\theta)d\theta\Big)^{1/2} d\eta \norm{\tilde\Y_1-\tilde\Y_2}\\
& \leq \frac{\sqrt{6}}{\A^2}\int_0^1\vert\tilde\U_1-\tilde\U_2\vert\vert\tilde\Y_2-\tilde\Y_1\vert \sqP{2}\vert\tilde\U_{2,\eta}\vert(t, \eta)d\eta\\
&\quad+\frac{\sqrt{6}}{\A^2}\int_0^1\vert\tilde\U_1-\tilde\U_2\vert\sqP{2}\vert\tilde\U_{2,\eta}\vert(t, \eta) d\eta \norm{\tilde\Y_1-\tilde\Y_2}\\
&\leq \bigO(1)\norm{\tilde\U_1-\tilde\U_2}\norm{\tilde\Y_1-\tilde\Y_2}\\
& \le \bigO(1)\Big(\norm{\tilde\U_1-\tilde\U_2}^2+\norm{\tilde\Y_1-\tilde\Y_2}^2\Big),
\end{align*}
using \eqref{Diff:Exp}, \eqref{eq:all_PestimatesA}, and \eqref{eq:all_estimatesQ}.

The terms $W_{67}^\pm$ and $W_{68}^\pm$ have a similar structure:
\begin{align*}
\abs{W_{67}^+}&= \frac{\ma^5}{\A^6}\vert\int_0^1(\tilde\U_1-\tilde\U_2)\tilde\U_{2,\eta}(t, \eta)\\
&\qquad\qquad\qquad\times\int_0^{\eta}\min_j(e^{-\frac{1}{\ma}( \tilde\Y_j(t,\eta)-\tilde\Y_j(t,\theta))})(\tilde\U_2^+ -\tilde\U_1^+)
\mathbbm{1}_{\tilde\U_1^+ \le\tilde\U_2^+}(t,\theta)d\theta d\eta\vert\\
&\le \norm{\tilde\U_1-\tilde\U_2}^2 +\frac{1}{\A^2}\int_0^1\tilde\U_{2,\eta}^2(t, \eta)\\
&\qquad\qquad\qquad \times\Big(\int_0^{\eta}e^{-\frac{1}{\sqtC{2}}( \tilde\Y_2(t,\eta)-\tilde\Y_2(t,\theta))}(\tilde\U_2^+ -\tilde\U_1^+)
(t,\theta)d\theta\Big)^2 d\eta\\
&\le \norm{\tilde\U_1-\tilde\U_2}^2 +\A\int_0^1\tilde\Y_{2,\eta}(t, \eta)\\
& \qquad \qquad \qquad\times \Big(\int_0^\eta e{-\frac{1}{\sqtC{2}}(\tilde \Y_2(t,\eta)-\tilde \Y_2(t,\theta))}(\tilde \U_2-\tilde \U_1)^2(t,\theta) d\theta\Big)d\eta\\
&\le \norm{\tilde\U_1-\tilde\U_2}^2 \\
&\quad+\A\int_0^1\tilde\Y_{2,\eta}(t, \eta)e^{-\frac{1}{\sqtC{2}}\tilde \Y_2(t,\eta)}\Big(\int_0^\eta e^{\frac{1}{\sqtC{2}}\tilde \Y_2(t,\theta)}(\tilde \U_1-\tilde \U_2)^2(t,\theta) d\theta\Big) d\eta\\
& \leq \bigO(1)\norm{\tilde \U_1-\tilde \U_2}^2,
\end{align*}
using \eqref{eq:all_estimatesH}.

Finally, the terms $W_{69}^\pm$:
\begin{align*}
\abs{W_{69}^+}&=\frac{\ma^5}{\A^6}\vert\int_0^1(\tilde\U_1-\tilde\U_2)(\tilde\U_1-\tilde\U_2)_\eta(t, \eta)\\
&\qquad\qquad\qquad\times\int_0^{\eta}\min_j(e^{-\frac{1}{\ma}( \tilde\Y_j(t,\eta)-\tilde\Y_j(t,\theta))})\min_j(\tilde\U_j^+)(t,\theta)d\theta d\eta\vert\\
&\le \frac{1}{2\A}\vert \Big(  (\tilde\U_1-\tilde\U_2)^2\int_0^{\eta}\min_j(e^{-\frac{1}{\ma}( \tilde\Y_j(t,\eta)-\tilde\Y_j(t,\theta))})\min_j(\tilde\U_j^+)(t,\theta)d\theta \Big)\Big|_{\eta=0}^1\\
&\quad -\int_0^1(\tilde\U_1-\tilde\U_2)^2\frac{d}{d\eta}\int_0^{\eta}\min_j(e^{-\frac{1}{\ma}( \tilde\Y_j(t,\eta)-\tilde\Y_j(t,\theta))})\min_j(\tilde\U_j^+)(t,\theta)d\theta d\eta 
\vert \\
&\le \bigO(1)\norm{\tilde\U_1-\tilde\U_2}^2,
\end{align*}
see estimates for $\bar B_{67}^\pm$, cf.~\eqref{eq:barB67}, and Lemma \ref{lemma:6}.

%-----------------

%----------------------
\medskip
The terms $M_2$ and $M_3$ can be treated similarly. More precisely
\begin{align*}
\abs{M_2}&\le\mathbbm{1}_{\sqtC{2}\le \sqtC{1}} \big(\frac{1}{\sqtC{2}^6}-\frac{1}{\sqtC{1}^6} \big)\vert\int_0^{1}(\tilde\U_1-\tilde\U_2)\tilde\U_{2,\eta}(t, \eta)\\
&\qquad\qquad\qquad\qquad\qquad\times
\int_0^{\eta}e^{-\frac{1}{\sqtC{2}}( \tilde\Y_2(t,\eta)-\tilde\Y_2(t,\theta)) }\tilde\Q_2\tilde\U_{2,\eta}(t,\theta)d\theta d\eta\vert \\
&\le \mathbbm{1}_{\sqtC{2}\le \sqtC{1}} \frac{\vert \sqtC{1}^6-\sqtC{2}^6\vert}{\A^6\sqtC{2}^5}\int_0^{1}\vert\tilde\U_1-\tilde\U_2\vert \, \vert\tilde\U_{2,\eta}\vert(t, \eta)\\
&\qquad\qquad\qquad\qquad\qquad\times
\int_0^{\eta}e^{-\frac{1}{\sqtC{2}}( \tilde\Y_2(t,\eta)-\tilde\Y_2(t,\theta)) }\tilde\P_2\vert\tilde\U_{2,\eta}\vert(t,\theta)d\theta d\eta \\
&\le \mathbbm{1}_{\sqtC{2}\le\sqtC{1}} 6\frac{\vert \sqtC{1}-\sqtC{2}\vert}{\A\sqtC{2}^5}\norm{\tilde\U_1-\tilde\U_2}\\
&\qquad\times\Big(\int_0^{1} \tilde\U_{2,\eta}^2(t, \eta) \Big(
\int_0^{\eta}e^{-\frac{1}{\sqtC{2}}( \tilde\Y_2(t,\eta)-\tilde\Y_2(t,\theta)) }\tilde\P_2\vert\tilde\U_{2,\eta}\vert(t,\theta)d\theta\Big)^2 d\eta\Big)^{1/2} \\
&\le  \mathbbm{1}_{\sqtC{2}\le \sqtC{1}} 6\frac{\vert \sqtC{1}-\sqtC{2}\vert}{\A\sqtC{2}^6}\norm{\tilde\U_1-\tilde\U_2}\\
&\qquad\qquad\qquad\times\Big(\int_0^{1}  \tilde\U_{2,\eta}^2(t, \eta) \Big(\int_0^{\eta}e^{-\frac{1}{\sqtC{2}}( \tilde\Y_2(t,\eta)-\tilde\Y_2(t,\theta)) }\tilde\P_2^2\tilde\Y_{2,\eta}(t,\theta)d\theta\Big) \\
&\qquad\qquad\qquad\qquad\qquad\qquad\times\Big(\int_0^{\eta}e^{-\frac{1}{\sqtC{2}}( \tilde\Y_2(t,\eta)-\tilde\Y_2(t,\theta)) }\tilde\Henergy_{2,\eta}(t,\theta)d\theta\Big)
    d\eta\Big)^{1/2} \\
&\le \mathbbm{1}_{\sqtC{2}\le \sqtC{1}}6\sqrt{6}\frac{\vert\sqtC{1}-\sqtC{2}\vert}{\A\sqtC{2}^3}\norm{\tilde\U_1-\tilde\U_2}\Big(\int_0^{1} \tilde\U_{2,\eta}^2\tilde\P_2^2(t, \eta)
    d\eta\Big)^{1/2}     \\
&\le \bigO(1) \big(\norm{\tilde\U_1-\tilde\U_2}^2+\vert \sqtC{1}-\sqtC{2}\vert^2 \big).    
\end{align*}
Here we used 
\begin{align*}
\tilde\P_2\vert\tilde\U_{2,\eta}\vert& \leq \frac{1}{\sqtC{2}}\tilde\P_2\sqrt{\tilde\Y_{2,\eta}\tilde\Henergy_{2,\eta}},
\end{align*}
cf.~\eqref{eq:all_estimatesM}, as well as \eqref{eq:all_estimatesA} and \eqref{eq:all_estimatesQ}. In addition, we applied  \eqref{eq:all_PestimatesB} and \eqref{eq:all_PestimatesD}.

We have shown the anticipated result.
\begin{lemma}\label{lemlipu}
Let $\tilde\U_i$ be two solutions of \eqref{eq:Ulip}.  Then we have
\begin{align*}
\frac{d}{dt} &\norm{\tilde\U_1-\tilde\U_2}^2\\&\leq \bigO(1)\big(\norm{\tilde\Y_1-\tilde\Y_2}^2+\norm{\tilde\U_1-\tilde\U_2}^2+\norm{\sqP{1}-\sqP{2}}^2+ \vert \sqtC{1}-\sqtC{2}\vert ^2\big),
\end{align*}
where $\bigO(1)$ denotes some constant which only depends on $\A=\max_j(\sqtC{j})$ and which remains bounded as $\A\to 0$.
\end{lemma}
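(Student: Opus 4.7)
The plan is to differentiate $\norm{\tilde\U_1-\tilde\U_2}^2$ in time, use the evolution equation \eqref{eq:Ulip} to decompose the result into three pieces
\[
\frac{d}{dt}\norm{\tilde\U_1-\tilde\U_2}^2 = I_1+I_2+I_3,
\]
where $I_1$ collects the $\tilde\Q$ contribution, $I_2$ the transport term $\tfrac{2}{3A^5}\tilde\U^3\tilde\U_\eta$, and $I_3$ the $A^{-6}\tilde\So\tilde\U_\eta$ term. The overall goal is to bound each $I_j$ by $\bigO(1)$ times the sum of squared distances in Definition \ref{def:metric}, after which Gronwall closes the estimate, mirroring exactly the strategy carried out for $\tilde\Y$ in Lemma \ref{lemlipy}.

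For $I_1$ the plan is to absorb the $\sqtC{i}^{-2}$ prefactor difference via a splitting on $\mathbbm{1}_{\sqtC{1}\le \sqtC{2}}$ versus $\mathbbm{1}_{\sqtC{2}<\sqtC{1}}$ using $|\tilde\Q_i|\le\tilde\P_i\le \tfrac14 A^4$, and then prove the crucial Lipschitz estimate
\[
\norm{\tilde\Q_1-\tilde\Q_2} \le \bigO(1) A^2\bigl(\norm{\tilde\Y_1-\tilde\Y_2}+\norm{\tilde\U_1-\tilde\U_2}+\norm{\sqP{1}-\sqP{2}}+|\sqtC{1}-\sqtC{2}|\bigr)
\]
by writing $\tilde\Q=\sqtC{}\tilde\P-\tilde\D$, exploiting $\tilde\P_1-\tilde\P_2=(\sqP{1}+\sqP{2})(\sqP{1}-\sqP{2})$, and invoking the $\tilde\D$-Lipschitz bound (an analogue of Lemma \ref{lemma:D}) already used for the $\tilde\Y$ proof. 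For $I_2$ the plan is to replicate the $I_2$ argument from Lemma \ref{lemlipy}: use $\tilde a^3\tilde a_\eta=(\tilde a^4)_\eta/4$ combined with the $\min_j(\tilde\U_j^2)$ splitting from Lemma \ref{lemma:LUR}, integrate by parts (the boundary terms vanish thanks to \eqref{decay:impl} applied in the new scaling), and control the derivative $\partial_\eta(\tilde\U_1\min_j\tilde\U_j^2)$ by $\bigO(1)A^6$ using $|\tilde\U\tilde\U_\eta|\le A^4/2$.

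The heart of the work, and the main obstacle, lies in $I_3$. Inserting the definition of $\tilde\So$ splits $I_3$ into three pieces: one with $\tilde\U^3\tilde\Y_\eta$, one with $\tilde\P\tilde\U\tilde\Y_\eta$, and the dreaded one with $\tilde\Q\tilde\U_\eta$. For the first two, the plan is to use positive/negative-part decompositions $\tilde\U_j=\tilde\U_j^++\tilde\U_j^-$, the symmetrization trick \eqref{eq:triks} to remove absolute values inside the exponentials, and then the combinatorial min-max splittings (from Lemma \ref{lemma:LUR}) combined with the ``double $e^{-|\cdot|/A}$'' Cauchy--Schwarz trick and subsequent integrations by parts in $\eta$ (using the key integral estimates \eqref{eq:all_PestimatesA}--\eqref{eq:all_PestimatesE}), essentially mimicking the organization of the terms $J_1,\dots,J_{10}$ from Section \ref{subsec:lipy}. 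The hardest piece is the $\tilde\Q\tilde\U_\eta$ integral, because $\tilde\U_\eta$ is merely $L^2$ and appears on both ends of the double integral, producing a term $\tilde\U_{i,\eta}(t,\eta)\tilde\U_{i,\eta}(t,\theta)$ with no obvious $L^\infty$ control. The plan is to use $\tilde\Q=\sqtC{}\tilde\P-\tilde\D$ and then integration by parts in $\theta$ to remove the inner $\tilde\U_{i,\eta}(t,\theta)$: integrating against the explicit $\theta$-derivative of $e^{-(\tilde\Y_i(t,\eta)-\tilde\Y_i(t,\theta))/\sqtC{i}}\tilde\P_i(t,\theta)$ (where $\tilde\P_{i,\eta}=\tilde\Q_i\tilde\Y_{i,\eta}/\sqtC{i}$) and similarly for $\tilde\D_i$ (whose explicit derivative is \eqref{eq:D_deriv}), produces bounded pointwise terms $\tilde\P_i\tilde\U_i$ and $\tilde\D_i\tilde\U_i$ plus further double integrals that structurally match pieces already appearing in $I_{31}$ and $I_{32}$, so they can be recycled.

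Once all boundary terms vanish (by the moment/decay estimates from Subsection \ref{moments}, recast in scaled variables) and every surviving quadratic term is absorbed into one of $\norm{\tilde\Y_1-\tilde\Y_2}^2$, $\norm{\tilde\U_1-\tilde\U_2}^2$, $\norm{\sqP{1}-\sqP{2}}^2$, or $|\sqtC{1}-\sqtC{2}|^2$, the lemma follows. The single conceptual difficulty worth flagging is that integration by parts on the innermost variable $\theta$ in the $\tilde\Q\tilde\U_\eta$ term is what makes the whole argument work; without it, the $L^2$-only control on $\tilde\U_\eta$ would force the estimate out of the chosen functional framework.
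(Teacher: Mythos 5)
Your plan is correct and follows essentially the same route as the paper: the same $I_1+I_2+I_3$ splitting, the same Lipschitz estimate for $\tilde\Q_1-\tilde\Q_2$ via $\tilde\Q=\sqtC{}\tilde\P-\tilde\D$ together with Lemma \ref{lemma:D}, the same integration-by-parts treatment of $I_2$, and—crucially—the same device for the $\tilde\Q\tilde\U_\eta$ piece of $I_3$, namely integrating by parts in $\theta$ after substituting $\tilde\Q=\sqtC{}\tilde\P-\tilde\D$, exactly as the paper does by recycling the $\bar K_1$ rewrite. Only a cosmetic slip: the scaled identity is $\tilde\P_{i,\eta}=\tilde\Q_i\tilde\Y_{i,\eta}/\sqtC{i}^{2}$, not $/\sqtC{i}$, which does not affect the structure of the argument.
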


%-------------------------------------------------

\subsection{Lipschitz estimates for $\tilde \P$ (or $\tilde\P^{1/2}$)}\label{subsec:lipp}
From the system of differential equations we recall
\begin{equation}\label{eq:Plip}
\big(\sqP{i}\big)_t+ (\frac23 \frac{1}{\sqtC{i}^5} \tilde \U_i^3+\frac{1}{\sqtC{i}^6}\tilde \So_i)\big(\sqP{i}\big)_\eta=\frac{1}{2\sqtC{i}^2}\frac{\tilde \Q_i\tilde\U_i}{\sqP{i}}+\frac{1}{2\sqtC{i}^3}\frac{\tilde \R_i}{\sqP{i}},
\end{equation}
where
\begin{subequations}
\begin{align}
\tilde \Q_i(t,\eta)
& = -\frac14 \int_0^1 \sign(\eta-\theta)e^{-\frac{1}{\sqtC{i}}\vert \tilde \Y_i(t,\eta)-\tilde \Y_i(t,\theta)\vert } (2(\tilde \U_i^2-\tilde \P_i)\tilde \Y_{i,\eta}(t,\theta)+\sqtC{i}^5)d\theta,\label{eq:Q}
\\
\tilde \So_i(t,\eta)
&  = \int_0^1 e^{-\frac{1}{\sqtC{i}}\vert \tilde \Y_i(t,\eta)-\tilde \Y_i(t,\theta)\vert} (\frac23 \tilde\U_i^3\tilde\Y_{i,\eta}-\tilde \Q_i\tilde \U_{i,\eta}-2\tilde\P_i\tilde \U_i\tilde\Y_{i,\eta})(t,\theta)d\theta, \label{eq:S}
\\[3mm]
\tilde \R_i(t,\eta)& = \frac14 \int_0^1 \sign(\eta-\theta)e^{-\frac{1}{\sqtC{i}}\vert \tilde\Y_i(t,\eta)-\tilde\Y_i(t,\theta)\vert} (\frac23\sqtC{i} \tilde \U_i^3\tilde \Y_{i,\eta}+\sqtC{i}^6\tilde\U_i)(t,\theta)d\theta\nn \\
& \quad -\frac12 \int_0^1 e^{-\frac{1}{\sqtC{i}}\vert \tilde \Y_i(t,\eta)-\tilde \Y_i(t,\theta)\vert} \tilde \U_i\tilde\Q_i\tilde \Y_{i,\eta}(t,\theta) d\theta.\label{eq:Req}
\end{align}
\end{subequations}

Thus we have 
\begin{align*} \nn
\frac{d}{dt}&\int_0^{1}\Big(\sqP{1}-\sqP{2}\Big)^2(t,\eta)d\eta\\  \nn
& = 2\int_0^{1}\Big(\sqP{1}- \sqP{2}\Big)\Big(\big(\sqP{1}\big)_t-\big(\sqP{2}\big)_t\Big)(t,\eta)d\eta\\ \nn
& =  -\frac23\int_0^{1} \Big(\sqP{1}-\sqP{2}\Big)
\Big(
\frac{1}{\sqtC{1}^5} \tilde \U_1^3\big(\sqP{1}\big)_\eta-
\frac{1}{\sqtC{2}^5}  \tilde \U_2^3\big(\sqP{2}\big)_\eta
\Big)(t,\eta)d\eta\\ \nn
& \quad - \int_0^{1} \Big(\sqP{1}-\sqP{2}\Big)
\Big(
\frac{1}{\sqtC{1}^6} \tilde \So_1\big(\sqP{1}\big)_\eta-
\frac{1}{\sqtC{2}^6} \tilde \So_2\big(\sqP{2}\big)_\eta
\Big)(t,\eta)d\eta\\ \nn 
& \quad + \frac12\int_0^{1}\Big(\sqP{1}-\sqP{2}\Big)
\Big(\frac{1}{\sqtC{1}^2}\frac{\tilde \Q_1\tilde\U_1}{\sqP{1}}-\frac{1}{\sqtC{2}^2}\frac{\tilde \Q_2\tilde\U_2}{\sqP{2}}\Big)(t,\eta)d\eta\\ \nn
& \quad +\frac12 \int_0^{1}\Big(\sqP{1}-\sqP{2}\Big)
\Big(\frac{1}{\sqtC{1}^3}\frac{\tilde \R_1}{\sqP{1}}-\frac{1}{\sqtC{2}^3}\frac{\tilde \R_2}{\sqP{2}}\Big)(t,\eta)d\eta\\
& = \frac23 I_1+I_2+\frac12 I_3+\frac12 I_4.
\end{align*}
We will estimate each of these terms, yielding that 
\begin{align*}
\frac{d}{dt} &\norm{\sqP{1}-\sqP{2}}^2\\&\leq \bigO(1)\Big(\norm{\tilde\Y_1-\tilde\Y_2}^2+\norm{\tilde\U_1-\tilde\U_2}^2+\norm{\sqP{1}-\sqP{2}}^2+ \vert\sqtC{1}-\sqtC{2}\vert ^2\Big),
\end{align*}
where $\bigO(1)$ denotes some constant which only depends on $\A=\max_j(\sqtC{j})$ and which remains bounded as $\A\to 0$.

\textit{The term $I_1$:} Here we do as follows:
\begin{align*}
I_1&= \int_0^{1} \Big(\sqP{1}-\sqP{2}\Big)
\Big(
\frac{1}{\sqtC{2}^5} \tilde \U_2^3\big(\sqP{2}\big)_\eta-
\frac{1}{\sqtC{1}^5}  \tilde \U_1^3\big(\sqP{1}\big)_\eta
\Big)(t,\eta)d\eta \\
&=\frac1{\A^5}\int_0^{1} \Big(\sqP{1}-\sqP{2}\Big)
\Big( \tilde \U_2^3\sqDP{2}-\tilde \U_1^3\sqDP{1}\Big)(t,\eta)d\eta \\
&\quad+\frac{\sqtC{1}^5-\sqtC{2}^5}{\sqtC{1}^5\sqtC{2}^5}\int_0^{1}\Big(\sqP{1}-\sqP{2}\Big)\\
&\qquad\qquad\times\Big(\tilde \U_2^3\big(\sqP{2}\big)_\eta
\mathbbm{1}_{\sqtC{2}< \sqtC{1}}+\tilde \U_1^3\big(\sqP{1}\big)_\eta\mathbbm{1}_{\sqtC{1}\le \sqtC{2}} \Big)(t,\eta)d\eta \\
&= \frac1{\A^5}\int_0^{1} \Big(\sqP{1}-\sqP{2}\Big)
\Big( \tilde \U_2-\tilde \U_1\Big)\tilde \U_2^2\big(\sqP{2}\big)_\eta(t,\eta)d\eta \\
&\quad+  \frac1{\A^5}\int_0^{1} \Big(\sqP{1}-\sqP{2}\Big)
\Big( \tilde \U_2^2\big(\sqP{2}\big)_\eta-\tilde \U_1^2\big(\sqP{1}\big)_\eta\Big)\tilde \U_1(t,\eta)d\eta \\
&\quad+\frac{\sqtC{1}^5-\sqtC{2}^5}{\sqtC{1}^5\sqtC{2}^5}\int_0^{1}\Big(\sqP{1}-\sqP{2}\Big)\\
&\qquad\qquad\times\Big(\tilde \U_2^3\big(\sqP{2}\big)_\eta
\mathbbm{1}_{\sqtC{2}<\sqtC{1}}+\tilde \U_1^3\big(\sqP{1}\big)_\eta\mathbbm{1}_{\sqtC{1}\le \sqtC{2}} \Big)(t,\eta)d\eta \\
&= \frac1{\A^5}\int_0^{1} \Big(\sqP{1}-\sqP{2}\Big)
\Big( \tilde \U_2-\tilde \U_1\Big)\tilde \U_2^2\big(\sqP{2}\big)_\eta(t,\eta)d\eta \\
&\quad+  \frac1{\A^5}\int_0^{1} \Big(\sqP{1}-\sqP{2}\Big)
\Big( \tilde \U_2^2-\tilde \U_1^2\Big)\big(\sqP{2}\big)_\eta\tilde \U_1\mathbbm{1}_{\tilde \U_1^2\le \tilde \U_2^2}(t,\eta)d\eta \\
&\quad+  \frac1{\A^5}\int_0^{1} \Big(\sqP{1}-\sqP{2}\Big)
\Big( \tilde \U_2^2-\tilde \U_1^2\Big)\big(\sqP{1}\big)_\eta\tilde \U_1\mathbbm{1}_{\tilde \U_2^2< \tilde \U_1^2}(t,\eta)d\eta \\
&\quad+  \frac1{\A^5}\int_0^{1} \Big(\sqP{1}-\sqP{2}\Big)
\Big(\big(\sqP{2}\big)_\eta-\big(\sqP{1}\big)_\eta\Big)\tilde \U_1\min_j(\tilde \U_j^2)(t,\eta)d\eta \\
&\quad+\frac{\sqtC{1}^5-\sqtC{2}^5}{\sqtC{1}^5\sqtC{2}^5}\int_0^{1}\Big(\sqP{1}-\sqP{2}\Big)\\
&\qquad\qquad\times\Big(\tilde \U_1^3\big(\sqP{1}\big)_\eta
\mathbbm{1}_{\sqtC{1}\leq \sqtC{2}}+\tilde \U_2^3\big(\sqP{2}\big)_\eta\mathbbm{1}_{\sqtC{2}< \sqtC{1}} \Big)(t,\eta)d\eta \\
&=I_{11}+I_{12}+I_{13}+I_{14}+I_{15}.
\end{align*}
We first estimate
\begin{align*}
\abs{\big(\tilde\P_i^{1/2}\big)_\eta}=\frac{\abs{\tilde \P_{i,\eta}}}{2\tilde\P_i^{1/2}}= \frac{\abs{\tilde \Q_i \tilde \Y_{i,\eta}}}{2\sqtC{i}^2\tilde\P_i^{1/2}}
\le \frac1{2\sqtC{i}} \tilde\P_i^{1/2}\tilde \Y_{i,\eta},
\end{align*}
and thus
\begin{align*}
\abs{\tilde\U_i\tilde\P_i^{1/2}\tilde\Y_{i,\eta}}\le \frac12(\tilde\U_i^2\tilde\Y_{i,\eta}+\tilde\P_i\tilde\Y_{i,\eta})\le \frac34 \sqtC{i}^5.
\end{align*}
We start with the term $I_{11}$:
\begin{align*}
\abs{I_{11}}&\le \bigO(1) \int_0^{1} \abs{\sqP{1}-\sqP{2}}\, 
\abs{\tilde \U_2-\tilde \U_1}(t,\eta)d\eta\\
& \le \bigO(1)\Big(\norm{\sqP{1}-\sqP{2}}^2+
\norm{\tilde \U_2-\tilde \U_1}^2\Big). 
\end{align*}
Subsequently (and similar for $I_{13}$)
\begin{align*}
\abs{I_{12}}&\le \frac1{\A^5}\int_0^{1} \abs{\sqP{1}-\sqP{2}}\,
\abs{\tilde \U_2-\tilde \U_1}\, \abs{\tilde \U_2+\tilde \U_1}\,\abs{\big(\sqP{2}\big)_\eta\tilde \U_1}
\mathbbm{1}_{\tilde \U_1^2\le \tilde \U_2^2}(t,\eta)d\eta \\
&\le \frac{2}{\A^5}\int_0^{1} \abs{\sqP{1}-\sqP{2}}\,
\abs{\tilde \U_2-\tilde \U_1}\tilde \U_2^2\big(\sqP{2}\big)_\eta(t,\eta)
d\eta \\
&\le \bigO(1)(\norm{\sqP{1}-\sqP{2}}^2+
\norm{\tilde \U_2-\tilde \U_1}^2).
\end{align*}
The term  $I_{14}$ requires more estimates (see \eqref{eq:470}):
\begin{align*}
\abs{I_{14}}&= \frac1{A^5} \abs{\int_0^{1} \Big(\sqP{1}-\sqP{2}\Big)
\Big(\big(\sqP{2}\big)_\eta-\big(\sqP{1}\big)_\eta\Big)\tilde \U_1\min_j(\tilde \U_j^2)(t,\eta)d\eta} \\
&=  \Big\vert\frac1{2\A^5} \Big(\sqP{1}-\sqP{2}\Big)^2\tilde \U_1\min_j(\tilde \U_j^2)(t,\eta)\Big\vert_{\eta=0}^1\\
&\quad- \frac1{2\A^5} \int_0^{1}\Big(\sqP{1}-\sqP{2}\Big)^2\Big(\frac{d}{d\eta}\tilde \U_1\min_j(\tilde \U_j^2)\Big)(t,\eta) d\eta\Big\vert\\
&=\frac1{2\A^5} \abs{\int_0^{1}\Big(\sqP{1}-\sqP{2}\Big)^2\Big(\frac{d}{d\eta}\tilde \U_1\min_j(\tilde \U_j^2)\Big)(t,\eta) d\eta} \\
&\le \bigO(1) \norm{\sqP{1}-\sqP{2}}^2.
\end{align*}
The last term can be estimated as follows:
\begin{align*}
\abs{I_{15}}& \le
\Big\vert\frac{\sqtC{1}^5-\sqtC{2}^5}{\sqtC{1}^5\sqtC{2}^5}\int_0^{1}\Big(\sqP{1}-\sqP{2}\Big)\\
&\qquad\qquad\qquad\qquad\times\Big(\tilde \U_1^3\big(\sqP{1}\big)_\eta\mathbbm{1}_{\sqtC{1}\le \sqtC{2}}+\tilde \U_2^3\big(\sqP{2}\big)_\eta
\mathbbm{1}_{\sqtC{2}< \sqtC{1}} \Big)(t,\eta)d\eta\Big\vert \\
&\le 5\frac{\abs{\sqtC{1}-\sqtC{2}} \A^4}{\sqtC{1}^5\sqtC{2}^5}\\
&\qquad\times\int_0^{1}\Big(\sqP{1}-\sqP{2}\Big)\Big(\vert \tilde \U_1^3\big(\sqP{1}\big)_\eta\vert
\mathbbm{1}_{\sqtC{1}\leq \sqtC{2}}+\vert\tilde \U_2^3\big(\sqP{2}\big)_\eta\vert\mathbbm{1}_{\sqtC{2}< \sqtC{1}} \Big)(t,\eta)d\eta.
\end{align*}
We consider, in particular, the term
\begin{align*}
& \frac{\A^4}{\sqtC{1}^5\sqtC{2}^5}\abs{\tilde \U_1^3\big(\sqP{1}\big)_\eta}\mathbbm{1}_{\sqtC{1}\leq \sqtC{2}}
\le\frac{\A^4}{\sqtC{1}^5\sqtC{2}^5}\frac{3}{16}\sqtC{1}^8\mathbbm{1}_{\sqtC{1}\leq \sqtC{2}}\le \frac{3}{16}\A^2.
\end{align*}
Thus
\begin{align*}
\abs{I_{15}}&\le \frac{30}{16}\A^2 \abs{\sqtC{1}-\sqtC{2}} \int_0^{1}\abs{\sqP{1}-\sqP{2}}(t,\eta)d\eta\\
&\le \bigO(1)\big( \abs{\sqtC{1}-\sqtC{2}}^2+ \norm{\sqP{1}-\sqP{2}}^2\big).
\end{align*}

\medskip
\textit{The term $I_2$:}  Recall that the term reads as follows:
\begin{align*}
I_{2}& =\int_0^{1} \Big(\sqP{1}-\sqP{2}\Big)
\Big(
\frac{1}{\sqtC{2}^6} \tilde \So_2\big(\sqP{2}\big)_\eta-
\frac{1}{\sqtC{1}^6} \tilde \So_1\big(\sqP{1}\big)_\eta
\Big)(t,\eta)d\eta.
\end{align*}
Here we can follow the estimates of the term $I_3$, cf.~\eqref{eq:I1-3} and \eqref{eq:I3_delt}..  
Here we go:  Define
\begin{align*}
I_{21}&= \int_0^{1} (\sqP{1}-\sqP{2})(t, \eta)\Big(\frac{1}{\sqtC{2}^6}\big(\sqP{2}\big)_\eta(t, \eta)\int_0^{1}
 e^{-\frac{1}{\sqtC{2}}\vert \tilde\Y_2(t,\eta)-\tilde\Y_2(t,\theta)\vert }\tilde \U_2^3\tilde\Y_{2,\eta}(t,\theta)d\theta\\
&\qquad\qquad\qquad -\frac{1}{\sqtC{1}^6}\big(\sqP{1}\big)_\eta(t, \eta)\int_0^{1} e^{-\frac{1}{\sqtC{1}}\vert \tilde\Y_1(t,\eta)-\tilde\Y_1(t,\theta)\vert } \tilde \U_1^3\tilde\Y_{1,\eta}(t,\theta)d\theta\Big)d\eta,
\\[2mm]
I_{22}&=\int_0^{1} (\sqP{1}- \sqP{2})(t, \eta)\\
&\qquad\times\Big(\frac{1}{\sqtC{2}^6}\big(\sqP{2}\big)_\eta(t, \eta)\int_0^{1}
 e^{-\frac{1}{\sqtC{2}}\vert \tilde\Y_2(t,\eta)-\tilde\Y_2(t,\theta)\vert }\tilde\P_2\tilde \U_2\tilde\Y_{2,\eta}(t,\theta)d\theta\\
&\qquad\qquad\qquad- \frac{1}{\sqtC{1}^6}\big(\sqP{1}\big)_\eta(t, \eta)\int_0^{1} e^{-\frac{1}{\sqtC{1}}\vert \tilde\Y_1(t,\eta)-\tilde\Y_1(t,\theta)\vert } 
\tilde\P_1\tilde \U_1\tilde\Y_{1,\eta}(t,\theta)d\theta\Big)d\eta,\\[2mm]
I_{23}&=\int_0^{1} (\sqP{1}-\sqP{2})(t, \eta)\\
&\qquad\times\Big(\frac{1}{\sqtC{2}^6}\big(\sqP{2}\big)_\eta(t, \eta)\int_0^{1}
 e^{-\frac{1}{\sqtC{2}}\vert \tilde\Y_2(t,\eta)-\tilde\Y_2(t,\theta)\vert }\tilde\Q_2\tilde \U_{2,\eta}(t,\theta)d\theta\\
&\qquad\qquad\qquad- \frac{1}{\sqtC{1}^6}\big(\sqP{1}\big)_\eta(t, \eta)\int_0^{1} e^{-\frac{1}{\sqtC{1}}\vert \tilde\Y_1(t,\eta)-\tilde\Y_1(t,\theta)\vert } 
\tilde\Q_1\tilde \U_{1,\eta}(t,\theta)d\theta\Big)d\eta.
\end{align*}
Using the definition \eqref{eq:S} of $\tilde \So$, we can write
\begin{align*}
I_2=\frac23 I_{21}-2I_{22}-I_{23}.
\end{align*}
We commence with the estimates for  $I_{21}$:
\begin{align*}
I_{21}&= \int_0^{1} (\sqP{1}-\sqP{2})(t,\eta)\\ 
& \qquad \times\Big(\frac{1}{\sqtC{2}^6}\big(\sqP{2}\big)_\eta(t,\eta)\int_0^{1} e^{-\frac{1}{\sqtC{2}}\vert \tilde\Y_2(t,\eta)-\tilde\Y_2(t,\theta)\vert}\tilde\U_2^3\tilde\Y_{2,\eta}(t,\theta)d\theta\\
&\qquad\qquad\qquad-\frac{1}{\sqtC{1}^6}\big(\sqP{1}\big)_\eta(t,\eta)\int_0^{1} e^{-\frac{1}{\sqtC{1}}\vert \tilde\Y_1(t,\eta)-\tilde\Y_1(t,\theta)\vert} \tilde\U_1^3\tilde\Y_{1,\eta}(t,\theta) d\theta\Big)d\eta\\
& \quad =  \int_0^{1} (\sqP{1}-\sqP{2})(t,\eta)\\
& \qquad \times\Big(\frac{1}{\sqtC{2}^6}\big(\sqP{2}\big)_\eta(t,\eta)\int_0^{1} e^{-\frac{1}{\sqtC{2}}\vert \tilde\Y_2(t,\eta)-\tilde\Y_2(t,\theta)\vert}(\tilde\V_2^-)^3\tilde\Y_{2,\eta}(t,\theta)d\theta\\
&\qquad\qquad\qquad-\frac{1}{\sqtC{1}^6}\big(\sqP{1}\big)_\eta(t,\eta)\int_0^{1} e^{-\frac{1}{\sqtC{1}}\vert \tilde\Y_1(t,\eta)-\tilde\Y_1(t,\theta)\vert} (\tilde\V_1^-)^3\tilde\Y_{1,\eta}(t,\theta) d\theta\Big)d\eta\\
& \qquad +  \int_0^{1} (\sqP{1}-\sqP{2})(t,\eta)\\ 
& \qquad \times\Big(\frac{1}{\sqtC{2}^6}\big(\sqP{2}\big)_\eta(t,\eta)\int_0^{1} e^{-\frac{1}{\sqtC{2}}\vert \tilde\Y_2(t,\eta)-\tilde\Y_2(t,\theta)\vert}(\tilde\V_2^+)^3\tilde\Y_{2,\eta}(t,\theta)d\theta\\
&\qquad\qquad\qquad-\frac{1}{\sqtC{1}^6}\big(\sqP{1}\big)_\eta(t,\eta)\int_0^{1} e^{-\frac{1}{\sqtC{1}}\vert \tilde\Y_1(t,\eta)-\tilde\Y_1(t,\theta)\vert} (\tilde\V_1^+)^3\tilde\Y_{1,\eta}(t,\theta) d\theta\Big)d\eta.
\end{align*}
The two terms are similar, and we only discuss the second one. Next we use the trick \eqref{eq:triks}. Thus we have to consider the term
\begin{align}\nn
& \frac{1}{\sqtC{2}^6}  \big(\sqP{2}\big)_\eta(t,\eta) \int_0^{\eta} e^{-\frac{1}{\sqtC{2}}\vert \tilde\Y_2(t,\eta)-\tilde\Y_2(t,\theta)\vert } (\tilde\V_2^+)^3\tilde\Y_{2,\eta}(t,\theta) d\theta\\ \nn
&\qquad\qquad\qquad\qquad-\frac{1}{\sqtC{1}^6}\big(\sqP{1}\big)_\eta(t,\eta)\int_0^{\eta} e^{-\frac{1}{\sqtC{1}}\vert \tilde\Y_1(t,\eta)
-\tilde\Y_1(t,\theta)\vert }(\tilde\V_1^+)^3\tilde \Y_{1,\eta}(t,\theta) d\theta\\ \nn
 & = \frac{1}{\A^6}\Big(\big(\sqP{2}\big)_\eta(t,\eta) \int_0^{\eta} e^{-\frac{1}{\sqtC{2}}(\tilde \Y_2(t,\eta)-\tilde\Y_2(t,\theta)) } (\tilde\V_2^+)^3 \tilde\Y_{2,\eta}(t,\theta) d\theta\\ \nn
 &\qquad\qquad\qquad\qquad\qquad- \big(\sqP{1}\big)_\eta(t,\eta)\int_0^{\eta} e^{-\frac{1}{\sqtC{1}}(\tilde \Y_1(t,\eta)-\tilde\Y_1(t,\theta))}(\tilde\V_1^+)^3\tilde\Y_{1,\eta}(t,\theta) d\theta\Big)\\ \nn
 & \quad + \mathbbm{1}_{\sqtC{1}\leq \sqtC{2}} (\frac{1}{\sqtC{2}^6}-\frac{1}{\sqtC{1}^6}) \big(\sqP{1}\big)_\eta(t,\eta)\int_0^\eta e^{-\frac{1}{\sqtC{1}}(\tilde\Y_1(t,\eta)-\tilde\Y_1(t,\theta))} (\tilde\V_1^+)^3\tilde\Y_{1,\eta}(t,\theta)d\theta\\ \nn
 & \quad + \mathbbm{1}_{\sqtC{2}< \sqtC{1}} (\frac{1}{\sqtC{2}^6}-\frac1{\sqtC{1}^6})\big(\sqP{2}\big)_\eta(t,\eta) \int_0^\eta e^{-\frac{1}{\sqtC{2}}(\tilde\Y_2(t,\eta)-\tilde\Y_2(t,\theta))} (\tilde\V_2^+)^3 \tilde\Y_{2,\eta} (t,\theta) d\theta\\ \nn
 &= \frac{1}{\A^6} \big(\sqP{2}\big)_\eta(t,\eta)\\ \nn
& \qquad\qquad\times \int_0^\eta e^{-\frac{1}{\sqtC{2}}(\tilde \Y_2(t,\eta)-\tilde\Y_2(t,\theta))}((\tilde\V_2^+)^3-(\tilde\V_1^+)^3)\tilde\Y_{2,\eta}(t,\theta)\mathbbm{1}_{\tilde\V_1^+\leq \tilde\V_2^+}(t,\theta)d\theta\\ \nn
 & \quad +\frac1{\A^6}\big(\sqP{1}\big)_\eta(t,\eta)\\ \nn
& \qquad\qquad\times\int_0^\eta e^{-\frac{1}{\sqtC{1}}(\tilde \Y_1(t,\eta)-\tilde \Y_1(t,\theta))}((\tilde \V_2^+)^3-(\tilde\V_1^+)^3)\tilde \Y_{1,\eta}(t,\theta) \mathbbm{1}_{\tilde\V_2^+<\tilde\V_1^+}(t,\theta) d\theta\\ \nn
 & \quad +\frac{1}{\A^6} \big(\sqP{2}\big)_\eta(t,\eta)\int_0^\eta e^{-\frac{1}{\sqtC{2}}(\tilde\Y_2(t,\eta) -\tilde \Y_2(t,\theta))}\min_j(\tilde\V_j^+)^3 \tilde \Y_{2,\eta} (t,\theta) d\theta\\ \nn
 & \quad - \frac{1}{\A^6} \big(\sqP{1}\big)_\eta (t,\eta) \int_0^\eta e^{-\frac{1}{\sqtC{1}}(\tilde \Y_1(t,\eta)-\tilde \Y_1(t,\theta))} \min_j(\tilde\V_j^+)^3\tilde\Y_{1,\eta}(t,\theta) d\theta\\ \nn
  & \quad + \mathbbm{1}_{\sqtC{1}\leq \sqtC{2}} (\frac{1}{\sqtC{2}^6}-\frac{1}{\sqtC{1}^6}) \big(\sqP{1}\big)_\eta(t,\eta)\int_0^\eta e^{-\frac{1}{\sqtC{1}}(\tilde\Y_1(t,\eta)-\tilde\Y_1(t,\theta))} (\tilde\V_1^+)^3\tilde\Y_{1,\eta}(t,\theta)d\theta\\ \nn
 & \quad + \mathbbm{1}_{\sqtC{2}< \sqtC{1}} (\frac{1}{\sqtC{2}^6}-\frac1{\sqtC{1}^6})\big(\sqP{2}\big)_\eta(t,\eta) \int_0^\eta e^{-\frac{1}{\sqtC{2}}(\tilde\Y_2(t,\eta)-\tilde\Y_2(t,\theta))} (\tilde\V_2^+)^3 \tilde\Y_{2,\eta} (t,\theta) d\theta\\ \nn
 & = \frac{1}{\A^6} \big(\sqP{2}\big)_\eta(t,\eta)\\ \nn
& \qquad\qquad\times\int_0^\eta e^{-\frac{1}{\sqtC{2}}(\tilde \Y_2(t,\eta)-\tilde\Y_2(t,\theta))}((\tilde\V_2^+)^3-(\tilde\V_1^+)^3)\tilde\Y_{2,\eta}(t,\theta)\mathbbm{1}_{\tilde\V_1^+\leq \tilde\V_2^+}(t,\theta)d\theta\\ \nn
 & \quad +\frac1{\A^6}\big(\sqP{1}\big)_\eta(t,\eta)\\ \nn
& \qquad\qquad\times\int_0^\eta e^{-\frac{1}{\sqtC{1}}(\tilde \Y_1(t,\eta)-\tilde \Y_1(t,\theta))}((\tilde  \V_2^+)^3-(\tilde \V_1^+)^3)\tilde  \Y_{1,\eta}(t,\theta) \mathbbm{1}_{\tilde \V_2^+<\tilde \V_1^+}(t,\theta) d\theta\\ \nn
 & \quad +\mathbbm{1}_{\sqtC{1}\leq \sqtC{2}}\frac{1}{\A^6}\big(\sqP{2}\big)_\eta(t,\eta)\\ \nn
& \qquad\qquad\times\int_0^\eta (e^{-\frac{1}{\sqtC{2}}(\tilde \Y_2(t,\eta)-\tilde\Y_2(t,\theta))}-e^{-\frac{1}{\sqtC{1}}(\tilde \Y_2(t,\eta)-\tilde \Y_2(t,\theta))}\min_j(\tilde\V_j^+)^3 \tilde\Y_{2,\eta} (t,\theta)d\theta d\eta\\ \nn
 & \quad +\mathbbm{1}_{\sqtC{2}< \sqtC{1}}\frac{1}{\A^6}\big(\sqP{1}\big)_\eta(t,\eta)\\ \nn
& \qquad\qquad\times\int_0^\eta (e^{-\frac{1}{\sqtC{2}}(\tilde \Y_1(t,\eta)-\tilde\Y_1(t,\theta))}-e^{-\frac{1}{\sqtC{1}}(\tilde \Y_1(t,\eta)-\tilde \Y_1(t,\theta))}\min_j(\tilde\V_j^+)^3 \tilde\Y_{1,\eta} (t,\theta)d\theta d\eta\\ \nn
 & \quad +\frac{1}{\A^6} \ \big(\sqP{2}\big)_\eta(t,\eta) (\int_0^\eta (e^{-\frac{1}{\ma}(\tilde \Y_2(t,\eta)-\tilde \Y_2(t,\theta))}-e^{-\frac{1}{\ma}(\tilde \Y_1(t,\eta)-\tilde \Y_1(t,\theta))})\\ \nn
 &\qquad\qquad\qquad\qquad\qquad\qquad\times\min_j(\tilde\V_j^+)^3 \tilde\Y_{2,\eta} (t,\theta)\mathbbm{1}_{B(\eta)}(t,\theta) d\theta)\\ \nn
 & \quad -\frac{1}{\A^6}  \big(\sqP{1}\big)_\eta (t,\eta)(\int_0^\eta (e^{-\frac{1}{\ma}(\tilde \Y_1(t,\eta)-\tilde \Y_1(t,\theta))}-e^{-\frac{1}{\ma}(\tilde \Y_2(t,\eta)-\tilde \Y_2(t,\theta))})\\ \nn
 &\qquad\qquad\qquad\qquad\qquad\qquad\times\min_j(\tilde\V_j^+)^3\tilde \Y_{1,\eta} (t,\theta) \mathbbm{1}_{B^c(\eta)}(t,\theta) d\theta)\\ \nn
 &\quad + \frac{1}{\A^6}  \big(\sqP{2}\big)_\eta (t,\eta)(\int_0^\eta \min_j(e^{-\frac{1}{\ma}(\tilde \Y_j(t,\eta)-\tilde\Y_j(t,\theta))})\min_j(\tilde\V_j^+)^3\tilde \Y_{2,\eta} (t,\theta) d\theta)\\ \nn
 & \quad -\frac{1}{\A^6} \big(\sqP{1}\big)_\eta (t,\eta)(\int_0^\eta \min_j(e^{-\frac{1}{\ma}(\tilde \Y_j(t,\eta)-\tilde\Y_j(t,\theta))})\min_j(\tilde \V_j^+)^3\tilde \Y_{1,\eta} (t,\theta) d\theta)\\ \nn
 & \quad + \mathbbm{1}_{\sqtC{1}\leq \sqtC{2}} (\frac{1}{\sqtC{2}^6}-\frac{1}{\sqtC{1}^6}) \big(\sqP{1}\big)_\eta(t,\eta)\int_0^\eta e^{-\frac{1}{\sqtC{1}}(\tilde\Y_1(t,\eta)-\tilde\Y_1(t,\theta))} (\tilde\V_1^+)^3\tilde\Y_{1,\eta}(t,\theta)d\theta\\ \nn
 & \quad + \mathbbm{1}_{\sqtC{2}< \sqtC{1}} (\frac{1}{\sqtC{2}^6}-\frac1{\sqtC{1}^6})\big(\sqP{2}\big)_\eta(t,\eta) \int_0^\eta e^{-\frac{1}{\sqtC{2}}(\tilde\Y_2(t,\eta)-\tilde\Y_2(t,\theta))} (\tilde\V_2^+)^3 \tilde\Y_{2,\eta} (t,\theta) d\theta\\ \label{eq:I21j}
 & = (I_{211}+I_{212}+I_{213}+I_{214}+I_{215}+I_{216}+ I_{217}+I_{218}+I_{219}+I_{220})(t,\eta).
 \end{align}
The same estimate works for $I_{211}$ and $I_{212}$ (cf.~\eqref{eq:J1J2}):
\begin{equation*}
\vert \int_0^1I_{211}(\sqP{1}-\sqP{2})(t,\eta)d\eta\vert \leq \norm{\sqP{1}-\sqP{2}}^2 + \int_0^1 I_{211}^2(t,\eta) d\eta,
\end{equation*}
which shows that it suffices to estimate
\begin{align*}
\int_0^1 I_{211}^2(t,\eta) d\eta&\le  \frac1{\A^{12}} \int_0^1 \big(\sqP{2}\big)_\eta^2(t,\eta) \Big(\int_0^\eta e^{-\frac{1}{\sqtC{2}}(\tilde \Y_2(t,\eta)-\tilde \Y_2(t,\theta))}\\
&\qquad\times((\tilde \V_2^+)^3-(\tilde \V_1^+)^3)\tilde \Y_{2,\eta} (t,\theta) \mathbbm{1}_{\tilde\V_1^+\leq \tilde\V_2^+} (t,\theta) d\theta\Big)^2 d\eta\\
&\le \bigO(1)\norm{\tilde \U_1-\tilde \U_2}^2,
 \end{align*}
where we used that $\big(\sqP{2}\big)_\eta^2\le \frac{1}{8}\sqtC{2}^3 \tilde\Y_{2,\eta}$. 

Regards the term $I_{213}$ (and $I_{214}$), we find
\begin{align*}
& \vert \int_0^1 I_{213}(\sqP{1}-\sqP{2})(t,\eta) d\eta\vert \\
& \quad = \mathbbm{1}_{\sqtC{1}\leq \sqtC{2}}\frac{1}{\A^6} \vert \int_0^1 
\sqP{1}-\sqP{2})\big(\sqP{2}\big)_\eta(t,\eta)\\
&\qquad\times\int_0^\eta (e^{-\frac{1}{\sqtC{2}}(\tilde \Y_2(t,\eta)-\tilde \Y_2(t,\theta))}-e^{-\frac{1}{\sqtC{1}}(\tilde \Y_2(t,\eta)-\tilde \Y_2(t,\theta))})\min_j(\tilde \V_j^+)^3\tilde \Y_{2,\eta}(t,\theta) d\theta d\eta\vert\\
& \quad \leq \frac{2\sqrt{2}\ma}{\A^6 e}  \int_0^1 \vert\sqP{1}-\sqP{2}\vert\vert \big(\sqP{2}\big)_\eta\vert(t,\eta) \\
&\qquad\qquad\qquad\times\int_0^\eta e^{-\frac{3}{4\sqtC{2}}(\tilde \Y_2(t,\eta)-\tilde \Y_2(t,\theta))} \tilde \U_2^2\tilde \Y_{2,\eta}(t,\theta) d\theta d\eta\vert \sqtC{1}-\sqtC{2}\vert\\
& \quad \leq \norm{\sqP{1}-\sqP{2}}^2 \\
& \qquad \qquad + \frac{8}{\A^{10}e^2} \int_0^1 \big(\sqP{2}\big)_\eta^2(t,\eta)\Big(\int_0^\eta e^{-\frac{1}{\sqtC{2}}(\tilde \Y_2(t,\eta)-\tilde \Y_2(t,\theta))}\tilde \U_2^2\tilde \Y_{2,\eta}(t,\theta) d\theta\Big)\\
& \qquad \qquad \qquad \qquad \times \Big(\int_0^\eta e^{-\frac{1}{2\sqtC{2}}(\tilde \Y_2(t,\eta)-\tilde \Y_2(t,\theta))} \tilde \U_2^2\tilde \Y_{2,\eta}(t,\theta) d\theta\Big) d\eta\vert \sqtC{1}-\sqtC{2}\vert^2\\
& \quad \leq \norm{\sqP{1}-\sqP{2}}^2 + \frac{4}{\A e^2} \int_0^1 \tilde \P_2\tilde \Y_{2,\eta}(t,\eta) d\eta \vert \sqtC{1}-\sqtC{2}\vert^2\\
& \leq \bigO(1)(\norm{\sqP{1}-\sqP{2}}^2+\vert \sqtC{1}-\sqtC{2}\vert^2).
\end{align*} 
 
Regarding the term $I_{215}$ (and $I_{216}$), we find
\begin{align*}
&\vert\int_0^{1} I_{215}(\sqP{1}-\sqP{2})(t,\eta)d\eta  \vert\\
&\quad=\frac{1}{\A^6}\vert \int_0^{1}  (\sqP{1}-\sqP{2}) \big(\sqP{2}\big)_\eta(t,\eta)\\
&\qquad\qquad\times \Big(\int_0^\eta (e^{-\frac{1}{\ma}(\tilde\Y_2(t,\eta)-\tilde\Y_2(t,\theta))}-e^{-\frac{1}{\ma}(\tilde\Y_1(t,\eta)-\tilde\Y_1(t,\theta))}) \\
&\qquad\qquad\qquad\qquad\qquad\qquad\qquad\qquad\times\min_j(\tilde\V_j^+)^3\tilde\Y_{2,\eta}(t,\theta)\mathbbm{1}_{B(\eta)}(t,\theta) d\theta\Big)d\eta\vert \\
&\quad \leq \frac{1}{\ma\A^6}\int_0^{1} \vert \sqP{1}-\sqP{2} \vert \vert\big(\sqP{2}\big)_\eta\vert(t,\eta) \\
& \qquad \qquad \times \Big(\int_0^\eta (\vert \tilde\Y_2(t,\eta)-\tilde \Y_1(t,\eta)\vert +\vert \tilde\Y_2(t,\theta)-\tilde\Y_1(t,\theta) \vert) \\
&\qquad\qquad\qquad\qquad\qquad\qquad\qquad\times e^{-\frac{1}{\ma}(\tilde\Y_2(t,\eta)-\tilde\Y_2(t,\theta))} \min_j(\tilde\V_j^+)^3 \tilde\Y_{2,\eta}(t,\theta) d\theta\Big) d\eta\\
&\quad \leq \frac{1}{\ma\A^6}\int_0^{1} \vert \sqP{1}-\sqP{2}\vert \,\vert \tilde \Y_1-\tilde\Y_2\vert \, \vert \big(\sqP{2}\big)_\eta\vert (t,\eta)\\
&\qquad\qquad\qquad\qquad\qquad\qquad\times\int_0^{\eta} e^{-\frac{1}{\ma}(\tilde\Y_2(t,\eta)-\tilde\Y_2(t,\theta))} \min_j(\tilde\V_j^+)^3\tilde\Y_{2,\eta}(t,\theta) d\theta d\eta\\
& \qquad +\norm{\sqP{1}-\sqP{2}}^2
 + \frac{1}{\ma^2\A^{12}}\int_0^{1} \big(\sqP{2}\big)_\eta^2(t,\eta)\\
&\qquad\qquad\times\left( \int_0^\eta \vert \tilde\Y_1(t,\theta)-\tilde\Y_2(t,\theta)\vert e^{-\frac{1}{\ma}(\tilde\Y_2(t,\eta)-\tilde\Y_2(t,\theta))} \min_j(\tilde\V_j^+)^3 \tilde\Y_{2,\eta}(t,\theta) d\theta \right)^2 d\eta\\
& \quad\leq \bigO(1)(\norm{\tilde \Y_1-\tilde \Y_2}^2 +\norm{\sqP{1}-\sqP{2}}^2),
\end{align*}
using the estimates in \eqref{eq:J3} and $\big(\sqP{2}\big)_\eta^2\le \frac{1}8\A^3\tilde \Y_{2,\eta}$.

The next terms are $I_{217}+I_{218}$:
\begin{align*}
I_{217}+I_{218}&= \frac{1}{\A^6}\big(\sqP{2}\big)_\eta(t,\eta) \\
&\quad\times \left(\int_0^\eta \min_j(e^{-\frac{1}{\ma}(\tilde \Y_j(t,\eta)-\tilde\Y_j(t,\theta))})\min_j(\tilde\V_j^+)^3\tilde\Y_{2,\eta}(t,\theta) d\theta\right)\\
& \quad - \frac{1}{\A^6}\big(\sqP{1}\big)_\eta(t,\eta)\\
&\quad\times \left(\int_0^\eta \min_j(e^{-\frac{1}{\ma}(\tilde \Y_j(t,\eta)-\tilde\Y_j(t,\theta))})\min_j(\tilde\V_j^+)^3\tilde \Y_{1,\eta}(t,\theta) d\theta\right)\\
& =  \frac{1}{\A^6}(\big(\sqP{2}\big)_\eta-\big(\sqP{1}\big)_\eta)(t,\eta)\\
&\quad\times \min_k\Big[ \int_0^\eta \min_j(e^{-\frac{1}{\ma}(\tilde \Y_j(t,\eta)-\tilde\Y_j(t,\theta))})\min_j(\tilde\V_j^+)^3\tilde\Y_{k,\eta}(t,\theta) d\theta\Big]\\
& \quad + \frac{1}{\A^6}\big(\sqP{1}\big)_\eta(t,\eta) \int_0^\eta \min_j(e^{-\frac{1}{\ma}(\tilde \Y_j(t,\eta)-\tilde\Y_j(t,\theta))})\\
&\quad\qquad\qquad\qquad\qquad\qquad\times \min_j(\tilde\V_j^+)^3(\tilde\Y_{2,\eta}-\tilde \Y_{1,\eta})(t, \theta) d\theta\mathbbm{1}_E(t,\eta)\\
& \quad +\frac{1}{\A^6}\big(\sqP{2}\big)_\eta(t,\eta) \int_0^\eta \min_j(e^{-\frac{1}{\ma}(\tilde \Y_j(t,\eta)-\tilde\Y_j(t,\theta))})\\
&\quad\qquad\qquad\qquad\qquad\qquad\times \min_j(\tilde\V_j^+)^3(\tilde\Y_{2,\eta}-\tilde \Y_{1,\eta})(t,\theta) d\theta \mathbbm{1}_{E^c}(t,\eta)\\
& = L_{211}+L_{212}+L_{213}.
\end{align*}
The term $L_{211}$ can be estimated as in \eqref{eq:L1}, yielding the result
\begin{equation*}
\vert \int_0^{1} L_{211}(\sqP{1}-\sqP{2})(t,\eta)d\eta\vert  \leq \bigO(1) \norm{\sqP{1}-\sqP{2}}^2.
\end{equation*}
The terms $L_{212}$ and $L_{213}$ can be treated similarly; we list $L_{213}$:
\begin{align*}
\vert \int_0^{1} L_{213}(\sqP{1}-\sqP{2})(t,\eta)d\eta\vert 
 \leq \bigO(1) (\norm{\tilde \Y_1-\tilde \Y_2}^2+\norm{\sqP{1}-\sqP{2}}^2),
\end{align*}
see  \eqref{eq:L3}.
The terms $I_{219}$ and $I_{220}$ share the same behavior.  We find, when $\sqtC{1}\le \sqtC{2}$ (otherwise $I_{219}$ vanishes), see \eqref{eq:J7}, that
\begin{align*}
&\vert \int_0^{1} I_{219}(\sqP{1}-\sqP{2})(t,\eta)d\eta\vert \\
&\quad =(\frac{1}{\sqtC{1}^6}-\frac{1}{\sqtC{2}^6}) \Big\vert \int_0^1 (\sqP{1}-\sqP{2})\big(\sqP{1}\big)_\eta(t,\eta) \\
&\qquad\qquad\qquad\qquad\times\Big(\int_0^\eta    e^{-\frac{1}{\sqtC{1}}(\tilde\Y_1(t,\eta)-\tilde\Y_1(t,\theta))} (\tilde\V_1^+)^3\tilde\Y_{1,\eta}(t,\theta)d\theta\Big) d\eta \Big\vert \\
&\quad\le  \bigO(1)(\vert \sqtC{1}-\sqtC{2}\vert^2 + \norm{\sqP{1}-\sqP{2}}^2).
\end{align*} 

\bigskip
Next, we consider the term $I_{22}$:
\begin{align*}
I_{22}&=\int_0^{1} (\sqP{1}- \sqP{2})(t, \eta)\\
&\qquad\times\Big(\frac{1}{\sqtC{2}^6}\big(\sqP{2}\big)_\eta(t, \eta)\int_0^{1}
 e^{-\frac{1}{\sqtC{2}}\vert \tilde\Y_2(t,\eta)-\tilde\Y_2(t,\theta)\vert }\tilde\P_2\tilde \U_2\tilde\Y_{2,\eta}(t,\theta)d\theta\\
&\qquad\qquad- \frac{1}{\sqtC{1}^6}\big(\sqP{1}\big)_\eta(t, \eta)\int_0^{1} e^{-\frac{1}{\sqtC{1}}\vert \tilde\Y_1(t,\eta)-\tilde\Y_1(t,\theta)\vert } 
\tilde\P_1\tilde \U_1\tilde\Y_{1,\eta}(t,\theta)d\theta\Big)d\eta.
\end{align*}
We start by  replacing the integrals $\int_0^1(\dots)d\theta$ with $\int_0^\eta(\dots)d\theta$, cf.~\eqref{eq:triks}.  
Furthermore, we write $\tilde \U_j=\tilde \U_j^+ +\tilde \U_j^-$, and hence it suffices to consider
\begin{align*}
\tilde I_{22}&=\int_0^{1} (\sqP{1}- \sqP{2})(t, \eta)\\
&\qquad\qquad\times\Big(\frac{1}{\sqtC{2}^6}\big(\sqP{2}\big)_\eta(t, \eta)\int_0^{\eta}
 e^{-\frac{1}{\sqtC{2}}\vert \tilde\Y_2(t,\eta)-\tilde\Y_2(t,\theta)\vert }\tilde\P_2\tilde \U_2^+\tilde\Y_{2,\eta}(t,\theta)d\theta\\
&\qquad\qquad\qquad- \frac{1}{\sqtC{1}^6}\big(\sqP{1}\big)_\eta(t, \eta)\int_0^{\eta} e^{-\frac{1}{\sqtC{1}}\vert \tilde\Y_1(t,\eta)-\tilde\Y_1(t,\theta)\vert } 
\tilde\P_1\tilde \U_1^+\tilde\Y_{1,\eta}(t,\theta)d\theta\Big)d\eta \\
&=\frac{1}{\A^6}\int_0^{1} (\sqP{1}- \sqP{2})(t, \eta)\\
&\qquad\qquad\times\Big(\big(\sqP{2}\big)_\eta(t, \eta)\int_0^{\eta}
 e^{-\frac{1}{\sqtC{2}}( \tilde\Y_2(t,\eta)-\tilde\Y_2(t,\theta))}\tilde\P_2\tilde \U_2^+\tilde\Y_{2,\eta}(t,\theta)d\theta\\
&\qquad\qquad\qquad- \big(\sqP{1}\big)_\eta(t, \eta)\int_0^{\eta} e^{-\frac{1}{\sqtC{1}}(\tilde\Y_1(t,\eta)-\tilde\Y_1(t,\theta))} 
\tilde\P_1\tilde \U_1^+\tilde\Y_{1,\eta}(t,\theta)d\theta\Big)d\eta \\
&\quad+\frac{\sqtC{1}^6-\sqtC{2}^6}{\sqtC{1}^6\sqtC{2}^6}\mathbbm{1}_{\sqtC{2}\le \sqtC{1}} \int_0^{1} (\sqP{1}- \sqP{2})(t, \eta)\sqDP{2}(t, \eta)\\
&\qquad\qquad\qquad\qquad\qquad\times\Big(\int_0^{\eta}
 e^{-\frac{1}{\sqtC{2}}( \tilde\Y_2(t,\eta)-\tilde\Y_2(t,\theta)) }\tilde\P_2\tilde\U_2^+\tilde\Y_{2,\eta}(t,\theta)d\theta\Big)d\eta\\
&\quad+\frac{\sqtC{1}^6-\sqtC{2}^6}{\sqtC{1}^6\sqtC{2}^6}\mathbbm{1}_{\sqtC{1}< \sqtC{2}} \int_0^{1} (\sqP{1}- \sqP{2})(t, \eta)\sqDP{1}(t, \eta)\\
&\qquad\qquad\qquad\qquad\qquad\times\Big(\int_0^{\eta}
 e^{-\frac{1}{\sqtC{1}}( \tilde\Y_1(t,\eta)-\tilde\Y_1(t,\theta)) }\tilde\P_1\tilde\U_1^+\tilde\Y_{1,\eta}(t,\theta)d\theta\Big)d\eta\\
 &= N_1+N_2+N_3.
\end{align*}
We consider first $N_1$ where we get
\begin{align}
N_1&=\frac1{\A^6}\int_0^{1} (\sqP{1}- \sqP{2})(t, \eta) \nn \\
&\qquad\qquad\qquad\times\Big[\sqDP{2}(t, \eta)\int_0^{\eta}
 e^{-\frac{1}{\sqtC{2}}( \tilde\Y_2(t,\eta)-\tilde\Y_2(t,\theta)) }
 \tilde\P_2\tilde\U_2^+\tilde\Y_{2,\eta}(t,\theta)d\theta \notag\\
&\qquad\qquad\qquad\qquad- \sqDP{1}(t, \eta)\int_0^{\eta} e^{-\frac{1}{\sqtC{1}}( \tilde\Y_1(t,\eta)-\tilde\Y_1(t,\theta)) } 
\tilde\P_1\tilde\U_1^+\tilde\Y_{1,\eta}(t,\theta)d\theta\Big]d\eta \notag\\
&=\frac1{\A^6}\int_0^{1} (\sqP{1}- \sqP{2})(t, \eta) \nn \\
&\quad\times\Big[\sqDP{2}(t, \eta)\int_0^{\eta}
 e^{-\frac{1}{\sqtC{2}}( \tilde\Y_2(t,\eta)-\tilde\Y_2(t,\theta)) }(\tilde\P_2-\tilde\P_1)\tilde\U_2^+\tilde\Y_{2,\eta}
 \mathbbm{1}_{\tilde\P_1\le\tilde\P_2}(t,\theta)d\theta\notag\\
&\qquad+ \sqDP{1}(t, \eta)\int_0^{\eta} e^{-\frac{1}{\sqtC{1}}( \tilde\Y_1(t,\eta)-\tilde\Y_1(t,\theta)) } 
(\tilde\P_2-\tilde\P_1)\tilde\U_1^+\tilde\Y_{1,\eta}\mathbbm{1}_{\tilde\P_2<\tilde\P_1}(t,\theta)d\theta \notag\\
&\qquad +\sqDP{2}(t, \eta)\nn \\
&\qquad\qquad\times\int_0^{\eta}
 e^{-\frac{1}{\sqtC{2}}( \tilde\Y_2(t,\eta)-\tilde\Y_2(t,\theta)) }\min_j(\tilde\P_j)(\tilde\U_2^+-\tilde\U_1^+)\tilde\Y_{2,\eta}
 \mathbbm{1}_{\tilde\U_1^+\le\tilde\U_2^+}(t,\theta)d\theta\notag\\
&\qquad +\sqDP{1}(t, \eta) \nn \\
&\qquad\qquad\times\int_0^{\eta}
 e^{-\frac{1}{\sqtC{1}}( \tilde\Y_1(t,\eta)-\tilde\Y_1(t,\theta)) }\min_j(\tilde\P_j)(\tilde\U_2^+-\tilde\U_1^+)\tilde\Y_{1,\eta}
 \mathbbm{1}_{\tilde\U_2^+<\tilde\U_1^+}(t,\theta)d\theta\notag\\
&\qquad+\sqDP{2}(t, \eta)\nn \\
&\qquad\qquad\times\int_0^{\eta}
 e^{-\frac{1}{\sqtC{2}}( \tilde\Y_2(t,\eta)-\tilde\Y_2(t,\theta)) }\min_j(\tilde\P_j)\min_j(\tilde\U_j^+)\tilde\Y_{2,\eta}(t,\theta)d\theta\notag\\
&\qquad-\sqDP{1}(t, \eta)\int_0^{\eta}
 e^{-\frac{1}{\sqtC{1}}( \tilde\Y_1(t,\eta)-\tilde\Y_1(t,\theta)) }\min_j(\tilde\P_j)\min_j(\tilde\U_j^+)\tilde\Y_{1,\eta}(t,\theta)d\theta
\Big]d\eta \notag\\
&= N_{11}+N_{12}+N_{13}+N_{14}+N_{15}+N_{16}. \label{eq:N1P}
\end{align}
The terms $N_{11}$ and $N_{12}$ can be treated similarly. To that end we find
\begin{align*}
\abs{N_{11}}&= \frac1{\A^6}\vert\int_0^{1} (\sqP{1}- \sqP{2})\sqDP{2}(t, \eta)\\
&\qquad\times\Big(\int_0^{\eta}
 e^{-\frac{1}{\sqtC{2}}( \tilde\Y_2(t,\eta)-\tilde\Y_2(t,\theta)) }(\tilde\P_2-\tilde\P_1)\tilde\U_2^+\tilde\Y_{2,\eta}
 \mathbbm{1}_{\tilde\P_1\le\tilde\P_2}(t,\theta)d\theta\Big)d\eta \vert \\
 &\le \norm{\sqP{1}- \sqP{2}}^2+\frac1{A^{12}} \int_0^{1}(\sqDP{2})^2(t, \eta)\\
 &\qquad\qquad\times\Big(\int_0^{\eta}
 e^{-\frac{1}{\sqtC{2}}( \tilde\Y_2(t,\eta)-\tilde\Y_2(t,\theta)) }(\tilde\P_2-\tilde\P_1)\tilde\U_2^+\tilde\Y_{2,\eta}
 \mathbbm{1}_{\tilde\P_1\le\tilde\P_2}(t,\theta)d\theta \Big)^2d\eta \\
 &\le \norm{\sqP{1}- \sqP{2}}^2 \\
 &\quad\qquad+\frac1{2\A^9} \int_0^{1}\tilde\Y_{2,\eta}(t, \eta)\Big(\int_0^{\eta}e^{-\frac{1}{\sqtC{2}}( \tilde\Y_2(t,\eta)-\tilde\Y_2(t,\theta))}
 (\tilde\U_2^+)^2\tilde\Y_{2,\eta}(t,\theta)d\theta \Big)\\
 &\qquad\qquad\qquad\times \Big(\int_0^{\eta}e^{-\frac{1}{\sqtC{2}}( \tilde\Y_2(t,\eta)-\tilde\Y_2(t,\theta))}\vert\sqP{2}-\sqP{1}\vert^2 \tilde\P_2\tilde\Y_{2,\eta}(t,\theta)d\theta \Big)d\eta \\
&\le \norm{\sqP{1}- \sqP{2}}^2 +\frac2{\A^8} \int_0^{1}\tilde\P_2\tilde\Y_{2,\eta}(t, \eta) \\
 &\qquad\qquad\qquad\times
 \Big(\int_0^{\eta}e^{-\frac{1}{\sqtC{2}}( \tilde\Y_2(t,\eta)-\tilde\Y_2(t,\theta))}\vert\sqP{2}-\sqP{1}\vert^2 \tilde\P_2\tilde\Y_{2,\eta}(t,\theta)d\theta \Big)d\eta \\
 &\le \bigO(1)\norm{\sqP{1}- \sqP{2}}^2,
\end{align*}
where we have used \eqref{eq:all_estimatesE} and \eqref{eq:all_Pderiv_estimatesC}.  The terms $N_{13}$ and $N_{14}$ follow the same lines. More precisely,
\begin{align*}
\abs{N_{13}}&=\frac1{\A^6} \vert\int_0^{1} (\sqP{1}- \sqP{2})
\sqDP{2}(t, \eta) \\
&\quad \times\int_0^{\eta}
 e^{-\frac{1}{\sqtC{2}}( \tilde\Y_2(t,\eta)-\tilde\Y_2(t,\theta)) }\min_j(\tilde\P_j)(\tilde\U_2^+-\tilde\U_1^+)\tilde\Y_{2,\eta}
 \mathbbm{1}_{\tilde\U_1^+\le\tilde\U_2^+}(t,\theta)d\theta d\eta\vert\\
 &\le\frac1{\A^6} \int_0^{1}\vert\sqP{1}- \sqP{2}\vert\, 
\vert\sqDP{2}\vert(t, \eta) \\
&\quad \times\int_0^{\eta}
 e^{-\frac{1}{\sqtC{2}}( \tilde\Y_2(t,\eta)-\tilde\Y_2(t,\theta)) }\min_j(\tilde\P_j)\vert\tilde\U_2^+-\tilde\U_1^+\vert\tilde\Y_{2,\eta}
 \mathbbm{1}_{\tilde\U_1^+\le\tilde\U_2^+}(t,\theta)d\theta d\eta\\
 &\le\norm{\sqP{1}- \sqP{2}}^2 +\frac1{\A^{12}} \int_0^{1}
(\sqDP{2})^2(t, \eta)\\
 &\qquad\qquad\times \Big(\int_0^{\eta}
 e^{-\frac{1}{\sqtC{2}}( \tilde\Y_2(t,\eta)-\tilde\Y_2(t,\theta)) }\min_j(\tilde\P_j)\vert\tilde\U_2^+-\tilde\U_1^+\vert\tilde\Y_{2,\eta}
 (t,\theta)d\theta\Big)^2 d\eta\\ 
&\le\norm{\sqP{1}- \sqP{2}}^2 + \frac1{\A^{12}} \int_0^{1}
(\sqDP{2})^2(t, \eta) \\
&\qquad\qquad\qquad\times\big(\int_0^{\eta} e^{-\frac3{2\sqtC{2}}( \tilde\Y_2(t,\eta)-\tilde\Y_2(t,\theta)) }\tilde\P_2\tilde\Y_{2,\eta}
(t,\theta)d\theta\big) \\
 &\qquad\qquad\qquad\times \big(\int_0^{\eta} e^{-\frac1{2\sqtC{2}}( \tilde\Y_2(t,\eta)-\tilde\Y_2(t,\theta)) }(\tilde\U_2^+-\tilde\U_1^+)^2\tilde\P_2\tilde\Y_{2,\eta}(t,\theta)d\theta\big) d\eta\\   
 &\le\norm{\sqP{1}- \sqP{2}}^2 + \frac{2}{\A^{11}} \int_0^{1}
\tilde\P_2(\sqDP{2})^2(t, \eta) \\
 &\qquad\qquad\qquad\qquad\times \big(\int_0^{\eta} e^{-\frac1{2\sqtC{2}}( \tilde\Y_2(t,\eta)-\tilde\Y_2(t,\theta)) }(\tilde\U_2-\tilde\U_1)^2\tilde\P_2\tilde\Y_{2,\eta}(t,\theta)d\theta\big) d\eta\\   
&\le \bigO(1)\big( \norm{\sqP{2}-\sqP{1}}^2+\norm{\tilde\U_2-\tilde\U_1}^2\big),
\end{align*}
by applying \eqref{eq:343},   \eqref{eq:all_Pderiv_estimatesC}, and \eqref{eq:all_estimatesE}. 
The term  $N_{15}+N_{16}$ can be estimated as follows:
\begin{align*}
N_{15}+N_{16}&=\frac1{A^6}\int_0^{1} (\sqP{1}- \sqP{2})(t, \eta)\\
&\quad\times\Big[\sqDP{2}(t, \eta)\int_0^{\eta}
 e^{-\frac{1}{\sqtC{2}}( \tilde\Y_2(t,\eta)-\tilde\Y_2(t,\theta)) }\min_j(\tilde\P_j)\min_j(\tilde\U_j^+)\tilde\Y_{2,\eta}(t,\theta)d\theta\\
&\qquad\qquad\qquad-\sqDP{1}(t, \eta)\int_0^{\eta}
 e^{-\frac{1}{\sqtC{1}}( \tilde\Y_1(t,\eta)-\tilde\Y_1(t,\theta)) }\\
 &\qquad\qquad\qquad\qquad\qquad\times\min_j(\tilde\P_j)\min_j(\tilde\U_j^+)\tilde\Y_{1,\eta}(t,\theta)d\theta
\Big]d\eta \\
&=\mathbbm{1}_{\sqtC{1}\leq \sqtC{2}} \frac{1}{\A^6}\int_0^{1} (\sqP{1}- \sqP{2})(t, \eta)\sqDP{2}(t, \eta)\\
& \qquad \quad \times \Big(\int_0^{\eta}
 (e^{-\frac{1}{\sqtC{2}}( \tilde\Y_2(t,\eta)-\tilde\Y_2(t,\theta)) -}e^{-\frac{1}{\sqtC{1}}(\tilde \Y_2(t,\eta)-\tilde \Y_2(t,\theta))})\\
 &\qquad\qquad\qquad\qquad\qquad\qquad\qquad\times\min_j(\tilde\P_j)\min_j(\tilde\U_j^+)\tilde\Y_{2,\eta}(t,\theta)d\theta\Big) d\eta\\
 &\quad +\mathbbm{1}_{\sqtC{2}< \sqtC{1}} \frac{1}{\A^6}\int_0^{1} (\sqP{1}- \sqP{2})(t, \eta)\sqDP{1}(t, \eta)\\
 & \qquad \qquad \times\Big(\int_0^{\eta}
 (e^{-\frac{1}{\sqtC{2}}( \tilde\Y_1(t,\eta)-\tilde\Y_1(t,\theta)) }-e^{-\frac{1}{\sqtC{1}}(\tilde \Y_1(t,\eta)-\tilde \Y_1(t,\theta))})\\
 &\qquad\qquad\qquad\qquad\qquad\qquad\qquad\times\min_j(\tilde\P_j)\min_j(\tilde\U_j^+)\tilde\Y_{1,\eta}(t,\theta)d\theta\Big) d\eta\\
&+\frac1{\A^6}\int_0^{1} (\sqP{1}- \sqP{2})\sqDP{2}(t, \eta) \\
&\quad\times\int_0^{\eta}
\big( e^{-\frac{1}{\ma}( \tilde\Y_2(t,\eta)-\tilde\Y_2(t,\theta))}-e^{-\frac{1}{\ma}
( \tilde\Y_1(t,\eta)-\tilde\Y_1(t,\theta))}\big) \\
 &\qquad\qquad\qquad\qquad\qquad\times\min_j(\tilde\P_j)\min_j(\tilde\U_j^+)\tilde\Y_{2,\eta}(t,\theta) \mathbbm{1}_{B(\eta)}(t,\theta) d\theta\\
&\quad+\frac1{\A^6}\int_0^{1} (\sqP{1}- \sqP{2})\sqDP{1}(t, \eta)\\
&\quad\times\int_0^{\eta}
\big( e^{-\frac{1}{\ma}( \tilde\Y_2(t,\eta)-\tilde\Y_2(t,\theta))}-e^{-\frac{1}{\ma}
(\tilde\Y_1(t,\eta)-\tilde\Y_1(t,\theta))}\big) \\
 &\qquad\qquad\qquad\qquad\qquad\times\min_j(\tilde\P_j)\min_j(\tilde\U_j^+)\tilde\Y_{1,\eta}(t,\theta) \mathbbm{1}_{B^c(\eta)}(t,\theta) d\theta\\
&\quad +\frac1{\A^6}\int_0^{1} (\sqP{1}- \sqP{2})(t, \eta)\Big[\sqDP{2}(t, \eta)\int_0^{\eta}
 \min_j(e^{-\frac{1}{\ma}( \tilde\Y_j(t,\eta)-\tilde\Y_j(t,\theta))})\\
&\qquad\qquad\qquad\qquad\qquad\qquad\qquad\times\min_j(\tilde\P_j)\min_j(\tilde\U_j^+)\tilde\Y_{2,\eta}(t,\theta)d\theta\\
&\quad-\sqDP{1}(t, \eta)\int_0^{\eta}
 \min_j(e^{-\frac{1}{\ma}( \tilde\Y_j(t,\eta)-\tilde\Y_j(t,\theta))})\\
 &\qquad\qquad\qquad\qquad\qquad\qquad\qquad\times\min_j(\tilde\P_j)\min_j(\tilde\U_j^+)\tilde\Y_{1,\eta}(t,\theta)d\theta
\Big]d\eta \\
&=N_{151}+N_{152}+N_{153}+N_{154}+N_{155},
\end{align*} 
which, unfortunately, all need a special treatment.

The terms $N_{151}$ and $N_{152}$ can be handled as follow:
\begin{align*}
\abs{N_{151}}& \leq \mathbbm{1}_{\sqtC{1}\leq\sqtC{2}} \frac{2\sqrt{2}\ma}{\A^6e}\int_0^{1} \abs{\sqP{1}- \sqP{2}}\abs{\sqDP{2}}(t, \eta) \\
&\qquad\qquad\times\Big(\int_0^{\eta}
 e^{-\frac{3}{4\sqtC{2}}( \tilde\Y_2(t,\eta)-\tilde\Y_2(t,\theta)) }\tilde \P_2\tilde\Y_{2,\eta}(t,\theta)d\theta\Big) d\eta\vert \sqtC{1}-\sqtC{2}\vert\\
 & \leq \frac{2\sqrt{2}\ma}{\A^6e}\int_0^{1} \abs{\sqP{1}- \sqP{2}}\abs{\sqDP{2}}(t, \eta)\\
&\qquad\qquad\qquad\qquad\times\Big(\int_0^\eta e^{-\frac3{2\sqtC{2}}(\tilde \Y_2(t,\eta)-\tilde \Y_2(t,\theta))}\tilde \P_2\tilde \Y_{2,\eta}(t,\theta) d\theta\Big)^{1/2}\\
 & \qquad \qquad\qquad\qquad \times \Big(\int_0^\eta \tilde \P_2\tilde \Y_{2,\eta}(t,\theta) d\theta\Big)^{1/2}d\eta\vert \sqtC{1}-\sqtC{2}\vert\\
 & \leq \frac{2\sqrt{2}\ma}{\A^3e}\int_0^{1} \abs{\sqP{1}- \sqP{2}}\abs{\sqDP{2}}\sqP{2}(t,\eta) d\eta\vert\sqtC{1}-\sqtC{2}\vert\\
 & \leq \bigO(1)(\norm{\sqP{1}-\sqP{2}}^2+\vert \sqtC{1}-\sqtC{2}\vert^2).
 \end{align*}

The terms $N_{153}$ and $N_{154}$ can be handled as follow:
\begin{align*}
\abs{N_{153}}&=\frac1{\A^6}\vert\int_0^{1} (\sqP{1}- \sqP{2})\sqDP{2}(t, \eta) \\
&\quad\times\int_0^{\eta}
\big( e^{-\frac{1}{\ma}( \tilde\Y_2(t,\eta)-\tilde\Y_2(t,\theta))}-e^{-\frac{1}{\ma}
( \tilde\Y_1(t,\eta)-\tilde\Y_1(t,\theta))}\big) \\
&\qquad\qquad\qquad\qquad\times\min_j(\tilde\P_j)\min_j(\tilde\U_j^+)\tilde\Y_{2,\eta}(t,\theta) \mathbbm{1}_{B(\eta)}(t,\theta) d\theta d\eta\vert\\
&\le\frac1{\ma\A^6}\int_0^{1} \vert\sqP{1}- \sqP{2}\vert\, \vert\sqDP{2}\vert(t, \eta) \\
&\quad\times\int_0^{\eta}
\big( \vert \tilde\Y_2(t,\eta)-\tilde\Y_1(t,\eta) \vert+ \vert \tilde\Y_2(t,\theta)-\tilde\Y_1(t,\theta) \vert\big) e^{-\frac{1}{\ma}( \tilde\Y_2(t,\eta)-\tilde\Y_2(t,\theta))}\\
&\qquad\qquad\times\min_j(\tilde\P_j)\min_j(\tilde\U_j^+)\tilde\Y_{2,\eta}(t,\theta) d\theta  d\eta \\
&=\frac1{\ma\A^6}\int_0^{1} \vert \sqP{1}- \sqP{2}\vert \,  \vert \tilde\Y_2-\tilde\Y_1 \vert\,\vert\sqDP{2}\vert(t, \eta) \int_0^{\eta}e^{-\frac{1}{\ma}( \tilde\Y_2(t,\eta)-\tilde\Y_2(t,\theta))}\\
&\qquad\qquad\times\min_j(\tilde\P_j)\min_j(\tilde\U_j^+)\tilde\Y_{2,\eta}(t,\theta)d\theta  d\eta \\
&\quad +\frac1{\ma\A^6}\int_0^{1} \vert \sqP{1}- \sqP{2}\vert\,\vert \sqDP{2}\vert(t, \eta) \int_0^{\eta}
 e^{-\frac{1}{\ma}( \tilde\Y_2(t,\eta)-\tilde\Y_2(t,\theta))} \vert \tilde\Y_2-\tilde\Y_1 \vert\\
&\qquad\qquad\times\min_j(\tilde\P_j)\min_j(\tilde\U_j^+)\tilde\Y_{2,\eta}(t,\theta) d\theta  d\eta \\
&\le \frac1{\sqrt{2}\A^5}\int_0^{1} \vert \sqP{1}- \sqP{2}\vert \,  \vert \tilde\Y_2-\tilde\Y_1 \vert\, 
\vert\sqDP{2}\vert(t, \eta) \\
&\qquad\qquad\qquad\qquad\times\Big(\int_0^{\eta}e^{-\frac{1}{\ma}( \tilde\Y_2(t,\eta)-\tilde\Y_2(t,\theta))}\tilde\P_2^2\tilde\Y_{2,\eta}(t,\theta)d\theta\Big)^{1/2} \\
&\qquad\qquad\qquad\qquad\times\Big(\int_0^{\eta}e^{-\frac{1}{\ma}( \tilde\Y_2(t,\eta)-\tilde\Y_2(t,\theta))}\tilde\Y_{2,\eta}(t,\theta) d\theta\Big)^{1/2}  d\eta \\
&\quad+\norm{\sqP{1}- \sqP{2}}^2+\frac1{2\A^{10}} \int_0^{1}(\sqDP{2})^2(t, \eta) \\
&\qquad\qquad\qquad\times\Big(
\int_0^{\eta}e^{-\frac{1}{\ma}( \tilde\Y_2(t,\eta)-\tilde\Y_2(t,\theta))}\vert \tilde\Y_2-\tilde\Y_1\vert\tilde\P_2\tilde\Y_{2,\eta}(t,\theta) d\theta\Big)^{2}  d\eta \\
&\le \frac{\sqrt{3}}{2\A^{2}}\int_0^{1} \vert \sqP{1}- \sqP{2}\vert \,  \vert \tilde\Y_2-\tilde\Y_1 \vert \tilde\P_2^{1/2}\vert \sqDP{2}\vert(t, \eta)  d\eta +\norm{\sqP{1}- \sqP{2}}^2\\
&\quad+\frac1{2\A^{10}}\int_0^{1} (\sqDP{2})^2(t, \eta) \Big(\int_0^{\eta}e^{-\frac{3}{2\sqtC{2}}( \tilde\Y_2(t,\eta)-\tilde\Y_2(t,\theta))}\tilde\P_2\tilde\Y_{2,\eta}(t,\theta) d\theta\Big) \\
&\qquad\qquad\times \Big(\int_0^{\eta}e^{-\frac{1}{2\ma}(\tilde\Y_2(t,\eta)-\tilde\Y_2(t,\theta))}(\tilde\Y_2-\tilde\Y_1)^2\tilde \P_2\tilde\Y_{2,\eta}(t,\theta) d\theta\Big)d\eta \\
&\le \bigO(1)\big(\norm{\sqP{1}- \sqP{2}}^2+\norm{\tilde\Y_1- \tilde\Y_2}^2 \big),
\end{align*} 
using \eqref{eq:all_PestimatesB}, \eqref{eq:343}, \eqref{eq:all_Pderiv_estimatesA}, and \eqref{eq:all_estimatesE}.
We consider now $N_{155}$:
\begin{align*}
N_{155}&=\frac1{\A^6}\int_0^{1} (\sqP{1}- \sqP{2})(t, \eta)\\
&\quad\times\Big[\sqDP{2}(t, \eta)\int_0^{\eta}
 \min_j(e^{-\frac{1}{\ma}( \tilde\Y_j(t,\eta)-\tilde\Y_j(t,\theta))})\min_j(\tilde\P_j)\min_j(\tilde\U_j^+)\tilde\Y_{2,\eta}(t,\theta)d\theta\\
&\quad-\sqDP{1}(t, \eta)\int_0^{\eta}
 \min_j(e^{-\frac{1}{\ma}( \tilde\Y_j(t,\eta)-\tilde\Y_j(t,\theta))})\min_j(\tilde\P_j)\min_j(\tilde\U_j^+)\tilde\Y_{1,\eta}(t,\theta)d\theta
\Big]d\eta \\
&= -\frac1{\A^6}\int_0^{1} (\sqP{1}- \sqP{2}) (\sqP{1}- \sqP{2})_\eta (t, \eta)\\
&\qquad\quad\times\min_k \Big[ \int_0^{\eta}
 \min_j(e^{-\frac{1}{\ma}( \tilde\Y_j(t,\eta)-\tilde\Y_j(t,\theta))})\min_j(\tilde\P_j)\min_j(\tilde\U_j^+)\tilde\Y_{k,\eta}(t,\theta)d\theta\Big]d\eta \\
&\quad + \frac1{\A^6}\int_0^{1} (\sqP{1}- \sqP{2})\sqDP{2}(t, \eta)\mathbbm{1}_{\tilde E^c}(t,\eta)\\
&\qquad\times\big( \int_0^{\eta}\min_j(e^{-\frac{1}{\ma}( \tilde\Y_j(t,\eta)-\tilde\Y_j(t,\theta))})\min_j(\tilde\P_j)\min_j(\tilde\U_j^+)(\tilde\Y_{2}- \tilde\Y_{1})_\eta (t,\theta)d\theta\big)d\eta \\
&\quad+  \frac1{\A^6}\int_0^{1} (\sqP{1}- \sqP{2})\sqDP{1}(t, \eta)\mathbbm{1}_{\tilde E}(t,\eta)\\
&\qquad\times\big( \int_0^{\eta}\min_j(e^{-\frac{1}{\ma}( \tilde\Y_j(t,\eta)-\tilde\Y_j(t,\theta))})\min_j(\tilde\P_j)\min_j(\tilde\U_j^+)(\tilde\Y_{2}- \tilde\Y_{1})_\eta (t,\theta)d\theta\big)d\eta \\   
&= N_{1551}+N_{1552}+N_{1553},
\end{align*}
which, yet again, requires separate treatment.  Here $\tilde E$ is defined in \eqref{eq:tildeE}.
The term $N_{1551}$ can be handled as the term $\tilde L_{31}$, cf.~\eqref{eq:tildeL31},
\begin{align*}
\vert N_{1551}\vert&\le \bigO(1)\norm{\sqP{1}-\sqP{2}}^2.
\end{align*}
The terms $N_{1552}$ and $N_{1553}$ can be treated in the same manner:
\begin{align*}
\abs{N_{1552}}&\le\frac1{\A^6}\int_0^{1} \vert\sqP{1}- \sqP{2}\vert\,\vert\sqDP{2}\vert(t, \eta)\mathbbm{1}_{\tilde E^c}(t,\eta)\\
&\quad\quad\times\vert\int_0^{\eta}\min_j(e^{-\frac{1}{\ma}( \tilde\Y_j(t,\eta)-\tilde\Y_j(t,\theta))})\min_j(\tilde\P_j)\min_j(\tilde\U_j^+)(\tilde\Y_{2}- \tilde\Y_{1})_\eta (t,\theta) d\theta\vert d\eta \\
&\le \frac1{\A^6}\int_0^{1} \vert\sqP{1}- \sqP{2}\vert\, \vert \tilde\Y_1- \tilde\Y_2 \vert\min_j(\tilde\P_j)\min_j(\tilde\U_j^+)\vert \sqDP{2}\vert(t,\eta)d\eta\\
&\quad +  \frac1{\A^6}\int_0^{1}\vert\sqP{1}- \sqP{2}\vert\vert\sqDP{2}\vert(t, \eta)\\
&\quad\times \vert\int_0^{\eta}(\tilde\Y_1- \tilde\Y_2)\Big[\frac{d}{d\theta}\big(\min_j(e^{-\frac{1}{\ma}( \tilde\Y_j(t,\eta)-\tilde\Y_j(t,\theta))})\big)\min_j(\tilde\P_j)\min_j(\tilde\U_j^+) \\
&\qquad\qquad\qquad + \min_j(e^{-\frac{1}{\ma}( \tilde\Y_j(t,\eta)-\tilde\Y_j(t,\theta))})\frac{d}{d\theta}\big(\min_j(\tilde\P_j)\min_j(\tilde\U_j^+) \big)\Big](t,\theta) d\theta\vert d\eta \\
&\le \bigO(1) \big(\norm{\sqP{1}- \sqP{2}}^2+ \norm{\tilde\Y_1- \tilde\Y_2}^2 \big). 
\end{align*}

The term $N_2$ (and also $N_3$) can be treated as follows.
\begin{align*}
\abs{N_2}&\leq \frac{\sqtC{1}^6-\sqtC{2}^6}{\sqtC{1}^6\sqtC{2}^6}\int_0^1\vert \sqP{1}-\sqP{2}\vert \vert\sqDP{2}\vert(t,\eta)\\
&\qquad\qquad\times\big(\int_0^\eta e^{-\frac{1}{\sqtC{2}}(\tilde\Y_2(t,\eta)-\tilde\Y_2(t,\theta))}\tilde \P_2^2\tilde\Y_{2,\eta}(t,\theta)d\theta\big)^{1/2}\\
& \qquad \qquad \times\big(\int_0^\eta e^{-\frac{1}{\sqtC{2}}(\tilde \Y_2(t,\eta)-\tilde \Y_2(t,\theta))}\tilde \U_2^2\tilde\Y_{2,\eta}(t,\theta) d\theta\big)^{1/2}d\eta\\
& \leq \sqrt{6}\frac{\sqtC{1}^6-\sqtC{2}^6}{\A^6\sqtC{2}^3} \int_0^1 \vert \sqP{1}-\sqP{2}\vert \tilde\P_2\vert \sqDP{2}\vert (t,\eta) d\eta\\
& \leq \bigO(1)\vert \sqtC{1}-\sqtC{2}\vert \int_0^1 \vert \sqP{1}-\sqP{2}\vert (t,\eta)d\eta\\
& \leq \bigO(1) (\norm{\sqP{1}-\sqP{2}}^2+\vert \sqtC{1}-\sqtC{2}\vert ^2).
\end{align*}
This concludes the discussion of the term $I_{22}$.

\bigskip
The term $I_{23}$ is handled as follows:
\begin{align*}
I_{23}&=\int_0^{1} (\sqP{1}-\sqP{2})(t, \eta)\\
&\qquad\qquad\times\Big(\frac{1}{\sqtC{2}^6}\sqDP{2}(t, \eta)\int_0^{1}
 e^{-\frac{1}{\sqtC{2}}\vert \tilde\Y_2(t,\eta)-\tilde\Y_2(t,\theta)\vert }\tilde\Q_2\tilde \U_{2,\eta}(t,\theta)d\theta\\
&\qquad\qquad\qquad- \frac{1}{\sqtC{1}^6}\sqDP{1}(t, \eta)\int_0^{1} e^{-\frac{1}{\sqtC{1}}\vert \tilde\Y_1(t,\eta)-\tilde\Y_1(t,\theta)\vert } 
\tilde\Q_1\tilde \U_{1,\eta}(t,\theta)d\theta\Big)d\eta.
\end{align*}
We start by  replacing the integrals $\int_0^1(\dots)d\theta$ with $\int_0^\eta(\dots)d\theta$, cf.~\eqref{eq:triks}.  Thus
\begin{align*}
\tilde I_{23}&=\int_0^{1} (\sqP{1}-\sqP{2})\Big(\frac{1}{\sqtC{2}^6}\sqDP{2}(t, \eta)\int_0^{\eta}
 e^{-\frac{1}{\sqtC{2}}( \tilde\Y_2(t,\eta)-\tilde\Y_2(t,\theta)) }\tilde\Q_2\tilde\U_{2,\eta}(t,\theta)d\theta\\
&\qquad\qquad\qquad- \frac{1}{\sqtC{1}}\sqDP{1}(t, \eta)\int_0^{\eta} e^{-\frac{1}{\sqtC{1}}( \tilde\Y_1(t,\eta)-\tilde\Y_1(t,\theta)) } 
\tilde\Q_1\tilde\U_{1,\eta}(t,\theta)d\theta\Big)d\eta\\
&= \frac{1}{\A^6}\int_0^{1} (\sqP{1}-\sqP{2})\Big(\sqDP{2}(t, \eta)\int_0^{\eta}
 e^{-\frac{1}{\sqtC{2}}( \tilde\Y_2(t,\eta)-\tilde\Y_2(t,\theta)) }\tilde\Q_2\tilde\U_{2,\eta}(t,\theta)d\theta \\
& \qquad -\sqDP{1}(t, \eta)\int_0^{\eta} e^{-\frac{1}{\sqtC{1}}( \tilde\Y_1(t,\eta)-\tilde\Y_1(t,\theta)) } 
\tilde\Q_1\tilde\U_{1,\eta}(t,\theta)d\theta \Big)d\eta \\
& \quad+\mathbbm{1}_{\sqtC{2}\le \sqtC{1}}\big(\frac{1}{\sqtC{2}^6}-\frac{1}{\sqtC{1}^6} \big)\int_0^{1}(\sqP{1}-\sqP{2})\sqDP{2}(t, \eta) \\
&\qquad\qquad\qquad\qquad\qquad\times\int_0^{\eta}e^{-\frac{1}{\sqtC{2}}( \tilde\Y_2(t,\eta)-\tilde\Y_2(t,\theta)) }\tilde\Q_2\tilde\U_{2,\eta}(t,\theta)d\theta d\eta \\
& \quad+\mathbbm{1}_{\sqtC{1}<\sqtC{2}}\big(\frac{1}{\sqtC{2}^6}-\frac{1}{\sqtC{1}^6} \big)\int_0^{1}(\sqP{1}-\sqP{2})\sqDP{1}(t, \eta)\\
&\qquad\qquad\qquad\qquad\qquad\times
\int_0^{\eta}e^{-\frac{1}{\sqtC{1}}( \tilde\Y_1(t,\eta)-\tilde\Y_1(t,\theta)) }\tilde\Q_1\tilde\U_{1,\eta}(t,\theta)d\theta d\eta \\
&= M_1+M_2+M_3.
\end{align*}
We estimate, using first that $\tilde\Q_i=\sqtC{i}\tilde\P_i-\tilde\D_i$ (cf.~\eqref{eq:DogP}):
\begin{align*}
M_1&=\frac{1}{\A^6}\int_0^{1} (\sqP{1}-\sqP{2})\Big(\sqDP{2}(t, \eta)\int_0^{\eta}
 e^{-\frac{1}{\sqtC{2}}( \tilde\Y_2(t,\eta)-\tilde\Y_2(t,\theta)) }\tilde\Q_2\tilde\U_{2,\eta}(t,\theta)d\theta \\
& \qquad -\sqDP{1}(t, \eta)\int_0^{\eta} e^{-\frac{1}{\sqtC{1}}( \tilde\Y_1(t,\eta)-\tilde\Y_1(t,\theta)) } 
\tilde\Q_1\tilde\U_{1,\eta}(t,\theta)d\theta \Big)d\eta \\
&= \frac{1}{\A^6}\int_0^{1} (\sqP{1}-\sqP{2})\Big(\sqtC{2}\sqDP{2}(t, \eta)\int_0^{\eta}
 e^{-\frac{1}{\sqtC{2}}( \tilde\Y_2(t,\eta)-\tilde\Y_2(t,\theta)) }\tilde\P_2\tilde\U_{2,\eta}(t,\theta)d\theta \\
& \qquad -\sqtC{1}\sqDP{1}(t, \eta)\int_0^{\eta} e^{-\frac{1}{\sqtC{1}}( \tilde\Y_1(t,\eta)-\tilde\Y_1(t,\theta)) } 
\tilde\P_1\tilde\U_{1,\eta}(t,\theta)d\theta \Big)d\eta \\
&\quad-\frac{1}{\A^6}\int_0^{1} (\sqP{1}-\sqP{2})\Big(\sqDP{2}(t, \eta)\int_0^{\eta}
 e^{-\frac{1}{\sqtC{2}}( \tilde\Y_2(t,\eta)-\tilde\Y_2(t,\theta)) }\tilde\D_2\tilde\U_{2,\eta}(t,\theta)d\theta \\
& \qquad -\sqDP{1}(t, \eta)\int_0^{\eta} e^{-\frac{1}{\sqtC{1}}( \tilde\Y_1(t,\eta)-\tilde\Y_1(t,\theta)) } 
\tilde\D_1\tilde\U_{1,\eta}(t,\theta)d\theta \Big)d\eta\\
&=\frac{1}{\A^6}\int_0^{1} (\sqP{1}-\sqP{2})(\sqtC{2}\tilde\P_2 \tilde\U_2\sqDP{2}- \sqtC{1}\tilde\P_1\tilde\U_1\sqDP{1})(t, \eta)d\eta \\
&\quad-\frac{1}{\A^6}\int_0^{1} (\sqP{1}-\sqP{2})(\tilde\D_2 \tilde\U_2\sqDP{2}- \tilde\D_1\tilde\U_1\sqDP{1})(t, \eta)d\eta \\
&\quad+\frac{1}{\A^6}\int_0^{1} (\sqP{1}-\sqP{2})\\
&\qquad\times\Big(\sqDP{2}(t, \eta)\int_0^{\eta} e^{-\frac{1}{\sqtC{2}}( \tilde\Y_2(t,\eta)-\tilde\Y_2(t,\theta))}\frac{1}{\sqtC{2}}\tilde\D_2\tilde\U_{2}\tilde\Y_{2,\eta}(t,\theta)d\theta\\
&\qquad\qquad-\sqDP{1}(t, \eta)\int_0^{\eta} e^{-\frac{1}{\sqtC{1}}( \tilde\Y_1(t,\eta)-\tilde\Y_1(t,\theta))}\frac{1}{\sqtC{1}}\tilde\D_1\tilde\U_{1}\tilde\Y_{1,\eta}(t,\theta)d\theta\Big)d\eta\\
&\quad-\frac{3}{\A^6}\int_0^{1} (\sqP{1}-\sqP{2})\\
&\qquad\times\Big(\sqDP{2}(t, \eta)\int_0^{\eta} e^{-\frac{1}{\sqtC{2}}( \tilde\Y_2(t,\eta)-\tilde\Y_2(t,\theta))}\tilde\P_2\tilde\U_{2}\tilde\Y_{2,\eta}(t,\theta)d\theta\\
&\qquad\qquad-\sqDP{1}(t, \eta)\int_0^{\eta} e^{-\frac{1}{\sqtC{1}}( \tilde\Y_1(t,\eta)-\tilde\Y_1(t,\theta))}\tilde\P_1\tilde\U_{1}\tilde\Y_{1,\eta}(t,\theta)d\theta\Big)d\eta\\
&\quad+\frac{1}{\A^6}\int_0^{1} (\sqP{1}-\sqP{2})\Big(\sqDP{2}(t, \eta)\int_0^{\eta} e^{-\frac{1}{\sqtC{2}}( \tilde\Y_2(t,\eta)-\tilde\Y_2(t,\theta))}\tilde\U_2^3\tilde\Y_{2,\eta}(t,\theta)d\theta\\
&\qquad-\sqDP{1}(t, \eta)\int_0^{\eta} e^{-\frac{1}{\sqtC{1}}( \tilde\Y_1(t,\eta)-\tilde\Y_1(t,\theta))}\tilde\U_1^3\tilde\Y_{1,\eta}(t,\theta)d\theta\Big)d\eta\\
&\quad+\frac{1}{2\A^6}\int_0^{1} (\sqP{1}-\sqP{2})\Big(\sqDP{2}(t, \eta)\int_0^{\eta} e^{-\frac{1}{\sqtC{2}}( \tilde\Y_2(t,\eta)-\tilde\Y_2(t,\theta))}\sqtC{2}^5\tilde\U_2(t,\theta)d\theta\\
&\qquad-\sqDP{1}(t, \eta)\int_0^{\eta} e^{-\frac{1}{\sqtC{1}}( \tilde\Y_1(t,\eta)-\tilde\Y_1(t,\theta))}\sqtC{1}^5\tilde\U_1(t,\theta)d\theta\Big)d\eta\\
&= W_1+W_2+W_3+W_4+W_5+W_6.
\end{align*}
Here we have used the rewrite employed when manipulating the term $\bar K_1$ from the expression \eqref{eq:barK1_1} to \eqref{eq:barK1_2}.
We start by considering the term $W_1$:
\begin{align*}
W_1&=\frac{1}{\A^6}\int_0^{1} (\sqP{1}-\sqP{2})(\sqtC{2}\tilde\P_2 \tilde\U_2\sqDP{2}- \sqtC{1}\tilde\P_1 \tilde\U_1\sqDP{1})(t, \eta)d\eta \\
&= \mathbbm{1}_{\sqtC{1}\leq \sqtC{2}}\frac{\sqtC{2}-\sqtC{1}}{\A^6}\int_0^{1} (\sqP{1}-\sqP{2})\tilde\P_2 \tilde\U_2\sqDP{2}(t, \eta)d\eta \\
&\quad + \mathbbm{1}_{\sqtC{2}< \sqtC{1}}\frac{\sqtC{2}-\sqtC{1}}{\A^6}\int_0^{1} (\sqP{1}-\sqP{2})\tilde\P_1 \tilde\U_1\sqDP{1}(t, \eta)d\eta \\
&\quad +\frac{\ma}{\A^6}\int_0^{1} (\sqP{1}-\sqP{2})(\tilde\P_2-\tilde\P_1)   \tilde\U_2\sqDP{2}(t, \eta)\mathbbm{1}_{\tilde\P_1\le\tilde\P_2}(t,\eta)d\eta \\
&\quad+\frac{\ma}{\A^6}\int_0^{1} (\sqP{1}-\sqP{2})(\tilde\P_2-\tilde\P_1)   \tilde\U_1\sqDP{1}(t, \eta)\mathbbm{1}_{\tilde\P_2<\tilde\P_1}(t,\eta)d\eta \\
&\quad+\frac{\ma}{\A^6}\int_0^{1}(\sqP{1}-\sqP{2})(\sqDP{2}-\sqDP{1})  \min_j( \tilde\P_j) \tilde\U_2(t, \eta)d\eta \\
&\quad+\frac{\ma}{\A^6}\int_0^{1}(\sqP{1}-\sqP{2})(\tilde\U_2-\tilde\U_1) \min_j( \tilde\P_j) \sqDP{1}(t, \eta)d\eta \\
&=W_{11}+W_{12}+W_{13}+W_{14}+W_{15}+W_{16}.
\end{align*}
Using \eqref{eq:all_estimatesA}, \eqref{eq:all_estimatesB}, \eqref{eq:all_estimatesE}, \eqref{eq:all_Pderiv_estimatesA}, and \eqref{eq:all_Pderiv_estimatesB}, we find that
\begin{multline*}
W_{11}+W_{12}+W_{13}+W_{14}+W_{16}\\
\le \bigO(1)\Big(\norm{\tilde\U_1-\tilde\U_2}^2+\norm{\sqP{1}-\sqP{2}}^2+\vert \sqtC{1}-\sqtC{2}\vert^2\Big).
\end{multline*}
Regarding the term $W_{15}$:
\begin{align*}
\abs{W_{15}}&=\frac{\ma}{\A^6}\vert\int_0^{1}(\sqP{1}-\sqP{2})(\sqDP{2}-\sqDP{1})  \min_j( \tilde\P_j) \tilde\U_2(t, \eta)d\eta\vert \\
&=\vert\frac{\ma}{2\A^6}(\sqP{1}-\sqP{2})^2 \min_j( \tilde\P_j) \tilde\U_2(t, \eta)\Big\vert_{0}^1 \\
&\qquad-\frac{\ma}{2\A^6}\int_0^{1}(\sqP{1}-\sqP{2})^2\frac{d}{d\eta}\Big( \min_j( \tilde\P_j) \tilde\U_2\Big)(t, \eta)d\eta\vert\\
&=\frac{\ma}{2\A^6}\vert\int_0^{1}(\sqP{1}-\sqP{2})^2\frac{d}{d\eta}\Big( \min_j( \tilde\P_j) \tilde\U_2\Big)(t, \eta)d\eta\vert\\
&\le \bigO(1)\norm{\sqP{1}-\sqP{2}}^2,
\end{align*}
using the same estimates as for $\bar B_{13}$ (cf.~\eqref{eq:barB13}) and Lemma \ref{lemma:3}(ii).

As for the term $W_2$ we find
\begin{align*}
-W_2&= \frac{1}{\A^6}\int_0^{1} (\sqP{1}-\sqP{2})(\tilde\D_2 \tilde\U_2\sqDP{2}- \tilde\D_1 \tilde\U_1\sqDP{1})(t, \eta)d\eta \\
&=\frac{1}{\A^6}\int_0^{1} (\sqP{1}-\sqP{2})(\tilde\D_2-\tilde\D_1)   \tilde\U_1\sqDP{1}\mathbbm{1}_{\tilde\D_2\le\tilde\D_1}(t,\eta)d\eta \\
&\quad+\frac{1}{\A^6}\int_0^{1} (\sqP{1}-\sqP{2})(\tilde\D_2-\tilde\D_1)   \tilde\U_2\sqDP{2}\mathbbm{1}_{\tilde\D_1<\tilde\D_2}(t,\eta)d\eta \\
&\quad+\frac{1}{\A^6}\int_0^{1} (\sqP{1}-\sqP{2})(\sqDP{2}-\sqDP{1})  \min_j( \tilde\D_j) \tilde\U_2(t, \eta)d\eta \\
&\quad-\frac{1}{\A^6}\int_0^{1} (\sqP{1}-\sqP{2})  (\tilde\U_1-\tilde\U_2) \min_j( \tilde\D_j) \sqDP{1}(t, \eta)d\eta \\
&=W_{21}+W_{22}+W_{23}+W_{24}.
\end{align*}
The terms $W_{21}$ and $W_{22}$ can be treated similarly.  We need to estimate $\tilde\D_2-\tilde\D_1$. Applying  Lemma \ref{lemma:D}, we have
\begin{align*}
\abs{W_{21}}&= \frac{1}{\A^6}\vert\int_0^{1} (\sqP{1}-\sqP{2})(\tilde\D_2-\tilde\D_1)   \tilde\U_1\sqDP{1}(t, \eta)\mathbbm{1}_{\tilde\D_2\le\tilde\D_1}(t,\eta)d\eta\vert \\
&\leq \frac{2}{\A^{9/2}}\int_0^{1} \vert\sqP{1}-\sqP{2}\vert\, \vert \tilde\Y_1-\tilde\Y_2\vert  \tilde\D_1^{1/2}\vert\tilde\U_1\sqDP{1}\vert(t, \eta)d\eta \\
& \quad +\frac{1}{\A^6}\int_0^{1} \vert\sqP{1}-\sqP{2}\vert\, \vert \tilde \Y_1-\tilde\Y_2\vert (\tilde \U_1^2+\tilde \P_1)\vert\tilde\U_1\sqDP{1}\vert(t,\eta) d\eta\\
& \quad +  \frac{2\sqrt{2}}{\A^{9/2}}\norm{\tilde\Y_1-\tilde\Y_2}\int_0^{1}\vert\sqP{1}-\sqP{2}\vert\tilde \D_1^{1/2}\vert\tilde\U_1\sqDP{1}\vert(t,\eta)d\eta \\
& \quad  + \frac{4}{\A^3}\int_0^{1}\vert\sqP{1}-\sqP{2}\vert \\
&\qquad\times\Big(\int_0^\eta e^{-\frac{1}{\A}(\tilde\Y_1(t,\eta)-\tilde\Y_1(t,\theta))}(\tilde\U_1-\tilde\U_2)^2(t,\theta)d\theta\Big)^{1/2}\vert\tilde\U_1\sqDP{1}\vert(t,\eta)d\eta\\
& \quad + \frac{2\sqrt{2}}{\A^3} \int_0^{1}\vert\sqP{1}-\sqP{2}\vert \\
&\qquad\times\Big(\int_0^\eta e^{-\frac{1}{\A}(\tilde \Y_1(t,\eta)-\tilde\Y_1(t,\theta))} (\sqP{1}-\sqP{2})^2(t,\theta) d\theta\Big)^{1/2}\vert\tilde\U_1\sqDP{1}\vert(t,\eta)d\eta\\
& \quad + \frac{3}{\sqrt{2}\A^2} \int_0^{1}\vert\sqP{1}-\sqP{2}\vert \\
&\qquad\times\Big(\int_0^\eta e^{-\frac{1}{\ma}(\tilde \Y_1(t,\eta)-\tilde\Y_1(t,\theta))} (\tilde\Y_1-\tilde\Y_2)^2(t,\theta) d\theta\Big)^{1/2}\vert\tilde\U_1\sqDP{1}\vert(t,\eta)d\eta\\
& \quad +\frac{3}{2\A^2}\int_0^{1}\vert\sqP{1}-\sqP{2}\vert \\
&\qquad\times\Big(\int_0^\eta e^{-\frac{1}{\ma}(\tilde\Y_1(t,\eta)-\tilde\Y_1(t,\theta))} \vert\tilde\Y_1-\tilde\Y_2\vert(t,\theta) d\theta\Big)\vert\tilde\U_1\sqDP{1}\vert(t,\eta)d\eta\\
& \quad + \frac{6}{\A^2}\vert\sqtC{1}-\sqtC{2}\vert\int_0^{1}\vert\sqP{1}-\sqP{2}\vert \\
&\qquad\qquad\qquad\times \Big(\int_0^\eta e^{-\frac{3}{4\A}(\tilde\Y_1(t,\eta)-\tilde\Y_1(t,\theta))} d\theta\Big)\vert\tilde\U_1\sqDP{1}\vert(t,\eta)d\eta \\
& \quad + \frac{12\sqrt{2}}{\sqrt{3}e\A^2}\vert\sqtC{1}-\sqtC{2}\vert\int_0^{1}\vert\sqP{1}-\sqP{2}\vert \\
&\qquad\qquad\qquad\times \Big(\int_0^\eta e^{-\frac{3}{4\A}(\tilde\Y_1(t,\eta)-\tilde\Y_1(t,\theta))} d\theta\Big)^{1/2}\vert\tilde\U_1\sqDP{1}\vert(t,\eta)d\eta \\
&\le \frac{2}{\A^{9/2}} \int_0^{1} \vert\sqP{1}-\sqP{2}\vert\, \vert \tilde\Y_1-\tilde\Y_2\vert(t,\eta)
 \frac{\A^{13/2}}{4} d\eta \\
&\quad+ \frac{1}{\A^6}\int_0^{1} \vert\sqP{1}-\sqP{2}\vert\, \vert \tilde \Y_1-\tilde\Y_2\vert (t,\eta)\frac{3\A^8}{8\sqrt{2}}d\eta\\
& \quad +  \frac{2\sqrt{2}}{\A^{9/2}}\norm{\tilde\Y_1-\tilde\Y_2}\int_0^{1}\vert\sqP{1}-\sqP{2}\vert(t,\eta)\frac{\A^{13/2}}{4}d\eta \\
&\quad+\frac{4}{\A^3}\int_0^{1}\vert\sqP{1}-\sqP{2}\vert(t,\eta) \Big(\int_0^\eta (\tilde\U_1-\tilde\U_2)^2(t,\theta)d\theta\Big)^{1/2} \frac{\A^4}{2\sqrt{2}} d\eta\\
&\quad + \frac{2\sqrt{2}}{\A^3} \int_0^{1}\vert\sqP{1}-\sqP{2}\vert(t,\eta)\Big(\int_0^\eta  (\sqP{1}-\sqP{2})^2(t,\theta) d\theta\Big)^{1/2}\frac{\A^4}{2\sqrt{2}} d\eta\\
&\quad + \frac{3}{\sqrt{2}\A^2} \int_0^{1}\vert\sqP{1}-\sqP{2}\vert(t,\eta)\Big(\int_0^\eta  (\tilde\Y_1-\tilde\Y_2)^2(t,\theta) d\theta\Big)^{1/2}\frac{\A^4}{2\sqrt{2}} d\eta\\
&\quad +\frac{3}{2\A^2}\int_0^{1}\vert\sqP{1}-\sqP{2}\vert(t,\eta)\Big(\int_0^\eta \vert\tilde\Y_1-\tilde\Y_2\vert(t,\theta) d\theta\Big)\frac{\A^4}{2\sqrt{2}} d\eta\\
&\quad +\frac{6}{\A^2}\vert\sqtC{1}-\sqtC{2}\vert\int_0^{1}\vert\sqP{1}-\sqP{2}\vert(t,\eta) \frac{\A^4}{2\sqrt{2}}d\eta \\
&\quad +\frac{12\sqrt{2}}{\sqrt{3}e\A^2}\vert\sqtC{1}-\sqtC{2}\vert\int_0^{1}\vert\sqP{1}-\sqP{2}\vert(t,\eta) \frac{\A^4}{2\sqrt{2}}d\eta \\
&\le \bigO(1)\big(\norm{\tilde\U_1-\tilde\U_2}^2+\norm{\sqP{1}-\sqP{2}}^2+\norm{\tilde\Y_1-\tilde\Y_2}^2+\abs{\sqtC{1}-\sqtC{2}}^2\big),
\end{align*}
where we have used estimates \eqref{eq:all_estimatesA}, \eqref{eq:all_estimatesB},  \eqref{eq:all_Pderiv_estimatesB}, and \eqref{eq:all_estimatesN}.
Furthermore,
\begin{align*}
\abs{W_{23}}&=\frac{1}{\A^6}\vert\int_0^{1} (\sqP{1}-\sqP{2})(\sqDP{2}-\sqDP{1})  \min_j( \tilde\D_j) \tilde\U_2(t, \eta)d\eta\vert \\
&\le \vert \frac{1}{2\A^6}(\sqP{1}-\sqP{2})^2 \min_j( \tilde\D_j) \tilde\U_2(t, \eta)\Big\vert_{0}^1\\
&\qquad-  \frac{1}{2\A^6}\int_0^{1} 
(\sqP{1}-\sqP{2})^2 \frac{d}{d\theta}
\big(\min_j( \tilde\D_j) \tilde\U_2\big) (t, \eta)d\eta\vert\\
&=\frac{1}{2\A^6}\vert\int_0^{1} (\sqP{1}-\sqP{2})^2 \frac{d}{d\theta}
\big(\min_j(\tilde\D_j) \tilde\U_2\big) (t, \eta)d\eta \vert \\
&\le \bigO(1) \norm{\sqP{1}-\sqP{2}}^2,
\end{align*}
by applying  Lemma \ref{lemma:5} (ii). 
The term $W_{24}$  goes as follows:
\begin{align*}
\abs{W_{24}}&= \frac{1}{\A^6}\vert\int_0^{1}(\sqP{1}-\sqP{2}) (\tilde\U_1-\tilde\U_2) \min_j( \tilde\D_j) \sqDP{1}(t, \eta)d\eta\vert \\
&\le \bigO(1)\big(\norm{\tilde\U_1-\tilde\U_2}^2+\norm{\sqP{1}-\sqP{2}}^2\big)
\end{align*}
using 
\begin{align*}
\min_j( \tilde\D_j) \vert\sqDP{1}\vert&\le 2\sqtC{1}\tilde\P_1\vert\sqDP{1}\vert\le \sqP{1} \tilde\P_1\tilde\Y_{1,\eta} \le \frac{\A^7}{4}
\end{align*}
from \eqref{eq:all_estimatesA}, \eqref{eq:all_estimatesE}, \eqref{eq:all_estimatesN}, and \eqref{eq:all_Pderiv_estimatesA}.

Next comes $W_3$, namely
\begin{align*}
W_3&=\frac{1}{\A^6}\int_0^{1} (\sqP{1}-\sqP{2})\\
&\qquad\times\Big(\sqDP{2}(t, \eta)\int_0^{\eta} e^{-\frac{1}{\sqtC{2}}( \tilde\Y_2(t,\eta)-\tilde\Y_2(t,\theta))}\frac{1}{\sqtC{2}}\tilde\D_2\tilde\U_{2}\tilde\Y_{2,\eta}(t,\theta)d\theta\\
&\qquad\qquad-\sqDP{1}(t, \eta)\int_0^{\eta} e^{-\frac{1}{\sqtC{1}}( \tilde\Y_1(t,\eta)-\tilde\Y_1(t,\theta))}\frac{1}{\sqtC{1}}\tilde\D_1\tilde\U_{1}\tilde\Y_{1,\eta}(t,\theta)d\theta\Big)d\eta\\
&=\frac{1}{\A^6}\int_0^{1} (\sqP{1}-\sqP{2})\\
&\qquad\times\Big(\sqDP{2}(t, \eta)\int_0^{\eta} e^{-\frac{1}{\sqtC{2}}( \tilde\Y_2(t,\eta)-\tilde\Y_2(t,\theta))}\frac{1}{\sqtC{2}}\tilde\D_2\tilde\U_{2}^+\tilde\Y_{2,\eta}(t,\theta)d\theta\\
&\qquad\qquad-\sqDP{1}(t, \eta)\int_0^{\eta} e^{-\frac{1}{\sqtC{1}}( \tilde\Y_1(t,\eta)-\tilde\Y_1(t,\theta))}\frac{1}{\sqtC{1}}\tilde\D_1\tilde\U_{1}^+\tilde\Y_{1,\eta}(t,\theta)d\theta\Big)d\eta\\
&\quad+\frac{1}{\A^6}\int_0^{1} (\sqP{1}-\sqP{2})\\
&\qquad\times\Big(\sqDP{2}(t, \eta)\int_0^{\eta} e^{-\frac{1}{\sqtC{2}}( \tilde\Y_2(t,\eta)-\tilde\Y_2(t,\theta))}\frac{1}{\sqtC{2}}\tilde\D_2\tilde\U_{2}^-\tilde\Y_{2,\eta}(t,\theta)d\theta\\
&\qquad\qquad-\sqDP{1}(t, \eta)\int_0^{\eta} e^{-\frac{1}{\sqtC{1}}( \tilde\Y_1(t,\eta)-\tilde\Y_1(t,\theta))}\frac{1}{\sqtC{1}}\tilde\D_1\tilde\U_{1}^-\tilde\Y_{1,\eta}(t,\theta)d\theta\Big)d\eta\\
&=W_{31}+W_{32};
\end{align*}
and the two terms can be treated in the same manner. Thus
\begin{align*}
W_{31}&=\frac{1}{\A^6}\int_0^{1} (\sqP{1}-\sqP{2})\\
&\qquad\times\Big(\sqDP{2}(t, \eta)\int_0^{\eta} e^{-\frac{1}{\sqtC{2}}( \tilde\Y_2(t,\eta)-\tilde\Y_2(t,\theta))}\frac{1}{\sqtC{2}}\tilde\D_2\tilde\U_{2}^+\tilde\Y_{2,\eta}(t,\theta)d\theta\\
&\qquad\qquad-\sqDP{1}(t, \eta)\int_0^{\eta} e^{-\frac{1}{\sqtC{1}}( \tilde\Y_1(t,\eta)-\tilde\Y_1(t,\theta))}\frac{1}{\sqtC{1}}\tilde\D_1\tilde\U_{1}^+\tilde\Y_{1,\eta}(t,\theta)d\theta\Big)d\eta\\
&= \mathbbm{1}_{\sqtC{1}\leq \sqtC{2}}\frac{1}{\A^6}\Big(\frac{1}{\sqtC{2}}-\frac{1}{\sqtC{1}}\Big) \int_0^{1} (\sqP{1}-\sqP{2})\sqDP{1}(t, \eta)\\
&\qquad\qquad\qquad\times\Big(\int_0^{\eta} e^{-\frac{1}{\sqtC{1}}( \tilde\Y_1(t,\eta)-\tilde\Y_1(t,\theta))}\tilde\D_1\tilde\U_{1}^+\tilde\Y_{1,\eta}(t,\theta)d\theta\Big)d\eta\\
& \quad +  \mathbbm{1}_{\sqtC{2}< \sqtC{1}}\frac{1}{\A^6}\Big(\frac{1}{\sqtC{2}}-\frac{1}{\sqtC{1}}\Big) \int_0^{1} (\sqP{1}-\sqP{2})\sqDP{2}(t, \eta)\\
&\qquad\qquad\qquad\times\Big(\int_0^{\eta} e^{-\frac{1}{\sqtC{2}}( \tilde\Y_2(t,\eta)-\tilde\Y_2(t,\theta))}\tilde\D_2\tilde\U_{2}^+\tilde\Y_{2,\eta}(t,\theta)d\theta\Big) d\eta\\
&\quad +\frac{1}{\A^7}\int_0^{1} (\sqP{1}-\sqP{2})\sqDP{2}(t, \eta)\\
&\qquad\times\Big(\int_0^{\eta} e^{-\frac{1}{\sqtC{2}}( \tilde\Y_2(t,\eta)-\tilde\Y_2(t,\theta))}(\tilde\D_2-\tilde\D_1)\tilde\U_{2}^+\tilde\Y_{2,\eta}(t,\theta)
\mathbbm{1}_{\tilde\D_1\le\tilde\D_2}(t,\theta)d\theta\Big)d\eta\\
&\quad+\frac{1}{\A^7}\int_0^{1} (\sqP{1}-\sqP{2})\sqDP{1}(t, \eta)\\
&\qquad\times\Big(\int_0^{\eta} e^{-\frac{1}{\sqtC{1}}( \tilde\Y_1(t,\eta)-\tilde\Y_1(t,\theta))}(\tilde\D_2-\tilde\D_1)\tilde\U_{1}^+\tilde\Y_{1,\eta}(t,\theta)
\mathbbm{1}_{\tilde\D_2<\tilde\D_1}(t,\theta)d\theta\Big)d\eta\\
&\quad+\frac{1}{\A^7}\int_0^{1} (\sqP{1}-\sqP{2})\sqDP{2}(t, \eta)\Big(\int_0^{\eta}e^{-\frac{1}{\sqtC{2}}( \tilde\Y_2(t,\eta)-\tilde\Y_2(t,\theta))}\\
&\qquad\qquad\qquad\qquad\qquad\times\min_j(\tilde\D_j)(\tilde\U_2^+-\tilde\U_1^+)\tilde\Y_{2,\eta}(t,\theta)
\mathbbm{1}_{\tilde\U_1^+\le\tilde\U_2^+}(t,\theta)d\theta\Big)d\eta\\
&\quad+\frac{1}{\A^7}\int_0^{1} (\sqP{1}-\sqP{2})\sqDP{1}(t, \eta)\Big(\int_0^{\eta}e^{-\frac{1}{\sqtC{1}}( \tilde\Y_1(t,\eta)-\tilde\Y_1(t,\theta))}\\
&\qquad\qquad\qquad\qquad\qquad\times\min_j(\tilde\D_j)(\tilde\U_2^+-\tilde\U_1^+)\tilde\Y_{1,\eta}(t,\theta)
\mathbbm{1}_{\tilde\U_2^+<\tilde\U_1^+}(t,\theta)d\theta\Big)d\eta\\
& \quad + \mathbbm{1}_{\sqtC{1}\leq \sqtC{2}}\frac{1}{\A^7}\int_0^{1} (\sqP{1}-\sqP{2})\sqDP{2}(t, \eta)\\
&\qquad\qquad\qquad\qquad\times\Big(\int_0^{\eta}(e^{-\frac{1}{\sqtC{2}}( \tilde\Y_2(t,\eta)-\tilde\Y_2(t,\theta))}-e^{-\frac{1}{\sqtC{1}}( \tilde\Y_2(t,\eta)-\tilde\Y_2(t,\theta))})\\
&\qquad\qquad\qquad\qquad\qquad\times
\min_j(\tilde\D_j)\min_j(\tilde\U_j^+)\tilde\Y_{2,\eta}(t,\theta)d\theta\Big)d\eta\\
& \quad + \mathbbm{1}_{\sqtC{2}< \sqtC{1}}\frac{1}{\A^7}\int_0^{1} (\sqP{1}-\sqP{2})\sqDP{1}(t, \eta)\\
&\qquad\qquad\qquad\qquad\times\Big(\int_0^{\eta}(e^{-\frac{1}{\sqtC{2}}( \tilde\Y_1(t,\eta)-\tilde\Y_1(t,\theta))}-e^{-\frac{1}{\sqtC{1}}( \tilde\Y_1(t,\eta)-\tilde\Y_1(t,\theta))})\\
&\qquad\qquad\qquad\qquad\qquad\times
\min_j(\tilde\D_j)\min_j(\tilde\U_j^+)\tilde\Y_{1,\eta}(t,\theta)d\theta\Big)d\eta\\
&\quad+\frac{1}{\A^7}\int_0^{1} (\sqP{1}-\sqP{2})\sqDP{2}(t, \eta)\\
&\qquad\qquad\times\Big(\int_0^{\eta}(e^{-\frac{1}{\ma}( \tilde\Y_2(t,\eta)-\tilde\Y_2(t,\theta))}-e^{-\frac{1}{\ma}( \tilde\Y_1(t,\eta)-\tilde\Y_1(t,\theta))})\\
&\qquad\qquad\qquad\qquad\qquad\qquad\times
\min_j(\tilde\D_j)\min_j(\tilde\U_j^+)\tilde\Y_{2,\eta}(t,\theta)\mathbbm{1}_{B(\eta)}(t,\theta)d\theta\Big)d\eta\\
&\quad+\frac{1}{\A^7}\int_0^{1} (\sqP{1}-\sqP{2})\sqDP{1}(t, \eta)\\
&\qquad\qquad\qquad\times\Big(\int_0^{\eta}(e^{-\frac{1}{\ma}(\tilde\Y_2(t,\eta)-\tilde\Y_2(t,\theta))}-e^{-\frac{1}{\ma}( \tilde\Y_1(t,\eta)-\tilde\Y_1(t,\theta))})\\
&\qquad\qquad\qquad\qquad\qquad\qquad\times
\min_j(\tilde\D_j)\min_j(\tilde\U_j^+)\tilde\Y_{1,\eta}(t,\theta)\mathbbm{1}_{B(\eta)^c}(t,\theta)d\theta\Big)d\eta\\
&\quad+\frac{1}{\A^7}\int_0^{1} (\sqP{1}-\sqP{2})(\sqDP{2}-\sqDP{1})(t, \eta)\\
&\quad\qquad\times\min_k\Big(\int_0^{\eta}\min_j(e^{-\frac{1}{\ma}( \tilde\Y_j(t,\eta)-\tilde\Y_j(t,\theta))})
\min_j(\tilde\D_j)\min_j(\tilde\U_j^+)\tilde\Y_{k,\eta}(t,\theta)d\theta\Big)d\eta\\
&\quad+\frac{1}{\A^7}\int_0^{1} (\sqP{1}-\sqP{2})\sqDP{2}\mathbbm{1}_{D^c}(t, \eta)\\
&\quad\times\Big(\int_0^{\eta}\min_j(e^{-\frac{1}{\ma}( \tilde\Y_j(t,\eta)-\tilde\Y_j(t,\theta))})
\min_j(\tilde\D_j)\min_j(\tilde\U_j^+)(\tilde\Y_{2,\eta}-\tilde\Y_{1,\eta})(t,\theta)d\theta\Big)d\eta\\
&\quad+\frac{1}{\A^7}\int_0^{1} (\sqP{1}-\sqP{2})\sqDP{1}\mathbbm{1}_{e}(t, \eta)\\
&\quad\times\Big(\int_0^{\eta}\min_j(e^{-\frac{1}{\ma}( \tilde\Y_j(t,\eta)-\tilde\Y_j(t,\theta))})
\min_j(\tilde\D_j)\min_j(\tilde\U_j^+)(\tilde\Y_{2,\eta}-\tilde\Y_{1,\eta})(t,\theta)d\theta\Big)d\eta\\
&=Z_1+Z_2+Z_3+Z_4+Z_5+Z_6+Z_7+Z_8+Z_9+Z_{10}+Z_{11}+Z_{12}+Z_{13}, 
\end{align*}
where the set $e$ is given by \eqref{eq:set_e}.  Here there is no alternative but to treat these terms more or less separately. 
For the terms $Z_{1}$ and $Z_2$ we find
\begin{align*}
\abs{Z_1}&\leq \frac{2}{\A^7} \vert \sqtC{1}-\sqtC{2}\vert \int_0^1 \vert \sqP{1}-\sqP{2}\vert \vert\sqDP{1}\vert\\
&\qquad\times\Big(\int_0^\eta e^{-\frac{1}{\sqtC{1}}(\tilde \Y_1(t,\eta)-\tilde\Y_1(t,\theta))}\tilde \U_1^2\tilde \Y_{1,\eta}(t,\theta) d\theta\Big)^{1/2}\\
& \qquad \qquad \qquad \times \Big(\int_0^\eta e^{-\frac{1}{\sqtC{1}}(\tilde \Y_1(t,\eta)-\tilde \Y_1(t,\theta))} \tilde \P_1^2\tilde \Y_{1,\eta}(t,\theta)d\theta\Big)^{1/2}d\eta\\
& \leq\bigO(1)(\norm{\sqP{1}-\sqP{2}}^2+\vert \sqtC{1}-\sqtC{2}\vert^2).
\end{align*}

For the terms $Z_3$  and $Z_4$ we find
\begin{align*}
\abs{Z_{3}}&=\frac{1}{\A^7}\vert\int_0^{1} (\sqP{1}-\sqP{2})\sqDP{2}(t, \eta)\\
&\quad\times\Big(\int_0^{\eta} e^{-\frac{1}{\sqtC{2}}( \tilde\Y_2(t,\eta)-\tilde\Y_2(t,\theta))}
(\tilde\D_2-\tilde\D_1)\tilde\U_{2}^+\tilde\Y_{2,\eta}(t,\theta)
\mathbbm{1}_{\tilde\D_1\le\tilde\D_2}(t,\theta)d\theta\Big)d\eta\vert\\
&\le \bigO(1)
\big(\norm{\tilde\U_2-\tilde\U_1}^2+ \norm{\tilde\Y_2-\tilde\Y_1}^2+\norm{\sqP{2}-\sqP{1}}^2+\abs{\sqtC{2}-\sqtC{1}}^2\big)
\end{align*}
by Lemma~\ref{lemma:D}.

For the terms $Z_5$  and $Z_6$ we find
\begin{align*}
\abs{Z_{5}}&=\frac{1}{\A^7}\vert\int_0^{1} (\sqP{1}-\sqP{2})\sqDP{2}(t, \eta)\\
&\qquad\times\Big(\int_0^{\eta}e^{-\frac{1}{\sqtC{2}}( \tilde\Y_2(t,\eta)-\tilde\Y_2(t,\theta))}\min_j(\tilde\D_j)(\tilde\U_2^+-\tilde\U_1^+)\tilde\Y_{2,\eta}
\mathbbm{1}_{\tilde\U_1^+\le\tilde\U_2^+}(t,\theta)d\theta\Big)d\eta\vert\\
&\le \norm{\sqP{1}-\sqP{2}}^2+\frac1{\A^{14}}\int_0^{1} (\sqDP{2})^2(t, \eta)\\
&\qquad\times\Big(\int_0^{\eta}e^{-\frac{1}{\sqtC{2}}( \tilde\Y_2(t,\eta)-\tilde\Y_2(t,\theta))}\min_j(\tilde\D_j)(\tilde\U_2^+-\tilde\U_1^+)
\tilde\Y_{2,\eta} \mathbbm{1}_{\tilde\U_1^+\le\tilde\U_2^+}(t,\theta)d\theta\Big)^2d\eta\\
&\le \bigO(1)\big(\norm{\sqP{1}-\sqP{2}}^2+\norm{\tilde\U_2-\tilde\U_1}^2\big),
\end{align*}
by applying \eqref{eq:343}, estimating $\tilde\D_2\leq 2\sqtC{2}\tilde\P_1$ (cf. \eqref{eq:all_estimatesN}), and subsequently \eqref{eq:all_Pderiv_estimatesA}

For the terms $Z_7$ and $Z_8$ we find
\begin{align*}
\abs{Z_7}&=\mathbbm{1}_{\sqtC{1}\leq \sqtC{2}}\frac{1}{\A^7}\vert\int_0^{1} (\sqP{1}-\sqP{2})\sqDP{2}(t, \eta)\\
&\qquad\times\Big(\int_0^{\eta}(e^{-\frac{1}{\sqtC{2}}( \tilde\Y_2(t,\eta)-\tilde\Y_2(t,\theta))}-e^{-\frac{1}{\sqtC{1}}( \tilde\Y_2(t,\eta)-\tilde\Y_2(t,\theta))})\\
&\qquad\qquad\qquad\qquad\qquad\qquad\qquad\qquad\times
\min_j(\tilde\D_j)\min_j(\tilde\U_j^+)\tilde\Y_{2,\eta}(t,\theta)d\theta\Big)d\eta\vert\\
& \leq \frac{4}{\sqtC{1}\A^7e}\vert\int_0^{1} \vert\sqP{1}-\sqP{2}\vert\vert\sqDP{2}\vert(t, \eta)\\
&\qquad\times\Big(\int_0^{\eta}e^{-\frac{3}{4\sqtC{2}}( \tilde\Y_2(t,\eta)-\tilde\Y_2(t,\theta))}\tilde\D_2\vert\tilde\U_1\vert\tilde\Y_{2,\eta}(t,\theta)d\theta\Big)d\eta\vert\vert \sqtC{1}-\sqtC{2}\vert \\
& \leq \frac{4\sqrt{2}\ma}{\A^6e}\int_0^1 \vert \sqP{1}-\sqP{2}\vert \vert\sqDP{2}\vert(t,\eta)\\
&\qquad\qquad\qquad\times \Big(\int_0^\eta e^{-\frac{3}{2\sqtC{2}}(\tilde\Y_2(t,\eta)-\tilde \Y_2(t,\theta))}\tilde \P_2\tilde \Y_{2,\eta}(t,\theta) d\theta\Big)^{1/2}\\
& \qquad \qquad \qquad\qquad\qquad \times\Big(\int_0^\eta \tilde \P_2\tilde \Y_{2,\eta}(t,\theta) d\theta\Big)^{1/2} d\eta\vert \sqtC{1}-\sqtC{2}\vert\\
& \leq \bigO(1)(\norm{\sqP{1}-\sqP{2}}^2 +\vert \sqtC{1}-\sqtC{2}\vert^2).
\end{align*}

For the terms $Z_9$  and $Z_{10}$ we find
\begin{align*}
\abs{Z_{9}}&=\frac{1}{\A^7}\vert\int_0^{1} (\sqP{1}-\sqP{2})\sqDP{2}(t, \eta)\\
&\qquad\times\Big(\int_0^{\eta}(e^{-\frac{1}{\ma}( \tilde\Y_2(t,\eta)-\tilde\Y_2(t,\theta))}-e^{-\frac{1}{\ma}( \tilde\Y_1(t,\eta)-\tilde\Y_1(t,\theta))})\\
&\qquad\qquad\qquad\qquad\qquad\times
\min_j(\tilde\D_j)\min_j(\tilde\U_j^+)\tilde\Y_{2,\eta}(t,\theta)\mathbbm{1}_{B(\eta)}(t,\theta)d\theta\Big)d\eta\vert\\
&\le\frac{1}{\ma\A^7}\int_0^{1} \vert\sqP{1}-\sqP{2}\vert\, \vert\sqDP{2}\vert(t, \eta)\\
&\qquad\times\Big(\int_0^{\eta}\big(\abs{\tilde\Y_2- \tilde\Y_1}(t,\eta)+\abs{\tilde\Y_2- \tilde\Y_1}(t,\theta)\big)\\
&\qquad\qquad\qquad\qquad\times e^{-\frac{1}{\sqtC{2}}( \tilde\Y_2(t,\eta)-\tilde\Y_2(t,\theta))}
\tilde\D_2\min_j(\tilde\U_j^+)\tilde\Y_{2,\eta}(t,\theta)d\theta\Big)d\eta\\
&\le\frac{1}{\sqrt{2}\A^6}\int_0^{1} \abs{\sqP{1}-\sqP{2}} \abs{\tilde\Y_2- \tilde\Y_1} \vert\sqDP{2}\vert(t, \eta)\\
&\qquad\times\Big(\int_0^{\eta}e^{-\frac{1}{\sqtC{2}}( \tilde\Y_2(t,\eta)-\tilde\Y_2(t,\theta))}
\tilde\D_2\tilde\Y_{2,\eta}(t,\theta)d\theta\Big)d\eta\\
&\quad+\frac{1}{\sqrt{2}\A^6}\int_0^{1} \abs{\sqP{1}-\sqP{2}} \vert \sqDP{2}\vert(t, \eta)\\
&\qquad\times\Big(\int_0^{\eta}e^{-\frac{1}{\sqtC{2}}( \tilde\Y_2(t,\eta)-\tilde\Y_2(t,\theta))}\abs{\tilde\Y_2- \tilde\Y_1}
\tilde\D_2\tilde\Y_{2,\eta}(t,\theta)d\theta\Big)d\eta\\
&\le\frac{\sqrt{2}}{\A^5}\int_0^{1} \abs{\sqP{1}-\sqP{2}} \abs{\tilde\Y_2- \tilde\Y_1} \vert\sqDP{2}\vert(t, \eta)\\
&\qquad\times\Big(\int_0^{\eta}e^{-\frac{1}{\sqtC{2}}( \tilde\Y_2(t,\eta)-\tilde\Y_2(t,\theta))}
\tilde\P_2^2\tilde\Y_{2,\eta}(t,\theta)d\theta\Big)^{1/2} \\
&\qquad\times\Big(\int_0^{\eta}e^{-\frac{1}{\sqtC{2}}( \tilde\Y_2(t,\eta)-\tilde\Y_2(t,\theta))}
\tilde\Y_{2,\eta}(t,\theta)d\theta\Big)^{1/2} d\eta\\
&\quad+\norm{\sqP{1}-\sqP{2}}^2 \\
&\quad +\frac{2}{\A^{10}}\int_0^{1}(\sqDP{2})^2(t, \eta)\Big(\int_0^{\eta}e^{-\frac{3}{2\sqtC{2}}( \tilde\Y_2(t,\eta)-\tilde\Y_2(t,\theta))} \tilde\P_2^2 \tilde\Y_{2,\eta}^2(t,\theta)d\theta\Big) \\
&\qquad\qquad\qquad\times\Big(\int_0^{\eta}e^{-\frac{1}{2\sqtC{2}}( \tilde\Y_2(t,\eta)-\tilde\Y_2(t,\theta))} (\tilde\Y_2-\tilde\Y_1)^2 (t,\theta)d\theta\Big)d\eta\\
&\le \bigO(1)\big(\norm{\sqP{1}-\sqP{2}}^2+\norm{\tilde\Y_1-\tilde\Y_2}^2 \big), 
\end{align*}
applying the same set of estimates applied when studying $\bar B_{35}$, see \eqref{eq:barB35}.
For the term $Z_{11}$  we find
\begin{align*}
\abs{Z_{11}}&=\frac{1}{\A^7}\vert\int_0^{1} (\sqP{1}-\sqP{2})(\sqDP{2}-\sqDP{1})(t, \eta)\\
&\qquad\times\min_k\Big(\int_0^{\eta}\min_j(e^{-\frac{1}{\ma}( \tilde\Y_j(t,\eta)-\tilde\Y_j(t,\theta))})
\min_j(\tilde\D_j)\min_j(\tilde\U_j^+)\tilde\Y_{k,\eta}(t,\theta)d\theta\Big)d\eta\vert\\
&\le \bigO(1)\norm{\sqP{1}-\sqP{2}}^2,
\end{align*}
see estimates for $\bar B_{37}$, cf.~\eqref{eq:barB37}.

For the terms $Z_{12}$  and $Z_{13}$ we find
\begin{align*}
\abs{Z_{12}}&=\frac{1}{\A^7}\vert\int_0^{1} (\sqP{1}-\sqP{2})\sqDP{2}\mathbbm{1}_{D^c}(t, \eta)\\
&\quad\times\Big(\int_0^{\eta}\min_j(e^{-\frac{1}{\ma}( \tilde\Y_j(t,\eta)-\tilde\Y_j(t,\theta))})
\min_j(\tilde\D_j)\min_j(\tilde\U_j^+)(\tilde\Y_{2,\eta}-\tilde\Y_{1,\eta})(t,\theta)d\theta\Big)d\eta\vert\\
&=\frac{1}{\A^7}\vert\int_0^{1} (\sqP{1}-\sqP{2})\sqDP{2}\mathbbm{1}_{D^c}(t, \eta)\\
&\qquad\times\Big[\Big(\min_j(e^{-\frac{1}{\ma}( \tilde\Y_j(t,\eta)-\tilde\Y_j(t,\theta))})
\min_j(\tilde\D_j)\min_j(\tilde\U_j^+)(\tilde\Y_{2}-\tilde\Y_{1})(t,\theta)\Big)\Big\vert_{\theta=0}^\eta \\
&\quad -\int_0^{\eta} (\tilde\Y_{2}-\tilde\Y_{1})\frac{d}{d\theta} \Big(\min_j(e^{-\frac{1}{\ma}( \tilde\Y_j(t,\eta)-\tilde\Y_j(t,\theta))})
\min_j(\tilde\D_j)\min_j(\tilde\U_j^+) \Big)(t,\theta)d\theta\Big]d\eta\vert\\
&=\vert\frac{1}{\A^7}\int_0^{1} (\sqP{1}-\sqP{2})\sqDP{2}\mathbbm{1}_{D^c}
\min_j(\tilde\D_j)\min_j(\tilde\U_j^+)(\tilde\Y_{2}-\tilde\Y_{1})(t, \eta)d\eta\\
&\quad-\frac{1}{\A^7}\int_0^{1} (\sqP{1}-\sqP{2})\sqDP{2}\mathbbm{1}_{D^c}(t, \eta)\Big(\int_0^{\eta} (\tilde\Y_{2}-\tilde\Y_{1}) \\
&\quad\times\frac{d}{d\theta} \Big(\min_j(e^{-\frac{1}{\ma}( \tilde\Y_j(t,\eta)-\tilde\Y_j(t,\theta))})
\min_j(\tilde\D_j)\min_j(\tilde\U_j^+) \Big)(t,\theta)d\theta\Big) d\eta\vert \\
&\le \frac{\A^2}{4\sqrt{2}} \int_0^{1} \abs{\sqP{1}-\sqP{2}} \abs{\tilde\Y_{2}-\tilde\Y_{1}}(t,\eta)d\eta \\
&\quad+ \frac{1}{\A^7}\int_0^{1} \abs{\sqP{1}-\sqP{2}} \abs{\sqDP{2}}\mathbbm{1}_{D^c}(t, \eta) \Big(\int_0^{\eta} \abs{\tilde\Y_{2}-\tilde\Y_{1}}\\
&\qquad\times\vert\frac{d}{d\theta} \Big(\min_j(e^{-\frac{1}{\ma}( \tilde\Y_j(t,\eta)-\tilde\Y_j(t,\theta))})
\min_j(\tilde\D_j)\min_j(\tilde\U_j^+) \Big)\vert (t,\theta)d\theta\Big) d\eta \\
&=\tilde M_1+\tilde M_2.
\end{align*}
We find directly
\begin{equation*}
\abs{\tilde M_1}\le \bigO(1)\big(\norm{\sqP{1}-\sqP{2}}^2+\norm{\tilde\Y_1-\tilde\Y_2}^2 \big).
\end{equation*}
For the other term $\tilde M_2$ we proceed as follows:
\begin{align*}
\abs{\tilde M_{2}}&\le\frac{1}{\A^7}\int_0^{1} \abs{\sqP{1}-\sqP{2}} \abs{\sqDP{2}}\mathbbm{1}_{D^c}(t, \eta) \Big(\int_0^{\eta} \abs{\tilde\Y_{2}-\tilde\Y_{1}}\vert\\
&\qquad\times\frac{d}{d\theta} \Big(\min_j(e^{-\frac{1}{\ma}( \tilde\Y_j(t,\eta)-\tilde\Y_j(t,\theta))})
\min_j(\tilde\D_j)\min_j(\tilde\U_j^+) \Big)\vert (t,\theta)d\theta\Big) d\eta \\
&\le \frac{1}{\A^7}\int_0^{1} \abs{\sqP{1}-\sqP{2}} \abs{\sqDP{2}}\mathbbm{1}_{D^c}(t, \eta) \\
&\quad\times\Big(\int_0^{\eta} \abs{\tilde\Y_{2}-\tilde\Y_{1}}\Big[\vert\Big(\frac{d}{d\theta} \min_j(e^{-\frac{1}{\ma}( \tilde\Y_j(t,\eta)-\tilde\Y_j(t,\theta))})\Big)\vert \, \vert
\min_j(\tilde\D_j)\min_j(\tilde\U_j^+)\vert (t,\theta) \\
&\qquad\quad+\vert\min_j(e^{-\frac{1}{\ma}( \tilde\Y_j(t,\eta)-\tilde\Y_j(t,\theta))})\vert\, \vert\frac{d}{d\theta}\Big( 
\min_j(\tilde\D_j)\min_j(\tilde\U_j^+) \Big)\vert (t,\theta)\Big]d\theta\Big) d\eta \\
&\le \frac{1}{\A^7}\int_0^{1} \abs{\sqP{1}-\sqP{2}} \abs{\sqDP{2}}\mathbbm{1}_{D^c}(t, \eta) \Big(\int_0^{\eta} \abs{\tilde\Y_{2}-\tilde\Y_{1}}\\
&\quad\times\Big[\frac{1}{\ma}\min_j(e^{-\frac{1}{\ma}( \tilde\Y_j(t,\eta)-\tilde\Y_j(t,\theta))})
 \max_j(\tilde\Y_{j,\eta})\min_j(\tilde\D_j)\min_j(\tilde\U_j^+)(t,\theta) \\
&\qquad\quad+\bigO(1)\A^{9/2}\min_j(e^{-\frac{1}{\ma}( \tilde\Y_j(t,\eta)-\tilde\Y_j(t,\theta))}) (\min_j(\tilde\D_j)^{1/2}+\vert \tilde\U_2\vert) (t,\theta)\Big] d\theta\Big)d\eta \\
&=\tilde M_{21}+\tilde M_{22}, 
\end{align*}
by using Lemmas \ref{lemma:1} and \ref{lemma:5}.  For $\tilde M_{21}$ we find
\begin{align*}
\tilde M_{21}&=\frac{1}{\ma\A^7}\int_0^{1} \abs{\sqP{1}-\sqP{2}} \abs{\sqDP{2}}\mathbbm{1}_{D^c}(t, \eta) \Big(\int_0^{\eta} \min_j(e^{-\frac{1}{\ma}( \tilde\Y_j(t,\eta)-\tilde\Y_j(t,\theta))})\\
&\qquad\qquad\qquad\qquad\qquad\times\abs{\tilde\Y_{2}-\tilde\Y_{1}}
\vert \max_j(\tilde\Y_{j,\eta})\min_j(\tilde\D_j)\min_j(\tilde\U_j^+)\vert (t,\theta) d\theta\Big)d\eta \\
&\le \norm{\sqP{1}-\sqP{2}}^2+\frac{1}{\ma^2\A^{14}}\int_0^{1}  (\sqDP{2})^2(t, \eta)\Big(\int_0^{\eta} \min_j(e^{-\frac{1}{\ma}( \tilde\Y_j(t,\eta)-\tilde\Y_j(t,\theta))})\\
&\qquad\qquad\qquad\qquad\qquad\times\abs{\tilde\Y_{2}-\tilde\Y_{1}}
\max_j(\tilde\Y_{j,\eta})\min_j(\tilde\D_j)\min_j(\tilde\U_j^+) (t,\theta) d\theta\Big)^2d\eta \\
&\le \norm{\sqP{1}-\sqP{2}}^2\\
&\quad+\frac{1}{2}\int_0^{1}  (\sqDP{2})^2(t, \eta)\Big(\int_0^{\eta} e^{-\frac{1}{\ma}( \tilde\Y_2(t,\eta)-\tilde\Y_2(t,\theta))}\abs{\tilde\Y_{2}-\tilde\Y_{1}}
 (t,\theta) d\theta\Big)^2d\eta \\
&\le \norm{\sqP{1}-\sqP{2}}^2\\
&\quad+\frac{1}{8\A^2}\int_0^{1}  \tilde \P_2\tilde \Y_{2,\eta}^2(t, \eta)\Big(\int_0^{\eta} e^{-\frac{1}{\ma}( \tilde\Y_2(t,\eta)-\tilde\Y_2(t,\theta))}(\tilde\Y_{2}-\tilde\Y_{1})^2d\theta\Big)\\
&\qquad\qquad\qquad\qquad\times
\Big(\int_0^{\eta} e^{-\frac{1}{\ma}( \tilde\Y_2(t,\eta)-\tilde\Y_2(t,\theta))}(t,\theta) d\theta\Big)d\eta \\
&\le \norm{\sqP{1}-\sqP{2}}^2+\bigO(1)\int_0^{1}  \int_0^{\eta}(\tilde\Y_{2}-\tilde\Y_{1})^2d\theta d\eta \\
&\le \bigO(1)\big(\norm{\sqP{1}-\sqP{2}}^2+\norm{\tilde\Y_{2}-\tilde\Y_{1}}^2 \big),
\end{align*}
using
\begin{equation*}
\max_j(\tilde\Y_{j,\eta})\min_j(\tilde\D_j)\le 2\max_j(\tilde\Y_{j,\eta})\min_j(\sqtC{i}\tilde\P_j)
\le 2\max_j(\sqtC{i}\tilde\P_j\tilde\Y_{j,\eta})\leq \A^6,
\end{equation*}
and \eqref{eq:all_PestimatesA}, \eqref{eq:all_Pderiv_estimatesC}, and  \eqref{eq:all_estimatesE}.
For $\tilde M_{22}$ we find
\begin{align*}
\tilde M_{22}&=\frac{\bigO(1)}{\A^{5/2}}\int_0^{1} \abs{\sqP{1}-\sqP{2}} \abs{\sqDP{2}}\mathbbm{1}_{D^c}(t, \eta) \\
&\qquad\times\Big(\int_0^{\eta}\min_j(e^{-\frac{1}{\ma}( \tilde\Y_j(t,\eta)-\tilde\Y_j(t,\theta))}) \abs{\tilde\Y_{2}-\tilde\Y_{1}}
(\min_j(\tilde\D_j)^{1/2}+\vert \tilde\U_2\vert) (t,\theta)d\theta\Big)d\eta \\
&\le\norm{\sqP{1}-\sqP{2}}^2 +\frac{\bigO(1)}{\A^5}\int_0^{1} (\sqDP{2})^2(t, \eta) \\
&\quad\times\Big(\int_0^{\eta}\min_j(e^{-\frac{1}{\ma}( \tilde\Y_j(t,\eta)-\tilde\Y_j(t,\theta))}) \abs{\tilde\Y_{2}-\tilde\Y_{1}}
(\min_j(\tilde\D_j)^{1/2}+\vert \tilde\U_2\vert) (t,\theta)d\theta\Big)^2d\eta \\
&\le\norm{\sqP{1}-\sqP{2}}^2 +\frac{\bigO(1)}{\A^5}\int_0^{1} (\sqDP{2})^2(t, \eta) \\
&\quad\times\Big(\int_0^{\eta}e^{-\frac{1}{\ma}( \tilde\Y_2(t,\eta)-\tilde\Y_2(t,\theta))} \abs{\tilde\Y_{2}-\tilde\Y_{1}}
 \tilde\P_2^{1/2} (t,\theta)d\theta\Big)^2d\eta \\
 &\le\norm{\sqP{1}-\sqP{2}}^2 \\
&\qquad+\frac{\bigO(1)}{\A^7}\int_0^{1} \tilde\P_2\tilde\Y_{2,\eta}^2(t, \eta) \Big(\int_0^{\eta}e^{-\frac{3}{2\sqtC{2}}( \tilde\Y_2(t,\eta)-\tilde\Y_2(t,\theta))} \tilde\P_2(t,\theta)d\theta\Big)\\
&\qquad\times
\Big(\int_0^{\eta}e^{-\frac1{2\sqtC{2}}( \tilde\Y_2(t,\eta)-\tilde\Y_2(t,\theta))} \abs{\tilde\Y_{2}-\tilde\Y_{1}}(t,\theta)d\theta\Big)d\eta \\
&\le\norm{\sqP{1}-\sqP{2}}^2 \\
&\quad+\frac{\bigO(1)}{\A^7}\int_0^{1} \tilde\P_2^2\tilde\Y_{2,\eta}^2(t, \eta) 
\Big(\int_0^{\eta}e^{-\frac1{2\sqtC{2}}( \tilde\Y_2(t,\eta)-\tilde\Y_2(t,\theta))} (\tilde\Y_{2}-\tilde\Y_{1})^2(t,\theta)d\theta\Big)d\eta \\
&\le \bigO(1)\big(\norm{\sqP{1}-\sqP{2}}^2+\norm{\tilde\Y_{2}-\tilde\Y_{1}}^2 \big),
\end{align*}
using that, cf.~\eqref{eq:all_estimatesD} and \eqref{eq:all_estimatesN},
\begin{equation*}
\min_j(\tilde\D_j)^{1/2}+\vert \tilde\U_2\vert\le \sqrt{2\sqtC{2}}\tilde\P_2^{1/2}+\vert \tilde\U_2\vert\le \sqrt{2}(1+\sqrt{\sqtC{2}})\tilde\P_2^{1/2},
\end{equation*}
and \eqref{eq:32P}, \eqref{eq:all_estimatesE}, and \eqref{eq:all_Pderiv_estimatesA}.

\bigskip
We now consider the term $W_4$:
\begin{align*}
W_4&=-\frac{3}{\A^6}\int_0^{1} (\sqP{1}-\sqP{2})\\
&\qquad\times\Big(\sqDP{2}(t, \eta)\int_0^{\eta} e^{-\frac{1}{\sqtC{2}}( \tilde\Y_2(t,\eta)-\tilde\Y_2(t,\theta))}\tilde\P_2\tilde\U_{2}\tilde\Y_{2,\eta}(t,\theta)d\theta\\
&\qquad-\sqDP{1}(t, \eta)\int_0^{\eta} e^{-\frac{1}{\sqtC{1}}( \tilde\Y_1(t,\eta)-\tilde\Y_1(t,\theta))}\tilde\P_1\tilde\U_{1}\tilde\Y_{1,\eta}(t,\theta)d\theta\Big)d\eta\\
&=-\frac{3}{\A^6}\int_0^{1} (\sqP{1}-\sqP{2})\\
&\qquad\times\Big(\sqDP{2}(t, \eta)\int_0^{\eta} e^{-\frac{1}{\sqtC{2}}( \tilde\Y_2(t,\eta)-\tilde\Y_2(t,\theta))}\tilde\P_2\tilde\U_{2}^+\tilde\Y_{2,\eta}(t,\theta)d\theta\\
&\qquad-\sqDP{1}(t, \eta)\int_0^{\eta} e^{-\frac{1}{\sqtC{1}}( \tilde\Y_1(t,\eta)-\tilde\Y_1(t,\theta))}\tilde\P_1\tilde\U_{1}^+\tilde\Y_{1,\eta}(t,\theta)d\theta\Big)d\eta\\
&\quad-\frac{3}{\A^6}\int_0^{1} (\sqP{1}-\sqP{2})\\
&\qquad\times\Big(\sqDP{2}(t, \eta)\int_0^{\eta} e^{-\frac{1}{\sqtC{2}}( \tilde\Y_2(t,\eta)-\tilde\Y_2(t,\theta))}\tilde\P_2\tilde\U_{2}^-\tilde\Y_{2,\eta}(t,\theta)d\theta\\
&\qquad-\sqDP{1}(t, \eta)\int_0^{\eta} e^{-\frac{1}{\sqtC{1}}( \tilde\Y_1(t,\eta)-\tilde\Y_1(t,\theta))}\tilde\P_1\tilde\U_{1}^-\tilde\Y_{1,\eta}(t,\theta)d\theta\Big)d\eta\\
&= W_4^++W_4^-.
\end{align*}
Since $W_4^+=-3N_1$, see \eqref{eq:N1P}, we find that
\begin{equation*}
\abs{W_4}\le \bigO(1)\big(\norm{\sqP{1}-\sqP{2}}^2+\norm{\tilde\U_{1}-\tilde\U_{2}}^2+\norm{\tilde\Y_2-\tilde\Y_1}^2+\vert \sqtC{1}-\sqtC{2}\vert \big).
\end{equation*}

The next term $W_5$ would be laborious:
\begin{align*}
W_5&=\frac{1}{\A^6}\int_0^{1} (\sqP{1}-\sqP{2})\Big(\sqDP{2}(t, \eta)\int_0^{\eta} e^{-\frac{1}{\sqtC{2}}( \tilde\Y_2(t,\eta)-\tilde\Y_2(t,\theta))}\tilde\U_2^3\tilde\Y_{2,\eta}(t,\theta)d\theta\\
&\qquad-\sqDP{1}(t, \eta)\int_0^{\eta} e^{-\frac{1}{\sqtC{1}}( \tilde\Y_1(t,\eta)-\tilde\Y_1(t,\theta))}\tilde\U_1^3\tilde\Y_{1,\eta}(t,\theta)d\theta\Big)d\eta\\
&=\frac{1}{\A^6}\int_0^{1} (\sqP{1}-\sqP{2})\Big(\sqDP{2}(t, \eta)\int_0^{\eta} e^{-\frac{1}{\sqtC{2}}( \tilde\Y_2(t,\eta)-\tilde\Y_2(t,\theta))}(\tilde\U_2^+)^3\tilde\Y_{2,\eta}(t,\theta)d\theta\\
&\qquad-\sqDP{1}(t, \eta)\int_0^{\eta} e^{-\frac{1}{\sqtC{1}}( \tilde\Y_1(t,\eta)-\tilde\Y_1(t,\theta))}(\tilde\U_1^+)^3\tilde\Y_{1,\eta}(t,\theta)d\theta\Big)d\eta\\
&+\frac{1}{\A^6}\int_0^{1} (\sqP{1}-\sqP{2})\Big(\sqDP{2}(t, \eta)\int_0^{\eta} e^{-\frac{1}{\sqtC{2}}( \tilde\Y_2(t,\eta)-\tilde\Y_2(t,\theta))}(\tilde\U_2^-)^3\tilde\Y_{2,\eta}(t,\theta)d\theta\\
&\qquad-\sqDP{1}(t, \eta)\int_0^{\eta} e^{-\frac{1}{\sqtC{1}}( \tilde\Y_1(t,\eta)-\tilde\Y_1(t,\theta))}(\tilde\U_1^-)^3\tilde\Y_{1,\eta}(t,\theta)d\theta\Big)d\eta\\
&=W_5^+ + W_5^-.
\end{align*}

Unfortunately, having a close look at $W_5^+$ one has 
\begin{equation*}
W_5^+=\int_0^1 (\sqP{1}-\sqP{2})(I_{211}+I_{212}+I_{213}+I_{214}+I_{215}+I_{216}+I_{217}+I_{218})(t,\eta)d\eta,
\end{equation*}
where $I_{211}, \dots, I_{218}$ are defined in \eqref{eq:I21j}. Thus we can conclude immediately that 
\begin{equation*}
\abs{W_5^+}\leq \bigO(1) \Big(\norm{\tilde\Y_1-\tilde\Y_2}^2+\norm{\tilde\U_1-\tilde\U_2}^2
+\norm{\sqP{1}-\sqP{2}}^2+\vert \sqtC{1}-\sqtC{2}\vert^2\Big).
\end{equation*} 

\bigskip
For the term $W_6$ we find
\begin{align*}
W_6&=\frac{1}{2\A^6}\int_0^{1} (\sqP{1}-\sqP{2})\Big(\sqDP{2}(t, \eta)\int_0^{\eta} e^{-\frac{1}{\sqtC{2}}( \tilde\Y_2(t,\eta)-\tilde\Y_2(t,\theta))}\sqtC{2}^5\tilde\U_2(t,\theta)d\theta\\
&\qquad\qquad-\sqDP{1}(t, \eta)\int_0^{\eta} e^{-\frac{1}{\sqtC{1}}( \tilde\Y_1(t,\eta)-\tilde\Y_1(t,\theta))}\sqtC{1}^5\tilde\U_1(t,\theta)d\theta\Big)d\eta\\
&=\frac{\sqtC{2}^5-\sqtC{1}^5}{2\A^6}\mathbbm{1}_{\sqtC{1}\le \sqtC{2}}\int_0^1(\sqP{1}-\sqP{2})
\sqDP{2}(t, \eta)\\
&\qquad\qquad\qquad\qquad\qquad\times\int_0^{\eta} e^{-\frac{1}{\sqtC{2}}( \tilde\Y_2(t,\eta)-\tilde\Y_2(t,\theta))}\tilde\U_2(t,\theta)d\theta d\eta\\
&\quad+\frac{\sqtC{2}^5-\sqtC{1}^5}{2\A^6}\mathbbm{1}_{\sqtC{2}< \sqtC{1}}\int_0^1(\sqP{1}-\sqP{2})
\sqDP{1}(t, \eta)\\
&\qquad\qquad\qquad\qquad\qquad\times\int_0^{\eta} e^{-\frac{1}{\sqtC{2}}( \tilde\Y_2(t,\eta)-\tilde\Y_2(t,\theta))}\tilde\U_1(t,\theta)d\theta d\eta\\
&\quad+\mathbbm{1}_{\sqtC{1}\leq \sqtC{2}}\frac{\ma^5}{2\A^6}\int_0^1(\sqP{1}-\sqP{2})\sqDP{2}(t, \eta)\\
&\qquad\qquad\times\Big(\int_0^{\eta} \big(e^{-\frac{1}{\sqtC{2}}( \tilde\Y_2(t,\eta)-\tilde\Y_2(t,\theta))}-e^{-\frac{1}{\sqtC{1}}( \tilde\Y_2(t,\eta)-\tilde\Y_2(t,\theta))}\big)\tilde\U_2(t,\theta)d\theta\Big) d\eta\\
&\quad+\mathbbm{1}_{\sqtC{2}< \sqtC{1}}\frac{\ma^5}{2\A^6}\int_0^1(\sqP{1}-\sqP{2})\sqDP{1}(t, \eta)\\
&\qquad\qquad\times\Big(\int_0^{\eta} \big(e^{-\frac{1}{\sqtC{2}}( \tilde\Y_1(t,\eta)-\tilde\Y_1(t,\theta))}-e^{-\frac{1}{\sqtC{1}}( \tilde\Y_1(t,\eta)-\tilde\Y_1(t,\theta))}\big)\tilde\U_1(t,\theta)d\theta\Big) d\eta\\
&\quad+\frac{\ma^5}{2\A^6}\int_0^1(\sqP{1}-\sqP{2})\sqDP{2}(t, \eta)\\
&\qquad\qquad\times\Big(\int_0^{\eta} \big(e^{-\frac{1}{\ma}( \tilde\Y_2(t,\eta)-\tilde\Y_2(t,\theta))}-e^{-\frac{1}{\ma}( \tilde\Y_1(t,\eta)-\tilde\Y_1(t,\theta))}\big)\tilde\U_2\mathbbm{1}_{B(\eta)}(t,\theta)d\theta\Big) d\eta\\
&\quad+\frac{\ma^5}{2\A^6}\int_0^1(\sqP{1}-\sqP{2})\sqDP{1}(t, \eta)\\
&\qquad\qquad\times\Big(\int_0^{\eta} \big(e^{-\frac{1}{\ma}( \tilde\Y_2(t,\eta)-\tilde\Y_2(t,\theta))}-e^{-\frac{1}{\ma}( \tilde\Y_1(t,\eta)-\tilde\Y_1(t,\theta))}\big)\tilde\U_1\mathbbm{1}_{B(\eta)^c}(t,\theta)d\theta\Big) d\eta\\
&\quad+\frac{\ma^5}{2\A^6}\int_0^1(\sqP{1}-\sqP{2})\sqDP{2}(t, \eta)\\
&\qquad\qquad\times\Big(\int_0^{\eta} \min_j(e^{-\frac{1}{\ma}( \tilde\Y_j(t,\eta)-\tilde\Y_j(t,\theta))})(\tilde\U_2^+ -\tilde\U_1^+)\mathbbm{1}_{\tilde\U_1^+ \le\tilde\U_2^+}(t,\theta)d\theta\Big) d\eta\\
&\quad+\frac{\ma^5}{2\A^6}\int_0^1(\sqP{1}-\sqP{2})\sqDP{2}(t, \eta)\\
&\qquad\qquad\times\Big(\int_0^{\eta} \min_j(e^{-\frac{1}{\ma}( \tilde\Y_j(t,\eta)-\tilde\Y_j(t,\theta))})(\tilde\U_2^- -\tilde\U_1^-)\mathbbm{1}_{\tilde\U_2^- \le\tilde\U_1^-}(t,\theta)d\theta\Big) d\eta\\
&\quad+\frac{\ma^5}{2\A^6}\int_0^1(\sqP{1}-\sqP{2})\sqDP{1}(t, \eta)\\
&\qquad\qquad\times\Big(\int_0^{\eta} \min_j(e^{-\frac{1}{\ma}( \tilde\Y_j(t,\eta)-\tilde\Y_j(t,\theta))})(\tilde\U_2^+ -\tilde\U_1^+)\mathbbm{1}_{\tilde\U_2^+ <\tilde\U_1^+}(t,\theta)d\theta\Big) d\eta\\
&\quad+\frac{\ma^5}{2\A^6}\int_0^1(\sqP{1}-\sqP{2})\sqDP{1}(t, \eta)\\
&\qquad\qquad\times\Big(\int_0^{\eta} \min_j(e^{-\frac{1}{\ma}( \tilde\Y_j(t,\eta)-\tilde\Y_j(t,\theta))})(\tilde\U_2^- -\tilde\U_1^-)\mathbbm{1}_{\tilde\U_1^- <\tilde\U_2^-}(t,\theta)d\theta\Big) d\eta\\
&\quad+\frac{\ma^5}{2\A^6}\int_0^1(\sqP{1}-\sqP{2})(\sqDP{2} -\sqDP{1}(t, \eta)\\
&\qquad\qquad\times\Big(\int_0^{\eta} \min_j(e^{-\frac{1}{\ma}( \tilde\Y_j(t,\eta)-\tilde\Y_j(t,\theta))})\min_j(\tilde\U_j^+)(t,\theta)d\theta\Big) d\eta\\
&\quad+\frac{\ma^5}{2\A^6}\int_0^1(\sqP{1}-\sqP{2})(\sqDP{2} -\sqDP{1})(t, \eta)\\
&\qquad\qquad\times\Big(\int_0^{\eta} \max_j(e^{-\frac{1}{\ma}( \tilde\Y_j(t,\eta)-\tilde\Y_j(t,\theta))})\min_j(\tilde\U_j^-)(t,\theta)d\theta\Big) d\eta\\
&= W_{61}+W_{62}+W_{63}+W_{64}+W_{65}+W_{66}\\
&\quad +W_{67}^+ +W_{67}^- +W_{68}^+ +W_{68}^- +W_{69}^+ +W_{69}^-.
\end{align*}

The terms $W_{61}$ and $W_{62}$:
\begin{align*}
\abs{W_{61}}&=\frac{\sqtC{2}^5-\sqtC{1}^5}{2\A^6}\mathbbm{1}_{\sqtC{1}\le\sqtC{2}} \vert\int_0^1(\sqP{1}-\sqP{2})\sqDP{2}(t, \eta)\\
&\qquad\qquad\qquad\qquad\times\int_0^{\eta} e^{-\frac{1}{\sqtC{2}}( \tilde\Y_2(t,\eta)-\tilde\Y_2(t,\theta))}\tilde\U_2(t,\theta)d\theta d\eta\vert\\
&\le5\frac{\sqtC{2}-\sqtC{1}}{2\A^2} \int_0^1\vert\sqP{1}-\sqP{2}\vert\vert\sqDP{2}\vert(t, \eta)\\
&\qquad\qquad\qquad\qquad\times\Big(\int_0^{\eta} e^{-\frac{1}{\sqtC{2}}( \tilde\Y_2(t,\eta)-\tilde\Y_2(t,\theta))}\tilde\U_2^2(t,\theta)d\theta \Big)^{1/2} \\
& \qquad \qquad \qquad \qquad \qquad \qquad  \times\Big(\int_0^{\eta} e^{-\frac{1}{\sqtC{2}}( \tilde\Y_2(t,\eta)-\tilde\Y_2(t,\theta))}d\theta\Big)^{1/2}d\eta\vert\\
& \leq 5\sqrt{6}\frac{\sqtC{2}-\sqtC{1}}{2\A^2}\int_0^1\vert\sqP{1}-\sqP{2}\vert\sqP{2}\vert\sqDP{2}\vert(t, \eta)d\eta\\
&\le \bigO(1)\big(\abs{\sqtC{1}-\sqtC{2}}^2+\norm{\sqP{1}-\sqP{2}}^2 \big),
\end{align*}
using \eqref{eq:all_estimatesE}, \eqref{eq:all_PestimatesA}, and \eqref{eq:all_Pderiv_estimatesA}.

The terms $W_{63}$ and $W_{64}$:
\begin{align*}
\abs{W_{63}}&=\mathbbm{1}_{\sqtC{1}\leq \sqtC{2}}\frac{\ma^5}{2\A^6}\int_0^1\vert\sqP{1}-\sqP{2}\vert\vert\sqDP{2}\vert(t, \eta)\\
& \qquad \qquad \times\Big\vert\int_0^\eta (e^{-\frac{1}{\sqtC{2}}(\tilde \Y_2(t,\eta)-\tilde \Y_2(t,\theta))}-e^{-\frac{1}{\sqtC{1}}(\tilde \Y_2(t,\eta)-\tilde \Y_2(t,\theta))})\tilde \U_2(t,\theta) d\theta\Big\vert d\eta\\
& \leq \frac{4\ma^5}{2\ma\A^6e}\int_0^1\vert\sqP{1}-\sqP{2}\vert\vert\sqDP{2}\vert(t, \eta)\\
&\qquad\qquad\qquad\qquad\times\Big(\int_0^\eta e^{-\frac{3}{4\sqtC{2}}(\tilde \Y_2(t,\eta)-\tilde \Y_2(t,\theta))}\vert\tilde \U_2\vert(t,\theta) d\theta\Big) d\eta\vert \sqtC{1}-\sqtC{2}\vert\\
& \leq \frac{2}{\A^2e}\int_0^1\vert\sqP{1}-\sqP{2}\vert\vert\sqDP{2}\vert(t, \eta)\\
&\qquad\qquad\qquad\qquad\times\Big(\int_0^\eta e^{-\frac{1}{\sqtC{2}}(\tilde \Y_2(t,\eta)-\tilde \Y_2(t,\theta))}\tilde \U_2^2(t,\theta) d\theta\Big)^{1/2} d\eta\vert \sqtC{1}-\sqtC{2}\vert \\
& \leq \frac{2\sqrt{6}}{\A^2 e}\int_0^1\vert\sqP{1}-\sqP{2}\vert\sqP{2}\vert\sqDP{2}\vert(t, \eta)d\eta\vert\sqtC{1}-\sqtC{2}\vert \\
& \leq \bigO(1)(\norm{\sqP{1}-\sqP{2}}^2+\vert \sqtC{1}-\sqtC{2}\vert^2)
\end{align*}

The terms $W_{65}$ and $W_{66}$:
\begin{align*}
\abs{W_{65}}&=\frac{\ma^5}{2\A^6}\vert\int_0^1(\sqP{1}-\sqP{2})\sqDP{2}(t, \eta)\\
&\qquad\times\int_0^{\eta}\big( e^{-\frac{1}{\ma}( \tilde\Y_2(t,\eta)-\tilde\Y_2(t,\theta))}-e^{-\frac{1}{\ma}( \tilde\Y_1(t,\eta)-\tilde\Y_1(t,\theta))}\big)\tilde\U_2\mathbbm{1}_{B(\eta)}(t,\theta)d\theta d\eta\vert\\
& \le \frac{1}{2\A^2}\int_0^1\vert\sqP{1}-\sqP{2}\vert\vert\sqDP{2}\vert(t, \eta)\\
&\qquad\qquad\qquad\qquad\times\int_0^{\eta}\big(\vert\tilde\Y_2(t,\eta)-\tilde\Y_1(t,\eta)\vert+\vert \tilde\Y_2(t,\theta)-\tilde\Y_1(t,\theta)\vert\big) \\
&\qquad\qquad\qquad\qquad\qquad\times e^{-\frac{1}{\ma}( \tilde\Y_2(t,\eta)-\tilde\Y_2(t,\theta))}\vert\tilde\U_2\vert(t,\theta)d\theta d\eta\\
&\le \frac{1}{2\A^2}\int_0^1\vert\sqP{1}-\sqP{2}\vert\tilde\Y_2-\tilde\Y_1\vert \vert\sqDP{2}\vert(t, \eta)\\
&\qquad\qquad\qquad\qquad\times\int_0^{\eta} e^{-\frac{1}{\ma}( \tilde\Y_2(t,\eta)-\tilde\Y_2(t,\theta))}\vert\tilde\U_2\vert(t,\theta)d\theta d\eta\\
&\quad+\frac{1}{2\A^2}\int_0^1\vert\sqP{1}-\sqP{2}\vert\vert\sqDP{2}\vert(t, \eta)\\
&\qquad\qquad\qquad\qquad\times\int_0^{\eta} e^{-\frac{1}{\ma}( \tilde\Y_2(t,\eta)-\tilde\Y_2(t,\theta))}\vert \tilde\Y_2-\tilde\Y_1\vert\,\vert\tilde\U_2\vert(t,\theta)d\theta d\eta\\
& \leq \frac{1}{2\A^2}\int_0^1\vert\sqP{1}-\sqP{2}\vert\vert\tilde\Y_2-\tilde\Y_1\vert \vert\sqDP{2}\vert(t, \eta)\\
&\qquad\qquad\qquad\qquad\times\Big(\int_0^{\eta} e^{-\frac{1}{\ma}( \tilde\Y_2(t,\eta)-\tilde\Y_2(t,\theta))}\tilde\U_2^2(t,\theta)d\theta \Big)^{1/2}d\eta\\
&\quad+\frac{1}{2\A^2}\int_0^1\vert\sqP{1}-\sqP{2}\vert\vert\sqDP{2}\vert(t, \eta)\\
&\qquad\qquad\qquad\qquad\times\Big(\int_0^{\eta} e^{-\frac{1}{\ma}( \tilde\Y_2(t,\eta)-\tilde\Y_2(t,\theta))}\tilde\U_2^2(t,\theta)d\theta\Big)^{1/2} d\eta \norm{\tilde\Y_1-\tilde\Y_2}\\
& \leq \frac{\sqrt{3}}{\sqrt{2}\A^2}\int_0^1\vert\sqP{1}-\sqP{2}\vert\vert\tilde\Y_2-\tilde\Y_1\vert \sqP{2}\vert\sqDP{2}\vert(t, \eta)d\eta\\
&\quad+\frac{\sqrt{3}}{\sqrt{2}\A^2}\int_0^1\vert\sqP{1}-\sqP{2}\vert\sqP{2}\vert\sqDP{2}\vert(t, \eta) d\eta \norm{\tilde\Y_1-\tilde\Y_2}\\
& \le \bigO(1)\Big(\norm{\sqP{1}-\sqP{2}}^2+\norm{\tilde\Y_1-\tilde\Y_2}^2\Big),
\end{align*}
using \eqref{Diff:Exp}, \eqref{eq:all_PestimatesA}, \eqref{eq:all_estimatesE}, and \eqref{eq:all_Pderiv_estimatesA}.

The terms $W_{67}^\pm$ and $W_{68}^\pm$:
\begin{align*}
\abs{W_{67}^+}&=\frac{\ma^5}{2\A^6}\vert\int_0^1(\sqP{1}-\sqP{2})\sqDP{2}(t, \eta)\\
&\qquad\qquad\times\Big(\int_0^{\eta} \min_j(e^{-\frac{1}{\ma}( \tilde\Y_j(t,\eta)-\tilde\Y_j(t,\theta))})(\tilde\U_2^+ -\tilde\U_1^+)\mathbbm{1}_{\tilde\U_1^+ \le\tilde\U_2^+}(t,\theta)d\theta\Big) d\eta\vert\\
&\le \norm{\sqP{1}-\sqP{2}}^2 +\frac{1}{4\A^2}\int_0^1(\sqDP{2})^2(t, \eta) \\
&\qquad\times\Big( \int_0^{\eta} \min_j(e^{-\frac{1}{\ma}( \tilde\Y_j(t,\eta)-\tilde\Y_j(t,\theta))})(\tilde\U_2^+ -\tilde\U_1^+)\mathbbm{1}_{\tilde\U_1^+ \le\tilde\U_2^+}(t,\theta)d\theta\Big)^2d\eta\\
&\le \norm{\sqP{1}-\sqP{2}}^2 \\
&\quad +\frac{1}{16\A^4}\int_0^1\tilde\P_2\tilde \Y_2^2(t, \eta)\Big( \int_0^{\eta} e^{-\frac{1}{\sqtC{2}}( \tilde\Y_2(t,\eta)-\tilde\Y_2(t,\theta))}(\tilde\U_2^+ -\tilde\U_1)^2(t,\theta)d\theta\Big)d\eta\\
&\le \norm{\sqP{1}-\sqP{2}}^2 \\
&\quad +\frac{\A}{32}\int_0^1\tilde\Y_{2,\eta}(t, \eta)\Big( \int_0^{\eta}e^{-\frac{1}{\sqtC{2}}( \tilde\Y_2(t,\eta)-\tilde\Y_2(t,\theta))}(\tilde\U_2 -\tilde\U_1)^2(t,\theta)d\theta\Big)d\eta\\
&\le \bigO(1)\big(\norm{\sqP{1}-\sqP{2}}^2+\norm{\tilde\U_1-\tilde\U_2}^2 \big)
\end{align*}
following the estimates used for $\bar B_{65}^+$, see \eqref{eq:barB65}.
The terms $W_{69}^\pm$:
\begin{align*}
\abs{W_{69}^+}&=\frac{\ma^5}{2\A^6}\vert\int_0^1(\sqP{1}-\sqP{2})(\sqDP{2} -\sqDP{1}(t, \eta)\\
&\qquad\qquad\times\Big(\int_0^{\eta} \min_j(e^{-\frac{1}{\ma}( \tilde\Y_j(t,\eta)-\tilde\Y_j(t,\theta))})\min_j(\tilde\U_j^+)(t,\theta)d\theta\Big) d\eta\vert\\
&= \frac{\ma^5}{4\A^6}\Big\vert \Big((\sqP{1}-\sqP{2})^2\int_0^{\eta} \min_j(e^{-\frac{1}{\ma}( \tilde\Y_j(t,\eta)-\tilde\Y_j(t,\theta))})\min_j(\tilde\U_j^+)(t,\theta)d\theta\Big)\Big\vert_{\eta=0}^1\\
&\quad -\int_0^1(\sqP{1}-\sqP{2})^2  \frac{d}{d\eta}\Big(\int_0^{\eta} \min_j(e^{-\frac{1}{\ma}( \tilde\Y_j(t,\eta)-\tilde\Y_j(t,\theta))})\min_j(\tilde\U_j^+)(t,\theta) d\theta \Big) d\eta \Big\vert\\
&\le  \frac{\ma^5}{4\A^6}\int_0^1(\sqP{1}-\sqP{2})^2 \\
&\qquad\qquad\qquad\times \vert\frac{d}{d\eta}\Big(\int_0^{\eta} \min_j(e^{-\frac{1}{\ma}( \tilde\Y_j(t,\eta)-\tilde\Y_j(t,\theta))})\min_j(\tilde\U_j^+)\vert(t,\theta)d\theta \Big) d\eta\\
&\le \bigO(1)\norm{\sqP{1}-\sqP{2}}^2,
\end{align*}
see estimates employed for $\bar B_{67}$, cf.~\eqref{eq:barB67}, and Lemma \ref{lemma:6}.

The terms $M_2$ and $M_3$ can be treated similarly. More precisely
\begin{align*}
\abs{M_2}&\le\mathbbm{1}_{\sqtC{2}\le \sqtC{1}} \big(\frac{1}{\sqtC{2}^6}-\frac{1}{\sqtC{1}^6} \big)\vert\int_0^{1}(\sqP{1}-\sqP{2})\sqDP{2}(t, \eta)\\
&\qquad\qquad\qquad\qquad\times\int_0^{\eta}e^{-\frac{1}{\sqtC{2}}( \tilde\Y_2(t,\eta)-\tilde\Y_2(t,\theta)) }\tilde\Q_2\tilde\U_{2,\eta}(t,\theta)d\theta d\eta\vert \\
&\le \mathbbm{1}_{\sqtC{2}\le \sqtC{1}} \frac{\vert \sqtC{1}^6-\sqtC{2}^6\vert}{\A^6\sqtC{2}^5}\int_0^{1}\vert\sqP{1}-\sqP{2}\vert \, \vert\sqDP{2}\vert(t, \eta)\\
&\qquad\qquad\qquad\qquad\times
\int_0^{\eta}e^{-\frac{1}{\sqtC{2}}( \tilde\Y_2(t,\eta)-\tilde\Y_2(t,\theta)) }\tilde\P_2\vert\tilde\U_{2,\eta}\vert(t,\theta)d\theta d\eta \\
&\le \mathbbm{1}_{\sqtC{2}\le\sqtC{1}} 6\frac{\vert \sqtC{1}-\sqtC{2}\vert}{\A\sqtC{2}^5}
\norm{\sqP{1}-\sqP{2}} \\
&\qquad\times\Big(\int_0^{1} (\sqDP{2})^2(t, \eta) \Big(
\int_0^{\eta}e^{-\frac{1}{\sqtC{2}}( \tilde\Y_2(t,\eta)-\tilde\Y_2(t,\theta)) }\tilde\P_2\vert\tilde\U_{2,\eta}\vert(t,\theta)d\theta\Big)^2 d\eta\Big)^{1/2} \\
&\le  \mathbbm{1}_{\sqtC{2}\le \sqtC{1}} 6\frac{\vert \sqtC{1}-\sqtC{2}\vert}{\A\sqtC{2}^6}\norm{\sqP{1}-\sqP{2}}\\
&\qquad\times\Big(\int_0^{1}  (\sqDP{2})^2(t, \eta) \Big(\int_0^{\eta}e^{-\frac{1}{\sqtC{2}}( \tilde\Y_2(t,\eta)-\tilde\Y_2(t,\theta)) }\tilde\P_2^2\tilde\Y_{2,\eta}(t,\theta)d\theta\Big) \\
&\qquad\qquad\qquad\qquad\qquad\qquad\times\Big(\int_0^{\eta}e^{-\frac{1}{\sqtC{2}}( \tilde\Y_2(t,\eta)-\tilde\Y_2(t,\theta)) }\tilde\Henergy_{2,\eta}(t,\theta)d\theta\Big)
    d\eta\Big)^{1/2} \\
&\le \mathbbm{1}_{\sqtC{2}\le \sqtC{1}}6\sqrt{6}\frac{\vert\sqtC{1}-\sqtC{2}\vert}{\A\sqtC{2}^3}\norm{\sqP{1}-\sqP{2}}\Big(\int_0^{1} (\sqDP{2})^2\tilde\P_2^2(t, \eta)
    d\eta\Big)^{1/2}     \\
&\le \bigO(1) \big(\norm{\sqP{1}-\sqP{2}}^2+\vert \sqtC{1}-\sqtC{2}\vert^2 \big).    
\end{align*}
Here we used 
\begin{align*}
\tilde\P_2\vert\tilde\U_{2,\eta}\vert& \leq \frac{1}{\sqtC{2}}\tilde\P_2\sqrt{\tilde\Y_{2,\eta}\tilde\Henergy_{2,\eta}},
\end{align*}
cf.~\eqref{eq:all_estimatesM}, as well as \eqref{eq:all_estimatesC}, \eqref{eq:all_estimatesE}, and \eqref{eq:all_Pderiv_estimatesD}. In addition, we applied  \eqref{eq:all_PestimatesB} and \eqref{eq:all_PestimatesD}.

\textit{The term $I_3$:}  We have the following estimates 
\begin{align*}
\abs{I_3}&\le\int_0^{1}\abs{\sqP{1}-\sqP{2}}
\abs{\frac{1}{\sqtC{1}^2}\frac{\tilde \Q_1\tilde\U_1}{\sqP{1}}-\frac{1}{\sqtC{2}^2}\frac{\tilde \Q_2\tilde\U_2}{\sqP{2}}}(t,\eta)d\eta\\
&\le\norm{\sqP{1}-\sqP{2}}^2
+\norm{\frac{1}{\sqtC{1}^2}\frac{\tilde \Q_1\tilde\U_1}{\sqP{1}}-\frac{1}{\sqtC{2}^2}
\frac{\tilde \Q_2\tilde\U_2}{\sqP{2}}}^2.
\end{align*}
We consider the latter term, and find
\begin{align*}
&\norm{\frac{1}{\sqtC{1}^2}\frac{\tilde \Q_1\tilde\U_1}{\sqP{1}}-\frac{1}{\sqtC{2}^2}\frac{\tilde \Q_2\tilde\U_2}{\sqP{2}}}^2\\
&\qquad \leq3 \mathbbm{1}_{\sqtC{1}\leq \sqtC{2}} \Big\vert \frac{1}{\sqtC{1}^2}-\frac{1}{\sqtC{2}^2}\Big\vert^2 \norm{\frac{\tilde \Q_1\tilde\U_1}{\sqP{1}}}^2
+3\mathbbm{1}_{\sqtC{2}<\sqtC{1}}\Big\vert\frac{1}{\sqtC{1}^2}-\frac{1}{\sqtC{2}}\Big\vert^2 \norm{\frac{\tilde \Q_2\tilde\U_2}{\sqP{2}}}^2\\
&\qquad \quad +3\frac{1}{\A^4}\norm{\frac{\tilde \Q_1\tilde\U_1}{\sqP{1}}-\frac{\tilde \Q_2\tilde\U_2}{\sqP{2}}}^2\\
&\qquad \leq \bigO(1)\vert \sqtC{1}-\sqtC{2}\vert^2 +3\frac{1}{\A^4}\norm{\frac{\tilde \Q_1\tilde\U_1}{\sqP{1}}-\frac{\tilde \Q_2\tilde\U_2}{\sqP{2}}}^2,
\end{align*}
where we have used that 
\begin{equation*}
\vert \frac{\tilde \Q_i\tilde\U_i}{\sqP{i}}\vert \leq \frac{\sqtC{i}\tilde\P_i\tilde\U_i}{\sqP{i}}\vert = \sqtC{i}\sqP{i}\vert\tilde\U_i\vert \leq \bigO(1)\sqtC{i}^5.
\end{equation*}

We consider the latter term and find
\begin{align*}
\frac{1}{\A^4}&\norm{\frac{\tilde \Q_1\tilde\U_1}{\sqP{1}}-\frac{\tilde \Q_2\tilde\U_2}{\sqP{2}}}^2 \\
&= \frac{1}{\A^4}\int_0^1\abs{\frac{\tilde \Q_1\tilde\U_1}{\sqP{1}}-\frac{\tilde \Q_2\tilde\U_2}{\sqP{2}}}^2
\big( \mathbbm{1}_{\tilde \P_1\le \tilde \P_2}+ \mathbbm{1}_{\tilde \P_2< \tilde \P_1} \big)(t,\eta)d\eta  \\
&\le \frac{2}{\A^4}\int_0^1\abs{\frac{1}{\sqP{2}}\big(\tilde \Q_1\tilde\U_1-\tilde \Q_2\tilde\U_2\big)+\big(\frac{1}{\sqP{1}}-\frac{1}{\sqP{2}}\big)\tilde \Q_1\tilde\U_1 }^2
\mathbbm{1}_{\tilde \P_1\le \tilde \P_2} (t,\eta)d\eta \\
&\quad+\frac{2}{\A^4}\int_0^1\abs{\frac{1}{\sqP{1}}\big(\tilde \Q_1\tilde\U_1-\tilde \Q_2\tilde\U_2\big)+\big(\frac{1}{\sqP{1}}-\frac{1}{\sqP{2}}\big)\tilde \Q_2\tilde\U_2 }^2
\mathbbm{1}_{\tilde \P_2< \tilde \P_1} (t,\eta)d\eta \\
&\le\frac{4}{\A^4}\int_0^1\abs{\frac{1}{\max_i(\sqP{i})}\big(\tilde \Q_1(\tilde\U_1-\tilde\U_2)+\tilde\U_2(\tilde \Q_1-\tilde\Q_2)\big)}^2
\mathbbm{1}_{\tilde \P_1\le \tilde \P_2} (t,\eta)d\eta \\
&\quad+\frac{4}{\A^4}\int_0^1\abs{\frac{\tilde \Q_1\tilde\U_1}{\sqP{1}\sqP{2}}\big(\sqP{2}-\sqP{1}\big)}^2
\mathbbm{1}_{\tilde \P_1\le \tilde \P_2} (t,\eta)d\eta \\
&\quad+\frac{4}{\A^4}\int_0^1\abs{\frac{1}{\max_i(\sqP{i})}\big(\tilde \Q_1(\tilde\U_1-\tilde\U_2)+\tilde\U_2(\tilde \Q_1-\tilde\Q_2)\big)}^2
\mathbbm{1}_{\tilde \P_2< \tilde \P_1} (t,\eta)d\eta \\
&\quad+\frac{4}{\A^4}\int_0^1\abs{\frac{\tilde \Q_2\tilde\U_2}{\sqP{1}\sqP{2}}\big(\sqP{2}-\sqP{1}\big)}^2
\mathbbm{1}_{\tilde \P_2< \tilde \P_1} (t,\eta)d\eta\\
&= 4\big(I_{31}+I_{32}+I_{33}+I_{34}\big).
\end{align*}
We treat terms  $I_{31}$ and $I_{32}$; the others are similar.   
\begin{align*}
I_{31}&\le\frac{2}{\A^4} \int_0^1\Big|\Big(\frac{\tilde \Q_1}{\max_i(\sqP{i})}\Big)^2 \abs{\tilde\U_1-\tilde\U_2}^2\\
&\qquad\qquad\qquad\qquad+\Big(\frac{\tilde \U_2}{\max_i(\sqP{i})}\Big)^2 \abs{\tilde\Q_1-\tilde\Q_2}^2\Big| \mathbbm{1}_{\tilde \P_1\le \tilde \P_2}(t,\eta)d\eta\\
 & \leq \bigO(1)\Big(\norm{\tilde\Y_1-\tilde\Y_2}^2+\norm{\tilde\U_1-\tilde\U_2}^2+\norm{\sqP{1}-\sqP{2}}^2+\vert \sqtC{1}-\sqtC{2}\vert ^2\Big),
\end{align*}
where we have used \eqref{LipQ}, that
\begin{align*}
\abs{\frac{\tilde \Q_1}{\max_i(\sqP{i})}}\mathbbm{1}_{\tilde \P_1\le \tilde \P_2}&\le \frac{\sqtC{1}\tilde \P_1}{\max_i(\sqP{i})}\mathbbm{1}_{\tilde \P_1\le \tilde \P_2}\\
&\le \A\frac{\max_i(\tilde\P_i)}{\max_i(\sqP{i})}\mathbbm{1}_{\tilde \P_1\le \tilde \P_2}\le \A\max_i(\sqP{i})\le \frac{\A^3}{2},\\
\intertext{and} \\
\abs{\frac{\tilde \U_2}{\max_i(\sqP{i})}}\mathbbm{1}_{\tilde \P_1\le \tilde \P_2}&\le \frac{\abs{\tilde \U_2}}{\sqP{2}}\mathbbm{1}_{\tilde \P_1\le \tilde \P_2}\le \sqrt{2}.
\end{align*}
Next follows 
\begin{align*}
I_{32}&\le\frac{1}{\A^4}\int_0^1\abs{\frac{\tilde \Q_1\tilde\U_1}{\sqP{1}\sqP{2}}\big(\sqP{2}-\sqP{1}\big)}^2 \mathbbm{1}_{\tilde \P_1\le \tilde \P_2} (t,\eta)d\eta \\
& \le\bigO(1) \norm{\sqP{2}-\sqP{1}}^2,
\end{align*}
using that
\begin{align*}
\abs{\frac{\tilde \Q_1\tilde\U_1}{\sqP{1}\sqP{2}}}\mathbbm{1}_{\tilde \P_1\le \tilde \P_2}\le \abs{\frac{\tilde \Q_1\tilde\U_1}{\tilde\P_1}}
\le \abs{\sqtC{1}\tilde\U_1}\le \frac{\A^3}{\sqrt{2}}.
\end{align*}
This proves that 
\begin{multline*}
\frac{1}{\A^4}\norm{\frac{\tilde \Q_1\tilde\U_1}{\sqP{1}}-\frac{\tilde \Q_2\tilde\U_2}{\sqP{2}}}^2\\
\le \bigO(1)\Big(\norm{\tilde\Y_1-\tilde\Y_2}^2+\norm{\tilde\U_1-\tilde\U_2}^2+\norm{\sqP{1}-\sqP{2}}^2+\vert \sqtC{1}-\sqtC{2}\vert ^2\Big),
\end{multline*}
and thus $I_3$ has the right form.

\bigskip  
%-----------------------------------
\textit{The term $I_4$:}
\begin{align*}
I_4&= \int_0^{1}\Big(\sqP{1}-\sqP{2}\Big)
\Big(\frac{1}{\sqtC{1}^3}\frac{\tilde \R_1}{\sqP{1}}-\frac{1}{\sqtC{2}^3}\frac{\tilde \R_2}{\sqP{2}}\Big)(t,\eta)d\eta\\
&= \mathbbm{1}_{\sqtC{1}\leq \sqtC{2}}\Big(\frac{1}{\sqtC{1}^3}-\frac{1}{\sqtC{2}^3}\Big)\int_0^{1}\Big(\sqP{1}-\sqP{2}\Big)
\frac{\tilde \R_1}{\sqP{1}}(t,\eta)d\eta\\
& \quad +\mathbbm{1}_{\sqtC{2}< \sqtC{1}}\Big(\frac{1}{\sqtC{1}^3}-\frac{1}{\sqtC{2}^3}\Big)\int_0^{1}\Big(\sqP{1}-\sqP{2}\Big)
\frac{\tilde \R_2}{\sqP{2}}(t,\eta)d\eta\\
& \quad +\frac{1}{\A^3} \int_0^{1}\Big(\sqP{1}-\sqP{2}\Big)\Big(\frac1{\max_j(\sqP{j})}(\tilde\R_1-\tilde\R_2)\\
&\qquad\qquad\qquad\qquad-\big(\frac1{\sqP{2}}-\frac1{\sqP{1}}\big)\big(\tilde\R_2\mathbbm{1}_{\tilde \P_2<\tilde \P_1}+\tilde\R_1\mathbbm{1}_{\tilde \P_1\le\tilde \P_2}\big)  \Big)(t,\eta)d\eta\\
&= \mathbbm{1}_{\sqtC{1}\leq \sqtC{2}}\Big(\frac{1}{\sqtC{1}^3}-\frac{1}{\sqtC{2}^3}\Big)\int_0^{1}\Big(\sqP{1}-\sqP{2}\Big)
\frac{\tilde \R_1}{\sqP{1}}(t,\eta)d\eta\\
& \quad +\mathbbm{1}_{\sqtC{2}< \sqtC{1}}\Big(\frac{1}{\sqtC{1}^3}-\frac{1}{\sqtC{2}^3}\Big)\int_0^{1}\Big(\sqP{1}-\sqP{2}\Big)
\frac{\tilde \R_2}{\sqP{2}}(t,\eta)d\eta\\
& \quad +\frac{1}{\A^3}\int_0^{1}\Big[\frac1{\max_j(\sqP{j})}(\sqP{1}-\sqP{2})(\tilde\R_1-\tilde\R_2)-(\sqP{1}\\
&\qquad\qquad\qquad\qquad-\sqP{2})^2\frac1{\sqP{1}\sqP{2}}\big(\tilde\R_2\mathbbm{1}_{\tilde \P_2<\tilde \P_1}+\tilde\R_1\mathbbm{1}_{\tilde \P_1\le\tilde \P_2}\big)  \Big](t,\eta)d\eta\\
&=J_1+J_2+J_3-J_4-J_5,
\end{align*}
where
\begin{align*}
J_1&=\mathbbm{1}_{\sqtC{1}\leq \sqtC{2}}\Big(\frac{1}{\sqtC{1}^3}-\frac{1}{\sqtC{2}^3}\Big)\int_0^{1}\Big(\sqP{1}-\sqP{2}\Big)
\frac{\tilde \R_1}{\sqP{1}}(t,\eta)d\eta,\\[2mm]
J_2&=\mathbbm{1}_{\sqtC{2}< \sqtC{1}}\Big(\frac{1}{\sqtC{1}^3}-\frac{1}{\sqtC{2}^3}\Big)\int_0^{1}\Big(\sqP{1}-\sqP{2}\Big)
\frac{\tilde \R_2}{\sqP{2}}(t,\eta)d\eta,\\[2mm]
J_3&=\frac{1}{\A^3} \int_0^{1}\frac1{\max_j(\sqP{j})}(\sqP{1}-\sqP{2})(\tilde\R_1-\tilde\R_2)(t,\eta)d\eta,\\[2mm]
J_4&=\frac{1}{\A^3}\int_0^{1} (\sqP{1}-\sqP{2})^2\frac1{\sqP{1}\sqP{2}}\tilde\R_2\mathbbm{1}_{\tilde \P_2<\tilde \P_1}(t,\eta)d\eta,\\[2mm]
J_5&= \frac{1}{\A^3}\int_0^{1} (\sqP{1}-\sqP{2})^2\frac1{\sqP{1}\sqP{2}} \tilde\R_1\mathbbm{1}_{\tilde \P_1\le\tilde \P_2}(t,\eta)d\eta.
\end{align*}

For $J_1$ (and similar for $J_2$), we find
\begin{align*}
\abs{J_1}&\leq \frac{\sqtC{2}^3-\sqtC{1}^3}{\ma^3\A^3}\int_0^{1}\Big\vert\sqP{1}-\sqP{2}\Big\vert
(t,\eta)\bigO(1)\ma^5d\eta\\
& \leq \bigO(1)(\norm{\sqP{1}-\sqP{2}}^2+\vert \sqtC{1}-\sqtC{2}\vert^2),
\end{align*}
where we used that 
\begin{equation*}
\vert \frac{\tilde \R_i}{\sqP{i}}\vert\leq \bigO(1)\sqtC{i}^3\sqP{i}\leq \bigO(1)\sqtC{i}^5.
\end{equation*}

For $J_3$ we find: 
\begin{align*}
J_3&=\frac{1}{\A^3}\int_0^{1}\frac1{\max_j(\sqP{j})}(\sqP{1}-\sqP{2})(\tilde\R_1-\tilde\R_2)(t,\eta)d\eta \\
&= \frac{1}{6\A^3}\int_0^{1}\frac1{\max_j(\sqP{j})}(\sqP{1}-\sqP{2})(t,\eta)\Big(\int_0^1 \sign(\eta-\theta)\\
&\quad \times \big(e^{-\frac{1}{\sqtC{1}}\vert \tilde\Y_1(t,\eta)-\tilde\Y_1(t,\theta)\vert} \sqtC{1}\tilde \U_1^3\tilde \Y_{1,\eta}
-e^{-\frac{1}{\sqtC{2}}\vert \tilde\Y_2(t,\eta)-\tilde\Y_2(t,\theta)\vert} \sqtC{2}\tilde \U_2^3\tilde \Y_{2,\eta}\big)(t,\theta)d\theta\Big)d\eta \\
&\quad+\frac1{4\A^3}\int_0^{1}\frac1{\max_j(\sqP{j})}(\sqP{1}-\sqP{2})(t,\eta)\\
&\qquad\qquad\qquad\times\Big(\int_0^1 \sign(\eta-\theta)\big(e^{-\frac{1}{\sqtC{1}}\vert \tilde\Y_1(t,\eta)-\tilde\Y_1(t,\theta)\vert} \sqtC{1}^6\tilde\U_1\\
&\qquad\qquad\qquad\qquad-e^{-\frac{1}{\sqtC{2}}\vert \tilde\Y_2(t,\eta)-\tilde\Y_2(t,\theta)\vert} \sqtC{2}^6\tilde\U_2\big)
(t,\theta)d\theta\Big)d\eta\\
&\quad+\frac1{2\A^3}\int_0^{1}\frac1{\max_j(\sqP{j})}(\sqP{1}-\sqP{2})(t,\eta)\\
&\qquad\qquad\qquad\times\Big(\int_0^1\big(e^{-\frac{1}{\sqtC{2}}\vert \tilde\Y_2(t,\eta)-\tilde\Y_2(t,\theta)\vert} \tilde \U_2\tilde\Q_2\tilde \Y_{2,\eta}\\
&\qquad\qquad\qquad\qquad
-e^{-\frac{1}{\sqtC{1}}\vert \tilde\Y_1(t,\eta)-\tilde\Y_1(t,\theta)\vert} \tilde \U_1\tilde\Q_1\tilde \Y_{1,\eta}\big)(t,\theta)d\theta\Big)d\eta\\
&= J_{11}+J_{12}+J_{13}.
\end{align*}
Next we replace all the inner integrals $\int_0^1\dots d\theta$ by $(\int_0^\eta+\int_\eta^1)\dots d\theta$, cf.~\eqref{eq:triks}, consider only the terms with  $\int_0^\eta\dots d\theta$, 
and call the corresponding quantities $\tilde J_{11},\tilde J_{12},\tilde J_{13}$. Thus
\begin{align}
\tilde J_{11}&=\frac1{6\A^3}\int_0^{1}\frac1{\max_j(\sqP{j})}(\sqP{1}-\sqP{2})(t,\eta) \Big(\int_0^\eta\big(e^{-\frac{1}{\sqtC{1}}(\tilde\Y_1(t,\eta)-\tilde\Y_1(t,\theta))} \sqtC{1}\tilde \U_1^3\tilde \Y_{1,\eta}\notag\\ 
&\qquad \qquad\qquad\qquad\qquad\qquad\qquad
-e^{-\frac{1}{\sqtC{2}}(\tilde\Y_2(t,\eta)-\tilde\Y_2(t,\theta))} \sqtC{2}\tilde \U_2^3\tilde \Y_{2,\eta}\big)(t,\theta)d\theta\Big)d\eta, \label{eq:tildeJ11}\\[2mm]
\tilde J_{12}&=\frac1{4\A^3}\int_0^{1}\frac1{\max_j(\sqP{j})}(\sqP{1}-\sqP{2})(t,\eta)\Big(\int_0^\eta \big(e^{-\frac{1}{\sqtC{1}}(\tilde\Y_1(t,\eta)-\tilde\Y_1(t,\theta))} \sqtC{1}^6\tilde\U_1\notag\\ 
&\qquad \qquad\qquad\qquad\qquad\qquad\qquad-e^{-\frac{1}{\sqtC{2}}(\tilde\Y_2(t,\eta)-\tilde\Y_2(t,\theta))} \sqtC{2}^6\tilde\U_2\big)
(t,\theta)d\theta\Big)d\eta, \label{eq:tildeJ12} \\[2mm]
\tilde J_{13}&=\frac1{2\A^3}\int_0^{1}\frac1{\max_j(\sqP{j})}(\sqP{1}-\sqP{2})(t,\eta)\Big(\int_0^\eta\big(e^{-\frac{1}{\sqtC{2}}(\tilde\Y_2(t,\eta)-\tilde\Y_2(t,\theta))} \tilde \U_2\tilde\Q_2\tilde \Y_{2,\eta}\notag\\ 
&\qquad \qquad\qquad\qquad\qquad\qquad\qquad
-e^{-\frac{1}{\sqtC{1}}(\tilde\Y_1(t,\eta)-\tilde\Y_1(t,\theta))} \tilde \U_1\tilde\Q_1\tilde \Y_{1,\eta}\big)(t,\theta)d\theta\Big)d\eta.  \label{eq:tildeJ13}
\end{align}
For the term $\tilde J_{11}$ we write $ \tilde\U_j= \tilde\U_j^+ + \tilde\U_j^-$, collect terms, and study the positive part and negative part separately.

With a slight abuse of notation we need to consider the term 
\begin{align*}
\tilde J_{11}&=\frac1{6\A^3}\int_0^{1}\frac1{\max_j(\sqP{j})}(\sqP{1}-\sqP{2})(t,\eta) \\
&\qquad\qquad\qquad\times\Big(\int_0^\eta\big(e^{-\frac{1}{\sqtC{1}}(\tilde\Y_1(t,\eta)-\tilde\Y_1(t,\theta))}\sqtC{1} (\tilde \U_1^+)^3\tilde \Y_{1,\eta}\\
&\qquad\qquad\qquad\qquad-e^{-\frac{1}{\sqtC{2}}(\tilde\Y_2(t,\eta)-\tilde\Y_2(t,\theta))}\sqtC{2} (\tilde \U_2^+)^3\tilde \Y_{2,\eta}\big)(t,\theta)d\theta\Big)d\eta \\
&= \frac1{6\A^3}\int_0^{1}\frac1{\max_j(\sqP{j})}(\sqP{1}-\sqP{2})(t,\eta)\\ 
& \quad\times \Big(\mathbbm{1}_{\sqtC{2}<\sqtC{1}}(\sqtC{1}-\sqtC{2})\int_0^\eta e^{-\frac{1}{\sqtC{1}}(\tilde \Y_1(t,\eta)-\tilde \Y_1(t,\theta))}(\tilde \V_1^+)^3\tilde \Y_{1,\eta}(t,\theta) d\theta\\
& \qquad +\mathbbm{1}_{\sqtC{1}\leq \sqtC{2}}(\sqtC{1}-\sqtC{2})\int_0^\eta e^{-\frac{1}{\sqtC{2}}(\tilde \Y_2(t,\eta)-\tilde \Y_2(t,\theta))}(\tilde \V_2^+)^3\tilde \Y_{2,\eta}(t,\theta)d\theta\\
&\qquad + \ma\int_0^\eta e^{-\frac{1}{\sqtC{1}}(\tilde\Y_1(t,\eta)-\tilde\Y_1(t,\theta))} \big((\tilde \U_1^+)^3-(\tilde \U_2^+)^3\big)\mathbbm{1}_{\tilde \U_2^+<\tilde \U_1^+}\tilde \Y_{1,\eta}(t,\theta)d\theta\\
&\qquad +\ma\int_0^\eta e^{-\frac{1}{\sqtC{2}}(\tilde\Y_2(t,\eta)-\tilde\Y_2(t,\theta))} \big((\tilde \U_1^+)^3-(\tilde \U_2^+)^3\big)\mathbbm{1}_{\tilde \U_1^+\le\tilde \U_2^+}\tilde \Y_{2,\eta}(t,\theta)d\theta\\
&\qquad -\ma\mathbbm{1}_{\sqtC{2}<\sqtC{1}}\int_0^\eta\big(e^{-\frac{1}{\sqtC{2}}(\tilde\Y_1(t,\eta)-\tilde\Y_1(t,\theta))} -  e^{-\frac{1}{\sqtC{1}}(\tilde\Y_1(t,\eta)-\tilde\Y_1(t,\theta))}\big)\\
&\qquad\qquad\qquad\qquad\qquad\qquad\qquad\qquad\qquad\times\min_j\big((\tilde \U_j^+)^3 \big)\tilde\Y_{1,\eta}
(t,\theta)d\theta\\
&\qquad -\ma\mathbbm{1}_{\sqtC{1}\leq \sqtC{2}}\int_0^\eta\big(e^{-\frac{1}{\sqtC{2}}(\tilde\Y_2(t,\eta)-\tilde\Y_2(t,\theta))} -  e^{-\frac{1}{\sqtC{1}}(\tilde\Y_2(t,\eta)-\tilde\Y_2(t,\theta))}\big)\\
&\qquad\qquad\qquad\qquad\qquad\qquad\qquad\qquad\qquad\times\min_j\big((\tilde \U_j^+)^3 \big)\tilde \Y_{2,\eta}
(t,\theta)d\theta\\
&\qquad -\ma\int_0^\eta\big(e^{-\frac{1}{\ma}(\tilde\Y_2(t,\eta)-\tilde\Y_2(t,\theta))} -  e^{-\frac{1}{\ma}(\tilde\Y_1(t,\eta)-\tilde\Y_1(t,\theta))}\big)\\
&\qquad\qquad\qquad\qquad\qquad\qquad\qquad\qquad\qquad\times\min_j\big((\tilde \U_j^+)^3 \big)\tilde \Y_{2,\eta}
\mathbbm{1}_{B(\eta)}(t,\theta)d\theta\\
&\qquad -\ma\int_0^\eta\big(e^{-\frac{1}{\ma}(\tilde\Y_2(t,\eta)-\tilde\Y_2(t,\theta))} -  e^{-\frac{1}{\ma}(\tilde\Y_1(t,\eta)-\tilde\Y_1(t,\theta))}\big)\\
&\qquad\qquad\qquad\qquad\qquad\qquad\qquad\qquad\qquad\times\min_j\big((\tilde \U_j^+)^3 \big)\tilde \Y_{1,\eta}
\mathbbm{1}_{B(\eta)^c}(t,\theta)d\theta\\
&\qquad  -\ma\int_0^\eta \min_j\big( e^{-\frac{1}{\ma}(\tilde\Y_j(t,\eta)-\tilde\Y_j(t,\theta))}\big)\min_j\big((\tilde \U_j^+)^3 \big)\tilde \Y_{2,\eta}(t,\theta)d\theta\\
&\qquad  +\ma\int_0^\eta \min_j\big( e^{-\frac{1}{\ma}(\tilde\Y_j(t,\eta)-\tilde\Y_j(t,\theta))}\big)\min_j\big((\tilde \U_j^+)^3 \big)\tilde \Y_{1,\eta}(t,\theta)d\theta\Big)d\eta \\
&=K_1+K_2+K_3+K_4+K_5+K_6+K_7+K_8+K_9+K_{10}.
\end{align*}
It never stops, but we need to study the terms in groups.  

The term $K_1$ can be estimated as follows ($K_2$ is similar):
\begin{align*} 
\abs{K_1}& =\frac1{6\A^3}\int_0^{1}\frac1{\max_j(\sqP{j})}\vert\sqP{1}-\sqP{2}\vert(t,\eta)\\ 
& \quad\times \Big\vert\mathbbm{1}_{\sqtC{2}<\sqtC{1}}(\sqtC{1}-\sqtC{2})\int_0^\eta e^{-\frac{1}{\sqtC{1}}(\tilde \Y_1(t,\eta)-\tilde \Y_1(t,\theta))}(\tilde \V_1^+)^3\tilde \Y_{1,\eta}(t,\theta) d\theta\Big\vert d\eta\\
& \leq \frac{\sqrt{2}}{3}\int_0^1 \frac{\tilde\P_1}{\max_j(\sqP{j})}\vert \sqP{1}-\sqP{2}\vert (t,\eta) d\eta \vert \sqtC{1}-\sqtC{2}\vert \\
& \leq \bigO(1)(\norm{\sqP{1}-\sqP{2}}^2+\vert \sqtC{1}-\sqtC{2}\vert^2).
\end{align*}

The term $K_3$ can be estimated as follows ($K_4$ is similar):
\begin{align*}
\abs{K_3}&=\frac{\ma}{6\A^3}\Big|\int_0^{1}\frac1{\max_j(\sqP{j})}(\sqP{1}-\sqP{2})(t,\eta)\\ 
&\qquad \times \Big(\int_0^\eta e^{-\frac{1}{\sqtC{1}}(\tilde\Y_1(t,\eta)-\tilde\Y_1(t,\theta))}
 \big((\tilde \U_1^+)^3-(\tilde \U_2^+)^3\big)\mathbbm{1}_{\tilde \U_2^+<\tilde \U_1^+}\tilde \Y_{1,\eta}(t,\theta)d\theta\Big)d\eta\Big|\\
 &\le \frac{1}{2\A^2}\int_0^{1}\frac1{\max_j(\sqP{j})}\abs{\sqP{1}-\sqP{2}}(t,\eta)\\ 
&\qquad \times \Big(\int_0^\eta e^{-\frac{1}{\sqtC{1}}(\tilde\Y_1(t,\eta)-\tilde\Y_1(t,\theta))} (\tilde \U_1^+)^2\abs{\tilde \U_1^+ -\tilde \U_2^+}\tilde \Y_{1,\eta}(t,\theta)d\theta\Big)d\eta\\
&\le\frac12\norm{\sqP{1}-\sqP{2}}^2  + \frac1{2\A^4} \int_0^{1} \frac1{\max_j(\sqP{j})^2}\\
&\qquad\qquad\times
\Big(\int_0^\eta e^{-\frac{1}{\sqtC{1}}(\tilde\Y_1(t,\eta)-\tilde\Y_1(t,\theta))} (\tilde \U_1^+)^2\abs{\tilde \U_1^+ -\tilde \U_2^+}\tilde \Y_{1,\eta}(t,\theta)d\theta \Big)^2d\eta \\
&\le \frac12 \norm{\sqP{1}-\sqP{2}}^2 \\
&\quad + \frac1{2\A^4} \int_0^{1} \frac1{\max_j(\sqP{j})^2} \Big(\int_0^\eta e^{-\frac{1}{\sqtC{1}}(\tilde\Y_1(t,\eta)-\tilde\Y_1(t,\theta))}(\tilde \U_1^+)^2 \tilde \Y_{1,\eta}(t,\theta)d\theta \Big) \\
&\qquad\times \Big(\int_0^\eta e^{-\frac{1}{\sqtC{1}}(\tilde\Y_1(t,\eta)-\tilde\Y_1(t,\theta))}\abs{\tilde \U_1^+ -\tilde \U_2^+}^2(\tilde \U_1^+)^2 \tilde \Y_{1,\eta}(t,\theta)d\theta \Big)d\eta\\
&\le \bigO(1)\big(\norm{\sqP{1}-\sqP{2}}^2+\norm{\tilde \U_1 -\tilde \U_2}^2 \big),
\end{align*}
using \eqref{eq:all_PestimatesC}, \eqref{eq:all_estimatesG}, \eqref{eq:pm2}.

As for $K_5$ ($K_6$ is similar) we traverse the following path
\begin{align*}
\abs{K_5}
=& \frac1{6\A^3}\vert\int_0^{1}\frac1{\max_j(\sqP{j})}\vert\sqP{1}-\sqP{2}\vert(t,\eta)\\ 
&\qquad \times\Big(-\ma\mathbbm{1}_{\sqtC{2}<\sqtC{1}}\int_0^\eta\big(e^{-\frac{1}{\sqtC{2}}(\tilde\Y_1(t,\eta)-\tilde\Y_1(t,\theta))} -  e^{-\frac{1}{\sqtC{1}}(\tilde\Y_1(t,\eta)-\tilde\Y_1(t,\theta))}\big)\\
&\qquad\qquad\qquad\qquad\qquad\qquad\qquad\qquad\times\min_j\big((\hat \U_j^+)^3 \big)\tilde\Y_{1,\eta}
(t,\theta)d\theta\Big) d\eta\vert\\
& \leq \frac{2}{3\A^3e}\int_0^1 \frac{1}{\max_j(\sqP{j})}\vert \sqP{1}-\sqP{2}\vert (t,\eta)\\
& \qquad \times \Big(\int_0^\eta e^{-\frac{3}{4\sqtC{1}}(\tilde \Y_1(t,\eta)-\tilde \Y_1(t,\theta))}\min_j((\tilde \V_j^+)^3)\tilde \Y_{1,\eta}(t,\theta) d\theta\Big) d\eta \vert \sqtC{1}-\sqtC{2}\vert\\
& \leq\frac{ 2\sqrt{2}\A^2}{3}\int_0^1 \frac{\sqP{1}}{\max_j(\sqP{j})}\vert \sqP{1}-\sqP{2}\vert (t,\eta) d\eta\vert \sqtC{1}-\sqtC{2}\vert\\
& \leq \bigO(1)(\norm{\sqP{1}-\sqP{2}}^2+\vert \sqtC{1}-\sqtC{2}\vert^2).
\end{align*}

As for $K_7$ ($K_8$ is similar) we traverse the following path
\begin{align*}
\abs{K_7}&= \frac\ma{6\A^3}\Big\vert\int_0^{1}\frac1{\max_j(\sqP{j})}(\sqP{1}-\sqP{2})(t,\eta)\\ 
&\qquad \times\Big(\int_0^\eta\big(e^{-\frac{1}{\ma}(\tilde\Y_2(t,\eta)-\tilde\Y_2(t,\theta))} -  e^{-\frac{1}{\ma}(\tilde\Y_1(t,\eta)-\tilde\Y_1(t,\theta))}\big)\\
&\qquad\qquad\qquad\qquad\qquad\qquad\qquad\qquad\times\min_j\big((\tilde \U_j^+)^3 \big)\tilde \Y_{2,\eta}
\mathbbm{1}_{B(\eta)}(t,\theta)d\theta\Big)d\eta\Big\vert \\
&\le \frac1{6\A^3}\int_0^{1}\frac1{\max_j(\sqP{j})}\abs{\sqP{1}-\sqP{2}}(t,\eta) \Big( \int_0^\eta e^{-\frac{1}{\ma}(\tilde\Y_2(t,\eta)-\tilde\Y_2(t,\theta))}\\
&\qquad\times \big( \abs{\tilde\Y_2(t,\eta)-\tilde\Y_1(t,\eta)}+\abs{\tilde\Y_2(t,\theta)-\tilde\Y_1(t,\theta)}\big)
(\tilde \U_2^+)^3\tilde \Y_{2,\eta}(t,\theta)d\theta\Big)d\eta \\
&\le \frac1{6\A^3}\int_0^{1}\frac1{\max_j(\sqP{j})}\abs{\sqP{1}-\sqP{2}} \abs{\tilde\Y_2-\tilde\Y_1}(t,\eta)\\
&\qquad\times \Big( \int_0^\eta e^{-\frac{1}{\sqtC{2}}(\tilde\Y_2(t,\eta)-\tilde\Y_2(t,\theta))}
(\tilde \U_2^+)^3\tilde \Y_{2,\eta}(t,\theta)d\theta\Big)d\eta  \\
&\quad +\frac1{6\A^3}\int_0^{1}\frac1{\max_j(\sqP{j})}\abs{\sqP{1}-\sqP{2}} (t,\eta) \\
&\qquad\times\Big(\int_0^\eta e^{-\frac{1}{\ma}(\tilde\Y_2(t,\eta)-\tilde\Y_2(t,\theta))}\abs{\tilde\Y_2-\tilde\Y_1}
(\tilde \U_2^+)^3\tilde \Y_{2,\eta}(t,\theta)d\theta\Big)d\eta  \\
&\le \frac{\sqrt{2}}{3}\int_0^{1}\frac1{\max_j(\sqP{j})}\abs{\sqP{1}-\sqP{2}} \abs{\tilde\Y_2-\tilde\Y_1}  \tilde\P_2 (t,\eta)d\eta\\
&\quad +\norm{\sqP{1}-\sqP{2}}^2 \\
&\quad +\frac1{36\A^6}\int_0^{1}\frac1{\max_j(\sqP{j})^2}(t,\eta)\\
&\qquad\qquad\times\Big(\int_0^\eta  e^{-\frac{1}{\ma}(\tilde\Y_2(t,\eta)-\tilde\Y_2(t,\theta))}\abs{\tilde\Y_2-\tilde\Y_1}
(\tilde \U_2^+)^3\tilde \Y_{2,\eta}(t,\theta)d\theta\Big)^2d\eta  \\
&\le \bigO(1)\big(\norm{\sqP{1}-\sqP{2}}^2+ \norm{\tilde\Y_2-\tilde\Y_1}^2 \big) \\
&\quad +\frac{1}{36\A^6}\int_0^{1}\frac1{\max_j(\sqP{j})^2}(t,\eta)\Big(\int_0^\eta  e^{-\frac{1}{\sqtC{2}}(\tilde\Y_2(t,\eta)-\tilde\Y_2(t,\theta))}(\tilde \U_2^+)^2\tilde\Y_{2,\eta}(t,\theta)d\theta\Big) \\
&\qquad \times \Big(\int_0^\eta  e^{-\frac{1}{\sqtC{2}}(\tilde\Y_2(t,\eta)-\tilde\Y_2(t,\theta))} \abs{\tilde\Y_2-\tilde\Y_1}^2(\tilde \U_2^+)^4\tilde \Y_{2,\eta}(t,\theta)d\theta\Big)d\eta  \\
&\le \bigO(1)\big(\norm{\sqP{1}-\sqP{2}}^2+ \norm{\tilde\Y_2-\tilde\Y_1}^2 \big) \\
&\quad +\frac{1}{9\A^5}\int_0^{1}\frac1{\max_j(\sqP{j})^2}\tilde\P_2(t,\eta) \\
&\qquad \times \Big(\int_0^\eta  e^{-\frac{1}{\sqtC{2}}(\tilde\Y_2(t,\eta)-\tilde\Y_2(t,\theta))} \abs{\tilde\Y_2-\tilde\Y_1}^2(\tilde \U_2^+)^4\tilde \Y_{2,\eta}(t,\theta)d\theta\Big)d\eta  \\
&\le \bigO(1)\big(\norm{\sqP{1}-\sqP{2}}^2+ \norm{\tilde\Y_2-\tilde\Y_1}^2 \big).
\end{align*}
Here we have applied \eqref{Diff:Exp}, \eqref{eq:all_PestimatesC}, \eqref{eq:all_estimatesG}, and \eqref{eq:all_estimatesB}. 
Lo and behold, we can do $K_9$ and $K_{10}$ in one sweep: 
\begin{align*}
K_9+K_{10}&= -\frac{\ma}{6\A^3}\int_0^{1}\frac1{\max_j(\sqP{j})}(\sqP{1}-\sqP{2})(t,\eta)\\ 
&\quad  \times\Big(\int_0^\eta \min_j\big( e^{-\frac{1}{\ma}(\tilde\Y_j(t,\eta)-\tilde\Y_j(t,\theta))}\big)\min_j\big((\tilde \U_j^+)^3 \big)\tilde \Y_{2,\eta}(t,\theta)d\theta\\
&\qquad  -\int_0^\eta \min_j\big( e^{-\frac{1}{\ma}(\tilde\Y_j(t,\eta)-\tilde\Y_j(t,\theta))}\big)\min_j\big((\tilde \U_j^+)^3 \big)\tilde \Y_{1,\eta}(t,\theta)d\theta\Big)d\eta \\
&= -\frac{\ma}{6\A^3}\int_0^{1}\frac1{\max_j(\sqP{j})}(\sqP{1}-\sqP{2})(t,\eta)\\
&\quad  \times\Big(\int_0^\eta \min_j\big( e^{-\frac{1}{\ma}(\tilde\Y_j(t,\eta)-\tilde\Y_j(t,\theta))}\big)\min_j\big((\tilde \U_j^+)^3 \big)(\tilde \Y_{2,\eta}-\tilde \Y_{1,\eta})(t,\theta)d\theta\Big)d\eta \\
&= -\frac{\ma}{6\A^3}\int_0^{1}\frac1{\max_j(\sqP{j})}(\sqP{1}-\sqP{2})(t,\eta)\\
&\quad  \times\Big[ \Big(\min_j\big( e^{-\frac{1}{\ma}(\tilde\Y_j(t,\eta)-\tilde\Y_j(t,\theta))}\big)\min_j\big((\tilde \U_j^+)^3 \big)(\tilde \Y_{2}-\tilde \Y_{1})\Big)(t,\theta)\Big|_{\theta=0}^\eta\\
&\quad -\int_0^\eta (\tilde \Y_{2}-\tilde \Y_{1})\frac{d}{d\theta} \Big(\min_j\big( e^{-\frac{1}{\ma}(\tilde\Y_j(t,\eta)-\tilde\Y_j(t,\theta))}\big)\min_j\big((\tilde \U_j^+)^3 \big)\Big)(t,\theta)d\theta\Big]d\eta \\
&=- \frac{\ma}{6\A^3}\int_0^{1}\frac1{\max_j(\sqP{j})}(\sqP{1}-\sqP{2})(\tilde \Y_{2}-\tilde \Y_{1})\min_j\big((\tilde \U_j^+)^3 \big)(t,\eta)d\eta\\
&\qquad+  \frac\ma{6\A^3}\int_0^{1}\frac1{\max_j(\sqP{j})}(\sqP{1}-\sqP{2})(t,\eta) \\
&\quad \times\Big(\int_0^\eta (\tilde \Y_{2}-\tilde \Y_{1}) \min_j\big((\tilde \U_j^+)^3 \big)\frac{d}{d\theta}\min_j\big( e^{-\frac{1}{\ma}(\tilde\Y_j(t,\eta)-\tilde\Y_j(t,\theta))}\big)(t,\theta)d\theta\Big)d\eta\\
&\quad + \frac{\ma}{6\A^3}\int_0^{1}\frac1{\max_j(\sqP{j})}(\sqP{1}-\sqP{2})(t,\eta) \\
&\quad \times\Big(\int_0^\eta \min_j\big( e^{-\frac{1}{\ma}(\tilde\Y_j(t,\eta)-\tilde\Y_j(t,\theta))}\big)(\tilde \Y_{2}-\tilde \Y_{1}) \frac{d}{d\theta}\min_j\big((\tilde \U_j^+)^3 \big)(t,\theta)d\theta\Big)d\eta\\
&= L_1+L_2+L_3,
\end{align*}
where
\begin{align*}
L_1&=  -\frac{\ma}{6\A^3}\int_0^{1}\frac1{\max_j(\sqP{j})}(\sqP{1}-\sqP{2})(\tilde \Y_{2}-\tilde \Y_{1})\min_j\big((\tilde \U_j^+)^3 \big)(t,\eta)d\eta,  \\[2mm]
L_2&=  \frac{\ma}{6\A^3}\int_0^{1}\frac1{\max_j(\sqP{j})}(\sqP{1}-\sqP{2})(t,\eta) \\
&\qquad \times\Big(\int_0^\eta (\tilde \Y_{2}-\tilde \Y_{1}) \min_j\big((\tilde \U_j^+)^3 \big)\frac{d}{d\theta}\min_j\big( e^{-\frac{1}{\ma}(\tilde\Y_j(t,\eta)-\tilde\Y_j(t,\theta))}\big)(t,\theta)d\theta\Big)d\eta,\\[2mm]
L_3&= \frac{\ma}{6\A^3}\int_0^{1}\frac1{\max_j(\sqP{j})}(\sqP{1}-\sqP{2})(t,\eta) \\
&\qquad \times\Big(\int_0^\eta \min_j\big( e^{-\frac{1}{\ma}(\tilde\Y_j(t,\eta)-\tilde\Y_j(t,\theta))}\big)(\tilde \Y_{2}-\tilde \Y_{1}) \frac{d}{d\theta}\min_j\big((\tilde \U_j^+)^3 \big)(t,\theta)d\theta\Big)d\eta.  
\end{align*}

We easily find
\begin{align*}
\abs{L_1}&\le  \frac{\ma}{6\A^3}\int_0^{1}\frac1{\max_j(\sqP{j})}\abs{\sqP{1}-\sqP{2}}\abs{\tilde \Y_{2}-\tilde \Y_{1}}\min_j\big((\tilde \U_j^+)^3 \big)(t,\eta)d\eta  \\
&\le \bigO(1)\big( \norm{\sqP{1}-\sqP{2}}^2+\norm{\tilde \Y_{2}-\tilde \Y_{1}}^2\big),  
\end{align*}
using \eqref{eq:all_estimatesD} and  \eqref{eq:all_estimatesB}.  Furthermore, applying Lemma \ref{lemma:1}
\begin{align*}
\abs{L_2}&\le  \frac1{6\A^3}\int_0^{1}\frac1{\max_j(\sqP{j})}\abs{\sqP{1}-\sqP{2}}(t,\eta) \\
&\qquad \times\Big(\int_0^\eta  \min_j(e^{-\frac{1}{\ma}(\tilde \Y_j(t,\eta)-\tilde\Y_j(t,\theta))})\abs{\tilde \Y_{2}-\tilde \Y_{1}} \\
&\qquad\qquad\qquad\times\min_j\big((\tilde \U_j^+)^3 \big)
 \max_j(\tilde\Y_{j,\eta})(t,\theta)d\theta\Big)d\eta\\
 & \leq \frac1{6\A^3} \int_0^1 \frac{1}{\max_j(\sqP{j})}\abs{\sqP{1}-\sqP{2}}(t,\eta)\\
 & \qquad \times \Big(\int_0^\eta \min_j\big(e^{-\frac{1}{\sqtC{j}}(\tilde\Y_j(t,\eta)-\tilde\Y_j(t,\theta))}\tilde\U_j^2 \big)(t,\theta) d\theta \Big)^\frac12\\
 & \qquad \times \Big(\int_0^\eta \vert \tilde\Y_2-\tilde\Y_1\vert^2 \max_j(\tilde\U_j^4\tilde\Y_{j,\eta})^2(t,\theta) d\theta\Big)^\frac12 d\eta\\
 &\le \bigO(1) \int_0^{1}\abs{\sqP{1}-\sqP{2}}(t,\eta)d\eta\norm{\tilde\Y_1-\tilde\Y_2}\\
 &\le \bigO(1) \big(\norm{\sqP{1}-\sqP{2}}^2+ \norm{\tilde \Y_{2}-\tilde \Y_{1}}^2 \big),
\end{align*}
using \eqref{eq:all_PestimatesA}, \eqref{eq:all_estimatesG}, and
\begin{equation*}
\min_j\big((\tilde \U_j^+)^3 \big) \max_j(\tilde\Y_{j,\eta})\le \min_j\big(\tilde \U_j^+ \big)\max_j(\tilde\U_j^2\tilde\Y_{j,\eta})\le\frac{A^7}{\sqrt{2}}.
\end{equation*}
Another application of  Lemma \ref{lemma:1} yields 
\begin{align*}
\abs{L_3}&\le \frac{\ma}{6\A^3}\int_0^{1}\frac1{\max_j(\sqP{j})}\abs{\sqP{1}-\sqP{2}}(t,\eta) \\
&\qquad \times\Big(\int_0^\eta \min_j\big( e^{-\frac{1}{\ma}(\tilde\Y_j(t,\eta)-\tilde\Y_j(t,\theta))}\big)\abs{\tilde \Y_{2}-\tilde \Y_{1}} \vert\frac{d}{d\theta}\min_j\big((\tilde \U_j^+)^3 \big)\vert(t,\theta)d\theta\Big)d\eta\\
 &  \quad \le \frac{\A^2}{3}\int_0^{1}\frac1{\max_j(\sqP{j})}\abs{\sqP{1}-\sqP{2}}(t,\eta) \\ 
&\qquad \times\Big(\int_0^\eta \min_j\big( e^{-\frac{1}{\ma}(\tilde\Y_j(t,\eta)-\tilde\Y_j(t,\theta))}\big)\abs{\tilde \Y_{2}-\tilde \Y_{1}} \min_j(\tilde\V_j^+)(t,\theta) d\theta\Big)d\eta\\
& \quad \leq \frac{\A^2}{3} \int_0^1 \frac{1}{\max_j(\sqP{j})}\abs{\sqP{1}-\sqP{2}}(t,\eta)\\
& \qquad \times \Big(\int_0^\eta e^{-\frac{1}{\sqtC{2}}(\tilde\Y_2(t,\eta)-\tilde\Y_2(t,\theta))}\tilde\U_2^2(t,\theta)d\theta\Big)^{1/2}\\
& \qquad \times \Big(\int_0^\eta \min_j(e^{-\frac{1}{\ma}(\tilde\Y_j(t,\eta)-\tilde\Y_j(t,\theta))})\vert \tilde\Y_2-\tilde\Y_1\vert ^2 (t,\theta) d\theta\Big)^{1/2}d\eta\\
&\le \bigO(1) \big(\norm{\sqP{1}-\sqP{2}}^2+ \norm{\tilde \Y_{2}-\tilde \Y_{1}}^2 \big),
\end{align*}
using \eqref{eq:all_PestimatesA}.
We find
\begin{align*}
\abs{K_9+K_{10}}&\le \abs{L_1}+\abs{L_2}+\abs{L_3}\\
&\le \bigO(1)\big(\norm{\sqP{1}-\sqP{2}}^2+ \norm{\tilde \Y_{2}-\tilde \Y_{1}}^2 \big). 
\end{align*}

Thus we can sum up the estimates for $\tilde J_{11}$, and we find
\begin{align*}
\abs{\tilde J_{11}}&\le \bigO(1)\big(\norm{\sqP{1}-\sqP{2}}^2+\norm{\tilde \U_1 -\tilde \U_2}^2+ \norm{\tilde \Y_{2}-\tilde \Y_{1}}^2+\vert \sqtC{1}-\sqtC{2}\vert^2 \big). 
\end{align*}

We now proceed to the next term from \eqref{eq:tildeJ12}
\begin{align*}
\tilde J_{12}&=\frac1{4\A^3}\int_0^{1}\frac1{\max_j(\sqP{j})}(\sqP{1}-\sqP{2})(t,\eta)\\
&\quad\times\Big(\int_0^\eta \big(e^{-\frac{1}{\sqtC{1}}(\tilde\Y_1(t,\eta)-\tilde\Y_1(t,\theta))}\sqtC{1}^6\tilde\U_1-e^{-\frac{1}{\sqtC{2}}(\tilde\Y_2(t,\eta)-\tilde\Y_2(t,\theta))} \sqtC{2}^6\tilde\U_2\big)
(t,\theta)d\theta\Big)d\eta\\
&=\frac1{4\A^3}\int_0^{1}\frac1{\max_j(\sqP{j})}(\sqP{1}-\sqP{2})(t,\eta)\\
&\quad\times\Big((\sqtC{1}^6-\sqtC{2}^6)\mathbbm{1}_{\sqtC{1}<\sqtC{2}}\int_0^\eta e^{-\frac{1}{\sqtC{2}}(\tilde\Y_2(t,\eta)-\tilde\Y_2(t,\theta))} \tilde\U_2(t,\theta)d\theta\\
&\qquad +(\sqtC{1}^6-\sqtC{2}^2)\mathbbm{1}_{\sqtC{2}\le \sqtC{1}}\int_0^\eta e^{-\frac{1}{\sqtC{1}}(\tilde\Y_1(t,\eta)-\tilde\Y_1(t,\theta))} \tilde\U_1(t,\theta)d\theta\\
&\qquad +\ma^6\int_0^\eta\big( e^{-\frac{1}{\sqtC{1}}(\tilde\Y_1(t,\eta)-\tilde\Y_1(t,\theta))}\tilde\U_1-e^{-\frac{1}{\sqtC{2}}(\tilde\Y_2(t,\eta)-\tilde\Y_2(t,\theta))}\tilde\U_2\big)(t,\theta)d\theta\Big)d\eta\\
&=M_1+M_2+M_3.
\end{align*}
The term $M_1$  ($M_2$ is similar)  can be estimated as follows
\begin{align*}
\abs{M_1}&\le\frac1{4\A^3}\abs{\sqtC{1}^6-\sqtC{2}^6}\mathbbm{1}_{\sqtC{1}<\sqtC{2}}\int_0^{1}\frac1{\max_j(\sqP{j})}\abs{\sqP{1}-\sqP{2}}(t,\eta)\\
&\quad\times\Big(\int_0^\eta e^{-\frac{1}{\sqtC{2}}(\tilde\Y_2(t,\eta)-\tilde\Y_2(t,\theta))} \abs{\tilde\U_2}(t,\theta)d\theta\Big)d\eta\\
&\le \frac{3\A^2}{2}\abs{\sqtC{1}-\sqtC{2}}\int_0^{1}\frac1{\max_j(\sqP{j})}\abs{\sqP{1}-\sqP{2}}(t,\eta)\\
&\quad\times\Big(\int_0^\eta e^{-\frac{1}{\sqtC{2}}(\tilde\Y_2(t,\eta)-\tilde\Y_2(t,\theta))} \tilde\U_2^2(t,\theta)d\theta\Big)^{1/2}  \Big(\int_0^\eta e^{-\frac{1}{\sqtC{2}}(\tilde\Y_2(t,\eta)-\tilde\Y_2(t,\theta))} d\theta\Big)^{1/2}d\eta\\
&\le \frac{3\sqrt{3}\A^2}{\sqrt{2}}\abs{\sqtC{1}-\sqtC{2}}\int_0^{1}\abs{\sqP{1}-\sqP{2}}(t,\eta)d\eta\\
&\le \bigO(1) \big(\norm{\sqP{1}-\sqP{2}}^2+\abs{\sqtC{1}-\sqtC{2}}^2\big),
\end{align*}
using \eqref{eq:all_PestimatesA}. As for the next term $M_3$ we first  write $ \tilde\U_j= \tilde\U_j^+ + \tilde\U_j^-$, collect terms, and study the positive part and negative part separately. 
In the interest of the reader we do not change the notation. Thus
\begin{align*}
M_3&=\frac{\ma^6}{4\A^3}\int_0^{1}\frac1{\max_j(\sqP{j})}(\sqP{1}-\sqP{2})(t,\eta)\\
&\qquad \times\int_0^\eta\big( e^{-\frac{1}{\sqtC{1}}(\tilde\Y_1(t,\eta)-\tilde\Y_1(t,\theta))}\tilde\U_1^+ -e^{-\frac{1}{\sqtC{2}}(\tilde\Y_2(t,\eta)-\tilde\Y_2(t,\theta))}\tilde\U_2^+\big)(t,\theta)d\theta d\eta\\
&=\frac{\ma^6}{4\A^3}\int_0^{1}\frac1{\max_j(\sqP{j})}(\sqP{1}-\sqP{2})(t,\eta)\\
&\qquad \times\Big(\int_0^\eta e^{-\frac{1}{\sqtC{2}}(\tilde\Y_2(t,\eta)-\tilde\Y_2(t,\theta))} \big(\mathbbm{1}_{\tilde\U_1^+<\tilde\U_2^+} (\tilde\U_1^+ -\tilde\U_2^+) \big)(t,\theta)d\theta  \\
&\qquad+\int_0^\eta e^{-\frac{1}{\sqtC{1}}(\tilde\Y_1(t,\eta)-\tilde\Y_1(t,\theta))} \big(\mathbbm{1}_{\tilde\U_2^+\le\tilde\U_1^+} (\tilde\U_1^+ -\tilde\U_2^+) \big)(t,\theta)d\theta \\
&\qquad+\int_0^\eta \big(e^{-\frac{1}{\sqtC{1}}(\tilde\Y_1(t,\eta)-\tilde\Y_1(t,\theta))} -e^{-\frac{1}{\sqtC{2}}(\tilde\Y_2(t,\eta)-\tilde\Y_2(t,\theta))}\big)\min_j\big(\tilde\U_j^+\big)(t,\theta)d\theta  \Big)d\eta\\
&=M_{31}+M_{32}+M_{33}.  
\end{align*}
The terms $M_{31}$ and $M_{32}$ can be treated similarly. Thus
\begin{align*}
\abs{M_{31}}&=\frac{\ma^6}{4\A^3}\Big\vert\int_0^{1}\frac1{\max_j(\sqP{j})}(\sqP{1}-\sqP{2})(t,\eta)\\
&\qquad \times\Big(\int_0^\eta e^{-\frac{1}{\sqtC{2}}(\tilde\Y_2(t,\eta)-\tilde\Y_2(t,\theta))} \big(\mathbbm{1}_{\tilde\U_1^+<\tilde\U_2^+} (\tilde\U_1^+ -\tilde\U_2^+) \big)(t,\theta)d\theta \Big)d\eta\Big\vert \\
&\le \frac{1}{4\A^2}   \int_0^{1}\frac1{\max_j(\sqP{j})}  \abs{\sqP{1}-\sqP{2}}(t,\eta) \\
&\qquad\times\Big(\int_0^\eta e^{-\frac{1}{\sqtC{2}}(\tilde\Y_2(t,\eta)-\tilde\Y_2(t,\theta))}\sqtC{2}^5 \abs{\tilde\U_1^+ -\tilde\U_2^+} (t,\theta)d\theta\Big)d\eta \\
&\le \bigO(1) \int_0^{1}\frac1{\max_j(\sqP{j})}  \abs{\sqP{1}-\sqP{2}}\sqP{2}(t,\eta) \norm{\tilde\U_1 -\tilde\U_2}d\eta \\
&\le \bigO(1)\int_0^{1} \abs{\sqP{1}-\sqP{2}}(t,\eta) \norm{\tilde\U_1 -\tilde\U_2}d\eta \\
&\le \bigO(1) \big(\norm{\sqP{1}-\sqP{2}}^2+\norm{\tilde\U_1 -\tilde\U_2}^2\big),
\end{align*}
using
\begin{align*}
\ma^5&\int_0^\eta e^{-\frac{1}{\sqtC{2}}(\tilde\Y_2(t,\eta)-\tilde\Y_2(t,\theta))}\abs{\tilde\U_1^+ -\tilde\U_2^+} (t,\theta)d\theta\\
& \le \int_0^\eta e^{-\frac{1}{\sqtC{2}}(\tilde\Y_2(t,\eta)-\tilde\Y_2(t,\theta))}\sqtC{2}^5\abs{\tilde\U_1 -\tilde\U_2}(t,\theta) d\theta\\
&= \int_0^\eta e^{-\frac{1}{\sqtC{2}}(\tilde\Y_2(t,\eta)-\tilde\Y_2(t,\theta))}\big((2\tilde\P_2-\tilde\U_2^2) \tilde\Y_{2,\eta}+\tilde\Henergy_{2,\eta}\big)\abs{\tilde\U_1 -\tilde\U_2}(t,\theta) d\theta \\
&= 2 \int_0^\eta e^{-\frac{1}{\sqtC{2}}(\tilde\Y_2(t,\eta)-\tilde\Y_2(t,\theta))} \tilde\P_2 \tilde\Y_{2,\eta}\abs{\tilde\U_1 -\tilde\U_2}(t,\theta) d\theta  \\
&\qquad +\int_0^\eta e^{-\frac{1}{\sqtC{2}}(\tilde\Y_2(t,\eta)-\tilde\Y_2(t,\theta))}\tilde\Henergy_{2,\eta}\abs{\tilde\U_1 -\tilde\U_2}(t,\theta)d\theta\\
&\qquad -\int_0^\eta e^{-\frac{1}{\sqtC{2}}(\tilde\Y_2(t,\eta)-\tilde\Y_2(t,\theta))} \tilde\U_2^2 \tilde\Y_{2,\eta}\abs{\tilde\U_1 -\tilde\U_2}(t,\theta)d\theta\\
&\le  2\int_0^\eta e^{-\frac{1}{\sqtC{2}}(\tilde\Y_2(t,\eta)-\tilde\Y_2(t,\theta))} \tilde\P_2 \tilde\Y_{2,\eta}\abs{\tilde\U_1 -\tilde\U_2}(t,\theta) d\theta  \\
&\qquad +\int_0^\eta e^{-\frac{1}{\sqtC{2}}(\tilde\Y_2(t,\eta)-\tilde\Y_2(t,\theta))}\tilde\Henergy_{2,\eta}\abs{\tilde\U_1 -\tilde\U_2}(t,\theta)d\theta\\ 
&\le  2\Big(\int_0^\eta e^{-\frac3{2\sqtC{2}}(\tilde\Y_2(t,\eta)-\tilde\Y_2(t,\theta))} \tilde\P_2 \tilde\Y_{2,\eta}(t,\theta) d\theta\Big)^{1/2}  \\
&\qquad\times\Big(\int_0^\eta e^{-\frac1{2\sqtC{2}}(\tilde\Y_2(t,\eta)-\tilde\Y_2(t,\theta))}(\tilde\U_1 -\tilde\U_2)^2 \tilde\P_2\tilde\Y_{2,\eta}(t,\theta) d\theta\Big)^{1/2} \\
&\quad+\Big(\int_0^\eta e^{-\frac{1}{\sqtC{2}}(\tilde\Y_2(t,\eta)-\tilde\Y_2(t,\theta))} \tilde\Henergy_{2,\eta}(t,\theta) d\theta\Big)^{1/2} \\
&\qquad  \times\Big(\int_0^\eta e^{-\frac{1}{\sqtC{2}}(\tilde\Y_2(t,\eta)-\tilde\Y_2(t,\theta))}(\tilde\U_1 -\tilde\U_2)^2 \tilde\Henergy_{2,\eta}(t,\theta) d\theta\Big)^{1/2} \\
&\le \bigO(1)\A^3 \sqP{2}(t,\eta) \norm{\tilde\U_1 -\tilde\U_2},
\end{align*}
using \eqref{eq:PUYH_scale}, \eqref{eq:343}, \eqref{eq:all_PestimatesD},  \eqref{eq:all_estimatesE}, and \eqref{eq:all_estimatesK}. The term $M_{33}$ goes as follows 
\begin{align*}
M_{33}&=\frac{\ma^6}{4\A^3}\int_0^{1}\frac1{\max_j(\sqP{j})}(\sqP{1}-\sqP{2})(t,\eta)\\
&\qquad \times\Big(\int_0^\eta \big(e^{-\frac{1}{\sqtC{1}}(\tilde\Y_1(t,\eta)-\tilde\Y_1(t,\theta))} -e^{-\frac{1}{\sqtC{2}}(\tilde\Y_2(t,\eta)-\tilde\Y_2(t,\theta))}\big)\\
&\qquad\qquad\qquad\qquad\qquad\qquad\qquad\qquad\qquad\qquad\qquad\times\min_j\big(\tilde\U_j^+\big)(t,\theta)d\theta  \Big)d\eta\\
&= \frac{\ma^6}{4\A^3}\int_0^{1}\frac1{\max_j(\sqP{j})}(\sqP{1}-\sqP{2})(t,\eta)\\
& \qquad \times \mathbbm{1}_{\sqtC{1}\leq \sqtC{2}}\Big( \int_0^\eta (e^{-\frac{1}{\sqtC{1}}(\tilde \Y_2(t,\eta)-\tilde \Y_2(t,\theta))}-e^{-\frac{1}{\sqtC{2}}(\tilde \Y_2(t,\eta)-\tilde \Y_2(t,\theta))})\\
&\qquad\qquad\qquad\qquad\qquad\qquad\qquad\qquad\qquad\qquad\qquad\times\min_j(\tilde \V_j^+)(t,\theta)d\theta\Big)d\eta\\
& \quad + \frac{\ma^6}{4\A^3}\int_0^{1}\frac1{\max_j(\sqP{j})}(\sqP{1}-\sqP{2})(t,\eta)\\
& \qquad \times \mathbbm{1}_{\sqtC{2}< \sqtC{1}}\Big(\int_0^\eta (e^{-\frac{1}{\sqtC{1}}(\tilde \Y_1(t,\eta)-\tilde \Y_1(t,\theta))}-e^{-\frac{1}{\sqtC{2}}(\tilde \Y_1(t,\eta)-\tilde \Y_1(t,\theta))})\\
&\qquad\qquad\qquad\qquad\qquad\qquad\qquad\qquad\qquad\qquad\qquad\times\min_j(\tilde \V_j^+)(t,\theta)d\theta\Big)d\eta\\
&\quad +\frac{\ma^6}{4\A^3}\int_0^{1}\frac1{\max_j(\sqP{j})}(\sqP{1}-\sqP{2})(t,\eta)\\
&\qquad \times\Big(\int_0^\eta \big(e^{-\frac{1}{\ma}(\tilde\Y_1(t,\eta)-\tilde\Y_1(t,\theta))} -e^{-\frac{1}{\ma}(\tilde\Y_2(t,\eta)-\tilde\Y_2(t,\theta))}\big)\mathbbm{1}_{B(\eta)}\\
&\qquad\qquad\qquad\qquad\qquad\qquad\qquad\qquad\qquad\qquad\qquad\times\min_j\big(\tilde\U_j^+\big)(t,\theta)d\theta  \Big)d\eta\\
&\quad+ \frac{\ma^6}{4\A^3}\int_0^{1}\frac1{\max_j(\sqP{j})}(\sqP{1}-\sqP{2})(t,\eta)\\
&\qquad \times\Big(\int_0^\eta \big(e^{-\frac{1}{\ma}(\tilde\Y_1(t,\eta)-\tilde\Y_1(t,\theta))} -e^{-\frac{1}{\ma}(\tilde\Y_2(t,\eta)-\tilde\Y_2(t,\theta))}\big)\mathbbm{1}_{B(\eta)^c}\\
&\qquad\qquad\qquad\qquad\qquad\qquad\qquad\qquad\qquad\qquad\qquad\times\min_j\big(\tilde\U_j^+\big)(t,\theta)d\theta  \Big)d\eta\\
&=M_{331}+M_{332}+M_{333}+M_{334},
\end{align*}
where $B(\eta)$ is defined by \eqref{Def:Bn}. The terms $M_{331}$ and $M_{332}$  can be treated in the same manner. 
More specifically
\begin{align*}
\abs{M_{331}}& \leq \frac{\A^2}{e}\int_0^{1}\frac1{\max_j(\sqP{j})}\vert\sqP{1}-\sqP{2}\vert (t,\eta)\\
&\qquad\qquad\qquad\qquad\times\Big( \int_0^\eta e^{-\frac{3}{4\sqtC{2}}(\tilde \Y_2(t,\eta)-\tilde \Y_2(t,\theta))}\tilde \U_2^+(t,\theta)d\theta\Big)d\eta\vert \sqtC{1}-\sqtC{2}\vert\\
& \leq \frac{\A^2}{e}\int_0^1 \frac1{\max_j(\sqP{j})}\vert\sqP{1}-\sqP{2}\vert (t,\eta)\\
&\qquad\qquad\qquad\qquad\times\Big(\int_0^\eta e^{-\frac{1}{\sqtC{2}}(\tilde \Y_2(t,\eta)-\tilde \Y_2(t,\theta))}\tilde \U_2^2(t,\theta) d\theta\Big)^{1/2}d\eta\vert \sqtC{1}-\sqtC{2}\vert\\
& \leq \bigO(1)(\norm{\sqP{1}-\sqP{2}}^2+\vert \sqtC{1}-\sqtC{2}\vert^2).
\end{align*}

The terms $M_{333}$ and $M_{334}$ can be treated in a similar manner.
More specifically
\begin{align*}
\abs{M_{333}}&\le\frac{\ma^6}{4\A^3}\int_0^{1}\frac1{\max_j(\sqP{j})}\abs{\sqP{1}-\sqP{2}}(t,\eta)\\
&\qquad \times\Big(\int_0^\eta \abs{e^{-\frac{1}{\ma}(\tilde\Y_1(t,\eta)-\tilde\Y_1(t,\theta))} -e^{-\frac{1}{\ma}(\tilde\Y_2(t,\eta)-\tilde\Y_2(t,\theta))}}\mathbbm{1}_{B(\eta)}\\
&\qquad\qquad\qquad\qquad\qquad\qquad\qquad\qquad\qquad\qquad\times\min_j\big(\tilde\U_j^+\big)(t,\theta)d\theta  \Big)d\eta\\
&\le\frac{\A^2}{4}\int_0^{1}\frac1{\max_j(\sqP{j})}\abs{\sqP{1}-\sqP{2}}(t,\eta)\Big(\int_0^\eta e^{-\frac{1}{\sqtC{2}}(\tilde\Y_2(t,\eta)-\tilde\Y_2(t,\theta))}\\
&\quad\times\big(\abs{\tilde\Y_2(t,\eta)-\tilde\Y_1(t,\eta)}+\abs{\tilde\Y_2(t,\theta)-\tilde\Y_1(t,\theta)} \big) \min_j\big(\tilde\U_j^+\big)(t,\theta)d\theta  \Big)d\eta\\
&\le\frac{\A^2}{4}\int_0^{1}\frac1{\max_j(\sqP{j})}\abs{\sqP{1}-\sqP{2}}\,\abs{\tilde\Y_2-\tilde\Y_1}(t,\eta)\\
&\qquad \times\Big(\int_0^\eta e^{-\frac{1}{\sqtC{2}}(\tilde\Y_2(t,\eta)-\tilde\Y_2(t,\theta))}\mathbbm{1}_{B(\eta)}\min_j\big(\tilde\U_j^+\big)(t,\theta)d\theta  \Big)d\eta\\
&\quad+\frac{\A^2}{4}\int_0^{1}\frac1{\max_j(\sqP{j})}\abs{\sqP{1}-\sqP{2}}(t,\eta)\\
&\qquad \times\Big(\int_0^\eta e^{-\frac{1}{\sqtC{2}}(\tilde\Y_2(t,\eta)-\tilde\Y_2(t,\theta))}\abs{\tilde\Y_2-\tilde\Y_1}\min_j\big(\tilde\U_j^+\big)(t,\theta)d\theta  \Big)d\eta\\
&\le\frac{\A^2}{4}\int_0^{1}\frac1{\max_j(\sqP{j})}\abs{\sqP{1}-\sqP{2}}\,\abs{\tilde\Y_2-\tilde\Y_1}(t,\eta)\\
&\qquad \times\Big(\int_0^\eta e^{-\frac{1}{\sqtC{2}}(\tilde\Y_2(t,\eta)-\tilde\Y_2(t,\theta))}\tilde\U_2^2(t,\theta)d\theta  \Big)^{1/2} \\
&\qquad \times\Big(\int_0^\eta e^{-\frac{1}{\sqtC{2}}(\tilde\Y_2(t,\eta)-\tilde\Y_2(t,\theta))}d\theta\Big)^{1/2}d\eta\\
&\quad+\frac{A^2}{4}\int_0^{1}\frac1{\max_j(\sqP{j})}\abs{\sqP{1}-\sqP{2}}(t,\eta)\\
&\qquad \times\Big(\int_0^\eta e^{-\frac{1}{\sqtC{2}}(\tilde\Y_2(t,\eta)-\tilde\Y_2(t,\theta))}\tilde\U_2^2 (t,\theta)d\theta\Big)^{1/2}\\
&\qquad \times
\Big(\int_0^\eta e^{-\frac{1}{\sqtC{2}}(\tilde\Y_2(t,\eta)-\tilde\Y_2(t,\theta))}(\tilde\Y_2-\tilde\Y_1)^2(t,\theta)\Big)^{1/2}d\eta\\
&\le \bigO(1)\int_0^{1}\frac1{\max_j(\sqP{j})}\abs{\sqP{1}-\sqP{2}}\,\abs{\tilde\Y_2-\tilde\Y_1}\sqP{2}(t,\eta)d\eta\\
&\quad + \bigO(1)\int_0^{1}\frac1{\max_j(\sqP{j})}\abs{\sqP{1}-\sqP{2}}\,\norm{\tilde\Y_2-\tilde\Y_1}\sqP{2}(t,\eta)d\eta\\
&\le \bigO(1) \big(\norm{\sqP{1}-\sqP{2}}^2+ \norm{\tilde\Y_2-\tilde\Y_1}^2\big),
\end{align*}
using \eqref{Diff:Exp} and \eqref{eq:all_PestimatesA}.

\bigskip
And the final term from $J_3$, cf.~\eqref{eq:tildeJ13}, can be estimated as follows:
\begin{align*}
\tilde J_{13}&=\frac1{2\A^3}\int_0^{1}\frac1{\max_j(\sqP{j})}(\sqP{1}-\sqP{2})(t,\eta)\Big(\int_0^\eta\big(e^{-\frac{1}{\sqtC{2}}(\tilde\Y_2(t,\eta)-\tilde\Y_2(t,\theta))} \tilde \U_2\tilde\Q_2\tilde \Y_{2,\eta}\\
&\qquad\qquad\qquad\qquad
-e^{-\frac{1}{\sqtC{1}}(\tilde\Y_1(t,\eta)-\tilde\Y_1(t,\theta))} \tilde \U_1\tilde\Q_1\tilde \Y_{1,\eta}\big)(t,\theta)d\theta\Big)d\eta.
\end{align*}
We introduce the positive and negative part of $\tilde\U_j$, that is, $\tilde\U_j=\tilde\U_j^+ +\tilde\U_j^-$ (see \eqref{eq:pm0}) and introduce $\tilde\Q_j=\sqtC{j}\tilde\P_j-\tilde\D_j$. 
We study the term with $\tilde\P_j$ first
\begin{align*}
\tilde J_{131}&=\frac1{2\A^3}\int_0^{1}\frac1{\max_j(\sqP{j})}(\sqP{1}-\sqP{2})(t,\eta)\\
&\qquad\qquad\times\Big(\int_0^\eta\big(e^{-\frac{1}{\sqtC{2}}(\tilde\Y_2(t,\eta)-\tilde\Y_2(t,\theta))} \sqtC{2}\tilde \U_2^+\tilde\P_2\tilde \Y_{2,\eta}\\
&\qquad\qquad\qquad\qquad\qquad
-e^{-\frac{1}{\sqtC{1}}(\tilde\Y_1(t,\eta)-\tilde\Y_1(t,\theta))} \sqtC{1}\tilde \U_1^+\tilde\P_1\tilde \Y_{1,\eta}\big)(t,\theta)d\theta\Big)d\eta\\
&=\frac1{2\A^3}\int_0^{1}\frac1{\max_j(\sqP{j})}(\sqP{1}-\sqP{2})(t,\eta)\\
& \qquad \times\Big[\mathbbm{1}_{\sqtC{1}\leq \sqtC{2}}(\sqtC{2}-\sqtC{1})\int_0^\eta e^{-\frac{1}{\sqtC{2}}
(\tilde \Y_2(t,\eta)-\tilde \Y_2(t,\theta))}\tilde \V_2^+\tilde \P_2\tilde \Y_{2,\eta}(t,\theta)d\theta\\
& \qquad +\mathbbm{1}_{\sqtC{2}< \sqtC{1}}(\sqtC{2}-\sqtC{1})\int_0^\eta e^{-\frac{1}{\sqtC{1}}(\tilde \Y_1(t,\eta)-\tilde \Y_1(t,\theta))}\tilde \V_1^+\tilde \P_1\tilde \Y_{1,\eta}(t,\theta)d\theta\\
&\qquad+\ma\int_0^\eta e^{-\frac{1}{\sqtC{2}}(\tilde\Y_2(t,\eta)-\tilde\Y_2(t,\theta))}\big(\tilde\P_2-\tilde\P_1 \big)\mathbbm{1}_{\tilde\P_1<\tilde\P_2} \tilde \U_2^+\tilde \Y_{2,\eta}(t,\theta)d\theta \\
&\qquad+\ma\int_0^\eta e^{-\frac{1}{\sqtC{1}}(\tilde\Y_1(t,\eta)-\tilde\Y_1(t,\theta))}\big(\tilde\P_2-\tilde\P_1 \big)\mathbbm{1}_{\tilde\P_2\le\tilde\P_1} \tilde \U_1^+\tilde \Y_{1,\eta}(t,\theta)d\theta \\
&\qquad+\ma\int_0^\eta e^{-\frac{1}{\sqtC{2}}(\tilde\Y_2(t,\eta)-\tilde\Y_2(t,\theta))}\min_j\big(\tilde\P_j \big)\big(\tilde\U_2^+-\tilde\U_1^+ \big)\mathbbm{1}_{\tilde\U_1^+<\tilde\U_2^+} \tilde \Y_{2,\eta}(t,\theta)d\theta \\
&\qquad+\ma\int_0^\eta e^{-\frac{1}{\sqtC{1}}(\tilde\Y_1(t,\eta)-\tilde\Y_1(t,\theta))}\min_j\big(\tilde\P_j \big)\big(\tilde\U_2^+-\tilde\U_1^+ \big)\mathbbm{1}_{\tilde\U_2^+\le\tilde\U_1^+} \tilde \Y_{1,\eta}(t,\theta)d\theta \\
& \qquad +\ma\mathbbm{1}_{\sqtC{1}\leq \sqtC{2}}\int_0^\eta (e^{-\frac{1}{\sqtC{2}}(\tilde \Y_2(t,\eta)-\tilde \Y_2(t,\theta))}-e^{-\frac{1}{\sqtC{1}}(\tilde \Y_2(t,\eta)-\tilde \Y_2(t,\theta))})\\
&\qquad\qquad\qquad\qquad\qquad\times\min_j(\tilde \P_j)\min_j(\tilde \V_j^+)\tilde \Y_{2,\eta}(t,\theta) d\theta\\
& \qquad +\ma\mathbbm{1}_{\sqtC{2}< \sqtC{1}}\int_0^\eta (e^{-\frac{1}{\sqtC{2}}(\tilde \Y_1(t,\eta)-\tilde \Y_1(t,\theta))}-e^{-\frac{1}{\sqtC{1}}(\tilde \Y_1(t,\eta)-\tilde \Y_1(t,\theta))})\\
&\qquad\qquad\qquad\qquad\qquad\times\min_j(\tilde \P_j)\min_j(\tilde \V_j^+)\tilde \Y_{1,\eta}(t,\theta) d\theta\\
&\qquad+\ma\int_0^\eta \big(e^{-\frac{1}{\ma}(\tilde\Y_2(t,\eta)-\tilde\Y_2(t,\theta))}-e^{-\frac{1}{\ma}(\tilde\Y_1(t,\eta)-\tilde\Y_1(t,\theta))}\big) 
\mathbbm{1}_{B(\eta)}\\
&\qquad\qquad\qquad\qquad\qquad\times \min_j\big(\tilde\P_j \big)\min_j\big(\tilde\U_j^+\big)\tilde \Y_{2,\eta}(t,\theta)d\theta \\
&\qquad+\ma\int_0^\eta \big(e^{-\frac{1}{\ma}(\tilde\Y_2(t,\eta)-\tilde\Y_2(t,\theta))}-e^{-\frac{1}{\ma}(\tilde\Y_1(t,\eta)-\tilde\Y_1(t,\theta))}\big)
\mathbbm{1}_{B(\eta)^c} \\
&\qquad\qquad\qquad\qquad\qquad\times\min_j\big(\tilde\P_j \big)\min_j\big(\tilde\U_j^+\big)\tilde \Y_{1,\eta}(t,\theta)d\theta \\
&\qquad+\ma\int_0^\eta \min_j\big(e^{-\frac{1}{\ma}(\tilde\Y_j(t,\eta)-\tilde\Y_j(t,\theta))}\big) \\
&\qquad\qquad\qquad\qquad\qquad\times\min_j\big(\tilde\P_j \big)\min_j\big(\tilde\U_j^+\big)\big(\tilde \Y_{2,\eta}-\tilde \Y_{1,\eta}\big)(t,\theta)d\theta \Big]d\eta\\
&= K_1+K_2+K_3+K_4+K_5+K_6+K_7+K_8+K_9+K_{10}+K_{11}.
\end{align*}

The terms $K_1$ and $K_2$ can be treated similarly, thus
\begin{align*}
\abs{K_1}&=\frac1{2\A^3}\vert\int_0^{1}\frac1{\max_j(\sqP{j})}(\sqP{1}-\sqP{2})(t,\eta)\\
& \qquad \times\Big(\mathbbm{1}_{\sqtC{1}\leq \sqtC{2}}(\sqtC{2}-\sqtC{1})\int_0^\eta e^{-\frac{1}{\sqtC{2}}(\tilde \Y_2(t,\eta)-\tilde \Y_2(t,\theta))}\tilde \V_2^+\tilde \P_2\tilde \Y_{2,\eta}(t,\theta)d\theta\Big)d\eta\vert\\
& \leq\frac1{2\A^3}\int_0^1 \frac{1}{\max_j(\sqP{j})} \vert \sqP{1}-\sqP{2}\vert (t,\eta)\\
& \qquad \times \Big(\int_0^\eta e^{-\frac{1}{\sqtC{2}}(\tilde \Y_2(t,\eta)-\tilde \Y_2(t,\theta))}\tilde \P_2^2\tilde \Y_{2,\eta}(t,\theta) d\theta\Big)^{1/2}\\
& \qquad \times \Big(\int_0^\eta e^{-\frac{1}{\sqtC{2}}(\tilde \Y_2(t,\eta)-\tilde \Y_2(t,\theta))} \tilde \U_2^2\tilde \Y_{2,\eta}(t,\theta) d\theta\Big)^{1/2}d\eta\vert \sqtC{1}-\sqtC{2}\vert\\
& \leq \bigO(1)\big(\norm{\sqP{1}-\sqP{2}}^2+\vert \sqtC{1}-\sqtC{2}\vert^2).
\end{align*}

The terms $K_3$ and $K_4$ can be treated similarly, thus
\begin{align*}
\abs{K_3}&=\frac{\ma}{2\A^3}\Big\vert\int_0^{1}\frac1{\max_j(\sqP{j})}(\sqP{1}-\sqP{2})(t,\eta)\\
&\qquad\times\Big(\int_0^\eta e^{-\frac{1}{\sqtC{2}}(\tilde\Y_2(t,\eta)-\tilde\Y_2(t,\theta))}\big(\tilde\P_1-\tilde\P_2 \big)\mathbbm{1}_{\tilde\P_1<\tilde\P_2} \tilde \U_2^+\tilde \Y_{2,\eta}(t,\theta)d\theta \Big)d\eta\Big\vert\\
&\le\frac{1}{\A^2}\Big\vert\int_0^{1}\frac1{\max_j(\sqP{j})}\abs{\sqP{1}-\sqP{2}}(t,\eta)\\
&\qquad\times\Big(\int_0^\eta e^{-\frac{1}{\sqtC{2}}(\tilde\Y_2(t,\eta)-\tilde\Y_2(t,\theta))}\abs{\sqP{1}-\sqP{2}} \sqP{2} \tilde \U_2^+\tilde \Y_{2,\eta}(t,\theta)d\theta \Big)d\eta\Big\vert\\
&\le\frac{1}{\A^2}\Big\vert\int_0^{1}\frac1{\max_j(\sqP{j})}\abs{\sqP{1}-\sqP{2}}(t,\eta)\\
&\qquad\times\Big(\int_0^\eta e^{-\frac{1}{\sqtC{2}}(\tilde\Y_2(t,\eta)-\tilde\Y_2(t,\theta))}(\sqP{1}-\sqP{2})^2 (t,\theta)d\theta \Big)^{1/2} \\
&\qquad\times
\Big(\int_0^\eta  e^{-\frac{1}{\sqtC{2}}(\tilde\Y_2(t,\eta)-\tilde\Y_2(t,\theta))} \tilde\P_2 \tilde \U_2^2\tilde \Y_{2,\eta}^2(t,\theta)d\theta \Big)^{1/2} d\eta\\
&\le \bigO(1)\Big\vert\int_0^{1}\frac1{\max_j(\sqP{j})}\abs{\sqP{1}-\sqP{2}}\sqP{2}(t,\eta)\\
&\qquad\times\Big(\int_0^\eta e^{-\frac{1}{\sqtC{2}}(\tilde\Y_2(t,\eta)-\tilde\Y_2(t,\theta))}(\sqP{1}-\sqP{2})^2 (t,\theta)d\theta \Big)^{1/2} d\eta\\
&\le \bigO(1)\norm{\sqP{1}-\sqP{2}}^2, 
\end{align*}
using \eqref{eq:all_estimatesE} and \eqref{eq:all_PestimatesC}.  As for $K_5$ and $K_6$ we find
\begin{align*}
\abs{K_5}&=\frac{\ma}{2\A^3}\Big\vert\int_0^{1}\frac1{\max_j(\sqP{j})}(\sqP{1}-\sqP{2})(t,\eta)\\
&\qquad+\Big(\int_0^\eta e^{-\frac{1}{\sqtC{2}}(\tilde\Y_2(t,\eta)-\tilde\Y_2(t,\theta))}\min_j\big(\tilde\P_j \big)\big(\tilde\U_1^+-\tilde\U_2^+ \big)
\mathbbm{1}_{\tilde\U_1^+<\tilde\U_2^+} \tilde \Y_{2,\eta}(t,\theta)d\theta\Big)d\eta\Big\vert \\
&\le \frac1{2\A^2}\int_0^{1}\frac1{\max_j(\sqP{j})}\abs{\sqP{1}-\sqP{2}}(t,\eta)\\
&\qquad+\Big(\int_0^\eta e^{-\frac{1}{\sqtC{2}}(\tilde\Y_2(t,\eta)-\tilde\Y_2(t,\theta))}\min_j\big(\tilde\P_j \big)\abs{\tilde\U_1^+-\tilde\U_2^+}
\mathbbm{1}_{\tilde\U_1^+<\tilde\U_2^+} \tilde \Y_{2,\eta}(t,\theta)d\theta\Big)d\eta \\
&\le \norm{\sqP{1}-\sqP{2}}^2   +\frac1{4\A^4} \int_0^{1}\frac1{(\max_j(\sqP{j}))^2}(t,\eta)\\
&\qquad\times
\Big(\int_0^\eta e^{-\frac{1}{\sqtC{2}}(\tilde\Y_2(t,\eta)-\tilde\Y_2(t,\theta))}\min_j\big(\tilde\P_j \big)\abs{\tilde\U_1^+-\tilde\U_2^+}
\mathbbm{1}_{\tilde\U_1^+<\tilde\U_2^+} \tilde \Y_{2,\eta}(t,\theta)d\theta\Big)^2d\eta \\
&\le \norm{\sqP{1}-\sqP{2}}^2  \\
&\quad +\frac1{4\A^4} \int_0^{1}\frac1{(\max_j(\sqP{j}))^2}(t,\eta)
\Big(\int_0^\eta e^{-\frac3{2\sqtC{2}}(\tilde\Y_2(t,\eta)-\tilde\Y_2(t,\theta))}\tilde\P_2\tilde \Y_{2,\eta}(t,\theta)d\theta\Big) \\
&\qquad\times\Big(\int_0^\eta e^{-\frac1{2\sqtC{2}}(\tilde\Y_2(t,\eta)-\tilde\Y_2(t,\theta))}\tilde\P_2\tilde \Y_{2,\eta}(\tilde\U_1^+-\tilde\U_2^+)^2 (t,\theta)d\theta\Big)d\eta \\
&\le \norm{\sqP{1}-\sqP{2}}^2  +\frac1{2\A^3}\int_0^{1}\frac1{(\max_j(\sqP{j}))^2}\tilde\P_2(t,\eta)\\
&\qquad\qquad\times
\Big(\int_0^\eta e^{-\frac1{2\sqtC{2}}(\tilde\Y_2(t,\eta)-\tilde\Y_2(t,\theta))}\tilde\P_2\tilde \Y_{2,\eta}(\tilde\U_1^+-\tilde\U_2^+)^2 (t,\theta)d\theta\Big)d\eta \\
&\le \bigO(1)\big(\norm{\sqP{1}-\sqP{2}}^2+\norm{\tilde\U_1-\tilde\U_2}^2 \big),
\end{align*}
using \eqref{eq:343} and \eqref{eq:all_estimatesE}.  

The terms $K_7$ and $K_8$ allow for the following estimates
\begin{align*}
\abs{K_7}
&= \frac{\ma}{2\A^3}\Big\vert\int_0^{1}\frac1{\max_j(\sqP{j})}(\sqP{1}-\sqP{2})(t,\eta)\\
& \qquad \times\mathbbm{1}_{\sqtC{1}\leq \sqtC{2}}\int_0^\eta (e^{-\frac{1}{\sqtC{2}}(\tilde \Y_2(t,\eta)-\tilde \Y_2(t,\theta))}-e^{-\frac{1}{\sqtC{1}}(\tilde \Y_2(t,\eta)-\tilde \Y_2(t,\theta))}\\
&\qquad\qquad\times\min_j(\tilde \P_j)\min_j(\tilde \V_j^+)\tilde \Y_{2,\eta}(t,\theta) d\theta d\eta\vert\\
& \leq \frac{2}{\A^3 e}\int_0^1 \frac1{\max_j(\sqP{j})}\vert \sqP{1}-\sqP{2}\vert (t,\eta)\\
&\qquad\qquad\times\int_0^\eta e^{-\frac{3}{4\sqtC{2}}(\tilde \Y_2(t,\eta)-\tilde \Y_2(t,\theta))}\tilde \P_2\vert \tilde \U_2\vert \tilde \Y_{2,\eta}(t,\theta) d\theta d\eta \vert \sqtC{1}-\sqtC{2}\vert\\
& \leq \frac{2}{\A^3 e}\int_0^1 \frac1{\max_j(\sqP{j})}\vert \sqP{1}-\sqP{2}\vert (t,\eta)\\
& \qquad \times \Big(\int_0^1 e^{-\frac{1}{\sqtC{2}}(\tilde \Y_2(t,\eta)-\tilde \Y_2(t,\theta))}\tilde \U_2^2\tilde \Y_{2,\eta}(t,\theta) d\theta\Big)^{1/2}\\
& \qquad \times \Big(\int_0^1 e^{-\frac{1}{2\sqtC{2}}(\tilde \Y_2(t,\eta)-\tilde \Y_2(t,\theta))}\tilde \P_2^2\tilde \Y_{2,\eta}(t,\theta) d\theta\Big)^{1/2}d\eta \vert \sqtC{1}-\sqtC{2}\vert \\
& \leq \bigO(1)\big(\norm{\sqP{1}-\sqP{2}}^2+\vert \sqtC{1}-\sqtC{2}\vert^2\big).
\end{align*}

The terms $K_9$ and $K_{10}$ allow for the following estimates
\begin{align*}
\abs{K_9}&=\frac{\ma}{2\A^3}\Big\vert\int_0^{1}\frac1{\max_j(\sqP{j})}(\sqP{1}-\sqP{2})(t,\eta)\\
&\qquad+\int_0^\eta \big(e^{-\frac{1}{\ma}(\tilde\Y_1(t,\eta)-\tilde\Y_1(t,\theta))}-e^{-\frac{1}{\ma}(\tilde\Y_2(t,\eta)-\tilde\Y_2(t,\theta))}\big) 
\mathbbm{1}_{B(\eta)} \\
&\qquad\qquad\qquad\qquad\times\min_j\big(\tilde\P_j \big)\min_j\big(\tilde\U_j^+\big)\tilde \Y_{2,\eta}(t,\theta)d\theta d\eta\Big\vert\\
&\le\frac1{2\A^3}\Big\vert\int_0^{1}\frac1{\max_j(\sqP{j})}\abs{\sqP{1}-\sqP{2}}(t,\eta)\\
&\qquad\times\int_0^\eta e^{-\frac{1}{\ma}(\tilde\Y_2(t,\eta)-\tilde\Y_2(t,\theta))}\big(\abs{\tilde\Y_2(t,\eta)-\tilde\Y_1(t,\eta)}+\abs{\tilde\Y_2(t,\theta)-\tilde\Y_1(t,\theta)}\big) 
\\
&\qquad\qquad\qquad\qquad\times\min_j\big(\tilde\P_j \big)\min_j\big(\tilde\U_j^+\big)\tilde \Y_{2,\eta}(t,\theta)d\theta d\eta\Big\vert\\
&\le\frac1{2\A^3}\int_0^{1}\frac1{\max_j(\sqP{j})}\abs{\sqP{1}-\sqP{2}}\abs{\tilde\Y_2-\tilde\Y_1}(t,\eta)\\
&\qquad\times\Big(\int_0^\eta e^{-\frac{1}{\ma}(\tilde\Y_2(t,\eta)-\tilde\Y_2(t,\theta))}
\min_j\big(\tilde\P_j \big)\min_j\big(\tilde\U_j^+\big)\tilde \Y_{2,\eta}(t,\theta)d\theta\Big) d\eta \\
&\quad +\frac1{2\A^3}\int_0^{1}\frac1{\max_j(\sqP{j})}\abs{\sqP{1}-\sqP{2}}(t,\eta)\\
&\qquad\times\Big(\int_0^\eta e^{-\frac{1}{\ma}(\tilde\Y_2(t,\eta)-\tilde\Y_2(t,\theta))}\abs{\tilde\Y_2-\tilde\Y_1}
\min_j\big(\tilde\P_j \big)\min_j\big(\tilde\U_j^+\big)\tilde \Y_{2,\eta}(t,\theta)d\theta\Big) d\eta \\
&\le\frac1{2\A^3}\int_0^{1}\frac1{\max_j(\sqP{j})}\abs{\sqP{1}-\sqP{2}}\abs{\tilde\Y_2-\tilde\Y_1}(t,\eta)\\
&\qquad\times\Big(\int_0^\eta e^{-\frac{1}{\sqtC{2}}(\tilde\Y_2(t,\eta)-\tilde\Y_2(t,\theta))}\tilde\P_2^2\tilde \Y_{2,\eta}(t,\theta)d\theta\Big)^{1/2}\\
&\qquad\qquad\times
\Big(\int_0^\eta e^{-\frac{1}{\sqtC{2}}(\tilde\Y_2(t,\eta)-\tilde\Y_2(t,\theta))} \tilde\U_2^2\tilde \Y_{2,\eta}(t,\theta)d\theta\Big)^{1/2} d\eta \\
&\quad +\norm{\sqP{1}-\sqP{2}}^2 +\frac1{4\A^6}\int_0^{1}\frac1{(\max_j(\sqP{j}))^2}(t,\eta)\\
&\qquad\times\Big(\int_0^\eta e^{-\frac{1}{\sqtC{2}}(\tilde\Y_2(t,\eta)-\tilde\Y_2(t,\theta))}\abs{\tilde\Y_2-\tilde\Y_1}
\min_j\big(\tilde\P_j \big)\min_j\big(\tilde\U_j^+\big)\tilde \Y_{2,\eta}(t,\theta)d\theta\Big)^2 d\eta \\
&\le \bigO(1)\big(\norm{\sqP{1}-\sqP{2}}^2+\norm{\tilde\Y_2-\tilde\Y_1}^2 \big) +\frac1{4\A^6}\int_0^{1}\frac1{(\max_j(\sqP{j}))^2}(t,\eta)\\
&\qquad\qquad\times\Big(\int_0^\eta e^{-\frac{1}{\sqtC{2}}(\tilde\Y_2(t,\eta)-\tilde\Y_2(t,\theta))}\tilde\U_2^2\tilde \Y_{2,\eta}(t,\theta)d\theta\Big)\\
&\qquad\qquad\times\Big(\int_0^\eta e^{-\frac{1}{\sqtC{2}}(\tilde\Y_2(t,\eta)-\tilde\Y_2(t,\theta))}(\tilde\Y_2-\tilde\Y_1)^2
\tilde\P_2^2\tilde \Y_{2,\eta}(t,\theta)d\theta\Big) d\eta \\
&\le \bigO(1)\big(\norm{\sqP{1}-\sqP{2}}^2+\norm{\tilde\Y_2-\tilde\Y_1}^2 \big),
\end{align*}
using \eqref{eq:all_estimatesA}, \eqref{eq:all_estimatesE}, and \eqref{eq:all_PestimatesC}. The last term $K_{11}$ receives special treatment
\begin{align*}
\abs{K_{11}}&=\frac{\ma}{2\A^3}\Big\vert\int_0^{1}\frac1{\max_j(\sqP{j})}(\sqP{1}-\sqP{2})(t,\eta)\Big(\int_0^\eta \min_j\big(e^{-\frac{1}{\ma}(\tilde\Y_j(t,\eta)-\tilde\Y_j(t,\theta))}\big) \\
&\qquad\qquad\qquad\times\min_j\big(\tilde\P_j \big)\min_j\big(\tilde\U_j^+\big)\big(\tilde \Y_{1,\eta}-\tilde \Y_{2,\eta}\big)(t,\theta)d\theta \Big)d\eta\Big\vert\\
&= \frac{\ma}{2\A^3}\Big\vert\int_0^{1}\frac1{\max_j(\sqP{j})}(\sqP{1}-\sqP{2})(t,\eta)\\
&\qquad\times\Big[ \Big(\min_j\big(e^{-\frac{1}{\ma}(\tilde\Y_j(t,\eta)-\tilde\Y_j(t,\theta))}\big) \min_j\big(\tilde\P_j \big)\min_j\big(\tilde\U_j^+\big)\big(\tilde \Y_{1}-\tilde \Y_{2}\big)(t,\theta)\Big)\Big\vert_{\theta=0}^1\\
&\quad -\int_0^\eta \big(\tilde \Y_{1}-\tilde \Y_{2}\big) \\
&\qquad\quad\times
\frac{d}{d\theta}\Big(\min_j\big(e^{-\frac{1}{\ma}(\tilde\Y_j(t,\eta)-\tilde\Y_j(t,\theta))}\big) \min_j\big(\tilde\P_j \big)\min_j\big(\tilde\U_j^+\big)\Big) (t,\theta)d\theta \Big]d\eta\Big\vert\\
&\le \bigO(1)\big(\norm{\sqP{1}-\sqP{2}}^2+\norm{\tilde\Y_2-\tilde\Y_1}^2 \big) \\
&\qquad+ \frac{\ma}{2\A^3}\int_0^{1}\frac1{\max_j(\sqP{j})} \abs{\sqP{1}-\sqP{2}}(t,\eta)\\
&\qquad\times \int_0^\eta \abs{\tilde \Y_{1}-\tilde \Y_{2}}\Big\vert\big(\frac{d}{d\theta}\min_j\big(e^{-\frac{1}{\ma}(\tilde\Y_j(t,\eta)-\tilde\Y_j(t,\theta))}\big)\big)\min_j\big(\tilde\P_j \big)\min_j\big(\tilde\U_j^+
\big)\\
&\qquad\quad+\min_j\big(e^{-\frac{1}{\ma}(\tilde\Y_j(t,\eta)-\tilde\Y_j(t,\theta))}\big)\big(\frac{d}{d\theta}\min_j\big(\tilde\P_j \big)\min_j\big(\tilde\U_j^+
\big) \big)\Big\vert (t,\theta)d\theta \Big]d\eta\\
&\le \bigO(1)\big(\norm{\sqP{1}-\sqP{2}}^2+\norm{\tilde\Y_1-\tilde\Y_2}^2 \big) \\
&\qquad+ \frac1{2\A^3}\int_0^{1}\frac1{\max_j(\sqP{j})} \abs{\sqP{1}-\sqP{2}}(t,\eta)\\
&\qquad\times \int_0^\eta \abs{\tilde \Y_{1}-\tilde \Y_{2}}\Big(\min_j(e^{-\frac{1}{\ma}(\tilde \Y_j(t,\eta)-\tilde\Y_j(t,\theta))}) \max_j(\tilde\Y_{j,\eta})\min_j\big(\tilde\P_j \big)\min_j\big(\tilde\U_j^+
\big)\\
&\qquad\quad+2\ma\A^4\min_j\big(e^{-\frac{1}{\ma}(\tilde\Y_j(t,\eta)-\tilde\Y_j(t,\theta))}\big)(\min_j(\sqP{j})+\vert \tilde\U_2\vert)\Big) (t,\theta)d\theta \Big]d\eta\\
&\le \bigO(1)\big(\norm{\sqP{1}-\sqP{2}}^2+\norm{\tilde\Y_1-\tilde\Y_2}^2 \big) \\
&\qquad+ \frac1{2\A^3}\int_0^{1}\frac1{\max_j(\sqP{j})} \abs{\sqP{1}-\sqP{2}}(t,\eta)\\
&\qquad\quad\times\Big( \int_0^\eta e^{-\frac{1}{\sqtC{1}}(\tilde \Y_1(t,\eta)-\tilde\Y_1(t,\theta))} \abs{\tilde \Y_{1}-\tilde \Y_{2}}\tilde\Y_{1,\eta}\tilde\P_1 \tilde\U_1^+(t,\theta)d\theta \Big)d\eta\\
&\qquad+ \frac1{2\A^3}\int_0^{1}\frac1{\max_j(\sqP{j})} \abs{\sqP{1}-\sqP{2}}(t,\eta)\\
&\qquad\quad\times\Big( \int_0^\eta e^{-\frac{1}{\sqtC{2}}(\tilde \Y_2(t,\eta)-\tilde\Y_2(t,\theta))} \abs{\tilde \Y_{1}-\tilde \Y_{2}}\tilde\Y_{2,\eta}\tilde\P_2 \tilde\U_2^+(t,\theta)d\theta \Big)d\eta\\
&\qquad+ \A^2\int_0^{1}\frac1{\max_j(\sqP{j})} \abs{\sqP{1}-\sqP{2}}(t,\eta)\\
&\qquad\times\Big( \int_0^\eta\abs{\tilde \Y_{1}-\tilde \Y_{2}}\min_j\big(e^{-\frac{1}{\sqtC{2}}(\tilde\Y_2(t,\eta)-\tilde\Y_2(t,\theta))}\big)(\sqP{2}+\vert \tilde\U_2\vert) (t,\theta)d\theta \Big)d\eta\\
&=\bigO(1)\big(\norm{\sqP{1}-\sqP{2}}^2+\norm{\tilde\Y_1-\tilde\Y_2}^2 \big) + K_{71}+K_{72}+K_{73},
\end{align*}
using Lemmas \ref{lemma:1} and \ref{lemma:3}, and
\begin{align*}
&\min_j(e^{-\frac{1}{\ma}(\tilde \Y_j(t,\eta)-\tilde\Y_j(t,\theta))}) \max_j(\tilde\Y_{j,\eta})\min_j\big(\tilde\P_j \big)\min_j\big(\tilde\U_j^+
\big)(t,\theta)\\
&\qquad \le e^{-\frac{1}{\sqtC{1}}(\tilde \Y_1(t,\eta)-\tilde\Y_1(t,\theta))} \tilde\Y_{1,\eta}\tilde\P_1 \tilde\U_1^+(t,\theta)
+e^{-\frac{1}{\sqtC{2}}(\tilde \Y_2(t,\eta)-\tilde\Y_2(t,\theta))} \tilde\Y_{2,\eta}\tilde\P_2 \tilde\U_2^+(t,\theta) .
\end{align*}
Here $K_{71}$ and $K_{72}$ allow for the same treatment
\begin{align*}
2K_{71}&=\frac{1}{\A^3}\int_0^{1}\frac1{\max_j(\sqP{j})} \abs{\sqP{1}-\sqP{2}}(t,\eta)\\
&\qquad\times\Big( \int_0^\eta \abs{\tilde \Y_{1}-\tilde \Y_{2}} e^{-\frac{1}{\sqtC{1}}(\tilde \Y_1(t,\eta)-\tilde\Y_1(t,\theta))} \tilde\Y_{1,\eta}\tilde\P_1 \tilde\U_1^+ 
(t,\theta)d\theta \Big)d\eta\\
&\le \frac{1}{\A^3}\int_0^{1}\frac1{\max_j(\sqP{j})} \abs{\sqP{1}-\sqP{2}}(t,\eta)\\
&\qquad\times\Big( \int_0^\eta e^{-\frac{1}{\sqtC{1}}(\tilde \Y_1(t,\eta)-\tilde\Y_1(t,\theta))}  (\tilde \Y_{1}-\tilde \Y_{2})^2(t,\theta)d\theta \Big)^{1/2}\\
&\qquad\times
\Big(\int_0^\eta e^{-\frac{1}{\sqtC{1}}(\tilde \Y_1(t,\eta)-\tilde\Y_1(t,\theta))}  \tilde\Y_{1,\eta}^2\tilde\P_1^2 (\tilde\U_1^+)^2 (t,\theta)d\theta \Big)^{1/2}d\eta\\
&\le \A^2{\sqrt{2}}\int_0^{1}\frac1{\max_j(\sqP{j})} \abs{\sqP{1}-\sqP{2}}\sqP{1}(t,\eta)\norm{\tilde \Y_{1}-\tilde \Y_{2}}d\eta\\
&\le \bigO(1)\big(\norm{\sqP{1}-\sqP{2}}^2+ \norm{\tilde \Y_{1}-\tilde \Y_{2}}^2\big),
\end{align*}
using \eqref{eq:all_estimatesG} and \eqref{eq:all_PestimatesB}.   Furthermore,
\begin{align*}
K_{73}&= \A^2\int_0^{1}\frac1{\max_j(\sqP{j})} \abs{\sqP{1}-\sqP{2}}(t,\eta)\\
&\qquad\times\Big( \int_0^\eta\abs{\tilde \Y_{1}-\tilde \Y_{2}}\min_j\big(e^{-\frac{1}{\sqtC{2}}(\tilde\Y_2(t,\eta)-\tilde\Y_2(t,\theta))}\big)(\sqP{2}+\vert \tilde\U_2\vert) (t,\theta)d\theta \Big)d\eta\\
&\le \bigO(1)\int_0^{1}\frac1{\max_j(\sqP{j})} \abs{\sqP{1}-\sqP{2}}(t,\eta)\\
&\qquad\times\Big( \int_0^\eta\abs{\tilde \Y_{1}-\tilde \Y_{2}}\min_j\big(e^{-\frac{1}{\sqtC{2}}(\tilde\Y_2(t,\eta)-\tilde\Y_2(t,\theta))}\big)\sqP{2} (t,\theta)d\theta \Big)d\eta\\
&\le \bigO(1)\norm{\sqP{1}-\sqP{2}}^2 +\int_0^{1}\frac1{(\max_j(\sqP{j}))^2}(t,\eta)\\
&\qquad\times\Big( \int_0^\eta\abs{\tilde \Y_{1}-\tilde \Y_{2}}\min_j\big(e^{-\frac{1}{\sqtC{2}}(\tilde\Y_2(t,\eta)-\tilde\Y_2(t,\theta))}\big)\sqP{2} (t,\theta)d\theta \Big)^2d\eta\\
&\le \bigO(1)\norm{\sqP{1}-\sqP{2}}^2\\
&\quad +\int_0^{1}\frac1{(\max_j(\sqP{j}))^2}(t,\eta)
\Big( \int_0^\eta e^{-\frac1{2\sqtC{2}}(\tilde\Y_2(t,\eta)-\tilde\Y_2(t,\theta))}(\tilde \Y_{1}-\tilde \Y_{2})^2 (t,\theta)d\theta \Big)\\
&\qquad\times
\Big( \int_0^\eta e^{-\frac3{2\sqtC{2}}(\tilde\Y_2(t,\eta)-\tilde\Y_2(t,\theta))}\tilde\P_2 (t,\theta)d\theta \Big)d\eta\\
&\le \bigO(1)\norm{\sqP{1}-\sqP{2}}^2 +\bigO(1)\int_0^{1}\frac1{(\max_j(\sqP{j}))^2}\tilde\P_2 (t,\eta)\\
&\qquad\times
\Big( \int_0^\eta e^{-\frac1{2\sqtC{2}}(\tilde\Y_2(t,\eta)-\tilde\Y_2(t,\theta))}(\tilde \Y_{1}-\tilde \Y_{2})^2 (t,\theta)d\theta \Big)d\eta\\
&\le \bigO(1)\big(\norm{\sqP{1}-\sqP{2}}^2+\norm{\tilde \Y_{1}-\tilde \Y_{2}}^2\big),
\end{align*}
using  \eqref{eq:all_estimatesD} and \eqref{eq:32P}.
This proves that
\begin{align*}
\tilde J_{131}&\le \bigO(1)\big(\norm{\sqP{1}-\sqP{2}}^2+\norm{\tilde \U_{1}-\tilde \U_{2}}^2+\norm{\tilde \Y_{1}-\tilde \Y_{2}}^2+\vert \sqtC{1}-\sqtC{2}\vert^2\big).
\end{align*}

It remains to consider $\tilde J_{132}$:  
\begin{align*}
-\tilde J_{132}&=\frac1{2\A^3}\int_0^{1}\frac1{\max_j(\sqP{j})}(\sqP{1}-\sqP{2})(t,\eta)\\
&\qquad\qquad\times\Big(\int_0^\eta\big(e^{-\frac{1}{\sqtC{2}}(\tilde\Y_2(t,\eta)-\tilde\Y_2(t,\theta))} \tilde \U_2^+\tilde\D_2\tilde \Y_{2,\eta}\\
&\qquad\qquad\qquad\qquad
-e^{-\frac{1}{\sqtC{1}}(\tilde\Y_1(t,\eta)-\tilde\Y_1(t,\theta))} \tilde \U_1^+\tilde\D_1\tilde \Y_{1,\eta}\big)(t,\theta)d\theta\Big)d\eta\\
&= \frac1{2\A^3}\int_0^{1}\frac1{\max_j(\sqP{j})}(\sqP{1}-\sqP{2})(t,\eta)\\
&\qquad\times\Big[\int_0^\eta e^{-\frac{1}{\sqtC{2}}(\tilde\Y_2(t,\eta)-\tilde\Y_2(t,\theta))} \tilde \U_2^+\tilde \Y_{2,\eta} \big(\tilde\D_2-\tilde\D_1 \big)\mathbbm{1}_{\tilde\D_1<\tilde\D_2}(t,\theta)d\theta \\
&\qquad\quad +\int_0^\eta e^{-\frac{1}{\sqtC{1}}(\tilde\Y_1(t,\eta)-\tilde\Y_1(t,\theta))} \tilde \U_1^+\tilde \Y_{1,\eta} \big(\tilde\D_2-\tilde\D_1 \big)\mathbbm{1}_{\tilde\D_2\le\tilde\D_1}(t,\theta)d\theta \\
&\qquad\quad +\int_0^\eta \big(e^{-\frac{1}{\sqtC{2}}(\tilde\Y_2(t,\eta)-\tilde\Y_2(t,\theta))} \tilde \U_2^+\tilde \Y_{2,\eta} -e^{-(\tilde\Y_1(t,\eta)-\tilde\Y_1(t,\theta))} \tilde \U_1^+\tilde \Y_{1,\eta} \big)\\
&\qquad\qquad\qquad\qquad\qquad\qquad\qquad\qquad\qquad\times  
\min_j\big(\tilde\D_j\big)(t,\theta)d\theta \Big]d\eta \\
&= A_1+A_2+A_3.
\end{align*}
The terms $A_1$ and $A_2$ allow for the same treatment. Specifically (see Lemma \ref{lemma:D})
\begin{align*}
\abs{A_1}&= \frac1{2\A^3}\Big\vert\int_0^{1}\frac1{\max_j(\sqP{j})}(\sqP{1}-\sqP{2})(t,\eta)\\
&\qquad\times\Big[\int_0^\eta e^{-\frac{1}{\sqtC{2}}(\tilde\Y_2(t,\eta)-\tilde\Y_2(t,\theta))} \tilde \U_2^+\tilde \Y_{2,\eta} \big(\tilde\D_2-\tilde\D_1 \big)\mathbbm{1}_{\tilde\D_1<\tilde\D_2}(t,\theta)d\theta \Big]d\eta\Big\vert \\
&\le \frac1{2\A^3}\int_0^{1}\frac1{\max_j(\sqP{j})}\abs{\sqP{1}-\sqP{2}}(t,\eta)\\
&\qquad\times\Big[\int_0^\eta e^{-\frac{1}{\sqtC{2}}(\tilde\Y_2(t,\eta)-\tilde\Y_2(t,\theta))} \tilde \U_2^+\tilde \Y_{2,\eta} \big(\bar d_{11}+\bar d_{12} \big)\mathbbm{1}_{\tilde\D_1<\tilde\D_2}(t,\theta)d\theta \\
&\qquad\quad +\int_0^\eta e^{-\frac{1}{\sqtC{2}}(\tilde\Y_2(t,\eta)-\tilde\Y_2(t,\theta))} \tilde \U_2^+\tilde \Y_{2,\eta} T_{1}\mathbbm{1}_{\tilde\D_1<\tilde\D_2}(t,\theta)d\theta  \\
&\qquad\quad +\int_0^\eta e^{-\frac{1}{\sqtC{2}}(\tilde\Y_2(t,\eta)-\tilde\Y_2(t,\theta))} \tilde \U_2^+\tilde \Y_{2,\eta} T_{2}\mathbbm{1}_{\tilde\D_1<\tilde\D_2}(t,\theta)d\theta  \\
&\qquad\quad +\int_0^\eta e^{-\frac{1}{\sqtC{2}}(\tilde\Y_2(t,\eta)-\tilde\Y_2(t,\theta))} \tilde \U_2^+\tilde \Y_{2,\eta} T_{3}\mathbbm{1}_{\tilde\D_1<\tilde\D_2}(t,\theta)d\theta   \Big]d\eta \\
&= \frac12(A_{11}+A_{12}+A_{13}+A_{14}).
\end{align*}
Again we are forced to consider individual terms.
\begin{align*}
A_{11}&=\frac{1}{\A^3}\int_0^{1}\frac1{\max_j(\sqP{j})}\abs{\sqP{1}-\sqP{2}}(t,\eta)\\
&\qquad\times\Big[\int_0^\eta e^{-\frac{1}{\sqtC{2}}(\tilde\Y_2(t,\eta)-\tilde\Y_2(t,\theta))} \tilde \U_2^+\tilde \Y_{2,\eta} \big(\bar d_{11}+\bar d_{12} \big)\mathbbm{1}_{\tilde\D_1<\tilde\D_2}(t,\theta)d\theta\Big]d\eta \\
&\le \frac{1}{\A^3}\int_0^{1}\frac1{\max_j(\sqP{j})}\abs{\sqP{1}-\sqP{2}}(t,\eta)\\
&\qquad\times\Big[\int_0^\eta e^{-\frac{1}{\sqtC{2}}(\tilde\Y_2(t,\eta)-\tilde\Y_2(t,\theta))} \tilde \U_2^+\tilde \Y_{2,\eta} 
\big(2\A^{3/2}\tilde\D_2^{1/2} \abs{\tilde \Y_{2}-\tilde \Y_{1}} \\
&\qquad\qquad\qquad\qquad\qquad\qquad+2\sqrt{2}\A^{3/2} \tilde\D_2^{1/2} \norm{\tilde \Y_{2}-\tilde \Y_{1}}\big)(t,\theta)d\theta\Big]d\eta \\
&\le  \norm{\sqP{1}-\sqP{2}}^2+\frac{4}{\A^3}\int_0^{1}\frac1{(\max_j(\sqP{j}))^2}\\
&\qquad\qquad\times
\Big(\int_0^\eta e^{-\frac{1}{\sqtC{2}}(\tilde\Y_2(t,\eta)-\tilde\Y_2(t,\theta))} \tilde \U_2^+\tilde \Y_{2,\eta} \tilde\D_2^{1/2} \abs{\tilde \Y_{2}-\tilde \Y_{1}} (t,\theta)d\theta\Big)^2d\eta\\
&\quad+\frac{2\sqrt{2}}{\A^{3/2}} \norm{\tilde \Y_{2}-\tilde \Y_{1}} \norm{\sqP{1}-\sqP{2}}\Big(\int_0^{1}\frac1{(\max_j(\sqP{j}))^2}\\
&\qquad\times\Big(\int_0^\eta e^{-\frac{1}{\sqtC{2}}(\tilde\Y_2(t,\eta)-\tilde\Y_2(t,\theta))}
\tilde \U_2^+\tilde \Y_{2,\eta}\tilde\D_2^{1/2}(t,\theta)d\theta\Big)^{2}d\eta\Big)^{1/2} \\
&\le  \norm{\sqP{1}-\sqP{2}}^2\\
&\quad+\frac{4}{\A^3}\int_0^{1}\frac1{(\max_j(\sqP{j}))^2}
\Big(\int_0^\eta e^{-\frac{1}{\sqtC{2}}(\tilde\Y_2(t,\eta)-\tilde\Y_2(t,\theta))} \tilde \U_2^2\tilde \Y_{2,\eta}(t,\theta)d\theta\Big) \\
&\qquad\times\Big(\int_0^\eta e^{-\frac{1}{\sqtC{2}}(\tilde\Y_2(t,\eta)-\tilde\Y_2(t,\theta))}\tilde\D_2 \tilde \Y_{2,\eta}\abs{\tilde \Y_{2}-\tilde \Y_{1}} ^2(t,\theta)d\theta\Big)d\eta\\
&\quad+\frac{2\sqrt{2}}{\A^3}\norm{\tilde \Y_{2}-\tilde \Y_{1}} \norm{\sqP{1}-\sqP{2}}\Big[\int_0^{1}\frac1{(\max_j(\sqP{j}))^2}\\
&\qquad\times\Big(\int_0^\eta e^{-\frac{1}{\sqtC{2}}(\tilde\Y_2(t,\eta)-\tilde\Y_2(t,\theta))}\tilde \U_2^2\tilde \Y_{2,\eta}(t,\theta)d\theta\Big) \\
&\qquad\times\Big(\int_0^\eta e^{-\frac{1}{\sqtC{2}}(\tilde\Y_2(t,\eta)-\tilde\Y_2(t,\theta))}2\A\tilde\P_2\tilde \Y_{2,\eta}(t,\theta)d\theta\Big) d\eta\Big]^{1/2} \\
&\le \bigO(1)\big( \norm{\sqP{1}-\sqP{2}}^2+\norm{\tilde \Y_{2}-\tilde \Y_{1}}^2 \big),
\end{align*}
using \eqref{eq:all_estimatesE}, \eqref{eq:all_PestimatesC}, and \eqref{eq:all_estimatesN}. Next we find (see \eqref{eq:barT1})
\begin{align*}
A_{12}&=\frac{1}{\A^3}\int_0^{1}\frac1{\max_j(\sqP{j})}\abs{\sqP{1}-\sqP{2}}(t,\eta)\\
&\qquad\quad \times\Big(\int_0^\eta e^{-\frac{1}{\sqtC{2}}(\tilde\Y_2(t,\eta)-\tilde\Y_2(t,\theta))} \tilde \U_2^+\tilde \Y_{2,\eta} T_{1}\mathbbm{1}_{\tilde\D_1<\tilde\D_2}(t,\theta)d\theta \Big)d\eta \\
&\le\frac{1}{\A^3}\int_0^{1}\frac1{\max_j(\sqP{j})}\abs{\sqP{1}-\sqP{2}}(t,\eta)\\
&\qquad\quad \times\Big(\int_0^\eta e^{-\frac{1}{\sqtC{2}}(\tilde\Y_2(t,\eta)-\tilde\Y_2(t,\theta))} \abs{\tilde \U_2}^3\tilde \Y_{2,\eta} \abs{\tilde \Y_{2}-\tilde \Y_{1}}(t,\theta)d\theta \Big)d\eta \\
&\quad + 4 \int_0^{1} \frac1{\max_j(\sqP{j})}\abs{\sqP{1}-\sqP{2}}(t,\eta)  \Big( \int_0^\eta e^{-\frac{1}{\sqtC{2}}(\tilde\Y_2(t,\eta)-\tilde\Y_2(t,\theta))} \\
&\qquad\qquad\times\Big( \int_0^\theta e^{-\frac{1}{\A}(\tilde\Y_2(t,\theta)-\tilde\Y_2(t,l))} \big(\tilde \U_2-\tilde \U_1\big)^2 (t,l) dl\Big)^{1/2}  
\tilde \U_2^+\tilde \Y_{2,\eta}(t,\theta)d\theta  \Big)d\eta \\
&\quad + \sqrt{2}\A \int_0^{1} \frac1{\max_j(\sqP{j})}\abs{\sqP{1}-\sqP{2}}(t,\eta) \Big( \int_0^\eta e^{-\frac{1}{\sqtC{2}}(\tilde\Y_2(t,\eta)-\tilde\Y_2(t,\theta))} \\
&\qquad\qquad\times \Big( \int_0^\theta e^{-\frac{1}{\ma}(\tilde\Y_2(t,\theta)-\tilde\Y_2(t,l))} \big(\tilde \Y_2-\tilde \Y_1\big)^2 (t,l) dl\Big)^{1/2}  
\tilde \U_2^+\tilde \Y_{2,\eta}(t,\theta)d\theta  \Big)d\eta \\
&\quad +\A \int_0^{1}\frac1{\max_j(\sqP{j})}\abs{\sqP{1}-\sqP{2}}(t,\eta)\Big( \int_0^\eta e^{-\frac{1}{\sqtC{2}}(\tilde\Y_2(t,\eta)-\tilde\Y_2(t,\theta))} \\
&\qquad\qquad\times \Big( \int_0^\theta e^{-\frac{1}{\ma}(\tilde\Y_2(t,\theta)-\tilde\Y_2(t,l))} \abs{\tilde \Y_2-\tilde \Y_1} (t,l) dl\Big)
\tilde \U_2^+\tilde \Y_{2,\eta}(t,\theta)d\theta  \Big)d\eta \\
&\quad + \frac{8\sqrt{2}\A}{\sqrt{3}e} \int_0^{1} \frac1{\max_j(\sqP{j})}\abs{\sqP{1}-\sqP{2}}(t,\eta) \Big( \int_0^\eta e^{-\frac{1}{\sqtC{2}}(\tilde\Y_2(t,\eta)-\tilde\Y_2(t,\theta))}  \\
&\qquad\qquad\times\Big( \int_0^\theta e^{-\frac{3}{4\A}(\tilde\Y_2(t,\theta)-\tilde\Y_2(t,l))}  dl\Big)^{1/2}  \vert \sqtC{1}-\sqtC{2}\vert
\tilde \U_2^+\tilde \Y_{2,\eta}(t,\theta)d\theta  \Big)d\eta \\
&\le 5\norm{\sqP{1}-\sqP{2}}^2 + \frac{1}{A^6}\int_0^{1}\frac1{(\max_j(\sqP{j}))^2}(t,\eta)\\
&\qquad\times\Big(\int_0^\eta e^{-\frac{1}{\sqtC{2}}(\tilde\Y_2(t,\eta)-\tilde\Y_2(t,\theta))} \vert\tilde \U_2\vert ^3\tilde \Y_{2,\eta} \abs{\tilde \Y_{2}-\tilde \Y_{1}}(t,\theta)d\theta \Big)^2d\eta\\
&\quad + 16\int_0^{1}\frac1{(\max_j(\sqP{j}))^2}(t,\eta)\Big( \int_0^\eta e^{-\frac{1}{\sqtC{2}}(\tilde\Y_2(t,\eta)-\tilde\Y_2(t,\theta))} \\
&\qquad\times\Big( \int_0^\theta e^{-\frac{1}{\A}(\tilde\Y_2(t,\theta)-\tilde\Y_2(t,l))} \big(\tilde \U_2-\tilde \U_1\big)^2 (t,l) dl\Big)^{1/2}  
\tilde \U_2^+\tilde \Y_{2,\eta}(t,\theta)d\theta \Big)^2d\eta\\
&\quad + 2\A^2\int_0^{1}\frac1{(\max_j(\sqP{j}))^2}(t,\eta)\Big( \int_0^\eta e^{-\frac{1}{\sqtC{2}}(\tilde\Y_2(t,\eta)-\tilde\Y_2(t,\theta))}\\
&\qquad\times \Big( \int_0^\theta e^{-\frac{1}{\ma}(\tilde\Y_2(t,\theta)-\tilde\Y_2(t,l))} \big(\tilde \Y_2-\tilde \Y_1\big)^2 (t,l) dl\Big)^{1/2}  
\tilde \U_2^+\tilde \Y_{2,\eta}(t,\theta)d\theta \Big)^2d\eta\\
&\quad +\A^2 \int_0^{1}\frac1{(\max_j(\sqP{j}))^2}(t,\eta)\Big(\int_0^\eta e^{-\frac{1}{\sqtC{2}}(\tilde\Y_2(t,\eta)-\tilde\Y_2(t,\theta))}\\
&\qquad\times \Big( \int_0^\theta e^{-\frac{1}{\ma}(\tilde\Y_2(t,\theta)-\tilde\Y_2(t,l))} \abs{\tilde \Y_2-\tilde \Y_1} (t,l) dl\Big)\tilde \U_2^+\tilde \Y_{2,\eta}(t,\theta)d\theta  \Big)^2d\eta \\
&\quad + \frac{128\A^2}{3e^2} \int_0^{1} \frac1{(\max_j(\sqP{j}))^2}\Big( \int_0^\eta e^{-\frac{1}{\sqtC{2}}(\tilde\Y_2(t,\eta)-\tilde\Y_2(t,\theta))} \\
&\qquad\qquad\times\Big( \int_0^\theta e^{-\frac{3}{4\A}(\tilde\Y_2(t,\theta)-\tilde\Y_2(t,l))}  dl\Big)^{1/2} 
\tilde \U_2^+\tilde \Y_{2,\eta}(t,\theta)d\theta  \Big)^2d\eta\vert \sqtC{1}-\sqtC{2}\vert^2 \\
&\le 5\norm{\sqP{1}-\sqP{2}}^2\\
&\quad + \frac{1}{\A^6}\int_0^{1}\frac1{(\max_j(\sqP{j}))^2}(t,\eta)\Big(\int_0^\eta e^{-\frac{1}{\sqtC{2}}(\tilde\Y_2(t,\eta)-\tilde\Y_2(t,\theta))} \tilde \U_2^2\tilde \Y_{2,\eta}(t,\theta)d\theta \Big)\\
&\qquad\times\Big(\int_0^\eta e^{-\frac{1}{\sqtC{2}}(\tilde\Y_2(t,\eta)-\tilde\Y_2(t,\theta))} \tilde \U_2^4\tilde \Y_{2,\eta} (\tilde \Y_{2}-\tilde \Y_{1})^2(t,\theta)d\theta \Big)d\eta\\
&\quad + 16\int_0^{1}\frac1{(\max_j(\sqP{j}))^2}(t,\eta)\Big(\int_0^\eta e^{-\frac{1}{\sqtC{2}}(\tilde\Y_2(t,\eta)-\tilde\Y_2(t,\theta))}\tilde\U_2^2\tilde \Y_{2,\eta}d\theta \Big) \\
&\qquad\times\Big( \int_0^\eta e^{-\frac{1}{\sqtC{2}}(\tilde\Y_2(t,\eta)-\tilde\Y_2(t,\theta))}\\
&\qquad\qquad\times\Big( \int_0^\theta e^{-\frac{1}{\A}(\tilde\Y_2(t,\theta)-\tilde\Y_2(t,l))} \big(\tilde \U_2-\tilde \U_1\big)^2 (t,l) dl\Big) 
\tilde \Y_{2,\eta}(t,\theta)d\theta \Big)d\eta\\
&\quad + 2\A^2\int_0^{1}\frac1{(\max_j(\sqP{j}))^2}(t,\eta)\Big(\int_0^\eta e^{-\frac{1}{\sqtC{2}}(\tilde\Y_2(t,\eta)-\tilde\Y_2(t,\theta))}\tilde\U_2^2\tilde \Y_{2,\eta}d\theta \Big) \\
&\qquad\times\Big( \int_0^\eta e^{-\frac{1}{\sqtC{2}}(\tilde\Y_2(t,\eta)-\tilde\Y_2(t,\theta))}\\
&\qquad\qquad\times\Big( \int_0^\theta e^{-\frac{1}{\ma}(\tilde\Y_2(t,\theta)-\tilde\Y_2(t,l))} \big(\tilde \Y_2-\tilde \Y_1\big)^2 (t,l) dl\Big) 
\tilde \Y_{2,\eta}(t,\theta)d\theta \Big)d\eta\\
&\quad +\A^2 \int_0^{1}\frac1{(\max_j(\sqP{j}))^2}(t,\eta)\Big(\int_0^\eta e^{-\frac{1}{\sqtC{2}}(\tilde\Y_2(t,\eta)-\tilde\Y_2(t,\theta))} \tilde \U_2^2\tilde \Y_{2,\eta}(t,\theta)d\theta  \Big)\\
&\qquad\times\Big(\int_0^\eta e^{-\frac{1}{\sqtC{2}}(\tilde\Y_2(t,\eta)-\tilde\Y_2(t,\theta))} \\
&\qquad\times \Big( \int_0^\theta e^{-\frac{1}{\ma}(\tilde\Y_2(t,\theta)-\tilde\Y_2(t,l))} \abs{\tilde \Y_2-\tilde \Y_1} (t,l) dl\Big)^2
\tilde \Y_{2,\eta}(t,\theta)d\theta  \Big)d\eta \\
&\quad + \frac{128\A^2}{3e^2} \int_0^{1} \frac1{(\max_j(\sqP{j}))^2}(t,\eta)\Big(\int_0^\eta e^{-\frac{1}{\sqtC{2}}(\tilde \Y_2(t,\eta)-\tilde \Y_2(t,\theta))}\tilde \U_2^2\tilde \Y_{2,\eta}(t,\theta) d\theta\Big)\\
&\quad\times\Big( \int_0^\eta e^{-\frac{1}{\sqtC{2}}(\tilde\Y_2(t,\eta)-\tilde\Y_2(t,\theta))}\\
&\qquad\times \Big( \int_0^\theta e^{-\frac{3}{4\A}(\tilde\Y_2(t,\theta)-\tilde\Y_2(t,l))}  dl\Big) 
\tilde \Y_{2,\eta}(t,\theta)d\theta  \Big)d\eta\vert \sqtC{1}-\sqtC{2}\vert^2 \\
&\le \bigO(1) \big(\norm{\sqP{1}-\sqP{2}}^2+\norm{\tilde \Y_2-\tilde \Y_1}^2 \big)  +\bigO(1) \int_0^{1}\Big( \int_0^\eta e^{-\frac{1}{\sqtC{2}}(\tilde\Y_2(t,\eta)-\tilde\Y_2(t,\theta))}\\
&\qquad\qquad\times\Big( \int_0^\theta e^{-\frac{1}{\A}(\tilde\Y_2(t,\theta)-\tilde\Y_2(t,l))} \big(\tilde \U_2-\tilde \U_1\big)^2 (t,l) dl\Big) 
\tilde \Y_{2,\eta}(t,\theta)d\theta \Big)d\eta\\
&\quad +\bigO(1) \int_0^{1}\Big( \int_0^\eta e^{-\frac{1}{\sqtC{2}}(\tilde\Y_2(t,\eta)-\tilde\Y_2(t,\theta))}\\
&\qquad\qquad\times\Big( \int_0^\theta e^{-\frac{1}{\ma}(\tilde\Y_2(t,\theta)-\tilde\Y_2(t,l))} \big(\tilde \Y_2-\tilde \Y_1\big)^2 (t,l) dl\Big) 
\tilde \Y_{2,\eta}(t,\theta)d\theta \Big)d\eta\\
&\quad +\bigO(1)\int_0^1\Big(\int_0^\eta e^{-\frac{1}{\sqtC{2}}(\tilde \Y_2(t,\eta)-\tilde \Y_2(t,\theta))}\\
&\qquad\qquad\times\Big(\int_0^\theta e^{-\frac{3}{4\A}(\tilde \Y_2(t,\theta)-\tilde \Y_2(t,l))}dl\Big) \tilde \Y_{2,\eta}(t,\theta)d\theta\Big) d\eta\vert \sqtC{1}-\sqtC{2}\vert^2\\
&\le \bigO(1) \big(\norm{\sqP{1}-\sqP{2}}^2+\norm{\tilde \Y_2-\tilde \Y_1}^2 \big)+B_1+B_2+B_3,
\end{align*}
using \eqref{eq:all_estimatesB}, \eqref{eq:all_estimatesG}, and  \eqref{eq:all_PestimatesC}. As for $B_1$ we find
\begin{align*}
B_1&=\bigO(1) \int_0^{1}\Big( \int_0^\eta e^{-\frac{1}{\sqtC{2}}(\tilde\Y_2(t,\eta)-\tilde\Y_2(t,\theta))}\\
&\qquad\times\Big( \int_0^\theta e^{-\frac{1}{\A}(\tilde\Y_2(t,\theta)-\tilde\Y_2(t,l))} \big(\tilde \U_2-\tilde \U_1\big)^2 (t,l) dl\Big) 
\tilde \Y_{2,\eta}(t,\theta)d\theta \Big)d\eta\\
&\le \bigO(1) \int_0^{1}\Big( \int_0^\eta e^{-\frac1{2\A}(\tilde\Y_2(t,\eta)-\tilde\Y_2(t,\theta))}\\
&\qquad\times\Big( \int_0^\theta e^{-\frac{1}{\A}(\tilde\Y_2(t,\theta)-\tilde\Y_2(t,l))} \big(\tilde \U_2-\tilde \U_1\big)^2 (t,l) dl\Big) 
\tilde \Y_{2,\eta}(t,\theta)d\theta \Big)d\eta\\
&= \bigO(1) \int_0^{1}\Big( \int_0^\eta e^{-\frac1{2\A}\tilde\Y_2(t,\theta)}\tilde \Y_{2,\eta}(t,\theta)\\
&\qquad\times\Big( \int_0^\theta e^{-(\frac1{2\A}\tilde\Y_2(t,\eta)-\frac{1}{\A}\tilde\Y_2(t,l))} \big(\tilde \U_2-\tilde \U_1\big)^2 (t,l) dl\Big) 
d\theta \Big)d\eta\\
&= \bigO(1) \int_0^{1}\Big[ \Big( -2\A e^{-\frac1{2\A}\tilde\Y_2(t,\theta)}\\
&\qquad\qquad\times\Big( \int_0^\theta e^{-(\frac1{2\A}\tilde\Y_2(t,\eta)-\frac1{\A}\tilde\Y_2(t,l))} \big(\tilde \U_2-\tilde \U_1\big)^2 (t,l) dl\Big)\Big)\Big\vert_{\theta=0}^\eta \\
&\quad+2\A\int_0^\eta e^{-\frac1{2\A}\tilde\Y_2(t,\theta)} e^{-(\frac1{2\A}\tilde\Y_2(t,\eta)-\frac{1}{\A}\tilde\Y_2(t,\theta))}(\tilde \U_2-\tilde\U_1)^2(t,\theta)d\theta\Big]d\eta\\
&= -2\A\bigO(1) \int_0^{1}\int_0^\eta e^{-\frac{1}{\A}(\tilde\Y_2(t,\eta)-\tilde\Y_2(t,\theta))} \big(\tilde \U_2-\tilde \U_1\big)^2 (t,\theta) d\theta d\eta \\
&\quad+2\A\bigO(1)\int_0^{1}\int_0^\eta  e^{-\frac1{2\A}(\tilde\Y_2(t,\eta)-\tilde\Y_2(t,\theta))}\big(\tilde \U_2-\tilde \U_1\big)^2 (t,\theta)d\theta d\eta\\
&\le \bigO(1) \norm{\tilde \U_2-\tilde \U_1}^2,
\end{align*}
while for $B_2$ we estimate as follows
\begin{align*}
B_2&=\bigO(1)\int_0^{1}
\Big(\int_0^\eta e^{-\frac{1}{\sqtC{2}}(\tilde\Y_2(t,\eta)-\tilde\Y_2(t,\theta))}  \\
&\qquad\times\Big( \int_0^\theta e^{-\frac{1}{\ma}(\tilde\Y_2(t,\theta)-\tilde\Y_2(t,l))} (\tilde \Y_2-\tilde \Y_1)^2 (t,l) dl\Big)
\tilde \Y_{2,\eta}(t,\theta)d\theta  \Big)d\eta \\
&\le \bigO(1)\int_0^{1}
\Big(\int_0^\eta e^{-\frac1{2\sqtC{2}}(\tilde\Y_2(t,\eta)-\tilde\Y_2(t,\theta))} \\
&\qquad\times \Big( \int_0^\theta e^{-\frac{1}{\sqtC{2}}(\tilde\Y_2(t,\theta)-\tilde\Y_2(t,l))^2} (\tilde \Y_2-\tilde \Y_1)?2 (t,l) dl\Big)
\tilde \Y_{2,\eta}(t,\theta)d\theta  \Big)d\eta \\
&= \bigO(1) \int_0^{1}\Big( \int_0^\eta e^{-\frac1{2\sqtC{2}}\tilde\Y_2(t,\theta)}\tilde \Y_{2,\eta}(t,\theta)\\
&\qquad\times\Big( \int_0^\theta e^{-(\frac1{2\sqtC{2}}\tilde\Y_2(t,\eta)-\frac{1}{\sqtC{2}}\tilde\Y_2(t,l))} \big(\tilde \Y_2-\tilde \Y_1\big)^2 (t,l) dl\Big) 
d\theta \Big)d\eta\\
&= \bigO(1) \int_0^{1}\Big[ \Big( -2\sqtC{2} e^{-\frac1{2\sqtC{2}}\tilde\Y_2(t,\theta)}\\
&\qquad\times\Big( \int_0^\theta e^{-(\frac1{2\sqtC{2}}\tilde\Y_2(t,\eta)-\frac1{\sqtC{2}}\tilde\Y_2(t,l))} \big(\tilde \Y_2-\tilde \Y_1\big)^2 (t,l) dl\Big)\Big)\Big\vert_{\theta=0}^\eta \\
&\quad+2\sqtC{2}\int_0^\eta e^{-\frac1{2\sqtC{2}}\tilde\Y_2(t,\theta)} e^{-(\frac1{2\sqtC{2}}\tilde\Y_2(t,\eta)-\frac{1}{\sqtC{2}}\tilde\Y_2(t,\theta))}(\tilde\Y_2-\tilde\Y_1)2(t,\theta)d\theta\Big]d\eta\\
&= -2\sqtC{2}\bigO(1) \int_0^{1}\int_0^\eta e^{-\frac{1}{\sqtC{2}}(\tilde\Y_2(t,\eta)-\tilde\Y_2(t,\theta))} \big(\tilde \Y_2-\tilde \Y_1\big)^2 (t,\theta) d\theta d\eta \\
&\quad+2\sqtC{2}\bigO(1)\int_0^{1}\int_0^\eta  e^{-\frac1{2\sqtC{2}}(\tilde\Y_2(t,\eta)-\tilde\Y_2(t,\theta))}\big(\tilde \Y_2-\tilde \Y_1\big)^2 (t,\theta)d\theta d\eta\\
&\le \bigO(1) \norm{\tilde \Y_2-\tilde \Y_1}^2,
\end{align*}
while for $B_3$ we estimate as follows
\begin{align*}
B_3&=\bigO(1)\int_0^1\Big(\int_0^\eta e^{-\frac{1}{\sqtC{2}}(\tilde \Y_2(t,\eta)-\tilde \Y_2(t,\theta))}\\
&\qquad\times\Big(\int_0^\theta e^{-\frac{3}{4\A}(\tilde \Y_2(t,\theta)-\tilde \Y_2(t,l))}dl\Big) \tilde \Y_{2,\eta}(t,\theta)d\theta\Big) d\eta\vert \sqtC{1}-\sqtC{2}\vert^2\\
& \leq\bigO(1)\int_0^{1}
\Big(\int_0^\eta e^{-\frac3{8\A}(\tilde\Y_2(t,\eta)-\tilde\Y_2(t,\theta))} \\
&\qquad\times \Big( \int_0^\theta e^{-\frac{3}{4\A}(\tilde\Y_2(t,\theta)-\tilde\Y_2(t,l))} dl\Big)
\tilde \Y_{2,\eta}(t,\theta)d\theta  \Big)d\eta \vert \sqtC{1}-\sqtC{2}\vert^2\\
&= \bigO(1) \int_0^{1}\Big( \int_0^\eta e^{-\frac3{8\A}\tilde\Y_2(t,\theta)}\tilde \Y_{2,\eta}(t,\theta)\\
&\qquad\times\Big( \int_0^\theta e^{-(\frac3{8\A}\tilde\Y_2(t,\eta)-\frac{3}{4\A}\tilde\Y_2(t,l))}  dl\Big) 
d\theta \Big)d\eta\vert \sqtC{1}-\sqtC{2}\vert^2\\
&= \bigO(1) \int_0^{1}\Big[ \Big( -\frac83\A e^{-\frac3{8\A}\tilde\Y_2(t,\theta)}\\
&\qquad\times\Big( \int_0^\theta e^{-(\frac3{8\A}\tilde\Y_2(t,\eta)-\frac3{4\A}\tilde\Y_2(t,l))} dl\Big)\Big)\Big\vert_{\theta=0}^\eta \\
&\quad+\frac83\A\int_0^\eta e^{-\frac3{8\A}\tilde\Y_2(t,\theta)} e^{-(\frac3{8\A}\tilde\Y_2(t,\eta)-\frac{3}{4\A}\tilde\Y_2(t,\theta))}d\theta\Big]d\eta\vert \sqtC{1}-\sqtC{2}\vert^2\\
&= -\frac83\A\bigO(1) \int_0^{1}\int_0^\eta e^{-\frac{3}{4\A}(\tilde\Y_2(t,\eta)-\tilde\Y_2(t,\theta))}  d\theta d\eta\vert \sqtC{1}-\sqtC{2}\vert^2 \\
&\quad+\frac83\A\bigO(1)\int_0^{1}\int_0^\eta  e^{-\frac3{8\A}(\tilde\Y_2(t,\eta)-\tilde\Y_2(t,\theta))}d\theta d\eta\vert \sqtC{1}-\sqtC{2}\vert^2\\
&\le \bigO(1) \vert \sqtC{1}-\sqtC{2}\vert^2.
\end{align*}

Thus we find that
\begin{equation*}
A_{12}\le \bigO(1) \big(\norm{\sqP{1}-\sqP{2}}^2+\norm{\tilde \Y_2-\tilde \Y_1}^2+\norm{\tilde \U_2-\tilde \U_1}^2 +\vert \sqtC{1}-\sqtC{2}\vert^2\big).
\end{equation*}

Next we find (see \eqref{eq:barT2})
\begin{align*}
A_{13}&=\frac{1}{\A^3}\int_0^{1}\frac1{\max_j(\sqP{j})}\abs{\sqP{1}-\sqP{2}}(t,\eta)\\
&\qquad\quad \times\Big(\int_0^\eta e^{-\frac{1}{\sqtC{2}}(\tilde\Y_2(t,\eta)-\tilde\Y_2(t,\theta))} \tilde \U_2^+\tilde \Y_{2,\eta} T_{2}\mathbbm{1}_{\tilde\D_1<\tilde\D_2}(t,\theta)d\theta \Big)d\eta \\
&\le \frac{1}{\A^3}\int_0^{1}\frac1{\max_j(\sqP{j})}\abs{\sqP{1}-\sqP{2}}(t,\eta)\\
&\qquad\quad \times\Big(\int_0^\eta e^{-\frac{1}{\sqtC{2}}(\tilde\Y_2(t,\eta)-\tilde\Y_2(t,\theta))} \tilde \U_2^+\tilde \Y_{2,\eta}(t,\theta)\Big(\tilde\P_j \abs{\tilde\Y_2-\tilde\Y_1}(t,\theta)\\
&\qquad\quad+2\sqrt{2}\A^3\Big( \int_0^\theta e^{-\frac{1}{\A}(\tilde\Y_2(t,\theta)-\tilde\Y_2(t,l))} (\sqP{1}-\sqP{2})^2 (t,l)dl\Big)^{1/2} \\
& \qquad \quad +\frac{\A^4}{\sqrt{2}}\Big(\int_0^\theta e^{-\frac{1}{\ma}(\tilde\Y_2(t,\theta)-\tilde \Y_2(t,l))}(\tilde \Y_1-\tilde \Y_2)^2(t,l) dl \Big)^{1/2}\\
&\qquad\quad+\frac{\A^4}2\Big( \int_0^\theta e^{-\frac{1}{\ma}(\tilde\Y_2(t,\theta)-\tilde\Y_2(t,l))} \abs{\tilde\Y_2-\tilde\Y_1} (t,l)dl\Big)\\
& \qquad \quad+\frac{4\sqrt{2}\A^4}{\sqrt{3}e}\Big(\int_0^\eta e^{-\frac{3}{4\A}(\tilde \Y_2(t,\theta)-\tilde \Y_2(t,l))}dl\Big)^{1/2}\vert \sqtC{1}-\sqtC{2}\vert
 \Big)d\theta \Big)d\eta \\
&\le 5 \norm{\sqP{1}-\sqP{2}}^2  +\frac{1}{\A^6}\int_0^{1}\frac1{(\max_j(\sqP{j}))^2}(t,\eta)\\
&\qquad\qquad\times\Big(\int_0^\eta e^{-\frac{1}{\sqtC{2}}(\tilde\Y_2(t,\eta)-\tilde\Y_2(t,\theta))} \tilde \U_2^+\tilde \Y_{2,\eta}\tilde\P_j \abs{\tilde\Y_2-\tilde\Y_1}(t,\theta)d\theta \Big)^2d\eta\\
&\quad  +8\int_0^{1}\frac1{(\max_j(\sqP{j}))^2}(t,\eta)\Big(\int_0^\eta e^{-\frac{1}{\sqtC{2}}(\tilde\Y_2(t,\eta)-\tilde\Y_2(t,\theta))} \tilde \U_2^+\tilde \Y_{2,\eta}(t,\theta)\\
&\qquad\qquad\times\Big(\int_0^\theta e^{-\frac{1}{\A}(\tilde\Y_2(t,\theta)-\tilde\Y_2(t,l))} (\sqP{1}-\sqP{2})^2 (t,l)dl\Big)^{1/2}d\theta \Big)^2d\eta \\
&\quad  +\frac{\A^2}{2}\int_0^{1}\frac1{(\max_j(\sqP{j}))^2}(t,\eta)\Big(\int_0^\eta e^{-\frac{1}{\sqtC{2}}(\tilde\Y_2(t,\eta)-\tilde\Y_2(t,\theta))} \tilde \U_2^+\tilde \Y_{2,\eta}(t,\theta)\\
&\qquad\times\Big(\int_0^\theta e^{-\frac{1}{\ma}(\tilde\Y_2(t,\theta)-\tilde\Y_2(t,l))} (\tilde \Y_1-\tilde\Y_2)^2 (t,l)dl\Big)^{1/2}d\theta \Big)^2d\eta \\
&\quad  +\frac{\A^2}{4}\int_0^{1}\frac1{(\max_j(\sqP{j}))^2}(t,\eta)\Big(\int_0^\eta e^{-\frac{1}{\sqtC{2}}(\tilde\Y_2(t,\eta)-\tilde\Y_2(t,\theta))} \tilde \U_2^+\tilde \Y_{2,\eta}(t,\theta)\\
&\qquad\times\Big( \int_0^\theta e^{-\frac{1}{\ma}(\tilde\Y_2(t,\theta)-\tilde\Y_2(t,l))} \abs{\tilde\Y_2-\tilde\Y_1} (t,l)dl
 \Big)d\theta \Big)^2d\eta \\
 & \quad +\frac{32\A^2}{3e^2}\int_0^{1}\frac1{(\max_j(\sqP{j}))^2}(t,\eta)\Big(\int_0^\eta e^{-\frac{1}{\sqtC{2}}(\tilde\Y_2(t,\eta)-\tilde\Y_2(t,\theta))} \tilde \U_2^+\tilde \Y_{2,\eta}(t,\theta)\\
&\qquad\times\Big( \int_0^\theta e^{-\frac{3}{4\A}(\tilde\Y_2(t,\theta)-\tilde\Y_2(t,l))}dl
 \Big)^{1/2}d\theta \Big)^2d\eta\vert \sqtC{1}-\sqtC{2}\vert^2 \\
&\le 5 \norm{\sqP{1}-\sqP{2}}^2\\
&\quad  +\frac{1}{\A^6}\int_0^{1}\frac1{(\max_j(\sqP{j}))^2}(t,\eta)\Big(\int_0^\eta e^{-\frac{1}{\sqtC{2}}(\tilde\Y_2(t,\eta)-\tilde\Y_2(t,\theta))} \tilde\P_2^2\tilde \Y_{2,\eta}(t,\theta) d\theta \Big) \\
&\qquad\times \Big(\int_0^\eta e^{-\frac{1}{\sqtC{2}}(\tilde\Y_2(t,\eta)-\tilde\Y_2(t,\theta))} \tilde \U_2^2\tilde \Y_{2,\eta}(\tilde\Y_2-\tilde\Y_1)^2(t,\theta)d\theta \Big)d\eta\\ 
&\quad  +8\int_0^{1}\frac1{(\max_j(\sqP{j}))^2}(t,\eta)\Big(\int_0^\eta e^{-\frac{1}{\sqtC{2}}(\tilde\Y_2(t,\eta)-\tilde\Y_2(t,\theta))} \tilde\U_2^2\tilde \Y_{2,\eta}(t,\theta) d\theta \Big) \\
&\qquad\times \Big(\int_0^\eta e^{-\frac{1}{\sqtC{2}}(\tilde\Y_2(t,\eta)-\tilde\Y_2(t,\theta))} \tilde \Y_{2,\eta}(t,\theta)\\
&\qquad\times
\Big(\int_0^\theta e^{-\frac{1}{\A}(\tilde\Y_2(t,\theta)-\tilde\Y_2(t,l))} (\sqP{1}-\sqP{2})^2 (t,l)dl\Big) d\theta\Big)d\eta\\
&\quad  +\frac{\A^2}{2}\int_0^{1}\frac1{(\max_j(\sqP{j}))^2}(t,\eta)\Big(\int_0^\eta e^{-\frac{1}{\sqtC{2}}(\tilde\Y_2(t,\eta)-\tilde\Y_2(t,\theta))} \tilde\U_2^2\tilde \Y_{2,\eta}(t,\theta) d\theta \Big) \\
&\qquad\times \Big(\int_0^\eta e^{-\frac{1}{\sqtC{2}}(\tilde\Y_2(t,\eta)-\tilde\Y_2(t,\theta))} \tilde \Y_{2,\eta}(t,\theta)\\
&\qquad\times
\Big(\int_0^\theta e^{-\frac{1}{\ma}(\tilde\Y_2(t,\theta)-\tilde\Y_2(t,l))} (\tilde\Y_1-\tilde\Y_2)^2 (t,l)dl\Big) d\theta\Big)d\eta\\
&\quad  +\frac{\A^2}{4}\int_0^{1}\frac1{(\max_j(\sqP{j}))^2}(t,\eta)\Big(\int_0^\eta e^{-\frac{1}{\sqtC{2}}(\tilde\Y_2(t,\eta)-\tilde\Y_2(t,\theta))} \tilde\U_2^2\tilde \Y_{2,\eta}(t,\theta) d\theta \Big) \\
&\qquad\times \Big(\int_0^\eta e^{-\frac{1}{\sqtC{2}}(\tilde\Y_2(t,\eta)-\tilde\Y_2(t,\theta))} \tilde \Y_{2,\eta}(t,\theta)\\
&\qquad\times\Big(\int_0^\theta e^{-\frac{1}{\ma}(\tilde\Y_2(t,\theta)-\tilde\Y_2(t,l))}\abs{\tilde\Y_2-\tilde\Y_1} (t,l)dl\Big)^2 d\theta \Big)d\eta \\
& \quad +\frac{32\A^2}{3e^2}\int_0^{1}\frac1{(\max_j(\sqP{j}))^2}(t,\eta)\Big(\int_0^\eta e^{-\frac{1}{\sqtC{2}}(\tilde \Y_2(t,\eta)-\tilde \Y_2(t,\theta))}\tilde \U_2^2\tilde \Y_{2,\eta}(t,\theta)d\theta\Big)\\
&\qquad\times\Big(\int_0^\eta e^{-\frac{1}{\sqtC{2}}(\tilde\Y_2(t,\eta)-\tilde\Y_2(t,\theta))} \tilde \U_2^+\tilde \Y_{2,\eta}(t,\theta)\\
&\qquad\times\Big( \int_0^\theta e^{-\frac{3}{4\A}(\tilde\Y_2(t,\theta)-\tilde\Y_2(t,l))}dl
 \Big)d\theta \Big)d\eta\vert \sqtC{1}-\sqtC{2}\vert^2 \\
&\le \bigO(1)\big(\norm{\sqP{1}-\sqP{2}}^2+\norm{\tilde \Y_2-\tilde \Y_1}^2 \big) \\
& \quad+ \bigO(1)\int_0^{1}\Big(\int_0^\eta e^{-\frac{1}{\sqtC{2}}(\tilde\Y_2(t,\eta)-\tilde\Y_2(t,\theta))} \\
&\qquad\times
\Big(\int_0^\theta e^{-\frac{1}{\A}(\tilde\Y_2(t,\theta)-\tilde\Y_2(t,l))} (\sqP{1}-\sqP{2})^2 (t,l)dl\Big) \tilde \Y_{2,\eta}(t,\theta)d\theta\Big)d\eta\\
&\quad  +\bigO(1)\int_0^{1}\Big(\int_0^\eta e^{-\frac{1}{\sqtC{2}}(\tilde\Y_2(t,\eta)-\tilde\Y_2(t,\theta))}\\
&\qquad\times \Big(\int_0^\theta e^{-\frac{1}{\ma}(\tilde\Y_2(t,\theta)-\tilde\Y_2(t,l))}(\tilde\Y_2-\tilde\Y_1)^2 (t,l)dl\Big)\tilde \Y_{2,\eta}(t,\theta)
d\theta \Big)d\eta \\
& \quad +\bigO(1)\Big(\int_0^\eta e^{-\frac{1}{\sqtC{2}}(\tilde\Y_2(t,\eta)-\tilde\Y_2(t,\theta))}\\
&\qquad\times \Big(\int_0^\theta e^{-\frac{3}{4\A}(\tilde\Y_2(t,\theta)-\tilde\Y_2(t,l))}dl\Big)
 \tilde \Y_{2,\eta}(t,\theta)d\theta \Big)d\eta \vert \sqtC{1}-\sqtC{2}\vert^2\\
& \leq  \bigO(1)\big(\norm{\sqP{1}-\sqP{2}}^2+\norm{\tilde \Y_2-\tilde \Y_1}^2 \big) + B_4+B_2+B_3\\
 &  \leq \bigO(1)\big(\norm{\sqP{1}-\sqP{2}}^2+\norm{\tilde \Y_2-\tilde \Y_1}^2+\abs{\sqtC{1}-\sqtC{2}}^2 \big),
 \end{align*}
following the approach used for $A_{12}$. As for $B_4$ we find
\begin{align*}
B_4&=\bigO(1) \int_0^{1}\Big( \int_0^\eta e^{-\frac{1}{\sqtC{2}}(\tilde\Y_2(t,\eta)-\tilde\Y_2(t,\theta))} \\
&\qquad\times\Big( \int_0^\theta e^{-\frac{1}{\A}(\tilde\Y_2(t,\theta)-\tilde\Y_2(t,l))} \big(\sqP{1}-\sqP{2}\big)^2 (t,l) dl\Big) 
\tilde \Y_{2,\eta}(t,\theta)d\theta \Big)d\eta\\
&\le \bigO(1) \int_0^{1}\Big( \int_0^\eta e^{-\frac1{2\A}(\tilde\Y_2(t,\eta)-\tilde\Y_2(t,\theta))} \\
&\qquad\times\Big( \int_0^\theta e^{-\frac{1}{\A}(\tilde\Y_2(t,\theta)-\tilde\Y_2(t,l))} \big(\sqP{1}-\sqP{2}\big)^2 (t,l) dl\Big) 
\tilde \Y_{2,\eta}(t,\theta)d\theta \Big)d\eta\\
&= \bigO(1) \int_0^{1}\Big( \int_0^\eta e^{-\frac1{2\A}\tilde\Y_2(t,\theta)}\tilde \Y_{2,\eta}(t,\theta) \\
&\qquad\times\Big( \int_0^\theta e^{-(\frac1{2\A}\tilde\Y_2(t,\eta)-\frac{1}{\A}\tilde\Y_2(t,l))} \big(\sqP{1}-\sqP{2}\big)^2 (t,l) dl\Big) 
d\theta \Big)d\eta\\
&= \bigO(1) \int_0^{1}\Big[ \Big( -2\A e^{-\frac1{2\A}\tilde\Y_2(t,\theta)} \\
&\qquad\times\Big( \int_0^\theta e^{-(\frac1{2\A}\tilde\Y_2(t,\eta)-\frac1{\A}\tilde\Y_2(t,l))} \big(\sqP{1}-\sqP{2}\big)^2 (t,l) dl\Big)\Big)\Big\vert_{\theta=0}^\eta \\
&\quad+2\A\int_0^\eta e^{-\frac1{2\A}\tilde\Y_2(t,\theta)} e^{-(\frac1{2\A}\tilde\Y_2(t,\eta)-\frac{1}{\A}\tilde\Y_2(t,\theta))}(\sqP{1}-\sqP{2})^2(t,\theta)d\theta\Big]d\eta\\
&= -2\A\bigO(1) \int_0^{1}\int_0^\eta e^{-\frac{1}{\A}(\tilde\Y_2(t,\eta)-\tilde\Y_2(t,\theta))} \big(\sqP{1}-\sqP{2}\big)^2 (t,\theta) d\theta d\eta \\
&\quad+2\A\bigO(1)\int_0^{1}\int_0^\eta  e^{-\frac1{2\A}(\tilde\Y_2(t,\eta)-\tilde\Y_2(t,\theta))}\big(\sqP{1}-\sqP{2}\big)^2 (t,\theta)d\theta d\eta\\
&\le \bigO(1) \norm{\sqP{1}-\sqP{2}}^2.
\end{align*}
 
Thus we find that 
 \begin{equation*}
A_{12}\le \bigO(1) \big(\norm{\sqP{1}-\sqP{2}}^2+\norm{\tilde \Y_2-\tilde \Y_1}^2+\norm{\tilde \U_2-\tilde \U_1}^2 +\vert \sqtC{1}-\sqtC{2}\vert^2\big).
\end{equation*}

Next we find (see \eqref{eq:barT3})
\begin{align*}
A_{14}&=\frac{1}{\A^3}\int_0^{1}\frac1{\max_j(\sqP{j})}\abs{\sqP{1}-\sqP{2}}(t,\eta)\\
&\qquad\quad \times\Big(\int_0^\eta e^{-\frac{1}{\sqtC{2}}(\tilde\Y_2(t,\eta)-\tilde\Y_2(t,\theta))} \tilde \U_2^+\tilde \Y_{2,\eta} T_{3}\mathbbm{1}_{\tilde\D_1<\tilde\D_2}(t,\theta)d\theta \Big)d\eta \\
&\le 12\A\abs{\sqtC{1}-\sqtC{2}}\int_0^{1}\frac1{\max_j(\sqP{j})}\abs{\sqP{1}-\sqP{2}}(t,\eta)\\
&\quad \times\Big(\int_0^\eta e^{-\frac{1}{\sqtC{2}}(\tilde\Y_2(t,\eta)-\tilde\Y_2(t,\theta))} \tilde \U_2^+\tilde \Y_{2,\eta} (t,\theta)
\Big(\int_0^\theta e^{-\frac{3}{4\A}(\tilde\Y_2(t,\theta)-\tilde\Y_2(t,l))} dl \Big)d\theta \Big)d\eta \\
&\le 12\A\abs{\sqtC{1}-\sqtC{2}}\norm{\sqP{1}-\sqP{2}}\Big(\int_0^{1}\frac1{(\max_j(\sqP{j}))^2}(t,\eta)\\
&\quad \times\Big(\int_0^\eta e^{-\frac{1}{\sqtC{2}}(\tilde\Y_2(t,\eta)-\tilde\Y_2(t,\theta))} \tilde \U_2^+\tilde \Y_{2,\eta}(t,\theta) \\
&\qquad\times
\Big(\int_0^\theta  e^{-\frac{3}{4\A}(\tilde\Y_2(t,\theta)-\tilde\Y_2(t,l))} dl \Big)d\theta \Big)^2d\eta\Big)^{1/2} \\
&\le 12\A\abs{\sqtC{1}-\sqtC{2}}\norm{\sqP{1}-\sqP{2}}\\
&\quad\times\Big(\int_0^{1}\frac1{(\max_j(\sqP{j}))^2}(t,\eta)
\Big(\int_0^\eta e^{-\frac{1}{\sqtC{2}}(\tilde\Y_2(t,\eta)-\tilde\Y_2(t,\theta))} \tilde \U_2^2\tilde \Y_{2,\eta}(t,\theta)d\theta \Big)  \\
&\quad \times\Big(\int_0^\eta e^{-\frac{1}{\sqtC{2}}(\tilde\Y_2(t,\eta)-\tilde\Y_2(t,\theta))} \tilde \Y_{2,\eta} (t,\theta)
\Big(\int_0^\theta  e^{-\frac{3}{4\A}(\tilde\Y_2(t,\theta)-\tilde\Y_2(t,l))} dl \Big)^2d\theta \Big) d\eta\Big)^{1/2} \\
&\le  \bigO(1)\abs{\sqtC{1}-\sqtC{2}}\norm{\sqP{1}-\sqP{2}} \Big(\int_0^{1}  \Big(\int_0^\eta e^{-\frac{1}{\sqtC{2}}(\tilde\Y_2(t,\eta)-\tilde\Y_2(t,\theta))} \tilde \Y_{2,\eta} (t,\theta)\\
&\qquad \times
\Big(\int_0^\theta  e^{-\frac{3}{4\A}(\tilde\Y_2(t,\theta)-\tilde\Y_2(t,l))} dl \Big)^2d\theta \Big) d\eta\Big)^{1/2}   \\
&\le  \bigO(1)\abs{\sqtC{1}-\sqtC{2}}\norm{\sqP{1}-\sqP{2}} \\
&\qquad \times\Big(\int_0^{1}  \Big(\int_0^\eta e^{-\frac3{8\A}(\tilde\Y_2(t,\eta)-\tilde\Y_2(t,\theta))} \tilde \Y_{2,\eta} (t,\theta)\\
&\qquad \times
\Big(\int_0^\theta  e^{-\frac{3}{4\A}(\tilde\Y_2(t,\theta)-\tilde\Y_2(t,l))} dl \Big)d\theta \Big) d\eta\Big)^{1/2}   \\
&\le  \bigO(1)\abs{\sqtC{1}-\sqtC{2}}\norm{\sqP{1}-\sqP{2}} \\
&\qquad \times\Big(\int_0^{1}  \Big(\int_0^\eta e^{-\frac3{8\A}\tilde\Y_2(t,\theta)} \tilde \Y_{2,\eta} (t,\theta)\\
&\qquad \qquad\times
\Big(\int_0^\theta  e^{-(\frac3{8\A}\tilde\Y_2(t,\eta)-\frac{3}{4\A}\tilde\Y_2(t,l))} dl \Big)d\theta \Big) d\eta\Big)^{1/2}   \\
&\le  \bigO(1)\abs{\sqtC{1}-\sqtC{2}}\norm{\sqP{1}-\sqP{2}} \\
&\qquad\qquad \times\Big(\int_0^{1}  \Big[  \Big( -\frac83\A e^{-\frac3{8\A}\tilde\Y_2(t,\theta)}  
\int_0^\theta  e^{-(\frac3{8\A}\tilde\Y_2(t,\eta)-\frac{3}{4\A}\tilde\Y_2(t,l))} dl \Big)\Big\vert_{\theta=0}^\eta\\
&\qquad\qquad\qquad\quad +\frac83\A \int_0^\eta e^{-\frac3{8\A}\tilde\Y_2(t,\theta)} e^{-(\frac3{8\A}\tilde\Y_2(t,\eta)-\frac{3}{4\A}\tilde\Y_2(t,\theta))} d\theta  \Big] d\eta\Big)^{1/2}   \\
&\le  \bigO(1)\abs{\sqtC{1}-\sqtC{2}}\norm{\sqP{1}-\sqP{2}} \\
&\qquad\qquad\qquad \times\Big(\int_0^{1}  \Big[ -\frac83 \A \int_0^\eta 
 e^{-\frac{3}{4\A}(\tilde\Y_2(t,\eta)-\tilde\Y_2(t,\theta))} d\theta \\
&\qquad\qquad\qquad\quad +\frac83 \A \int_0^\eta e^{-\frac3{8\A}(\tilde\Y_2(t,\eta)-\tilde\Y_2(t,\theta))} d\theta  \Big] d\eta\Big)^{1/2}   \\
&\le \bigO(1)\big(\abs{\sqtC{1}-\sqtC{2}}^2+\norm{\sqP{1}-\sqP{2}}^2 \big).
\end{align*}

Thus we have shown that
\begin{equation*}
A_{1}+A_{2}\le \bigO(1) \big(\norm{\sqP{1}-\sqP{2}}^2+\norm{\tilde \Y_2-\tilde \Y_1}^2+\norm{\tilde \U_2-\tilde \U_1}^2+\abs{\sqtC{1}-\sqtC{2}}^2 \big).
\end{equation*}

We next consider the term $A_3$: 
\begin{align}
A_3&= \frac1{2\A^3}\int_0^{1}\frac1{\max_j(\sqP{j})}(\sqP{1}-\sqP{2})(t,\eta)\int_0^\eta \big(e^{-\frac{1}{\sqtC{2}}(\tilde\Y_2(t,\eta)-\tilde\Y_2(t,\theta))} \tilde \U_2^+\tilde \Y_{2,\eta} \notag\\
&\qquad \qquad\qquad\qquad-e^{-\frac{1}{\sqtC{1}}(\tilde\Y_1(t,\eta)-\tilde\Y_1(t,\theta))} \tilde \U_1^+\tilde \Y_{1,\eta} \big)  
\min_j\big(\tilde\D_j\big)(t,\theta)d\theta d\eta \notag\\
&=\frac1{2\A^3}\int_0^{1}\frac1{\max_j(\sqP{j})}(\sqP{1}-\sqP{2})(t,\eta)\notag\\
&\qquad \times\Big[\int_0^\eta e^{-\frac{1}{\sqtC{2}}(\tilde\Y_2(t,\eta)-\tilde\Y_2(t,\theta))}\min_j\big(\tilde\D_j\big)\tilde \Y_{2,\eta}
\big(\tilde \U_2^+-\tilde \U_1^+ \big)\mathbbm{1}_{\tilde \U_1^+ <\tilde \U_2^+ }(t,\theta)d\theta  \notag\\
&\qquad +\int_0^\eta e^{-\frac{1}{\sqtC{1}}(\tilde\Y_1(t,\eta)-\tilde\Y_1(t,\theta))}\min_j\big(\tilde\D_j\big)\tilde \Y_{1,\eta}
\big(\tilde \U_2^+-\tilde \U_1^+ \big)\mathbbm{1}_{\tilde \U_2^+ \le\tilde \U_1^+ }(t,\theta)d\theta  \notag\\
& \qquad +\mathbbm{1}_{\sqtC{1}\leq \sqtC{2}}\int_0^\eta (e^{-\frac{1}{\sqtC{2}}(\tilde \Y_2(t,\eta)-\tilde \Y_2(t,\theta))}-e^{-\frac{1}{\sqtC{1}}(\tilde \Y_2(t,\eta)-\tilde \Y_2(t,\theta))})\notag \\
&\qquad\qquad\qquad\qquad\qquad\times\min_j(\tilde \D_j)\min_j(\tilde \V_j^+)\tilde \Y_{2,\eta}(t,\theta) d\theta \notag\\
& \qquad +\mathbbm{1}_{\sqtC{2}<\sqtC{1}}\int_0^\eta (e^{-\frac{1}{\sqtC{2}}(\tilde \Y_1(t,\eta)-\tilde\Y_1(t,\theta))}-e^{-\frac{1}{\sqtC{1}}(\tilde\Y_1(t,\eta)-\tilde \Y_1(t,\theta))}\notag \\
&\qquad\qquad\qquad\qquad\qquad\times\min_j(\tilde \D_j)\min_j(\tilde \V_j^+)\tilde \Y_{1,\eta}(t,\theta) d\theta \notag \\
&\qquad +\int_0^\eta \big(e^{-\frac{1}{\ma}(\tilde\Y_2(t,\eta)-\tilde\Y_2(t,\theta))}-e^{-\frac{1}{\ma}(\tilde\Y_1(t,\eta)-\tilde\Y_1(t,\theta))} \big)\notag \\
&\qquad\qquad\qquad\qquad\qquad\times\min_j\big(\tilde\D_j\big)\min_j\big(\tilde\U^+_j\big)
\tilde \Y_{2,\eta}\mathbbm{1}_{B(\eta)}(t,\theta)d\theta  \notag\\
&\qquad +\int_0^\eta \big(e^{-\frac{1}{\ma}(\tilde\Y_2(t,\eta)-\tilde\Y_2(t,\theta))}-e^{-\frac{1}{\ma}(\tilde\Y_1(t,\eta)-\tilde\Y_1(t,\theta))} \big)\notag \\
&\qquad\qquad\qquad\qquad\qquad\times\min_j\big(\tilde\D_j\big)\min_j\big(\tilde\U^+_j\big)
\tilde \Y_{1,\eta}\mathbbm{1}_{B(\eta)^c}(t,\theta)d\theta  \notag\\
&\quad +\int_0^\eta \min_j\big(e^{-\frac{1}{\ma}(\tilde\Y_j(t,\eta)-\tilde\Y_j(t,\theta))}\big)\min_j\big(\tilde\D_j\big)\min_j\big(\tilde\U^+_j\big)\big(\tilde \Y_{2,\eta}-\tilde \Y_{1,\eta} \big)(t,\theta)d\theta
\Big]d\eta \notag\\
&= A_{31}+A_{32}+A_{33}+A_{34}+A_{35}+A_{36}+A_{37}. \label{eq:A3}
\end{align}
Terms $A_{31}$ and $A_{32}$ allow for the same treatment:
\begin{align*}
\abs{A_{31}}&=\frac1{2\A^3}\Big\vert\int_0^{1}\frac1{\max_j(\sqP{j})}(\sqP{1}-\sqP{2})(t,\eta)\\
&\quad \times\Big(\int_0^\eta e^{-\frac{1}{\sqtC{2}}(\tilde\Y_2(t,\eta)-\tilde\Y_2(t,\theta))}\min_j\big(\tilde\D_j\big)\tilde \Y_{2,\eta}
\big(\tilde \U_2^+-\tilde \U_1^+ \big)\mathbbm{1}_{\tilde \U_1^+ <\tilde \U_2^+ }(t,\theta)d\theta  \Big)d\eta\Big\vert\\
&\le \frac{1}{\A^2}\Big\vert\int_0^{1}\frac1{\max_j(\sqP{j})}\abs{\sqP{1}-\sqP{2}}(t,\eta)\\
&\qquad \times\Big(\int_0^\eta e^{-\frac{1}{\sqtC{2}}(\tilde\Y_2(t,\eta)-\tilde\Y_2(t,\theta))}\tilde\P_2\tilde \Y_{2,\eta}
\abs{\tilde \U_2^+-\tilde \U_1^+}(t,\theta)d\theta  \Big)d\eta\\
&\le \norm{\sqP{1}-\sqP{2}}^2  +\frac{1}{\A^4}\int_0^{1}\frac1{(\max_j(\sqP{j}))^2}(t,\eta)\\
&\qquad\times
\Big(\int_0^\eta e^{-\frac{1}{\sqtC{2}}(\tilde\Y_2(t,\eta)-\tilde\Y_2(t,\theta))}\tilde\P_2\tilde \Y_{2,\eta}
\abs{\tilde \U_2^+-\tilde \U_1^+}(t,\theta)d\theta  \Big)^2d\eta\\
&\le \norm{\sqP{1}-\sqP{2}}^2 +\frac{1}{\A^4}\int_0^{1}\frac1{(\max_j(\sqP{j}))^2}(t,\eta)\\
&\qquad \times
\Big(\int_0^\eta e^{-\frac3{2\sqtC{2}}(\tilde\Y_2(t,\eta)-\tilde\Y_2(t,\theta))}\tilde\P_2\tilde \Y_{2,\eta} (t,\theta)d\theta  \Big) \\
&\qquad \times\Big(\int_0^\eta e^{-\frac1{2\sqtC{2}}(\tilde\Y_2(t,\eta)-\tilde\Y_2(t,\theta))}\tilde\P_2\tilde \Y_{2,\eta}\big(\tilde \U_2-\tilde \U_1\big)^2(t,\theta)d\theta  \Big)d\eta\\
&\le \norm{\sqP{1}-\sqP{2}}^2 \\
&\qquad +\frac{2}{\A^3}\int_0^{1}
\Big(\int_0^\eta e^{-\frac1{2\sqtC{2}}(\tilde\Y_2(t,\eta)-\tilde\Y_2(t,\theta))}\tilde\P_2\tilde \Y_{2,\eta}\big(\tilde \U_2-\tilde \U_1\big)^2(t,\theta)d\theta  \Big)d\eta\\
&\le \bigO(1) \big( \norm{\sqP{1}-\sqP{2}}^2+ \norm{\tilde \U_2-\tilde \U_1}^2 \big),
\end{align*}
using \eqref{eq:all_estimatesE}, \eqref{eq:all_estimatesN}, and \eqref{eq:343}. 

The terms $A_{33}$ and $A_{34}$ can be treated as follows
\begin{align*}
\abs{A_{33}}&=\frac{1}{2\A^3}\vert \int_0^1 \frac{1}{\max_j(\sqP{j})}(\sqP{1}-\sqP{2})\\
& \qquad \times\mathbbm{1}_{\sqtC{1}\leq \sqtC{2}}\int_0^\eta (e^{-\frac{1}{\sqtC{2}}(\tilde \Y_2(t,\eta)-\tilde \Y_2(t,\theta)}-e^{-\frac{1}{\sqtC{1}}(\tilde \Y_2(t,\eta)-\tilde \Y_2(t,\theta))})\\
&\qquad\qquad\qquad\qquad\times\min_j(\tilde \D_j)\min_j(\tilde \U_j^+)\tilde \Y_{2,\eta}(t,\theta) d\theta d\eta\\
& \leq \frac{\sqrt{2}\ma}{\A^3e}\int_0^1 \frac{1}{\max_j(\sqP{j})}\abs{\sqP{1}-\sqP{2}}(t,\eta)\\
& \qquad \times \Big(\int_0^\eta e^{-\frac{3}{4\sqtC{2}}(\tilde \Y_2(t,\eta)-\tilde \Y_2(t,\theta))}\tilde \D_2\tilde \Y_{2,\eta}(t,\theta) d\theta\Big) d\eta \vert \sqtC{1}-\sqtC{2}\vert\\
& \leq \frac{2\sqrt{2}}{\A^2 e}\int_0^1 \frac{1}{\max_j(\sqP{j})}\abs{\sqP{1}-\sqP{2}}(t,\eta)\\
&\qquad\qquad\times\Big(\int_0^\eta e^{-\frac{3}{2\sqtC{2}}(\tilde \Y_2(t,\eta)-\tilde \Y_2(t,\theta))}\tilde \P_2\tilde \Y_{2,\eta} (t,\theta) d\theta\Big)^{1/2}\\
& \qquad \times \Big(\int_0^\eta \tilde \P_2\tilde \Y_{2,\eta}(t,\theta) d\theta\Big)^{1/2}d\eta \vert \sqtC{1}-\sqtC{2}\vert\\
& \leq \bigO(1)(\norm{\sqP{1}-\sqP{2}}^2+\abs{\sqtC{1}-\sqtC{2}}^2).
\end{align*}

 Furthermore, the terms $A_{35}$ and $A_{36}$ can be estimated like this
\begin{align*}
\abs{A_{35}}&=\frac1{2\A^3}\Big\vert\int_0^{1}\frac1{\max_j(\sqP{j})}(\sqP{1}-\sqP{2})(t,\eta)\\
&\qquad \times\Big(\int_0^\eta \big(e^{-\frac{1}{\ma}(\tilde\Y_2(t,\eta)-\tilde\Y_2(t,\theta))}-e^{-\frac{1}{\ma}(\tilde\Y_1(t,\eta)-\tilde\Y_1(t,\theta))} \big)\\
&\qquad\qquad\times\min_j\big(\tilde\D_j\big)\min_j\big(\tilde\U^+_j\big)
\tilde \Y_{2,\eta}\mathbbm{1}_{B(\eta)}(t,\theta)d\theta \Big)d\eta \Big\vert \\
&\le\frac1{2\ma\A^3}\int_0^{1}\frac1{\max_j(\sqP{j})}\abs{\sqP{1}-\sqP{2}}(t,\eta)\\
&\qquad \times\Big(\int_0^\eta e^{-\frac{1}{\ma}(\tilde\Y_2(t,\eta)-\tilde\Y_2(t,\theta))}\big(\abs{\tilde\Y_2(t,\eta)-\tilde\Y_1(t,\eta)}+\abs{\tilde\Y_2(t,\theta)-\tilde\Y_1(t,\theta)} \big)\\
&\qquad\times
\min_j\big(\tilde\D_j\big)\min_j\big(\tilde\U^+_j\big)
\tilde \Y_{2,\eta}\mathbbm{1}_{B(\eta)}(t,\theta)d\theta \Big)d\eta \\
&\le\frac{\ma}{2\sqrt{2}\A^3}\int_0^{1}\frac1{\max_j(\sqP{j})}\abs{\sqP{1}-\sqP{2}}\abs{\tilde\Y_2-\tilde\Y_1}(t,\eta)\\
&\qquad \times\Big(\int_0^\eta e^{-\frac{1}{\ma}(\tilde\Y_2(t,\eta)-\tilde\Y_2(t,\theta))}
\min_j\big(\tilde\D_j\big)
\tilde \Y_{2,\eta}\mathbbm{1}_{B(\eta)}(t,\theta)d\theta \Big)d\eta \\
&\quad +\frac{\ma}{2\sqrt{2}\A^3}\int_0^{1}\frac1{\max_j(\sqP{j})}\abs{\sqP{1}-\sqP{2}}(t,\eta)\\
&\qquad \times\Big(\int_0^\eta e^{-\frac{1}{\ma}(\tilde\Y_2(t,\eta)-\tilde\Y_2(t,\theta))}\abs{\tilde\Y_2-\tilde\Y_1}
\min_j\big(\tilde\D_j\big)
\tilde \Y_{2,\eta}\mathbbm{1}_{B(\eta)}(t,\theta)d\theta \Big)d\eta \\
&\le\frac{1}{\sqrt{2}\A}\int_0^{1}\frac1{\max_j(\sqP{j})}\abs{\sqP{1}-\sqP{2}}\abs{\tilde\Y_2-\tilde\Y_1}(t,\eta) \\
&\qquad\qquad\times
\Big(\int_0^\eta e^{-\frac{3}{2\sqtC{2}}(\tilde\Y_2(t,\eta)-\tilde\Y_2(t,\theta))} \tilde\P_2 \tilde \Y_{2,\eta} (t,\theta)d\theta \Big)^{1/2}d\eta\\
&\qquad\qquad \times\Big(\int_0^\eta e^{-\frac{1}{2\sqtC{2}}(\tilde\Y_2(t,\eta)-\tilde\Y_2(t,\theta))}\tilde\P\tilde \Y_{2,\eta}(t,\theta)d\theta \Big)^{1/2}d\eta \\
&\quad +\frac{1}{\sqrt{2}\A}\int_0^{1}\frac1{\max_j(\sqP{j})}\abs{\sqP{1}-\sqP{2}}(t,\eta) \\
&\qquad\qquad \times\Big(\int_0^\eta e^{-\frac{3}{2\sqtC{2}}(\tilde\Y_2(t,\eta)-\tilde\Y_2(t,\theta))}  \tilde\P_2 \tilde \Y_{2,\eta}  (t,\theta)d\theta \Big)^{1/2} \\
&\qquad \times\Big(\int_0^\eta e^{-\frac{1}{2\sqtC{2}}(\tilde\Y_2(t,\eta)-\tilde\Y_2(t,\theta))}\tilde\P\tilde \Y_{2,\eta}(\tilde\Y_2-\tilde\Y_1)^2(t,\theta)d\theta \Big)^{1/2}d\eta \\
&\le \bigO(1)\big( \norm{\sqP{1}-\sqP{2}}^2+ \norm{\tilde \Y_2-\tilde \Y_1}^2 \big),
\end{align*}
using \eqref{eq:343}, \eqref{eq:all_estimatesE}, and \eqref{eq:all_estimatesN}.  As for $A_{37}$ we follow this path: 
\begin{align*}
\abs{A_{37}}&=\frac1{2\A^3}\Big\vert\int_0^{1}\frac1{\max_j(\sqP{j})}(\sqP{1}-\sqP{2})(t,\eta)\Big(\int_0^\eta \min_j\big(e^{-\frac{1}{\ma}(\tilde\Y_j(t,\eta)-\tilde\Y_j(t,\theta))}\big)\\
&\qquad \qquad\times\min_j\big(\tilde\D_j\big)\min_j\big(\tilde\U^+_j\big)\big(\tilde \Y_{2,\eta}-\tilde \Y_{1,\eta} \big)(t,\theta)d\theta
\Big)d\eta\Big\vert\\
&=\frac1{2\A^3}\Big\vert\int_0^{1}\frac1{\max_j(\sqP{j})}(\sqP{1}-\sqP{2})(t,\eta)\\
&\quad \times\Big[\Big(\min_j\big(e^{-\frac{1}{\ma}(\tilde\Y_j(t,\eta)-\tilde\Y_j(t,\theta))}\big)\min_j\big(\tilde\D_j\big)\min_j\big(\tilde\U^+_j\big)\big(\tilde \Y_{2}-\tilde \Y_{1} \big)(t,\theta)\Big)\Big\vert_{\theta=0}^\eta \\
&\quad -\int_0^\eta \big(\tilde \Y_{2}-\tilde \Y_{1} \big)\\
&\qquad\qquad\times\frac{d}{d\theta}\Big(\min_j\big(e^{-\frac{1}{\ma}(\tilde\Y_j(t,\eta)-\tilde\Y_j(t,\theta))}\big)\min_j\big(\tilde\D_j\big)\min_j\big(\tilde\U^+_j\big) \Big)(t,\theta)d\theta
\Big]d\eta\Big\vert\\
&\le\frac1{2\A^3}\Big\vert\int_0^{1}\frac1{\max_j(\sqP{j})}(\sqP{1}-\sqP{2})\min_j\big(\tilde\D_j\big)\min_j\big(\tilde\U^+_j\big)\big(\tilde \Y_{2}-\tilde \Y_{1} \big)(t,\eta)d\eta \Big\vert\\
&\quad +\frac1{2\A^3}\Big\vert\int_0^{1}\frac1{\max_j(\sqP{j})}(\sqP{1}-\sqP{2})(t,\eta)\Big(\int_0^\eta \big(\tilde \Y_{2}-\tilde \Y_{1} \big)\\
&\qquad\times\frac{d}{d\theta}\Big(\min_j\big(e^{-\frac{1}{\ma}(\tilde\Y_j(t,\eta)-\tilde\Y_j(t,\theta))}\big)\min_j\big(\tilde\D_j\big)\min_j\big(\tilde\U^+_j\big) \Big)(t,\theta)d\theta
\Big)d\eta\Big\vert\\
&\le\frac{1}{\A^2}\int_0^{1}\frac1{\max_j(\sqP{j})}\abs{\sqP{1}-\sqP{2}}\tilde\P_2\abs{\tilde\U_2}\abs{\tilde \Y_{2}-\tilde \Y_{1}}(t,\eta)d\eta \\
&\quad +\frac1{2\A^3}\Big\vert\int_0^{1}\frac1{\max_j(\sqP{j})}(\sqP{1}-\sqP{2})(t,\eta)\Big(\int_0^\eta \big(\tilde \Y_{2}-\tilde \Y_{1} \big) \\
&\qquad\qquad\times \Big(\big(\frac{d}{d\theta}\min_j\big(e^{-\frac{1}{\ma}(\tilde\Y_j(t,\eta)-\tilde\Y_j(t,\theta))}\big)\big)\min_j\big(\tilde\D_j\big)\min_j\big(\tilde\U^+_j\big) \\
&\qquad+\min_j\big(e^{-\frac{1}{\ma}(\tilde\Y_j(t,\eta)-\tilde\Y_j(t,\theta))}\big)\big(\frac{d}{d\theta}\min_j\big(\tilde\D_j\big)\min_j\big(\tilde\U^+_j\big) \big)\Big)(t,\theta)d\theta
\Big)d\eta\Big\vert\\
&\le \bigO(1)\big( \norm{\sqP{1}-\sqP{2}}^2+ \norm{\tilde \Y_1-\tilde \Y_2}^2 \big) \\
&\quad +\frac1{2\A^3}\int_0^{1}\frac1{\max_j(\sqP{j})}\abs{\sqP{1}-\sqP{2}}(t,\eta)\\
&\quad\times\Big(\int_0^\eta \abs{\tilde \Y_{2}-\tilde \Y_{1}} \frac{1}{\ma}\Big(\min_j\big(e^{-\frac{1}{\ma}(\tilde\Y_j(t,\eta)-\tilde\Y_j(t,\theta))}\big)\\
&\qquad\qquad\qquad\qquad\times\max_j\big(\tilde\Y_{j,\eta}\big)\min_j\big(\tilde\D_j\big)\min_j\big(\tilde\U^+_j\big) \\
&\qquad+\bigO(1)\A^5\min_j\big(e^{-\frac{1}{\ma}(\tilde\Y_j(t,\eta)-\tilde\Y_j(t,\theta))}\big)\big(\min_j\big(\tilde\D_j^{1/2}\big)+\abs{\tilde\U_2} \big)\Big)(t,\theta)d\theta
\Big)d\eta\\
&\le \bigO(1)\big( \norm{\sqP{1}-\sqP{2}}^2+ \norm{\tilde \Y_1-\tilde \Y_2}^2 \big) +\frac12(A_{371}+A_{372}),
\end{align*}
using \eqref{eq:all_estimatesA},  \eqref{eq:all_estimatesB}, \eqref{eq:all_estimatesN}, and Lemmas \ref{lemma:1} and \ref{lemma:5}. Here
\begin{align*}
A_{371}&=\frac{1}{\ma\A^3}\int_0^{1}\frac1{\max_j(\sqP{j})}\abs{\sqP{1}-\sqP{2}}(t,\eta)\\
&\quad\times\Big(\int_0^\eta \abs{\tilde \Y_{2}-\tilde \Y_{1}} \Big(\min_j\big(e^{-\frac{1}{\ma}(\tilde\Y_j(t,\eta)-\tilde\Y_j(t,\theta))}\big)\\
&\qquad\qquad\qquad\qquad\times\max_j\big(\tilde\Y_{j,\eta}\big)\min_j\big(\tilde\D_j\big)\min_j\big(\tilde\U^+_j\big)(t,\theta)d\theta\Big)d\eta \\
&\le \frac{\sqrt{2}\ma}{\A^2}\int_0^{1}\frac1{\max_j(\sqP{j})}\abs{\sqP{1}-\sqP{2}}(t,\eta)\Big(\int_0^\eta \abs{\tilde \Y_{2}-\tilde \Y_{1}} \\
&\qquad\times\Big(e^{-\frac{1}{\sqtC{1}}(\tilde\Y_1(t,\eta)-\tilde\Y_1(t,\theta))}\tilde\P_1\tilde\Y_{1,\eta}
+e^{-\frac{1}{\sqtC{2}}(\tilde\Y_2(t,\eta)-\tilde\Y_2(t,\theta))}\tilde\P_2\tilde\Y_{2,\eta}\Big)(t,\theta)d\theta\Big)d\eta \\
&\le \frac{\sqrt{2}}{\A}\int_0^{1}\frac1{\max_j(\sqP{j})}\abs{\sqP{1}-\sqP{2}}(t,\eta)\\
&\quad\times\Big[\Big(\int_0^\eta e^{-\frac{3}{2\sqtC{1}}(\tilde\Y_1(t,\eta)-\tilde\Y_1(t,\theta))}\tilde\P_1\tilde\Y_{1,\eta}(t,\theta)d\theta\Big)^{1/2} \\
&\qquad\times\Big(\int_0^\eta e^{-\frac{1}{2\sqtC{1}}(\tilde\Y_1(t,\eta)-\tilde\Y_1(t,\theta))}\tilde\P_1\tilde\Y_{1,\eta} (\tilde \Y_{2}-\tilde \Y_{1})^2(t,\theta)d\theta\Big)^{1/2} \\
&\qquad+ \Big(\int_0^\eta e^{-\frac{3}{2\sqtC{2}}(\tilde\Y_2(t,\eta)-\tilde\Y_2(t,\theta))}\tilde\P_2\tilde\Y_{2,\eta}(t,\theta)d\theta\Big)^{1/2} \\
&\qquad\qquad\times\Big(\int_0^\eta e^{-\frac{1}{2\sqtC{2}}(\tilde\Y_2(t,\eta)-\tilde\Y_2(t,\theta))}\tilde\P_2\tilde\Y_{2,\eta} (\tilde \Y_{2}-\tilde \Y_{1})^2(t,\theta)d\theta\Big) ^{1/2}\Big]d\eta \\
&\le \bigO(1)\big( \norm{\sqP{1}-\sqP{2}}^2+ \norm{\tilde \Y_1-\tilde \Y_2}^2 \big),
\end{align*}
using $\max_j(a_j)\le a_1+a_2$ and $\min_j(b_j)\le b_k$.  Then
\begin{align*}
A_{372}&=\bigO(1)\int_0^{1}\frac1{\max_j(\sqP{j})}\abs{\sqP{1}-\sqP{2}}(t,\eta)\\
&\quad\times\Big(\int_0^\eta \abs{\tilde \Y_{2}-\tilde \Y_{1}}\min_j\big(e^{-\frac{1}{\ma}(\tilde\Y_j(t,\eta)-\tilde\Y_j(t,\theta))}\big)\big(\min_j\big(\tilde\D_j^{1/2}\big)+\abs{\tilde\U_2} \big)(t,\theta)d\theta
\Big)d\eta\\
&\le \bigO(1)\int_0^{1}\frac1{\max_j(\sqP{j})}\abs{\sqP{1}-\sqP{2}}(t,\eta)\\
&\qquad\times\Big(\int_0^\eta \abs{\tilde \Y_{2}-\tilde \Y_{1}}e^{-\frac{1}{\sqtC{2}}(\tilde\Y_2(t,\eta)-\tilde\Y_2(t,\theta))}\tilde\P_2^{1/2}(t,\theta)d\theta
\Big)d\eta\\
&\le \bigO(1)\int_0^{1}\frac1{\max_j(\sqP{j})}\abs{\sqP{1}-\sqP{2}}(t,\eta)\\
&\qquad\qquad\times\Big(\int_0^\eta e^{-\frac3{2\sqtC{2}}(\tilde\Y_2(t,\eta)-\tilde\Y_2(t,\theta))}\tilde\P_2 (t,\theta)d\theta\Big)^{1/2}\\
&\qquad\qquad\times\Big(\int_0^\eta e^{-\frac1{2\sqtC{2}}(\tilde\Y_2(t,\eta)-\tilde\Y_2(t,\theta))}\big(\tilde \Y_{2}-\tilde \Y_{1})^2(t,\theta)d\theta \Big)^{1/2}d\eta\\
&\le \bigO(1)\big( \norm{\sqP{1}-\sqP{2}}^2+ \norm{\tilde \Y_1-\tilde \Y_2}^2 \big),
\end{align*}
using \eqref{eq:all_estimatesD}, \eqref{eq:all_estimatesN}, and \eqref{eq:32P}. This completes the estimate for $A_3$ (see \eqref{eq:A3}):
\begin{equation*}
A_3\le \bigO(1)\big( \norm{\sqP{1}-\sqP{2}}^2+ \norm{\tilde \U_1-\tilde \U_2}^2+ \norm{\tilde \Y_1-\tilde \Y_2}^2+\abs{\sqtC{1}-\sqtC{2}}^2 \big).
\end{equation*}

\bigskip
For $J_4$ (and similarly for $J_5$) the estimates read
\begin{align*}
\abs{J_4}&\le\frac{1}{\A^3}\int_0^{1} \abs{\sqP{1}-\sqP{2}}^2\frac{\abs{\tilde\R_2}}{\tilde\P_2}(t,\eta)d\eta\\
&\le \bigO(1) \norm{\sqP{1}-\sqP{2}}^2,
\end{align*}
using \eqref{eq:all_estimatesJ}.

We have shown the anticipated result.
\begin{lemma} \label{lemma:sqP}
Let $\sqP{i}$ be two solutions of \eqref{eq:Plip} for $i=1,2$.  Then we have
\begin{align*}
\frac{d}{dt} &\norm{\sqP{1}-\sqP{2}}^2\\&\leq \bigO(1)\big(\norm{\tilde\Y_1-\tilde\Y_2}^2+\norm{\tilde\U_1-\tilde\U_2}^2+\norm{\sqP{1}-\sqP{2}}^2+ \abs{\sqtC{1}-\sqtC{2}} ^2\big),
\end{align*}
where $\bigO(1)$ denotes some constant which only depends on $\A=\max_j(\sqtC{j})$ and which remains bounded as $\A\to 0$.
\end{lemma}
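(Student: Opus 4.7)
The approach mimics the strategy already carried out for $\tilde\Y$ (Lemma \ref{lemlipy}) and $\tilde\U$ (Lemma \ref{lemlipu}). Differentiating and using \eqref{eq:Plip} gives
\begin{align*}
\frac{d}{dt}\norm{\sqP{1}-\sqP{2}}^2 &= -\tfrac23 I_1 + I_2 + \tfrac12 I_3 + \tfrac12 I_4,
\end{align*}
where $I_1$ collects the $\tfrac23 A_i^{-5}\tilde\U_i^3 (\sqP{i})_\eta$ transport terms, $I_2$ collects the $A_i^{-6}\tilde\So_i (\sqP{i})_\eta$ transport terms, $I_3$ collects the source $A_i^{-2}\tilde\Q_i\tilde\U_i/\sqP{i}$ and $I_4$ collects the source $A_i^{-3}\tilde\R_i/\sqP{i}$. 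The goal is to bound each $I_k$ by $\bigO(1)(\norm{\tilde\Y_1-\tilde\Y_2}^2+\norm{\tilde\U_1-\tilde\U_2}^2+\norm{\sqP{1}-\sqP{2}}^2+\abs{\A_1-\A_2}^2)$; Gronwall then gives the statement.

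The transport terms $I_1$ and $I_2$ are handled by the standard four-stage decomposition applied to each difference under the integrals. First, factor out the mismatch in $1/A_i^5$ or $1/A_i^6$, producing a prefactor $\abs{A_1-A_2}$; the remaining integrands are uniformly controlled using \eqref{eq:all_estimates} so Cauchy--Schwarz yields the $\abs{\A_1-\A_2}^2$ contribution. Next, split the kernel using \eqref{eq:triks} and write $\tilde\U_j = \tilde\U_j^+ + \tilde\U_j^-$, and then insert $\min_j$ of each factor via Lemma \ref{lemma:LUR}; the differences of the factors produce terms in $\abs{\tilde\U_1-\tilde\U_2}$, $\abs{\sqP{1}-\sqP{2}}$, $\abs{\tilde\Y_1-\tilde\Y_2}$. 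Finally, the two remaining $\min_j$-$\min_j$ pieces are handled by splitting on the set where the $\min_k\int_0^\eta(\cdots)\tilde\Y_{k,\eta}\,d\theta$ is attained and integrating by parts, using the Lipschitz estimates from Lemmas \ref{lemma:1}--\ref{lemma:6}. The crucial algebraic identity is $\tilde\Q_i = A_i\tilde\P_i - \tilde\D_i$ (cf.~\eqref{eq:DogP}), which, together with a further integration by parts removing $\tilde\U_{i,\eta}$ from $I_{33}$-type integrals, rewrites the $I_{2}$-contribution in terms of quantities whose $L^2$-Lipschitz behaviour is exactly the one summarised in \eqref{eq:all_estimates}, \eqref{eq:all_Pestimates} and \eqref{eq:all_Pderiv_estimates}.

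The source term $I_3$ is treated directly: extract the $1/A_i^2$ mismatch (contributing $\abs{A_1-A_2}^2$ via the uniform bound $|\tilde\Q_i\tilde\U_i/\sqP{i}| \le A_i^3\sqP{i}|\tilde\U_i| \le \bigO(1)A_i^5$) and then bound $\norm{\tilde\Q_1\tilde\U_1/\sqP{1}-\tilde\Q_2\tilde\U_2/\sqP{2}}$ by telescoping using the bounds $|\tilde\Q_i/\max_j\sqP{j}| \le A\max_j\sqP{j}$ and $|\tilde\U_i|/\sqP{i} \le \sqrt{2}$ together with the Lipschitz estimate \eqref{LipQ} for $\tilde\Q_i$, whose proof only requires Lemma \ref{lemma:D}. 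The term $I_4$ uses the a priori bound $|\tilde\R_i| \le \bigO(1)A^3\tilde\P_i$ from \eqref{eq:all_estimatesJ} on the diagonal piece $(\sqP{1}-\sqP{2})^2\tilde\R_i/(\sqP{1}\sqP{2})$, while the off-diagonal piece needs the same integration-by-parts technology as $I_2$ applied now to $\tilde\R_i$, written as the combination of $\tilde\U^3$, $\tilde\U$ and $\tilde\U\tilde\Q\tilde\Y_\eta$ appearing in \eqref{eq:Req}.

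The main obstacle is the $I_2$-contribution from the $\tilde\Q_i\tilde\U_{i,\eta}$ part of $\tilde\So_i$: here $\tilde\U_{i,\eta}$ is merely $L^2$ (only $\tilde\U_{i,\eta}^2 \le A^{-2}\tilde\Henergy_\eta\tilde\Y_\eta$ is available), so one cannot afford pointwise-$L^\infty$ bounds. The remedy is precisely the identity $\tilde\Q_i=A_i\tilde\P_i-\tilde\D_i$ combined with an integration by parts in $\theta$ that converts $\tilde\U_{i,\eta}$ into $\tilde\U_i$ and regenerates a kernel of the form of the $\tilde\P$-term, at the cost of extra terms $\tilde\P_i\tilde\U_i\tilde\Y_{i,\eta}$, $\tilde\U_i^3\tilde\Y_{i,\eta}$ and $A_i^5\tilde\U_i$, each of which is then handled by the four-stage decomposition above. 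The required absorption inequalities \eqref{eq:all_PestimatesA}--\eqref{eq:general} and the Lipschitz bound for $\tilde\D_1-\tilde\D_2$ (Lemma \ref{lemma:D}) are exactly what keeps the exponent factors under control so that no powers of $1/\ma$ or $1/\A$ remain after telescoping and boundary terms vanish by the moment/decay properties \eqref{decay:impl} established in Subsection \ref{moments}.
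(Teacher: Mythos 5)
Your plan is correct and follows essentially the same route as the paper's proof: the same splitting into the four contributions $I_1$--$I_4$, the identity $\tilde\Q_i=\sqtC{i}\tilde\P_i-\tilde\D_i$ plus an integration by parts in $\theta$ to trade $\tilde\U_{i,\eta}$ for $\tilde\U_i$ in the $\tilde\So$- and $\tilde\R$-terms, the use of \eqref{LipQ} and Lemma \ref{lemma:D} for the source term $I_3$, the bound $\abs{\tilde\R_i}\le\bigO(1)\A^3\tilde\P_i$ for the diagonal part of $I_4$, and the min/max splittings with the Lipschitz Lemmas \ref{lemma:1}--\ref{lemma:6} and the decay properties \eqref{decay:impl} for the remaining pieces. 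Only cosmetic discrepancies remain (e.g.\ the term $I_1$ needs no kernel splitting via \eqref{eq:triks}, and your intermediate bound $\abs{\tilde\Q_i\tilde\U_i/\sqP{i}}\le \sqtC{i}^3\sqP{i}\abs{\tilde\U_i}$ should read $\sqtC{i}\sqP{i}\abs{\tilde\U_i}$, though the conclusion $\bigO(1)\sqtC{i}^5$ is the correct one).
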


%------------------- appendix -------------------
\appendix

\section{Lipschitz continuity and uniformly bounded Lipschitz constants}
We need to establish that a number of complicated functions are Lipschitz continuous with uniformly bounded Lipschitz constants. In a desperate attempt to ease the readability of the main estimates, we collect these results here together with some other estimates which are essential in Section~\ref{sec:Lip}.

%----------------------------

\begin{lemma}\label{lemma:enkel}
(i) We have
\begin{align}\label{eq:enkel}
\vert e^{a_1-a_2}-e^{b_1-b_2} \vert&\le
\max\big(e^{a_1-a_2}, e^{b_1-b_2} \big) \big( \vert b_1-a_1 \vert +\vert b_2-a_2 \vert \big)  \\
&\le \vert b_1-a_1 \vert +\vert b_2-a_2 \vert , \label{eq:enkel1} \\
&\qquad\qquad\qquad\qquad a_1<a_2, \quad  b_1<b_2. \notag
\end{align}
(ii) Let $0<a\le A_j\le A$, $j=1,2$. Then we have
\begin{equation}
\vert e^{-\frac{1}{\sqtC{2}}x}-e^{-\frac{1}{\sqtC{1}}x}\vert\le 
\frac{4}{ae}e^{-\frac{3}{4A} x}\vert \sqtC{2}-\sqtC{1}\vert, \quad x\in [0,\infty).
\end{equation}
\end{lemma}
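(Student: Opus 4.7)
For part (i), the plan is to combine the elementary one-variable bound for increments of the exponential with the triangle inequality. By the mean value theorem applied to $s\mapsto e^s$ on the interval joining $u:=a_1-a_2$ and $v:=b_1-b_2$, one has $\vert e^u-e^v\vert \le \max(e^u,e^v)\,\vert u-v\vert$. Writing $u-v = (a_1-b_1)-(a_2-b_2)$ and using the triangle inequality yields $\vert u-v\vert \le \vert a_1-b_1\vert + \vert a_2-b_2\vert$, which is the first inequality. The second inequality is then immediate: the hypotheses $a_1<a_2$ and $b_1<b_2$ force both exponents to be negative, so $\max(e^u,e^v)\le 1$.

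For part (ii), I would treat the left-hand side as the increment of the smooth function $\varphi(s)=e^{-x/s}$, whose derivative is $\varphi'(s)=(x/s^2)\,e^{-x/s}$. The mean value theorem then gives
\[
\bigl\vert e^{-x/A_2}-e^{-x/A_1}\bigr\vert \;=\; \frac{x}{c^2}\,e^{-x/c}\,\vert A_2-A_1\vert
\]
for some intermediate $c$ between $A_1$ and $A_2$, in particular $a\le c\le A$. The entire estimate therefore reduces to the pointwise bound
\[
\frac{x}{c^2}\,e^{-x/c} \;\le\; \frac{4}{ae}\,e^{-3x/(4A)}, \qquad x\ge 0,\ a\le c\le A.
\]
I would factor out the required exponential decay and recast this as $(x/c^2)\,e^{3x/(4A)-x/c}\le 4/(ae)$. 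Using $c\le A$, the residual exponent satisfies $3x/(4A)-x/c\le -x/(4c)$; substituting $t=x/c\ge 0$ and using $1/c\le 1/a$, it then suffices to show $t\,e^{-t/4}\le 4/e$ for all $t\ge 0$. This last inequality is a standard one-variable calculus exercise: the maximum of $t\,e^{-t/4}$ is attained at $t=4$ with value $4/e$.

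The only delicate point — and it is really a bookkeeping matter — is the calibration of the exponential splitting. The factor $3/4$ in the target exponent $e^{-3x/(4A)}$ is chosen precisely so that after using $c\le A$ the leftover exponent is $-x/(4c)$, and so that multiplying by $x/c^2$ produces the function $t\,e^{-t/4}$ whose global supremum is the clean constant $4/e$. A different splitting would either spoil the decoupling of $c$ and $A$ or lead to a worse numerical constant than the $4/(ae)$ that appears in the statement.
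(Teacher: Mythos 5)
Your proof is correct and follows essentially the same route as the paper: in (i) you bound the increment of $e^s$ by $\max(e^u,e^v)\vert u-v\vert$ and use that both exponents are negative, and in (ii) you extract the factor $e^{-\frac{3}{4A}x}$ and reduce to the maximum of $t\,e^{-t/4}$, which is exactly the paper's calibration giving the constant $\frac{4}{ae}$. The only cosmetic difference is that you invoke the mean value theorem where the paper writes the difference as $\int_{A_1}^{A_2}\frac{x}{s^2}e^{-x/s}\,ds$, which is an equivalent device.
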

\begin{proof}
(i) The results follows from the elementary inequality
\begin{equation*}
\vert e^a-e^b\vert =\vert \int_b^a e^x \,dx \vert\le 
e^{b}\vert \int_b^a\,dx \vert\le e^{b} \vert b-a \vert, \quad a<b.
\end{equation*}
(ii)  For $x\in [0,\infty)$ one can write
\begin{align*}
\vert e^{-\frac{1}{\sqtC{2}}x}-e^{-\frac{1}{\sqtC{1}}x}\vert & = \vert \int_{\sqtC{1}}^{\sqtC{2}} \frac{1}{s^2}x e^{-\frac{1}{s}x}ds\vert \\
& \leq e^{-\frac{3}{4A}x} \vert \int_{\sqtC{1}}^{\sqtC{2}}\frac{1}{s^2} xe^{-\frac{1}{4s}x} ds\vert \\
& \leq \frac{4}{ae}e^{-\frac{3}{4A} x}\vert \sqtC{2}-\sqtC{1}\vert.
\end{align*}
Here we used in the last step that for $s\in [\ma, \A]$, the function $f\colon[0,\infty)\to [0,\infty)$ with $f(x)=\frac{1}{s^2} xe^{-\frac{1}{4s} x}$ attains its maximum at $x=4s$ and 
\begin{equation*}
0\leq f(4s)=\frac{4}{se}\leq \frac4{\ma e}.
\end{equation*}
\end{proof}

%----------------------------
\begin{lemma}  \label{lemma:1}
(i): The function $\theta\mapsto \min_j(e^{-\frac{1}{\ma}(\tilde \Y_j(t,\eta)-\tilde\Y_j(t,\theta))})$ is non-decreasing for almost every $\eta$ and thus differentiable almost everywhere. We have that
\begin{equation} \label{est:L3a_lemma}
\vert \frac{d}{d\theta}  \min_j(e^{-\frac{1}{\ma}(\tilde \Y_j(t,\eta)-\tilde\Y_j(t,\theta))})\vert
 \leq
 \frac{1}{\ma}\min_j(e^{-\frac{1}{\ma}(\tilde \Y_j(t,\eta)-\tilde\Y_j(t,\theta))}) \max_j(\tilde\Y_{j,\eta}(t,\theta)).
\end{equation}
(ii)  The function $\theta\mapsto \min_j(\tilde \U_j^2)(t,\theta)$ is  differentiable almost everywhere with  
\begin{equation}\label{est:L2c_lemma}
\vert \frac{d}{d\eta}\min_j(\tilde\U_j^2(t,\theta))\vert \leq A^4.
\end{equation}
(iii) The function  $\theta\mapsto\min_j(\tilde\V_j^+)^3(t,\theta)$ is differentiable almost everywhere with
\begin{equation}\label{est:L3c_lemma}
\big\vert \frac{d}{d\theta} \min_j(\tilde\V_j^+)^3(t,\theta)\big\vert \leq 2\A^4\min_j(\tilde\V_j^+)(t,\theta).
\end{equation}
\end{lemma}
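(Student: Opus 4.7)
The plan is to handle each item by reducing the minimum to one of its (smooth) branches at every $\theta$ where the minimum is strictly achieved, and to check that this branch has the required bound. The min--of--Lipschitz lemma recorded just before the statement justifies that each minimum is itself Lipschitz with Lipschitz constant bounded by the maximum of the component constants, so each minimum is differentiable almost everywhere by Rademacher's theorem, and at every point of differentiability the derivative coincides with the derivative of whichever branch achieves the minimum there.

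For (i), I first note that since $\tilde\Y_j(t,\cdot)$ is non--decreasing (as a pseudo--inverse), the map $\theta\mapsto e^{-\frac{1}{\ma}(\tilde\Y_j(t,\eta)-\tilde\Y_j(t,\theta))}$ is non--decreasing, so the pointwise minimum is non--decreasing and in particular differentiable almost everywhere. At such a $\theta$ let $j^\ast$ be the index attaining the minimum; then the derivative of $\min_j$ equals that of the $j^\ast$--branch, namely
\[
\tfrac{1}{\ma}e^{-\frac{1}{\ma}(\tilde\Y_{j^\ast}(t,\eta)-\tilde\Y_{j^\ast}(t,\theta))}\tilde\Y_{j^\ast,\eta}(t,\theta)
=\tfrac{1}{\ma}\min_j\!\Big(e^{-\frac{1}{\ma}(\tilde\Y_j(t,\eta)-\tilde\Y_j(t,\theta))}\Big)\tilde\Y_{j^\ast,\eta}(t,\theta),
\]
and bounding $\tilde\Y_{j^\ast,\eta}\le \max_j(\tilde\Y_{j,\eta})$ yields \eqref{est:L3a_lemma}.

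For (ii) the function $\theta\mapsto \tilde\U_j^2(t,\theta)$ is Lipschitz because $(\tilde\U_j^2)_\eta=2\tilde\U_j\tilde\U_{j,\eta}$ is uniformly bounded by $\sqtC{j}^4\le A^4$ thanks to \eqref{eq:all_estimatesF} (equivalently \eqref{evik:est} after the scaling). The min--of--Lipschitz lemma then yields \eqref{est:L2c_lemma} directly.

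For (iii) note that $(\tilde\U_j^+)^3$ is $C^1$ as a function of $\tilde\U_j$ (the map $u\mapsto (u^+)^3$ has derivative $3(u^+)^2$), so its $\theta$--derivative is $3(\tilde\U_j^+)^2\tilde\U_{j,\eta}$ almost everywhere. At a point where the minimum is strictly attained by index $j^\ast$, the derivative of $\min_j(\tilde\V_j^+)^3$ equals $3(\tilde\U_{j^\ast}^+)^2\tilde\U_{j^\ast,\eta}$, which we rewrite as $3\,\tilde\U_{j^\ast}^+\,(\tilde\U_{j^\ast}\tilde\U_{j^\ast,\eta})$ and estimate using \eqref{eq:all_estimatesF} by $\tfrac{3}{2}A^4 \min_j(\tilde\V_j^+)$, giving \eqref{est:L3c_lemma} with constant $\tfrac{3}{2}$ (and hence also with the stated constant $2$). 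The only (mild) subtlety is that the positive part $(\cdot)^+$ is not smooth at zero, but this is harmless: where $\tilde\U_{j^\ast}^+=0$ the right--hand side of \eqref{est:L3c_lemma} vanishes and the min equals $0$, so differentiability there (almost everywhere) is automatic from the min--of--Lipschitz lemma applied to the Lipschitz branches.
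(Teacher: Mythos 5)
Your proposal is correct and takes essentially the paper's approach: almost-everywhere differentiability comes from monotonicity of $\theta\mapsto e^{-\frac{1}{\ma}(\tilde \Y_j(t,\eta)-\tilde\Y_j(t,\theta))}$ in (i) and from the min-of-Lipschitz lemma in (ii)--(iii), and the derivative bound is obtained by reducing to the branch attaining the minimum and using $2\vert\tilde\U_j\tilde\U_{j,\eta}\vert\le A^4$ together with the monotonicity of $\tilde\Y_j$. The paper carries out the branch-reduction via explicit one-sided difference-quotient estimates (and, in (iii), the factorization $\vert m^3-n^3\vert\le(m+n)\vert m^2-n^2\vert$ combined with the part-(ii) type estimate for $(\tilde\V_j^+)^2$), but this is only a difference in bookkeeping, not in substance.
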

\begin{proof}
(i):  First of all note that the function $\theta\mapsto \min_j(e^{-\frac{1}{\ma}(\tilde \Y_j(t,\eta)-\tilde\Y_j(t,\theta))})$ is non-decreasing and hence differentiable almost everywhere. Consider in the following $\theta<\eta$.
Assume that for fixed $\theta$, such that the given function is differentiable, we have 
\begin{equation} \label{eq:fix_theta1}
\min_j(e^{-\frac{1}{\ma}(\tilde \Y_j(t,\eta)-\tilde\Y_j(t,\theta))})= e^{-\frac{1}{\ma}(\tilde\Y_1(t,\eta)-\tilde\Y_1(t,\theta))} 
\end{equation}
and that there exists a sequence $\theta_n\uparrow \theta$ such that 
\begin{equation*}
\min_j(e^{-\frac{1}{\ma}(\tilde\Y_j(t,\eta)-\tilde\Y_j(t,\theta_n))})= e^{-\frac{1}{\ma}(\tilde\Y_1(t, \eta)-\tilde\Y_1(t,\theta_n))}\quad \text{ for all } n.
\end{equation*}
Then we have 
\begin{align*}
\vert \min_j(e^{-\frac{1}{\ma}(\tilde\Y_j(t,\eta)-\tilde\Y_j(t,\theta))})&-\min_j(e^{-\frac{1}{\ma}(\tilde\Y_j(t, \eta)-\tilde\Y_j(t,\theta_n))})\vert\\
& \qquad= e^{-\frac{1}{\ma}(\tilde\Y_1(t,\eta)-\tilde\Y_1(t,\theta))}- e^{-\frac{1}{\ma}(\tilde\Y_1(t, \eta)-\tilde \Y_1(t,\theta_n))}\\
&\qquad \leq \frac{1}{\ma}e^{-\frac{1}{\ma}(\tilde\Y_1(t,\eta)-\tilde\Y_1(t,\theta))} (\tilde\Y_1(t,\theta)-\tilde\Y_1(t,\theta_n)),
\end{align*}
since $\tilde \Y_i(t,\dott)$ is non-decreasing.
\begin{align*}
\vert \frac{d}{d\theta}  \min_j(e^{-\frac{1}{\ma}(\tilde\Y_j(t,\eta)-\tilde\Y_j(t,\theta))})\vert 
& \leq \lim_{\theta_n\uparrow \theta } \frac{1}{\ma}e^{-\frac{1}{\ma}(\tilde\Y_1(t,\eta)-\tilde\Y_1(t,\theta))} \frac{\tilde\Y_1(t,\theta)-\tilde\Y_1(t,\theta_n)} {\theta-\theta_n}\\
& = \frac{1}{\ma}e^{-\frac{1}{\ma}(\tilde\Y_1(t,\eta)-\tilde\Y_1(t,\theta))}\tilde\Y_{1,\eta}(t,\theta)\\
& =\frac{1}{\ma} \min_j(e^{-\frac{1}{\ma}(\tilde \Y_j(t,\eta)-\tilde\Y_j(t,\theta))}) \tilde\Y_{1,\eta}(t,\theta).
\end{align*}
Assume, on the other hand, that for fixed $\theta$ we have
\begin{equation*}
\min_j(e^{-\frac{1}{\ma}(\tilde \Y_j(t,\eta)-\tilde\Y_j(t,\theta))})= e^{-\frac{1}{\ma}(\tilde\Y_1(t,\eta)-\tilde\Y_1(t,\theta))} 
\end{equation*}
and that there exists a sequence $\theta_n\downarrow \theta$ such that 
\begin{equation*}
\min_j(e^{-\frac{1}{\ma}(\tilde\Y_j(t,\eta)-\tilde\Y_j(t,\theta_n))})= e^{-\frac{1}{\ma}(\tilde\Y_1(t, \eta)-\tilde\Y_1(t,\theta_n))}\quad \text{ for all } n.
\end{equation*}
Then we have 
\begin{align*}
\vert \min_j(e^{-\frac{1}{\ma}(\tilde\Y_j(t,\eta)-\tilde\Y_j(t,\theta))})&-\min_j(e^{-\frac{1}{\ma}(\tilde\Y_j(t, \eta)-\tilde\Y_j(t,\theta_n))})\vert\\
& \qquad= e^{-\frac{1}{\ma}(\tilde\Y_1(t,\eta)-\tilde\Y_1(t,\theta_n))}- e^{-\frac{1}{\ma}(\tilde\Y_1(t, \eta)-\tilde\Y_1(t,\theta))}\\
& \qquad\leq \frac{1}{\ma}e^{-\frac{1}{\ma}(\tilde\Y_1(t,\eta)-\tilde\Y_1(t,\theta_n))} (\tilde\Y_1(t,\theta_n)-\tilde\Y_1(t,\theta)),
\end{align*}
since $\tilde\Y_i(t,\dott)$ is non-decreasing.
Thus  
\begin{align*}
\vert \frac{d}{d\theta}  \min_j(e^{-\frac{1}{\ma}(\tilde\Y_j(t,\eta)-\tilde\Y_j(t,\theta))})\vert 
& \leq \lim_{\theta_n\downarrow \theta } \frac{1}{\ma}e^{-\frac{1}{\ma}(\tilde \Y_1(t,\eta)-\tilde\Y_1(t,\theta_n))} \frac{\tilde\Y_1(t,\theta_n)-\tilde\Y_1(t,\theta)} {\theta_n-\theta}\\
& = \frac{1}{\ma}e^{-\frac{1}{\ma}(\tilde\Y_1(t,\eta)-\tilde\Y_1(t,\theta))}\tilde\Y_{1,\eta}(t,\theta)\\
& = \min_j(e^{-\frac{1}{\ma}(\tilde \Y_j(t,\eta)-\tilde\Y_j(t,\theta))}) \tilde\Y_{1,\eta}(t,\theta).
\end{align*}
Thus in the case  \eqref{eq:fix_theta1} we find
\begin{equation*}
\vert \frac{d}{d\theta}  \min_j(e^{-\frac{1}{\ma}(\tilde \Y_j(t,\eta)-\tilde\Y_j(t,\theta))})\vert  \leq
\frac{1}{\ma} \min_j(e^{-\frac{1}{\ma}(\tilde \Y_j(t,\eta)-\tilde\Y_j(t,\theta))}) \tilde\Y_{1,\eta}(t,\theta).
\end{equation*}
If we instead of \eqref{eq:fix_theta1} assume
\begin{equation*} %\label{eq:fix_theta12}
\min_j(e^{-\frac{1}{\ma}(\tilde \Y_j(t,\eta)-\tilde\Y_j(t,\theta))})= e^{-\frac{1}{\ma}(\tilde\Y_2(t,\eta)-\tilde\Y_2(t,\theta))}, 
\end{equation*}
a similar argument yields
\begin{equation*}
\vert \frac{d}{d\theta}  \min_j(e^{-\frac{1}{\ma}(\tilde \Y_j(t,\eta)-\tilde\Y_j(t,\theta))})\vert   \leq
 \min_j(e^{-\frac{1}{\ma}(\tilde \Y_j(t,\eta)-\tilde\Y_j(t,\theta))}) \tilde\Y_{2,\eta}(t,\theta).
\end{equation*}
Thus we conclude that in general we have
\begin{equation*} %\label{est:L3a}
\vert \frac{d}{d\theta}  \min_j(e^{-\frac{1}{\ma}(\tilde \Y_j(t,\eta)-\tilde\Y_j(t,\theta))})\vert
 \leq
\frac{1}{\ma} \min_j(e^{-\frac{1}{\ma}(\tilde \Y_j(t,\eta)-\tilde\Y_j(t,\theta))}) \max_j(\tilde\Y_{j,\eta})(t,\theta).
\end{equation*}

\medskip
(ii):  We have (cf.~Lemma \ref{lemma:LUR}) that
\begin{align}
\vert   \min_j(\tilde\U_j^2(t,\eta))-\min_j(\tilde\U_j^2(t,\theta))\vert&\le 
\max\big(\vert\tilde\U_1^2(t,\eta)-\tilde\U_1^2(t,\theta)\vert, \vert \tilde\U_2^2(t,\eta)-\tilde\U_2^2(t,\eta)\vert \big)\nn \\
&\le 2 \max_j\norm{\tilde\U_j\tilde\U_{j\eta}}_\infty \vert\eta-\theta\vert \le A^4\vert\eta-\theta\vert, \label{eq:L2estim}
\end{align}
using \eqref{eq:all_estimatesF}.

\medskip
(iii): Note that one has that for any positive function $m(x)$, 
\begin{align*}
\vert m^3(x)-m^3(y)\vert & = (m^2(x)+m(x)m(y)+m^2(y))\vert m(x)-m(y)\vert \notag\\
& \leq (m^2(x)+2m(x)m(y)+m^2(y))\vert m(x)-m(y)\vert \notag\\
& = (m(x)+m(y))^2 \vert m(x)-m(y)\vert\notag \\
& =(m(x)+m(y))\vert m^2(x)-m^2(y)\vert. %\label{eq:m3}
\end{align*}
If we replace $m(x)$ by $\min_j(\tilde\V_j^+)(t,\eta)$ we have 
\begin{align*}
&\vert \min_j(\tilde\V_j^+)^3(t,\eta)  - \min_j(\tilde\V_j^+)^3 (t, \theta)\vert \\
& \quad \leq (\min_j(\tilde\V_j^+)(t,\eta)+\min_j(\tilde\V_j^+)(t,\theta) )\vert \min_j(\tilde\V_j^+)^2(t,\eta)-\min_j(\tilde\V_j^+)^2(t,\theta)\vert \\
&\quad \leq (\min_j(\tilde\V_j^+)(t,\eta)+\min_j(\tilde\V_j^+)(t,\theta) )\\
&\qquad\qquad\qquad\qquad\qquad\times\max (\vert (\tilde\V_1^+)^2(t,\eta)-(\tilde\V_1^+)^2(t,\theta)\vert , \vert (\tilde \V_2^+)^2(t,\eta)-(\tilde \V_2^+)^2(t,\theta)\vert )\\
& \quad\leq A^4(\min_j(\tilde\V_j^+)(t,\eta)+\min_j(\tilde\V_j^+)(t,\theta) ) \vert \eta-\theta\vert, 
\end{align*}
see \eqref{eq:L2estim}, and hence $\min_j(\tilde \V_j^+)^3(t,\eta)$ is differentiable almost everywhere with
\begin{equation*}%\label{est:L3c}
\big\vert \frac{d}{d\theta} \min_j(\tilde\V_j^+)^3(t,\theta)\big\vert \leq 2A^4\min_j(\tilde\V_j^+)(t,\theta).
\end{equation*}
\end{proof}
%-----------------------------------------
%----------------------
\begin{lemma}\label{lemma:3}
(i) The function $\eta\mapsto\min_j(\tilde\P_j)\min_j(\tilde \V_j^+)(t,\eta)$ is 
Lipschitz continuous with a uniformly bounded Lipschitz constant and thus differentiable almost everywhere with
\begin{equation*}
\vert \frac{d}{d\eta} (\min_j(\tilde \P_j)\min_j(\tilde\V_j^+))(t,\eta)\vert\leq 2\A^4 (\min_j(\tilde \P_j)^{1/2}+\vert \tilde\U_k\vert)(t,\eta), \quad k=1,2.
\end{equation*}
(ii) The function $\eta\mapsto\min_j(\tilde\P_j)\tilde\U_k(t,\eta)$, $k=1,2$ is Lipschitz continuous with a uniformly bounded Lipschitz constant and thus differentiable almost everywhere with 
\begin{equation*}
\vert \frac{d}{d\eta} (\min_j(\tilde \P_j)\tilde\U_k)(t,\eta)\vert\leq \frac{1}{\sqrt{2}}\A^6.
\end{equation*}
\end{lemma}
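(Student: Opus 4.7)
The approach is to apply the product rule pointwise almost everywhere, using that the minimum of two absolutely continuous functions is absolutely continuous with derivative bounded almost everywhere by the maximum of the absolute values of the derivatives of the arguments (this is the elementary estimate in Lemma \ref{lemma:LUR}, exploited in the same way as in the proof of Lemma \ref{lemma:1}(ii)). Once pointwise bounds are in place, Lipschitz continuity with a uniformly bounded constant follows from integrating the pointwise derivative bound.

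The key preparatory step is a pointwise bound on $\tilde\P_{j,\eta}$. Recalling that $\P(t,\eta)=p(t,\Y(t,\eta))$ and $\Q(t,\eta)=p_x(t,\Y(t,\eta))$, we have $\P_\eta=\Q\Y_\eta$, which after rescaling via \eqref{newchange} yields $\tilde\P_{j,\eta}=\tilde\Q_j\tilde\Y_{j,\eta}/\sqtC{j}^2$. Combining this with $|\tilde\Q_j|\le \sqtC{j}\tilde\P_j$ from \eqref{eq:all_estimatesC} and $2\tilde\P_j\tilde\Y_{j,\eta}\le\sqtC{j}^5$ from \eqref{eq:all_estimatesE}, one obtains the uniform pointwise bound $|\tilde\P_{j,\eta}|\le \A^4/2$. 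For part (ii) this is enough: writing
\[
\frac{d}{d\eta}\big(\min_j(\tilde\P_j)\,\tilde\U_k\big)=\big(\min_j(\tilde\P_j)\big)_\eta\tilde\U_k+\min_j(\tilde\P_j)\,\tilde\U_{k,\eta},
\]
the first term is bounded by $\max_j|\tilde\P_{j,\eta}|\cdot|\tilde\U_k|\le (\A^4/2)(\A^2/\sqrt2)=\A^6/(2\sqrt2)$ using \eqref{eq:all_estimatesB}. For the second, I use $\tilde\U_{k,\eta}^2\le\A^3\tilde\Y_{k,\eta}$ from \eqref{eq:all_estimatesH} and \eqref{eq:all_estimatesE} to write
\[
\big(\min_j(\tilde\P_j)\,|\tilde\U_{k,\eta}|\big)^2\le\tilde\P_k^2\cdot\A^3\tilde\Y_{k,\eta}=\A^3\tilde\P_k(\tilde\P_k\tilde\Y_{k,\eta})\le \tfrac{\A^8}{2}\tilde\P_k\le \tfrac{\A^{12}}{8},
\]
so the second term is also bounded by $\A^6/(2\sqrt2)$, and summing gives $\A^6/\sqrt2$.

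Part (i) is the main obstacle, since the derivative of $\min_j(\tilde\V_j^+)$ reduces almost everywhere to $\tilde\U_{j^*,\eta}$ at the index $j^*$ achieving the minimum (and where $\tilde\U_{j^*}>0$), and $\tilde\U_{j,\eta}$ is not pointwise bounded in $\A$, only in the square-integrable sense through $\tilde\U_{j,\eta}^2\le\A^3\tilde\Y_{j,\eta}$. The rescue is exactly the multiplication by $\min_j(\tilde\P_j)$: the computation carried out above for part (ii) shows
\[
\min_j(\tilde\P_j)\,\big|\big(\min_j(\tilde\V_j^+)\big)_\eta\big|\le \tilde\P_{j^*}|\tilde\U_{j^*,\eta}|\le \frac{\A^4}{\sqrt2}\,\big(\min_j(\tilde\P_j)\big)^{1/2},
\]
which is the source of the $(\min_j\tilde\P_j)^{1/2}$ term in the claimed bound. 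For the remaining product rule contribution, I use $\min_j(\tilde\V_j^+)\le|\tilde\U_k|$ for any $k\in\{1,2\}$ together with the pointwise bound $|(\min_j\tilde\P_j)_\eta|\le\A^4/2$ established in the preparatory step, yielding $(\A^4/2)|\tilde\U_k|$. Adding the two contributions gives $\tfrac{\A^4}{\sqrt2}(\min_j\tilde\P_j)^{1/2}+\tfrac{\A^4}{2}|\tilde\U_k|\le 2\A^4\big((\min_j\tilde\P_j)^{1/2}+|\tilde\U_k|\big)$, which is the announced estimate.
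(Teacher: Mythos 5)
Your core estimates are the same ones the paper uses: $\tilde\P_{j,\eta}=\tilde\Q_j\tilde\Y_{j,\eta}/\sqtC{j}^2$ together with \eqref{eq:all_estimatesC} and \eqref{eq:all_estimatesE} gives $\vert\tilde\P_{j,\eta}\vert\le\A^4/2$, the combination of \eqref{eq:all_estimatesH} with \eqref{eq:all_estimatesE} controls $\tilde\P\,\vert\tilde\U_\eta\vert$, and $\min_j(\tilde\V_j^+)\le\vert\tilde\U_k\vert$ supplies the $\vert\tilde\U_k\vert$ term for either $k$; your part (ii) is in fact essentially identical to the paper's. But two points need attention. First, the displayed chain $\min_j(\tilde\P_j)\vert(\min_j(\tilde\V_j^+))_\eta\vert\le\tilde\P_{j^*}\vert\tilde\U_{j^*,\eta}\vert\le\frac{\A^4}{\sqrt2}(\min_j\tilde\P_j)^{1/2}$ is not valid as written: your part (ii) computation only yields $\tilde\P_{j^*}\vert\tilde\U_{j^*,\eta}\vert\le\frac{\A^4}{\sqrt2}\tilde\P_{j^*}^{1/2}$, and $\tilde\P_{j^*}$ may be the larger of the two pressures. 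The repair is to keep one factor of the minimum before trading the other for $\tilde\P_{j^*}$:
\begin{equation*}
\big(\min_j(\tilde\P_j)\big)^2\tilde\U_{j^*,\eta}^2\le \min_j(\tilde\P_j)\,\tilde\P_{j^*}\,\sqtC{j^*}^3\tilde\Y_{j^*,\eta}\le \tfrac12\sqtC{j^*}^8\min_j(\tilde\P_j),
\end{equation*}
which gives the claimed $\frac{\A^4}{\sqrt2}(\min_j\tilde\P_j)^{1/2}$; this is exactly how the paper bounds $\vert\tilde\U_{2,\eta}\vert\min_j(\tilde\P_j)$ inside its increment estimates.

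Second, your logical order is the reverse of the paper's, and the step "a.e.\ derivative bound, then integrate to get Lipschitz" is the thin point: a function differentiable a.e.\ with bounded derivative need not be Lipschitz, so you must first know that $\min_j(\tilde\P_j)\min_j(\tilde\V_j^+)$ is absolutely continuous, which in turn requires absolute continuity of $\tilde\U_j(t,\cdot)$ (hence of $\tilde\V_j^+$) in $\eta$ — asserted but not verified in your argument, and not completely innocent since $\tilde\U_{j,\eta}$ is only controlled through $\tilde\U_{j,\eta}^2\le\sqtC{j}^3\tilde\Y_{j,\eta}$ with $\tilde\Y_{j,\eta}$ unbounded near the endpoints. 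The paper's proof of (i) is designed precisely to avoid invoking such general facts: it estimates the increment $d(t,\eta_2)-d(t,\eta_1)$ directly, distinguishing the cases where $\min_j(\tilde\P_j)$ or $\min_j(\tilde\V_j^+)$ vanishes and splitting the interval at the points where the minimizing index of $\tilde\V_j^+$ switches or $\tilde\V_1^+$ vanishes (so that the boundary terms telescope and $\vert\tilde\U_1\vert\le\vert\tilde\U_2\vert$ can be used on the relevant subintervals), thereby proving Lipschitz continuity first and only then extracting the pointwise derivative bound from the refined increment inequality. Your route is shorter and the estimates coincide, but you should either justify the absolute continuity you use or recast the argument as an increment estimate in the paper's spirit; with the corrected inequality above and that regularity point addressed, your proof is sound.
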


\begin{proof}
(i) We only present the proof for the case $k=2$, since the case $k=1$ is similar. 

Given $0\leq \eta_1<\eta_2\leq 1$, we assume without loss of generality that
\begin{equation*}
0\leq \min_j(\tilde \P_j)\min_j(\tilde\V_j^+)(t,\eta_2)-\min_j(\tilde\P_j)\min_j(\tilde\V_j^+)(t,\eta_1).
\end{equation*}
To ease the notation we introduce the function
\begin{equation*}
d(t,\eta)=\min_j(\tilde \P_j)\min_j(\tilde\V_j^+)(t,\eta).
\end{equation*}
We will distinguish several cases:

(1): If $d(t,\eta_2)=0$, then $d(t,\eta_1)=0$ and one has 
\begin{align*}
0&\leq \min_j(\tilde\V_j^+)(t,\eta_2)\vert \min_j(\tilde\P_j)(t,\eta_2)-\min_j(\tilde\P_j)(t,\eta_1)\vert )\\
 & \leq \min_j(\tilde\V_j^+)(t,\eta_2)\vert (\vert \tilde\P_1(t,\eta_2)-\tilde\P_1(t,\eta_1)\vert +\vert \tilde\P_2(t,\eta_2)-\tilde\P_2(t,\eta_1)\vert )\\
 & \leq \A^4\min_j(\tilde\V_j^+)(t,\eta_2)\vert \eta_2-\eta_1\vert.
 \end{align*}
 
(2): If $d(t,\eta_2)>0$, then $\min_j(\tilde\P_j)(t,\eta_2)>0$ and $\min_j(\tilde\V_j^+)(t,\eta_2)>0$ or equivalently
\begin{equation*}
\tilde\P_1(t,\eta_2)>0, \quad \tilde\P_2(t,\eta_2)>0, \quad \tilde \U_1(t,\eta_2)>0, \quad \text{and}\quad \tilde\U_2(t,\eta_2)>0.
\end{equation*}

(2a): Assume that $d(t,\eta_1)=0$ and $\min_j(\tilde\P_j)(t,\eta_1)=0$, then 
\begin{align*}
0& \leq d(t,\eta_2)-d(t,\eta_1)\\
& \leq (\min_j(\tilde\P_j)(t,\eta_2)-\min_j(\tilde\P_j)(t,\eta_1))\min_j(\tilde\V_j^+)(t,\eta_2)\\
& \leq \min_j(\tilde\V_j^+)(t,\eta_2)(\vert \tilde \P_1(t,\eta_2)-\tilde\P_1(t,\eta_1)\vert +\vert \tilde\P_2(t,\eta_2)-\tilde \P_2(t,\eta_1)\vert )\\
&\leq \A^4 \min_j(\tilde\V_j^+)(t,\eta_2)\vert \eta_2-\eta_1\vert.
\end{align*} 

(2b): Assume that $d(t,\eta_1)=0$ and $\min_j(\tilde\V_j^+)(t,\eta_1)=0$, then one either has that 

(i): $\tilde \U_2(t,\eta_1)\leq \tilde \V_2^+(t,\eta_1)=\min_j(\tilde \V_j^+)(t,\eta_1)$, and we can write 
\begin{align*}
0&\leq d(t,\eta_2)-d(t,\eta_1)\\
& \leq \tilde \U_2\min_j(\tilde\P_j)(t,\eta_2)-\tilde\U_2\min_j(\tilde\P_j)(t,\eta_1)\\
& \leq \int_{\eta_1}^{\eta_2} \Big(\tilde \U_{2,\eta} \min_j(\tilde\P_j)+ \tilde \U_2\frac{d}{d\theta} \min_j(\tilde \P_j)\Big) (t,s)ds\\
& \leq \int_{\eta_1}^{\eta_2} \Big(\vert \tilde\U_{2,\eta} \vert \min_j(\tilde \P_j) + \vert \tilde \U_2\vert (\frac{1}{\sqtC{1}}\tilde \P_1\tilde \Y_{1,\eta}+\frac{1}{\sqtC{2}}\tilde \P_2\tilde\Y_{2,\eta} )\Big)(t,s) ds\\
& \leq \frac{\A^4}{\sqrt{2}}\int_{\eta_1}^{\eta_2} \min_j(\tilde\P_j)^{1/2} (t,s) ds+ \A^4\int_{\eta_1}^{\eta_2} \vert \tilde\U_2\vert (t,s) ds\\
& \leq \frac{3}{2\sqrt{2}}\A^6 \vert \eta_2-\eta_1\vert,
\end{align*}
or 

(ii): $\min_j(\tilde\V_j^+)(t,\eta_1)=\tilde \V_1^+(t,\eta_1)$. Then there exists a maximal interval $[\eta_1, a]$ such that $\tilde \V_1^+(t,s)<\tilde \V_2^+(t,s)$ for all $s\in [\eta_1, a)$ and $\tilde \V_1^+(t,a)=\tilde \V_2^+(t,a)$. Moreover, there exists a maximal interval $[b,a]\subset [\eta_1, a]$ such that $\tilde \V_1^+(t,s)>0$ for all $s\in (b,a]$ and $\tilde \V_1^+(t,b)=0$.
Hence we can write
\begin{align*}
0&\leq d(t,\eta_2)-d(t,\eta_1)\\
& \leq d(t,\eta_2)-d(t,a)+d(t,a)-d(t,b)\\
& \leq \tilde \U_2\min_j(\tilde\P_j)(t,\eta_2)-\tilde\U_2\min_j(\tilde \P_j)(t,a)\\
& \qquad +\tilde \U_1\min_j(\tilde\P_j)(t,a)-\tilde \U_1\min_j(\tilde \P_j) (t,b) \\
& \leq \int_{a}^{\eta_2}\Big( \tilde\U_{2,\eta} \min_j(\tilde \P_j) + \tilde\U_2 \frac{d}{d\eta} \min_j(\tilde\P_j)\Big)(t,s) ds\\
& \quad +\int_b^a\Big( \tilde\U_{1,\eta} \min_j(\tilde \P_j) + \tilde\U_1 \frac{d}{d\eta} \min_j(\tilde\P_j)\Big)(t,s) ds\\
& \leq \int_a^{\eta_2} \Big(\frac{\A^4}{\sqrt{2}} \min_j(\tilde \P_j)^{1/2}  + \A^4 \vert \tilde\U_2\vert\Big) (t,s)ds\\
& \qquad + \int_b^a \Big(\frac{\A^4}{\sqrt{2}} \min_j(\tilde\P_j)^{1/2} +\A^4 \vert \tilde \U_1\vert\Big) (t,s) ds\\
& \leq \frac{\A^4}{\sqrt{2}}\int_b^{\eta_2} \min_j(\tilde \P_j)^{1/2}(t,s)ds + \A^4\int_b^{\eta_2} \vert \tilde\U_2\vert (t,s) ds\\
& \leq \frac{\A^4}{\sqrt{2}}\int_{\eta_1}^{\eta_2} \min_j(\tilde \P_j)^{1/2}(t,s)ds + \A^4\int_{\eta_1}^{\eta_2} \vert \tilde\U_2\vert (t,s) ds\\
& \leq\frac{3}{2\sqrt{2}}\A^6\vert \eta_2-\eta_1\vert.
\end{align*}

Note, in the case that $\tilde \U_1^+(t,s)<\tilde \U_2^+(t,s)$ for all $s\in [\eta_1, \eta_2]$, the estimate starts with
\begin{equation*}
0\leq d(t,\eta_2)-d(t,\eta_1)\leq \tilde \U_1\min_j(\tilde\P_j)(t,\eta_2)-\tilde \U_1\min_j(\tilde \P_j) (t,b),
\end{equation*}
where $(b,\eta_2]$ denotes the maximal interval such that $\tilde \U_1^+(t,s)>0$ for all $s\in (b, \eta_2]$.
(2c): Assume that $d(t,\eta_1)>0$, then $\min_j(\tilde \P_j)(t,\eta_1)>0$ and $\min_j(\tilde\V_j^+)(t,\eta_1)=\min_j(\tilde \U_j)(t,\eta_1)>0$. Then one either has that

(i): $\min_j(\tilde \U_j)(t,\eta_1)=\tilde\U_2(t,\eta_1)$, and we have (as before)
\begin{align*}
0&\leq d(t,\eta_2)-d(t,\eta_1)\\
& \leq \tilde \U_2\min_j(\tilde\P_j)(t,\eta_2)-\tilde\U_2\min_j(\tilde\P_j)(t,\eta_1)\\
& \leq\frac{\A^4}{\sqrt{2}} \int_{\eta_1}^{\eta_2}  \min_j(\tilde\P_j)^{1/2}(t,s)ds +\A^4\int_{\eta_1}^{\eta_2}\vert \tilde\U_2\vert (t,s) ds\\
& \leq \frac{3}{2\sqrt{2}}\A^6 \vert \eta_2-\eta_1\vert,
\end{align*}
 or 
 
 (ii): $\min_j(\tilde\U_j)(t,\eta_1)=\tilde\U_1(t,\eta_1)$. Then there exists a maximal interval $[\eta_1, a]$ such that $\tilde \V_1^+(t,s)<\tilde \V_2^+(t,s)$ for all $s\in[\eta_1, a)$ and $\tilde \V_1^+(t,a)=\tilde\V_2^+(t,a)$. Moreover, there exists a maximal interval $[b,a]\subset [\eta_1, a]$ such that $\tilde \V_1^+(t,s)>0$ for all $s\in (b,a]$ and $\tilde \V_1^+(t,b)=0$. Hence we can write
 \begin{align*}
 0&\leq d(t,\eta_2)-d(t,\eta_1)\\
 & \leq d(t,\eta_2)-d(t,a)+d(t,a)-d(t,b)\\
 & \leq \tilde \U_2\min_j(\tilde\P_j)(t,\eta_2)-\tilde\U_2\min_j(\tilde \P_j)(t,a)\\
& \qquad +\tilde \U_1\min_j(\tilde\P_j)(t,a)-\tilde \U_1\min_j(\tilde \P_j) (t,b) \\
& \leq \int_{a}^{\eta_2}\Big( \tilde\U_{2,\eta} \min_j(\tilde \P_j) + \tilde\U_2 \frac{d}{d\eta} \min_j(\tilde\P_j)\Big)(t,s) ds\\
& \quad +\int_b^a \Big(\tilde\U_{1,\eta} \min_j(\tilde \P_j) + \tilde\U_1 \frac{d}{d\eta} \min_j(\tilde\P_j)\Big)(t,s) ds\\
& \leq \int_a^{\eta_2} \Big(\frac{\A^4}{\sqrt{2}} \min_j(\tilde \P_j)^{1/2}  + \A^4 \vert \tilde\U_2\vert\Big)(t,s)ds\\
& \qquad + \int_b^a \Big(\frac{\A^4}{\sqrt{2}} \min_j(\tilde\P_j)^{1/2} +\A^4 \vert \tilde\U_1\vert\Big) (t,s) ds\\
& \leq \frac{\A^4}{\sqrt{2}}\int_b^{\eta_2} \min_j(\tilde \P_j)^{1/2}(t,s)ds + \A^4\int_b^{\eta_2} \vert \tilde\U_2\vert (t,s) ds\\
& \leq \frac{\A^4}{\sqrt{2}}\int_{\eta_1}^{\eta_2} \min_j(\tilde \P_j)^{1/2}(t,s)ds + \A^4\int_{\eta_1}^{\eta_2} \vert \tilde\U_2\vert (t,s) ds\\
& \leq \frac{3}{2\sqrt{2}}\A^6 \vert \eta_2-\eta_1\vert.
\end{align*}

Note, in the case that $\tilde \U_1^+(t,s)<\tilde \U_2^+(t,s)$ for all $s\in [\eta_1, \eta_2]$, the estimate starts with
\begin{equation*}
0\leq d(t,\eta_2)-d(t,\eta_1)\leq \tilde \U_1\min_j(\tilde\P_j)(t,\eta_2)-\tilde \U_1\min_j(\tilde \P_j) (t,b),
\end{equation*}
where $(b,\eta_2]$ denotes the maximal interval such that $\tilde \U_1^+(t,s)>0$ for all $s\in (b, \eta_2]$.

Thus we showed that 
\begin{equation*}
\vert d(t,\eta_2)-d(t,\eta_1)\vert \leq\frac{3}{2\sqrt{2}}\A^6\vert \eta_2-\eta_1\vert,
\end{equation*}
or, in other words,
$d(t,\dott)= \min_j(\tilde \P_j)\min_j(\tilde\V_j^+)(t,\dott)$ is Lipschitz continuous with Lipschitz constant $\frac{3}{2\sqrt{2}}\A^6$, which is independent of time, and thus differentiable almost everywhere. 
Moreover a closer look reveals that 
\begin{align*}
\vert d(t,\eta_2)-d(t,\eta_1)\vert&  \leq \A^4\min_j(\tilde\V_j^+)(t,\eta_2)\vert \eta_2-\eta_1\vert \\ 
& \qquad+ \frac{\A^4}{\sqrt{2}}\vert \int_{\eta_1}^{\eta_2} \min_j(\tilde \P_j)^{1/2}(t,s) ds\vert + \A^4 \vert \int_{\eta_1}^{\eta_2} \vert \tilde \U_2\vert (t,s) ds\vert. 
\end{align*}
Since both $\vert \tilde\U_2\vert (t,\dott)$ and $\min_j(\tilde\P_j)^{1/2}(t,\dott)$ are continuous, the fundamental theorem of calculus implies that 
\begin{align*}
\left\vert \frac{d(t,\eta_2)-d(t,\eta_1)}{\eta_2-\eta_1}\right\vert& \leq\A^4\min_j(\tilde\V_j^+)(t,\eta_2)\\
& \qquad + \frac{\A^4}{\sqrt{2}}\min_j(\tilde\P_j)^{1/2} (t,\tilde\eta)+ \A^4\vert \tilde \U_2\vert (t,\tilde\eta)
\end{align*} 
for some $\tilde \eta$ between $\eta_1$ and $\eta_2$. Letting $\eta_2\to \eta_1$ we thus obtain for almost every $\eta$ that 
\begin{align*}
\left\vert \frac{d}{d\eta} \min_j(\tilde \P_j) \min_j(\tilde\V_j^+)(t,\eta)\right\vert &=\vert \frac{d}{d\eta} d(t,\eta) \vert \\
& \leq 2\A^4( \min_j(\tilde \P_j)^{1/2} +\vert \tilde\U_2\vert)(t,\eta).
\end{align*}

(ii) We have that
\begin{align*}\nonumber
\vert \min_j(\tilde\P_j)(t,\eta)& -\min_j(\tilde\P_j)(t,\tilde \eta)\vert \\ \nn
& \leq \max \big(\vert\tilde \P_1(t,\eta)-\tilde\P_1(t,\tilde\eta)\vert , \vert \tilde\P_2(t,\eta)- \tilde\P_2(t,\tilde\eta)\vert \big)\\% \label{der:estminP}
& \leq \frac{\A^4}{2}\vert \eta-\tilde\eta\vert.
\end{align*}
Thus, for almost every $\eta$,
\begin{align*}
\vert\frac{d}{d\eta}(\min_j(\tilde\P_j)\tilde\U_k)(t,\eta)\vert&= \vert\big(\frac{d}{d\eta}\min_j(\tilde\P_j)\big)\tilde\U_k(t,\eta)+\min_j(\tilde\P_j)\tilde\U_{k,\eta}(t,\eta)\vert\\
&\le \frac{\A^4}{2}\norm{\tilde\U_k}_\infty+\norm{\tilde\P_k\tilde\U_{k,\eta}}_\infty\\
&\le \frac{1}{\sqrt{2}}\A^6.
\end{align*}
\end{proof}
%-------------------
%----------- 
\begin{lemma}\label{lemma:5}
(i) The function $\eta\mapsto\min_j(\tilde\D_j)\min_j(\tilde \V_j^+)(t,\eta)$ is Lipschitz continuous with a uniformly bounded Lipschitz constant and thus differentiable almost everywhere with
\begin{equation*}
\vert \frac{d}{d\eta} (\min_j(\tilde\D_j)\min_j(\tilde\V_j^+))(t,\eta)\vert\leq \bigO(1)\sqrt{\A}\A^4 (\min_j(\tilde\D_j)^{1/2}+\vert \tilde\U_k\vert)(t,\eta), \quad k=1,2.
\end{equation*}
(ii) The function $\eta\mapsto\min_j(\tilde\D_j)\tilde\U_k(t,\eta)$, $k=1,2$ Lipschitz continuous with a uniformly bounded Lipschitz constant and thus differentiable almost everywhere with 
\begin{equation*}
\vert \frac{d}{d\eta} (\min_j(\tilde \D_j)\tilde\U_k)(t,\eta)\vert\leq \bigO(1)\A^7.
\end{equation*}
\end{lemma}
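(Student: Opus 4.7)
\textbf{Proof plan for Lemma \ref{lemma:5}.} The structure closely mirrors the proof of Lemma \ref{lemma:3}, with $\tilde\P_j$ replaced throughout by $\tilde\D_j$. The essential new ingredient is the derivative bound \eqref{eq:D_deriv}, namely $|\tilde\D_{j,\eta}|\le\bigO(1)\sqtC{j}^5\le\bigO(1)A^5$, together with the pointwise estimate $0\le\tilde\D_j\le2\sqtC{j}\tilde\P_j$ from \eqref{eq:all_estimatesN}. These, combined with the known bounds \eqref{eq:all_estimatesP} ($2\sqrt{2}\tilde\P\tilde\U_\eta\le A^6$) and \eqref{eq:all_estimatesQ} ($2\tilde\P\tilde\U_\eta^2\le A^8$), are what make both estimates possible. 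In particular one immediately obtains $\tilde\D_j|\tilde\U_{j,\eta}|\le2\sqtC{j}\tilde\P_j|\tilde\U_{j,\eta}|\le A^7/\sqrt{2}$, which is the key uniform bound absent from the $\tilde\P$-setting of Lemma \ref{lemma:3}.

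For part (ii), the argument is short. Observe that $\eta\mapsto\min_j(\tilde\D_j)(t,\eta)$ is Lipschitz with constant $\bigO(1)A^5$ by the same elementary max-of-two argument used for $\min_j(\tilde\P_j)$ in Lemma \ref{lemma:3}(ii), since each $\tilde\D_j$ is itself Lipschitz in $\eta$ with that bound. Then, writing
\[
\tfrac{d}{d\eta}(\min_j(\tilde\D_j)\tilde\U_k)=(\tfrac{d}{d\eta}\min_j(\tilde\D_j))\,\tilde\U_k+\min_j(\tilde\D_j)\,\tilde\U_{k,\eta},
\]
the first term is at most $\bigO(1)A^5\cdot\|\tilde\U_k\|_{L^\infty}\le\bigO(1)A^7$ by \eqref{eq:all_estimatesB}, and the second is at most $2\sqtC{k}\tilde\P_k|\tilde\U_{k,\eta}|\le A^7/\sqrt{2}$ by \eqref{eq:all_estimatesP}.

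For part (i), I would follow the case analysis of Lemma \ref{lemma:3}(i) almost verbatim. Setting $d(t,\eta)=\min_j(\tilde\D_j)\min_j(\tilde\V_j^+)(t,\eta)$ and taking $0\le\eta_1<\eta_2\le1$ with (WLOG) $d(t,\eta_1)\le d(t,\eta_2)$, split into the cases (1) $d(t,\eta_2)=0$; (2a) $d(t,\eta_2)>0$ but $\min_j\tilde\D_j(t,\eta_1)=0$; (2b) $d(t,\eta_2)>0$ but $\min_j\tilde\V_j^+(t,\eta_1)=0$; (2c) $d(t,\eta_1)>0$. In each case we either use directly the Lipschitz continuity of $\min_j(\tilde\D_j)$ against the bounded $\min_j(\tilde\V_j^+)$, or we insert intermediate points $a,b\in[\eta_1,\eta_2]$ where $\tilde\V_1^+(t,a)=\tilde\V_2^+(t,a)$ or $\tilde\V_j^+(t,b)=0$ to reduce to a single-branch computation. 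On such a branch $\tfrac{d}{d\eta}(\tilde\D_j\tilde\U_j)=\tilde\D_{j,\eta}\tilde\U_j+\tilde\D_j\tilde\U_{j,\eta}$, and the first piece is dominated by $\bigO(1)A^5|\tilde\U_j|$, while the second requires a Cauchy--Schwarz split
\[
\tilde\D_j|\tilde\U_{j,\eta}|\le2A\,\sqrt{\tilde\P_j}\cdot\sqrt{\tilde\P_j\tilde\U_{j,\eta}^2}\le\sqrt{2}\,A^5\sqrt{\tilde\P_j}\le\bigO(1)\sqrt{A}\,A^4\sqrt{\tilde\D_j}^{\,\cdot}
\]
where the last inequality rewrites $\sqrt{\tilde\P_j}$ using $\tilde\P_j\le A^4/4$ combined with $\tilde\D_j\le 2A\tilde\P_j$ (giving $\sqrt{\tilde\P_j}\le (A/\sqrt{A})\sqrt{\tilde\D_j/(2A)}\cdot\bigO(1)$ up to the numerology). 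After integrating over $[\eta_1,\eta_2]$ (and over the intermediate subintervals when we had to split) one obtains
\[
|d(t,\eta_2)-d(t,\eta_1)|\le\bigO(1)A^7|\eta_2-\eta_1|+\bigO(1)\sqrt{A}\,A^4\int_{\eta_1}^{\eta_2}\bigl(\min_j(\tilde\D_j)^{1/2}+|\tilde\U_k|\bigr)(t,s)\,ds,
\]
with continuous integrand, so the fundamental theorem of calculus yields the claimed pointwise derivative bound.

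The main obstacle is bookkeeping: the case analysis for (i) has several subcases (the minimum of two nonnegative quantities may vanish in various patterns), and one must be careful that when $\min_j(\tilde\V_j^+)$ switches branches one inserts the right crossing point $a$ and, if necessary, a further point $b$ where the winning branch first became positive; this is exactly what Lemma \ref{lemma:3}(i) already handles, so I would essentially transcribe that argument, replacing $\tilde\P_j$ with $\tilde\D_j$ everywhere and invoking the two estimates $|\tilde\D_{j,\eta}|\le\bigO(1)A^5$ and $\tilde\D_j|\tilde\U_{j,\eta}|\le\bigO(1)\sqrt{A}A^4\sqrt{\tilde\P_j}$ in place of the corresponding $\tilde\P$-bounds.
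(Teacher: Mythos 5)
Your overall strategy is exactly the paper's: part (ii) is proved there just as you do (Lipschitz continuity of $\min_j(\tilde\D_j)$ with constant $\bigO(1)\A^5$ from \eqref{eq:D_deriv}, then the product rule with $\norm{\tilde\U_k}_{L^\infty}\le \A^2/\sqrt{2}$ and $\min_j(\tilde\D_j)\vert\tilde\U_{k,\eta}\vert\le 2\A\tilde\P_k\vert\tilde\U_{k,\eta}\vert\le \A^7/\sqrt{2}$), and part (i) is indeed a transcription of the case analysis of Lemma \ref{lemma:3}(i) with $\tilde\P_j$ replaced by $\tilde\D_j$, the contribution of $\tilde\U_j\,\frac{d}{d\eta}\min_j(\tilde\D_j)$ being absorbed into $\bigO(1)\A^5\vert\tilde\U_j\vert$.

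There is, however, a genuine gap in the single new estimate of part (i). Your Cauchy--Schwarz split gives $\min_j(\tilde\D_j)\vert\tilde\U_{j,\eta}\vert\le 2\A\tilde\P_j\vert\tilde\U_{j,\eta}\vert\le\sqrt{2}\,\A^5\tilde\P_j^{1/2}$, and you then try to rewrite $\tilde\P_j^{1/2}$ in terms of $\min_j(\tilde\D_j)^{1/2}$ using $\tilde\D_j\le 2\A\tilde\P_j$. That inequality goes the wrong way: it only yields $\tilde\D_j^{1/2}\le\sqrt{2\A}\,\tilde\P_j^{1/2}$, not an upper bound of $\tilde\P_j^{1/2}$ by $\bigO(1)\A^{-1/2}\tilde\D_j^{1/2}$; indeed $\tilde\D_j$ vanishes (e.g.\ as $\eta\to0$, where $\tilde\P_j=\frac{1}{2\sqtC{j}}(\tilde\D_j+\tilde\E_j)$ stays positive through $\tilde\E_j$), so no such reverse bound holds, and your intermediate bound $\bigO(1)\A^5\tilde\P_j^{1/2}$ does not imply the stated estimate $\bigO(1)\sqrt{\A}\,\A^4(\min_j(\tilde\D_j)^{1/2}+\vert\tilde\U_k\vert)$. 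The fix is the factorization the paper uses: write $\min_j(\tilde\D_j)\vert\tilde\U_{j,\eta}\vert=\min_j(\tilde\D_j)^{1/2}\cdot\big(\min_j(\tilde\D_j)^{1/2}\vert\tilde\U_{j,\eta}\vert\big)$ and bound the second factor by $\tilde\D_j^{1/2}\vert\tilde\U_{j,\eta}\vert\le\sqrt{2\sqtC{j}}\,\tilde\P_j^{1/2}\vert\tilde\U_{j,\eta}\vert\le\sqrt{\A}\,\A^4$, using \eqref{eq:all_estimatesN} together with \eqref{eq:all_estimatesQ}; this keeps the surviving factor $\min_j(\tilde\D_j)^{1/2}$ intact and yields exactly the claimed pointwise bound after the same crossing-point bookkeeping you describe.
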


\begin{proof}
(i) We only present the proof for the case $k=2$, since the case $k=1$ is similar. 

Given $0\leq \eta_1<\eta_2\leq 1$, we assume without loss of generality that
\begin{equation*}
0\leq \min_j(\tilde\D_j)\min_j(\tilde\V_j^+)(t,\eta_2)-\min_j(\tilde\D_j)\min_j(\tilde\V_j^+)(t,\eta_1).
\end{equation*}
To ease the notation we introduce the function
\begin{equation*}
\bar d(t,\eta)=\min_j(\tilde\D_j)\min_j(\tilde\V_j^+)(t,\eta).
\end{equation*}
We will distinguish several cases:

(1): If $\bar d(t,\eta_2)=0$, then $\bar d(t,\eta_1)=0$ and one has 
\begin{align*}
0&\leq \min_j(\tilde\V_j^+)(t,\eta_2)\vert \min_j(\tilde\D_j)(t,\eta_2)-\min_j(\tilde \D_j)(t,\eta_1)\vert )\\
 & \leq \min_j(\tilde\V_j^+)(t,\eta_2)\vert (\vert \tilde\D_1(t,\eta_2)-\tilde\D_1(t,\eta_1)\vert +\vert \tilde\D_2(t,\eta_2)-\tilde\D_2(t,\eta_1)\vert )\\
 & \leq \bigO(1)\A^5\min_j(\tilde\V_j^+)(t,\eta_2)\vert \eta_2-\eta_1\vert
 \end{align*}

(2): If $\bar d(t,\eta_2)>0$, then $\min_j(\tilde\D_j)(t,\eta_2)>0$ and $\min_j(\tilde\V_j^+)(t,\eta_2)>0$ or equivalently
\begin{equation*}
\tilde\D_1(t,\eta_2)>0, \quad \tilde\D_2(t,\eta_2)>0, \quad \tilde \U_1(t,\eta_2)>0, \quad \text{and}\quad \tilde\U_2(t,\eta_2)>0.
\end{equation*}

(2a): Assume that $\bar d(t,\eta_1)=0$ and $\min_j(\tilde \D_j)(t,\eta_1)=0$, then 
\begin{align*}
0& \leq \bar d(t,\eta_2)-\bar d(t,\eta_1)\\
& \leq (\min_j(\tilde\D_j)(t,\eta_2)-\min_j(\tilde\D_j)(t,\eta_1))\min_j(\tilde\V_j^+)(t,\eta_2)\\
& \leq \min_j(\tilde\V_j^+)(t,\eta_2)(\vert \tilde \D_1(t,\eta_2)-\tilde\D_1(t,\eta_1)\vert +\vert \tilde\D_2(t,\eta_2)-\tilde \D_2(t,\eta_1)\vert )\\
&\leq \bigO(1)\A^5 \min_j(\tilde\V_j^+)(t,\eta_2)\vert \eta_2-\eta_1\vert.
\end{align*} 

(2b): Assume that $\bar d(t,\eta_1)=0$ and $\min_j(\tilde\V_j^+)(t,\eta_1)=0$, then one either has that 

(i): $\tilde \U_2(t,\eta_1)\leq \tilde \V_2^+(t,\eta_1)=\min_j(\tilde \V_j^+)(t,\eta_1)$ and we can write 
\begin{align*}
0&\leq d(t,\eta_2)-d(t,\eta_1)\\
& \leq \tilde \U_2\min_j(\tilde\D_j)(t,\eta_2)-\tilde\U_2\min_j(\tilde\D_j)(t,\eta_1)\\
& \leq \int_{\eta_1}^{\eta_2} \tilde \U_{2,\eta} \min_j(\tilde\D_j)(t,s)+ \tilde \U_2\frac{d}{d\theta} \min_j(\tilde\D_j)(t,s) ds\\
& \leq \int_{\eta_1}^{\eta_2} \vert \tilde \U_{2,\eta} \vert \min_j(\tilde\D_j)(t,s) ds\\
& \quad +\int_{\eta_1}^{\eta_2} \vert \tilde \U_2\vert \Big(\vert (\tilde\U_1^2-\tilde\P_1)\tilde\Y_{1,\eta}-\frac{1}{\sqtC{1}}\tilde\D_1\tilde\Y_{1,\eta}+\frac12 \sqtC{1}^5\vert \\
&\qquad\qquad\qquad\qquad+\vert (\tilde\U_2^2-\tilde\P_2)\tilde\Y_{2,\eta}-\frac{1}{\sqtC{2}}\tilde\D_2\tilde\Y_{2,\eta}+\frac12 \sqtC{2}^5\vert \Big)(t,s) ds\\
& \leq \sqrt{\A}\A^4\int_{\eta_1}^{\eta_2} \min_j(\tilde\D_j)^{1/2} (t,s) ds+ \bigO(1)\A^5 \int_{\eta_1}^{\eta_2} \vert \tilde\U_2\vert (t,s) ds\\
& \leq \bigO(1)\A^7 \vert \eta_2-\eta_1\vert
\end{align*}
or 

(ii): $\min_j(\tilde\V_j^+)(t,\eta_1)=\tilde \V_1^+(t,\eta_1)$. Then there exists a maximal interval $[\eta_1, a]$ such that $\tilde \V_1^+(t,s)<\tilde \V_2^+(t,s)$ for all $s\in [\eta_1, a)$ and $\tilde \V_1^+(t,a)=\tilde \V_2^+(t,a)$. Moreover, there exists a maximal interval $[b,a]\subset [\eta_1, a]$ such that $\tilde \V_1^+(t,s)>0$ for all $s\in (b,a]$ and $\tilde \V_1^+(t,b)=0$.
Hence we can write
\begin{align*}
0&\leq \bar d(t,\eta_2)-\bar d(t,\eta_1)\\
& \leq \bar d(t,\eta_2)-\bar d(t,a)+\bar d(t,a)-\bar d(t,b)\\
& \leq \tilde \U_2\min_j(\tilde\D_j)(t,\eta_2)-\tilde\U_2\min_j(\tilde\D_j)(t,a)\\
& \qquad +\tilde \U_1\min_j(\tilde\D_j)(t,a)-\tilde \U_1\min_j(\tilde\D_j) (t,b) \\
& \leq \int_{a}^{\eta_2}\Big( \tilde\U_{2,\eta} \min_j(\tilde\D_j) + \tilde\U_2 \frac{d}{d\eta} \min_j(\tilde\D_j)\Big)(t,s) ds\\
& \quad +\int_b^a \Big(\tilde\U_{1,\eta} \min_j(\tilde\D_j) + \tilde\U_1 \frac{d}{d\eta} \min_j(\tilde\D_j)\Big)(t,s) ds\\
& \leq \int_a^{\eta_2} \Big(\sqrt{\A}\A^4 \min_j(\tilde\D_j)^{1/2}  + \bigO(1)\A^5 \vert \tilde\U_2\vert\Big) (t,s)ds\\
& \qquad + \int_b^a \Big(\sqrt{\A} \A^4 \min_j(\tilde \D_j)^{1/2}+\bigO(1)\A^5 \vert \tilde \U_1\vert\Big) (t,s) ds\\
& \leq \sqrt{\A}\A^4\int_b^{\eta_2} \min_j(\tilde\D_j)^{1/2}(t,s)ds + \bigO(1)\A^5\int_b^{\eta_2} \vert \tilde\U_2\vert (t,s) ds\\
& \leq \sqrt{\A}\A^4\int_{\eta_1}^{\eta_2} \min_j(\tilde\D_j)^{1/2}(t,s)ds + \bigO(1)\A^5\int_{\eta_1}^{\eta_2} \vert \tilde\U_2\vert (t,s) ds\\
& \leq \bigO(1)\A^7 \vert \eta_2-\eta_1\vert.
\end{align*}

Note, in the case that $\tilde \U_1^+(t,s)<\tilde \U_2^+(t,s)$ for all $s\in [\eta_1, \eta_2]$, the estimate starts with
\begin{equation*}
0\leq d(t,\eta_2)-d(t,\eta_1)\leq \tilde \U_1\min_j(\tilde\D_j)(t,\eta_2)-\tilde \U_1\min_j(\tilde \D_j) (t,b),
\end{equation*}
where $(b,\eta_2]$ denotes the maximal interval such that $\tilde \U_1^+(t,s)>0$ for all $s\in (b, \eta_2]$.

(2c): Assume that $\bar d(t,\eta_1)>0$, then $\min_j(\tilde\D_j)(t,\eta_1)>0$ and $\min_j(\tilde\V_j^+)(t,\eta_1)=\min_j(\tilde \U_j)(t,\eta_1)>0$. Then one either has that

(i): $\min_j(\tilde \U_j)(t,\eta_1)=\tilde\U_2(t,\eta_1)$, and we have (as before)
\begin{align*}
0&\leq \bar d(t,\eta_2)-\bar d(t,\eta_1)\\
& \leq \tilde \U_2\min_j(\tilde \D_j)(t,\eta_2)-\tilde\U_2\min_j(\tilde\D_j)(t,\eta_1)\\
& \leq \int_{\eta_1}^{\eta_2}\Big( \frac12 \sqrt{\A}\A^6 \min_j(\tilde \D_j)^{1/2} 
+ \bigO(1)\A^5 \vert \tilde\U_2\vert \Big)(t,s) ds\\
& \leq \bigO(1)\A^7 \vert \eta_2-\eta_1\vert,
\end{align*}
 or 
 
 (ii): $\min_j(\tilde\U_j)(t,\eta_1)=\tilde\U_1(t,\eta_1)$. Then there exists a maximal interval $[\eta_1, a]$ such that $\tilde \V_1^+(t,s)<\tilde \V_2^+(t,s)$ for all $s\in[\eta_1, a)$ and $\tilde \V_1^+(t,a)=\tilde\V_2^+(t,a)$. Moreover, there exists a maximal interval $[b,a]\subset [\eta_1, a]$ such that $\tilde \V_1^+(t,s)>0$ for all $s\in (b,a]$ and $\tilde \V_1^+(t,b)=0$. Hence we can write
 \begin{align*}
 0&\leq \bar d(t,\eta_2)-\bar d(t,\eta_1)\\
 & \leq \bar d(t,\eta_2)-\bar d(t,a)+\bar d(t,a)-\bar d(t,b)\\
 & \leq \tilde \U_2\min_j(\tilde\D_j)(t,\eta_2)-\tilde\U_2\min_j(\tilde\D_j)(t,a)\\
& \qquad +\tilde \U_1\min_j(\tilde\D_j)(t,a)-\tilde \U_1\min_j(\tilde\D_j) (t,b) \\
& \leq \int_{a}^{\eta_2}\Big( \tilde\U_{2,\eta} \min_j(\tilde\D_j) + \tilde\U_2 \frac{d}{d\eta} \min_j(\tilde\D_j)\Big)(t,s) ds\\
& \quad +\int_b^a \Big(\tilde\U_{1,\eta} \min_j(\tilde\D_j) + \tilde\U_1 \frac{d}{d\eta} \min_j(\tilde\D_j)\Big)(t,s) ds\\
& \leq \int_a^{\eta_2} \Big(\frac12 \sqrt{\A}\A^6 \min_j(\tilde\D_j)^{1/2} + \bigO(1)\A^5 \vert \tilde\U_2\vert\Big) (t,s)ds\\
& \qquad + \int_b^a \Big(\frac12 \sqrt{\A}\A^6 \min_j(\tilde \D_j)^{1/2}+\bigO(1)\A^5 \vert \tilde \U_1\vert\Big) (t,s) ds\\
& \leq \sqrt{\A}\A^4\int_b^{\eta_2} \min_j(\tilde\D_j)^{1/2}(t,s)ds + \bigO(1)\A^5\int_b^{\eta_2} \vert \tilde\U_2\vert (t,s) ds\\
& \leq  \sqrt{\A} \A^4\int_{\eta_1}^{\eta_2} \min_j(\tilde\D_j)^{1/2}(t,s)ds + \bigO(1)\A^5\int_{\eta_1}^{\eta_2} \vert \tilde\U_2\vert (t,s) ds\\
& \leq \bigO(1)\A^7 \vert \eta_2-\eta_1\vert.
\end{align*}

Note, in the case that $\tilde \U_1^+(t,s)<\tilde \U_2^+(t,s)$ for all $s\in [\eta_1, \eta_2]$, the estimate starts with
\begin{equation*}
0\leq d(t,\eta_2)-d(t,\eta_1)\leq \tilde \U_1\min_j(\tilde\D_j)(t,\eta_2)-\tilde \U_1\min_j(\tilde \D_j) (t,b),
\end{equation*}
where $(b,\eta_2]$ denotes the maximal interval such that $\tilde \U_1^+(t,s)>0$ for all $s\in (b, \eta_2]$.

Thus we showed that 
\begin{equation*}
\vert \bar d(t,\eta_2)-\bar d(t,\eta_1)\vert \leq \bigO(1)\A^7\vert \eta_2-\eta_1\vert,
\end{equation*}
or in other words
$\bar d(t,\dott)= \min_j(\tilde\D_j)\min_j(\tilde\V_j^+)(t,\dott)$ is Lipschitz continuous with Lipschitz constant $\bigO(1)\A^7$, which is independent of time, and thus differentiable almost everywhere. 
Moreover a close look reveals that 
\begin{align*}
\vert \bar d(t,\eta_2)-\bar d(t,\eta_1)\vert&  \leq \bigO(1)\A^5\min_j(\tilde\V_j^+)(t,\eta_2)\vert \eta_2-\eta_1\vert \\ 
& \quad+ \sqrt{\A}\A^4\vert \int_{\eta_1}^{\eta_2} \min_j(\tilde\D_j)^{1/2}(t,s) ds\vert + \bigO(1)\A^5 \vert \int_{\eta_1}^{\eta_2} \vert \tilde \U_2\vert (t,s) ds\vert 
\end{align*}
Since both $\vert \tilde\U_2\vert (t,\dott)$ and $\min_j(\tilde\D_j)^{1/2}(t,\dott)$ are continuous, the fundamental theorem of calculus implies that 
\begin{align*}
\left\vert \frac{\bar d(t,\eta_2)-\bar d(t,\eta_1)}{\eta_2-\eta_1}\right\vert& \leq \bigO(1)\A^5\min_j(\tilde\V_j^+)(t,\eta_2)\\
& \quad + \sqrt{\A}\A^4\min_j(\tilde\D_j)^{1/2} (t,\tilde\eta)+ \bigO(1)\A^5\vert \tilde \U_2\vert (t,\tilde\eta)
\end{align*} 
for some $\tilde \eta$ between $\eta_1$ and $\eta_2$. Letting $\eta_2\to \eta_1$ we thus obtain for almost every $\eta$ that 
\begin{align*}
\left\vert \frac{d}{d\eta} \min_j(\tilde\D_j) \min_j(\tilde\V_j^+)(t,\eta)\right\vert &=\vert \frac{d}{d\eta} \bar d(t,\eta) \vert \\
& \leq \bigO(1)\sqrt{\A}\A^4( \min_j(\tilde\D_j)^{1/2} +\vert \tilde\U_2\vert )(t,\eta).
\end{align*}

\bigskip
(ii) We have that 
\begin{align*}
\vert \min_j(\tilde\D_j)(t,\eta)&-\min_j(\tilde \D_j)(t,\tilde\eta)\vert \\
&\qquad \leq \max(\vert \tilde \D_1(t,\eta)-\tilde\D_1(t,\tilde\eta)\vert +\vert \tilde \D_2(t,\eta)-\tilde\D_2(t,\tilde\eta)\vert )\\
&\qquad \leq \bigO(1)\A^5 \vert \eta-\tilde\eta\vert, 
\end{align*}
and hence, for almost every $\eta$,
\begin{equation*} *\label{eq:DUderiv}
\vert \frac{d}{d\eta} \min_j(\tilde\D_j)(t,\eta)\vert \leq \bigO(1)\A^5.
\end{equation*}
This implies, for almost every $\eta$,
\begin{align*}
\vert \frac{d}{d\eta} (\min_j(\tilde \D_j)\tilde \U_k)(t,\eta)\vert &= \vert (\frac{d}{d\eta} \min_j(\tilde\D_j))\tilde\U_k(t,\eta)+ \min_j(\tilde\D_j)\tilde\U_{k,\eta}(t,\eta) \vert \\
& \leq \bigO(1)\A^7 + 2\A\norm{ \tilde\P_k \vert \tilde\U_{k,\eta}\vert} \\
& \leq \bigO(1)\A^7.
\end{align*}
\end{proof}
%--------------------------------
%-------------------------
\begin{lemma} \label{lemma:6}
The function  $\eta\mapsto\ma\int_0^\eta \min_j(e^{-\frac{1}{\ma}(\tilde  \Y_j(t,\eta)-\tilde\Y_j(t,\theta))})\min_j(\tilde\V_j^+)(t,\theta) d\theta$ is Lipschitz continuous with a uniformly bounded Lipschitz constant and thus differentiable almost everywhere with 
\begin{equation*}
\vert \frac{d}{d\eta}\Big( \ma\int_0^\eta \min_j(e^{-\frac{1}{\ma}(\tilde  \Y_j(t,\eta)-\tilde\Y_j(t,\theta))})\min_j(\tilde\V_j^+)(t,\theta) d\theta\Big)\vert \leq \bigO(1)\A^2.
\end{equation*}
\end{lemma}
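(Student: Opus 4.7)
The strategy is to differentiate under the integral sign and bound each resulting piece separately, using the tools already developed in Lemma~\ref{lemma:1}(i) together with the sharp integral estimate encountered in the proof of $\bar B_{69}^{\pm}$. Set $G(\eta,\theta):=\min_j(e^{-\frac{1}{\ma}(\tilde\Y_j(t,\eta)-\tilde\Y_j(t,\theta))})\min_j(\tilde\V_j^+)(t,\theta)$ and $F(\eta)=\ma\int_0^\eta G(\eta,\theta)\,d\theta$. Since $G$ is uniformly bounded by $\min_j(\tilde\V_j^+)\le A^2/\sqrt{2}$, the map $F$ is absolutely continuous in $\eta$, so it suffices to establish the pointwise bound $|F'(\eta)|\le\bigO(1)A^2$ for a.e.\ $\eta$; Lipschitz continuity then follows by integration, with the same constant.

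By Leibniz's rule $F'(\eta)=\ma\,G(\eta,\eta)+\ma\int_0^\eta\partial_\eta G(\eta,\theta)\,d\theta$ for a.e.\ $\eta$. The boundary contribution is immediate: $G(\eta,\eta)=\min_j(\tilde\V_j^+)(t,\eta)\le\abs{\tilde\U_k(t,\eta)}\le A^2/\sqrt{2}$ by \eqref{eq:all_estimatesB}, so $\ma\,G(\eta,\eta)\le A^3/\sqrt{2}=\bigO(1)A^2$. For the integral contribution, I would repeat, with $\eta$ and $\theta$ interchanged, the one-sided-difference argument of Lemma~\ref{lemma:1}(i): at a.e.\ $\eta$ the minimizer $k^\ast(\eta,\theta)=\arg\max_j(\tilde\Y_j(t,\eta)-\tilde\Y_j(t,\theta))$ produces a side derivative giving the a.e.\ estimate
\begin{equation*}
\abs{\partial_\eta\min_j(e^{-\frac{1}{\ma}(\tilde\Y_j(t,\eta)-\tilde\Y_j(t,\theta))})}\le\frac{1}{\ma}\min_j(e^{-\frac{1}{\ma}(\tilde\Y_j(t,\eta)-\tilde\Y_j(t,\theta))})\max_j(\tilde\Y_{j,\eta}(t,\eta)).
\end{equation*}
Thus the integral piece is controlled by $\max_j(\tilde\Y_{j,\eta}(t,\eta))\int_0^\eta\min_j(e^{-\frac{1}{\ma}(\cdots)})\min_j(\tilde\V_j^+)(t,\theta)\,d\theta$.

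The genuine difficulty, and the only substantial step, is absorbing the potentially unbounded factor $\max_j\tilde\Y_{j,\eta}(t,\eta)$. The plan is to invoke the integral estimate already carried out in the discussion of $\bar B_{69}^{\pm}$,
\begin{equation*}
\ma^{5}\int_0^\eta\min_j(e^{-\frac{1}{\ma}(\tilde\Y_j(t,\eta)-\tilde\Y_j(t,\theta))})\min_j(\tilde\V_j^+)(t,\theta)\,d\theta\le \bigO(1)\A^{2}\min_j(\tilde\P_j)(t,\eta),
\end{equation*}
whose derivation rests on the pointwise domination $\min_j(e^{-\frac{1}{\ma}(\cdots)})\le e^{-\frac{1}{\sqtC{i}}(\tilde\Y_i(t,\eta)-\tilde\Y_i(t,\theta))}$, the scaled identity $\sqtC{i}^5\le 2(\tilde\P_i\tilde\Y_{i,\eta}+\tilde\Henergy_{i,\eta})$, Young's inequality in the form $\tilde\P_i\abs{\tilde\U_i}\le\tfrac1{2\sqtC{i}}\tilde\P_i^2+\tfrac{\sqtC{i}}{2}\tilde\U_i^2$, and the three exponential bounds \eqref{eq:all_PestimatesB}--\eqref{eq:all_PestimatesD}. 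The crucial cancellation is then
\begin{equation*}
\max_j\tilde\Y_{j,\eta}(t,\eta)\cdot\min_j\tilde\P_j(t,\eta)\le A^{5}/2,
\end{equation*}
obtained by setting $k=\arg\max_j\tilde\Y_{j,\eta}$, using $\min_j\tilde\P_j\le\tilde\P_k$, and applying the scaled constraint $\tilde\P_k\tilde\Y_{k,\eta}\le A^{5}/2$ from \eqref{eq:all_estimatesE}. Combining the two estimates yields
\begin{equation*}
\max_j\tilde\Y_{j,\eta}(t,\eta)\int_0^\eta\min_j(e^{-\frac{1}{\ma}(\cdots)})\min_j(\tilde\V_j^+)(t,\theta)\,d\theta\le \bigO(1)A^{2},
\end{equation*}
with the factor $(\A/\ma)^5$ absorbed into the $\bigO(1)$ convention used throughout the paper. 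Adding the boundary and integral contributions produces $\abs{F'(\eta)}\le\bigO(1)A^{2}$ for a.e.\ $\eta$, which together with the absolute continuity of $F$ establishes the claimed uniform Lipschitz constant.
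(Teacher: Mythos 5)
There is a genuine gap, and it sits exactly at the step you flag as the "only substantial step." Your chain gives, for the integral contribution,
\begin{equation*}
\max_j\big(\tilde\Y_{j,\eta}(t,\eta)\big)\int_0^\eta \min_j(e^{-\frac{1}{\ma}(\tilde\Y_j(t,\eta)-\tilde\Y_j(t,\theta))})\min_j(\tilde\V_j^+)(t,\theta)\,d\theta
\le \bigO(1)\,\frac{\A^2}{\ma^5}\,\max_j(\tilde\Y_{j,\eta})\min_j(\tilde\P_j)(t,\eta)\le \bigO(1)\,\A^2\Big(\frac{\A}{\ma}\Big)^5,
\end{equation*}
and you then declare the factor $(\A/\ma)^5$ "absorbed into the $\bigO(1)$ convention." That is not permitted: throughout the paper $\bigO(1)$ denotes a constant depending only on $\max_j(C_j)$, i.e.\ on $\A$, and remaining bounded as $\A\to 0$; it cannot depend on $\ma=\min_j\sqtC{j}$, and $(\A/\ma)^5$ blows up as $\ma\to0$ with $\A$ fixed. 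This regime is precisely the one the metric must cover (comparing a given solution with one of arbitrarily small, even vanishing, energy), and the weaker bound also breaks the downstream use of the lemma: in $\bar B_{69}^{\pm}$ and $W_{69}^{\pm}$ the derivative is multiplied by $\ma^5/\A^6$, which with your bound only yields $\bigO(1)\A/\ma$ instead of $\bigO(1)$. The source of the loss is a mismatch of indices: the bound on $\partial_\eta\min_j(e^{-\frac{1}{\ma}(\cdots)})$ produces $\tilde\Y_{k,\eta}(t,\eta)$ for some specific $k$ (the active branch), whereas the integral estimate you import from $\bar B_{69}^{\pm}$ is normalized by $\ma^5$ rather than by $\sqtC{k}^5$, so the exact cancellation $\tilde\P_k\tilde\Y_{k,\eta}\le\sqtC{k}^5/2$ is traded for the non-uniform ratio $(\sqtC{k}/\ma)^5$.

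The repair is to keep all indices matched, which is what the paper does: a.e.\ the $\eta$-derivative of the minimum equals that of the active branch $j^*$, so the dangerous factor is $\frac{1}{\ma}e^{-\frac{1}{\ma}(\tilde\Y_{j^*}(t,\eta)-\tilde\Y_{j^*}(t,\theta))}\tilde\Y_{j^*,\eta}(t,\eta)$; bound $e^{-\frac1\ma(\cdots)}\le e^{-\frac{1}{\sqtC{j^*}}(\cdots)}$ and $\min_j(\tilde\V_j^+)\le\tilde\V_{j^*}^+$, write $\tilde\V_{j^*}^+\le\sqtC{j^*}^{-5}\tilde\V_{j^*}^+\big(2\tilde\P_{j^*}\tilde\Y_{j^*,\eta}+\tilde\Henergy_{j^*,\eta}\big)$, apply Young's inequality and \eqref{eq:all_PestimatesB}--\eqref{eq:all_PestimatesD} to get the inner integral $\le\bigO(1)\sqtC{j^*}^{-3}\tilde\P_{j^*}(t,\eta)$, and only then absorb $\tilde\P_{j^*}\tilde\Y_{j^*,\eta}\le\sqtC{j^*}^5/2$; this yields $\bigO(1)\sqtC{j^*}^2\le\bigO(1)\A^2$ uniformly in $\ma$. (The paper carries this out on difference quotients, splitting the difference of minima into the two single-index differences, which also sidesteps your unproven assertion that boundedness of the integrand alone makes $F$ absolutely continuous so that an a.e.\ derivative bound upgrades to a Lipschitz bound; proving the Lipschitz estimate directly on increments, as in the paper, avoids that issue as well.)
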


\begin{proof}
To prove the existence and boundedness of the derivative, we will prove Lipschitz continuity. Let $0\leq \eta_1<\eta_2\leq1$, then 
\begin{align*}
\ma\vert \int_0^{\eta_1} &\min_j(e^{-\frac{1}{\ma}(\tilde\Y_j(t,\eta_1)-\tilde\Y_j(t,\theta))})\min_j(\tilde\V_j^+)(t,\theta) d\theta\\
& \qquad \qquad- \int_0^{\eta_2} \min_j(e^{-\frac{1}{\ma}(\tilde\Y_j(t,\eta_2)-\tilde\Y_j(t,\theta))})\min_j(\tilde\V_j^+)(t,\theta)d\theta\vert \\
& \leq \ma\int_{\eta_1}^{\eta_2} \min_j(e^{-\frac{1}{\ma}(\tilde\Y_j(t,\eta_2)-\tilde\Y_j(t,\theta))})\min_j(\tilde\V_j^+)(t,\theta) d\theta\\
& \quad + \ma\int_0^{\eta_1} \Big(\min_j(e^{-\frac{1}{\ma}(\tilde\Y_j(t,\eta_1)-\tilde\Y_j(t,\theta))})\\
&\qquad\qquad\qquad-\min_j(e^{-\frac{1}{\ma}(\tilde\Y_j(t,\eta_2)-\tilde\Y_j(t,\theta))})\Big)\min_j(\tilde\V_j^+)(t,\theta) d\theta\\
& \leq \frac{\ma\A^2}{\sqrt{2}}\vert \eta_2-\eta_1\vert\\
& \quad +\ma\int_0^{\eta_1} (e^{-\frac{1}{\ma}(\tilde\Y_1(t,\eta_1)-\tilde\Y_1(t,\theta))}-e^{-\frac{1}{\ma}(\tilde\Y_1(t,\eta_2)-\tilde\Y_1(t,\theta))}) \min_j(\tilde\V_j^+)(t,\theta) d\theta\\
& \quad + \ma\int_0^{\eta_1} (e^{-\frac{1}{\ma}(\tilde \Y_2(t,\eta_1)-\tilde\Y_2(t,\theta))}-e^{-\frac{1}{\ma}(\tilde\Y_2(t,\eta_2)-\tilde\Y_2(t,\theta))} \min_j(\tilde\V_j^+)(t,\theta) d\theta\\
& \leq \frac{\ma\A^2}{\sqrt{2}} \vert \eta_2-\eta_1\vert \\
& \quad +\int_0^{\eta_1} \int_{\eta_1}^{\eta_2} e^{-\frac{1}{\ma}(\tilde\Y_1(t,s)-\tilde\Y_1(t,\theta))} \tilde\Y_{1,\eta}(t,s) ds\min_j(\tilde\V_j^+)(t,\theta) d\theta\\
& \quad +\int_0^{\eta_1}\int_{\eta_1}^{\eta_2} e^{-\frac{1}{\ma}(\tilde\Y_2(t,s)-\tilde\Y_2(t,\theta))}\tilde\Y_{2,\eta}(t,s) ds\min_j(\tilde\V_j^+)(t,\theta) d\theta\\
& \leq \frac{\ma\A^2}{\sqrt{2}}\vert \eta_2-\eta_1\vert \\
& \quad +\int_{\eta_1}^{\eta_2} \Big(\int_0^s e^{-\frac{1}{\sqtC{1}}(\tilde\Y_1(t,s)-\tilde\Y_1(t,\theta))} \tilde\V_1^+(t,\theta)d\theta\Big) \tilde\Y_{1,\eta}(t,s)ds\\
& \quad +\int_{\eta_1}^{\eta_2} \Big(\int_0^s e^{-\frac{1}{\sqtC{2}}(\tilde \Y_2(t,s)-\tilde\Y_2(t,\theta))} \tilde\V_2^+(t,\theta) d\theta\Big)\tilde\Y_{2,\eta}(t,s) ds\\
& \leq \frac{\ma\A^2}{\sqrt{2}} \vert \eta_2-\eta_1\vert  +\frac{1}{\sqtC{1}^5}\int_{\eta_1}^{\eta_2} \Big( \int_0^s e^{-\frac{1}{\sqtC{1}}(\tilde\Y_1(t,s)-\tilde\Y_1(t,\theta))} \\
&\qquad\times\big(\frac{1}{\sqtC{1}}\tilde\P_1^2\tilde\Y_{1,\eta}+\sqtC{1}\tilde\U_1^2\tilde\Y_{1,\eta}+\tilde\V_1^+\tilde\Henergy_{1,\eta}\big)(t,\theta) d\theta\Big)
\tilde\Y_{1,\eta}(t,s)ds\\
& \quad +\frac{1}{\sqtC{2}^5} \int_{\eta_1}^{\eta_2} \Big(\int_0^s e^{-\frac{1}{\sqtC{2}}(\tilde\Y_2(t,s)-\tilde\Y_2(t,\theta)}\\
&\qquad\times\big(\frac{1}{\sqtC{2}}\tilde \P_2^2\tilde\Y_{2,\eta}+\sqtC{2}\tilde\U_2^2\tilde\Y_{2,\eta}+\tilde\V_2^+\tilde\Henergy_{2,\eta}\big)(t,\theta) d\theta\Big) 
\tilde\Y_{2,\eta}(t,s)ds\\
& \leq \frac{\ma\A^2}{\sqrt{2}} \vert \eta_2-\eta_1\vert +\bigO(1)\frac{1}{\sqtC{1}^3} \int_{\eta_1}^{\eta_2} \tilde \P_1\tilde\Y_{1,\eta}(t,s)ds 
 +\bigO(1)\frac{1}{\sqtC{2}^3}\int_{\eta_1}^{\eta_2}\tilde\P_2\tilde\Y_{2,\eta}(t,s) ds\\
& \leq \bigO(1)\A^2 \vert \eta_2-\eta_1\vert.
\end{align*}
\end{proof}
%---------------------------

%-------------------
\begin{lemma}\label{lemma:2}
The function 
\[
\eta\mapsto\min_k\Big( \int_0^\eta \min_j(e^{-\frac{1}{\ma}(\tilde \Y_j(t,\eta)-\tilde\Y_j(t,\theta))})\min_j(\tilde\P_j)
\min_j(\tilde\V_j^+)\tilde \Y_{k,\eta}(t,\theta) d\theta\Big)
\]
is Lipschitz continuous with a uniformly bounded Lipschitz constant and thus differentiable almost everywhere. The derivative satisfies,
\begin{align}\label{eq:MinMax2}
&\vert\frac{d}{d\eta} \min_k\Big( \int_0^\eta \min_j(e^{-\frac{1}{\ma}(\tilde \Y_j(t,\eta)-\tilde\Y_j(t,\theta))})\min_j(\tilde\P_j)
\min_j(\tilde\V_j^+)\tilde \Y_{k,\eta}(t,\theta) d\theta\Big)\vert \leq \bigO(1)\A^7.
\end{align} 
\end{lemma}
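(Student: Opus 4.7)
The plan mirrors the strategy of Lemma \ref{lemma:6} and its cousins. Since the $\min$ of two Lipschitz continuous functions is itself Lipschitz with Lipschitz constant bounded by the maximum of the two individual constants (as recorded at the beginning of Section \ref{sec:Lip}), it suffices to show that for each fixed $k\in\{1,2\}$ the function
\[
F_k(t,\eta) := \int_0^\eta \min_j\bigl(e^{-\frac{1}{\ma}(\tilde\Y_j(t,\eta)-\tilde\Y_j(t,\theta))}\bigr)\min_j(\tilde\P_j)\min_j(\tilde\V_j^+)\tilde\Y_{k,\eta}(t,\theta)\,d\theta
\]
is Lipschitz in $\eta$ with Lipschitz constant $\bigO(1)\A^7$.

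For $\eta_1<\eta_2$ the first step is the standard splitting
\[
F_k(t,\eta_2)-F_k(t,\eta_1)=\int_{\eta_1}^{\eta_2} g_k(t,\eta_2,\theta)\,d\theta+\int_0^{\eta_1}\bigl[g_k(t,\eta_2,\theta)-g_k(t,\eta_1,\theta)\bigr]\,d\theta,
\]
where $g_k$ denotes the full integrand. The boundary term is controlled via the pointwise estimate $\min_j(\tilde\P_j)\min_j(\tilde\V_j^+)\tilde\Y_{k,\eta}\le\tilde\P_k|\tilde\U_k|\tilde\Y_{k,\eta}\le\bigO(1)\A^7$, which follows from $\tilde\P_k\tilde\Y_{k,\eta}\le A_k^5/2$ and $|\tilde\U_k|\le A_k^2/\sqrt{2}$, producing a contribution of $\bigO(1)\A^7|\eta_2-\eta_1|$. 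For the second term only the exponential factor depends on $\eta$, and the plan is to write
\[
e^{-\frac{1}{\ma}(\tilde\Y_j(t,\eta_1)-\tilde\Y_j(t,\theta))}-e^{-\frac{1}{\ma}(\tilde\Y_j(t,\eta_2)-\tilde\Y_j(t,\theta))}=\frac{1}{\ma}\int_{\eta_1}^{\eta_2}e^{-\frac{1}{\ma}(\tilde\Y_j(t,s)-\tilde\Y_j(t,\theta))}\tilde\Y_{j,\eta}(t,s)\,ds,
\]
combine with the elementary bound $|\min_j(e^{-a_j})-\min_j(e^{-b_j})|\le\sum_j|e^{-a_j}-e^{-b_j}|$, and invoke Fubini to reduce the problem to estimating, for each $j$, the inner quantity
\[
I_j(s):=\int_0^{s}e^{-\frac{1}{\sqtC{j}}(\tilde\Y_j(t,s)-\tilde\Y_j(t,\theta))}\min_j(\tilde\P_j)\min_j(\tilde\V_j^+)\tilde\Y_{k,\eta}(t,\theta)\,d\theta,
\]
after extending the range from $[0,\eta_1]$ to $[0,s]$ and passing from $\ma$ to $\sqtC{j}$ in the exponent (both admissible since the exponent is non-positive), and then bounding $\frac{1}{\ma}\int_{\eta_1}^{\eta_2}\tilde\Y_{j,\eta}(t,s)I_j(s)\,ds$.

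For the diagonal case $j=k$, the bound $\min_j(\tilde\P_j)\min_j(\tilde\V_j^+)\tilde\Y_{k,\eta}\le\tilde\P_k|\tilde\U_k|\tilde\Y_{k,\eta}$ together with Cauchy--Schwarz and the estimates \eqref{eq:all_PestimatesB}--\eqref{eq:all_PestimatesC} yield $I_k(s)\le \bigO(1)\A^3\tilde\P_k(t,s)$; inserting this into the outer $s$-integral and using $\tilde\P_k\tilde\Y_{k,\eta}\le A_k^5/2$ produces a contribution of $\bigO(1)\A^7|\eta_2-\eta_1|$ once the $1/\ma$ prefactor is absorbed into a constant depending on $\max_j(C_j)$.

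The main obstacle will be the cross case $j\neq k$, where the exponential decays in $\tilde\Y_j$ while the weight carries $\tilde\Y_{k,\eta}$, so the estimates in \eqref{eq:all_PestimatesB}--\eqref{eq:all_PestimatesD} do not apply directly. I would handle this by invoking Xavier's relation $|\tilde\U_k|\tilde\Y_{k,\eta}\le\sqrt{\tilde\Y_{k,\eta}\tilde\Henergy_{k,\eta}}$ to rewrite the product $\tilde\P_k|\tilde\U_k|\tilde\Y_{k,\eta}$, applying Cauchy--Schwarz to split the resulting expression into integrals of the form $\int e^{-\frac{1}{\sqtC{j}}\cdots}\tilde\P_k^2\tilde\Y_{k,\eta}\,d\theta$ and $\int e^{-\frac{1}{\sqtC{j}}\cdots}\tilde\Henergy_{k,\eta}\,d\theta$, and then decoupling the mismatched exponentials by majorizing one of them by $1$ and exploiting the uniform $L^\infty$-bounds $\tilde\P_k\le A^4/4$, $\tilde\P_k\tilde\Y_{k,\eta}\le A^5/2$ and $\tilde\Henergy_{k,\eta}\le A^5$. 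After recombining and performing the outer $s$-integration, the bounded factor $\tilde\P_j\tilde\Y_{j,\eta}\le A^5/2$ reappears and keeps the total contribution within $\bigO(1)\A^7|\eta_2-\eta_1|$, with any remaining $A/\ma$ ratios absorbed into the $\bigO(1)$ constant. Summing over $j$ and combining with the boundary term yields the required Lipschitz bound, and \eqref{eq:MinMax2} follows from Rademacher's theorem.
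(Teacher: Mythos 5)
Your skeleton coincides with the paper's: reduce to each fixed $k$ because a minimum of Lipschitz functions is Lipschitz, split the increment into the piece over $[\eta_1,\eta_2]$ plus the change of the exponential over $[0,\eta_1]$, write that change with the fundamental theorem of calculus, use Fubini, extend the inner range to $[0,s]$ and relax $\ma$ to $\sqtC{j}$ in the exponent. The gap lies in the two quantitative devices that make the constant come out as $\bigO(1)\A^7$. First, the factor $1/\ma$ created by differentiating the exponential cannot be ``absorbed into a constant depending on $\max_j(C_j)$'': by the paper's convention $\bigO(1)$ depends only on $\A=\max_j(\sqtC{j})$, while $\ma=\min_j(\sqtC{j})$ may be arbitrarily small relative to $\A$, since the two solutions can have very different energies. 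With your diagonal estimate $I_k(s)\le\bigO(1)\A^3\tilde\P_k(t,s)$ and $\tilde\P_k\tilde\Y_{k,\eta}\le\sqtC{k}^5/2$ you obtain a contribution of order $\A^8/\ma\,\vert\eta_2-\eta_1\vert$, which is not $\bigO(1)\A^7\vert\eta_2-\eta_1\vert$ when $\ma\ll\A$, and would destroy the final Lipschitz estimate. The paper cancels $1/\ma$ \emph{before} discarding the minimum structure, using $\min_j(\tilde\P_j)^{1/4}\le\ma/\sqrt{2}$ (from $\tilde\P_j\le\sqtC{j}^4/4$), i.e.\ trading $\frac1\ma\min_j(\tilde\P_j)$ for $\frac1{\sqrt2}\min_j(\tilde\P_j)^{3/4}$; replacing $\min_j(\tilde\P_j)$ by $\tilde\P_k$ at the outset, as you do, throws away exactly the factor that neutralizes $1/\ma$.

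Second, in the cross case $j\neq k$ your plan (Xavier's relation, Cauchy--Schwarz, then ``decoupling the mismatched exponentials by majorizing one of them by $1$'' together with $L^\infty$ bounds) does not generate the factor $\tilde\P_j(t,s)$ that must pair with the unbounded outer factor $\tilde\Y_{j,\eta}(t,s)$: after the majorization the inner integral contains only index-$k$ quantities, $\int_{\eta_1}^{\eta_2}\tilde\Y_{j,\eta}(t,s)\,ds$ is not $\bigO(1)\vert\eta_2-\eta_1\vert$ because $\tilde\Y_{j,\eta}$ is not uniformly bounded, and the asserted reappearance of $\tilde\P_j\tilde\Y_{j,\eta}\le\A^5/2$ simply does not occur. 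The paper's mechanism is different: it keeps a small power of the exponential index through $\min_j(\tilde\P_j)^{5/4}\le\tilde\P_j^{1/4}\tilde\P_k$, bounds the mismatched weight by the constant $\tilde\P_k\tilde\Y_{k,\eta}\le\sqtC{k}^5/2$, and then re-expands that constant via \eqref{eq:PUYH_scale}, $\sqtC{j}^5\le2\tilde\P_j\tilde\Y_{j,\eta}+\tilde\Henergy_{j,\eta}$, so that every term matches the exponential's index; the estimates of the type \eqref{eq:general} then yield an inner bound of the form $\bigO(1)\sqtC{k}^5\sqtC{j}^{-3}\tilde\P_j(t,s)$, after which $\tilde\P_j\tilde\Y_{j,\eta}\le\sqtC{j}^5/2$ closes the argument with only nonnegative powers of $\sqtC{1},\sqtC{2}$, hence $\bigO(1)\A^7$. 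Without these two devices your proposal proves at best a Lipschitz constant of order $\A^8/\ma$, which is not the statement of the lemma.
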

\begin{proof}
Introduce   
\begin{align*}
\tilde a(t,\eta)& = \min_k\Big( \int_0^\eta \min_j(e^{-\frac{1}{\ma}(\tilde \Y_j(t,\eta)-\tilde\Y_j(t,\theta))})\min_j(\tilde\P_j)
\min_j(\tilde\V_j^+)\tilde \Y_{k,\eta}(t,\theta) d\theta\Big) \\ 
\tilde b(t,\eta)& = \int_0^\eta \min_j(e^{-\frac{1}{\ma}(\tilde \Y_j(t,\eta)-\tilde\Y_j(t,\theta))})\min_j(\tilde\P_j)
\min_j(\tilde\V_j^+)\tilde \Y_{1,\eta} (t,\theta) d\theta\\
\tilde c(t,\eta)& = \int_0^\eta \min_j(e^{-\frac{1}{\ma}(\tilde \Y_j(t,\eta)-\tilde\Y_j(t,\theta))})\min_j(\tilde\P_j)
\min_j(\tilde\V_j^+)\tilde \Y_{2,\eta} (t,\theta) d\theta.
\end{align*}
Thus 
\begin{equation*} 
\tilde a(t,\eta)=\min(\tilde b, \tilde c)(t,\eta).
\end{equation*}
Then we have to show that $\tilde a(t, \dott)$ is Lipschitz continuous with a Lipschitz constant, which only depends on $\A$. Clearly we have that 
\begin{equation*}
\vert \tilde a(t,\eta_1)-\tilde a(t,\eta_2)\vert \leq \max(\vert \tilde b(t,\eta_1)-\tilde b(t,\eta_2)\vert, \vert \tilde c(t,\eta_1)-\tilde c(t,\eta_2)\vert ),
\end{equation*}
and it suffices to show that both $\tilde b(t,\dott)$ and $\tilde c(t,\dott)$ are Lipschitz continuous with a Lipschitz constant, which only depends on $\A$. We are only going to establish the Lipschitz continuity for $\tilde b$, since the argument for $\tilde c$ follows the same lines. 

Let $0\leq \eta_1<\eta_2\leq 1$. Then we have to consider two cases
\begin{equation*}
0\leq \tilde b(t,\eta_1)-\tilde b(t,\eta_2)\quad \text{ and }\quad 0\leq \tilde b(t,\eta_2)-\tilde b(t,\eta_1).
\end{equation*}

(i): $0\leq \tilde b(t,\eta_2)-\tilde b(t,\eta_1)$: By defintion we have 
\begin{align*}
\tilde b(t,\eta_2)-\tilde b(t,\eta_1)& = \int_0^{\eta_2} \min_j(e^{-\frac{1}{\ma}(\tilde \Y_j(t,\eta_2)-\tilde \Y_j(t,\theta))})\min_j(\tilde\P_j)\\ 
& \qquad \qquad \qquad \qquad \qquad \qquad\qquad\times\min_j(\tilde \V_j^+)\tilde \Y_{1,\eta} (t,\theta) d\theta\\
& \qquad - \int_0^{\eta_1} \min_j(e^{-\frac{1}{\ma}(\tilde \Y_j(t,\eta_1)-\tilde \Y_j(t,\theta))})\min_j(\tilde\P_j) \\ 
& \qquad \qquad \qquad \qquad \qquad \qquad\qquad\times\min_j(\tilde\V_j^+)\tilde \Y_{1,\eta} (t,\theta) d\theta\\
& = \int_{\eta_1}^{\eta_2} \min_j(e^{-\frac{1}{\ma}(\tilde \Y_j(t,\eta_2)-\tilde \Y_j(t,\theta))})\min_j(\tilde\P_j)\\ 
& \qquad \qquad \qquad \qquad \qquad \qquad\qquad\times\min_j(\tilde \V_j^+)\tilde \Y_{1,\eta} (t,\theta) d\theta\\
& \qquad + \int_0^{\eta_1} (\min_j(e^{-\frac{1}{\ma}(\tilde \Y_j(t,\eta_2)-\tilde \Y_j(t,\theta))})\\ 
& \qquad \qquad \qquad \qquad  -\min_j(e^{-\frac{1}{\ma}(\tilde \Y_j(t,\eta_1)-\tilde \Y_j(t,\theta))}))\\ 
& \qquad \qquad \qquad \qquad\qquad \times\min_j(\tilde\P_j)\min_j(\tilde \V_j^+)\tilde \Y_{1,\eta} (t,\theta) d\theta\\
& \leq  \int_{\eta_1}^{\eta_2} \min_j(e^{-\frac{1}{\ma}(\tilde \Y_j(t,\eta_2)-\tilde \Y_j(t,\theta))})\min_j(\tilde\P_j)\\ 
& \qquad \qquad \qquad \qquad \qquad \qquad \times\min_j(\tilde \V_j^+)\tilde \Y_{1,\eta} (t,\theta) d\theta,
\end{align*}
where we used in the last step that $0\leq \min_j(\tilde\P_j) \min_j(\tilde \V_j^+)\tilde \Y_{1,\eta} (t,\eta)$ and that $\tilde \Y_1(t,\eta)$ is increasing, which implies 
\begin{equation*}
\min_j(e^{-\frac{1}{\ma}(\tilde \Y_j(t,\eta_1)-\tilde \Y_j(t,\theta))})\geq \min_j(e^{-\frac{1}{\ma}(\tilde \Y_j(t,\eta_2)-\tilde \Y_j(t,\theta))}). 
\end{equation*}
Moreover, note that $\min_j(e^{-\frac{1}{\ma}(\tilde \Y_j(t,\eta_2)-\tilde \Y_j(t,\theta))})\leq 1$, for $0\leq \eta_1\leq \theta\leq\eta_2$ and that 
\begin{equation*}
0\leq 2\sqrt{2}\min_j(\tilde\P_j)\min_j(\tilde \V_j^+)\tilde \Y_{1,\eta} (t, \theta)\leq \A_1^7\leq \A^7, 
\end{equation*}
by \eqref{eq:all_estimatesB} and \eqref{eq:all_estimatesE}.
Thus 
\begin{align*}
0& \leq \tilde b(t,\eta_2)-\tilde b(t,\eta_1)\\
& \leq \int_{\eta_1}^{\eta_2} \min_j(e^{-\frac{1}{\ma}(\tilde \Y_j(t,\eta_2)-\tilde \Y_j(t,\theta))}) \min_j(\tilde\P_j) \min_j(\tilde \V_j^+)\tilde \Y_{1,\eta} (t,\theta) d\theta\\
& \leq \frac{\A^7}{2\sqrt{2}} \vert \eta_2-\eta_1\vert.
\end{align*}

(ii): $0\leq \tilde b(t,\eta_1)-\tilde b(t,\eta_2)$: By definition we have 
\begin{align*}
0& \leq \tilde b(t,\eta_1)-\tilde b(t,\eta_2)\\
& = \int_0^{\eta_1} \min_j(e^{-\frac{1}{\ma}(\tilde \Y_j(t,\eta_1)-\tilde \Y_j(t,\theta))}) \min_j(\tilde\P_j) \min_j(\tilde \V_j^+) \tilde \Y_{1,\eta} (t,\theta) d\theta\\
& \quad -\int_0^{\eta_2} \min_j(e^{-\frac{1}{\ma}(\tilde \Y_j(t,\eta_2)-\tilde \Y_j(t,\theta))})\min_j(\tilde\P_j) \min_j(\tilde \V_j^+)\tilde \Y_{1,\eta} (t,\theta) d\theta\\
& = \int_0^{\eta_1} (\min_j(e^{-\frac{1}{\ma}(\tilde \Y_j(t,\eta_1)-\tilde \Y_j(t,\theta))})-\min_j(e^{-\frac{1}{\ma}(\tilde \Y_j(t,\eta_2)-\tilde \Y_j(t,\theta))}))\\
& \quad \qquad \qquad \qquad \times\min_j(\tilde\P_j)\min_j(\tilde \V_j^+)\tilde \Y_{1,\eta} (t,\theta) d\theta\\
& \quad - \int_{\eta_1}^{\eta_2} \min_j(e^{-\frac{1}{\ma}(\tilde \Y_j(t,\eta_2)-\tilde \Y_j(t,\theta))})\min_j(\tilde\P_j)\min_j(\tilde \V_j^+) \tilde \Y_{1,\eta}(t,\theta) d\theta\\
& \leq  \int_0^{\eta_1} (\min_j(e^{-\frac{1}{\ma}(\tilde \Y_j(t,\eta_1)-\tilde \Y_j(t,\theta))})-\min_j(e^{-\frac{1}{\ma}(\tilde \Y_j(t,\eta_2)-\tilde \Y_j(t,\theta))}))\\
& \qquad \qquad \qquad \qquad \times\min_j(\tilde\P_j)\min_j(\tilde \V_j^+)\tilde \Y_{1,\eta}(t,\theta) d\theta.
\end{align*}
Now we have to be much more careful then before. Namely, we have (as before)
\begin{align*}
0&\leq \min_j(e^{-\frac{1}{\ma}(\tilde \Y_j(t,\eta_1)-\tilde \Y_j(t,\theta))})-\min_j(e^{-\frac{1}{\ma}(\tilde \Y_j(t,\eta_2)-\tilde \Y_j(t,\theta))})\\
& \leq \frac{1}{\ma}\int_{\eta_1}^{\eta_2} (e^{\frac{1}{\ma}(\tilde \Y_1(t,\theta)-\tilde \Y_1(t,s))}\tilde \Y_{1, \eta} (t,s)+e^{\frac{1}{\ma}(\tilde \Y_2(t,\theta)-\tilde \Y_2(t,s))} \tilde \Y_{2, \eta} (t,s)) ds.
\end{align*}
Hence 
\begin{align*}
&\tilde b(t,\eta_1)-\tilde b(t,\eta_2) \\
&\quad \leq \int_0^{\eta_1} (\min_j(e^{-\frac{1}{\ma}(\tilde \Y_j(t,\eta_1)-\tilde \Y_j(t,\theta))})-\min_j(e^{-\frac{1}{\ma}(\tilde \Y_j(t,\eta_2)-\tilde \Y_j(t,\theta))}))\\
& \qquad \qquad \qquad \qquad \times\min_j(\tilde\P_j)\min_j(\tilde \V_j^+)\tilde \Y_{1,\eta}(t,\theta) d\theta\\
& \quad\leq \frac{1}{\ma}\int_0^{\eta_1} \int_{\eta_1}^{\eta_2} e^{-\frac{1}{\ma}(\tilde \Y_1(t,s)-\tilde \Y_1(t,\theta))} \tilde \Y_{1,\eta} (t,s) ds \min_j(\tilde\P_j) \min_j(\tilde \V_j^+)\tilde \Y_{1, \eta} (t,\theta) d\theta\\
& \qquad + \frac{1}{\ma}\int_0^{\eta_1} \int_{\eta_1}^{\eta_2} e^{-\frac{1}{\ma}(\tilde \Y_2(t,s)-\tilde \Y_2(t,\theta))}\tilde \Y_{2,\eta} (t,s)ds \min_j(\tilde\P_j) \min_j(\tilde \V_j^+)\tilde \Y_{1,\eta} (t,\theta) d\theta\\
&\quad = \frac{1}{\ma}\int_{\eta_1}^{\eta_2} \tilde \Y_{1,\eta} (t,s) \Big(\int_0^{\eta_1} e^{-\frac{1}{\ma}(\tilde \Y_1(t,s) -\tilde \Y_1(t,\theta))} \min_j(\tilde\P_j) \min_j(\tilde \V_j^+)\tilde \Y_{1,\eta} (t,\theta) d\theta\Big) ds\\
& \qquad + \frac{1}{\ma}\int_{\eta_1}^{\eta_2} \tilde \Y_{2,\eta} (t,s) \Big(\int_0^{\eta_1} e^{-\frac{1}{\ma}(\tilde \Y_2(t,s)-\tilde \Y_2(t,\theta))} \min_j(\tilde\P_j) \min_j(\tilde \V_j^+)\tilde \Y_{1,\eta} (t,\theta) d\theta\Big) ds\\
&\quad \leq  \frac{1}{\ma}\int_{\eta_1}^{\eta_2} \tilde \Y_{1,\eta} (t,s) \Big(\int_0^{s} e^{-\frac{1}{\ma}(\tilde \Y_1(t,s) -\tilde \Y_1(t,\theta))} \min_j(\tilde\P_j) \min_j(\tilde \V_j^+)\tilde \Y_{1,\eta} (t,\theta) d\theta\Big) ds\\
& \qquad + \frac{1}{\ma}\int_{\eta_1}^{\eta_2} \tilde \Y_{2,\eta} (t,s) \Big(\int_0^{s} e^{-\frac{1}{\ma}(\tilde \Y_2(t,s)-\tilde \Y_2(t,\theta))} \min_j(\tilde\P_j) \min_j(\tilde \V_j^+)\tilde \Y_{1,\eta} (t,\theta) d\theta\Big) ds\\
& \quad=\int_{\eta_1}^{\eta_2} \tilde B_1(t,s)ds+\int_{\eta_1}^{\eta_2} \tilde B_2(t,s) ds.
\end{align*}

As far as $\tilde B_1(t,s)$ is concerned, we have 
\begin{align*}
\int_{\eta_1}^{\eta_2}& \tilde B_1(t,s)ds \\ 
& = \frac{1}{\ma}\int_{\eta_1}^{\eta_2} \Big(\int_0^s e^{-\frac{1}{\ma}(\tilde \Y_1(t,s)-\tilde \Y_1(t,\theta))} \min_j(\tilde \P_j) \min_j(\tilde \V_j^+) \tilde \Y_{1,\eta} (t,\theta) d\theta\Big)\tilde \Y_{1,\eta} (t,s) ds\\
& \leq \frac{1}{\sqrt{2}}\int_{\eta_1}^{\eta_2} \Big(\int_0^s e^{-\frac{1}{\sqtC{1}}(\tilde \Y_1(t,s)-\tilde \Y_1(t,\theta))} \tilde \P_1^{3/4}\tilde \V_1^+\tilde \Y_{1,\eta}(t,\theta) d\theta\Big) \tilde \Y_{1, \eta}(t,s) ds\\
& \leq \frac{1}{\sqrt{2}}\int_{\eta_1}^{\eta_2} \Big(\int_0^s e^{-\frac{1}{\sqtC{1}}(\tilde \Y_1(t,s)-\tilde \Y_1(t,\theta))}\tilde \P_1^{3/2}\tilde \Y_{1,\eta} (t,\theta) d\theta\Big)^{1/2}\\
&\qquad\qquad\qquad\qquad\times \Big(\int_0^s e^{-\frac{1}{\sqtC{1}}(\tilde \Y_1(t,s)-\tilde \Y_1(t,\theta))} \tilde \U_1^2\tilde \Y_{1,\eta} (t,\theta) d\theta\Big)^{1/2} \tilde \Y_{1,\eta} (t,s) ds\\
& \leq 3\A^2 \int_{\eta_1}^{\eta_2} \tilde \P_1\tilde \Y_{1,\eta} (t,s) ds\\
& \leq \frac32 \A^7\vert \eta_2-\eta_1\vert,
\end{align*}
using \eqref{eq:general}.

As far as $\tilde B_2(t,s)$ is concerned, we have to be more careful. Therefore recall that we have, 
cf.~\eqref{eq:PUYH_scale}, that
\begin{equation*}
\sqtC{2}^5=2\tilde \P_2\tilde \Y_{2,\eta}(t,\eta) -\tilde \U_2^2\tilde \Y_{2,\eta}(t,\eta)+\tilde \Henergy_{2,\eta} (t,\eta)\leq 2\tilde \P_2\tilde \Y_{2,\eta} (t,\eta)+\tilde \Henergy_{2,\eta} (t,\eta).
\end{equation*}
Therefore we can write
\begin{align*}
\frac{1}{\ma}\int_0^s&  e^{-\frac{1}{\ma}(\tilde \Y_2(t,s)-\tilde \Y_2(t,\theta))} \min_j(\tilde\P_j)\min_j(\tilde \V_j^+)\tilde \Y_{1,\eta} (t,\theta) d\theta\\
&\le \frac{\sqrt{2}}{\ma}\int_0^s  e^{-\frac{1}{\sqtC{2}}(\tilde \Y_2(t,s)-\tilde \Y_2(t,\theta))} \min_j(\tilde\P_j)^{3/2}\tilde \Y_{1,\eta} (t,\theta) d\theta\\
& \leq \int_0^s e^{-\frac{1}{\sqtC{2}}(\tilde \Y_2(t,s) -\tilde \Y_2(t,\theta))} \tilde \P_2^{1/4} \tilde \P_1\tilde \Y_{1,\eta} (t,\theta) d\theta\\
& \leq \frac{\sqtC{1}^5}{2\sqtC{2}^5}\int_0^s e^{-\frac{1}{\sqtC{2}}(\tilde \Y_2(t,s)-\tilde \Y_2(t,\theta))} \tilde \P_2^{1/4} (2\tilde \P_2\tilde \Y_{2,\eta} + \tilde \Henergy_{2,\eta} )(t, \theta) d\theta\\
& \leq \frac{\sqtC{1}^5}{2\sqtC{2}^5} \int_0^s e^{-\frac{1}{\sqtC{2}}(\tilde \Y_2(t,s)-\tilde \Y_2(t,\theta))} (2\tilde \P_2^{5/4}  \tilde \Y_{2,\eta} + \frac{\sqtC{2}}{\sqrt{2}}\tilde \Henergy_{2,\eta} ) (t,\theta) d\theta\\
& \leq \frac{\sqtC{1}^5}{2\sqtC{2}^5}\Big(2\sqrt{2} \sqtC{2}^2\tilde\P_2(t,s) +2 \int_0^s e^{-\frac{1}{\sqtC{2}}(\tilde \Y_2(t,s)-\tilde \Y_2(t,\theta))}\tilde \P_2^{5/4}\tilde \Y_{2,\eta} (t, \theta) d\theta\Big).
\end{align*}

Note that the integral term can be bounded by $\bigO(1) \tilde \P_2(t,s)$ since 
\begin{equation*}
\int_0^s e^{-\frac{1}{\sqtC{2}}( \tilde \Y_2(t,s)-\tilde \Y_2(t,\theta)\vert } \tilde \P_2^{5/4}\tilde \Y_{2,\eta} (t,\theta) d\theta\leq \frac{15}{\sqrt{2}}\sqtC{2}^2\tilde \P_2(t,s)
\end{equation*}
by \eqref{eq:general}. We end up with 
\begin{align*}
\int_{\eta_1}^{\eta_2} \tilde B_2(t,s)ds & =\frac{1}{\ma} \int_{\eta_1}^{\eta_2} \tilde \Y_{2,\eta} (t,s)\\
&\quad\times\Big(\int_0^s e^{-\frac{1}{\ma}(\tilde \Y_2(t,s)-\tilde \Y_2(t,\theta))}\min_j(\tilde\P_j) \min_j(\tilde\V_j^+)\tilde \Y_{1,\eta} (t,\theta)d\theta\Big) ds\\
& \leq \frac{17}{\sqrt{2}}\frac{\sqtC{1}^5}{\sqtC{2}^3} \int_{\eta_1}^{\eta_2}  \tilde \P_2\tilde \Y_{2,\eta} (t,s)  ds\\
& \leq \frac{17}{2\sqrt{2}}\A^7 \vert \eta_2-\eta_1\vert .
\end{align*}
Moreover, 
\begin{align*}
\tilde b(t,\eta_1)-\tilde b(t,\eta_2) & \leq \int_{\eta_1}^{\eta_2} \tilde B_1(t,s) ds+ \int_{\eta_1}^{\eta_2} \tilde B_2(t,s) ds\\
& \leq \bigO(1)\A^7\vert \eta_2-\eta_1\vert,
\end{align*}
where $\bigO(1)$ denotes some constant, which only depends on $\A$ and which remains bounded as $\A\to 0$.

Finally combining both cases yields that there exists a constant $\bigO(1)$, which only depends on $A$ and which remains bounded as $\A\to 0$, such that 
\begin{equation*}
\vert \tilde b(t,\eta_2)-\tilde b(t,\eta_1)\vert \leq \bigO(1)\A^7\vert \eta_2-\eta_1\vert 
\end{equation*}
and subsequently
\begin{equation*}
\vert \tilde a(t,\eta_2)-\tilde a(t,\eta_1)\vert \leq \bigO(1)\A^7\vert \eta_2-\eta_1\vert.
\end{equation*}
This proves that the derivative exists for almost every $\eta$ and is bounded by \eqref{eq:MinMax2}.
\end{proof}
%------------------
%-----------------------------

\begin{lemma}\label{lemma:4}
The function 
\[
\eta\mapsto\min_k\Big(\int_0^\eta \min_j(e^{-\frac{1}{\ma}(\tilde \Y_j(t,\eta)-\tilde\Y_j(t,\theta))})
\min_j(\tilde\D_j)\min_j(\tilde\V_j^+)\tilde\Y_{k,\eta}(t,\theta)d\theta\Big)
\] 
is Lipschitz continuous with uniformly bounded Lipschitz constant and thus differentiable almost everywhere. The derivative satisfies,
\begin{align}
\vert\frac{d}{d\eta}\min_k\Big(\int_0^\eta \min_j(e^{-\frac{1}{\ma}(\tilde \Y_j(t,\eta)-\tilde\Y_j(t,\theta))})
\min_j(\tilde\D_j)\min_j(\tilde\V_j^+)\tilde\Y_{k,\eta}(t,\theta)d\theta\Big)\vert \leq \bigO(1)\A^8.  \label{eq:MinMax3}
\end{align}
\end{lemma}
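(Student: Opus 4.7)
The strategy is to mirror the proof of Lemma~\ref{lemma:2} verbatim, with the single substitution $\min_j(\tilde\P_j)\rightsquigarrow\min_j(\tilde\D_j)$ in the integrand. The crucial observation is that, by \eqref{eq:all_estimatesN}, we have the pointwise bound $\min_j(\tilde\D_j)\le 2\sqtC{j}\min_j(\tilde\P_j)\le 2\A\min_j(\tilde\P_j)$, so every estimate coming from the $\tilde\P$-version acquires precisely one extra factor of $\A$, which accounts for the jump from the $\A^7$ bound in Lemma~\ref{lemma:2} to the $\A^8$ bound asserted in \eqref{eq:MinMax3}.

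Concretely, I would introduce
\begin{align*}
\tilde b(t,\eta)&=\int_0^\eta \min_j(e^{-\frac{1}{\ma}(\tilde\Y_j(t,\eta)-\tilde\Y_j(t,\theta))})\min_j(\tilde\D_j)\min_j(\tilde\V_j^+)\tilde\Y_{1,\eta}(t,\theta)d\theta,\\
\tilde c(t,\eta)&=\int_0^\eta \min_j(e^{-\frac{1}{\ma}(\tilde\Y_j(t,\eta)-\tilde\Y_j(t,\theta))})\min_j(\tilde\D_j)\min_j(\tilde\V_j^+)\tilde\Y_{2,\eta}(t,\theta)d\theta,
\end{align*}
so that the quantity of interest equals $\min(\tilde b,\tilde c)$. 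As in Lemma~\ref{lemma:2}, it suffices to prove that $\tilde b$ (and by symmetry $\tilde c$) is Lipschitz in $\eta$ with constant $\bigO(1)\A^8$, and then the estimate for the min follows from the elementary fact (see Lemma~\ref{lemma:LUR}) that $\abs{\min(a_1,b_1)-\min(a_2,b_2)}\le\max(\abs{a_1-a_2},\abs{b_1-b_2})$.

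Fix $0\le\eta_1<\eta_2\le 1$ and split into the two cases as in Lemma~\ref{lemma:2}. In the \emph{increasing} case $0\le\tilde b(t,\eta_2)-\tilde b(t,\eta_1)$, monotonicity of $\tilde\Y_j(t,\cdot)$ lets me discard the $\theta<\eta_1$ contribution (the exponentials in the subtracted term are larger), leaving a single integral over $[\eta_1,\eta_2]$ whose integrand is controlled pointwise by
\[
\min_j(\tilde\D_j)\min_j(\tilde\V_j^+)\tilde\Y_{1,\eta}\le 2\A\,\tilde\P_1\tilde\Y_{1,\eta}\cdot\min_j(\tilde\V_j^+)\le 2\A\cdot\tfrac12\A^5\cdot\tfrac{1}{\sqrt{2}}\A^2=\tfrac{1}{\sqrt{2}}\A^8,
\]
using \eqref{eq:all_estimatesE} and \eqref{eq:all_estimatesB}. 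This gives the $\A^8|\eta_2-\eta_1|$ bound immediately.

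The \emph{decreasing} case $0\le\tilde b(t,\eta_1)-\tilde b(t,\eta_2)$ is the main technical step and will proceed exactly as the corresponding step in Lemma~\ref{lemma:2}. Applying Lemma~\ref{lemma:1}(i) to estimate the difference of the minima of exponentials and then exchanging the order of integration, I reduce matters to proving the analogues of the bounds on $\tilde B_1$ and $\tilde B_2$ there. For $\tilde B_1$ I use $\min_j(\tilde\D_j)\le 2\A\tilde\P_1$ together with \eqref{eq:general} (for $\beta=1/2$) to pick up a bound of the form $\bigO(1)\A^3\tilde\P_1(t,s)$ inside the inner integral, which after integration against $\tilde\Y_{1,\eta}$ and using \eqref{eq:all_estimatesE} yields $\bigO(1)\A^8|\eta_2-\eta_1|$; the extra $\A$ comes directly from the $\min_j(\tilde\D_j)\le 2\A\min_j(\tilde\P_j)$ substitution. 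For $\tilde B_2$, the key trick is again the energy balance \eqref{eq:PUYH_scale}, namely $\sqtC{2}^5\le 2\tilde\P_2\tilde\Y_{2,\eta}+\tilde\Henergy_{2,\eta}$, which together with \eqref{eq:general} allows one to bound
\[
\frac{1}{\ma}\int_0^s e^{-\frac{1}{\ma}(\tilde\Y_2(t,s)-\tilde\Y_2(t,\theta))}\min_j(\tilde\D_j)\min_j(\tilde\V_j^+)\tilde\Y_{1,\eta}(t,\theta)d\theta\le\bigO(1)\A^3\tilde\P_2(t,s),
\]
one factor of $\A$ more than in Lemma~\ref{lemma:2}. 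A final integration against $\tilde\Y_{2,\eta}$ together with \eqref{eq:all_estimatesE} yields the desired $\bigO(1)\A^8|\eta_2-\eta_1|$ bound. Combining the two cases establishes Lipschitz continuity of $\tilde b$ with constant $\bigO(1)\A^8$, and the conclusion of the lemma follows.

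The main obstacle, as in Lemma~\ref{lemma:2}, is the decreasing case: one must carefully chain the Lipschitz estimate for the minimum of exponentials (Lemma~\ref{lemma:1}(i)) with a Fubini exchange and then find the right $\beta$ in \eqref{eq:general} so that the half-derivative $\tilde\P_2^{1/4}$ produced after the bootstrapping remains integrable against the exponential. No genuinely new idea beyond those in Lemma~\ref{lemma:2} is required; the bookkeeping of $\A$-powers is what ensures the $\A^8$ constant.
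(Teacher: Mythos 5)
Your proposal matches the paper's proof: the paper establishes Lemma \ref{lemma:4} by repeating the argument of Lemma \ref{lemma:2} with $\min_j(\tilde\P_j)$ replaced by $\min_j(\tilde\D_j)$, using $\tilde\D_j\le 2\sqtC{j}\tilde\P_j$ to gain exactly one extra factor of $\A$, the same reduction via $\min(\tilde b,\tilde c)$, the same two monotonicity cases with the pointwise $\tfrac{1}{\sqrt{2}}\A^8$ bound in the increasing case, and in the decreasing case the same Fubini exchange, \eqref{eq:general} with $\beta=\tfrac12$, and $\sqtC{2}^5\le 2\tilde\P_2\tilde\Y_{2,\eta}+\tilde\Henergy_{2,\eta}$ for the $\bar B_1$, $\bar B_2$ terms. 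The only cosmetic difference is that the paper derives the difference-of-exponentials estimate inline (it concerns the $\eta$-argument rather than the $\theta$-argument treated in Lemma \ref{lemma:1}(i)), which is the same monotonicity computation you invoke.
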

\begin{proof}
We present the following argument. Introduce
\begin{align*}
\bar a(t,\eta)& =\min_k\Big(\int_0^\eta \min_j(e^{-\frac{1}{\ma}(\tilde \Y_j(t,\eta)-\tilde\Y_j(t,\theta))})
\min_j(\tilde\D_j)\min_j(\tilde\V_j^+)\tilde\Y_{k,\eta}(t,\theta)d\theta\Big),\\
\bar b(t,\eta)&= \int_0^\eta \min_j(e^{-\frac{1}{\ma}(\tilde \Y_j(t,\eta)-\tilde\Y_j(t,\theta))}) 
\min_j(\tilde\D_j)\min_j(\tilde\V_j^+)\tilde\Y_{1,\eta}(t,\theta) d\theta,\\
\bar c(t,\eta)&= \int_0^\eta \min_j(e^{-\frac{1}{\ma}(\tilde \Y_j(t,\eta)-\tilde\Y_j(t,\theta))})
\min_j(\tilde\D_j)\min_j(\tilde\V_j^+)\tilde\Y_{2,\eta}(t,\theta) d\theta.
\end{align*}
Thus 
\begin{equation*}
\bar a(t,\eta)=\min(\bar b, \bar c)(t,\eta).
\end{equation*}
We have to show that $\bar a(t,\dott)$ is Lipschitz continuous with a Lipschitz constant, which only depends on $\A$. Clearly, we have that 
\begin{equation*}
\vert \bar a(t,\eta_1)-\bar a(t,\eta_2)\vert \leq \max(\vert \bar b(t,\eta_1)-\bar b(t,\eta_2)\vert, \vert \bar c(t,\eta_1)-\bar c(t,\eta_2)\vert ),
\end{equation*}
and it suffices to show that both $\bar b(t,\dott)$ and $\bar c(t,\dott)$ are Lipschitz continuous with a Lipschitz constant, which only depends on $\A$. We are only going to establish the Lipschitz continuity for $\bar b$ since the argument for $\bar c$ follows the same lines. Let $0\leq \eta_1<\eta_2\leq 1$. Then we have to consider two cases
\begin{equation*}
0\leq \bar b(t,\eta_1)-\bar b(t,\eta_2)\qquad \text{ and }\qquad 0\leq \bar b(t,\eta_2)-\bar b(t,\eta_1).
\end{equation*}

(i): $0\leq\bar b(t,\eta_2)-\bar b(t,\eta_1)$: By definition we have
\begin{align*}
\bar b(t,\eta_2)&-\bar b(t,\eta_1)\\
&= \int_0^{\eta_2} \min_j(e^{-\frac{1}{\ma}(\tilde\Y_j(t,\eta_2)-\tilde\Y_j(t,\theta))})
\min_j(\tilde\D_j)\min_j(\tilde\V_j^+)\tilde\Y_{1,\eta}(t,\theta)d\theta\\
& \quad -\int_0^{\eta_1}\min_j(e^{-\frac{1}{\ma}(\tilde\Y_j(t,\eta_1)-\tilde\Y_j(t,\theta))})\min_j(\tilde\D_j)\min_j(\tilde\V_j^+)\tilde\Y_{1,\eta}(t,\theta)d\theta\\
& = \int_{\eta_1}^{\eta_2} \min_j(e^{-\frac{1}{\ma}(\tilde\Y_j(t,\eta_2)-\tilde\Y_j(t,\theta))})\\
&\qquad\qquad\qquad\qquad\qquad\qquad\qquad\times\min_j(\tilde\D_j)\min_j(\tilde\V_j^+)\tilde\Y_{1,\eta}(t,\theta) d\theta\\
& \quad + \int_0^{\eta_1} (\min_j(e^{-\frac{1}{\ma}(\tilde\Y_j(t,\eta_2)-\tilde\Y_j(t,\theta))})\\
&\qquad\qquad\qquad\qquad\qquad-\min_j(e^{-\frac{1}{\ma}(\tilde\Y_j(t,\eta_1)-\tilde\Y_j(t,\theta))}))\\
& \qquad \qquad \qquad\qquad\qquad\qquad\qquad \times\min_j(\tilde\D_j)\min_j(\tilde\V_j^+)\tilde\Y_{1,\eta}(t,\theta)d\theta\\
& \leq \int_{\eta_1}^{\eta_2} \min_j(e^{-\frac{1}{\ma}(\tilde\Y_j(t,\eta_2)-\tilde\Y_j(t,\theta))}) \\
&\qquad\qquad\qquad\qquad\qquad\qquad\qquad\times\min_j(\tilde\D_j)\min_j(\tilde\V_j^+)\tilde\Y_{1,\eta}(t,\theta) d\theta,
\end{align*}
where we used in the last step that $0\leq \min_j(\tilde\D_j)\min_j(\tilde\V_j^+)\tilde\Y_{1,\eta}(t,\eta)$ and that $\tilde \Y_i(t,\eta)$ is increasing, which implies 
\begin{equation*}
\min_j(e^{-\frac{1}{\ma}(\tilde\Y_j(t,\eta_1)-\tilde\Y_j(t,\theta))})\geq \min_j(e^{-\frac{1}{\ma}(\tilde\Y_j(t,\eta_2)-\tilde\Y_j(t,\theta))}).
\end{equation*}
Moreover, note that $\min_j(e^{-\frac{1}{\ma}(\tilde\Y_j(t,\eta_2)-\tilde\Y_j(t,\theta))})\leq 1$ for $0\leq \eta_1\leq \theta\leq \eta_2$ and that 
\begin{equation*}
0\leq \min_j(\tilde \D_j)\min_j(\tilde\V_j^+)\tilde\Y_{1,\eta}(t,\theta) \leq \frac{1}{\sqrt{2}}\sqtC{1}^8\leq \frac{1}{\sqrt{2}}\A^8
\end{equation*}
by \eqref{eq:all_estimatesB}, \eqref{eq:all_estimatesE}, and \eqref{eq:all_estimatesN}.
Thus 
\begin{equation*}
0\leq \bar b(t,\eta_2)-\bar b(t,\eta_1)\leq \frac{1}{\sqrt{2}}\A^8\vert \eta_2-\eta_1\vert.
\end{equation*}

(ii) $0\leq \bar b(t,\eta_1)-\bar b(t,\eta_2)$: By definition, we have
\begin{align*}
0& \leq \bar b(t,\eta_1)-\bar b(t,\eta_2)\\
& = \int_0^{\eta_1} \min_j(e^{-\frac{1}{\ma}(\tilde \Y_j(t,\eta_1)-\tilde \Y_j(t,\theta))}) \min_j(\tilde\D_j) \min_j(\tilde \V_j^+) \tilde \Y_{1,\eta} (t,\theta) d\theta\\
& \quad -\int_0^{\eta_2} \min_j(e^{-\frac{1}{\ma}(\tilde \Y_j(t,\eta_2)-\tilde \Y_j(t,\theta))})\min_j(\tilde\D_j) \min_j(\tilde \V_j^+)\tilde \Y_{1,\eta} (t,\theta) d\theta\\
& = \int_0^{\eta_1} (\min_j(e^{-\frac{1}{\ma}(\tilde \Y_j(t,\eta_1)-\tilde \Y_j(t,\theta))})-\min_j(e^{-\frac{1}{\ma}(\tilde \Y_j(t,\eta_2)-\tilde \Y_j(t,\theta))}))\\
& \qquad \qquad \qquad \qquad\qquad\qquad\qquad \times\min_j(\tilde\D_j)\min_j(\tilde \V_j^+)\tilde \Y_{1,\eta} (t,\theta) d\theta\\
& \quad - \int_{\eta_1}^{\eta_2} \min_j(e^{-\frac{1}{\ma}(\tilde \Y_j(t,\eta_2)-\tilde \Y_j(t,\theta))})\min_j(\tilde\D_j)\min_j(\tilde \V_j^+) \tilde \Y_{1,\eta}(t,\theta) d\theta\\
& \leq  \int_0^{\eta_1} (\min_j(e^{-\frac{1}{\ma}(\tilde \Y_j(t,\eta_1)-\tilde \Y_j(t,\theta))})-\min_j(e^{-\frac{1}{\ma}(\tilde \Y_j(t,\eta_2)-\tilde \Y_j(t,\theta))}))\\
& \qquad \qquad \qquad \qquad\qquad\qquad\qquad \times\min_j(\tilde\D_j)\min_j(\tilde \V_j^+)\tilde \Y_{1,\eta}(t,\theta) d\theta.
\end{align*}
Now we have to be much more careful then before. Namely, we have (as before)
\begin{align*}
0&\leq \min_j(e^{-\frac{1}{\ma}(\tilde \Y_j(t,\eta_1)-\tilde \Y_j(t,\theta))})-\min_j(e^{-\frac{1}{\ma}(\tilde \Y_j(t,\eta_2)-\tilde \Y_j(t,\theta))})\\
& \leq \frac{1}{\ma}\int_{\eta_1}^{\eta_2} (e^{-\frac{1}{\ma}(\tilde \Y_1(t,s)-\tilde \Y_1(t,\theta))}\tilde \Y_{1, \eta} (t,s)+e^{-\frac{1}{\ma}(\tilde \Y_2(t,s)-\tilde \Y_2(t,\theta))} \tilde \Y_{2, \eta} (t,s)) ds.
\end{align*}
Hence 
\begin{align*}
\bar b(t,\eta_1)&-\bar b(t,\eta_2) \\
& \leq \int_0^{\eta_1} (\min_j(e^{-\frac{1}{\ma}(\tilde \Y_j(t,\eta_1)-\tilde \Y_j(t,\theta))})-\min_j(e^{-\frac{1}{\ma}(\tilde \Y_j(t,\eta_2)-\tilde \Y_j(t,\theta))}))\\
& \qquad \qquad \qquad \qquad\qquad\qquad \times\min_j(\tilde\D_j)\min_j(\tilde \V_j^+)\tilde \Y_{1,\eta}(t,\theta) d\theta\\
& \leq\frac{1}{\ma} \int_0^{\eta_1} \int_{\eta_1}^{\eta_2} e^{-\frac{1}{\ma}(\tilde \Y_1(t,s)-\tilde \Y_1(t,\theta))} \tilde \Y_{1,\eta} (t,s) ds \min_j(\tilde\D_j) \min_j(\tilde \V_j^+)\tilde \Y_{1, \eta} (t,\theta) d\theta\\
& \quad + \frac{1}{\ma}\int_0^{\eta_1} \int_{\eta_1}^{\eta_2} e^{-\frac{1}{\ma}(\tilde \Y_2(t,s)-\tilde \Y_2(t,\theta))}\tilde \Y_{2,\eta} (t,s)ds\\
&\qquad\qquad\qquad\qquad\qquad\qquad\qquad\times \min_j(\tilde\D_j) \min_j(\tilde \V_j^+)\tilde \Y_{1,\eta} (t,\theta) d\theta\\
& = \frac{1}{\ma}\int_{\eta_1}^{\eta_2} \tilde \Y_{1,\eta} (t,s)\\
&\qquad\qquad\times \Big(\int_0^{\eta_1} e^{-\frac{1}{\ma}(\tilde \Y_1(t,s) -\tilde \Y_1(t,\theta))} \min_j(\tilde\D_j) \min_j(\tilde \V_j^+)\tilde \Y_{1,\eta} (t,\theta) d\theta\Big) ds\\
& \qquad + \frac{1}{\ma}\int_{\eta_1}^{\eta_2} \tilde \Y_{2,\eta} (t,s) \\
&\qquad\qquad\times\Big(\int_0^{\eta_1} e^{-\frac{1}{\ma}(\tilde \Y_2(t,s)-\tilde \Y_2(t,\theta))} \min_j(\tilde\D_j) \min_j(\tilde \V_j^+)\tilde \Y_{1,\eta} (t,\theta) d\theta\Big) ds\\
& \leq \frac{1}{\ma} \int_{\eta_1}^{\eta_2} \tilde \Y_{1,\eta} (t,s) \\
&\qquad\qquad\times\Big(\int_0^{s} e^{-\frac{1}{\ma}(\tilde \Y_1(t,s) -\tilde \Y_1(t,\theta))} \min_j(\tilde\D_j) \min_j(\tilde \V_j^+)\tilde \Y_{1,\eta} (t,\theta) d\theta\Big) ds\\
& \qquad + \frac{1}{\ma}\int_{\eta_1}^{\eta_2} \tilde \Y_{2,\eta} (t,s)\\
&\qquad\qquad\times \Big(\int_0^{s} e^{-\frac{1}{\ma}(\tilde \Y_2(t,s)-\tilde \Y_2(t,\theta))} \min_j(\tilde\D_j) \min_j(\tilde \V_j^+)\tilde \Y_{1,\eta} (t,\theta) d\theta\Big) ds\\
& =\int_{\eta_1}^{\eta_2} \bar B_1(t,s)ds+\int_{\eta_1}^{\eta_2} \bar B_2(t,s) ds.
\end{align*}

As far as $\bar B_1(t,s)$ is concerned, we have 
\begin{align*}
\int_{\eta_1}^{\eta_2}& \bar B_1(t,s)ds \\
& = \frac{1}{\ma}\int_{\eta_1}^{\eta_2} \Big(\int_0^s e^{-\frac{1}{\ma}(\tilde \Y_1(t,s)-\tilde \Y_1(t,\theta))} \min_j(\tilde\D_j) \min_j(\tilde \V_j^+) \tilde \Y_{1,\eta} (t,\theta) d\theta\Big)\tilde \Y_{1,\eta} (t,s) ds\\
& \leq \sqrt{2}\A\int_{\eta_1}^{\eta_2} \Big(\int_0^s e^{-\frac{1}{\sqtC{1}}(\tilde \Y_1(t,s)-\tilde \Y_1(t,\theta))} \tilde \P_1^{3/4}\tilde \V_1^+\tilde \Y_{1,\eta} (t,\theta) d\theta\Big) \tilde \Y_{1, \eta}(t,s) ds\\
& \leq \sqrt{2}\A\int_{\eta_1}^{\eta_2} \Big(\int_0^s e^{-\frac{1}{\sqtC{1}}(\tilde \Y_1(t,s)-\tilde \Y_1(t,\theta))}\tilde \P_1^{3/2}\tilde \Y_{1,\eta} (t,\theta) d\theta\Big)^{1/2}\\
&\qquad\qquad\qquad\qquad\times \Big(\int_0^s e^{-\frac{1}{\sqtC{1}}(\tilde \Y_1(t,s)-\tilde \Y_1(t,\theta))} \tilde \U_1^2\tilde \Y_{1,\eta} (t,\theta) d\theta\Big)^{1/2} \tilde \Y_{1,\eta} (t,s) ds\\
& \leq 6\A^3 \int_{\eta_1}^{\eta_2} \tilde \P_1\tilde \Y_{1,\eta} (t,s) ds\\
& \leq 3\A^8\vert \eta_2-\eta_1\vert,
\end{align*}
where we used \eqref{eq:all_estimatesN}.

As far as $\bar B_2(t,s)$ is concerned, we have to be more careful. Therefore recall that we have 
cf.~\eqref{eq:PUYH_scale}, that
\begin{equation*}
\sqtC{2}^5=2\tilde \P_2\tilde \Y_{2,\eta}(t,\eta) -\tilde \U_2^2\tilde \Y_{2,\eta}(t,\eta)+\tilde \Henergy_{2,\eta} (t,\eta)\leq 2\tilde \P_2\tilde \Y_{2,\eta} (t,\eta)+\tilde \Henergy_{2,\eta} (t,\eta).
\end{equation*}
Therefore we can write
\begin{align*}
\frac{1}{\ma}\int_0^s&  \e^{-\frac{1}{\ma}(\tilde \Y_2(t,s)-\tilde \Y_2(t,\theta))} \min_j(\tilde\D_j)\min_j(\tilde \V_j^+)\tilde \Y_{1,\eta} (t,\theta) d\theta\\
& \leq \frac{2\sqrt{2}\A}{\ma}\int_0^s e^{-\frac{1}{\sqtC{2}}(\tilde \Y_2(t,s) -\tilde \Y_2(t,\theta))} \min_j(\tilde \P_j)^{3/2}\tilde \Y_{1,\eta} (t,\theta) d\theta\\
& \leq 2\A\int_0^s e^{-\frac{1}{\sqtC{2}}(\tilde \Y_2(t,s)-\tilde \Y_2(t,\theta))} \tilde \P_2^{1/4}\tilde \P_1\tilde \Y_{1,\eta}(t,\theta) d\theta\\
& \leq A\frac{\sqtC{1}^5}{\sqtC{2}^5}\int_0^s e^{-\frac{1}{\sqtC{2}}(\tilde \Y_2(t,s)-\tilde \Y_2(t,\theta))} \tilde \P_2^{1/4}(2\tilde \P_2\tilde \Y_{2,\eta}+\tilde \Henergy_{2,\eta})(t,\theta) d\theta\\
& \leq A\frac{\sqtC{1}^5}{\sqtC{2}^5}\int_0^s e^{-\frac{1}{\sqtC{2}}(\tilde \Y_2(t,s)-\tilde \Y_2(t,\theta))}(2\tilde \P_2^{5/4}\tilde \Y_{2,\eta}+\frac{\sqtC{2}}{\sqrt{2}}\tilde \Henergy_{2,\eta})(t,\theta) d\theta\\
& \leq \A \frac{\sqtC{1}^5}{\sqtC{2}^5}\Big(2\sqrt{2}\sqtC{2}^2\tilde \P_2(t,s)+2\int_0^s e^{-\frac{1}{\sqtC{2}}(\tilde \Y_2(t,s)-\tilde \Y_2(t,\theta))}\tilde \P_2^{5/4}\tilde \Y_{2,\eta}(t,\theta)d\theta\Big).
\end{align*}

Note that the integral term can be bounded by $\bigO(1) \tilde \P_2(t,s)$ since 
\begin{equation*}
\int_0^s e^{-\frac{1}{\ma}(\hat \Y_2(t,s)-\tilde \Y_2(t,\theta)) } \tilde \P_2^{5/4}\tilde \Y_{2,\eta} (t,\theta) d\theta\leq \frac{15}{\sqrt{2}}\sqtC{2}^2\tilde \P_2(t,s) 
\end{equation*}
by \eqref{eq:general}. We end up with 
\begin{align*}
\int_{\eta_1}^{\eta_2}& \bar B_2(t,s)ds \\
& = \frac{1}{\ma}\int_{\eta_1}^{\eta_2} \tilde \Y_{2,\eta} (t,s)\Big(\int_0^s e^{-\frac{1}{\ma}(\tilde \Y_2(t,s)-\tilde \Y_2(t,\theta))}\min_j(\tilde\D_j) \min_j(\tilde\V_j^+)\tilde \Y_{1,\eta} (t,\theta)d\theta\Big) ds\\
& \leq \frac{17\sqrt{2}\A^6}{\sqtC{2}^3} \int_{\eta_1}^{\eta_2} \tilde \P_2\tilde \Y_{2,\eta} (t,s)  ds\\
& \leq \frac{17\A^8}{\sqrt{2}}\vert \eta_2-\eta_1\vert .
\end{align*}
Moreover, 
\begin{align*}
\bar b(t,\eta_1)-\bar b(t,\eta_2) & \leq \int_{\eta_1}^{\eta_2} \bar B_1(t,s) ds+ \int_{\eta_1}^{\eta_2} \bar B_2(t,s) ds\\
& \leq \bigO(1)\A^8\vert \eta_2-\eta_1\vert,
\end{align*}
where $\bigO(1)$ denotes some constant, which only depends on $\A$ and which remains bounded as $\A\to 0$.

Finally, combining both cases yields that there exists a constant $\bigO(1)$, which only depends on $\A$ and which remains bounded as $\A\to 0$, such that 
\begin{equation*}
\vert \bar b(t,\eta_2)-\bar b(t,\eta_1)\vert \leq \bigO(1)\A^8\vert \eta_2-\eta_1\vert 
\end{equation*}
and subsequently
\begin{equation*}
\vert \bar a(t,\eta_2)-\bar a(t,\eta_1)\vert \leq \bigO(1)\A^8\vert \eta_2-\eta_1\vert.
\end{equation*}
This proves that the derivative exists for almost every $\eta$ and is bounded by \eqref{eq:MinMax3}.\end{proof}
%-----------------------
%---------------------------
%----------------------------

\begin{lemma}\label{lemma:7}
The function
\[
\eta \mapsto \min_k \Big[ \int_0^\eta \min_j(e^{-\frac{1}{\ma}(\tilde \Y_j(t,\eta)-\tilde\Y_j(t,\theta))})\min_j(\tilde\V_j^+)^3\tilde\Y_{k,\eta}(t,\theta) d\theta\Big]
\]
is Lipschitz continuous with a uniformly bounded  Lipschitz constant and thus differentiable almost everywhere. The derivative satisfies
\begin{equation} 
\vert \frac{d}{d\eta} \min_k \Big[ \int_0^\eta \min_j(e^{-(\tilde \Y_j(t,\eta)-\tilde\Y_j(t,\theta))})\min_j(\tilde\V_j^+)^3\tilde\Y_{k,\eta}(t,\theta) d\theta\Big]\vert \leq \bigO(1)\A^{6},\label{eq:MinMax}
\end{equation}
where $\bigO(1)$ denotes a constant, which only depends on $\A$ and which remains bounded as $\A\to 0$.
\end{lemma}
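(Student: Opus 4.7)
The strategy is a verbatim mirror of Lemma~\ref{lemma:2}. Set
\[
a(t,\eta)=\min_k b_k(t,\eta),\qquad b_k(t,\eta)=\int_0^\eta \min_j\!\big(e^{-\frac{1}{\ma}(\tilde\Y_j(t,\eta)-\tilde\Y_j(t,\theta))}\big)\min_j(\tilde\V_j^+)^3\,\tilde\Y_{k,\eta}(t,\theta)\,d\theta.
\]
Since $|a(t,\eta_2)-a(t,\eta_1)|\le\max_k|b_k(t,\eta_2)-b_k(t,\eta_1)|$, it suffices to show that each $b_k$ is Lipschitz in $\eta$ with a constant depending only on $\A$ that remains bounded as $\A\to0$; by symmetry we treat only $b_1$. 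Once that is shown, $a(t,\cdot)$ is Lipschitz, hence differentiable a.e., and passing the Lipschitz bound to the derivative gives \eqref{eq:MinMax}.

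\textbf{Step 1: the increasing case.} Fix $0\le\eta_1<\eta_2\le1$ with $b_1(t,\eta_2)\ge b_1(t,\eta_1)$. Since $\tilde\Y_j(t,\cdot)$ is non-decreasing, one has $\min_j(e^{-\frac{1}{\ma}(\tilde\Y_j(t,\eta_1)-\tilde\Y_j(t,\theta))})\ge \min_j(e^{-\frac{1}{\ma}(\tilde\Y_j(t,\eta_2)-\tilde\Y_j(t,\theta))})$, so splitting $\int_0^{\eta_2}=\int_0^{\eta_1}+\int_{\eta_1}^{\eta_2}$ and dropping the non-positive contribution on $[0,\eta_1]$ gives
\[
b_1(t,\eta_2)-b_1(t,\eta_1)\le \int_{\eta_1}^{\eta_2}\min_j(\tilde\V_j^+)^3\,\tilde\Y_{1,\eta}(t,\theta)\,d\theta.
\]
The integrand is bounded pointwise by $|\tilde\U_1|\cdot\tilde\U_1^2\tilde\Y_{1,\eta}\le(\A^2/\sqrt2)\cdot\A^5=\A^7/\sqrt2$ thanks to \eqref{eq:all_estimatesB} and \eqref{eq:all_estimatesG}, so this case contributes $\bigO(1)\A^7|\eta_2-\eta_1|$.

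\textbf{Step 2: the decreasing case.} Suppose instead $b_1(t,\eta_1)\ge b_1(t,\eta_2)$. Discarding the non-negative $[\eta_1,\eta_2]$-piece and using the elementary bound
\[
0\le\min_j\!\big(e^{-\frac{1}{\ma}(\tilde\Y_j(t,\eta_1)-\tilde\Y_j(t,\theta))}\big)-\min_j\!\big(e^{-\frac{1}{\ma}(\tilde\Y_j(t,\eta_2)-\tilde\Y_j(t,\theta))}\big)\le\tfrac{1}{\ma}\sum_{i=1,2}\int_{\eta_1}^{\eta_2}e^{-\frac{1}{\ma}(\tilde\Y_i(t,s)-\tilde\Y_i(t,\theta))}\tilde\Y_{i,\eta}(t,s)\,ds,
\]
followed by Fubini, reduces matters to controlling
\[
B_i(t,s)\;:=\;\tfrac{1}{\ma}\,\tilde\Y_{i,\eta}(t,s)\int_0^s e^{-\frac{1}{\ma}(\tilde\Y_i(t,s)-\tilde\Y_i(t,\theta))}\min_j(\tilde\V_j^+)^3\,\tilde\Y_{1,\eta}(t,\theta)\,d\theta,\qquad i=1,2,
\]
uniformly on $[\eta_1,\eta_2]$. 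For $i=1$ the estimate is routine: use $\min_j(\tilde\V_j^+)^3\le|\tilde\U_1|\tilde\U_1^2$, apply Cauchy--Schwarz to split off $\tilde\U_1^2\tilde\Y_{1,\eta}$ and $\tilde\P_1^{3/2}\tilde\Y_{1,\eta}$, and invoke \eqref{eq:general} with $\beta=\tfrac12$ and \eqref{eq:all_PestimatesC}. This yields $\int_{\eta_1}^{\eta_2}B_1\,ds\le\bigO(1)\A^7|\eta_2-\eta_1|$.

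\textbf{Step 3: the mixed case $i=2$.} This is the only delicate step. Inside $B_2$ we have $\tilde\Y_{1,\eta}$ in the integrand but the exponential kernel is in $\tilde\Y_2$. We convert via the scaled identity \eqref{eq:PUYH_scale}, namely $\sqtC{1}^5\le 2\tilde\P_2\tilde\Y_{2,\eta}+\tilde\Henergy_{2,\eta}$, which allows us to replace $\sqtC{1}^5\tilde\P_1$ (arising after bounding $\min_j(\tilde\V_j^+)^3\le|\tilde\U_2|\tilde\P_2^{1/4}\tilde\P_1$ and $\min_j(\tilde\V_j^+)\le\sqrt{2\tilde\P_2}$) by a sum of terms of the form $\tilde\P_2^{5/4}\tilde\Y_{2,\eta}$ and $\sqtC{2}\tilde\Henergy_{2,\eta}$. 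Applying \eqref{eq:general} with $\beta=\tfrac14$ then bounds the inner integral by $\bigO(1)\A^2\tilde\P_2(t,s)$, so that $\int_{\eta_1}^{\eta_2}B_2\,ds\le\bigO(1)\A^6\int_{\eta_1}^{\eta_2}\tilde\P_2\tilde\Y_{2,\eta}(t,s)\,ds\le\bigO(1)\A^{11}|\eta_2-\eta_1|$, and in particular $\bigO(1)|\eta_2-\eta_1|$ with a constant depending only on $\A$ and remaining bounded as $\A\to0$. This is the main obstacle, and it is resolved in the same way as the $\tilde B_2$ estimate in Lemma~\ref{lemma:2}.

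\textbf{Conclusion.} Combining Steps 1--3 we obtain $|b_1(t,\eta_2)-b_1(t,\eta_1)|\le\bigO(1)|\eta_2-\eta_1|$, and the analogous bound for $b_2$ is identical. Hence $a(t,\cdot)$ is Lipschitz with a uniformly bounded Lipschitz constant; differentiating a.e., the bound \eqref{eq:MinMax} follows from the fundamental theorem of calculus exactly as in the closing paragraph of Lemma~\ref{lemma:2}.
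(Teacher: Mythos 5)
Your overall architecture is the same as the paper's (write the function as $\min(b_1,b_2)$, treat the monotone and non-monotone cases separately, bound the difference of the two $\min_j$-exponentials by $\frac{1}{\ma}\int_{\eta_1}^{\eta_2}\sum_i e^{-\frac{1}{\ma}(\tilde\Y_i(t,s)-\tilde\Y_i(t,\theta))}\tilde\Y_{i,\eta}(t,s)\,ds$, use Fubini, and estimate $B_1$, $B_2$). The gap is in your Step 2. By replacing $\min_j(\tilde\V_j^+)^3$ with $\vert\tilde\U_1\vert\,\tilde\U_1^2$ you discard the only available source of smallness that can absorb the prefactor $\frac{1}{\ma}$ sitting in front of $B_1$. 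Your Cauchy--Schwarz route (with \eqref{eq:general}, $\beta=\tfrac12$, and \eqref{eq:all_PestimatesC}) bounds the inner integral by $\bigO(1)\sqtC{1}^{2}\tilde\P_1(t,s)$ at best, so after multiplying by $\frac{1}{\ma}\tilde\Y_{1,\eta}(t,s)$ and using $\tilde\P_1\tilde\Y_{1,\eta}\le\tfrac12\sqtC{1}^5$ you get $B_1\le\bigO(1)\sqtC{1}^{7}/\ma$, not $\bigO(1)\A^7$. In the regime $\ma=\sqtC{2}\ll\sqtC{1}=\A$ (comparison with a solution of much smaller energy, which the metric must handle, since the final theorem's constant may only depend on $\max_j C_j$) this blows up, so the claimed Lipschitz constant does not follow from your steps. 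The paper's fix is precisely to keep one factor of the minimum: $\min_j(\tilde\V_j^+)\le\min_j\vert\tilde\U_j\vert\le\ma^2/\sqrt2$, so that $\frac{1}{\ma}\min_j(\tilde\V_j^+)^3\le\frac{\ma}{\sqrt2}\,\tilde\U_1^2\le\frac{\A}{\sqrt2}\,\tilde\U_1^2$, and then \eqref{eq:all_PestimatesC} together with $\tilde\P_1\tilde\Y_{1,\eta}\le\tfrac12\sqtC{1}^5$ gives $\int_{\eta_1}^{\eta_2}B_1\,ds\le\sqrt2\,\A^7\vert\eta_2-\eta_1\vert$.

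Two further remarks on Step 3. First, \eqref{eq:PUYH_scale} gives $\sqtC{2}^5\le 2\tilde\P_2\tilde\Y_{2,\eta}+\tilde\Henergy_{2,\eta}$, not the inequality with $\sqtC{1}^5$ that you wrote; the conversion is $\tilde\U_1^2\tilde\Y_{1,\eta}\le\sqtC{1}^5=\frac{\sqtC{1}^5}{\sqtC{2}^5}\sqtC{2}^5\le\frac{\sqtC{1}^5}{\sqtC{2}^5}(2\tilde\P_2\tilde\Y_{2,\eta}+\tilde\Henergy_{2,\eta})$, and the ratio $\sqtC{1}^5/\sqtC{2}^5$ must be carried along. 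Second, the point of the $B_2$ estimate is again the cancellation of $\frac{1}{\ma}$: after bounding $\min_j(\tilde\V_j^+)^3\le\vert\tilde\U_2\vert\,\tilde\U_1^2$, converting as above and using the $\tilde\P_2^2\tilde\Y_{2,\eta}$-type integral bounds, the net prefactor is $\sqtC{1}^5\sqtC{2}^2/\ma$, which is $\le\A^6$ whichever of $\sqtC{1},\sqtC{2}$ is the minimum; this verification is the actual content of the step and cannot be replaced by an appeal to Lemma \ref{lemma:2}, where the $\frac{1}{\ma}$ is cancelled by the different mechanism $\min_j(\tilde\P_j)^{1/4}\le\ma/\sqrt2$. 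As written, your Steps 2 and 3 therefore do not establish a Lipschitz constant depending only on $\A$.
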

\begin{proof}
To that end, we present the following argument.
Introduce 
\begin{align*}
a(t,\eta)& =\min_k[\int_0^\eta \min_j(e^{-\frac{1}{\ma}(\tilde \Y_j(t,\eta)-\tilde\Y_j(t,\theta))})\min_j(\tilde\V_j^+)^3\tilde\Y_{k,\eta}(t,\theta) d\theta],\\[2mm]
b(t,\eta)&= \int_0^\eta \min_j(e^{-\frac{1}{\ma}(\tilde \Y_j(t,\eta)-\tilde\Y_j(t,\theta))})\min_j(\tilde\V_j^+)^3\tilde\Y_{1,\eta}(t,\theta) d\theta,\\[2mm]
c(t,\eta)&=\int_0^\eta \min_j(e^{-\frac{1}{\ma}(\tilde \Y_j(t,\eta)-\tilde\Y_j(t,\theta))})\min_j(\tilde\V_j^+)^3\tilde\Y_{2,\eta}(t,\theta) d\theta.
\end{align*}
Thus
\begin{equation*}
a(t,\eta)=\min(b,c) (t,\eta).
\end{equation*} 
Then we have to show that $a(t,\dott)$ is Lipschitz continuous with a Lipschitz constant, which only depends on $\A$ and hence on $C$. Clearly we have that 
\begin{equation*}
\vert a(t,\eta_1)-a(t,\eta_2)\vert \leq \max(\vert b(t,\eta_1)-b(t,\eta_2)\vert,\vert c(t,\eta_1)-c(t,\eta_2)\vert ),
\end{equation*} 
and it suffices to show that both $b(t,\dott)$ and $c(t,\dott)$ are Lipschitz continuous with a Lipschitz constant, which only depends on $\A$. We are only going to establish the Lipschitz continuity for $b$, since the argument for $c$ follows the same lines. Let $0\leq \eta_1<\eta_2\leq 1$. Then we have to consider two cases
\begin{equation*}
0\leq b(t,\eta_1)-b(t,\eta_2)\qquad \text{ and } \qquad 0\leq b(t,\eta_2)-b(t,\eta_1).
\end{equation*}

(i): $0\leq b(t,\eta_2)-b(t,\eta_1)$: By definition we have 
\begin{align*}
b(t,\eta_2)&-b(t,\eta_1)\\
&= \int_0^{\eta_2} \min_j(e^{-\frac{1}{\ma}(\tilde\Y_j(t,\eta_2)-\tilde\Y_j(t,\theta))})\min_j(\tilde\V_j^+)^3\tilde \Y_{1,\eta} (t,\theta) d\theta\\ 
& \quad - \int_0^{\eta_1} \min_j(e^{-\frac{1}{\ma}(\tilde\Y_j(t,\eta_1)-\tilde\Y_j(t,\theta))})\min_j(\tilde\V_j^+)^3\tilde \Y_{1,\eta} (t,\theta) d\theta\\
& = \int_{\eta_1}^{\eta_2} \min_j(e^{-\frac{1}{\ma}(\tilde\Y_j(t,\eta_2)-\tilde\Y_j(t,\theta))})\min_j(\tilde\V_j^+)^3\tilde\Y_{1,\eta}(t,\theta) d\theta\\
& \quad +\int_0^{\eta_1}\big(\min_j(e^{-\frac{1}{\ma}(\tilde\Y_j(t,\eta_2)-\tilde\Y_j(t,\theta))})\\
&\qquad\qquad\qquad\qquad\qquad\qquad-\min_j(e^{-\frac{1}{\ma}(\tilde \Y_j(t,\eta_1)-\tilde\Y_j(t,\theta))})\big)\\
& \qquad \qquad\qquad \qquad\qquad\qquad\qquad\qquad  \times\min_j(\tilde\V_j^+)^3\tilde\Y_{1,\eta}(t,\theta) d\theta\\
& \leq  \int_{\eta_1}^{\eta_2} \min_j(e^{-\frac{1}{\ma}(\tilde\Y_j(t,\eta_2)-\tilde\Y_j(t,\theta))})\min_j(\tilde\V_j^+)^3\tilde\Y_{1,\eta}(t,\theta) d\theta,
\end{align*} 
where we used in the last step that $0\leq \min_j(\tilde\V_j^+)^3\tilde\Y_{1,\eta}(t,\eta)$ and that $\tilde\Y_1(t,\dott)$ is increasing, which implies 
\begin{align*}
&\min_j(e^{-\frac{1}{\ma}(\tilde \Y_j(t,\eta_1)-\tilde\Y_j(t,\theta))})\\
& \qquad = \min_j(e^{-\frac{1}{\ma}(\tilde\Y_j(t,\eta_1)-\tilde \Y_j(t,\eta_2)+\tilde\Y_j(t,\eta_2)-\tilde \Y_j(t,\theta))})\\
& \qquad \geq \min_j(e^{-\frac{1}{\ma}(\tilde \Y_j(t,\eta_2)-\tilde \Y_j(t,\theta))})\min_j (e^{-\frac{1}{\ma}(\tilde \Y_j(t,\eta_1)-\tilde \Y_j(t,\eta_2))})\\
& \qquad \geq \min_j(e^{-\frac{1}{\ma}(\tilde \Y_j(t,\eta_2)-\tilde \Y_j(t,\theta))}).
\end{align*}
Moreover, note that $\min_j(e^{-\frac{1}{\ma}(\tilde \Y_j(t,\eta_2)-\tilde \Y_j(t,\theta))})\leq 1$ since $ \eta_1\le\theta\le\eta_2$,  and that 
\begin{equation*}
0\leq \min_j(\tilde\V_j^+)^3 \tilde \Y_{1, \eta}(t,\eta)\leq \vert \tilde \U_1^3\vert \tilde \Y_{1, \eta}(t,\eta) \leq \frac{1}{\sqrt{2}}\sqtC{1}^7\leq \frac{1}{\sqrt{2}}\A^7
\end{equation*}
by \eqref{eq:all_estimatesB} and \eqref{eq:all_estimatesG}. Thus 
\begin{align*}
0& \leq b(t,\eta_2)-b(t,\eta_1)\\
& \leq \int_{\eta_1}^{\eta_2} \min_j(e^{-\frac{1}{\ma}(\tilde \Y_j(t,\eta_2)-\tilde\Y_j(t,\theta))})\min_j(\tilde\V_j^+)^3\tilde \Y_{1,\eta} (t,\theta) d\theta\\
& \leq \frac{1}{\sqrt{2}}A^7 \vert \eta_2-\eta_1\vert.
\end{align*}

(ii): $0\leq b(t,\eta_1)-b(t,\eta_2)$: By definition we have 
\begin{align*}
b(t,\eta_1)&-b(t,\eta_2)\\
&= \int_0^{\eta_1} \min_j(e^{-\frac{1}{\ma}(\tilde \Y_j(t,\eta_1)-\tilde \Y_j(t,\theta))}) \min_j(\tilde \V_j^+)^3 \tilde \Y_{1, \eta}(t,\theta) d\theta\\
& \quad -\int_0^{\eta_2} \min_j(e^{-\frac{1}{\ma}(\tilde \Y_j(t,\eta_2)-\tilde \Y_j(t,\theta))})\min_j(\tilde \V_j^+)^3 \tilde \Y_{1, \eta} (t,\theta) d\theta\\
& = \int_0^{\eta_1} (\min_j(e^{-\frac{1}{\ma}(\tilde \Y_j(t,\eta_1)-\tilde \Y_j(t,\theta))})\\
&\qquad\qquad\qquad\qquad\qquad\qquad-\min_j(e^{-\frac{1}{\ma}(\tilde \Y_j(t,\eta_2)-\tilde \Y_j(t,\theta))}))\\
& \qquad \qquad \qquad \qquad\qquad\qquad\qquad\qquad\times \min_j(\tilde \V_j^+)^3\tilde \Y_{1, \eta}(t, \theta) d\theta\\
& \quad -\int_{\eta_1}^{\eta_2} \min_j(e^{-\frac{1}{\ma}(\tilde\Y_j(t,\eta_2)-\tilde \Y_j(t,\theta))}\min_j(\tilde\V_j^+)^3\tilde \Y_{1, \eta}(t,\theta) d\theta\\
& \leq \int_0^{\eta_1} (\min_j(e^{-\frac{1}{\ma}(\tilde \Y_j(t,\eta_1)-\tilde \Y_j(t,\theta))})\\
&\qquad\qquad\qquad\qquad\qquad\qquad-\min_j(e^{-\frac{1}{\ma}(\tilde \Y_j(t,\eta_2)-\tilde \Y_j(t,\theta))}))\\
&  \qquad \qquad \qquad \qquad\qquad\qquad\qquad\qquad\times \min_j(\tilde \V_j^+)^3\tilde \Y_{1, \eta}(t, \theta) d\theta.
\end{align*}
Now we have to be much more careful than before. Namely, if $\min_j(\tilde \Y_j(t,\theta)-\tilde \Y_j(t,\eta_2))=\tilde \Y_i(t,\theta)-\tilde \Y_i(t,\eta_2)$, we have
\begin{align*}
0& \leq \min_j(e^{-\frac{1}{\ma}(\tilde \Y_j(t,\eta_1)-\tilde \Y_j(t,\theta))})-\min_j(e^{-\frac{1}{\ma}(\tilde \Y_j(t,\eta_2)-\tilde \Y_j(t,\theta))})\\
& = \min_j(e^{-\frac{1}{\ma}(\tilde \Y_j(t,\eta_1)-\tilde \Y_j(t,\theta))}) -e^{\frac{1}{\ma}(\tilde \Y_i(t, \theta)-\tilde \Y_i(t,\eta_2))}\\
& \leq e^{\frac{1}{\ma}(\tilde \Y_i(t,\theta)-\tilde \Y_i(t,\eta_1))}-e^{\frac{1}{\ma}(\tilde \Y_i(t,\theta) -\tilde \Y_i(t,\eta_2))} \\
& = \frac{1}{\ma}\int_{\eta_1}^{\eta_2} e^{\frac{1}{\ma}(\tilde \Y_i(t,\theta) -\tilde \Y_i(t,s))} \tilde\Y_{i,\eta} (t,s) ds\\
& \leq \frac{1}{\ma}\int_{\eta_1}^{\eta_2} \big( e^{\frac{1}{\ma}(\tilde \Y_1(t,\theta)-\tilde \Y_1(t,s))}\tilde \Y_{1, \eta}(t,s)+e^{\frac{1}{\ma}(\tilde \Y_2(t,\theta)-\tilde \Y_2(t,s))} \tilde \Y_{2,\eta} (t,s)\big)ds.
\end{align*}
Hence 
\begin{align*}
b(t,\eta_1)&-b(t,\eta_2)\\
& \leq \int_0^{\eta_1} \big(\min_j(e^{-\frac{1}{\ma}(\tilde \Y_j(t,\eta_1)-\tilde \Y_j(t,\theta))})\\
&\qquad\qquad\qquad\qquad\qquad\qquad-\min_j(e^{-\frac{1}{\ma}(\tilde \Y_j(t,\eta_2)-\tilde \Y_j(t,\theta))}\big)\\
& \qquad \qquad \qquad \qquad\qquad\qquad\qquad\qquad\times \min_j(\tilde \V_j^+)^3\tilde \Y_{1, \eta}(t, \theta) d\theta\\
& \leq \frac{1}{\ma}\int_0^{\eta_1} \int_{\eta_1}^{\eta_2} e^{-\frac{1}{\ma}(\tilde \Y_1(t,s) -\tilde \Y_1(t,\theta))}\tilde \Y_{1,\eta} (t,s) ds \min_j(\tilde \V_j^+)^3\tilde \Y_{1,\eta} (t,\theta) d\theta\\
& \quad + \frac{1}{\ma}\int_0^{\eta_1} \int_{\eta_1}^{\eta_2} e^{-\frac{1}{\ma}(\tilde \Y_2(t,s) -\tilde \Y_2(t,\theta))} \tilde \Y_{2,\eta} (t,s) ds \min_j(\tilde \V_j^+)^3\tilde \Y_{1,\eta} (t,\theta) d\theta\\
& = \frac{1}{\ma}\int_{\eta_1}^{\eta_2}\tilde \Y_{1,\eta} (t,s) \left( \int_0^{\eta_1} e^{-\frac{1}{\ma}(\tilde \Y_1(t,s)-\tilde \Y_1(t,\theta)} \min_j(\tilde \V_j^+)^3 \tilde \Y_{1, \eta} (t,\theta) d\theta\right) ds\\ 
& \quad + \frac{1}{\ma}\int_{\eta_1}^{\eta_2}\tilde \Y_{2,\eta} (t,s) \left( \int_0^{\eta_1} e^{-\frac{1}{\ma}(\tilde \Y_2(t,s)-\tilde \Y_2(t,\theta))} \min_j(\tilde \V_j^+)^3 \tilde \Y_{1, \eta} (t,\theta) d\theta\right) ds\\ 
& \leq \frac{1}{\ma}\int_{\eta_1}^{\eta_2}\tilde \Y_{1,\eta} (t,s) \left( \int_0^{s} e^{-\frac{1}{\ma}(\tilde \Y_1(t,s)-\tilde \Y_1(t,\theta))} \min_j(\tilde \V_j^+)^3 \tilde \Y_{1, \eta} (t,\theta) d\theta\right) ds\\ 
& \quad + \frac{1}{\ma}\int_{\eta_1}^{\eta_2}\tilde \Y_{2,\eta} (t,s) \left( \int_0^{s} e^{-\frac{1}{\ma}(\tilde \Y_2(t,s)-\tilde \Y_2(t,\theta))} \min_j(\tilde \V_j^+)^3 \tilde \Y_{1, \eta} (t,\theta) d\theta\right) ds\\
& = \int_{\eta_1}^{\eta_2}B_1(t,s)ds+\int_{\eta_1}^{\eta_2}B_2(t,s)ds.
\end{align*}
As far as $B_1(t,s)$ is concerned, recall that
\begin{align*}
\tilde \P_i(t,s)&=\frac{1}{4\sqtC{i}} \int_0^1 e^{-\frac{1}{\sqtC{i}}\vert\tilde \Y_i(t,s)-\tilde \Y_i(t,\theta)\vert } (2(\tilde \U_i^2-\tilde \P_i)\tilde \Y_{i,\eta} (t,\theta) + \sqtC{i}^5) d\theta\\
& = \frac{1}{4\sqtC{i}} \int_0^1 e^{-\frac{1}{\sqtC{i}}\vert\tilde \Y_i(t,s) -\tilde \Y_i(t,\theta)\vert } (\tilde \U_i^2\tilde \Y_{i, \eta} +\tilde \Henergy_{i,\eta})(t,\theta) d\theta,
\end{align*}
which implies that 
\begin{align*}
\int_{\eta_1}^{\eta_2}& B_1(t,s)ds \\
& = \frac{1}{\ma}\int_{\eta_1}^{\eta_2}\tilde \Y_{1,\eta} (t,s) \left( \int_0^{s} e^{-\frac{1}{\ma}(\tilde \Y_1(t,s)-\tilde \Y_1(t,\theta))} \min_j(\tilde \V_j^+)^3 \tilde \Y_{1, \eta} (t,\theta) d\theta\right) ds\\
& \leq \frac{1}{\ma}\int_{\eta_1}^{\eta_2} \tilde \Y_{1,\eta}(t,s) \left(\int_0^s e^{-\frac{1}{\ma}(\tilde \Y_1(t,s)-\tilde \Y_1(t,\theta))} \min_j(\tilde \V_j^+)\tilde\U_1^2 \tilde \Y_{1,\eta} (t,\theta) d\theta\right) ds\\
& \leq \frac{\ma}{\sqrt{2}}\int_{\eta_1}^{\eta_2} \tilde \Y_{1,\eta} (t,s) \left (\int_0^s e^{-\frac{1}{\ma}(\tilde \Y_1(t,s) -\tilde \Y_1(t,\theta))} \tilde \U_1^2 \tilde \Y_{1,\eta}(t,\theta) d\theta\right) ds\\
& \leq \frac{4}{\sqrt{2}}\A^2 \int_{\eta_1}^{\eta_2} \tilde \P_1\tilde \Y_{1,\eta} (t,s)ds\\
& \leq \sqrt{2}A^7\vert \eta_2-\eta_1\vert.
\end{align*}
As far as $B_2(t,s)$ is concerned, we have to be more careful. Therefore recall that we have 
\begin{equation*}
\sqtC{2}^5= 2\tilde \P_2\tilde \Y_{2,\eta}(t,\eta) -\tilde \U_2^2\tilde \Y_{2,\eta}(t,\eta) +\tilde \Henergy_{2,\eta}(t,\eta)\leq 2\tilde \P_2\tilde \Y_{2,\eta} (t,\eta) + \tilde \Henergy_{2,\eta}(t,\eta).
\end{equation*}
Therefore we can write
\begin{align*}
&\frac{1}{\ma}\int_0^s e^{-\frac{1}{\ma}(\tilde \Y_2(t,s) -\tilde \Y_2(t,\theta))}\min_j(\tilde\V_j^+)^3 \tilde \Y_{1,\eta} (t,\theta) d\theta\\
& \qquad \leq \frac{1}{\ma}\int_0^s e^{-\frac{1}{\ma}(\tilde \Y_2(t,s)-\tilde \Y_2(t,\theta))} \vert \tilde\U_2\vert \tilde \U_1^2\tilde \Y_{1,\eta} (t,\theta) d\theta\\
& \qquad \leq\frac{\sqtC{1}^5}{\ma\sqtC{2}^5} \int_0^s  e^{-\frac{1}{\ma}(\tilde \Y_2(t,s)-\tilde \Y_2(t,\theta))} \vert \tilde \U_2\vert (2\tilde \P_2\tilde \Y_{2,\eta} +\tilde \Henergy_{2,\eta}) (t,\theta) d\theta\\
& \qquad \leq \frac{\sqtC{1}^5}{\ma\sqtC{2}^5} \int_0^s e^{-\frac{1}{\ma}(\tilde\Y_2(t,s) -\tilde \Y_2(t,\theta))} \big(\sqtC{2}\tilde \U_2^2\tilde \Y_{2,\eta} + \frac{1}{\sqtC{2}}\tilde \P_2^2\tilde \Y_{2,\eta} + \sqtC{2}^2 \tilde \Henergy_{2,\eta}\big) (t,\theta) d\theta\\
& \qquad \leq \frac{\sqtC{1}^5}{\ma\sqtC{2}^5} \int_0^1 e^{-\frac{1}{\sqtC{2}}\vert \tilde \Y_2(t,s)-\tilde \Y_2(t,\theta)\vert } \big(\sqtC{2}\tilde \U_2^2\tilde \Y_{2,\eta} + \sqtC{2}^2 \tilde \Henergy_{2,\eta} +\frac{1}{\sqtC{2}} \tilde \P_2^2\tilde \Y_{2,\eta} \big)(t,\theta) d\theta\\
& \qquad \leq \frac{\sqtC{1}^5}{\ma\sqtC{2}^5} \Big(4\sqtC{2}^2\tilde \P_2(t,s) + 4\sqtC{2}^3 \tilde \P_2(t,s) \\
&\qquad\qquad\qquad\qquad\qquad+\frac{1}{\sqtC{2}}\int_0^1 e^{-\frac{1}{\sqtC{2}}\vert \tilde \Y_2(t,s)-\tilde \Y_2(t,\theta)\vert }\tilde \P_2^2\tilde \Y_{2,\eta} (t,\theta) d\theta\Big).
\end{align*}
Note that the integral term can is bounded by $\bigO(1)\tilde \P_2(t,s)$ since 
\begin{align*}
\int_0^1 e^{-\frac{1}{\sqtC{2}}\vert \tilde \Y_2(t,s)-\tilde \Y_2(t,\theta)\vert}& \tilde \P_2^2\tilde \Y_{2,\eta}(t,\theta) d\theta \\
& = \sqtC{2}^7\int_0^1 e^{-\vert \Y_2(t,\sqtC{2}^2s)-\Y_2(t,\sqtC{2}^2\theta)\vert }\P_2^2 \Y_{2,\eta} (t, \sqtC{2}^2\theta) d\theta\\
& =\sqtC{2}^5 \int_0^{\sqtC{2}^2} e^{-\vert \Y_2(t, \sqtC{2}^2s) - \Y_2(t, \theta)\vert }\P_2^2\Y_{2,\eta} (t, \theta) d\theta\\
& \leq \bigO(1)\sqtC{2}^5 \P_2(t, \sqtC{2}^2s)= \bigO(1)\sqtC{2}^3\tilde \P_2(t,s)
\end{align*}
by \eqref{eq:109}.  We end up with
\begin{align*}
\int_{\eta_1}^{\eta_2} &B_2(t,s)ds\\
& =\frac{1}{\ma} \int_{\eta_1}^{\eta_2} \tilde \Y_{2,\eta} (t,s) \left(\int_0^s e^{-\frac{1}{\ma}(\tilde \Y_2(t,s)-\tilde \Y_2(t,\theta))} \min_j(\tilde \V_j^+)^3\tilde \Y_{1,\eta} (t,\theta) d\theta\right ) ds \\
& \leq \frac{\sqtC{1}^5}{a\sqtC{2}^5}\int_{\eta_1}^{\eta_2}\tilde \Y_{2,\eta} (t,s)\\
&\qquad\times\left (\sqtC{2}^2\bigO(\A) \tilde \P_2(t,s) +\frac{1}{\sqtC{2}} \int_0^1e^{-\frac{1}{\sqtC{2}}\vert \tilde \Y_2(t,s)-\tilde \Y_2(t,\theta)\vert } \tilde \P_2^2\tilde \Y_{2,\eta} (t,\theta) d\theta\right)ds\\
& \leq \frac{\bigO(1)\sqtC{1}^5\sqtC{2}^2}{a\sqtC{2}^5} \int_{\eta_1}^{\eta_2} \tilde \P_2\tilde \Y_{2,\eta} (t,s) ds\\
& \leq \bigO(1)\A^6\vert \eta_2-\eta_1\vert .
\end{align*}
Moreover,
\begin{align*}
b(t,\eta_1)-b(t,\eta_2)& \leq \int_{\eta_1}^{\eta_2} B_1(t,s) ds + \int_{\eta_1}^{\eta_2} B_2(t,s)ds\\
& \leq \bigO(1)\A^6 \vert \eta_2-\eta_1\vert,
\end{align*}
where $\bigO(1)$ only depends on $\A$  and hence on $C=\frac{\A^2}{2}$, and remains bounded as $\A\to 0$. 

Finally, combining both cases yields that 
\begin{equation*}
\vert b(t,\eta_2)-b(t,\eta_1)\vert \leq \bigO(1)\A^6 \vert \eta_2-\eta_1\vert 
\end{equation*} 
and subsequently
\begin{equation*}%\label{subeq:MinMax}
\vert a(t,\eta_2)-a(t,\eta_1)\vert \leq \bigO(1)\A^6 \vert \eta_2-\eta_1\vert ,
\end{equation*}
where $\bigO(1)$ remains bounded as $\A\to 0$.
This proves that the derivative exists for almost every $\eta$ and is bounded by \eqref{eq:MinMax}.
\end{proof}

%-------------------------

We need to estimate the pointwise difference between two functions $\tilde\D_j$, $j=1,2$. This is the content of the following lemma.
\begin{lemma}\label{lemma:D}
We have that
\begin{align*}
\vert \tilde\D_1(t,\eta)-\tilde\D_2(t,\eta)\vert & \leq 2\A^{3/2}\max_j(\tilde\D_j^{1/2})(t,\eta)\vert \tilde \Y_1(t,\eta)-\tilde \Y_2(t,\eta)\vert \\
& \quad + 2\sqrt{2}\A^{3/2}\max_j(\tilde \D_j^{1/2})(t,\eta) \norm{\tilde \Y_1-\tilde \Y_2}\\
& \quad + 4\A^3\Big(\int_0^\eta e^{-\frac{1}{\A}(\tilde \Y_j(t,\eta)-\tilde \Y_j(t,\theta))}(\tilde \U_1-\tilde \U_2)^2(t,\theta) d\theta\Big)^{1/2}\\
& \quad +2\sqrt{2} \A^3 \Big(\int_0^\eta e^{-\frac{1}{\A}(\tilde \Y_j(t,\eta)-\tilde \Y_j(t,\theta))}(\sqP{1}-\sqP{2})^2(t,\theta) d\theta\Big)^{1/2} \\
& \quad + \frac{3\A^4}{\sqrt{2}} \Big(\int_0^\eta e^{-\frac{1}{\ma}(\tilde \Y_j(t,\eta)-\tilde \Y_j(t,\theta))}(\tilde \Y_1-\tilde \Y_2)^2(t,\theta) d\theta\Big)^{1/2}\\
& \quad + \frac{12\sqrt{2}\A^4}{\sqrt{3}e}\Big(\int_0^\eta e^{-\frac{3}{4\A}(\tilde \Y_j(t,\eta)-\tilde \Y_j(t,\theta))}d\theta\Big)^{1/2} \vert \sqtC{1}-\sqtC{2}\vert\\
& \quad + \tilde \U_j^2\vert \tilde \Y_1-\tilde \Y_2\vert (t,\eta)\\
& \quad + \tilde \P_j\vert \tilde\Y_1-\tilde \Y_2\vert (t,\eta)\\
& \quad +\frac{3\A^4}{2} \int_0^\eta e^{-\frac{1}{\ma} (\tilde \Y_j(t,\eta)-\tilde \Y_j(t,\theta))}\vert \tilde \Y_1-\tilde \Y_2\vert (t,\theta) d\theta\\
& \quad + 6\A^4 \int_0^\eta e^{-\frac{3}{4\A} (\tilde \Y_j(t,\eta)-\tilde \Y_j(t,\theta))}d\theta\vert \sqtC{1}-\sqtC{2}\vert,
\end{align*}
for any value of $j=1,2$.
\end{lemma}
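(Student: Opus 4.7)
The plan is to start from the integral definition \eqref{eq:DogEdef} of $\tilde\D_j$ and to telescope the difference $\tilde\D_1-\tilde\D_2$ into three qualitatively distinct pieces: (a) the change in the exponential kernel coming from $\tilde\Y_1\neq\tilde\Y_2$; (b) the change in the exponential coming from $A_1\neq A_2$; and (c) the change in the integrand $g_j:=(\tilde\U_j^2-\tilde\P_j)\tilde\Y_{j,\eta}+\tfrac12 A_j^5=\tfrac12(\tilde\U_j^2\tilde\Y_{j,\eta}+\tilde\Henergy_{j,\eta})$. Concretely, I would write
\begin{equation*}
\tilde\D_1-\tilde\D_2=\int_0^\eta\bigl(e^{-\frac1{A_1}(\tilde\Y_1(t,\eta)-\tilde\Y_1(t,\theta))}-e^{-\frac1{A_1}(\tilde\Y_2(t,\eta)-\tilde\Y_2(t,\theta))}\bigr)g_j\,d\theta
+\int_0^\eta \bigl(e^{-\frac1{A_1}(\cdots)}-e^{-\frac1{A_2}(\cdots)}\bigr)g_j\,d\theta+\int_0^\eta e^{-\frac1{A_j}(\cdots)}(g_1-g_2)\,d\theta,
\end{equation*}
and then process each piece separately. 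For (a), Lemma \ref{lemma:enkel}(i) produces a factor $|\tilde\Y_1-\tilde\Y_2|(t,\eta)+|\tilde\Y_1-\tilde\Y_2|(t,\theta)$, while for (b), Lemma \ref{lemma:enkel}(ii) yields the factor $\frac{4}{ae}e^{-\frac{3}{4A}(\cdots)}|A_1-A_2|$ that is responsible for the last two terms of the claimed bound.

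For piece (c) I would rewrite $g_1-g_2$ via Lemma \ref{lemma:LUR} as a sum of three terms: one proportional to $(\tilde\U_1-\tilde\U_2)\cdot(\tilde\U_1+\tilde\U_2)\tilde\Y_{j,\eta}$, one involving $(\tilde\P_1^{1/2}-\tilde\P_2^{1/2})(\tilde\P_1^{1/2}+\tilde\P_2^{1/2})\tilde\Y_{j,\eta}$, and one involving $(\tilde\Y_{1,\eta}-\tilde\Y_{2,\eta})$ times a uniformly bounded quantity. The first two of these, after Cauchy--Schwarz with one factor containing $\tilde\U_j^2\tilde\Y_{j,\eta}$ or $\tilde\P_j\tilde\Y_{j,\eta}$ (recognized as parts of $g_j$), contribute exactly the terms
\begin{equation*}
4A^3\Bigl(\int_0^\eta e^{-\frac{1}{A}(\cdots)}(\tilde\U_1-\tilde\U_2)^2\,d\theta\Bigr)^{1/2}\quad\text{and}\quad 2\sqrt{2}A^3\Bigl(\int_0^\eta e^{-\frac{1}{A}(\cdots)}(\tilde\P_1^{1/2}-\tilde\P_2^{1/2})^2\,d\theta\Bigr)^{1/2}
\end{equation*}
appearing in the claimed estimate. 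For the $(\tilde\Y_{1,\eta}-\tilde\Y_{2,\eta})$ part, I would integrate by parts over $\theta$ to transfer the $\partial_\theta$ from $\tilde\Y_j$ to the remaining factors: the boundary term at $\theta=\eta$ produces precisely the pointwise contributions $\tilde\U_j^2|\tilde\Y_1-\tilde\Y_2|(t,\eta)$ and $\tilde\P_j|\tilde\Y_1-\tilde\Y_2|(t,\eta)$, while the bulk term, bounded using \eqref{eq:all_estimates} and \eqref{eq:all_Pestimates}, yields the integral $\int e^{-\frac{1}{a}(\cdots)}|\tilde\Y_1-\tilde\Y_2|\,d\theta$.

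The terms proportional to $\tilde\D_j^{1/2}(t,\eta)|\tilde\Y_1-\tilde\Y_2|(t,\eta)$ and $\tilde\D_j^{1/2}(t,\eta)\|\tilde\Y_1-\tilde\Y_2\|$ emerge from piece (a): after bounding the exponential difference by $\frac{1}{A_1}e^{-\frac{1}{A_1}\min(\cdots)}\bigl(|\tilde\Y_1-\tilde\Y_2|(t,\eta)+|\tilde\Y_1-\tilde\Y_2|(t,\theta)\bigr)$, Cauchy--Schwarz applied to the $|\tilde\Y_1-\tilde\Y_2|(t,\theta)$ factor against $e^{-\frac{1}{A_1}(\cdots)}g_j$ recovers $\tilde\D_j^{1/2}(t,\eta)$ paired with either $|\tilde\Y_1-\tilde\Y_2|(t,\eta)$ (pointwise pull-out) or with $\|\tilde\Y_1-\tilde\Y_2\|$ (norm pull-out). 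The main obstacle I expect is bookkeeping: each of the many resulting subterms involves a different product of $A$-powers, of $a$-powers, and of pointwise bounds \eqref{eq:all_estimatesA}--\eqref{eq:all_estimatesR}, and matching these to the explicit prefactors $2A^{3/2}$, $2\sqrt{2}A^{3/2}$, $4A^3$, $2\sqrt{2}A^3$, $\frac{3A^4}{\sqrt{2}}$, $\frac{12\sqrt{2}A^4}{\sqrt{3}e}$, $\frac{3A^4}{2}$, $6A^4$ in the lemma statement requires careful, albeit routine, tracking. The rest of the argument is essentially a well-orchestrated application of Cauchy--Schwarz and the elementary inequalities collected in Lemma \ref{lemma:enkel} and Lemma \ref{lemma:LUR}.
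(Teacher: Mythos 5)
Your toolkit is the right one---telescoping the difference, Lemma~\ref{lemma:enkel}, Lemma~\ref{lemma:LUR}, Cauchy--Schwarz, and an integration by parts for the $(\tilde\Y_{1,\eta}-\tilde\Y_{2,\eta})$ contribution---and these are indeed the ingredients of the paper's proof. The gap is in the decomposition itself: the three-way telescoping you write down produces mismatched kernel/integrand pairs that cannot be absorbed into the claimed right-hand side, and in particular it cannot deliver the bound \emph{for any} $j=1,2$, which is the whole point of the lemma (its later uses choose $j$ to match the kernel of the surrounding integral). Concretely, take your piece (a), $\int_0^\eta\bigl(e^{-\frac1{\sqtC{1}}(\tilde\Y_1(t,\eta)-\tilde\Y_1(t,\theta))}-e^{-\frac1{\sqtC{1}}(\tilde\Y_2(t,\eta)-\tilde\Y_2(t,\theta))}\bigr)g_1\,d\theta$ with $g_1$ the $\tilde\D_1$-integrand. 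Lemma~\ref{lemma:enkel}(i) bounds the exponential difference by the \emph{maximum} of the two exponentials times $\vert\tilde\Y_1-\tilde\Y_2\vert(t,\eta)+\vert\tilde\Y_1-\tilde\Y_2\vert(t,\theta)$; on the set where that maximum is the $\tilde\Y_2$-exponential you are left with $\int e^{-\frac1{\sqtC{1}}(\tilde\Y_2(t,\eta)-\tilde\Y_2(t,\theta))}g_1(\cdots)\,d\theta$, a mixed object which is neither $\tilde\D_1$ nor $\tilde\D_2$, so the Cauchy--Schwarz step does not return $\max_j(\tilde\D_j^{1/2})$ (a crude bound $g_1\le\sqtC{1}^5$ loses exactly the $\tilde\D^{1/2}$-structure the statement requires); moreover, since $e^{-\frac1{\sqtC{1}}x}\ge e^{-\frac1{\ma}x}$ for $x\ge 0$, such mixed kernels are \emph{not} dominated by the kernels $e^{-\frac1{\ma}(\tilde\Y_j(t,\eta)-\tilde\Y_j(t,\theta))}$ or $e^{-\frac1{\A}(\cdots)}$ appearing in the statement for an arbitrary $j$. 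The same index mismatch recurs in your pieces (b) and (c) (e.g.\ in the Lemma~\ref{lemma:LUR} splitting of $g_1-g_2$ a term $(\tilde\U_1^2-\tilde\U_2^2)\tilde\Y_{2,\eta}$ sits under a $\tilde\Y_1$-indexed kernel, and $\tilde\Y_{2,\eta}$ is not pointwise bounded), whichever ordering of the telescoping you choose.

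The paper resolves this by pivoting through the kernels $\min_j(e^{-\frac1{\sqtC{i}}(\tilde\Y_j(t,\eta)-\tilde\Y_j(t,\theta))})$ and the set $B(\eta)$ of \eqref{Def:Bn}: the difference is written as (i) the $\sqtC{1}$-exponential difference times the $\tilde\D_1$-integrand restricted to $B(\eta)^c$, (ii) the $\sqtC{2}$-exponential difference times the $\tilde\D_2$-integrand restricted to $B(\eta)$, plus (iii) the difference of the two min-kernel integrals. On $B(\eta)^c$ (resp.\ $B(\eta)$) the dominating exponential in Lemma~\ref{lemma:enkel}(i) is precisely the one matching the integrand, so (i) and (ii) are absorbed directly into $\max_j(\tilde\D_j^{1/2})$, yielding the first two terms with the stated constants; and every kernel that survives in (iii)---which is then split according to the three parts $\tilde\U^2\tilde\Y_\eta$, $\tilde\P\tilde\Y_\eta$, $\tfrac12\sqtC{i}^5$ of the integrand and treated with Lemma~\ref{lemma:LUR}, Lemma~\ref{lemma:enkel}(ii), and the integration by parts you propose (Lemma~\ref{lemma:1} supplying the a.e.\ derivatives of the min-kernel and of $\min_j(\tilde\U_j^2)$)---is a min-kernel, hence dominated by the $j$-indexed kernels at rates $\frac1{\A}$, $\frac1{\ma}$, $\frac3{4\A}$ for \emph{either} choice of $j$. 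Without this $B(\eta)$/min-kernel bookkeeping your decomposition does not close, and repairing it essentially forces you back onto the paper's route; so this is a missing idea rather than routine constant-tracking.
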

\begin{proof}
Direct calculations yield 
\begin{align*}
\vert \tilde\D_1(t,\eta)-&\tilde\D_2(t,\eta)\vert \\
& = | \int_0^\eta e^{-\frac{1}{\sqtC{1}}(\tilde \Y_1(t,\eta)-\tilde \Y_1(t,\theta))}((\tilde \U_1^2-\tilde \P_1)\tilde\Y_{1,\eta} (t,\theta)+\frac12 \sqtC{1}^5)d\theta\\
& \qquad -\int_0^\eta e^{-\frac{1}{\sqtC{2}}(\tilde \Y_2(t,\eta)-\tilde\Y_2(t,\theta))}((\tilde \U_2^2-\tilde\P_2)\tilde\Y_{2,\eta}(t,\theta)+\frac12 \sqtC{2}^5) d\theta |\\
& \leq \vert \int_0^\eta (e^{-\frac{1}{\sqtC{1}}(\tilde \Y_1(t,\eta)-\tilde\Y_1(t,\theta))}-e^{-\frac{1}{\sqtC{1}}(\tilde\Y_2(t,\eta)-\tilde\Y_2(t,\theta))})\\
&\qquad\qquad\qquad\qquad\times((\tilde \U_1^2-\tilde \P_1)\tilde\Y_{1,\eta} (t,\theta)+\frac12 \sqtC{1}^5) \mathbbm{1}_{B(\eta)^c}(t,\theta)d\theta\vert \\
& \quad +\vert \int_0^\eta (e^{-\frac{1}{\sqtC{2}}(\tilde \Y_1(t,\eta)-\tilde\Y_1(t,\theta))}-e^{-\frac{1}{\sqtC{2}}(\tilde \Y_2(t,\eta)-\tilde\Y_2(t,\theta))})\\
&\qquad\qquad\qquad\qquad\times((\tilde \U_2^2-\tilde \P_2)\tilde\Y_{2,\eta} (t,\theta)+\frac{1}{2}\sqtC{2}^5)\mathbbm{1}_{B(\eta)}(t,\theta)d\theta\vert \\
& \quad + \vert \int_0^\eta \min_j(e^{-\frac{1}{\sqtC{1}}(\tilde \Y_j(t,\eta)-\tilde\Y_j(t,\theta))})((\tilde\U_1^2-\tilde\P_1)\tilde\Y_{1,\eta} (t,\theta)+\frac12\sqtC{1}^5)d\theta\\
& \quad -\int_0^\eta\min_j(e^{-\frac{1}{\sqtC{2}}(\tilde \Y_j(t,\eta)-\tilde \Y_j(t,\theta))})(\tilde \U_2^2-\tilde\P_2)\tilde\Y_{2,\eta} (t,\theta) +\frac12 \sqtC{2}^5) d\theta\vert\\
&= \bar d_{11}(t,\eta)+\bar d_{12}(t,\eta)+\bar d_{13}(t,\eta),
\end{align*}
where $B(\eta)$ is defined in \eqref{Def:Bn}.

For $\bar d_{11}(t,\eta)$ we immediately obtain 
\begin{align}\nn
 \bar d_{11}(t,\eta)& \leq \frac{1}{\sqtC{1}}\int_0^\eta (\vert \tilde \Y_1(t,\eta)-\tilde \Y_2(t,\eta)\vert +\vert \tilde \Y_1(t,\theta)-\tilde \Y_2(t,\theta)\vert)\\ \nn
 & \qquad \qquad \qquad \qquad \times e^{-\frac{1}{\sqtC{1}}(\tilde \Y_1(t,\eta)-\tilde \Y_1(t,\theta))}((\tilde \U_1^2-\tilde \P_1)\tilde \Y_{1,\eta}(t,\theta) +\frac12 \sqtC{1}^5)d\theta\\ \nn
 & \leq \frac{\tilde  \D_1(t,\eta)}{\sqtC{1}} \vert \tilde \Y_1(t,\eta)-\tilde \Y_2(t,\eta)\vert + \sqrt{2}\sqtC{1}^{3/2}\tilde \D_1^{1/2}(t,\eta)\norm{\tilde \Y_1-\tilde \Y_2}\\ \label{est:d11}
 & \leq \sqtC{1}^{3/2}\tilde \D_1^{1/2}(t,\eta) \vert \tilde\Y_1(t,\eta)-\tilde \Y_2(t,\eta)\vert +\sqrt{2}\sqtC{1}^{3/2}\tilde \D_1^{1/2}(t,\eta) \norm{\tilde\Y_1-\tilde\Y_2}.
 \end{align}
Following the same lines we end up with  
\begin{align}\nn
\bar d_{12}(t,\eta) &\leq \frac{1}{\sqtC{2}}\int_0^\eta (\vert \tilde \Y_1(t,\eta)-\tilde\Y_2(t,\eta)\vert +\vert \tilde\Y_1(t,\theta)-\tilde\Y_2(t,\theta)\vert)  \\ \nn
 &\qquad\qquad\qquad\qquad\times e^{-\frac{1}{\sqtC{2}}(\tilde \Y_2(t,\eta)-\tilde \Y_2(t,\theta))} \vert(\tilde\U_2^2-\tilde\P_2)\tilde\Y_{2,\eta} (t,\theta) +\frac12\sqtC{2}^5\vert d\theta \\ \label{est:d12}
& \leq \sqtC{2}^{3/2}\tilde \D_2^{1/2}(t,\eta)\vert \tilde\Y_1(t,\eta)-\tilde\Y_2(t,\eta)\vert +\sqrt{2}\sqtC{2}^{3/2}\tilde \D_2^{1/2} (t,\eta)\norm{\tilde \Y_1-\tilde\Y_2}.
\end{align}The last term $\bar d_{13}(t,\eta)$ needs a bit more work. Indeed, 
\begin{align*}
\bar d_{13}(t,\eta)&=\vert \int_0^\eta \min_j(e^{-\frac{1}{\sqtC{1}}(\tilde \Y_j(t,\eta)-\tilde\Y_j(t,\theta))})((\tilde\U_1^2-\tilde\P_1)\tilde\Y_{1,\eta} (t,\theta)+\frac12\sqtC{1}^5)d\theta\\
& \qquad -\int_0^\eta \min_j(e^{-\frac{1}{\sqtC{2}}(\tilde\Y_j(t,\eta)-\tilde \Y_j(t,\theta))})((\tilde \U_2^2-\tilde\P_2)\tilde\Y_{2,\eta} (t,\theta) +\frac12\sqtC{2}^5) d\theta\vert\\
&\leq\vert \int_0^\eta (\min_j(e^{-\frac{1}{\sqtC{1}}(\tilde \Y_j(t,\eta)-\tilde\Y_j(t,\theta))})\tilde\U_1^2\tilde\Y_{1,\eta}(t,\theta)\\
&\qquad\qquad\qquad\qquad\qquad -\min_j(e^{-\frac{1}{\sqtC{2}}(\tilde \Y_j(t,\eta)-\tilde \Y_j(t,\theta))})\tilde\U_2^2\tilde\Y_{2,\eta})(t,\theta))d\theta\vert\\
& \quad + \vert \int_0^\eta (\min_j(e^{-\frac{1}{\sqtC{1}}(\tilde \Y_j(t,\eta)-\tilde\Y_j(t,\theta))}) \tilde\P_1\tilde\Y_{1,\eta}(t,\theta)\\
&\qquad\qquad\qquad\qquad\qquad-\min_j(e^{-\frac{1}{\sqtC{2}}(\tilde \Y_j(t,\eta)-\tilde \Y_j(t,\theta))})\tilde\P_2\tilde\Y_{2,\eta}) (t,\theta) )d\theta\vert \\
& \quad +\vert  \int_0^\eta (\min_j(e^{-\frac{1}{\sqtC{1}}(\tilde \Y_j(t,\eta)-\tilde\Y_j(t,\theta))}) \frac12\sqtC{1}^5\\
&\qquad\qquad\qquad\qquad\qquad-\min_j(e^{-\frac{1}{\sqtC{2}}(\tilde \Y_j(t,\eta)-\tilde \Y_j(t,\theta))})\frac12\sqtC{2}^5) d\theta\vert\\
& = \bar T_1(t,\eta)+\bar T_2(t,\eta)+\bar T_3(t,\eta).
\end{align*}

To estimate $\bar T_1(t,\eta)$, recall \eqref{decay:impl}, \eqref{est:L3a_lemma}, \eqref{est:L2c_lemma}, and Lemma~\ref{lemma:enkel} (ii), which imply
\begin{align}\nn
\bar T_1(t,\eta)&= \vert \int_0^\eta (\min_j(\e^{-\frac{1}{\sqtC{1}}(\tilde \Y_j(t,\eta)-\tilde \Y_j(t,\theta))})\tilde \U_1^2\tilde\Y_{1,\eta}(t,\theta)\\ \nn
&\qquad\qquad\qquad\qquad\qquad-\min_j(e^{-\frac{1}{\sqtC{2}}(\tilde \Y_j(t,\eta)-\tilde \Y_j(t,\theta))})\tilde \U_2^2\tilde \Y_{2,\eta}(t,\theta) ) d\theta\vert \\ \nn
& \leq \vert \int_0^\eta \min_j(e^{-\frac{1}{\sqtC{1}}(\tilde\Y_j(t,\eta)-\tilde \Y_j(t,\theta))})(\tilde \U_1^2-\tilde\U_2^2)\tilde \Y_{1,\eta}(t,\theta) \mathbbm{1}_{\tilde \U_2^2\leq \tilde \U_1^2}(t,\theta)d\theta\vert\\ \nn
& \quad +\vert \int_0^\eta \min_j(e^{-\frac{1}{\sqtC{2}}(\tilde \Y_j(t,\eta)-\tilde \Y_j(t,\theta))})(\tilde \U_1^2-\tilde \U_2^2)\tilde \Y_{2,\eta}(t,\theta) \mathbbm{1}_{\tilde \U_1^2<\tilde \U_2^2}(t,\theta) d\theta\vert\\ \nn
& \quad + \mathbbm{1}_{\sqtC{1}\leq \sqtC{2}}\vert\int_0^\eta (\min_j(e^{-\frac{1}{\sqtC{1}}(\tilde\Y_j(t,\eta)-\tilde \Y_j(t,\theta))})\\ \nn
&\qquad\qquad\qquad\qquad\qquad-\min_j(e^{-\frac{1}{\sqtC{2}}(\tilde\Y_j(t,\eta)-\tilde \Y_j(t,\theta))}))\min_j(\tilde \U_j^2)\tilde \Y_{2,\eta}(t,\theta) d\theta\vert \\ \nn
& \quad +\mathbbm{1}_{\sqtC{2}<\sqtC{1}}\vert\int_0^\eta (\min_j(e^{-\frac{1}{\sqtC{1}}(\tilde \Y_j(t,\eta)-\tilde \Y_j(t,\theta))})\\ \nn
&\qquad\qquad\qquad\qquad\qquad-\min_j(e^{-\frac{1}{\sqtC{2}}(\tilde \Y_j(t,\eta)-\tilde \Y_j(t,\theta))}))\min_j(\tilde \U_j^2)\tilde \Y_{1,\eta}(t,\theta) d\theta\vert \\ \nn
& \quad + \vert \int_0^\eta \min_j(e^{-\frac1{\ma}(\tilde \Y_j(t,\eta)-\tilde \Y_j(t,\theta))})\min_j(\tilde \U_j^2)(\tilde \Y_{1,\eta}-\tilde \Y_{2,\eta})(t,\theta) d\theta\vert\\ \nn
& \leq 2\int_0^\eta \min_j(e^{-\frac{1}{\sqtC{1}}(\tilde \Y_j(t,\eta)-\tilde \Y_j(t,\theta))})\vert \tilde \U_1\vert \tilde \Y_{1,\eta}\vert \tilde \U_1-\tilde \U_2\vert (t,\theta) d\theta\vert\\ \nn
& \quad +2 \int_0^\eta \min_j(e^{-\frac{1}{\sqtC{2}}(\tilde\Y_j(t,\eta)-\tilde \Y_j(t,\theta))}) \vert \tilde \U_2\vert \tilde \Y_{2,\eta}\vert \tilde \U_1-\tilde \U_2\vert (t,\theta) d\theta\vert \\ \nn
& \quad +\mathbbm{1}_{\sqtC{1}\leq \sqtC{2}}\frac{4}{\ma e} \vert \int_0^\eta \min_j(e^{-\frac3{4\A} (\tilde \Y_j(t,\eta)-\tilde \Y_j(t,\theta))})\min_j(\tilde \U_j^2)\tilde \Y_{2,\eta}(t,\theta) d\theta\vert\vert \sqtC{1}-\sqtC{2}\vert \\ \nn
& \quad +\mathbbm{1}_{\sqtC{2}<\sqtC{1}}\frac{4}{\ma e} \vert \int_0^\eta \min_j(e^{-\frac{3}{4\A} (\tilde \Y_j(t,\eta)-\tilde \Y_j(t,\theta))})\min_j(\tilde \U_j^2)\tilde \Y_{1,\eta}(t,\theta) d\theta\vert \vert \sqtC{1}-\sqtC{2}\vert\\ \nn
& \quad +\vert \min_j(e^{-\frac{1}{\ma}(\tilde \Y_j(t,\eta)-\tilde \Y_j(t,\theta))})\min_j(\tilde \U_j^2)(\tilde \Y_1-\tilde \Y_2)\vert_{\theta=0}^{\eta}\\ \nn
& \qquad \qquad - \int_0^\eta \frac{d}{d\theta} (\min_j(e^{-\frac{1}{\ma}(\tilde \Y_j(t,\eta)-\tilde \Y_j(t,\theta))})\min_j(\tilde \U_j^2))(\tilde \Y_1-\tilde \Y_2)(t,\theta) d\theta\vert\\ \nn
& \leq 2\Big( \int_0^\eta e^{-\frac{1}{\sqtC{1}}(\tilde \Y_1(t,\eta)-\tilde \Y_1(t,\theta))}\tilde \U_1^2\tilde \Y_{1,\eta}^2(t,\theta) d\theta\Big)^{1/2}\\ \nn
& \qquad \qquad \qquad \qquad \qquad \quad \times \Big(\int_0^\eta e^{-\frac{1}{\sqtC{1}}(\tilde \Y_j(t,\eta)-\tilde \Y_j(t,\theta))}(\tilde \U_1-\tilde \U_2)^2(t,\theta) d\theta\Big)^{1/2}\\ \nn
& \quad +2 \Big(\int_0^\eta e^{-\frac{1}{\sqtC{2}}(\tilde \Y_2(t,\eta)-\tilde \Y_2(t,\theta))}\tilde \U_2^2\tilde \Y_{2,\eta}^2(t,\theta) d\theta\Big)^{1/2}\\ \nn
& \qquad \qquad \qquad \qquad \qquad \quad \times\Big(\int_0^\eta e^{-\frac{1}{\sqtC{2}}(\tilde \Y_j(t,\eta)-\tilde \Y_j(t,\theta))} (\tilde \U_1-\tilde \U_2)^2(t,\theta) d\theta\Big)^{1/2}\\ \nn
& \quad + \frac{2\sqrt{2}\A}{e}\Big(\int_0^\eta e^{-\frac{3}{4\sqtC{2}}(\tilde \Y_2(t,\eta)-\tilde \Y_2(t,\theta))}\tilde \U_2^2\tilde \Y_{2,\eta}^2(t,\theta)d\theta\Big)^{1/2}\\ \nn
& \qquad \qquad \qquad \qquad \qquad  \quad \times \Big(\int_0^\eta e^{-\frac{3}{4\A}(\tilde \Y_j(t,\eta)-\tilde \Y_j(t,\theta))}d\theta\Big)^{1/2} \vert \sqtC{1}-\sqtC{2}\vert\\ \nn
& \quad + \frac{2\sqrt{2}\A}{e}\Big(\int_0^\eta e^{-\frac{3}{4\sqtC{1}}(\tilde \Y_1(t,\eta)-\tilde \Y_1(t,\theta))}\tilde \U_1^2\tilde \Y_{1,\eta}^2(t,\theta) d\theta\Big)^{1/2}\\ \nn
& \qquad \qquad \qquad \qquad \qquad \quad \times \Big(\int_0^\eta e^{-\frac{3}{4\A} (\tilde \Y_j(t,\eta)-\tilde \Y_j(t,\theta))}d\theta\Big)^{1/2} \vert \sqtC{1}-\sqtC{2}\vert\\ \nn
& \quad + \tilde\U_j^2 \vert \tilde\Y_1-\tilde \Y_2\vert (t,\eta)\\ \nn
& \quad + \frac{1}{\ma}\int_0^\eta e^{-\frac{1}{\ma} (\tilde \Y_j(t,\eta)-\tilde \Y_j(t,\theta))}\min_j(\tilde \U_j^2) (\tilde \Y_{1,\eta}+\tilde \Y_{2,\eta})\vert \tilde \Y_1-\tilde \Y_2\vert (t,\theta)d\theta\\ \nn
& \quad + \A^4 \int_0^\eta e^{-\frac{1}{\ma}(\tilde \Y_j(t,\eta)-\tilde \Y_j(t,\theta))} \vert \tilde \Y_1-\tilde \Y_2\vert (t,\theta) d\theta\\ \nn
& \leq 4\A^3 \Big(\int_0^\eta e^{-\frac{1}{\A}(\tilde \Y_j(t,\eta)-\tilde \Y_j(t,\theta))}(\tilde \U_1-\tilde\U_2)^2(t,\theta)d\theta\Big)^{1/2}\\ \nn
& \quad + \frac{8\sqrt{2}\A^4}{\sqrt{3}e}\Big(\int_0^\eta e^{-\frac{3}{4\A} (\tilde \Y_j(t,\eta)-\tilde \Y_j(t,\theta))} d\theta\Big)^{1/2} \vert \sqtC{1}-\sqtC{2}\vert\\ \nn
& \quad + \tilde \U_j^2\vert \tilde \Y_1-\tilde \Y_2\vert (t,\eta)\\ \nn
& \quad + \sqrt{2}\A^4 \Big(\int_0^\eta e^{-\frac{1}{\ma} (\tilde \Y_j(t,\eta)-\tilde \Y_j(t,\theta))} (\tilde \Y_1-\tilde \Y_2)^2(t,\theta) d\theta\Big)^{1/2}\\ \label{eq:barT1}
& \quad + \A^4 \int_0^\eta e^{-\frac{1}{\ma}(\tilde \Y_j(t,\eta)-\tilde \Y_j(t,\theta))} \vert \tilde \Y_1-\tilde \Y_2\vert (t,\theta) d\theta.
\end{align}
Following the same lines one has 
\begin{align} \nn
\bar T_2(t,\eta)&= \vert \int_0^\eta (\min_j(\e^{-\frac{1}{\sqtC{1}}(\tilde \Y_j(t,\eta)-\tilde \Y_j(t,\theta))})\tilde \P_1\tilde\Y_{1,\eta}(t,\theta)\\ \nn
&\qquad\qquad\qquad\qquad\qquad-\min_j(e^{-\frac{1}{\sqtC{2}}(\tilde \Y_j(t,\eta)-\tilde \Y_j(t,\theta))})\tilde \P_2\tilde \Y_{2,\eta}(t,\theta) ) d\theta\vert \\ \nn
& \leq \vert \int_0^\eta \min_j(e^{-\frac{1}{\sqtC{1}}(\tilde\Y_j(t,\eta)-\tilde \Y_j(t,\theta))})(\tilde \P_1-\tilde\P_2)\tilde \Y_{1,\eta}(t,\theta) \mathbbm{1}_{\tilde \P_2\leq \tilde \P_1}(t,\theta)d\theta\vert\\ \nn
& \quad +\vert \int_0^\eta \min_j(e^{-\frac{1}{\sqtC{2}}(\tilde \Y_j(t,\eta)-\tilde \Y_j(t,\theta))})(\tilde \P_1-\tilde \P_2)\tilde \Y_{2,\eta}(t,\theta) \mathbbm{1}_{\tilde \P_1<\tilde \P_2}(t,\theta) d\theta\vert\\ \nn
& \quad + \mathbbm{1}_{\sqtC{1}\leq \sqtC{2}}\vert\int_0^\eta (\min_j(e^{-\frac{1}{\sqtC{1}}(\tilde\Y_j(t,\eta)-\tilde \Y_j(t,\theta))})\\ \nn
&\qquad\qquad\qquad\qquad\qquad-\min_j(e^{-\frac{1}{\sqtC{2}}(\tilde\Y_j(t,\eta)-\tilde \Y_j(t,\theta))}))\min_j(\tilde \P_j)\tilde \Y_{2,\eta}(t,\theta) d\theta\vert \\ \nn
& \quad +\mathbbm{1}_{\sqtC{2}<\sqtC{1}}\vert\int_0^\eta (\min_j(e^{-\frac{1}{\sqtC{1}}(\tilde \Y_j(t,\eta)-\tilde \Y_j(t,\theta))})\\ \nn
&\qquad\qquad\qquad\qquad\qquad-\min_j(e^{-\frac{1}{\sqtC{2}}(\tilde \Y_j(t,\eta)-\tilde \Y_j(t,\theta))}))\min_j(\tilde \P_j)\tilde \Y_{1,\eta}(t,\theta) d\theta\vert \\ \nn
& \quad + \vert \int_0^\eta \min_j(e^{-\frac1{\ma}(\tilde \Y_j(t,\eta)-\tilde \Y_j(t,\theta))})\min_j(\tilde \P_j )(\tilde \Y_{1,\eta}-\tilde \Y_{2,\eta})(t,\theta) d\theta\vert\\ \nn
& \leq 2\int_0^\eta \min_j(e^{-\frac{1}{\sqtC{1}}(\tilde \Y_j(t,\eta)-\tilde \Y_j(t,\theta))})\vert \sqP{1}\vert \tilde \Y_{1,\eta}\vert \sqP{1}-\sqP{2}\vert (t,\theta) d\theta\vert\\ \nn
& \quad +2 \int_0^\eta \min_j(e^{-\frac{1}{\sqtC{2}}(\tilde\Y_j(t,\eta)-\tilde \Y_j(t,\theta))}) \vert \sqP{2}\vert \tilde \Y_{2,\eta}\vert \sqP{1}-\sqP{2}\vert (t,\theta) d\theta\vert \\ \nn
& \quad +\mathbbm{1}_{\sqtC{1}\leq \sqtC{2}}\frac{4}{\ma e} \vert \int_0^\eta \min_j(e^{-\frac3{4\A} (\tilde \Y_j(t,\eta)-\tilde \Y_j(t,\theta))})\min_j(\tilde \P_j)\tilde \Y_{2,\eta}(t,\theta) d\theta\vert\vert \sqtC{1}-\sqtC{2}\vert \\ \nn
& \quad +\mathbbm{1}_{\sqtC{2}<\sqtC{1}}\frac{4}{\ma e} \vert \int_0^\eta \min_j(e^{-\frac{3}{4\A} (\tilde \Y_j(t,\eta)-\tilde \Y_j(t,\theta))})\min_j(\tilde \P_j)\tilde \Y_{1,\eta}(t,\theta) d\theta\vert \vert \sqtC{1}-\sqtC{2}\vert\\ \nn
& \quad +\vert \min_j(e^{-\frac{1}{\ma}(\tilde \Y_j(t,\eta)-\tilde \Y_j(t,\theta))})\min_j(\tilde \P_j)(\tilde \Y_1-\tilde \Y_2)\vert_{\theta=0}^{\eta}\\ \nn
& \qquad \qquad - \int_0^\eta \frac{d}{d\theta} (\min_j(e^{-\frac{1}{\ma}(\tilde \Y_j(t,\eta)-\tilde \Y_j(t,\theta))})\min_j(\tilde \P_j))(\tilde \Y_1-\tilde \Y_2)(t,\theta) d\theta\vert\\ \nn
& \leq 2\Big( \int_0^\eta e^{-\frac{1}{\sqtC{1}}(\tilde \Y_1(t,\eta)-\tilde \Y_1(t,\theta))}\tilde \P_1\tilde \Y_{1,\eta}^2(t,\theta) d\theta\Big)^{1/2}\\ \nn
& \qquad \qquad \qquad \qquad   \quad \times \Big(\int_0^\eta e^{-\frac{1}{\sqtC{1}}(\tilde \Y_j(t,\eta)-\tilde \Y_j(t,\theta))}(\sqP{1}-\sqP{2})^2(t,\theta) d\theta\Big)^{1/2}\\ \nn
& \quad +2 \Big(\int_0^\eta e^{-\frac{1}{\sqtC{2}}(\tilde \Y_2(t,\eta)-\tilde \Y_2(t,\theta))}\tilde \P_2\tilde \Y_{2,\eta}^2(t,\theta) d\theta\Big)^{1/2}\\ \nn
& \qquad \qquad \qquad \qquad   \quad \times\Big(\int_0^\eta e^{-\frac{1}{\sqtC{2}}(\tilde \Y_j(t,\eta)-\tilde \Y_j(t,\theta))} (\sqP{1}-\sqP{2})^2(t,\theta) d\theta\Big)^{1/2}\\ \nn
& \quad + \frac{2\A}{e}\Big(\int_0^\eta e^{-\frac{3}{4\sqtC{2}}(\tilde \Y_2(t,\eta)-\tilde \Y_2(t,\theta))}\tilde \P_2\tilde \Y_{2,\eta}^2(t,\theta)d\theta\Big)^{1/2}\\ \nn
& \qquad \qquad \qquad \qquad \qquad  \quad \times \Big(\int_0^\eta e^{-\frac{3}{4\A}(\tilde \Y_j(t,\eta)-\tilde \Y_j(t,\theta))}d\theta\Big)^{1/2} \vert \sqtC{1}-\sqtC{2}\vert\\ \nn
& \quad + \frac{2\A}{e}\Big(\int_0^\eta e^{-\frac{3}{4\sqtC{1}}(\tilde \Y_1(t,\eta)-\tilde \Y_1(t,\theta))}\tilde \P_1\tilde \Y_{1,\eta}^2(t,\theta) d\theta\Big)^2\\ \nn
& \qquad \qquad \qquad \qquad \qquad  \quad \times \Big(\int_0^\eta e^{-\frac{3}{4\A} (\tilde \Y_j(t,\eta)-\tilde \Y_j(t,\theta))}d\theta\Big)^{1/2} \vert \sqtC{1}-\sqtC{2}\vert\\ \nn
& \quad + \tilde\P_j \vert \tilde\Y_1-\tilde \Y_2\vert (t,\eta)\\ \nn
& \quad + \frac{1}{\ma}\int_0^\eta e^{-\frac{1}{\ma} (\tilde \Y_j(t,\eta)-\tilde \Y_j(t,\theta))}\min_j(\tilde \P_j) (\tilde \Y_{1,\eta}+\tilde \Y_{2,\eta})\vert \tilde \Y_1-\tilde \Y_2\vert (t,\theta)d\theta\\ \nn
& \quad + \frac{\A^4}{2} \int_0^\eta e^{-\frac{1}{\ma}(\tilde \Y_j(t,\eta)-\tilde \Y_j(t,\theta))} \vert \tilde \Y_1-\tilde \Y_2\vert (t,\theta) d\theta\\ \nn
& \leq 2\sqrt{2}\A^3 \Big(\int_0^\eta e^{-\frac{1}{\A}(\tilde \Y_j(t,\eta)-\tilde \Y_j(t,\theta))}(\sqP{1}-\sqP{2})^2(t,\theta)d\theta\Big)^{1/2}\\ \nn
& \quad + \frac{4\sqrt{2}\A^4}{\sqrt{3}e}\Big(\int_0^\eta e^{-\frac{3}{4\A} (\tilde \Y_j(t,\eta)-\tilde \Y_j(t,\theta))} d\theta\Big)^{1/2} \vert \sqtC{1}-\sqtC{2}\vert\\ \nn
& \quad + \tilde \P_j\vert \tilde \Y_1-\tilde \Y_2\vert (t,\eta)\\ \nn
& \quad + \frac{\A^4}{\sqrt{2}} \Big(\int_0^\eta e^{-\frac{1}{\ma} (\tilde \Y_j(t,\eta)-\tilde \Y_j(t,\theta))} (\tilde \Y_1-\tilde \Y_2)^2(t,\theta) d\theta\Big)^{1/2}\\ \label{eq:barT2}
& \quad + \frac{\A^4}{2} \int_0^\eta e^{-\frac{1}{\ma}(\tilde \Y_j(t,\eta)-\tilde \Y_j(t,\theta))} \vert \tilde \Y_1-\tilde \Y_2\vert (t,\theta) d\theta.
\end{align}

For the last term $\bar T_3(t,\eta)$, we have 
\begin{align} \nn
2\bar T_3(t,\eta)& =\vert \int_0^\eta (\min_j(e^{-\frac{1}{\sqtC{1}}(\tilde \Y_j(t,\eta)-\tilde \Y_j(t,\theta))})\sqtC{1}^5-(\min_j(e^{-\frac{1}{\sqtC{2}}(\tilde \Y_j(t,\eta)-\tilde \Y_j(t,\theta))})\sqtC{2}^5 )d\theta\vert\\ \nn
& \leq \mathbbm{1}_{\sqtC{1}\leq \sqtC{2}}\vert \int_0^\eta(e^{-\frac{1}{\sqtC{2}}(\tilde \Y_j(t,\eta)-\tilde \Y_j(t,\theta))})(\sqtC{1}^5-\sqtC{2}^5) d\theta \vert \\ \nn
& \quad + \mathbbm{1}_{\sqtC{2}<\sqtC{1}} \vert \int_0^\eta (e^{-\frac{1}{\sqtC{1}}(\tilde\Y_j(t,\eta)-\tilde \Y_j(t,\theta))})(\sqtC{1}^5-\sqtC{2}^5) d\theta\vert\\ \nn
& \quad + \ma^5 \int_0^\eta \vert \min_j(e^{-\frac{1}{\sqtC{1}}(\tilde \Y_j(t,\eta)-\tilde \Y_j(t,\theta))})-\min_j(e^{-\frac{1}{\sqtC{2}}(\tilde \Y_j(t,\eta)-\tilde \Y_j(t,\theta))})\vert d\theta\\ \nn
& \leq 10\A^4\int_0^\eta e^{-\frac{1}{\A}(\tilde \Y_j(t,\eta)-\tilde \Y_j(t,\theta))}d\theta \vert \sqtC{1}-\sqtC{2}\vert\\ \nn
& \quad + \frac{4\ma^4}{e}\int_0^\eta e^{-\frac{3}{4\A}(\tilde \Y_j(t,\eta)-\tilde \Y_j(t,\theta))}d\theta\vert \sqtC{1}-\sqtC{2}\vert\\
& \leq 12\A^4 \int_0^\eta e^{-\frac{3}{4\A} (\tilde \Y_j(t,\eta)-\tilde \Y_j(t,\theta))}d\theta\vert \sqtC{1}-\sqtC{2}\vert.\label{eq:barT3}
\end{align}

Thus we end up with 
\begin{align*}
\vert \tilde\D_1(t,\eta)-\tilde\D_2(t,\eta)\vert & \leq 2\A^{3/2}\max_j(\tilde\D_j^{1/2})(t,\eta)\vert \tilde \Y_1(t,\eta)-\tilde \Y_2(t,\eta)\vert \\
& \quad + 2\sqrt{2}\A^{3/2}\max_j(\tilde \D_j^{1/2})(t,\eta) \norm{\tilde \Y_1-\tilde \Y_2}\\
& \quad + 4\A^3\Big(\int_0^\eta e^{-\frac{1}{\A}(\tilde \Y_j(t,\eta)-\tilde \Y_j(t,\theta))}(\tilde \U_1-\tilde \U_2)^2(t,\theta) d\theta\Big)^{1/2}\\
& \quad +2\sqrt{2} \A^3 \Big(\int_0^\eta e^{-\frac{1}{\A}(\tilde \Y_j(t,\eta)-\tilde \Y_j(t,\theta))}(\sqP{1}-\sqP{2})^2(t,\theta) d\theta\Big)^{1/2} \\
& \quad + \frac{3\A^4}{\sqrt{2}} \Big(\int_0^\eta e^{-\frac{1}{\ma}(\tilde \Y_j(t,\eta)-\tilde \Y_j(t,\theta))}(\tilde \Y_1-\tilde \Y_2)^2(t,\theta) d\theta\Big)^{1/2}\\
& \quad + \frac{12\sqrt{2}\A^4}{\sqrt{3}e}\Big(\int_0^\eta e^{-\frac{3}{4\A}(\tilde \Y_j(t,\eta)-\tilde \Y_j(t,\theta))}d\theta\Big)^{1/2} \vert \sqtC{1}-\sqtC{2}\vert\\
& \quad + \tilde \U_j^2\vert \tilde \Y_1-\tilde \Y_2\vert (t,\eta)\\
& \quad + \tilde \P_j\vert \tilde\Y_1-\tilde \Y_2\vert (t,\eta)\\
& \quad +\frac{3\A^4}{2} \int_0^\eta e^{-\frac{1}{\ma} (\tilde \Y_j(t,\eta)-\tilde \Y_j(t,\theta))}\vert \tilde \Y_1-\tilde \Y_2\vert (t,\theta) d\theta\\
& \quad + 6\A^4 \int_0^\eta e^{-\frac{3}{4\A} (\tilde \Y_j(t,\eta)-\tilde \Y_j(t,\theta))}d\theta\vert \sqtC{1}-\sqtC{2}\vert,
\end{align*}
which proves the lemma.
\end{proof}
%-----------------------------------

\begin{lemma}\label{lemma:E}
We have the following estimates
\begin{align}
\int_0^\eta e^{-\frac3{2A} (\tilde\Y(t,\eta)-\tilde\Y(t,\theta))}\tilde\P\tilde\Y_{\eta} (t,\theta) d\theta&\leq 2A\tilde\P(t,\eta), 
\label{eq:343XX}  \\
\int_0^\eta e^{-\frac{3}{2A}(\tilde \Y(t,\eta)-\tilde \Y(t,\theta))} \tilde \Henergy_{\eta}(t,\theta) d\theta 
& \le 4A \tilde\P(t,\eta), \label{eq:Henergy32XX}  \\
\int_0^\eta e^{-\frac{1}{A}(\tilde\Y(t,\eta)-\tilde\Y(t,\theta))} \tilde\U^2(t,\theta)d \theta & \leq 6 \tilde\P(t,\eta), \label{est:L3bXX} \\
\int_0^\eta e^{-\frac{5}{4A}(\tilde \Y(t,\eta)-\tilde \Y(t,\theta))} \tilde \P\tilde \Y_{\eta}(t,\theta) d\theta&\leq 4 A\tilde\P(t,\eta),
   \label{eq:all_PestimatesEXX}  \\
   \int_0^\eta e^{-\frac3{2A} (\tilde\Y(t,\eta)-\tilde\Y(t,\theta))}\tilde \P(t,\theta) d\theta
& \leq 7\tilde\P(t,\eta), \label{eq:32PXX}  \\
\int_0^\eta e^{-\frac1{2A}(\tilde\Y(t,\eta)-\tilde\Y(t,\theta))} \tilde\P^2\tilde\Y_{\eta} (t,\theta) d\theta&\leq A\bigO(1)\P^{1/2}(t,\eta), 
\label{eq:p2yXX}  \\
\int_0^\eta e^{-\frac{1}{A}(\tilde \Y(t,\eta)-\tilde \Y(t,\theta))} \tilde \P^{1+\beta}\tilde \Y_{\eta}(t,\theta) d\theta
&\leq 3\frac{1+\beta}{\beta}\frac{A^{1+4\beta}}{4^{\beta}}\tilde \P(t,\eta), \quad\beta>0.  \label{eq:generalXX}
\end{align}
\end{lemma}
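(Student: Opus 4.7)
The proof will be organized around two building blocks. First, the rescaled representation
\[
4A\tilde\P(t,\eta)=\int_0^1 e^{-\frac{1}{A}\vert\tilde\Y(t,\eta)-\tilde\Y(t,\theta)\vert}\big(\tilde\U^2\tilde\Y_\eta+\tilde\Henergy_\eta\big)(t,\theta)\,d\theta,
\]
which is equivalent to $2A\tilde\P=\tilde\D+\tilde\E$ with the bounds \eqref{eq:all_estimatesN}--\eqref{eq:all_estimatesO}; in particular, restricting to $[0,\eta]$ yields $\int_0^\eta e^{-(\tilde\Y(\eta)-\tilde\Y(\theta))/A}(\tilde\U^2\tilde\Y_\eta+\tilde\Henergy_\eta)(t,\theta)\,d\theta=2\tilde\D(t,\eta)\le 4A\tilde\P(t,\eta)$. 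Second, rescaling the Lagrangian identity $P_\xi=Q y_\xi$ gives $\tilde\P_\eta=\tilde\Q\tilde\Y_\eta/A^2$, coupled with the pointwise control $|\tilde\Q|\le A\tilde\P$ from the sign-less version of its defining integral. The boundary value $\tilde\P(t,0)=0$, inherited from the decay of $p$ at $-\infty$, will be used repeatedly.

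The first easy group consists of \eqref{eq:Henergy32XX} and \eqref{est:L3bXX}. For \eqref{eq:Henergy32XX} I use the monotonicity $e^{-3x/(2A)}\le e^{-x/A}$ on $x\ge0$ and drop $\tilde\U^2\tilde\Y_\eta\ge0$ inside the $\tilde\D$-integral, bounding the left-hand side directly by $4A\tilde\P$. For \eqref{est:L3bXX} I first use $\tilde\U^2\le 2\tilde\P$ (the rescaling of \eqref{con:PU}) to reduce to $\int_0^\eta e^{-(\tilde\Y(\eta)-\tilde\Y(\theta))/A}\tilde\P\,d\theta$, which after the a fortiori replacement $e^{-x/A}\le e^{-3x/(2A)}$ is dominated by \eqref{eq:32PXX}; summing the constants yields the factor~$6$.

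The integration-by-parts group comprises \eqref{eq:343XX} and \eqref{eq:all_PestimatesEXX}. For generic $c>1$, set $I_c:=\int_0^\eta e^{-c(\tilde\Y(\eta)-\tilde\Y(\theta))/A}\tilde\P\tilde\Y_\eta\,d\theta$; then differentiating the exponential in $\theta$, integrating by parts, using $\tilde\P(t,0)=0$, and applying $\tilde\P_\theta=\tilde\Q\tilde\Y_\eta/A^2$ together with $|\tilde\Q|\le A\tilde\P$ gives
\[
I_c=\frac{A}{c}\tilde\P(\eta)-\frac{1}{cA}\int_0^\eta e^{-c(\tilde\Y(\eta)-\tilde\Y(\theta))/A}\tilde\Q\tilde\Y_\eta\,d\theta\le \frac{A}{c}\tilde\P(\eta)+\frac{1}{c}I_c,
\]
which rearranges to $I_c\le A\tilde\P(\eta)/(c-1)$. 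Specializing $c=3/2$ and $c=5/4$ produces the constants $2A$ and $4A$ in \eqref{eq:343XX} and \eqref{eq:all_PestimatesEXX}. The estimate \eqref{eq:32PXX} is next: the absence of a $\tilde\Y_\eta$ factor blocks direct IBP, so I substitute the representation for $\tilde\P(t,\theta)$, apply Fubini, split the inner $\theta$-integral at $\theta=\sigma$, and integrate the resulting pure exponentials in $\theta$; the outer integral in $\sigma$ recomposes $\tilde\P(t,\eta)$ (up to the factor $4A$), with the constant $7$ coming from the two regions' geometric sums.

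The main obstacle will be \eqref{eq:p2yXX} and \eqref{eq:generalXX}, where the IBP scheme of the previous paragraph sits at or below the critical exponent $c=1$. For \eqref{eq:generalXX} I apply the same IBP to the power $\tilde\P^{1+\beta}$ itself: since $\big(\tilde\P^{1+\beta}\big)_\theta=(1+\beta)\tilde\P^\beta\tilde\Q\tilde\Y_\eta/A^2$, one obtains
\[
\int_0^\eta e^{-(\tilde\Y(\eta)-\tilde\Y(\theta))/A}\tilde\P^{1+\beta}\tilde\Y_\eta\,d\theta = A\tilde\P^{1+\beta}(\eta)-\frac{1+\beta}{A}\int_0^\eta e^{-(\tilde\Y(\eta)-\tilde\Y(\theta))/A}\tilde\P^\beta\tilde\Q\tilde\Y_\eta\,d\theta,
\]
and after absorbing one factor of $\tilde\P$ through $\tilde\P^\beta\le (A^4/4)^\beta$ (from \eqref{eq:all_estimatesA}) and $|\tilde\Q|\le A\tilde\P$ one arrives at a closed linear inequality whose resolution produces the stated $3\tfrac{1+\beta}{\beta}(A^{1+4\beta}/4^\beta)$, the $1/\beta$ factor recording the deterioration as $\beta\downarrow 0$. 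Finally, \eqref{eq:p2yXX} with the weak decay $e^{-x/(2A)}$ is treated by the Cauchy--Schwarz split
\[
\int_0^\eta e^{-(\tilde\Y(\eta)-\tilde\Y(\theta))/(2A)}\tilde\P^2\tilde\Y_\eta\,d\theta\le \Big(\int_0^\eta e^{-(\tilde\Y(\eta)-\tilde\Y(\theta))/A}\tilde\P^2\tilde\Y_\eta\,d\theta\Big)^{1/2}\Big(\int_0^\eta \tilde\P^2\tilde\Y_\eta\,d\theta\Big)^{1/2},
\]
combined with \eqref{eq:all_PestimatesB} and the uniform bound $\tilde\P\le A^4/4$, yielding the $\bigO(1)A\tilde\P^{1/2}(\eta)$ right-hand side; the square-root power reflects the loss of sharpness at the marginal exponent.
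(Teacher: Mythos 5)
The decisive gap is in your argument for \eqref{eq:generalXX}. After the single integration by parts you have $I=A\tilde\P^{1+\beta}(t,\eta)-\frac{1+\beta}{A}\int_0^\eta e^{-\frac1A(\tilde\Y(t,\eta)-\tilde\Y(t,\theta))}\tilde\P^{\beta}\tilde\Q\tilde\Y_\eta\,d\theta$, and you propose to close it with $\tilde\P^\beta\le(A^4/4)^\beta$ and $\vert\tilde\Q\vert\le A\tilde\P$. But this error term is then bounded either by $(1+\beta)I$, and the inequality $I\le A\tilde\P^{1+\beta}+(1+\beta)I$ cannot be resolved because the coefficient exceeds one, or by $(1+\beta)(A^4/4)^\beta\int_0^\eta e^{-\frac1A(\tilde\Y(t,\eta)-\tilde\Y(t,\theta))}\tilde\P\tilde\Y_\eta\,d\theta$, and the bound $\int_0^\eta e^{-\frac1A(\tilde\Y(t,\eta)-\tilde\Y(t,\theta))}\tilde\P\tilde\Y_\eta\,d\theta\le\bigO(1)A\tilde\P(t,\eta)$ is not among the available estimates and is in fact false with a constant depending only on $A$: in Eulerian form it reads $\int_{-\infty}^{x}e^{-(x-y)}p(y)\,dy\le Cp(x)$, and since $\int_{-\infty}^{x}e^{-(x-y)}e^{-\vert y-z\vert}\,dy=\big(\tfrac12+(x-z)\big)e^{-(x-z)}$ for $z\le x$, the peakon--antipeakon data at breaking time ($\mu=E^2\delta_0$) make the ratio grow like $x$. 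This is exactly the critical-exponent obstruction you yourself flagged; at decay rate $\frac1A$ absorption via $\vert\tilde\Q\vert\le A\tilde\P$ is unavailable. The paper's proof performs a second integration by parts using the exact identity $\tilde\Q_\eta=-(\tilde\U^2-\tilde\P)\tilde\Y_\eta-\frac12A^5+\tilde\P\tilde\Y_\eta$: the resulting $\tilde\Q^2$-term has a favourable sign and is discarded, the $\tilde\P\tilde\Y_\eta$ piece cancels against the original integral leaving only the coefficient $\beta$ (whence the $1/\beta$), and the remainder equals $\frac{1+\beta}{2}\int_0^\eta e^{-\frac1A(\cdots)}\tilde\P^\beta(\tilde\U^2\tilde\Y_\eta+\tilde\Henergy_\eta)\,d\theta$, which is controlled by $\norm{\tilde\P}_{L^\infty}^\beta\,\tilde\D\le 2A(A^4/4)^\beta\tilde\P(t,\eta)$ through the representation of $\tilde\P$ -- not through $\vert\tilde\Q\vert\le A\tilde\P$. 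Your sketch states the paper's constant but the route described does not produce it.

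A second, smaller error: your reduction of \eqref{est:L3bXX} invokes $e^{-x/A}\le e^{-3x/(2A)}$, which is reversed for $x\ge0$, so \eqref{eq:32PXX} (the faster-decaying kernel) cannot dominate the integral with kernel $e^{-x/A}$. The intermediate claim $\int_0^\eta e^{-\frac1A(\tilde\Y(t,\eta)-\tilde\Y(t,\theta))}\tilde\P(t,\theta)\,d\theta\le\bigO(1)\tilde\P(t,\eta)$ is true, but you must prove it directly, e.g.\ by your own Fubini idea: insert the representation of $\tilde\P(t,\theta)$, check that the composed kernel is pointwise bounded by $e^{-\frac1A\vert\tilde\Y(t,\eta)-\tilde\Y(t,\sigma)\vert}$, and use that the $\theta$-interval has length at most one -- note there are no ``pure exponentials in $\theta$'' to integrate, since the $\theta$-integral carries no Jacobian $\tilde\Y_\eta$; the same caveat applies to your sketch of \eqref{eq:32PXX}, whose conclusion is nevertheless reachable this way (the paper instead multiplies by $\theta$ and integrates by parts, using $2\vert\tilde\U\tilde\U_\eta\vert\le\frac1A\tilde\Henergy_\eta$). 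The rest is sound: your generic-$c$ absorption for \eqref{eq:343XX} and \eqref{eq:all_PestimatesEXX} is precisely the paper's argument, \eqref{eq:Henergy32XX} is as in the paper, and your Cauchy--Schwarz shortcut for \eqref{eq:p2yXX} via \eqref{eq:all_PestimatesB} is a legitimate simplification of the paper's double integration by parts, provided you also invoke $2\tilde\P\tilde\Y_\eta\le A^5$ (and not merely $\tilde\P\le A^4/4$) to bound the factor $\int_0^\eta\tilde\P^2\tilde\Y_\eta\,d\theta$.
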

\begin{proof}  The proof of \eqref{eq:343XX} goes as follows
\begin{align*}
\int_0^\eta e^{-\frac3{2A} (\tilde \Y(t,\eta)-\tilde \Y(t,\theta))}&\tilde\P\tilde\Y_{\eta} (t,\theta) d\theta \\
& = \frac23A \tilde \P(t,\eta)-\frac2{3A}\int_0^\eta e^{-\frac3{2A}(\tilde\Y(t,\eta)-\tilde\Y(t,\theta))}\tilde\Q\tilde\Y_{\eta} (t,\theta) d\theta\\
& \leq \frac23 A \tilde \P(t,\eta) +\frac23 \int_0^\eta e^{-\frac3{2A}(\tilde\Y(t,\eta)-\tilde\Y(t,\theta))}\tilde\P\tilde\Y_{\eta}(t,\theta)d\theta
\end{align*}
which implies 
\begin{equation*}
\int_0^\eta e^{-\frac3{2A} (\tilde\Y(t,\eta)-\tilde\Y(t,\theta))}\tilde\P\tilde\Y_{\eta} (t,\theta) d\theta\leq 2A\tilde\P(t,\eta).
\end{equation*}

Next, we use that
\begin{align*}
\int_0^\eta e^{-\frac{3}{2A}(\tilde \Y(t,\eta)-\tilde \Y(t,\theta))} \tilde \Henergy_{\eta}(t,\theta) d\theta 
& \le \int_0^\eta e^{-\frac{1}{A}(\tilde \Y(t,\eta)-\tilde \Y(t,\theta))} \tilde \Henergy_{\eta}(t,\theta) d\theta  \\ 
& \le 4A \tilde\P(t,\eta), 
\end{align*}
see \eqref{eq:all_PestimatesD}, showing \eqref{eq:Henergy32XX}.

For \eqref{est:L3bXX} we find
\begin{align*}
\int_0^\eta &e^{-\frac{1}{A}(\tilde\Y(t,\eta)-\tilde\Y(t,\theta))} \tilde\U^2(t,\theta)d \theta\\ 
&\quad = \theta e^{-\frac{1}{A}(\tilde\Y(t,\eta)-\tilde\Y(t,\theta))} \tilde\U^2(t,\theta)\Big|_{\theta=0}^{\eta} \\ 
&\qquad\qquad\qquad -\int_0^{\eta} \theta e^{-\frac{1}{A}(\tilde\Y(t,\eta)-\tilde\Y(t,\theta))} (\frac{1}{A}\tilde\V^2\tilde\Y_{\eta}+2\tilde\U\tilde\U_{\eta}) (t,\theta)d\theta\\ 
&\quad = \eta \tilde\U^2(t,\eta)-\int_0^{\eta} \theta e^{-\frac{1}{A}(\tilde\Y(t,\eta)-\tilde\Y(t,\theta))}(\frac{1}{A}\tilde\U^2\tilde\Y_{\eta}+2\tilde\U\tilde\U_{\eta}) (t,\theta) d\theta\\ 
&\quad \leq  \tilde\U^2(t,\eta) + \int_0^\eta e^{-\frac{1}{A}(\tilde\Y(t,\eta)-\tilde\Y(t,\theta))} (\frac{1}{A}\tilde\U^2\tilde\Y_{\eta}+ 2\vert \tilde\U\tilde\U_{\eta}\vert)(t,\theta)d\theta\\ 
&\quad \leq \tilde\U^2 (t,\eta) + 4 \tilde\P(t,\eta) \\ 
&\quad \leq 6 \tilde\P(t,\eta).
\end{align*}

The proof of   \eqref{eq:all_PestimatesEXX}
\begin{equation*}
\int_0^\eta e^{-\frac{5}{4A}(\tilde \Y(t,\eta)-\tilde \Y(t,\theta))} \tilde \P\tilde \Y_{\eta}(t,\theta) d\theta\leq 4A\tilde\P(t,\eta)
\end{equation*}
follows in the same manner as \eqref{eq:343XX}.

Furthermore, for  \eqref{eq:32PXX} we find
\begin{align*}
\int_0^\eta e^{-\frac3{2A} (\tilde\Y(t,\eta)-\tilde\Y(t,\theta))}&\tilde \P(t,\theta) d\theta\\
& = \theta e^{-\frac3{2A} (\tilde\Y(t,\eta)-\tilde\Y(t,\theta))} \tilde \P(t,\theta) \Big\vert_{\theta=0}^{\eta}\\
& \quad-\int_0^\eta \theta e^{-\frac3{2A}(\tilde \Y(t,\eta)-\tilde \Y(t,\theta))}(\frac3{2A}\tilde \P\tilde\Y_{\eta}+\frac{1}{A^2}\tilde\Q\tilde\Y_{\eta})(t,\theta) d\theta\\
& \leq\eta\tilde \P(t,\eta)+\frac{3}{A}\int_0^\eta e^{-\frac3{2A}(\tilde\Y(t,\eta)-\tilde\Y(t,\theta))}\tilde\P\tilde\Y_{\eta} (t,\theta) d\theta\\
& \leq 7\tilde\P(t,\eta),
\end{align*}
which follows from \eqref{eq:343XX}.

In order to prove  \eqref{eq:p2yXX} we do as follows:
\begin{align*}
\int_0^\eta &e^{-\frac1{2A} (\tilde\Y(t,\eta)-\tilde\Y(t,\theta))} \tilde \P^2\tilde\Y_{\eta} (t,\theta) d\theta \\
&\qquad = 2A\tilde \P^2(t,\eta)-8\tilde\P\tilde\Q(t,\eta) \\
& \qquad\quad + 8\int_0^\eta e^{-\frac1{2A}(\tilde\Y(t,\eta)-\tilde\Y(t,\theta))}(\tilde \P^2+\frac1{A^2}\tilde\Q^2)\tilde\Y_{\eta} (t,\theta) d\theta\\
& \qquad\quad + 8 \int_0^\eta e^{-\frac1{2A} (\tilde\Y(t,\eta)-\tilde\Y(t,\theta))} ((\tilde\P-\tilde\U^2)\tilde\Y_{\eta}(t,\theta) -\frac12A^5)\tilde\P(t,\theta) d\theta,
\end{align*} 
and hence 
\begin{align*}
\int_0^\eta &e^{-\frac1{2A}(\tilde\Y(t,\eta)-\tilde\Y(t,\theta))} \tilde\P^2\tilde\Y_{\eta} (t,\theta) d\theta\\
& \quad\leq 8\tilde \P\tilde\Q(t,\eta) + 8 \int_0^\eta e^{-\frac1{2A} (\tilde \Y(t,\eta)-\tilde\Y(t,\theta))} ((\tilde\U^2-\tilde\P)\tilde\Y_{\eta}(t,\theta)+\frac12A^5)\tilde\P(t,\theta) d\theta\\
& \quad\leq 8A\tilde\P^2(t,\eta) +8\Big(\int_0^\eta e^{-\frac{1}{A}(\tilde\Y(t,\eta)-\tilde\Y(t,\theta))}((\tilde\U^2-\tilde\P)\tilde\Y_{\eta}(t,\theta) +\frac12A^5) d\theta\Big)^{1/2}\\ 
& \quad \qquad \qquad \qquad \qquad\times \Big(\int_0^\eta ((\tilde \U^2-\tilde\P)\tilde\Y_{\eta}(t,\theta)+\frac{1}{2}A^5 )\tilde \P^2(t,\theta) d\theta\Big)^{1/2}\\
& \quad\leq A\bigO(1)\tilde\P^{1/2}(t,\eta).
\end{align*}

As for the proof of  \eqref{eq:generalXX}, we proceed as follows. 
\begin{align}
\int_0^\eta & e^{-\frac{1}{A}(\tilde \Y(t,\eta)-\tilde \Y(t,\theta))}  \tilde \P^{1+\beta}\tilde \Y_{\eta}(t,\theta) d\theta \nn \\
&\qquad= A e^{-\frac{1}{A}(\tilde \Y(t,\eta)-\tilde \Y(t,\theta))} \tilde \P^{1+\beta}\big|_{\theta=0}^\eta \nn \\
&\qquad\quad -A(1+\beta) \int_0^\eta e^{-\frac{1}{A}(\tilde \Y(t,\eta)-\tilde \Y(t,\theta))} \tilde \P^{\beta}\tilde \P_\eta (t,\theta) d\theta\nn\\
&\qquad=A  \tilde \P^{1+\beta}(t,\eta)
-\frac{1}{A}(1+\beta) \int_0^\eta e^{-\frac{1}{A}(\tilde \Y(t,\eta)-\tilde \Y(t,\theta))} \tilde \P^{\beta}\tilde \Q \tilde \Y_{\eta}(t,\theta) d\theta\nn\\
&\qquad=A  \tilde \P^{1+\beta}(t,\eta) -(1+\beta) e^{-\frac{1}{A}(\tilde \Y(t,\eta)-\tilde \Y(t,\theta))} \tilde \P^{\beta}\tilde \Q(t,\theta)\big|_{\theta=0}^\eta \nn\\
&\qquad\quad+(1+\beta) \int_0^\eta e^{-\frac{1}{A}(\tilde \Y(t,\eta)-\tilde \Y(t,\theta))}\big(\beta \tilde \P^{\beta-1}\tilde \P_\eta \tilde \Q+ \tilde \P^{\beta}\tilde \Q_\eta \big)(t,\theta)d\theta\nn\\
&\qquad=A  \tilde \P^{1+\beta}(t,\eta) -(1+\beta)  \tilde \P^{\beta}\tilde \Q(t,\eta)\nn \\
&\qquad\quad+(1+\beta) \int_0^\eta e^{-\frac{1}{A}(\tilde \Y(t,\eta)-\tilde \Y(t,\theta))}\big(\beta \tilde \P^{\beta-1}\tilde \Q^2\tilde \Y_\eta\frac{1}{A^2} 
+ \tilde \P^{\beta}\tilde \Q_\eta \big)(t,\theta)d\theta. \label{eq:beta}
\end{align}
Recall \eqref{eq:DogE}, \eqref{eq:D_deriv}, and \eqref{eq:E_deriv}, which together imply
\begin{equation*}
\tilde \Q_\eta(t,\eta)= \frac12\big(\tilde\E_\eta -\tilde\D_\eta \big)(t,\eta)  
= -\big(\tilde\U^2-\tilde\P \big) \tilde\Y_\eta(t,\eta)-\frac12 A^5+\tilde\P\tilde \Y_\eta(t,\eta).
\end{equation*}
Thus
\begin{multline*}
\beta \tilde \P^{\beta-1}\tilde \Q^2\tilde \Y_\eta\frac{1}{A^2} 
+ \tilde \P^{\beta}\tilde \Q_\eta \\= 
\frac{\beta}{A^2} \tilde \Q^2\tilde \P^{\beta-1}\tilde\Y_\eta+\tilde \P^{\beta+1}\tilde\Y_\eta
+\tilde \P^{\beta}\big( -(\tilde\U^2-\tilde\P)\tilde\Y_\eta-\frac12 A^5\big).
\end{multline*}
Inserting this expression in \eqref{eq:beta} and re-ordering the terms, we find
\begin{align*}
\beta\int_0^\eta & e^{-\frac{1}{A}(\tilde \Y(t,\eta)-\tilde \Y(t,\theta))}  \tilde \P^{1+\beta}\tilde \Y_{\eta}(t,\theta) d\theta \\
 & \qquad\qquad\qquad\qquad+ \frac{1+\beta}{A^2} \int_0^\eta  e^{-\frac{1}{A}(\tilde \Y(t,\eta)-\tilde \Y(t,\theta))}  \tilde \P^{\beta-1}\tilde \Q^2\tilde \Y_{\eta}(t,\theta) d\theta \\
&=(1+\beta)  \tilde \P^{\beta}\tilde \Q(t,\eta) -A  \tilde \P^{1+\beta}(t,\eta) \\
&\qquad +(1+\beta) \int_0^\eta  e^{-\frac{1}{A}(\tilde \Y(t,\eta)-\tilde \Y(t,\theta))}  \tilde \P^{\beta}\big( (\tilde\U^2-\tilde\P)\tilde\Y_\eta+\frac12 A^5\big)(t,\theta) d\theta.
\end{align*}
Estimating this, using \eqref{eq:Ddef}, we find
\begin{align*}
\beta\int_0^\eta & e^{-\frac{1}{A}(\tilde \Y(t,\eta)-\tilde \Y(t,\theta))}  \tilde \P^{1+\beta}\tilde \Y_{\eta}(t,\theta) d\theta \\
&\le (1+\beta)  \tilde \P^{\beta}\vert\tilde \Q\vert (t,\eta) \\
&\quad +(1+\beta) \int_0^\eta  e^{-\frac{1}{A}(\tilde \Y(t,\eta)-\tilde \Y(t,\theta))}  \tilde \P^{\beta}\big( (\tilde\U^2-\tilde\P)\tilde\Y_\eta+\frac12 A^5\big)(t,\theta) d\theta\\
&\le (1+\beta)\norm{ \tilde \P}^\beta_\infty A\tilde \P (t,\eta)\\
&\qquad+(1+\beta)\norm{ \tilde \P}^\beta_\infty
\int_0^\eta  e^{-\frac{1}{A}(\tilde \Y(t,\eta)-\tilde \Y(t,\theta))}  \big( (\tilde\U^2-\tilde\P)\tilde\Y_\eta+\frac12 A^5\big)(t,\theta) d\theta\\
&\le (1+\beta)\norm{ \tilde \P}^\beta_\infty A\tilde \P (t,\eta)+(1+\beta)\norm{ \tilde \P}^\beta_\infty \tilde \D(t,\eta)\\
&\le 3(1+\beta)\Big(\frac{A^4}4\Big)^\beta A \tilde \P (t,\eta),
\end{align*}
where we used  \eqref{eq:all_estimatesC}, \eqref{eq:DPEP}, and \eqref{eq:all_estimatesA}.

\end{proof}
%------------------- an explicit example
\section{The antisymmetric peakon-antipeakon example}  \label{app:peak}

Fortunately, one can compute explicitly the quantities described in this paper in the important case of 
an antisymmetric peakon-antipeakon solution. 

More precisely, consider the function  \cite{GH}
\begin{equation*}
u(t,x)= \begin{cases}
\beta(t)\sinh(x), & \quad \vert x\vert \le\gamma(t),\\
\sign(x)\alpha(t)e^{-\vert x\vert } , & \quad \vert x\vert \ge\gamma(t),
\end{cases}
= \begin{cases} -\alpha(t)e^{x}, & \quad x\le-\gamma(t),\\
\beta(t)\sinh(x), & \quad -\gamma(t)\le x\le \gamma(t),\\
\alpha(t)e^{-x}, & \quad \gamma(t)\le x,
\end{cases}
\end{equation*}
where 
\begin{equation*}
\alpha(t)= \frac{E}{2}\sinh(\frac{E}{2}(t-t_0)), \quad \beta(t)=E\frac{1}{\sinh(\frac{E}{2}(t-t_0))}, \quad \gamma(t)=\ln(\cosh(\frac{E}{2}(t-t_0))).
\end{equation*}
Here $E=\norm{u(t)}_{H^1}$, $t\neq t_0$, denotes the total energy.
The corresponding energy density is given by 
\begin{equation*}
\mu(t,x)=(u^2+u_x^2)(t,x)=\begin{cases} 2\alpha^2(t)e^{2x}, & \quad x\leq-\gamma(t),\\
\beta^2(t)\cosh(2x), & \quad -\gamma(t)\leq x\leq\gamma(t),\\
2\alpha^2(t)e^{-2x}, & \quad \gamma(t)\leq x,
\end{cases} \quad t\neq t_0, 
\end{equation*}
with $\mu(t_0,x)=E^2\delta_0(x)$ for $t=t_0$. Hence   $C=\mu(t,\Real)=E^2$,  and 
\begin{equation*}
F(t,x)= 
\begin{cases}
\alpha(t)^2 e^{2x}, & \quad x<-\gamma(t),\\
\frac{E^2}{4} \tanh^2(\frac{E}{2}(t-t_0)), & \quad x=-\gamma(t),\\
\frac12 E^2+\frac12 \beta(t)^2 \sinh(2x),& \quad -\gamma(t)< x< \gamma(t),\\
E^2-\frac{E^2}{4} \tanh^2(\frac{E}{2}(t-t_0)), & \quad x=\gamma(t),\\
E^2-\alpha(t)^2e^{-2x}, & \quad \gamma(t)< x. 
\end{cases} 
\end{equation*}
In particular, this solution experiences wave breaking at time $t=t_0$, that is, $u_x(t,0)$ tends to $-\infty$ as $t\to t_0-$ and 
\begin{equation*}
F(t_0-,x)=\begin{cases}
0, & \quad x\leq 0,\\
E^2, & \quad 0<x.
\end{cases}
\end{equation*}
The corresponding function $p_x(t,x)$, which can be computed using $p_x(t,x)=-u_t(t,x)-uu_x(t,x)$, is given by 
\begin{equation*}
p_x(t,x)= \begin{cases}
\alpha'(t)e^x-\alpha(t)^2e^{2x}, & \quad x< -\gamma(t),\\
\frac{E^2}{4}-\frac{E^2}{4}\tanh^2(\frac{E}{2}(t-t_0)), & \quad x=-\gamma(t),\\
-\beta'(t)\sinh(x)-\frac12 \beta(t)^2 \sinh(2x), & \quad -\gamma(t)< x< \gamma(t),\\
-\frac{E^2}{4}+\frac{E^2}{4}\tanh^2(\frac{E}{2}(t-t_0)), & \quad \gamma(t)=x,\\
-\alpha'(t)e^{-x}+\alpha(t)^2e^{-2x}, & \quad \gamma(t)< x.
\end{cases}
\end{equation*}
In particular, one obtains at wave breaking time $t=t_0$ that
\begin{equation*}
p_x(t_0-,x)=
\begin{cases}
\frac{E^2}{4} e^x, & \quad x< 0,\\
-\frac{E^2}{4}e^{-x}, & \quad 0<x.
\end{cases}
\end{equation*}
Here it is important to note that $p_x(t_0-,x)$ has a (negative) jump of height $-\frac{E^2}{2}$ at $x=0$ at time $t=t_0$. For all other points $x\in\Real$, the function $p_x(t_0-,x)$ is continuous.

Thus the function $G(t,x)=2p_x(t,x)+2F(t,x)$ is given by 
\begin{equation*}
G(t,x)= \begin{cases}
2\alpha'(t)e^x, & \quad x<-\gamma(t),\\
\frac{E^2}{2}, & \quad x=-\gamma(t),\\
E^2-2\beta'(t)\sinh(x), &\quad -\gamma(t)< x <\gamma(t),\\
\frac{3E^2}{2}, & \quad x=\gamma(t),\\
2E^2-2\alpha'(t)e^{-x}, & \quad \gamma(t)< x.\\
\end{cases}
\end{equation*}
In particular, one observes that $G_x(t,x)>0$ for all $x\in \Real$,
\begin{equation*}
\lim_{x\to-\infty} G(t,x)= 0\quad \text{ and } \quad \lim_{x\to\infty}G(t,x)=2E^2=2C.
\end{equation*}
Thus the limits at $\pm\infty$ are independent of time. Moreover, one has
\begin{equation*}
G(t_0-,x)=
\begin{cases}
\frac{E^2}{2}e^x,& \quad x\leq0,\\
2E^2-\frac{E^2}{2} e^{-x}, & \quad 0<x.
\end{cases}
\end{equation*}
Here it is important to note that $G(t_0-,x)$ has a jump of size $E^2$ due to the wave breaking at $t=t_0$, i.e, $\mu(t_0,x)=E^2\delta_0(x)$.

Direct computations using $p=p_{xx}+\frac12 u^2+\frac12 d\mu$ yield
\begin{equation*}
p(t,x)=\begin{cases}
\alpha'(t)e^x-\frac12 \alpha(t)^2e^{2x}, & \quad x\leq -\gamma(t),\\
-\beta'(t)\cosh(x)-\frac12 \beta(t)^2\cosh^2(x), & \quad -\gamma(t)\leq x\leq\gamma(t),\\
\alpha'(t)e^{-x}-\frac12 \alpha(t)^2e^{-2x}, & \quad \gamma(t)\leq x.
\end{cases}
\end{equation*}

\begin{figure}\centering
\includegraphics[width=13cm]{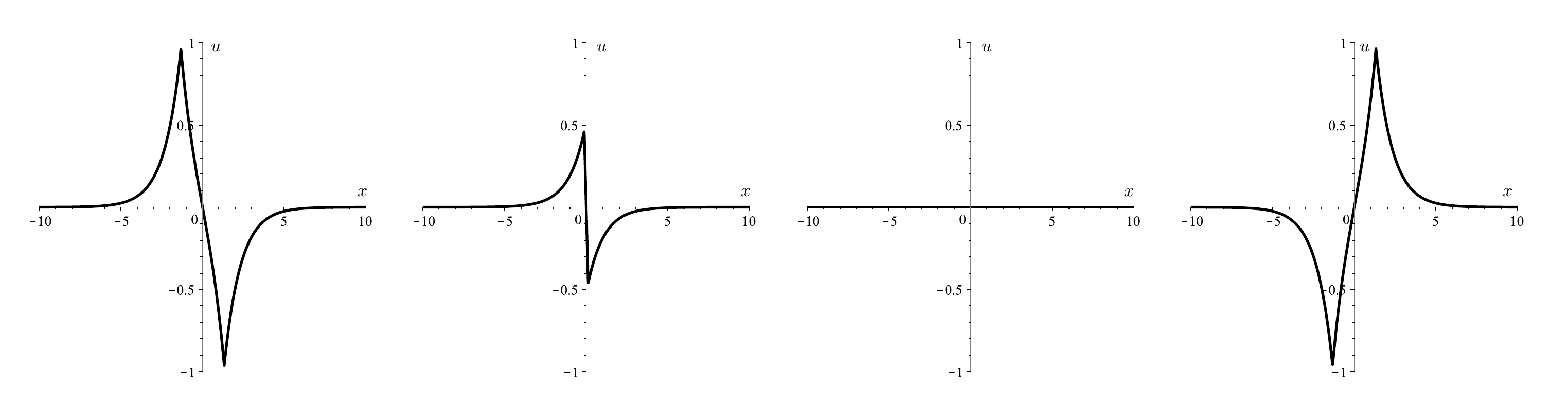}
\caption{Time evolution of $u(t,x)$ with $C=E^2=4$ and $t_0=2$ at $t=0,1.5,2,4$.} 
\label{fig:1}
\end{figure}

\begin{figure}\centering
\includegraphics[width=13cm]{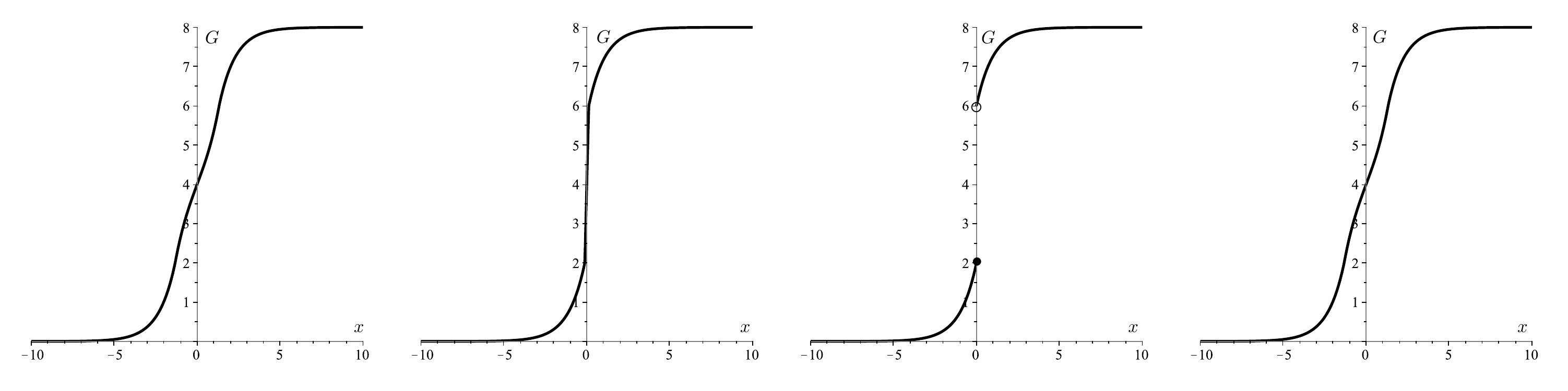}
\caption{Time evolution of $G(t,x)$ with $C=E^2=4$ and $t_0=2$ at $t=0,1.5,2,4$.}
\end{figure}

\begin{figure}\centering
\includegraphics[width=13cm]{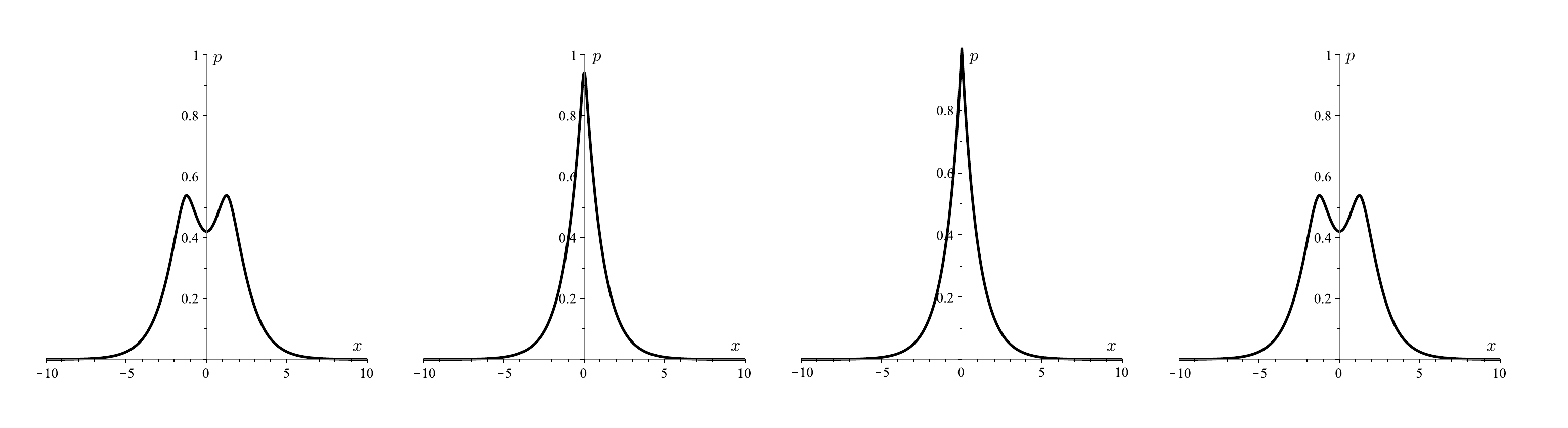}
\caption{Time evolution of $p(t,x)$ with $C=E^2=4$ and $t_0=2$ at $t=0,1.5,2,4$.}
\end{figure}

\begin{figure}\centering
\includegraphics[width=13cm]{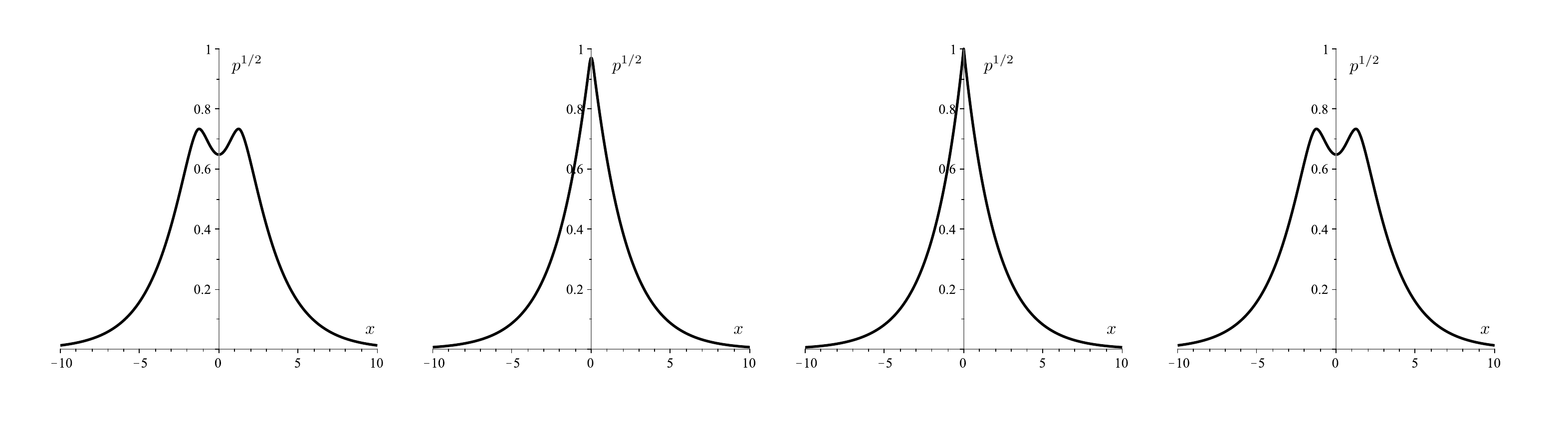}
\caption{Time evolution of $p^{1/2}(t,x)$ with $C=E^2=4$ and $t_0=2$ at $t=0,1.5,2,4$.}
\end{figure}

In the new coordinates the solution reads, using $G(t,\Y(t,\eta))=\eta$, 
\begin{equation*}
\Y(t,\eta)= \begin{cases}
\ln\left(\frac{\eta}{2\alpha'(t)}\right), &\quad 0<\eta\leq \frac{E^2}{2},\\
\sinh^{-1}(\frac{E^2-\eta}{2\beta'(t)}), & \quad \frac{E^2}{2}\leq \eta\leq \frac{3E^2}{2},\\
\ln\left(\frac{2\alpha'(t)}{2E^2-\eta}\right), & \quad  \frac{3E^2}{2}\leq \eta< 2E^2 ,
\end{cases} 
\end{equation*}
and, applying  $\U(t,\eta)=u(t,\Y(t,\eta))$,
\begin{equation*}
\U(t,\eta)  = \begin{cases}
-\frac{\alpha(t)}{2\alpha'(t)} \eta, & \quad 0<\eta\leq \frac{E^2}{2},\\
\frac{\beta(t)}{2\beta'(t)} (E^2-\eta), & \quad \frac{E^2}{2}\leq \eta\leq \frac{3E^2}{2},\\
\frac{\alpha(t)}{2\alpha'(t)} (2E^2-\eta), & \quad \frac{3E^2}{2}\leq \eta< 2E^2,
\end{cases}
\end{equation*}
and, by  invoking $\P(t,\eta)=p(t,\Y(t,\eta))$,
\begin{equation*}
\P(t,\eta) = \begin{cases}
\frac12 \eta-\frac{\alpha(t)^2}{8\alpha'(t)^2}\eta^2, & \quad 0<\eta\leq \frac{E^2}{2},\\
-\beta'(t)\sqrt{1+\frac{(E^2-\eta)^2}{4\beta'(t)^2}}-\frac12 \beta(t)^2(1+\frac{(E^2-\eta)^2}{4\beta'(t)^2}), & \quad \frac{E^2}{2}\leq \eta\leq \frac{3E^2}{2},\\
\frac12 (2E^2-\eta)-\frac{\alpha(t)^2}{8\alpha'(t)^2}(2E^2-\eta)^2, &\quad \frac{3E^2}{2}\leq\eta< 2E^2,
\end{cases}
\end{equation*}
where we used the convention that $\sinh^{-1}(x)$ denotes the inverse of $\sinh(x)$.

\begin{figure}\centering
\includegraphics[width=13cm]{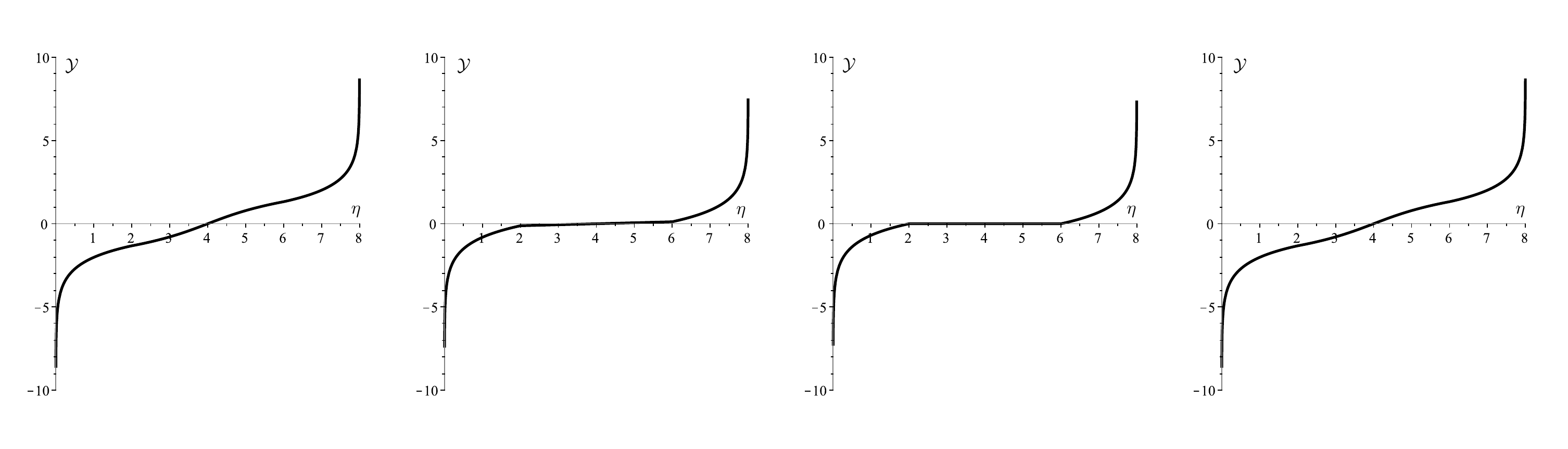}
\caption{Time evolution of $\Y(t,\eta)$ with $C=E^2=4$ and $t_0=2$ at $t=0,1.5,2,4$.}
\end{figure}

\begin{figure}\centering
\includegraphics[width=13cm]{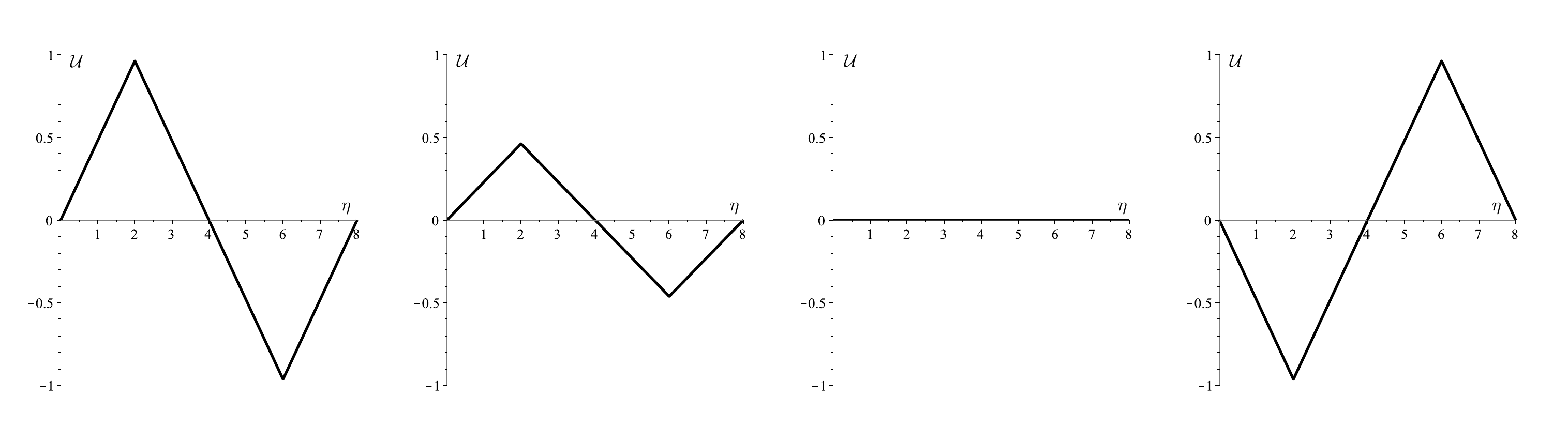}
\caption{Time evolution of $\U(t,\eta)$ with $C=E^2=4$ and $t_0=2$ at $t=0,1.5,2, 4$.}
\end{figure}

\begin{figure}\centering
\includegraphics[width=13cm]{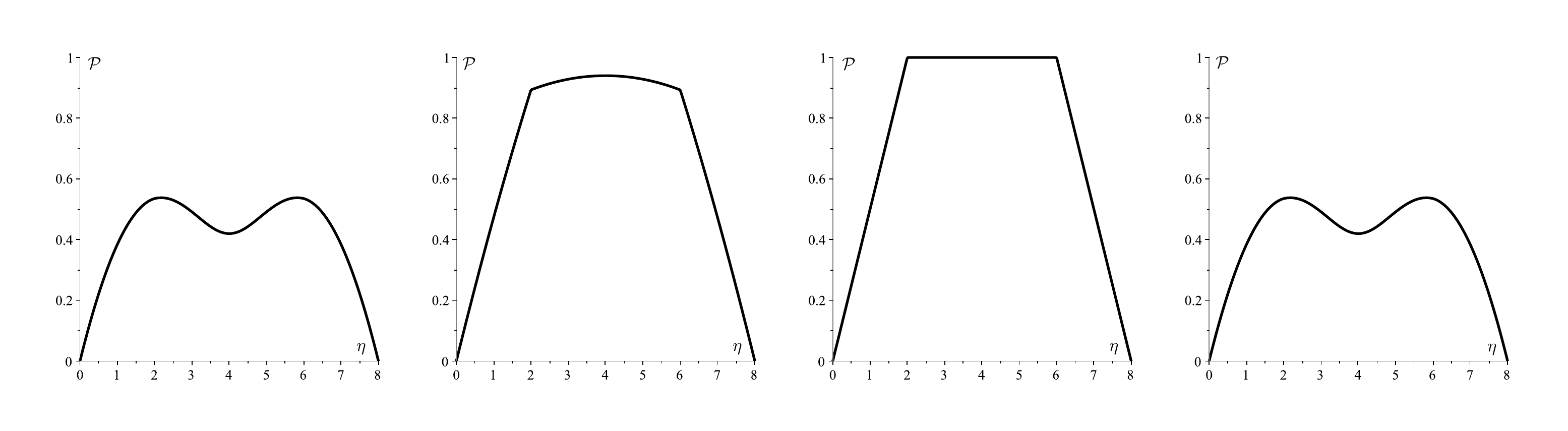}
\caption{Time evolution of $\P(t,\eta)$ with $C=E^2=4$ and $t_0=2$ at $t=0,1.5,2,4$.}
\end{figure}

\begin{figure}\centering
\includegraphics[width=13cm]{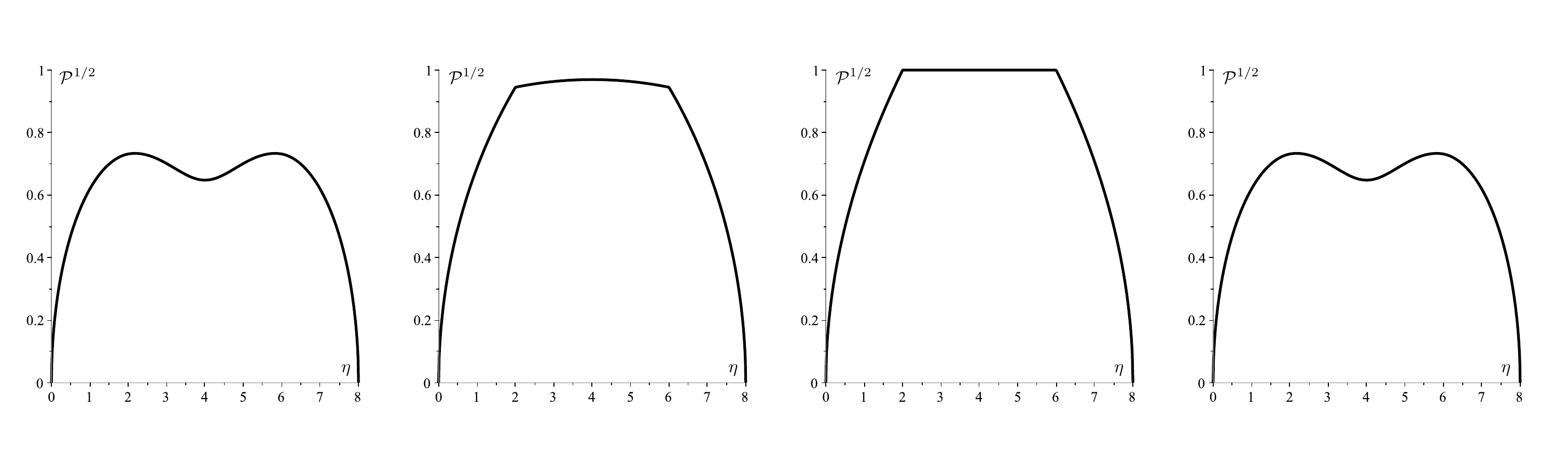}
\caption{Time evolution of $\P^{1/2}(t,\eta)$ with $C=E^2=4$ and $t_0=2$ at $t=0,1.5,2,4$.}
\end{figure}

For the scaled quantities we find, when we introduce $A=\sqrt{2C}=\sqrt{2}E$, that
\begin{align*}
\tilde\Y(t,\eta)&=A\Y(t,A^2\eta)= \sqrt{2C}\begin{cases}
\ln\left(\frac{C\eta}{\alpha'(t)}\right), &\quad 0<\eta\leq \frac{1}{4},\\
\sinh^{-1}(\frac{C(1-2\eta)}{2\beta'(t)}), & \quad \frac{1}{4}\leq \eta\leq \frac{3}{4},\\
\ln\left(\frac{\alpha'(t)}{C(1-\eta)}\right), & \quad \frac{3}{4}\leq \eta<1,
\end{cases} \\[3mm]
\tilde\U(t,\eta)  &=A\U(t,A^2\eta)= \sqrt{2C}\begin{cases}
-\frac{\alpha(t)}{\alpha'(t)} C\eta, & \quad 0<\eta\leq \frac{1}{4},\\
\frac{\beta(t)}{2\beta'(t)} C(1-2\eta), & \quad \frac{1}{4}\leq \eta\leq \frac{3}{4},\\
\frac{\alpha(t)}{\alpha'(t)} C(1-\eta), & \quad \frac{3}{4}\leq \eta<1,
\end{cases}\\[3mm]
\tilde\P(t,\eta) &=A^2 \P(t,A^2\eta)\\
&=2C\begin{cases}
C\eta-\frac{\alpha(t)^2}{2\alpha'(t)^2}C^2\eta^2, & \quad 0<\eta\leq \frac{1}{4},\\
-\beta'(t)\sqrt{1+\frac{C^2(1-2\eta)^2}{4\beta'(t)^2}}-\frac12 \beta(t)^2(1+\frac{C^2(1-2\eta)^2}{4\beta'(t)^2}), & \quad \frac{1}{4}\leq \eta\leq \frac{3}{4},\\
 C(1-\eta)-\frac{\alpha(t)^2}{2\alpha'(t)^2}C^2(1-\eta)^2, &\quad \frac{3}{4}\leq\eta<1.
\end{cases}
\end{align*}

\begin{figure}\centering
\includegraphics[width=13cm]{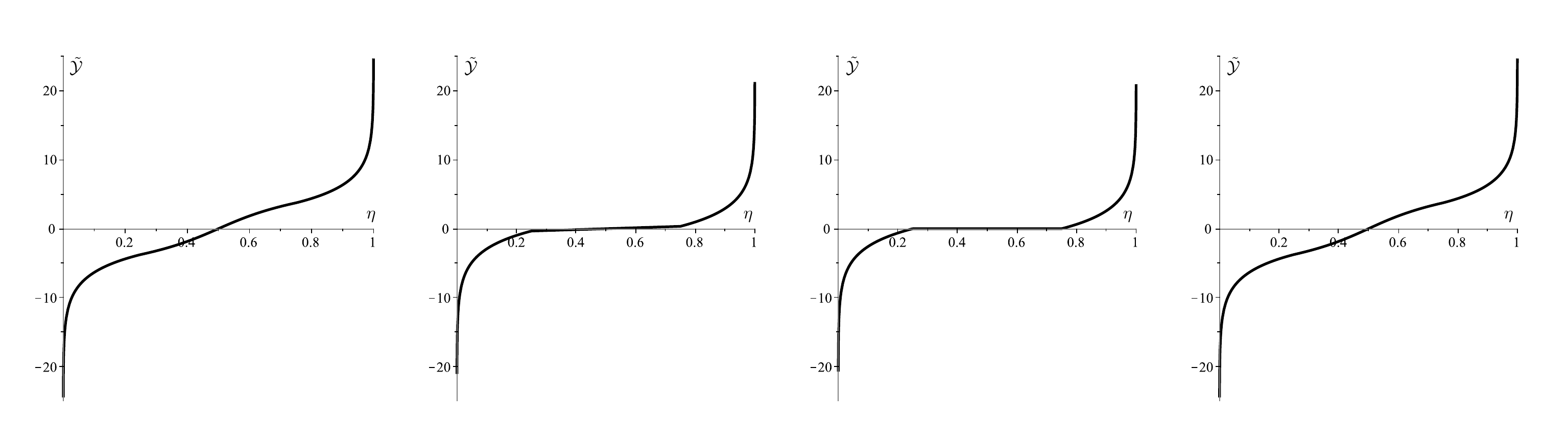}
\caption{Time evolution of $\tilde\Y(t,\eta)$ with $C=E^2=4$ and $t_0=2$ at $t=0,1.5,2,4$.}
\end{figure}

\begin{figure}\centering
\includegraphics[width=13cm]{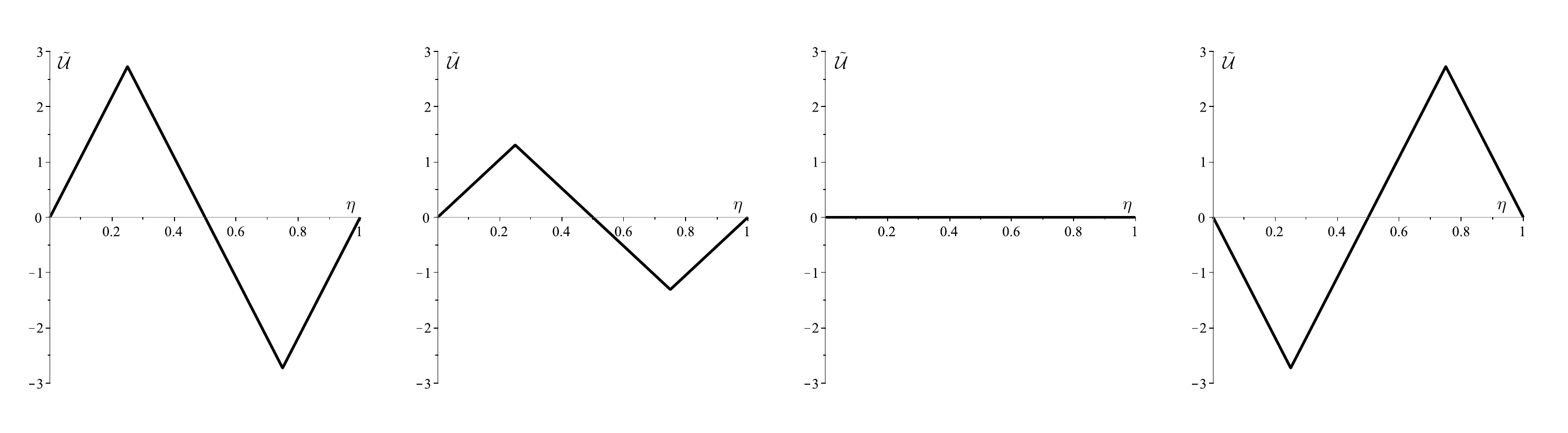}
\caption{Time evolution of $\tilde\U(t,\eta)$ with $C=E^2=4$ and $t_0=2$ at $t=0,1.5,2, 4$.}
\end{figure}

\begin{figure}\centering
\includegraphics[width=13cm]{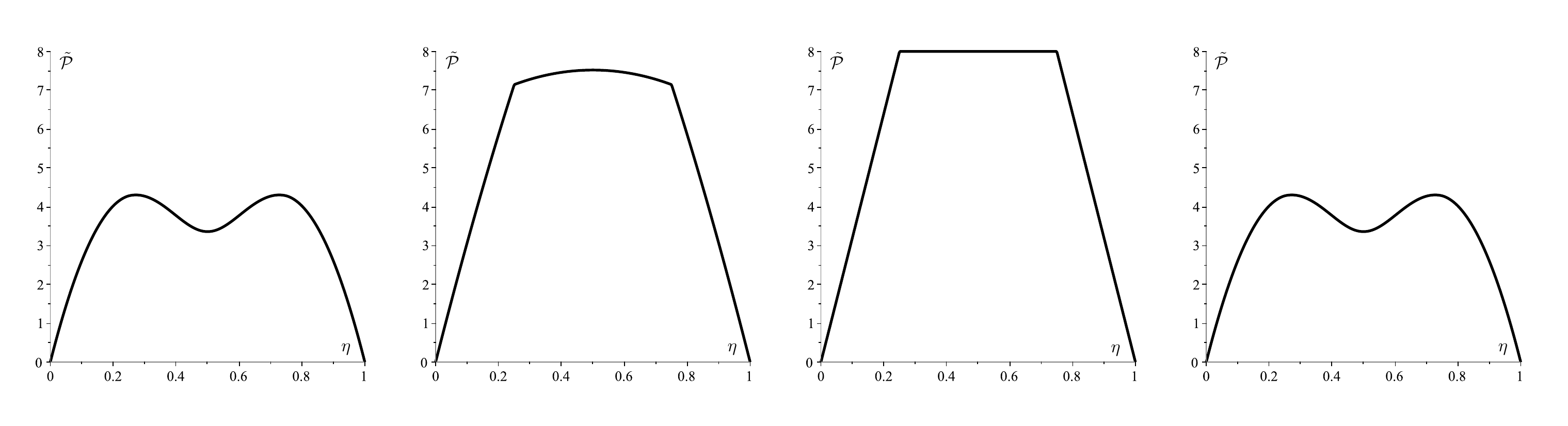}
\caption{Time evolution of $\tilde\P(t,\eta)$ with $C=E^2=4$ and $t_0=2$ at $t=0,1.5,2,4$.}
\end{figure}

\begin{figure}\centering
\includegraphics[width=13cm]{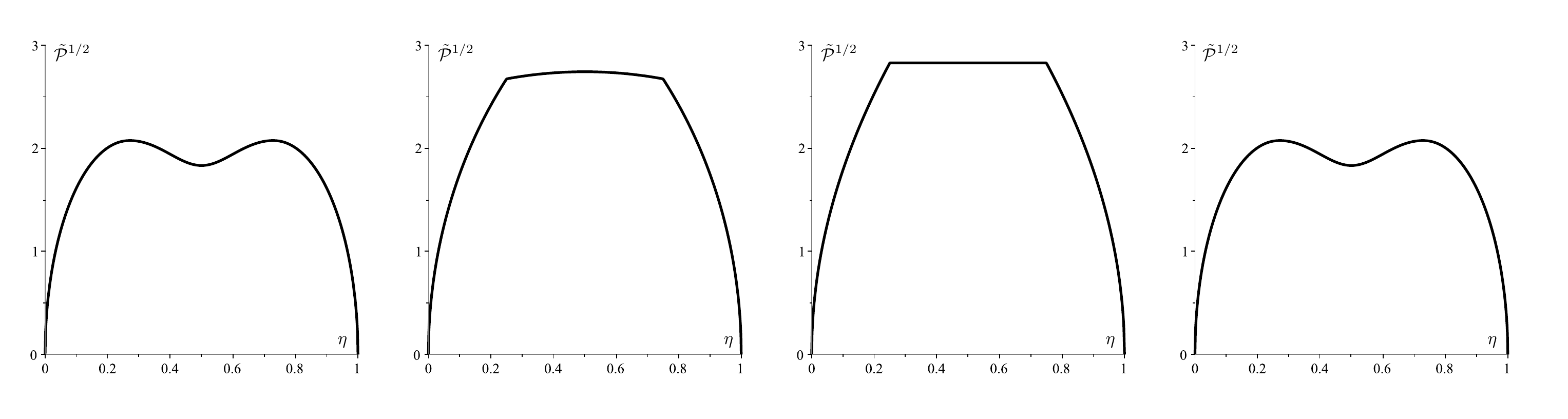}
\caption{Time evolution of $\tilde\P^{1/2}(t,\eta)$ with $C=E^2=4$ and $t_0=2$ at $t=0,1.5,2,4$.}
\end{figure}

%-----------------------
%
%  Additional formulas added after \end{document}
%
%-----------------------
%---------- references ------------------

\end{document}